\DeclareSymbolFontAlphabet{\mathbb}{AMSb} % to ensure \mathbb does not change
\DeclareSymbolFontAlphabet{\mathbbl}{bbold}
\newcommand{\cosimp}[3]{\xymatrix@1{#1 \ar@<.4ex>[r] \ar@<-.4ex>[r] & {\ }#2 \ar@<0.8ex>[r] \ar[r] \ar@<-.8ex>[r] & {\ } #3 \ar@<1.2ex>[r] \ar@<.4ex>[r] \ar@<-.4ex>[r] \ar@<-1.2ex>[r] & \cdots }}
\newcommand{\colim}{\mathop{\mathrm{colim}}}
\newcommand{\adjunction}[4]{\xymatrix@1{#1{\ } \ar@<0.3ex>[r]^{ {\scriptstyle #2}} & {\ } #3 \ar@<0.3ex>[l]^{ {\scriptstyle #4}}}}
\newcommand{\calO}{\mathcal{O}}
\newcommand{\calC}{\mathcal{C}}
\newcommand{\calN}{\mathcal{N}}
\newcommand{\T}{\mathbb{T}}
\newcommand{\HH}{\mathrm{HH}}
\newcommand{\THH}{\mathrm{THH}}
\newcommand{\TC}{\mathrm{TC}}
\newcommand{\HC}{\mathrm{HC}}
\newcommand{\Fil}{\mathrm{Fil}}
\newcommand{\TP}{\mathrm{TP}}
\newcommand{\TF}{\mathrm{TF}}
\newcommand{\TR}{\mathrm{TR}}
\newcommand{\HP}{\mathrm{HP}}
\newcommand{\gr}{\mathrm{gr}}
\newcommand{\dotimes}{{\otimes}^{\mathbb L}}
\newcommand{\Prism}{{ \mathbbl{\Delta}}}
\newcommand{\f}{{\star}}
\DeclareMathOperator{\Spec}{Spec}
\newcommand{\sub}[1]{{\mbox{\rm \scriptsize #1}}}
\newcommand{\Qs}{\mathrm{QSyn}}
\newcommand{\qs}{\mathrm{qSyn}}
\newcommand{\qsp}{\mathrm{qrsPerfd}}
\newcommand{\Qsp}{\mathrm{QRSPerfd}}
\begin{document}

\newtheorem{theorem}{Theorem}[section]
\newtheorem*{theorem*}{Theorem}
\newtheorem*{definition*}{Definition}
\newtheorem{proposition}[theorem]{Proposition}
\newtheorem{lemma}[theorem]{Lemma}
\newtheorem{corollary}[theorem]{Corollary}
\newtheorem{conjecture}[theorem]{Conjecture}
\newtheorem{definition}[theorem]{Definition}

\theoremstyle{definition}
\newtheorem{question}[theorem]{Question}
\newtheorem{remark}[theorem]{Remark}
\newtheorem{warning}[theorem]{Warning}
\newtheorem{example}[theorem]{Example}
\newtheorem{notation}[theorem]{Notation}
\newtheorem{convention}[theorem]{Convention}
\newtheorem{construction}[theorem]{Construction}
\newtheorem{claim}[theorem]{Claim}
\newtheorem{assumption}[theorem]{Assumption}
\newtheorem{variant}[theorem]{Variant}

\setcounter{tocdepth}{1}

\title{Topological Hochschild homology and integral $p$-adic Hodge theory}
\author{Bhargav Bhatt, Matthew Morrow and Peter Scholze}
\begin{abstract}
In mixed characteristic and in equal characteristic $p$ we define a filtration on topological Hochschild homology and its variants. This filtration is an analogue of the filtration of algebraic $K$-theory by motivic cohomology. Its graded pieces  are related in mixed characteristic to the complex $A\Omega$ constructed in our previous work, and in equal characteristic $p$ to crystalline cohomology. Our construction of the filtration on $\THH$ is via flat descent to semiperfectoid rings.

As one application, we refine the construction of the $A\Omega$-complex by giving a cohomological construction of Breuil--Kisin modules for proper smooth formal schemes over $\mathcal O_K$, where $K$ is a discretely valued extension of $\mathbb Q_p$ with perfect residue field. As another application, we define syntomic sheaves $\mathbb Z_p(n)$ for all $n\geq 0$ on a large class of $\mathbb Z_p$-algebras, and identify them in terms of $p$-adic nearby cycles in mixed characteristic, and in terms of logarithmic de~Rham-Witt sheaves in equal characteristic $p$.
\end{abstract}
\date{\today}
\maketitle

\tableofcontents

\section{Introduction}

This paper proves various foundational results on topological cyclic homology $\TC$ (and its cousins $\THH$, $\TC^-$, $\TP$) in mixed characteristic, notably the existence of certain filtrations mirroring the motivic filtration on algebraic $K$-theory. As a concrete arithmetic consequence, we refine the $A_{\inf}$-cohomology theory from \cite{BMS} to proper smooth (formal) schemes defined over $\calO_K$, where $K$ is a \emph{discretely} valued extension of $\mathbb Q_p$ with perfect residue field $k$, and relate this cohomology theory to topological cyclic homology, leading to new computations of algebraic $K$-theory.

\subsection{Breuil-Kisin modules}

Let us start by explaining more precisely the arithmetic application. Let $\mathfrak X$ be a proper smooth (formal) scheme over $\calO_K$. If $C$ is a completed algebraic closure of $K$ with ring of integers $\calO_C$, then in \cite{BMS} we associate to the base change $\mathfrak X_{\calO_C}$ a cohomology theory $R\Gamma_{A_{\inf}}(\mathfrak X_{\calO_C})$ with coefficients in Fontaine's period ring $A_{\inf}$. We recall that there is a natural surjective map $\theta: A_{\inf}\to \calO_C$ whose kernel is generated by a non-zero-divisor $\xi$, and a natural Frobenius automorphism $\varphi: A_{\inf}\to A_{\inf}$. Then $\tilde\xi=\varphi(\xi)\in A_{\inf}$ is a generator of the kernel of $\tilde\theta = \theta\circ \varphi^{-1}: A_{\inf}\to \calO_C$. The main properties of this construction are as follows.

\begin{enumerate}
\item The complex $R\Gamma_{A_{\inf}}(\mathfrak X_{\calO_C})$ is a perfect complex of $A_{\inf}$-modules, and each cohomology group is a finitely presented $A_{\inf}$-module that is free over $A_{\inf}[\tfrac 1p]$ after inverting $p$.
\item There is a natural Frobenius endomorphism $\varphi: R\Gamma_{A_{\inf}}(\mathfrak X_{\calO_C})\to R\Gamma_{A_{\inf}}(\mathfrak X_{\calO_C})$ that is semilinear with respect to $\varphi: A_{\inf}\to A_{\inf}$, and becomes an isomorphism after inverting $\xi$ resp.~$\tilde\xi = \varphi(\xi)$:
\[
\varphi: R\Gamma_{A_{\inf}}(\mathfrak X_{\calO_C})[\tfrac 1\xi]\simeq R\Gamma_{A_{\inf}}(\mathfrak X_{\calO_C})[\tfrac 1{\tilde{\xi}}]\ .
\]
\item After scalar extension along $\theta: A_{\inf}\to \calO_C$, one recovers de~Rham cohomology:
\[
R\Gamma_{A_{\inf}}(\mathfrak X_{\calO_C})\dotimes_{A_{\inf}} \calO_C\simeq R\Gamma_{\mathrm{dR}}(\mathfrak X_{\calO_C}/\calO_C)\ .
\]
\item After inverting a generator $\mu \in A_{\inf}$ of the kernel of the canonical map $A_{\inf} \to W(\mathcal{O}_C)$, one recovers \'etale cohomology:
\[
R\Gamma_{A_{\inf}}(\mathfrak X_{\calO_C})[\tfrac 1\mu]\simeq R\Gamma_{\mathrm{\acute{e}t}}(\mathfrak X_C,\mathbb Z_p)\otimes_{\mathbb Z_p} A_{\inf}[\tfrac 1\mu]\ ,
\]
where the isomorphism is $\varphi$-equivariant, where the action on the right-hand side is only via the action on $A_{\inf}[\tfrac 1\mu]$.
\item After scalar extension along $A_{\inf}\to W(\overline{k})$, one recovers crystalline cohomology of the special fiber,
\[
R\Gamma_{A_{\inf}}(\mathfrak X_{\calO_C})\dotimes_{A_{\inf}} W(\overline{k})\simeq R\Gamma_{\mathrm{crys}}(\mathfrak X_{\overline{k}}/W(\overline{k}))\ ,
\]
$\varphi$-equivariantly.
\end{enumerate}

In particular, the first two parts ensure that each $H^i_{A_{\inf}}(\mathfrak X_{\calO_C}):=H^i(R\Gamma_{A_{\inf}}(\mathfrak X_{\calO_C}))$ is a Breuil-Kisin-Fargues module in the sense of \cite[Definition 4.22]{BMS}.

On the other hand, in abstract $p$-adic Hodge theory, there is the more classical notion of Breuil-Kisin modules as defined by Breuil, \cite{Breuil} and and studied further by Kisin \cite{Kisin}. The theory depends on the choice of a uniformizer $\varpi\in \calO_K$. One gets the associated ring $\mathfrak S=W(k)[[z]]$, which has a surjective $W(k)$-linear map $\tilde\theta: \mathfrak S\to \calO_K$ sending $z$ to $\varpi$. The kernel of $\tilde\theta$ is generated by $E(z)\in \mathfrak S$, where $E$ is an Eisenstein polynomial for $\varpi$. Also, there is a Frobenius $\varphi: \mathfrak S\to \mathfrak S$ which is the Frobenius on $W(k)$ and sends $z$ to $z^p$. One can regard $\mathfrak S$ as a subring of $A_{\inf}$ by using the Frobenius on $W(k)$ and sending $z$ to $[\varpi^\flat]^p$ for a compatible choice of $p$-power roots $\varpi^{1/p^n}\in \calO_C$. This embedding is compatible with $\varphi$ and $\tilde{\theta}$.

\begin{definition} A Breuil-Kisin module is a finitely generated $\mathfrak S$-module $M$ together with an isomorphism
\[
M\otimes_{\mathfrak S,\varphi} \mathfrak S[\tfrac 1E]\to M[\tfrac 1E]\ .
\]
\end{definition}

Our first main theorem states roughly that there exists a well-behaved cohomology theory on proper smooth formal schemes $\mathfrak{X}/\calO_K$ that is valued in Breuil-Kisin modules, and recovers most other standard $p$-adic cohomology theories attached to $\mathfrak{X}$ by a functorial procedure; the existence of this construction geometrizes the results of \cite{Kisin} attaching Breuil-Kisin modules to lattices in crystalline Galois representations, proving a conjecture of Kisin; see \cite{CaisLiu, BarThesis} for some prior work on this question.

\begin{theorem}\label{thm:main1} There is a $\mathfrak S$-linear cohomology theory $R\Gamma_{\mathfrak S}(\mathfrak X)$ equipped with a $\varphi$-linear Frobenius map $\varphi: R\Gamma_{\mathfrak S}(\mathfrak X)\to R\Gamma_{\mathfrak S}(\mathfrak X)$, with the following properties:
\begin{enumerate}
\item After base extension to $A_{\inf}$, it recovers the $A_{\inf}$-cohomology theory:
\[
R\Gamma_{\mathfrak S}(\mathfrak X)\otimes_{\mathfrak S} A_{\inf}\simeq R\Gamma_{A_{\inf}}(\mathfrak X_{\calO_C})\ .
\]
In particular, $R\Gamma_{\mathfrak S}(\mathfrak X)$ is a perfect complex of $\mathfrak S$-modules, and $\varphi$ induces an isomorphism
\[
R\Gamma_{\mathfrak S}(\mathfrak X)\otimes_{\mathfrak S,\varphi} \mathfrak S[\tfrac 1E]\simeq R\Gamma_{\mathfrak S}(\mathfrak X)[\tfrac 1E]\ ,
\]
and so all $H^i_{\mathfrak S}(\mathfrak X):=H^i(R\Gamma_{\mathfrak S}(\mathfrak X))$ are Breuil-Kisin modules. Moreover, after scalar extension to $A_{\inf}[\tfrac 1\mu]$, one recovers \'etale cohomology.
\item After scalar extension along $\theta:=\tilde\theta\circ \varphi: \mathfrak S\to \calO_K$, one recovers de~Rham cohomology:
\[
R\Gamma_{\mathfrak S}(\mathfrak X)\dotimes_{\mathfrak S} \calO_K\simeq R\Gamma_{\mathrm{dR}}(\mathfrak X/\calO_K)\ .
\]
\item After scalar extension along the map $\mathfrak S\to W(k)$ which is the Frobenius on $W(k)$ and sends $z$ to $0$, one recovers crystalline cohomology of the special fiber:
\[
R\Gamma_{\mathfrak S}(\mathfrak X)\dotimes_{\mathfrak S} W(k)\simeq R\Gamma_{\mathrm{crys}}(\mathfrak X_k/W(k))\ .
\]
\end{enumerate}
\end{theorem}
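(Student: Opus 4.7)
The plan is to construct $R\Gamma_{\mathfrak S}(\mathfrak X)$ as an appropriate graded piece of the motivic-style filtration on $\TC^-(\mathfrak X;\mathbb Z_p)$ developed in the body of this paper. The essential new input, beyond the $A_{\inf}$-theory from \cite{BMS}, is the computation $\pi_0\TC^-(\calO_K;\mathbb Z_p)\simeq \mathfrak S$, with the Bökstedt periodicity generator mapping to $E$ up to a unit; this identification is compatible with the embedding $\mathfrak S\hookrightarrow A_{\inf}=\pi_0\TC^-(\calO_C;\mathbb Z_p)$ induced by $\calO_K\to\calO_C$. Granting this, the motivic filtration yields graded pieces that are naturally complexes of $\mathfrak S$-modules, and the cyclotomic Frobenius on $\TC^-$ supplies the required semilinear Frobenius on $R\Gamma_{\mathfrak S}(\mathfrak X)$. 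The construction of the filtration itself goes via quasi-syntomic descent to quasi-regular-semiperfectoid $\calO_K$-algebras, exactly as in the rest of the paper.

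For property (1), base change along $\calO_K\to\calO_C$ is functorial on $\TC^-$ and respects the motivic filtration, so the induced map $\mathfrak S\to A_{\inf}$ takes $R\Gamma_{\mathfrak S}(\mathfrak X)$ to $R\Gamma_{A_{\inf}}(\mathfrak X_{\calO_C})$. Perfectness over $\mathfrak S$ then follows from perfectness over $A_{\inf}$ by $(p,E)$-completely faithfully flat descent along $\mathfrak S\to A_{\inf}$, and the Frobenius isomorphism after inverting $E$ descends from the corresponding isomorphism after inverting $\tilde\xi=\varphi(E)$ over $A_{\inf}$. For property (2), the de~Rham comparison follows from the HKR-type behavior of $\TC^-$ after tensoring down along $\theta$, which at the level of the associated graded of the motivic filtration realizes the de~Rham complex of $\mathfrak X/\calO_K$. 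For property (3), base change along $\mathfrak S\to W(k)$ (sending $z\mapsto 0$) corresponds to reduction to the special fiber $\mathfrak X_k$, and the identification with crystalline cohomology then reduces to the known comparison between the relevant graded piece of $\TP$ of smooth $\mathbb F_p$-algebras and crystalline cohomology.

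The main obstacle is the identification $\pi_0\TC^-(\calO_K;\mathbb Z_p)\simeq \mathfrak S$ together with all of its compatibilities, since $\calO_K$ is not itself quasi-regular-semiperfectoid and so the answer cannot be read off directly from the semiperfectoid computations that drive the rest of the paper. The argument has to go via quasi-syntomic descent from a well-chosen $\calO_K$-algebra containing compatible $p$-power roots of $\varpi$ (such as $\calO_C$, or a smaller perfectoid cover), tracking the generator $z\mapsto [\varpi^\flat]^p$ on the nose, and showing that the motivic filtration on $\TC^-(\mathfrak X)$ refines to an $\mathfrak S$-linear rather than merely an $A_{\inf}$-linear filtration. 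Once this $\mathfrak S$-structure and its Frobenius are in hand, the three comparison statements become formal consequences of properties already established in the paper for the $A_{\inf}$, de~Rham, and crystalline theories.
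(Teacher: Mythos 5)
Your proposal rests on the claim that $\pi_0\TC^-(\calO_K;\mathbb Z_p)\simeq \mathfrak S$, and this is where it breaks down. When $K/\mathbb Q_p$ is ramified, the absolute theory $\THH(\calO_K;\mathbb Z_p)$ is \emph{not} B\"okstedt-periodic: $\pi_*\THH(\calO_K;\mathbb Z_p)$ is not a polynomial ring $\calO_K[u]$ on a degree-$2$ class (by Lindenstrauss--Madsen), and consequently the homotopy fixed point spectral sequence for $\TC^-(\calO_K;\mathbb Z_p)$ does not collapse to give $\mathfrak S$ on $\pi_0$. There is also a soft obstruction: $\TC^-(\calO_K;\mathbb Z_p)$ is canonical, whereas $\mathfrak S=W(k)\llbracket z\rrbracket$ and its embedding into $A_{\inf}$ depend on the choice of uniformizer $\varpi$; so $\pi_0\TC^-(\calO_K;\mathbb Z_p)$ cannot naturally be identified with $\mathfrak S$ compatibly with $\mathfrak S\hookrightarrow A_{\inf}$. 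The quasisyntomic-descent fallback you mention (computing $R\Gamma_\sub{syn}(\calO_K,\pi_0\TC^-(-;\mathbb Z_p))$) gives the Nygaard-completed absolute prismatic cohomology $\widehat{\Prism}_{\calO_K}$, which is a large ring related to the Breuil ring, not $\mathfrak S$.

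The paper's actual fix is to replace the absolute $\THH$ by the \emph{relative} $\THH(-/\mathbb S[z])$, where the choice of $\varpi$ is encoded as an $\mathbb S[z]$-algebra structure $z\mapsto\varpi$ on $\calO_K$. Only then does $\calO_K$ behave like a perfectoid ring: one has $\pi_*\THH(\calO_K/\mathbb S[z];\mathbb Z_p)\cong\calO_K[u]$ and $\pi_*\TC^-(\calO_K/\mathbb S[z];\mathbb Z_p)\cong\mathfrak S^{(-1)}[u,v]/(uv-E)$ (see \S\ref{ss:RelTHH}--\ref{ss:BK}). Even this only produces $\varphi^*R\Gamma_{\mathfrak S}(\mathfrak X)$, because the cyclotomic Frobenius on $\THH(-/\mathbb S[z])$ is semilinear over $z\mapsto z^p$, so $\pi_0\TC^-$ naturally lands over $\mathfrak S^{(-1)}$ rather than $\mathfrak S$. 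Undoing this Frobenius twist is the content of \S\ref{ss:BKdescent}, and it is not formal: it requires inverting the B\"okstedt generator $u$ and invoking the Segal-conjecture-type degree bound (Corollary~\ref{cor:segalcharp}, via Proposition~\ref{prop:BKnc}). Your outline addresses neither the need for relative $\THH$ nor the Frobenius-descent step, and both are essential.
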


The constructions of \cite{BMS} are not enough to get such a descent. In this paper, we deduce this theorem as a consequence of a different construction of the $A_{\inf}$-cohomology theory, in terms of topological Hochschild homology. This alternative construction was actually historically our first construction, except that we were at first unable to make it work.

\begin{remark} In \cite{BS}, this theorem will be reproved using the theory of the \emph{prismatic site}, which is something like a mixed-characteristic version of the crystalline site. In particular, this gives a proof of Theorem~\ref{thm:main1} that is independent of topological Hochschild homology. Moreover, that approach clarifies the various Frobenius twists that appear: In parts (1), (2) and (3), one always uses a base change along the Frobenius map of $W(k)$, which may seem confusing.
\end{remark}

\begin{remark}
\label{rmk:TorsiondR}
The Frobenius twists appearing in Theorem~\ref{thm:main1} are not merely an artifact of the methods, but have concrete implications for torsion in de Rham cohomology. Let us give one example. Take $K = \mathbb{Q}_p(p^{1/p})$. In this case, the map $\theta$ appearing in Theorem~\ref{thm:main1} (2) carries $z$ to $(p^{1/p})^p = p$, and thus factors over $\mathbb{Z}_p \subset \mathcal{O}_K$. It follows from the theorem that the $\calO_K$-complex $R\Gamma_{\mathrm{dR}}(\mathfrak{X}/\calO_K)$ is the pullback of a complex defined over $\mathbb{Z}_p$. In particular, the length of each $H^i_{\mathrm{dR}}(\mathfrak{X}/\calO_K)_{\mathrm{tors}}$ is a multiple of $p$; in fact, each indecomposable summand of this group has length a multiple of $p$. 
\end{remark}

\subsection{Quick reminder on topological Hochschild homology (THH)}

The theory of topological Hochschild homology was first introduced in \cite{Boekstedt}, following on some ideas of Goodwillie. Roughly, it is the theory obtained by replacing the ring $\mathbb{Z}$ with the sphere spectrum $\mathbb{S}$ in the definition of Hochschild homology. We shall use this theory to prove Theorem~\ref{thm:main1}. Thus, let us recall the essential features of this theory from our perspective; we shall use \cite{NikolausScholze} as our primary reference.

$\THH(-)$ is a functor that takes an associative ring spectrum $A$ and builds a spectrum $\THH(A)$ that is equipped with an action of the circle group $\mathbb T=S^1$ and a $\T\cong \T/C_p$-equivariant Frobenius map
\[
\varphi_p: \THH(A)\to \THH(A)^{tC_p}\ ,
\]
where $C_p\cong\mathbb Z/p\mathbb Z\subset \T$ is the cyclic subgroup of order $p$, and $(-)^{tC_p}$ is the Tate construction, i.e.~the cone of the norm map $(-)_{hC_p}\to (-)^{hC_p}$ from homotopy orbits to homotopy fixed points. These Frobenius maps are an essential feature of the topological theory, and are (provably) not present in the purely algebraic theory of Hochschild homology.

In the commutative case, the definition of $\THH$ is relatively easy to give (cf. \cite[\S IV.2]{NikolausScholze}), as we now recall. Say $A$ is an $E_\infty$-ring spectrum. Then:

\begin{enumerate}
\item $\THH(A)$ is naturally a $\T$-equivariant $E_\infty$-ring spectrum equipped with a non-equivariant map $i:A \to \THH(A)$ of $E_\infty$-ring spectra,  and is initial among such.
\item The map $i: A\to \THH(A)$ induces a $C_p$-equivariant map $A\otimes\dots\otimes A\to \THH(A)$ of $E_\infty$-ring spectra, given by $a_0\otimes\dots\otimes a_{p-1}\mapsto i(a_0) \sigma(i(a_1))\cdots \sigma^{p-1}(i(a_{p-1}))$, where $\sigma\in C_p$ is the generator, and the $C_p$-action on the left permutes the tensor factors cyclically. Applying the Tate construction, this induces a map
\[
(A\otimes\cdots\otimes A)^{tC_p}\to \THH(A)^{tC_p}
\]
of $E_\infty$-ring spectra. Moreover, there is a canonical map of $E_\infty$-ring spectra
\[
\Delta_p: A\to (A\otimes\cdots\otimes A)^{tC_p}
\]
given by the Tate diagonal of \cite[\S IV.1]{NikolausScholze}. This gives a map $A\to \THH(A)^{tC_p}$ of $E_\infty$-ring spectra, where the target has a natural residual $\T/C_p\cong \T$-action. By the universal property of $\THH(A)$, this factors over a unique $\T\cong \T/C_p$-equivariant map $\varphi_p: \THH(A)\to \THH(A)^{tC_p}$.
\end{enumerate}

We shall ultimately be interested in other functors obtained from $\THH$. Thus, recall that using the circle action, one can form the homotopy fixed points $\TC^-(A) = \THH(A)^{h\T}$ and the periodic topological cyclic homology $\TP(A) = \THH(A)^{t\T}$, both of which are again $E_\infty$-ring spectra. There is a {\em canonical} map 
\[ \mathrm{can}:\TC^-(A)\to \TP(A)\]
relating these constructions, arising from the natural map $(-)^{h\T} \to (-)^{t\T}$. Writing $\THH(-;\mathbb Z_p)$ for the $p$-completion of $\THH(-)$ (and similarly for other spectra), it is easy to see that if $A$ is connective, then $\pi_0 \TC^-(A;\mathbb Z_p) \cong \pi_0 \TP(A;\mathbb Z_p)$ via the canonical map. On the other hand, again assuming that $A$ is connective, there is also a {\em Frobenius} map
\[
\varphi_p^{h\T}: \TC^-(A;\mathbb Z_p)\to (\THH(A)^{tC_p})^{h\T} \simeq  \TP(A;\mathbb Z_p)
\]
induced by $\varphi_p$; the displayed equivalence comes from \cite[Lemma II.4.2]{NikolausScholze}. Combining these observations, the ring $\pi_0 \TC^-(A; \mathbb{Z}_p)$ acquires a ``Frobenius'' endomorphism $\varphi = \varphi_p^{h\T}: \pi_0 \TC^-(A;\mathbb Z_p)\to \pi_0 \TP(A;\mathbb Z_p) \cong \pi_0 \TC^-(A;\mathbb Z_p)$. This map is the ultimate source of the endomorphism $\varphi$ of $R\Gamma_{\mathfrak{S}}(\mathfrak{X})$ in Theorem~\ref{thm:main1}. In certain situations, our results show that this map is a lift of the Frobenius modulo $p$, justifying its name.

\begin{remark}
In this paper, we shall apply the preceding constructions only in the case that $A$ is (the Eilenberg-MacLane spectrum corresponding to) a usual commutative ring. Even in this relatively simple case, $\THH(A)$ is manifestly an $E_\infty$-ring spectrum,  so we are relying on some rather heavy machinery from algebraic topology. On the other hand, we rely largely on the ``formal'' aspects of this theory, with the only exception being B\"{o}kstedt's calculation that $\pi_* \THH(\mathbb{F}_p) = \mathbb{F}_p[u]$ for $u \in \pi_2 \THH(\mathbb{F}_p)$. Moreover, we will extract our desired information from the level of homotopy groups; in fact, understanding $\pi_0 \TC^{-}(-; \mathbb{Z}_p)$ will suffice for the application to Theorem~\ref{thm:main1}.
\end{remark}

\subsection{From $\THH$ to Breuil-Kisin modules}
\label{ss:THHtoBK}

Let us first explain how to recover the $A\Omega$-complexes of \cite{BMS} from $\THH$, and then indicate the modifications necessary for Theorem~\ref{thm:main1}.  We begin with the following theorem, which is due to Hesselholt \cite{HesselholtOC}  if $R=\calO_{\mathbb C_p}$, and was the starting point for our investigations.

\begin{theorem}[cf.\ \S \ref{sec:TCperfectoid}]\label{thm:TCperfectoid} Let $R$ be a perfectoid ring in the sense of \cite[Definition 3.5]{BMS}. Then there is a canonical (in $R$) $\varphi$-equivariant isomorphism
\[
\pi_0 \TC^-(R;\mathbb Z_p)\cong A_{\inf}(R)\ .
\]
In fact, one can explicitly identify $\pi_* \TC^{-}(R;\mathbb{Z}_p)$, $\pi_* \TP(R;\mathbb{Z}_p)$, $\pi_* \THH(R;\mathbb{Z}_p)$ as well as the standard maps relating them, cf.~\S \ref{sec:TCperfectoid}.
\end{theorem}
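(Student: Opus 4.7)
The plan is to compute the left-hand side explicitly by running the homotopy fixed point spectral sequence for $\T = S^1$ acting on $\THH(R;\mathbb Z_p)$, and then identify the resulting $p$-complete ring with $A_{\inf}(R)$ via the universal property of Witt vectors. The starting input is B\"okstedt's theorem $\pi_\ast\THH(\mathbb F_p) = \mathbb F_p[u]$ with $|u|=2$. Since $R$ is perfectoid, $R/p$ is semiperfect, so it receives a canonical map from $\mathbb F_p$. Using a bar-resolution/base-change argument for the commutative ring $\THH$ that exploits the surjectivity of Frobenius on $R/p$, one obtains
\[
\pi_\ast\THH(R/p) \cong (R/p)[u], \qquad \pi_\ast\THH(R;\mathbb Z_p) \cong R[u],
\]
polynomial algebras on a B\"okstedt-type generator $u$ of degree $2$, with the latter lifting the former under $p$-adic completion.

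Next, consider the homotopy fixed point spectral sequence
\[
E_2^{s,t} = H^s(B\T,\pi_t\THH(R;\mathbb Z_p)) \Rightarrow \pi_{t-s}\TC^-(R;\mathbb Z_p),
\]
with $H^\ast(B\T;\mathbb Z) = \mathbb Z[t]$, $|t|=2$. By the computation above, $E_2$ is concentrated in even bidegrees, so the spectral sequence degenerates at $E_2$ for degree reasons. At the $\pi_0$-line one obtains a $p$-complete filtered ring $A := \pi_0\TC^-(R;\mathbb Z_p)$ with associated graded $\bigoplus_{n\geq 0} R\cdot (ut)^n$ and a surjection $A \twoheadrightarrow R = \pi_0\THH(R;\mathbb Z_p)$ coming from the edge map, with kernel topologically generated by the degree-zero class $ut$.

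To identify $A$ with $A_{\inf}(R) = W(R^\flat)$, observe that the Frobenius $\varphi = \varphi_p^{h\T}$ from the introduction equips $A$ with a ring endomorphism. Unwinding the construction of $\varphi_p$ via the Tate diagonal $\Delta_p$ on $\pi_0$, one verifies that $\varphi_A$ reduces modulo $p$ to a lift of the absolute Frobenius of $A/p$, compatibly with Frobenius of $R/p$ under $A \twoheadrightarrow R$. The universal property of Witt vectors applied to the tilt $R^\flat = \varprojlim_{\varphi} R/p$ then yields a canonical, $\varphi$-equivariant ring map $A_{\inf}(R) \to A$. One checks it is an isomorphism by comparing filtrations: both sides are $p$-torsion-free, $p$-complete rings with a principal-ideal surjection onto $R$ (generated by $\xi$ on one side, $ut$ on the other), and the induced maps on associated gradeds are identifications of copies of $R$ in each degree.

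The main obstacle is the verification in the third step that $\varphi_p^{h\T}$ on $\pi_0$ really descends to a Witt-compatible Frobenius lift, not merely an abstract ring endomorphism; this is a genuine calculation with the Tate diagonal $\Delta_p$ at the level of $\pi_0$, and is the key technical point that makes the universal property of $W(R^\flat)$ applicable. A secondary bookkeeping task is matching the generator $ut$ of $\ker(A \to R)$ with the correct generator of $\ker(A_{\inf}(R) \to R)$ (namely $\xi$ versus $\tilde\xi$), since this controls the Frobenius-twist conventions and is essential for the $\varphi$-equivariance to hold on the nose. Once these are handled, the extended statement about $\pi_\ast\TC^-(R;\mathbb Z_p)$, $\pi_\ast\TP(R;\mathbb Z_p)$, and $\pi_\ast\THH(R;\mathbb Z_p)$ follows from the same HFPSS analysis together with the parallel Tate spectral sequence for $\TP$.
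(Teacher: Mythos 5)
Your global outline---compute $\pi_*\THH(R;\mathbb Z_p)$, observe that the homotopy-fixed-point and Tate spectral sequences degenerate for parity reasons, and identify $\pi_0$ with $A_{\inf}(R)$ via a universal property---does match the strategy of \S\ref{sec:TCperfectoid}, but the first step as you have it contains a genuine error, not just a gap. The intermediate claim $\pi_*\THH(R/p)\cong(R/p)[u]$ is false for a general perfectoid $R$ of mixed characteristic: $R/p$ is only quasiregular semiperfect, not perfect, and by Theorem~\ref{THHqrs} each $\pi_{2i}\THH(R/p;\mathbb Z_p)$ carries a length-$(i+1)$ filtration with graded pieces $\Gamma^j_{R/p}(I/I^2)\cong R/p$ for $0\le j\le i$ (where $I=\ker(R^\flat\to R/p)$), so it has rank $i+1$ over $R/p$ rather than rank one. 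No ``bar-resolution/base-change argument exploiting surjectivity of Frobenius'' rescues this, because the key fact that makes base change from $\mathbb F_p$ to a \emph{perfect} ring work is the vanishing of $L_{-/\mathbb F_p}$, and that fails for merely semiperfect rings. The relation you invoke between $\THH(R/p)$ and $\THH(R;\mathbb Z_p)$ is also not a mod-$p$ lifting: one is $\THH$ applied to the ring $R/p$, the other is the $p$-completion of $\THH(R)$, and they are genuinely different spectra. The paper's proof of $\pi_*\THH(R;\mathbb Z_p)=R[u]$ goes through a different chain: compute $\pi_*\HH(R;\mathbb Z_p)$ via HKR using $L_{R/\mathbb Z_p}^\wedge_p\simeq R[1]$; use Lemma~\ref{THHvsHH} and finiteness of $\pi_i\THH(\mathbb Z)$ to get pseudocoherence of $\THH(R;\mathbb Z_p)$; establish base change $\THH(R;\mathbb Z_p)\dotimes_R R'\simeq\THH(R';\mathbb Z_p)$ along maps of perfectoid rings (this, not mod-$p$ reduction, is what reduces to B\"okstedt); and then induct on degree, using Nakayama after base change to the perfection $\overline R$ of $R/p$ to cover characteristic $p$ points, plus a rational rank count via $\HH$ to cover characteristic $0$ points.

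Even granting the $\THH$ computation, your identification step differs from the paper's and leaves the substance unaddressed. The paper does not verify that $\varphi_p^{h\T}$ induces a Frobenius lift on $\pi_0\TC^-$ before constructing the isomorphism; rather, it uses Fontaine's universal property of $A_{\inf}(R)$ as the initial $p$-adically complete pro-nilpotent thickening of $R$ to build the map $A_{\inf}(R)\to\pi_0\TP(R;\mathbb Z_p)$ directly (no Frobenius required at that stage), checks it is an isomorphism on associated gradeds by functoriality (reducing to perfect fields of characteristic $p$ and then to $\mathbb F_p$), and only afterwards identifies $\pi_0\varphi_p^{h\T}$ with the Witt-vector Frobenius using the commuting square to $\THH\to\THH^{tC_p}$, the formula $\pi_0 R\to\pi_0 R^{tC_p}$, $x\mapsto x^p$, from \cite[Corollary IV.2.4]{NikolausScholze}, and once more the universal property. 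Your alternative route via the universal property of $W(R^\flat)$ could in principle be made to work, but you correctly flag that the verification that $\varphi_p^{h\T}$ mod $p$ is a Frobenius lift compatible with the tilt is the key technical point---and that is exactly where your proposal stops. Similarly, ``the induced maps on associated gradeds are identifications of copies of $R$ in each degree'' is asserted, not proved; the proof requires the reductions to $\mathbb F_p$ the paper carries out. In short: a real error at step one, and the load-bearing verifications in the later steps are deferred rather than supplied.
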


Now let $A$ be the $p$-adic completion of a smooth $\calO_C$-algebra as in \cite{BMS}. We will recover $A\Omega_A$ via flat descent from $\pi_0 \TC^-(-;\mathbb Z_p)$ by passage to a perfectoid cover $A\to R$. A convenient home for the rings encountered while performing the descent (such as $R\widehat{\otimes}_A R$) is provided by the following:

\begin{definition}[The quasisyntomic site, cf.~Definition \ref{defqs}] 
A ring $A$ is {\em quasisyntomic}\footnote{It would be better to write ``$p$-completely quasisyntomic".} if it is $p$-complete, has bounded $p^\infty$-torsion (i..e, the $p$-primary torsion is killed by a fixed power of $p$), and $L_{A/\mathbb Z_p}\dotimes_{A} A/pA\in D(A/pA)$ has Tor-amplitude in $[-1,0]$. A map $A\to B$ of such rings is a {\em quasisyntomic map (resp. cover)} if $A/p^nA\to B/p^n B$ is flat (resp. faithfully flat) for all $n\geq 1$ and $L_{(B/pB)/(A/pA)}\in D(B/pB)$ has Tor-amplitude in $[-1,0]$.

Let $\Qs$ be the category of quasisyntomic rings. For $A \in \Qs$, let $\qs_{A}$ denote the category of all quasisyntomic $A$-algebras $B$. Both these categories are endowed with a site structure with the topology defined by quasisyntomic covers.

For any abelian presheaf $F$ on $\qs_{A}$, we write $R\Gamma_\sub{syn}(A,F) = R\Gamma(\qs_{A},F)$ for the cohomology of its sheafification.
\end{definition}

The category $\Qs$ contains many Noetherian rings of interest, for example all $p$-complete regular rings; even more generally, all $p$-complete local complete intersection rings are in $\Qs$. It also contains the objects encountered above, i.e., $p$-adic completions of smooth $\calO_C$-algebras, as well as perfectoid rings.

The association $B\mapsto \pi_0 \TC^-(B;\mathbb Z_p)$ defines a presheaf of rings on $\qs_A$. The next result identifies the cohomology of this presheaf with the $A\Omega$-complexes:

\begin{theorem}[cf.\ Theorem \ref{main_theorem}]\label{thm:main2} Let $A$ be an $\calO_C$-algebra that can be written as the $p$-adic completion of a smooth $\calO_C$-algebra. There is a functorial (in $A$) $\varphi$-equivariant isomorphism of $E_\infty$-$A_{\inf}$-algebras
\[
A\Omega_A \simeq R\Gamma_\sub{syn}(A,\pi_0 \TC^-(-;\mathbb Z_p))\ .
\]
\end{theorem}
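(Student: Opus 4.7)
The plan is to prove the statement by quasisyntomic descent from $A$ down to a perfectoid cover, using Theorem~\ref{thm:TCperfectoid} as the key input on perfectoid rings, and then matching the Čech-theoretic description of the right-hand side with the corresponding Čech description of $A\Omega_A$ furnished by \cite{BMS}. Concretely: since $A$ is the $p$-adic completion of a smooth $\calO_C$-algebra, it admits a quasisyntomic cover $A\to R$ with $R$ perfectoid (take a framing $\calO_C[T_1^{\pm 1},\dots,T_d^{\pm 1}]\to A$ and adjoin compatible $p$-power roots of the $T_i$). The $p$-completed derived Čech nerve $R^{(\bullet)}=R\,\widehat{\otimes}_A^{\mathbb L}\cdots\widehat{\otimes}_A^{\mathbb L}\,R$ lies in $\qs_A$, and each $R^{(n)}$ is \emph{quasi-regular semiperfectoid}, i.e.\ a quotient of a perfectoid ring by a (derived) regular sequence.

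First I would verify that $B\mapsto \TC^-(B;\mathbb Z_p)$ is a sheaf on $\qs_A$. This reduces to flat descent for $\THH(-;\mathbb Z_p)$ (which holds as $\THH$ of a commutative ring may be computed by a bar construction and behaves well under $p$-completely flat base change), combined with the fact that $(-)^{h\T}$ preserves limits; the same argument works for $\TP$. Taking $\pi_0$ of a connective-coconnective bounded spectral sequence shows that $\pi_0\TC^-(-;\mathbb Z_p)$ is again a sheaf (the relevant vanishing will come out of Step~3), so
\[
R\Gamma_\sub{syn}\bigl(A,\pi_0\TC^-(-;\mathbb Z_p)\bigr)\;\simeq\;\mathrm{Tot}\bigl(\pi_0\TC^-(R^{(\bullet)};\mathbb Z_p)\bigr).
\]

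Next I would identify this cosimplicial ring with $A_{\inf}(R^{(\bullet)})$. For $R$ itself this is Theorem~\ref{thm:TCperfectoid}. For the general term $R^{(n)}$, quasi-regular semiperfectoid, one computes $\THH(R^{(n)};\mathbb Z_p)$ via base change along $R^{(n)}\leftarrow R$ and Bökstedt's calculation $\pi_*\THH(\mathbb F_p)=\mathbb F_p[u]$: the cotangent-complex hypothesis makes $\pi_*\THH(R^{(n)};\mathbb Z_p)$ concentrated in even degrees with $\pi_0=R^{(n)}$ and higher terms given by divided powers of a shift of $L_{R^{(n)}/\mathbb Z_p}$. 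The Tate spectral sequence for $\TC^-$ then degenerates for even-parity reasons, and a direct inspection identifies $\pi_0\TC^-(R^{(n)};\mathbb Z_p)$ with the initial $\delta$-ring deforming $R^{(n)}$ modulo a distinguished element, i.e.\ with $A_{\inf}(R^{(n)})$ in the sense used in \cite{BMS}. To finish, one invokes the analogous Čech description of $A\Omega_A$: by the quasisyntomic/pro-étale descent properties proved in \cite{BMS} (the complex $A\Omega_A$ is computed by applying $L\eta_\mu$ to $R\nu_*\mathbb A_{\inf}$, which in turn is presented on $R^{(\bullet)}$ by the same $A_{\inf}(R^{(\bullet)})$), both sides of the desired isomorphism are the totalization of the same cosimplicial $A_{\inf}$-algebra. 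The Frobenius on the left comes from the $A_{\inf}$-Frobenius, while on the right it comes from $\varphi_p^{h\T}$; naturality of the identification in Theorem~\ref{thm:TCperfectoid} makes the two agree termwise, yielding $\varphi$-equivariance.

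The main obstacle will be Step~3: the identification of $\pi_0\TC^-(S;\mathbb Z_p)$ for $S$ quasi-regular semiperfectoid. On the one hand, one must establish the even-parity vanishing of $\pi_*\THH(S;\mathbb Z_p)$ in sufficient generality (via a presentation of $S$ as a quotient of a perfectoid ring together with THH base-change, controlled by the cotangent complex), and on the other hand one must extract from the Tate spectral sequence a clean, functorial identification of $\pi_0 \TC^-$ with an explicit $A_{\inf}$-type period ring. Once this is in hand—together with the sheaf property—the comparison with $A\Omega_A$ reduces to matching two cosimplicial objects already present in \cite{BMS}, and is essentially formal.
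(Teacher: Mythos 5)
The proposal contains a genuine gap at the step you yourself flag as the main obstacle, and the gap is not one that can be filled along the lines you suggest: the identification of $\pi_0\TC^-(R^{(n)};\mathbb Z_p)$ with $A_{\inf}(R^{(n)})$ for the higher terms $R^{(n)}$ ($n\geq 1$) of the quasisyntomic \v{C}ech nerve is false. Only for $S$ \emph{perfectoid} does Theorem~\ref{thm:TCperfectoid} give $\pi_0\TC^-(S;\mathbb Z_p)\cong A_{\inf}(S)$. The fiber products $R^{(n)}=R\,\widehat\otimes_A\cdots\widehat\otimes_A R$ for $n\geq 1$ are quasiregular semiperfectoid but \emph{not} perfectoid: they have a nontrivial cotangent complex over $\mathbb Z_p$ (concentrated in degree $-1$ but of positive rank), so the Frobenius on $R^{(n)}/p$ is surjective but not injective. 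For such $S$ the ring $\pi_0\TC^-(S;\mathbb Z_p)=\widehat{\Prism}_S$ is a genuinely new object --- a Nygaard-complete deformation of $S$ whose graded pieces are divided powers on $L_{S/R}[-1]$ (Theorem~\ref{TCqrsp}) --- and it is neither $W(S^\flat)$ nor any ``$A_{\inf}$-type period ring'' in the sense of \cite{BMS}. Consequently the two sides of the comparison are \emph{not} totalizations of the same cosimplicial ring: the right-hand side totalizes $\widehat{\Prism}_{R^{(\bullet)}}$, while the \v{C}ech presentation of $R\nu_*\mathbb A_{\inf,X}$ from \cite{BMS} uses the pro-\'etale \v{C}ech nerve of a perfectoid cover, whose terms are perfectoid and give honest $A_{\inf}$'s. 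The two \v{C}ech nerves live in different topologies and have different terms, and the $L\eta_\mu$ operator driving the definition of $A\Omega_A$ does not commute naively with the totalization in any case.

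The paper's actual proof has to do substantially more work precisely because of this mismatch. Schematically: one first builds a primitive map $b_A:\widehat{\Prism}_A\to R\Gamma(\mathrm{Spf}(A)_C,\mathbb A_{\inf})$ by mapping $\widehat{\Prism}_A\to\widehat{\Prism}_F=A_{\inf}(F)$ for perfectoid $A$-algebras $F$ and passing to the limit; this hits the pre-$L\eta$ complex, not $A\Omega_A$. Then one exploits the factorization of the Frobenius through $L\eta_\xi\varphi_*$ coming from the Nygaard filtration (Corollary~\ref{NygaardSmooth}(3)) plus an almost-mathematics lemma (Lemma~\ref{LetaAlmost}) to show the map factors through $L\eta_\mu$ \emph{in the almost category}, via an inverse-limit fixed-point argument. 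One then needs a further trick (restricting to the variant site $\qsp^\sub{proj}_{\calO_C}$ where $\mathrm{Hom}(W(\mathfrak m^\flat),-)$ is the identity on the relevant topologically free modules) to lift the almost map to an honest map of $E_\infty$-algebras. Finally one checks the resulting map is an equivalence by reducing modulo $(p,\xi)$, identifying both sides with $L\Omega_{(A/p)/(\calO_C/p)}$, and using the Cartier isomorphism. None of these steps is a formal totalization argument; in particular the $L\eta$-factorization and the almost-to-honest descent are where the real content sits. Your outline would need to be restructured around this chain of ideas rather than around a termwise \v{C}ech identification.
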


\begin{remark}
While proving Theorem~\ref{thm:main2}, we will actually show that on a base of the site $\qs_A$ (given by the quasiregular semiperfectoid rings $S$), the presheaf $\pi_0 \TC^-(-;\mathbb{Z}_p)$ is already a sheaf with vanishing higher cohomology.
\end{remark}

There is also the following variant of this theorem in equal characteristic $p$, recovering crystalline cohomology.

\begin{theorem}[cf.\ \S \ref{subsection_TC_Acrys}]\label{thm:main3} Let $k$ be a perfect field of characteristic $p$, and $A$ a smooth $k$-algebra. There is a functorial (in $A$) $\varphi$-equivariant isomorphism of $E_\infty$-$W(k)$-algebras
\[
R\Gamma_{\mathrm{crys}}(A/W(k)) \simeq R\Gamma_\sub{syn}(A, \pi_0 \TC^-(-;\mathbb Z_p))\ .
\]
\end{theorem}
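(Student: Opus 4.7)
The plan is to mirror the proof strategy of Theorem~\ref{thm:main2}, replacing perfectoid rings with perfect $\mathbb{F}_p$-algebras and $A_{\mathrm{inf}}$ with the crystalline period ring $A_{\mathrm{crys}}$. The argument proceeds in three steps: a base-case computation identifying $\pi_0 \TC^{-}(S;\mathbb{Z}_p)$ on a natural basis of the quasisyntomic site of $A$, a vanishing-of-higher-cohomology statement on that basis, and a \v{C}ech-Alexander comparison with crystalline cohomology.

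First I would take the basis of $\qs_A$ given by the quasiregular semiperfect $\mathbb{F}_p$-algebras, i.e.\ quotients of perfect $\mathbb{F}_p$-algebras by quasiregular ideals. For a smooth $k$-algebra $A$, the perfection $A \to A_{\mathrm{perf}} = \colim_\varphi A$ is a quasisyntomic cover whose iterated self-products $(A_{\mathrm{perf}})^{\widehat{\otimes}_A (n+1)}$ are all quasiregular semiperfect, so this basis is rich enough to cover $A$. For any such $S$ with tilt $S^\flat := \lim_\varphi S$, write $A_{\mathrm{crys}}(S)$ for the $p$-completed PD-envelope of the canonical surjection $W(S^\flat) \to S$; because $S$ is quasiregular, this coincides with $R\Gamma_{\mathrm{crys}}(S/W(k))$, concentrated in degree $0$. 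The key calculation is then the $\varphi$-equivariant isomorphism
\[
\pi_0 \TC^{-}(S;\mathbb{Z}_p) \simeq A_{\mathrm{crys}}(S),
\]
the equal-characteristic counterpart of the quasiregular semiperfectoid case used in the proof of Theorem~\ref{thm:main2}. I would prove it by combining B\"{o}kstedt's computation $\pi_* \THH(\mathbb{F}_p) = \mathbb{F}_p[u]$ with $|u|=2$, a base-change description of $\THH(S;\mathbb{Z}_p)$ as a divided-power polynomial algebra over $S$ on a degree-$2$ class, and the homotopy-fixed-point spectral sequence, whose $E_\infty$-page is concentrated in even degrees and collapses to yield $A_{\mathrm{crys}}(S)$ in $\pi_0$. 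The Frobenius $\varphi_p^{h\T}$ then restricts on perfect subrings to the Witt-vector Frobenius, so it matches the crystalline Frobenius under the identification above.

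Second, on the same basis, I would show that the presheaf $S \mapsto \pi_0 \TC^{-}(S;\mathbb{Z}_p)$ is a sheaf with vanishing higher quasisyntomic cohomology, exactly as in the remark after Theorem~\ref{thm:main2}. With this in place, $R\Gamma_\sub{syn}(A, \pi_0 \TC^{-}(-;\mathbb{Z}_p))$ is computed by the cosimplicial \v{C}ech complex
\[
A_{\mathrm{crys}}(A_{\mathrm{perf}}) \to A_{\mathrm{crys}}(A_{\mathrm{perf}} \widehat{\otimes}_A A_{\mathrm{perf}}) \to A_{\mathrm{crys}}(A_{\mathrm{perf}}^{\widehat{\otimes}_A 3}) \to \cdots,
\]
with its induced Frobenius. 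Finally, I would identify this cosimplicial object with a \v{C}ech-Alexander complex for $R\Gamma_{\mathrm{crys}}(A/W(k))$: the termwise PD-envelopes of $W((-)^\flat)$ applied to the \v{C}ech nerve of $A \to A_{\mathrm{perf}}$ furnish a family of affine PD-thickenings covering the crystalline site of $A/W(k)$ by honestly flat $p$-complete $W(k)$-algebras, and the associated complex is classically known to compute $R\Gamma_{\mathrm{crys}}(A/W(k))$ with its Frobenius.

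The main obstacle will be the first step: extending the $\THH$-calculation from $\mathbb{F}_p$ to an arbitrary quasiregular semiperfect $\mathbb{F}_p$-algebra $S$ and then carefully tracking the Tate spectral sequence to recognize the answer as the PD-envelope of $W(S^\flat)\to S$, together with its Frobenius structure. This requires knowing that the divided-power classes arising from powers of $u \in \pi_2 \THH(\mathbb{F}_p)$ exhaust all possible contributions, which in turn relies on the quasiregularity of $S$ to control its cotangent complex and force the collapse of the spectral sequence. Once this computation is established (and its $\varphi$-equivariance verified), the sheafification and \v{C}ech-Alexander steps are formal manipulations closely modeled on the arguments for Theorem~\ref{thm:main2}.
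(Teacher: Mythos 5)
Your overall architecture---localize on the quasiregular semiperfect basis of $\qs_A$, compute $\pi_0\TC^{-}$ on the basis, and unfold by descent---matches the paper's skeleton. But the base-case identification is not correct as stated, and the proposal glosses over the two hardest points of the argument.

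First, $\pi_0\TC^{-}(S;\mathbb{Z}_p)$ is the \emph{Nygaard completion} $\widehat{\mathbb{A}}_{\mathrm{crys}}(S)$, not $\mathbb{A}_{\mathrm{crys}}(S)$ itself. The degenerate homotopy fixed point spectral sequence equips $\pi_0\TC^{-}(S;\mathbb{Z}_p)$ with a complete descending filtration (abutment filtrations are complete by construction), and this filtration is the Nygaard filtration; but $\mathbb{A}_{\mathrm{crys}}(S)$ is generally not Nygaard-complete. Already modulo $p$ one has $\mathbb{A}_{\mathrm{crys}}(S)/p\cong L\Omega_{S/\mathbb{F}_p}$ versus $\widehat{\mathbb{A}}_{\mathrm{crys}}(S)/p\cong\widehat{L\Omega}_{S/\mathbb{F}_p}$, and these differ for a generic $S\in\Qsp$ of characteristic $p$. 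The theorem survives because the Nygaard filtration on $W\Omega_A^\bullet$ is already complete for smooth $A$, so the two unfoldings coincide there; but your cosimplicial complex built from $\mathbb{A}_{\mathrm{crys}}$ does not literally compute $R\Gamma_{\mathrm{syn}}(A,\pi_0\TC^-(-;\mathbb{Z}_p))$, and you would need to supply this extra comparison.

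Second, and more fundamentally, the Frobenius identification is the crux and your proposal barely addresses it. The cyclotomic Frobenius $\varphi_p^{h\T}$ and the endomorphism induced functorially from the ring Frobenius of $S$ are a priori two different self-maps of $\pi_0\TC^{-}(S;\mathbb{Z}_p)$, and the homotopy fixed point spectral sequence---which only exhibits a filtered ring---gives no handle on this. Proving that they coincide, and in particular that $\pi_0\TC^-(S;\mathbb{Z}_p)\to S$ is a pd-thickening (which is what lets one write down a canonical map from $\mathbb{A}_{\mathrm{crys}}(S)$ at all), is the bulk of the proof of Theorem~\ref{thm:tccharp}: it proceeds by a direct computation for $S=\mathbb{F}_p[\mathbb{Q}_p/\mathbb{Z}_p]$ via the spherical group algebra $\mathbb{S}[\mathbb{Q}_p/\mathbb{Z}_p]$ and B\"{o}kstedt's theorem (identifying $\TP(\mathbb{F}_p[\mathbb{Q}_p/\mathbb{Z}_p])$ with the $p$-completion of $\HP(\mathbb{Z}[\mathbb{Q}_p/\mathbb{Z}_p])$), runs a density argument in $\widehat{\mathbb{A}}_{\mathrm{crys}}(S)[\tfrac{1}{p}]$ to compare the two Frobenii there, and then propagates to arbitrary $S$ via surjections from rings of the form $S^\flat[X_i^{1/p^\infty}]/(X_i)$. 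A smaller inaccuracy: $\THH(S;\mathbb{Z}_p)$ is not a divided-power polynomial algebra on a single degree-$2$ class; its even homotopy groups carry a finite filtration with graded pieces $(\Gamma^j_S M)^\wedge_p$ where $M=\pi_1(L_{S/\mathbb{F}_p})^\wedge_p$, and matching this structure to the pd-envelope of $W(S^\flat)\to S$ is exactly what the identification $\mathbb{A}_{\mathrm{crys}}(S)\cong LW\Omega_S$ in Theorem~\ref{theorem_structure_Acrys} is for---a step your sketch calls a ``base-case computation'' but does not actually supply.
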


In this case, this is related to Fontaine-Messing's approach to crystalline cohomology via syntomic cohomology, \cite{FontaineMessing}. More precisely, they identify crystalline cohomology with syntomic cohomology of a certain sheaf $\mathbb A_{\mathrm{crys}}$. The previous theorem is actually proved by identifying the sheaf $\pi_0 \TC^-(-;\mathbb Z_p)$ with (the Nygaard completion of) the sheaf $\mathbb A_{\mathrm{crys}}$. It is also possible to deduce Theorem~\ref{thm:main3} from Theorem~\ref{thm:main2} and the results of \cite{BMS}.

\begin{remark}\label{remark_prism}
The topological perspective seems very well-suited to handling certain naturally arising filtrations on both crystalline cohomology and $A_{\inf}$-cohomology, as we now explain. For any quasisyntomic ring $A$, define the $E_\infty$-$\mathbb{Z}_p$-algebra
\[
\widehat{\Prism}_A = R\Gamma_\sub{syn}(A,\pi_0 \TC^-(-;\mathbb Z_p))\ .
\]
The homotopy fixed point spectral sequence endows $\pi_0 \TC^-(-;\mathbb Z_p)$ with a natural abutment filtration. Passing to cohomology, we learn that $\widehat{\Prism}_A$ comes equipped with a natural complete filtration $\calN^{\geq \f}\widehat{\Prism}_A$ called the {\em Nygaard filtration}. We shall identify this filtration with the classical Nygaard filtration \cite{Nygaard} on crystalline cohomology in the situation of Theorem~\ref{thm:main3}, and with a mixed-characteristic version of it in the situation of Theorem~\ref{thm:main2}. In fact, these identifications are crucial to our proof strategy for both theorems. The notation $\widehat{\Prism}_A$ here is chosen in anticipation of the prismatic cohomology defined in \cite{BS}, where we will prove that $\widehat{\Prism}_A$ agrees with the Nygaard completion of the cohomology of the structure sheaf on the prismatic site.
\end{remark}

For a proper smooth formal scheme $\mathfrak{X}/\mathcal{O}_C$, Theorem~\ref{thm:main2} gives an alternate construction of the cohomology theory $R\Gamma_{A_{\inf}}(\mathfrak{X})$ from \cite{BMS} without any recourse to the generic fibre: one can simply define it as
\[ R\Gamma_{A_{\inf}}(\mathfrak{X}) := R\Gamma_\sub{syn}(\mathfrak{X}, \pi_0 \TC^{-}(-;\mathbb{Z}_p))\]
where one defines the quasisyntomic site $\qs_{\mathfrak{X}}$ in the natural way.  Similarly, for a proper smooth formal scheme $\mathfrak{X}/\mathcal{O}_K$ as in Theorem~\ref{thm:main1}, one can construct the cohomology theory $R\Gamma_{\mathfrak{S}}(\mathfrak{X})$ from Theorem~\ref{thm:main1} in essentially the same way: using the choice of the uniformizer $\varpi \in \mathcal{O}_K$, we produce a complex of $\mathfrak S$-modules by repeating the above construction, replacing $\THH(-)$ by its relative variant $\THH(-/\mathbb{S}[z])$, where $\mathbb{S}[z]$ is a polynomial ring\footnote{Formally, one can define $\mathbb{S}[z]$ as the free $E_\infty$-algebra generated by the $E_\infty$-monoid $\mathbf{N}$.  In particular, $\pi_*(\mathbb{S}[z]) \simeq \pi_*(\mathbb{S})[z]$.  We caution the reader that $\mathbb{S}[z]$ is {\em not} the free $E_\infty$-ring on one generator: the latter coincides with $\oplus_{n \geq 0} \mathbb S_{h\Sigma_n}$ as a spectrum, so its $\pi_*$ is not flat over $\pi_*(\mathbb{S})$. } over $\mathbb{S}$, i.e., we work with
\[ R\Gamma_{\sub{syn}}(\mathfrak{X}, \pi_0 \TC^{-}(-/\mathbb{S}[z];\mathbb{Z}_p)).\]
There is a slight subtlety here due to the non-perfectoid nature of $\mathcal{O}_K$: the above complex is actually $\varphi^* R\Gamma_{\mathfrak{S}}(\mathfrak{X})$ where $\varphi:\mathfrak{S} \to \mathfrak{S}$ is the Frobenius; the Frobenius descended object $R\Gamma_{\mathfrak{S}}(\mathfrak{X})$ is  then constructed using an analog of the Segal conjecture, cf. \S \ref{sec:breuilkisin}.

\subsection{``Motivic'' filtrations on $\THH$ and its variants}

In the proof of Theorem~\ref{thm:main1} as sketched above, we only needed $\pi_0 \TC^{-}(-;\mathbb{Z}_p)$ locally on $\Qs$. In the next result, we show that by considering the entire Postnikov filtration of $\TC^{-}(-;\mathbb{Z}_p)$ (and variants), we obtain a filtration of $\TC^{-}(-;\mathbb{Z}_p)$ that is reminiscent of the motivic filtration on algebraic $K$-theory whose graded pieces are motivic cohomology, cf. \cite{FSMotAtiHir}. In fact, one should expect a precise relation between the two filtrations through the cyclotomic trace, but we have not addressed this question. Our precise result is as follows; the existence of the filtration mentioned below has been conjectured by Hesselholt.   

\begin{theorem}[cf.\ \S \ref{sec:pAdicNygaard}]\label{thm:main5} Let $A$ be a quasisyntomic ring.
\begin{enumerate}
\item Locally on $\qs_{A}$, the spectra $\THH(-;\mathbb Z_p)$, $\TC^-(-;\mathbb Z_p)$ and $\TP(-;\mathbb Z_p)$ are concentrated in even degrees.
\item Define
\[\begin{aligned}
\Fil^n \THH(A; \mathbb Z_p) &= R\Gamma_\sub{syn}(A,\tau_{\geq 2n} \THH(-;\mathbb Z_p))\\
\Fil^n \TC^-(A; \mathbb Z_p) &= R\Gamma_\sub{syn}(A,\tau_{\geq 2n} \TC^-(-;\mathbb Z_p))\\
\Fil^n \TP(A; \mathbb Z_p) &= R\Gamma_\sub{syn}(A,\tau_{\geq 2n} \TP(-;\mathbb Z_p))\ .
\end{aligned}\]
These are complete exhaustive decreasing multiplicative $\mathbb Z$-indexed filtrations.

\item The filtered $E_\infty$-ring $\widehat{\Prism}_A = \gr^0 \TC^-(A; \mathbb Z_p) = \gr^0 \TP(A;\mathbb Z_p)$ with its Nygaard filtration $\calN^{\geq \f}\widehat{\Prism}_A$ is an $E_\infty$-algebra in the completed filtered derived category $\widehat{DF}(\mathbb Z_p)$ (cf. \S \ref{subsection_FDC}). We write $\calN^n\widehat{\Prism}_A$ for the $n$-th graded piece of this filtration.

The complex $\widehat{\Prism}_A\{1\} = \gr^1 \TP(A;\mathbb Z_p)[-2]$ with the Nygaard filtration $\calN^{\geq \f}\widehat{\Prism}_A\{1\}$ (defined via quasisyntomic descent of the abutment filtration) is a module in $\widehat{DF}(\mathbb Z_p)$ over the filtered ring $\widehat{\Prism}_A$, and is invertible as such.\footnote{We warn the reader that this statement does not imply that $\widehat{\Prism}_A\{1\}$ is an invertible module over the non-filtered ring $\widehat{\Prism}_A$, as that deduction would need the passage to an inverse limit over all filtration steps.} In particular, for any $n\geq 1$, $\widehat{\Prism}_A\{1\}/\calN^{\geq n}\widehat{\Prism}_A\{1\}$ is an invertible $\widehat{\Prism}_A/\calN^{\geq n}\widehat{\Prism}_A$-module. If $A$ admits a map from a perfectoid ring, then in fact $\widehat{\Prism}_A\{1\}$ is isomorphic to $\widehat{\Prism}_A$.

The base change $\widehat{\Prism}_A\{1\}\otimes_{\widehat{\Prism}_A} A$ is canonically trivialized to $A$, where the map $\widehat{\Prism}_A\to A$ is the map $\gr^0 \TC^-(A;\mathbb Z_p)\to \gr^0\THH(A;\mathbb Z_p)=A$. For any $\widehat{\Prism}_A$-module $M$ in $\widehat{DF}(\mathbb Z_p)$, we denote by $M\{i\} = M\otimes_{\widehat{\Prism}_A} \widehat{\Prism}_A\{1\}^{\otimes i}$ for $i\in \mathbb Z$ its Breuil-Kisin twists.

\item There are natural isomorphisms
\[\begin{aligned}
\gr^n \THH(A;\mathbb Z_p)&\simeq \calN^n \widehat{\Prism}_A\{n\}[2n]\simeq \calN^n\widehat{\Prism}_A[2n]\ ,\\
\gr^n \TC^-(A;\mathbb Z_p)&\simeq \calN^{\geq n}\widehat{\Prism}_A\{n\}[2n]\ ,\\
\gr^n \TP(A;\mathbb Z_p)&\simeq \widehat{\Prism}_A\{n\}[2n]\ .
\end{aligned}\]
These induce multiplicative spectral sequences
\[\begin{aligned}
E_2^{ij} = H^{i-j}(\calN^{-j}\widehat{\Prism}_A)&\Rightarrow \pi_{-i-j} \THH(A;\mathbb Z_p)\\
E_2^{ij} = H^{i-j}(\calN^{\geq -j} \widehat{\Prism}_A\{-j\})&\Rightarrow \pi_{-i-j} \TC^-(A;\mathbb Z_p)\\
E_2^{ij} = H^{i-j}(\widehat{\Prism}_A\{-j\})&\Rightarrow \pi_{-i-j} \TP(A;\mathbb Z_p)\ .
\end{aligned}\]
\item The map $\varphi: \TC^-(A;\mathbb Z_p)\to \TP(A;\mathbb Z_p)$ induces natural maps $\varphi: \Fil^n \TC^-(A;\mathbb Z_p)\to \Fil^n \TP(A;\mathbb Z_p)$, thereby giving a natural filtration
\[
\Fil^n \TC(A;\mathbb Z_p) = \mathrm{hofib}(\varphi - \mathrm{can}: \Fil^n\TC^-(A;\mathbb Z_p)\to \Fil^n \TP(A;\mathbb Z_p))
\]
on topological cyclic homology
\[
\TC(A;\mathbb Z_p) = \mathrm{hofib}(\varphi - \mathrm{can}: \TC^-(A;\mathbb Z_p)\to \TP(A;\mathbb Z_p))\ .
\]
The graded pieces
\[
\mathbb Z_p(n)(A) := \gr^n\TC(A;\mathbb Z_p)[-2n]
\]
are given by
\[
\mathbb Z_p(n)(A) = \mathrm{hofib}(\varphi - \mathrm{can}: \calN^{\geq n}\widehat{\Prism}_A\{n\}\to \widehat{\Prism}_A\{n\})\ ,
\]
where $\varphi: \calN^{\geq n}\widehat{\Prism}_A\{n\}\to \widehat{\Prism}_A\{n\}$ is a natural Frobenius endomorphism of the Breuil-Kisin twist. In particular, there is a spectral sequence
\[
E_2^{ij} = H^{i-j}(\mathbb Z_p(-j)(A))\Rightarrow \pi_{-i-j} \TC(A;\mathbb Z_p)\ .
\]
\end{enumerate}
\end{theorem}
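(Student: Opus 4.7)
The plan is to reduce the theorem to part (1), and then derive the remaining parts by syntomic descent from calculations on the basis of $\qs_A$ given by quasiregular semiperfectoid (QRS) rings. The main substance lies in the even-concentration of part (1); everything else should follow by formal manipulations of Postnikov filtrations and spectral sequences, together with Theorem~\ref{thm:TCperfectoid} extended to QRS rings.

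For part (1), I would argue that since QRS rings form a basis for $\qs_A$, it suffices to check the claim on such $S$. Writing $S = R/J$ for a perfectoid ring $R$ and a quasiregular ideal $J$, I would use Theorem~\ref{thm:TCperfectoid} and its accompanying computation $\pi_*\THH(R;\mathbb{Z}_p) = R[u]$ (with $|u|=2$) as a base case, and then bootstrap via cotangent-complex and base-change arguments (relying on B\"okstedt's calculation $\pi_*\THH(\mathbb{F}_p) = \mathbb{F}_p[u]$) to show that $\pi_*\THH(S;\mathbb{Z}_p)$ is a $p$-completed divided power algebra over $S$ on the shifted conormal module $(J/J^2)[1]$, hence is concentrated in even degrees. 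The corresponding statements for $\TC^-(S;\mathbb{Z}_p)$ and $\TP(S;\mathbb{Z}_p)$ then follow for parity reasons: the homotopy fixed-point and Tate spectral sequences degenerate at $E_2$ since there are no differentials between even-degree groups.

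Once (1) is established, the filtrations in (2) are defined by syntomic descent of the double-speed Postnikov truncations on the basis. These are complete, exhaustive, and multiplicative $\mathbb{Z}$-indexed $E_\infty$-filtrations, since the same is true for $\tau_{\geq 2n}$ on any even-concentrated $E_\infty$-ring spectrum (and sheafification preserves these properties). For (3), I would identify $\pi_0\TC^-(S;\mathbb{Z}_p) \simeq \widehat{\Prism}_S$ on QRS rings by extending Theorem~\ref{thm:TCperfectoid} via quasisyntomic descent, with the abutment filtration of the homotopy fixed-point spectral sequence yielding the Nygaard filtration $\calN^{\geq \f}\widehat{\Prism}_S$. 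The Breuil-Kisin twist $\widehat{\Prism}_A\{1\}$ is constructed from $\pi_2\TP(-;\mathbb{Z}_p)$; its invertibility as a filtered $\widehat{\Prism}_A$-module reflects the fact that $\pi_2\TP(S;\mathbb{Z}_p)$ is a free rank-one module over $\pi_0\TP(S;\mathbb{Z}_p)$ on the basis, trivialized in the perfectoid case by the Bott/periodicity element.

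For (4), I would read off the graded pieces on the basis directly from the homotopy fixed-point and Tate spectral sequences: $\pi_{2n}\THH(S;\mathbb{Z}_p) \simeq \calN^n\widehat{\Prism}_S$, $\pi_{2n}\TC^-(S;\mathbb{Z}_p) \simeq \calN^{\geq n}\widehat{\Prism}_S\{n\}$, and $\pi_{2n}\TP(S;\mathbb{Z}_p) \simeq \widehat{\Prism}_S\{n\}$; descent then yields the stated identifications, and the spectral sequences of (4) are the hypercohomology spectral sequences for the descent filtrations. Part (5) is then essentially formal: since both $\varphi$ and $\mathrm{can}$ preserve the even-concentration property (both sides being even-concentrated on the basis), they respect the $\tau_{\geq 2n}$-filtrations, so the defining fiber sequence for $\TC$ inherits a compatible filtration and $\mathbb{Z}_p(n)(A)$ is computed as the stated homotopy fiber on each graded piece. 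The main obstacle is the even-concentration claim in part (1), which is the only step requiring substantial topological content: B\"okstedt's theorem combined with a careful cotangent-complex analysis to propagate from perfectoids (where everything is explicit) to QRS rings.
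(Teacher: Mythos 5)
Your outline matches the paper's proof in \S\ref{sec:pAdicNygaard}: one establishes even-concentration for quasiregular semiperfectoid rings (Theorem~\ref{THHqrs}), observes degeneration of the homotopy fixed point and Tate spectral sequences and identifies the Nygaard filtration (Theorem~\ref{TCqrsp}), and then obtains all the filtrations in (2)--(5) by unfolding the double-speed Postnikov filtration via Proposition~\ref{qsqspextend}, with the filtered invertibility argument for the Breuil--Kisin twist handling the reduction away from a fixed perfectoid base. One inaccuracy worth flagging: for $S = R/J$ quasiregular semiperfectoid, $\pi_* \THH(S;\mathbb Z_p)$ is \emph{not} literally a $p$-completed divided power algebra on $J/J^2$; rather, the fiber sequence $\THH(S;\mathbb Z_p)[2] \xrightarrow{u} \THH(S;\mathbb Z_p) \to \HH(S/R;\mathbb Z_p)$ of Theorem~\ref{TPHPDeformation}, together with the HKR computation of the right-hand term, produces a finite increasing filtration on each $\pi_{2i}$ whose graded pieces are $(\Gamma^j_S(J/J^2))^\wedge_p$ for $0 \le j \le i$ --- an iterated extension, not a direct sum --- and it is the fact that these graded pieces sit in even homological degree (combined with completeness of the ambient $P^\f$-filtration from Corollary~\ref{THHPostnikovPerfectoid}) that gives the even-concentration, not a direct identification of the ring structure; this imprecision does not, however, compromise the validity of your argument's conclusion.
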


\begin{remark} Our methods can be extended to give similar filtrations on the spectra $\TR^r(A;\mathbb Z_p)$ studied in the classical approach to cyclotomic spectra. In this case, one gets a relation to the de~Rham-Witt complexes $W_r\Omega_{A/\mathbb F_p}$ if $A$ is of characteristic $p$, and the complexes $\widetilde{W_r\Omega}_A$ of \cite{BMS} if $A$ lives over $\calO_C$.
\end{remark}

\begin{remark} In the situation of (5), if $A$ is an $R$-algebra for a perfectoid ring $R$, then after a trivialization $A_{\inf}(R)\{1\}\cong A_{\inf}(R)$ of the Breuil-Kisin twist, a multiple $\tilde\xi^n \varphi: \calN^{\geq n}\widehat{\Prism}_A\{n\}\to \widehat{\Prism}_A\{n\}$ gets identified with the restriction to $\calN^{\geq n}\widehat{\Prism}_A\subset \widehat{\Prism}_A$ of the Frobenius endomorphism $\varphi: \widehat{\Prism}_A\to \widehat{\Prism}_A$; in other words, $\varphi: \calN^{\geq n}\widehat{\Prism}_A\{n\}\to \widehat{\Prism}_A\{n\}$ is a divided Frobenius, identifying the complexes $\mathbb Z_p(n)$ with a version of (what is traditionally called) syntomic cohomology.
\end{remark}

It follows from the definition that $\mathbb Z_p(n)$ is locally on $\qs_{A}$ concentrated in degrees $0$ and $1$. We expect that the contribution in degree $1$ vanishes after sheafification, but we can currently only prove this in characteristic $p$, or when $n\leq 1$. In fact, in degree $0$, one can check that $\mathbb Z_p(0) = \mathbb Z_p=\varprojlim_r \mathbb Z/p^r\mathbb Z$ is the usual (``constant'') sheaf; in degree $1$ we prove that $\mathbb Z_p(1)\simeq T_p\mathbb G_m[0]$; and for $n<0$, the complexes $\mathbb Z_p(n)=0$ vanish. Meanwhile, in characteristic $p$, the trace map from algebraic $K$-theory induces an identification $K_{2n}(-;\mathbb Z_p)[0]\simeq \mathbb Z_p(n)$.

Assuming that $\mathbb Z_p(n)$ is indeed locally concentrated in degree $0$, one can write
\[
\mathbb Z_p(n)(A) = R\Gamma_\sub{syn}(A,\mathbb Z_p(n))
\]
as the cohomology of a sheaf on the (quasi)syntomic site of $A$, justifying the name syntomic cohomology.

We identify the complexes $\mathbb Z_p(n)(A)$ when $A$ is a smooth $k$-algebra or the $p$-adic completion of a smooth $\calO_C$-algebra. In the formulation we use the pro-\'etale site \cite{BhattScholzePro} as we work with $p$-adic coefficients.

\begin{theorem}[cf.~Corollary~\ref{cor:logdRW}, Theorem~\ref{thm:nearbycycles}]\label{thm:main6}$ $
\begin{enumerate}
\item Let $A$ be a smooth $k$-algebra, where $k$ is a perfect field of characteristic $p$. Then there is an isomorphism of sheaves of complexes on the pro-\'etale site of $X=\Spec A$,
\[
\mathbb Z_p(n)\simeq W\Omega_{X,\sub{log}}^n[-n]\ .
\]
\item Let $A$ be the $p$-adic completion of a smooth $\calO_C$-algebra, where $C$ is an algebraically closed complete extension of $\mathbb Q_p$. Then there is an isomorphism of sheaves of complexes on the pro-\'etale site of $\mathfrak X=\mathrm{Spf}\ A$,
\[
\mathbb Z_p(n)\simeq \tau^{\leq n} R\psi \mathbb Z_p(n)\ ,
\]
where on the right-hand side, $\mathbb Z_p(n)$ denotes the usual (pro-)\'etale sheaf on the generic fibre $X$ of $\mathfrak X$, and $R\psi$ denotes the nearby cycles functor.
\end{enumerate}
\end{theorem}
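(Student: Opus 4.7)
My plan for both parts is to start from the explicit description
\[
\mathbb{Z}_p(n)(A) = \mathrm{hofib}\bigl(\varphi - \mathrm{can}: \mathcal{N}^{\geq n}\widehat{\Prism}_A\{n\} \to \widehat{\Prism}_A\{n\}\bigr)
\]
provided by Theorem~\ref{thm:main5}(5), and to reinterpret the right hand side via classical $p$-adic cohomology theories, using Theorems~\ref{thm:main2} and~\ref{thm:main3} to pin down $\widehat{\Prism}_A$ together with its Nygaard filtration and Frobenius. Both statements are local on $\qs_A$, so I would reduce to quasiregular semiperfectoid rings $S$ --- where $\widehat{\Prism}_S$ lives in degree zero and has an explicit description --- compute the fibre there, and then assemble the answer by descent from a quasisyntomic (and ultimately pro-\'etale) perfectoid cover.

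For Part (1), I would apply Theorem~\ref{thm:main3} to identify $\widehat{\Prism}_A$ with the Nygaard-completed crystalline cohomology $R\Gamma_\mathrm{crys}(A/W(k))$, and verify --- again by reducing to qrsp $S$, where the Nygaard filtration on $A_\mathrm{crys}(S)$ admits a classical description via divided powers --- that the $\THH$-theoretic Nygaard filtration and Frobenius recover the ones of Fontaine--Messing and Nygaard. The fibre of $\varphi - \mathrm{can}$ on the mod $p^r$ reduction is then, essentially by construction, the Fontaine--Messing syntomic complex $S_r(n)$, which by the theorems of Fontaine--Messing, Kurihara and Kato (or directly via an Artin--Schreier type short exact sequence involving $F - 1$ on the de~Rham--Witt complex) equals $W_r\Omega^n_{X,\mathrm{log}}[-n]$. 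Taking the inverse limit over $r$ yields Part (1).

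For Part (2), I would again descend to a pro-\'etale perfectoid cover: for a perfectoid $R$ over $\calO_C$ one has $\widehat{\Prism}_R \simeq A_\mathrm{inf}(R)$ with $\mathcal{N}^{\geq n}\widehat{\Prism}_R\{n\} \simeq \tilde{\xi}^n A_\mathrm{inf}(R)$ after trivializing the Breuil--Kisin twist, and $\varphi$ is the usual Witt-vector Frobenius. The resulting fibre of $\varphi - \mathrm{can}$ is then the integral incarnation of Fontaine's fundamental exact sequence
\[
0 \to \mathbb{Z}_p(n) \to \mathrm{Fil}^n B_\mathrm{crys}^+ \xrightarrow{\varphi - 1} B_\mathrm{crys}^+ \to 0,
\]
so that on a perfectoid $R$ one indeed recovers the pro-\'etale sheaf $\mathbb{Z}_p(n)$ on $\mathrm{Spa}(R[\tfrac{1}{p}], R)$. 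Sheafifying along a pro-\'etale perfectoid cover of the generic fibre $X = \mathfrak{X}_C$ then produces a natural map $\mathbb{Z}_p(n) \to R\psi \mathbb{Z}_p(n)$ on the pro-\'etale site of $\mathfrak{X}$; for the truncation $\tau^{\leq n}$, one shows after sheafification that the fibre lives in degrees $\leq n$, which amounts to surjectivity of $\tilde{\xi}^{-n}\varphi - \mathrm{can}$ on cohomology in degrees $> n$, and this follows from the local computations on affinoid perfectoids established in \cite{BMS}.

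The main obstacle in both parts is the initial matching step: identifying the $\THH$-theoretic Nygaard filtration and Frobenius on $\widehat{\Prism}_A$ with the classical notions on crystalline, respectively $A_\mathrm{inf}$-, cohomology. Unfolding the homotopy-theoretic construction is what makes contact with Fontaine--Messing syntomic complexes and with Fontaine's fundamental exact sequence possible. In Part (2) there is a secondary subtlety in handling the Breuil--Kisin twist $\{n\}$ and pinning down the truncation $\tau^{\leq n}$: the former requires tracking the canonical trivialization $\widehat{\Prism}_R\{1\} \cong \widehat{\Prism}_R$ over perfectoids, while the latter reduces via the \'etale comparison (inverting $\mu$) from \cite{BMS} to a manageable computation with Fontaine's period rings.
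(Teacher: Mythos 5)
Your treatment of Part (1) is essentially the paper's argument, especially via the ``Artin--Schreier'' alternative you mention in parentheses, which is in fact what the paper does. The identification of $\widehat{\Prism}_S$ with the Nygaard-completed $\widehat{\mathbb A}_\mathrm{crys}(S)$ compatibly with Frobenius and filtration is Theorem~\ref{thm:tccharp}; sheaf-level surjectivity of $\varphi_i-1$ is Lemma~\ref{lem:TCcharp}; the identification of the Nygaard-completed and uncompleted fibres (so that one lands on $\mathbb A_\mathrm{crys}(S)^{\varphi=p^i}$) is Proposition~\ref{proposition_Z_p(i)}; and the final step is Proposition~\ref{prop:logforms}, i.e.\ the exactness of $0\to W\Omega^n_{X,\sub{log}}[-n]\to \calN^{\geq n}W\Omega^\bullet_X\xrightarrow{\varphi_n-1} W\Omega^\bullet_X\to 0$. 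Your primary route via the Fontaine--Messing complexes $S_r(n)$ would require an extra comparison between the divided-power filtration used there and the Nygaard filtration used here --- the de~Rham--Witt argument avoids this.

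Part (2) has genuine gaps. The perfectoid-level observation that the fibre of $\varphi-\mathrm{can}$ is $\mathbb Z_p\cdot(\varphi^{-1}(\mu))^n$ is correct (modulo the typo: the Nygaard ideal is $\xi^n A_{\inf}$, not $\tilde\xi^n A_{\inf}$), but this alone does not yield the theorem. The real difficulty, which your plan does not engage with, is to transport the statement from the untwisted nearby-cycle complex $R\nu_*\mathbb A_{\mathrm{inf},X}/p^n$ (where perfectoid local computations and the fundamental exact sequence apply) to the $L\eta_\mu$-modified and Nygaard-filtered object $A\Omega$ (which is what the syntomic $\mathbb Z_p(n)$ is built from) \emph{as sheaves on $\mathfrak X_\sub{\'et}$}. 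Concretely, the paper's proof hinges on two inputs you leave unaddressed. First, Proposition~\ref{prop:largedegrees} shows that $\mathrm{hofib}(\varphi_i-1: \calN^{\geq i}A\Omega\{i\}/p^n\to A\Omega\{i\}/p^n)$ is concentrated in degrees $\le i$ as a sheaf on $\mathfrak X_\sub{\'et}$; this is proved by choosing a framing, writing $A\Omega_A$ as a $q$-de~Rham complex, and checking by hand that $\tilde\xi^{j-i}\varphi-1$ is an automorphism of $\tilde A/p$ for $j>i$ and \'etale-locally surjective for $j=i$. This is \emph{not} ``local computations on affinoid perfectoids from \cite{BMS}'': on the quasisyntomic site of an \'etale open $\mathrm{Spf}(A)$, the complex $\mathbb Z_p(n)(A)$ a priori has cohomology up to degree $\dim A$, and bounding it by $n$ on $\mathfrak X_\sub{\'et}$ requires work.

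Second, and more seriously, your plan supplies no mechanism for comparing the $L\eta_\mu$-modified complex $\tau^{\le i}A\Omega$ with the raw complex $R\nu_*\mathbb A_{\mathrm{inf},X}$. The key claim --- Lemma~\ref{lemma2} of the paper --- is that $\tau^{\le i}\,\mathrm{hofib}(1-\xi^i\varphi^{-1}\text{ on }\tau^{\le i}L\eta_\mu C)\simeq \tau^{\le i}\,\mathrm{hofib}(1-\xi^i\varphi^{-1}\text{ on }C)$ for $C=R\nu_*\mathbb A_{\mathrm{inf},X}/p^n$. Proving this requires a delicate analysis of how $1-\xi^\ell\varphi^{-1}$ acts on the $\mu$-torsion and $\mu^j$-quotients of $H^j(C)$, precisely because $L\eta_\mu$ kills $\mu$-torsion and rescales cohomology in positive degrees --- exactly where the truncation $\tau^{\le n}$ in the statement originates. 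Without this bridge, one can construct a comparison map, but one cannot see why it is an isomorphism, nor why $\tau^{\le n}$ is the right truncation; the proposal effectively asserts the conclusion rather than proving it.
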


Theorem~\ref{thm:main6} (1) is closely is related to the results of Hesselholt \cite{Hesselholtptypical} and Geisser--Hesselholt \cite{GeisserHesselholt1999}. Theorem~\ref{thm:main6}  (2) gives a description of $p$-adic nearby cycles as syntomic cohomology that works integrally; this description is related to the results of Geisser--Hesselholt \cite{GeisserHesselholt}. We expect that at least in the case of good reduction, this will yield refinements of earlier results relating $p$-adic nearby cycles with syntomic cohomology, such as Fontaine--Messing \cite{FontaineMessing}, Tsuji \cite{Tsuji1999}, and Colmez--Nizio{\l} \cite{ColmezNiziol}.

\begin{remark} If $X$ is a smooth $\calO_K$-scheme, \'etale sheaves of complexes $\mathfrak{T}_r(n)$ on $X$ have been defined by Schneider, \cite{Schneider}, and the construction has been extended to the semistable case by Sato, \cite{Sato} (and we follow Sato's notation). A direct comparison with our construction is complicated by a difference in the setups as all our rings are $p$-complete, but modulo this problem we expect that $\mathfrak{T}_r(n)$ is the restriction of the syntomic sheaves of complexes $\mathbb Z/p^r\mathbb Z(n) = \mathbb Z_p(n)/p^r$ to the \'etale site of $X$. In particular, we expect a canonical isomorphism in case $A=\calO_K$:
\[
\mathfrak{T}_r(n)(\calO_K)\simeq \mathbb Z/p^r\mathbb Z(n)(\calO_K)\ .
\]
If $k$ is algebraically closed, then in light of Schneider's definition of $\mathfrak T_r(n)(\calO_K)$ and passage to the limit over $r$, this means that there should be a triangle
\[
\mathbb Z_p(n-1)(k) = W\Omega_{k,\sub{log}}^{n-1}[-n+1]\to \mathbb Z_p(n)(\calO_K)\to \tau^{\leq n} R\Gamma_\sub{\'et}(\Spec K,\mathbb Z_p(n))
\]
where on the right-most term, $\mathbb Z_p(n)$ denotes the usual (pro-)\'etale sheaf in characteristic $0$.\footnote{If $k$ is not algebraically closed, one needs to interpret the objects as sheaves on the pro-\'etale site of $\mathrm{Spf}\ \calO_K$. More generally, one can expect a similar triangle involving logarithmic de~Rham-Witt sheaves of the special fibre, the complexes $\mathbb Z_p(n)$, and truncations of $p$-adic nearby cycles, on the pro-\'etale site of smooth formal $\calO_K$-schemes; this would give the comparison to the theory of \cite{Schneider}. Comparing with the theory of \cite{Sato} would then correspond to a generalization of this picture to semistable formal $\calO_K$-schemes.} Via comparison with the cofiber sequence
\[
K(k;\mathbb Z_p)\to K(\calO_K;\mathbb Z_p)\to K(K;\mathbb Z_p)
\]
in $K$-theory and the identifications $K(k;\mathbb Z_p)=\TC(k;\mathbb Z_p)$, $K(\calO_K;\mathbb Z_p)=\TC(\calO_K;\mathbb Z_p)$, such a comparison should recover the result of Hesselholt-Madsen, \cite{Hesselholt2003}, that $K(K;\mathbb Z_p)$ has a filtration with graded pieces
\[
\tau^{\leq n} R\Gamma_\sub{\'et}(\Spec K,\mathbb Z_p(n))\ ,
\]
verifying the Lichtenbaum-Quillen conjecture in this case.
\end{remark}

\subsection{Complements on cyclic homology}
We can also apply our methods to usual Hochschild homology. In that case, we get a relation between negative cyclic homology and de~Rham cohomology that seems to be slightly finer than the results in the literature, and is related to a question of Kaledin, \cite[\S 6.5]{KaledinCocyclic}.

\begin{theorem}[cf.~\S \ref{subsection_de_rham_neg}]\label{thm:main7} Fix $R\in \Qs$ and $A\in \qs_{R}$. There are functorial (in $A$) complete exhaustive decreasing multiplicative $\mathbb Z$-indexed filtrations
\[\begin{aligned}
\Fil^n \HC^-(A/R;\mathbb Z_p) &= R\Gamma_\sub{syn}(A,\tau_{\geq 2n} \HC^-(-/R;\mathbb Z_p))\\
\Fil^n \HP(A/R;\mathbb Z_p) &= R\Gamma_\sub{syn}(A,\tau_{\geq 2n} \HP(-/R;\mathbb Z_p))
\end{aligned}\]
on $\HC^-(A/R;\mathbb Z_p)$ and $\HP(A/R;\mathbb Z_p)$ with
\[\begin{aligned}
\gr^n \HC^-(A/R;\mathbb Z_p)&\simeq \widehat{L\Omega}^{\geq n}_{A/R}[2n]\ ,\\
\gr^n \HP(A/R;\mathbb Z_p)&\simeq \widehat{L\Omega}_{A/R}[2n]\ ,
\end{aligned}\]
where the right-hand side denotes the $p$-adically and Hodge completed derived (naively truncated) de~Rham complex. In particular, there are multiplicative spectral sequences
\[\begin{aligned}
E_2^{ij} = H^{i-j}(\widehat{L\Omega}^{\geq -j}_{A/R})\Rightarrow \pi_{-i-j} \HC^-(A/R;\mathbb Z_p)\ ,\\
E_2^{ij} = H^{i-j}(\widehat{L\Omega}_{A/R})\Rightarrow \pi_{-i-j} \HP(A/R;\mathbb Z_p)\ .
\end{aligned}\]
\end{theorem}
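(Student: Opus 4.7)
The plan is to prove Theorem~\ref{thm:main7} in close analogy with Theorem~\ref{thm:main5}, replacing $\THH$ by relative Hochschild homology $\HH(-/R)$ and using the Hochschild--Kostant--Rosenberg (HKR) filtration in place of the Nygaard filtration.

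First I would verify that the presheaves $\HH(-/R;\mathbb{Z}_p)$, $\HC^-(-/R;\mathbb{Z}_p)$ and $\HP(-/R;\mathbb{Z}_p)$ on $\qs_{A}$ are sheaves of spectra for the quasisyntomic topology.  For $\HH$ this reduces, after $p$-completion, to flat descent using the derived tensor-product description $\HH(B/R) = B \otimes^{\mathbb{L}}_{B \otimes_R B} B$; for $\HC^-$ and $\HP$ it follows because $(-)^{hS^1}$ and $(-)^{tS^1}$ commute with limits of spectra.  The corresponding descent for $\widehat{L\Omega}^{\geq n}_{-/R}$ on $\qs_{A}$ is standard via the cotangent complex.

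Next I would restrict to the basis $\qsp_{R} \subset \qs_{R}$ of quasiregular semiperfectoid $R$-algebras $S$ and establish even concentration there.  For such $S$, the $p$-completed relative cotangent complex $L_{S/R}$ is of the form $M[1]$ for a $p$-completely flat $S$-module $M$; obtaining this description --- carefully tracking $p$-completeness and applying the transitivity triangle relating $L_{S/R}$ to $L_{S/\mathbb{Z}_p}$ and $L_{R/\mathbb{Z}_p}$ --- is the main technical obstacle.  Granting it, the HKR filtration on $\HH(S/R;\mathbb{Z}_p)$ has graded pieces $(\wedge^{i}_S L_{S/R})[i]$, each concentrated in a single even homological degree, so $\pi_{\ast}\HH(S/R;\mathbb{Z}_p)$ is even.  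Feeding this into the $S^1$-homotopy fixed point and Tate spectral sequences converging to $\HC^-(S/R;\mathbb{Z}_p)$ and $\HP(S/R;\mathbb{Z}_p)$, the even concentration of $\HH$ forces degeneration at $E_{2}$ by parity.  Moreover, the resulting abutment filtration identifies $\pi_{2n}\HC^-(S/R;\mathbb{Z}_p)$ with $\widehat{L\Omega}^{\geq n}_{S/R}$ (concentrated in degree zero for $S \in \qsp_{R}$) and $\pi_{2n}\HP(S/R;\mathbb{Z}_p)$ with $\widehat{L\Omega}_{S/R}$; this is the derived HKR comparison for negative and periodic cyclic homology.

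Finally, I would assemble the filtration on a general $A \in \qs_{R}$.  The Postnikov presheaf $\tau_{\geq 2n}\HC^-(-/R;\mathbb{Z}_p)$ on $\qs_{A}$ defines
\[
\Fil^n \HC^-(A/R;\mathbb{Z}_p) := R\Gamma_\sub{syn}(A,\tau_{\geq 2n}\HC^-(-/R;\mathbb{Z}_p)),
\]
which by the previous step may be computed on the $\qsp_{R}$-basis, where the Postnikov tower is a tower of shifted modules.  Its graded piece is
\[
\gr^n \HC^-(A/R;\mathbb{Z}_p) \simeq R\Gamma_\sub{syn}(A,\pi_{2n}\HC^-(-/R;\mathbb{Z}_p)[2n]) \simeq \widehat{L\Omega}^{\geq n}_{A/R}[2n],
\]
combining the basis computation with quasisyntomic descent of $\widehat{L\Omega}^{\geq n}_{-/R}$; the $\HP$ case is analogous.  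Exhaustiveness, completeness and multiplicativity of the filtration are formal consequences of the corresponding properties of the Postnikov tower, the even concentration on $\qsp_{R}$, and the $E_{\infty}$-structure on $\HC^-$ and $\HP$.  The spectral sequences in the statement are then the hypercohomology spectral sequences of these filtrations.
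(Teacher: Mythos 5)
The overall strategy you lay out --- sheafiness via flat descent, reduction to the quasiregular semiperfectoid basis, even concentration of $\pi_\ast \HH$, degeneration of the homotopy fixed point / Tate spectral sequences, and assembly of the filtration by unfolding the double-speed Postnikov tower --- is exactly the paper's strategy (Example~\ref{HCddR}, Lemma~\ref{HKRSperf}, and the proof of Theorem~\ref{thm:main7}). However, there is a genuine gap at the most delicate step, which you compress into the single sentence ``the resulting abutment filtration identifies $\pi_{2n}\HC^-(S/R;\mathbb Z_p)$ with $\widehat{L\Omega}^{\geq n}_{S/R}$\ldots this is the derived HKR comparison.''

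That identification is not a consequence of the spectral sequence alone, and it is not a citation you may lean on: it is precisely the new content being established (and without $p$-completion it was only obtained later, by Antieau). The degenerate homotopy fixed point spectral sequence tells you that $\pi_{2n}\HC^-(S/R;\mathbb Z_p)$ carries a complete descending filtration with $i$-th graded piece the $p$-completion of $\wedge^i_S L_{S/R}[-i]$ for $i\ge n$; the Hodge filtration on $\widehat{L\Omega}^{\geq n}_{S/R}$ has the same graded pieces. But two complete filtered objects with identical associated graded need not be isomorphic, and there is no obvious natural map between them in either direction on $\Qsp_R$. The paper's Proposition~\ref{prop:hcddr} fills this gap via the Beilinson $t$-structure on $\widehat{DF}(R)$ (Theorem~\ref{TruncationBeilinson}): one unfolds the filtered object to quasismooth $R$-algebras $A$, checks that each $\gr^i$ sits in the correct single cohomological degree so the filtered object lies in the heart $\widehat{DF}(R)^{\heartsuit}$ and hence corresponds to an honest commutative differential graded $R$-algebra
\[ A \to (\Omega^1_{A/R})^\wedge_p \to (\Omega^2_{A/R})^\wedge_p \to \cdots, \]
identifies the differential with the de Rham differential by a direct computation on $A=\widehat{R[x]}$, and only then Kan-extends back (and in particular restricts to $\Qsp_R$). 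Without this, or an alternative construction of a natural comparison map, your step 3 does not go through. A secondary, smaller handwave: deducing flat descent for $\HH(-/R;\mathbb Z_p)$ from the bar description $B\otimes^{\mathbb L}_{B\otimes_R B} B$ is not immediate; the paper instead descends along the HKR filtration to reduce to the case of $\wedge^i L_{-/R}$, where descent is Theorem~\ref{FlatDescentCC}.
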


We would expect that this results holds true without $p$-completion.\footnote{Antieau, \cite{AntieauFiltration}, has recently obtained such results.} In fact, rationally, one gets such a filtration by using eigenspaces of Adams operations; in that case, the filtration is in fact split \cite[\S5.1.12]{Loday}. We can actually identify the action of the Adams operation $\psi_m$ on $\gr^n \THH(-;\mathbb Z_p)$ and its variants $\gr^n \TC^-(-;\mathbb Z_p)$, $\TP(-;\mathbb Z_p)$ and $\gr^n \HC^-(-;\mathbb Z_p)$, for any integer $m$ prime to $p$ (acting via multiplication on $\mathbb T$); it is given by multiplication by $m^n$, cf.~Proposition~\ref{prop:adams}.

\subsection{Overview of the paper}
Now let us briefly summarize the contents of the different sections. We start in \S \ref{sec:THH} by recalling very briefly the basic definitions on Hochschild homology and topological Hochschild homology. In \S \ref{sec:descent} we prove that all our theories satisfy flat descent, which is our central technique. In \S \ref{sec:sites} we then set up the quasisyntomic sites that we will use to perform the flat descent. Moreover, we isolate a base for the topology given by the quasiregular semiperfectoid rings; essentially these are quotients of perfectoid rings by regular sequences, and they come up in the Cech nerves of the flat covers of a smooth algebra by a perfectoid algebra. As a first application of these descent results, we construct the filtration on $\HC^-$ by de Rham cohomology in \S \ref{sec:HCvsdeRham}. For the proof, we use some facts about filtered derived $\infty$-categories that we recall, in particular the Beilinson $t$-structure.

Afterwards, we start to investigate topological Hochschild homology. We start with a description for perfectoid rings, proving Theorem~\ref{thm:TCperfectoid} in \S \ref{sec:TCperfectoid}. Morever, in the same section, we identify $\THH$ of smooth algebras over perfectoid rings. This information is then used in \S \ref{sec:pAdicNygaard} to control the $\THH$, $\TC^-$ and $\TP$ of quasiregular semiperfectoid rings, and prove Theorem~\ref{thm:main5}. At this point, we have defined our new complexes $\widehat{\Prism}_A$, and it remains to compare them to the known constructions.

In \S \ref{sec:charp}, we handle the case of characteristic $p$, and prove Theorem~\ref{thm:main3}, and the characteristic $p$ case of Theorem~\ref{thm:main6}. Afterwards, in \S \ref{sec:mixedchar}, we show that this recovers the $A\Omega$-theory by proving Theorem~\ref{thm:main2}. As an application of this comparison, we also identify the Adams operations. In \S \ref{sec:nearby}, we identify the sheaves of complexes $\mathbb Z_p(n)$ in terms of $p$-adic nearby cycles, proving the second part of Theorem~\ref{thm:main6}. Finally, we use relative $\THH$ to construct Breuil-Kisin modules by proving Theorem~\ref{thm:main1} in \S \ref{sec:breuilkisin}.

\begin{remark}[Comparison with \cite{BMS}]
As made clear by this introduction, a number of results are proved in this paper that go beyond the problems addressed in \cite{BMS}; the intersection is largely restricted to the application to the construction of the Breuil-Kisin cohomology theory. In particular:
\begin{enumerate}
\item The methods used in \cite{BMS} (such as perfectoid spaces, the $L\eta$-operator) lie squarely within arithmetic geometry. On the other hand, the methods used in this paper (such as $\infty$-categories, the formalism surrounding $\THH$) are much closer to homotopy theory.

\item The construction in \cite{BMS} was engineered to admit a comparison with \'etale cohomology; this comparison is crucial for applications to the cohomology of algebraic varieties over $C$. In contrast, the present construction begins life close to de Rham cohomology, and there is no easy way to compare to \'etale cohomology.

\item A primary goal of the present paper is to construct the motivic filtration on $\THH$ and its variants (Theorem~\ref{thm:main5}). The comparison with \cite{BMS} shows that the methods of $p$-adic Hodge theory have impact on questions in algebraic topology and algebraic $K$-theory. For example, we can compute algebraic $K$-theory in new cases, cf.~Theorem~\ref{cor:Ktheory}. Another application to $K$-theory is the calculation that $L_{K(1)} K(\mathbb{Z}/p^n\mathbb{Z}) \simeq 0$ for $n \geq 1$ (to appear in forthcoming work of the first author with Clausen and Mathew).
\end{enumerate}

We see that the fact that both approaches yield the same information yields interesting new information on both sides.
\end{remark}

\subsection*{Acknowledgments} The main ideas behind this paper were already known to the authors in 2015. However, the results of this paper are significantly clearer when expressed in the language of \cite{NikolausScholze}; giving a formula for $\widehat{\Prism}$ in terms of the spectra $\TR^r$ is a nontrivial translation, and the discussion of the syntomic complexes $\mathbb Z_p(n)$ is more transparent using the new formula for $\TC$. Therefore, we only formalized our results now. We apologize for the resulting long delay. It is a pleasure to thank Clark Barwick, Sasha Beilinson, Dustin Clausen, Vladimir Drinfeld, Saul Glasman, Lars Hesselholt, Igor Kriz, Jacob Lurie, Akhil Mathew, Thomas Nikolaus and Wiesia Nizio{\l} for discussions of the constructions of this paper. 

During the period in which this work was carried out, the first author was supported by National Science Foundation under grant number 1501461 and a Packard fellowship, the second author was  partly supported by the Hausdorff Center for Mathematics, and the third author was supported by a DFG Leibniz grant.

\subsection*{Conventions} We will freely use $\infty$-categories and the language, methods and results of \cite{LurieHTT} and \cite{HA} throughout the paper. All our derived categories are understood to be the natural $\infty$-categorical enhancements. When we work with usual rings, we reserve the word module for usual modules, and write $\dotimes$ for the derived tensor product. However, when we work over $E_\infty$-ring spectra, the only notion of module is a module spectrum and corresponds to a complex of modules in the case of discrete rings; similarly, only the derived tensor product retains meaning. For this reason, we simply say ``module'' and write $\otimes$ for the derived notions when working over an $E_\infty$-ring spectrum. Often, in particular when applying $\THH$ to a usual ring, we regard usual rings as $E_\infty$-ring spectra and also usual modules as module spectra, via passage to the Eilenberg--MacLane spectrum; this is often denoted via $R\mapsto HR$ and $M\mapsto HM$. Under this translation, the derived $\infty$-category $D(R)$ of a usual ring $R$ agrees with the $\infty$-category of module spectra over the $E_\infty$-ring spectrum $HR$. In this paper, we will omit the symbol $H$ in order to lighten notation.

Given a complex $C$, we denote by $C[1]$ its shift satisfying $C[1]^n=C^{n+1}$ (or, in homological indexing, $C_n[1]=C_{n-1}$); under this convention, the Eilenberg--MacLane functor takes the shift $[1]$ to the suspension $\Sigma$.

We tend to use the notation $\cong$ to denote isomorphisms between $1$-categorical objects such as rings, modules and actual complexes; conversely, $\simeq$ is used for equivalences in $\infty$-categories (and, in particular, quasi-isomorphisms between complexes).

Degrees of complexes, simplicial objects, etc.\ are denoted by $\bullet$, e.g., $K^\bullet$; for graded objects we use $*$, e.g., $\pi_*\THH(A)$; finally, for filtrations we use $\f$, e.g., $\Omega_{R/k}^{\le\f}$ denotes the Hodge filtration on the complex $\Omega_{R/k}^\bullet$. When we regard an actual complex $K^\bullet$ as an object of the derived category, we write simply $K$; so for example $\Omega_{R/k}$ denotes the de~Rham complex considered as an object of the derived category.

We often need to use completions of modules or spectra with respect to a finitely generated ideal. For the latter, we use the existing notion in stable homotopy theory (see, e.g., \cite[\S 7.3]{SAG} for a modern presentation). For modules and chain complexes, we use the notion of derived completions (see, e.g., \cite[\S 3.4, 3.5]{BhattScholzePro}, \cite[\S 6.2]{BMS}).

\newpage
\section{Reminders on the cotangent complex and (topological) Hochschild homology}
\label{sec:THH}

In this section, we recall the basic results about the cotangent complex and (topological) Hoch\-schild homology that we will use.

\subsection{The cotangent complex}
\label{subsection_AQ}
Let $R$ be a commutative ring. Given a commutative $R$-algebra $A$, let $P_\bullet\to A$ be a simplicial resolution of $A$ by polynomial $R$-algebras, and define (following \cite{Quillen1970}) the cotangent complex of $R\to A$ to be the simplicial $A$-module $L_{A/R}:= \Omega_{P_\bullet/R}^1\otimes_{P_\bullet}A$. Its wedge powers will be denoted by $\bigwedge_A^i L_{A/R}=\Omega_{P_\bullet/R}^i\otimes_{P_\bullet}A$ for each $i\ge 1$. Note that the map $P_\bullet \to A$ is an equivalence in the $\infty$-category of simplicial commutative $R$-algebras, and thus the object in $D(R)$ defined by $L_{A/R}$ coincides with that attached to the simplicial $R$-module $\Omega^1_{P_\bullet/R}$ via the Dold-Kan correspondence (and similarly for the wedge powers).

As such constructions appear repeatedly in the sequel, let us give an $\infty$-categorical account, following \cite[\S 5.5.9]{LurieHTT}.

\begin{construction}[Non-abelian derived functors on commutative rings as left Kan extensions]
\label{LKE}
\

\noindent Consider the category $\mathrm{CAlg}_R^{poly}$ of finitely generated polynomial $R$-algebras and the $\infty$-category $s\mathrm{CAlg}_R$, so one has an obvious fully faithful embedding $i:\mathrm{CAlg}_R^{poly} \to s\mathrm{CAlg}_R$. Using \cite[Corollary 5.5.9.3]{LurieHTT}, one can identify $s\mathrm{CAlg}_R$ as the $\infty$-category $\mathcal{P}_{\Sigma}(\mathrm{CAlg}_R^{poly})$ obtained from $\mathrm{CAlg}_R^{poly}$ by freely adjoining sifted colimits: this amounts to the observation that any contravariant set-valued functor $F$ on $\mathrm{CAlg}_R^{poly}$ that carries coproducts in $\mathrm{CAlg}_R^{poly}$ to products of sets is representable by the commutative ring $F(R[x])$. By \cite[Proposition 5.5.8.15]{LurieHTT}, one has the following universal property of the inclusion $i$: for any $\infty$-category $\mathcal{D}$ that admits sifted colimits, any functor $f:\mathrm{CAlg}_R^{poly} \to \mathcal{D}$ extends uniquely to a sifted colimit preserving functor $F:s\mathrm{CAlg}_R \to \mathcal{D}$. In this case, we call $F$ the left Kan extension of $f$ along the inclusion $i$. If $f$ preserves finite coproducts, then $F$ preserves all colimits.
\end{construction}

In this language, the cotangent complex construction has a simple description.

\begin{example}[The cotangent complex as a left Kan extension]
Applying Construction~\ref{LKE} to $\mathcal{D} := D(R)$ the derived $\infty$-category of $R$-modules and $f = \Omega^1_{-/R}$, one obtains a functor $F:s\mathrm{CAlg}_R \to D(R)$. We claim that $F(-) = L_{-/R}$. To see this, note that $F$ commutes with sifted colimits and agrees with $\Omega^1_{-/R}$ on polynomial $R$-algebras. It follows that if $P_\bullet \to A$ is a simplicial resolution of $A \in \mathrm{CAlg}_R$ by a simplicial polynomial $R$-algebra $P_\bullet$, then $F(A) \simeq |\Omega^1_{P_\bullet/R}| \simeq L_{A/R}$, as asserted. We can summarize this situation by saying that the cotangent complex functor $L_{-/R}$ is obtained by left Kan extension of the K\"{a}hler differentials functor $\Omega^1_{-/R}$ from polynomial $R$-algebras to all simplicial commutative $R$-algebras. Similarly, for each $j \geq 0$, the functor $\wedge^j L_{-/R}$ is the left Kan extension of $\Omega^j_{-/R}$ from polynomial $R$-algebras to all simplicial commutative $R$-algebras.
\end{example}

Next, recall from \cite[\S III.3.1]{IllusieCC1} that if $A$ is a smooth $R$-algebra, then the adjunction map $\wedge^i_A L_{A/R} \to\Omega^i_{A/R}$ is an isomorphism for each $i\ge0$.

Finally, if $A\to B\to C$ are homomorphisms of commutative rings, then from \cite[\S II.2.1]{IllusieCC1} one has an associated transitivity triangle
\[ L_{B/A}\dotimes_B C\to L_{C/A}\to L_{C/B}\]
in $D(C)$. Taking wedge powers induces a natural filtration on $\wedge^i L_{C/A}$ as in \cite[\S V.4]{IllusieCC1}:
\[
\wedge^i_C L_{C/A}= {\rm Fil}^0\wedge^i_C L_{C/A}\leftarrow {\rm Fil}^1\wedge^i_C L_{C/A}\leftarrow\ldots\leftarrow {\rm Fil}^i\wedge^i_C L_{C/A}=\wedge^i_B L_{B/A}\dotimes_B C\leftarrow {\rm Fil}^{i+1}\wedge^i_C L_{C/A}=0\]
of length $i$, with graded pieces
\[
\gr^j\wedge^i_C L_{C/A}\simeq (\wedge^j_B L_{B/A}\dotimes_B C)\dotimes_C\wedge^{i-j}_C L_{C/B}.\tag{$j=0,\dots,i$}
\]

\subsection{Hochschild homology}
\label{subsection_HH}

Let $R$ be a commutative ring. Let $A$ be a commutative $R$-algebra.\footnote{Hochschild homology can also be defined for noncommutative (and nonunital) $R$-algebras, but we will not need this generality.} Following \cite{Loday}, the ``usual'' Hochschild homology of $A$ is defined to be $\HH^{\mathrm{usual}}(A)=C_\bullet(A/R)$, where $C_\bullet(A/R)=\{[n]\mapsto A^{\otimes_R {n+1}}\}$ is the usual simplicial $R$-module. However, we will work throughout with the derived version of Hochschild homology (also known as Shukla homology following \cite{Shukla}), which we now explain. Letting $P_\bullet\to A$ be a simplicial resolution of $A$ by flat $R$-algebras, let $\HH(A/R)$ denote the diagonal of the bisimplicial $R$-module $C_\bullet(P_\bullet/R)$; the homotopy type of $\HH(A/R)$ does not depend on the choice of resolution. The canonical map $\HH(A/R)\to \HH^{\mathrm{usual}}(A/R)$ is an isomorphism if $A$ is flat over $R$. When $R=\mathbb Z$ we omit it from the notation, so $\HH(A) = \HH(A/\mathbb Z)$.

\begin{remark}[Hochschild homology as a left Kan extension]
On the category $\mathrm{CAlg}_R^{poly}$ from Construction~\ref{LKE}, consider the functor $A \mapsto \HH^{\mathrm{usual}}(A/R)$ valued in the derived $\infty$-category $D(R)$. As polynomial $R$-algebras are $R$-flat, we have $\HH^{\mathrm{usual}}(A/R) \simeq \HH(A/R)$ for polynomial $R$-algebras $A$. The left Kan extension of this functor along $i:\mathrm{CAlg}_R^{poly} \to s\mathrm{CAlg}_R$ defines a functor $F:s\mathrm{CAlg}_R \to D(R)$ that coincides with the $\HH(-/R)$ functor introduced above: the functor $F$ commutes with sifted colimits as in Construction~\ref{LKE}, so we have $F(A) \simeq |\HH(P_\bullet/R)| \simeq \HH(A/R)$ for all commutative $R$-algebras $A$ and simplicial resolutions $P_\bullet \to A$. 
\end{remark}

As $C_\bullet(A/R)$ is actually a cyclic module, there are natural $\T=S^1$-actions on $\HH(A/R)$ considered as an object of the $\infty$-derived category $D(R)$ for all $R$-algebras $A$, and the negative cyclic and periodic cyclic homologies are defined by
\[
\HC^-(A/R) = \HH(A/R)^{h\T},\ \HP(A/R) = \HH(A/R)^{t\T} = \mathrm{cofib}(\mathrm{Nm}: \HH(A/R)_{h\T}[1]\to \HH(A/R)^{h\T})\ .
\]
For a comparison with the classical definitions via explicit double complexes, see \cite{Hoyois}. The homotopy fixed point and Tate spectral sequences
\[E_2^{ij}=H^i(\T, \pi_{-j}\HH(A/R))\Rightarrow \pi_{-i-j} \HC^-(A/R)\ ,\ E_2^{ij}=\widehat H^i(\T, \pi_{-j}\HH(A/R))\Rightarrow \pi_{-i-j} \HP(A/R)\]
are basic tools to analyse $\HC^-(A/R)$ and $\HP(A/R)$.

\begin{remark}[The universal property of $\HH(A/R)$]
In anticipation of \S \ref{subsection_intro_to_THH}, let us explain a higher algebra perspective on $\HH(-/R)$. The simplicial $R$-module $C_\bullet(A/R)$ is naturally a simplicial commutative $R$-algebra, and the multiplication is compatible with the $\T$-action. By left Kan extension from the flat case, we learn:
\begin{enumerate}
\item $\HH(A/R)$ is naturally a $\T$-equivariant-$E_\infty$-$R$-algebra.
\item One has a non-equivariant $E_\infty$-$R$-algebra map $A \to \HH(A/R)$ induced by the $0$ cells.
\end{enumerate}
In fact, one can also show that $\HH(A/R)$ is initial with respect to these features; we will use this perspective when introducing topological Hochschild homology. To see this, write $A \otimes_{E_\infty\text-R} \T$ for the universal\footnote{Even more generally, for any $\infty$-groupoid $X$ and any $E_\infty$-$R$-algebra $A$, the $\infty$-groupoid valued functor $B \mapsto \mathrm{Map}(X, \mathrm{Map}_{E_\infty\text-R}(A,B))$ on the $\infty$-category of $E_\infty$-$R$-algebras preserves limits, and is thus corepresented by some $A \otimes_{E_\infty\text-R} X$. The resulting functor $X \mapsto A \otimes_{E_\infty\text-R} X$ commutes with all colimits by construction. The discussion above pertains to the special case $X = \T$. In this case, the $\T$-action on $A \otimes_{E_\infty\text-R} \T$ is induced by functoriality, and the unit element $1 \in \T$ defines a map $A \to A \otimes_{E_\infty\text-R} \T$.} $\T$-equivariant-$E_\infty$-$R$-algebra equipped with a non-equivariant map $A \to A \otimes_{E_\infty\text-R} \T$. Then one has a natural $\T$-equivariant map $A \otimes_{E_\infty\text-R} \T \to \HH(A/R)$ of $E_\infty$-$R$-algebras by universality. To show this is an equivalence, it is enough to do so for $A\in \mathrm{CAlg}_R^{poly}$ and work non-equivariantly. In this case, if we write $\T$ as the colimit of $\ast \gets \ast \sqcup \ast \to \ast$, then it follows that $A \otimes_{E_\infty\text-R} \T \simeq A \dotimes_{A \otimes_R A} A$, which is also the object in $D(R)$ being computed by $C_\bullet(A/R)$.
\end{remark}

Recall that the Hochschild--Kostant--Rosenberg theorem asserts that if $A$ is smooth over $R$, then the antisymmetrisation map $\Omega^n_{A/R}\to\HH_n(A/R)$ is an isomorphism for each $n\ge0$. Here, in degree $1$, the element $da$ maps to $a\otimes 1 - 1\otimes a$ for any $a\in A$. Hence by left Kan extension of the Postnikov filtration, it follows that the functor $\HH(-/R)$ on $s\mathrm{CAlg}_R$ comes equipped with a $\T$-equivariant complete descending $\mathbb{N}$-indexed filtration $\mathrm{Fil}_\sub{HKR}^n$ with $\gr^i_\sub{HKR} \HH(-/R) \simeq \wedge^i L_{-/R}[i]$ (with the trivial $\T$-action).  As each $\wedge^i L_{-/R}[i]$ is $i$-connective, the HKR filtration gives a weak Postnikov tower $\{\HH(-/R)/\mathrm{Fil}_\sub{HKR}^n\}$ with limit $\HH(-/R)$ in the sense of the forthcoming Lemma~\ref{Postnikov}.

\subsection{Topological Hochschild homology}\label{subsection_intro_to_THH}

Topological Hochschild homology is the analogue of Hochschild homology over the base ring $\mathbb S$ given by the sphere spectrum. This is not a classical ring, but an $E_\infty$-ring spectrum (or equivalently an $E_\infty$-algebra in the $\infty$-category of spectra). The definition is due to B\"okstedt, \cite{Boekstedt}, and the full structure of topological Hochschild homology as a cyclotomic spectrum was obtained by B\"okstedt--Hsiang--Madsen, \cite{BHM}, cf.~also \cite{Hesselholt1997}. We will use the recent discussion in \cite{NikolausScholze} as our basic reference.

In particular, if $A$ is an $E_\infty$-ring spectrum, then $\THH(A)$ is a $\T$-equivariant $E_\infty$-ring spectrum with a non-equivariant map $A\to \THH(A)$ of $E_\infty$-ring spectra, and $\THH(A)$ is initial with these properties. Moreover, if $C_p\subset \T$ is the cyclic subgroup of order $p$, then there is a natural Frobenius map
\[
\varphi_p: \THH(A)\to \THH(A)^{tC_p}
\]
that is a map of $E_\infty$-ring spectra which is equivariant for the $\T$-actions, where the target has the $\T$-action coming from the residual $\T/C_p$-action via the isomorphism $\T\cong \T/C_p$. The Frobenius maps exist only on $\THH(A)$, not on $\HH(A)$, cf.\ \cite[Remark III.1.9]{NikolausScholze}.

The negative topological cyclic and periodic topological cyclic homologies are given by
\[
\TC^-(A) = \THH(A)^{h\T}\ ,\ \TP(A) = \THH(A)^{t\T}= \mathrm{cofib}(\mathrm{Nm}: \THH(A)_{h\T}[1]\to \THH(A)^{h\T})\
\]
There are homotopy fixed point and Tate spectral sequences analogous to those for Hochschild homology.

We will often work with the corresponding $p$-completed objects. We denote these by $\THH(A;\mathbb Z_p)$, $\HH(A;\mathbb Z_p)$, $\TC^-(A;\mathbb Z_p)$ etc. We note that if $A$ is connective, then
\[
\TC^-(A;\mathbb Z_p) = \THH(A;\mathbb Z_p)^{h\T}\ ,\ \TP(A;\mathbb Z_p) = \THH(A;\mathbb Z_p)^{t\T}\ .
\]
Here, we use that if a spectrum $X$ is $p$-complete, then so is $X^{h\T}$ by closure of $p$-completeness under limits. If $X$ is moreover assumed to be homologically bounded below, then the homotopy orbit spectrum $X_{h\T}$ (and thus also the Tate construction $X^{t\T}$) is also $p$-complete: by writing $X$ as the limit of $\tau_{\leq n} X$ (and $X_{h\T}$ as the limit of $(\tau_{\leq n} X)_{h\T}$, using Lemma~\ref{Postnikov}), this reduces by induction to the case that $X$ is concentrated in degree $0$, in which case the result follows by direct computation.

Interestingly, if $A$ is connective, there is a natural equivalence
\[
\TP(A;\mathbb Z_p)\simeq (\THH(A)^{tC_p})^{h\T} = (\THH(A;\mathbb Z_p)^{tC_p})^{h\T}
\]
by \cite[Lemma II.4.2]{NikolausScholze}, and therefore $\varphi_p$ induces a map
\[
\varphi_p^{h\T}: \TC^-(A;\mathbb Z_p)\to (\THH(A;\mathbb Z_p)^{tC_p})^{h\T} \simeq \TP(A;\mathbb Z_p)
\]
of $p$-completed $E_\infty$-ring spectra. As $p$ is fixed throughout the paper, we will often write abbreviate $\varphi=\varphi_p^{h\T}$.

We use only ``formal'' properties of $\THH$ throughout the paper, with the one exception of B\"okstedt's computation of $\pi_\ast \THH(\mathbb F_p)$. In particular, we do not need B\"okstedt's computation of $\pi_\ast \THH(\mathbb Z)$.

We will often use the following well-known lemma, which we briefly reprove in the language of this paper.

\begin{lemma}\label{THHvsHH} For any commutative ring $A$, there is a natural $\T$-equivariant isomorphism of $E_\infty$-ring spectra
\[
\THH(A)\otimes_{\THH(\mathbb Z)} \mathbb Z\simeq \HH(A)\ .
\]
Moreover, this induces an isomorphism of $p$-complete $E_\infty$-ring spectra
\[
\THH(A;\mathbb Z_p)\otimes_{\THH(\mathbb Z)} \mathbb Z\simeq \HH(A;\mathbb Z_p)\ .
\]
The homotopy groups $\pi_i \THH(\mathbb Z)$ are finite for $i>0$.\footnote{In fact, they are $\mathbb Z$ for $i=0$ and $\mathbb Z/j\mathbb Z$ if $i=2j-1>0$ is odd and $0$ else, by B\"okstedt, \cite{BoekstedtZ}.}
\end{lemma}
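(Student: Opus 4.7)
The plan is to realize both sides of the main identity as tensor products of $A$ with the circle $\T$ in appropriate $\infty$-categories of $E_\infty$-algebras, reducing the identity to a base change along $\mathbb{S} \to \mathbb{Z}$; the $p$-completed variant and finiteness statement then follow formally, with B\"okstedt's computation as the only non-formal input.

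By the universal properties recalled in \S\ref{subsection_HH} and \S\ref{subsection_intro_to_THH}, we have $\HH(A) \simeq A \otimes_{E_\infty\text-\mathbb{Z}} \T$ and $\THH(A) \simeq A \otimes_{E_\infty\text-\mathbb{S}} \T$ as $\T$-equivariant $E_\infty$-algebras over $\mathbb{Z}$ and $\mathbb{S}$ respectively. The base-change identity
\[
A \otimes_{E_\infty\text-\mathbb{Z}} \T \;\simeq\; (A \otimes_{E_\infty\text-\mathbb{S}} \T) \otimes_{\mathbb{Z} \otimes_{E_\infty\text-\mathbb{S}} \T} \mathbb{Z}
\]
is verified by checking that both sides co-represent the same functor on $\T$-equivariant $E_\infty$-$\mathbb{Z}$-algebras $B$: a map to $B$ corresponds in either case to a non-equivariant $E_\infty$-$\mathbb{S}$-algebra map $A \to B$ that is compatible with the $\mathbb{Z}$-structure on $B$. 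Using $\mathbb{Z} \otimes_{E_\infty\text-\mathbb{S}} \T \simeq \THH(\mathbb{Z})$, this yields the first assertion, natural in $A$ and $\T$-equivariantly.

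For the $p$-completed version, apply derived $p$-completion. The base change $(-) \otimes_{\THH(\mathbb{Z})} \mathbb{Z}$ preserves $p$-complete objects, since $p$-completeness is equivalent to vanishing after inverting $p$, a condition stable under any colimit-preserving base change. Thus $\THH(A;\mathbb{Z}_p) \otimes_{\THH(\mathbb{Z})} \mathbb{Z}$ is already $p$-complete; since its reduction modulo $p^n$ agrees with that of the non-completed object $\THH(A) \otimes_{\THH(\mathbb{Z})} \mathbb{Z} \simeq \HH(A)$, it must coincide with $\HH(A;\mathbb{Z}_p)$.

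For finiteness of $\pi_i \THH(\mathbb{Z})$ in positive degrees: since $\THH(-)$ commutes with the filtered colimit $\mathbb{Z} \to \mathbb{Q}$, one has $\THH(\mathbb{Z}) \otimes \mathbb{Q} \simeq \THH(\mathbb{Q}) \simeq \HH(\mathbb{Q}/\mathbb{Q}) \simeq \mathbb{Q}$ concentrated in degree $0$ (using HKR and $L_{\mathbb{Q}/\mathbb{Q}} = 0$), so $\pi_i \THH(\mathbb{Z})$ is torsion for $i > 0$. For the $p$-primary part, apply the main identity with $A = \mathbb{F}_p$ to get $\HH(\mathbb{F}_p;\mathbb{Z}_p) \simeq \THH(\mathbb{F}_p;\mathbb{Z}_p) \otimes_{\THH(\mathbb{Z})} \mathbb{Z}$. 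Combining B\"okstedt's $\pi_* \THH(\mathbb{F}_p;\mathbb{Z}_p) = \mathbb{F}_p[u]$ with the computation of $\pi_* \HH(\mathbb{F}_p;\mathbb{Z}_p)$ via derived HKR using $L_{\mathbb{F}_p/\mathbb{Z}} \simeq \mathbb{F}_p[1]$ (giving finitely generated $\mathbb{F}_p$-modules in each degree), the resulting base-change spectral sequence forces $\pi_i \THH(\mathbb{Z};\mathbb{Z}_p)$ to be finitely generated over $\mathbb{Z}_p$ in each degree. Combined with rational torsion, this gives the finiteness claim. I expect the main obstacle to be this last bookkeeping argument (controlling the convergence and extension issues in the spectral sequence); the first two steps are purely formal consequences of the universal properties.
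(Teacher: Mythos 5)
Your first step (the universal-property identification $\THH(A)\otimes_{\THH(\mathbb Z)}\mathbb Z\simeq\HH(A)$) is the same as the paper's and is fine. The serious problem is in the second step: the claim that ``$p$-completeness is equivalent to vanishing after inverting $p$'' is false. For instance $\mathbb Z_p$ is $p$-complete, yet $\mathbb Z_p[\tfrac1p]=\mathbb Q_p\ne 0$. Consequently the ``formal'' deduction that $(-)\otimes_{\THH(\mathbb Z)}\mathbb Z$ preserves $p$-complete spectra does not go through; indeed this base change does \emph{not} preserve $p$-completeness in general (base changing a $p$-complete spectrum along $\mathbb Z\to\mathbb Q$ destroys $p$-completeness). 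The paper's route is the correct one: the $p$-completion statement is not a formal consequence of the first identity, but rather requires knowing that $\pi_i\THH(\mathbb Z)$ is finite for $i>0$, and then one deduces $p$-completeness of $\THH(A;\mathbb Z_p)\otimes_{\THH(\mathbb Z)}\mathbb Z$ by filtering $\mathbb Z$ (as a $\THH(\mathbb Z)$-module) via the Postnikov tower of $\tau_{\geq 1}\THH(\mathbb Z)$, whose layers are shifted finite abelian groups. So the logical order in the lemma is \emph{finiteness $\Rightarrow$ $p$-complete identity}, not the reverse.

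This also makes your finiteness argument circular: you invoke ``the main identity with $A=\mathbb F_p$'' in its $p$-completed form to run the base-change spectral sequence, but that $p$-completed identity is exactly what depends on finiteness. Even setting aside the circularity, the claim that the K\"unneth spectral sequence relating $\pi_\ast\THH(\mathbb F_p;\mathbb Z_p)$, $\pi_\ast\THH(\mathbb Z;\mathbb Z_p)$, and $\pi_\ast\HH(\mathbb F_p;\mathbb Z_p)$ ``forces'' finite generation of $\pi_i\THH(\mathbb Z;\mathbb Z_p)$ is not a valid deduction --- you cannot solve for the coefficient ring of a Tor spectral sequence from knowledge of its $E_2$-page and abutment. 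And even if you could, ``torsion and finitely generated over $\mathbb Z_p$ for every $p$'' does not imply finite (e.g.\ $\bigoplus_p\mathbb Z/p$). The paper's argument avoids all this and is much more direct: $\THH(\mathbb Z)$ is the geometric realization of the simplicial spectrum with terms $\mathbb Z\otimes_{\mathbb S}\cdots\otimes_{\mathbb S}\mathbb Z$, each of which has finite homotopy groups in positive degree (since $H\mathbb Z$ has finitely generated integral homology that is torsion in positive degree, which in turn comes from the finiteness of stable homotopy groups of spheres and rationalization), and the skeletal filtration immediately gives the desired finiteness. You should replace both your second and third steps with this.
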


\begin{proof} The final statement follows from the description of $\THH(\mathbb Z)$ as the colimit of the simplicial spectrum with terms $\mathbb Z\otimes_{\mathbb S}\ldots\otimes_{\mathbb S} \mathbb Z$ and the finiteness of the stable homotopy groups of spheres. The first statement follows from the universal properties of $\THH(A)$, respectively\ $\HH(A)$, as the universal $\T$-equivariant $E_\infty$-ring spectrum, respectively $\T$-equivariant $E_\infty$-$\mathbb Z$-algebra, equipped with a non-equivariant map from $A$. The statement about $p$-completions follows as soon as one checks that $\THH(A;\mathbb Z_p)\otimes_{\THH(\mathbb Z)} \mathbb Z$ is still $p$-complete, which follows from finiteness of $\pi_i \THH(\mathbb Z)$ for $i>0$.
\end{proof}
\newpage

\section{Flat descent for cotangent complexes and Hochschild homology}

In this section, we prove flat descent for topological Hochschild homology via reduction to the case of (exterior powers of) the cotangent complex, which was first proved by the first author in \cite{Bhatt2012}.

\label{sec:descent}
\begin{theorem}
\label{FlatDescentCC}
Fix a base ring $R$. For each $i \geq 0$, the functor $A \mapsto \wedge^i_A L_{A/R}$ is an fpqc sheaf with values in the $\infty$-derived category $D(R)$ of $R$-modules, i.e., if $A \to B$ is a faithfully flat map of $R$-algebras, then the natural map gives an equivalence
\[ \wedge^i L_{A/R} \simeq \lim \big(\cosimp {\wedge^i L_{B/R}}{\wedge^i L_{(B \otimes_A B)/R}}{\wedge^i L_{(B \otimes_A B \otimes_A B)/R}} \big)\]
\end{theorem}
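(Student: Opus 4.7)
The plan is to combine the transitivity filtration for $\wedge^i L$ with the classical extra-degeneracy trick, tying them together via a bicosimplicial Fubini argument.

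At each level of the \v{C}ech nerve, apply the transitivity filtration for the chain $R \to A \to B^{\otimes_A(n+1)}$. This produces a functorial (in $n$) filtration of length $i$ on $\wedge^i L_{B^{\otimes_A(n+1)}/R}$ with graded pieces $\wedge^j L_{A/R} \dotimes_A \wedge^{i-j} L_{B^{\otimes_A(n+1)}/A}$ for $0 \le j \le i$. Since $\mathrm{Tot}$ preserves fiber sequences (and hence finite extensions), it suffices to analyze the totalization of each cosimplicial graded piece. For $j = i$, the graded piece is $\wedge^i L_{A/R} \dotimes_A B^{\otimes_A(\bullet+1)}$, the Amitsur complex of the $A$-module $\wedge^i L_{A/R}$ along $A \to B$; it totalizes to $\wedge^i L_{A/R}$ by faithfully flat descent for modules. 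The remaining content is therefore the vanishing
\[
\mathrm{Tot}\bigl(N \dotimes_A \wedge^k L_{B^{\otimes_A(\bullet+1)}/A}\bigr) \simeq 0
\]
for any $N \in D(A)$ and any $k \ge 1$ (applied with $N = \wedge^j L_{A/R}$ and $k = i - j$).

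For this vanishing, introduce the bicosimplicial object
\[
Y^{m,n} := N \dotimes_A \wedge^k L_{B^{\otimes_A(n+1)}/A} \dotimes_A B^{\otimes_A(m+1)},
\]
and compute $\lim_{\Delta \times \Delta} Y$ in both iterated orders. Fixing $n$ and totalizing in $m$: this is the Amitsur complex of the $A$-module $N \dotimes_A \wedge^k L_{B^{\otimes_A(n+1)}/A}$ along $A \to B$, whose totalization is the module itself; so one iterated order gives $\mathrm{Tot}(N \dotimes_A \wedge^k L_{B^{\otimes_A(\bullet+1)}/A})$. Fixing $m$ and totalizing in $n$: using flat base change $L_{B^{\otimes_A(n+1)}/A} \dotimes_A B \simeq L_{B^{\otimes_A(n+2)}/B}$ realized via one of the $B$-factors in $B^{\otimes_A(m+1)}$, one can rewrite
\[
Y^{m,n} \simeq N \dotimes_A \wedge^k L_{B^{\otimes_A(n+2)}/B} \dotimes_A B^{\otimes_A m},
\]
so the cosimplicial object $n \mapsto Y^{m,n}$ is the image of the cosimplicial $B$-module $\wedge^k L_{B^{\otimes_A(\bullet+2)}/B}$ under the functor $M \mapsto N \dotimes_A M \dotimes_A B^{\otimes_A m}$. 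Now the cosimplicial $B$-algebra $B^{\otimes_A(\bullet+2)}$ is the \v{C}ech nerve of the map $B \to B \otimes_A B$ given by $b \mapsto b \otimes 1$, and the multiplication $B \otimes_A B \to B$ is a $B$-algebra retraction of this map; this makes the \v{C}ech nerve a split augmented cosimplicial $B$-algebra with augmentation $B$. Any functor preserves split augmented cosimplicial objects, so the image under $\wedge^k L_{-/B}$ followed by the tensor functor above is split with augmentation $N \dotimes_A \wedge^k L_{B/B} \dotimes_A B^{\otimes_A m} = 0$ (using $k \ge 1$). Hence $\mathrm{Tot}_n Y^{m,n} \simeq 0$ for every $m$, and so this iterated limit is $0$. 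Combining the two orders yields the desired vanishing.

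The main obstacle the argument is designed to overcome is an asymmetry: the \v{C}ech nerve $A \to B^{\otimes_A(\bullet+1)}$ does not itself admit an extra degeneracy unless $A \to B$ splits, yet the necessary extra degeneracy does emerge after base change to $B$. The bicosimplicial device above is precisely the mechanism that transports this ``locally available'' splitting into a genuine vanishing statement over $A$, bypassing the issue that the functor $- \dotimes_A B$ does not commute with $\mathrm{Tot}$ in general.
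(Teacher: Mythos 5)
Your proof is correct and shares the overall strategy with the paper's: both reduce via the transitivity filtration for $R \to A \to B^{\otimes_A(\bullet+1)}$ to (a) faithfully flat descent for the $A$-module $\wedge^i L_{A/R}$, and (b) the vanishing $\mathrm{Tot}\bigl(N \dotimes_A \wedge^k L_{B^{\otimes_A(\bullet+1)}/A}\bigr) \simeq 0$ for $k \geq 1$, and both ultimately prove (b) by exploiting the extra degeneracy of the \v{C}ech nerve of $B \to B\otimes_A B$. Where the two arguments differ is the mechanism for transporting that splitting, available only after base change to $B$, back down to $A$ given that $-\dotimes_A B$ does not commute with $\mathrm{Tot}$. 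The paper first passes to homotopy groups: by convergence of the Postnikov filtration, $\mathrm{Tot}$ vanishes once each strict cochain complex of $A$-modules $\pi_l \wedge^k L_{B^\bullet/A}$ is acyclic, and acyclicity of a strict cochain complex \emph{can} be checked after the exact, faithfully flat functor $-\otimes_A B$, at which point the cosimplicial homotopy appears. Your argument instead stays fully derived: the bicosimplicial Fubini device introduces an auxiliary cosimplicial direction carrying an Amitsur complex along $A\to B$, so that one iterated limit recovers the totalization in question while the other is identically zero by the split augmentation, with no detour through homotopy groups. Both routes are valid; yours is arguably cleaner for the $i>1$ case, where the factor $N = \wedge^j L_{A/R}$ need be neither discrete nor flat over $A$ --- your Fubini argument carries $N$ along as a passenger without issue, whereas the paper's terse ``analog of (1) and (2)'' leaves to the reader the (true, but worth spelling out) point that the split augmentation of $\wedge^k L_{C^\bullet/B}$ is preserved under $N\dotimes_A -$ so that the homotopy-group-level argument still applies.
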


\begin{proof}
The $i=0$ case amounts to faithfully flat descent. We explain the $i=1$ case in depth, and then indicate the modifications necessary to tackle larger $i$.

Write $B^\bullet$ for the Cech nerve of $A \to B$. The transitivity triangle for $R \to A \to B^\bullet$ is a cosimplicial exact triangle
\[ L_{A/R} \otimes_A B^\bullet \to L_{B^\bullet/R} \to L_{B^\bullet/A}. \]
Thus, to prove the theorem for $i=1$, we are reduced to showing the following two assertions:
\begin{enumerate}
\item The map $A \to B^\bullet$ induces an isomorphism $L_{A/R} \to \lim \big( L_{A/R} \otimes_A B^\bullet \big)$.
\item One has $\mathrm{Tot} (L_{B^\bullet/A}) \simeq 0$.
\end{enumerate}

Assertion (1) holds true more generally for any $M \in D(A)$ (with the assertion above corresponding to $M = L_{A/R}$) by fpqc descent. 

We now prove assertion (2). By the convergence of the Postnikov filtration, it is enough to show that for each $i$, the $A$-cochain complex corresponding to $\pi_i L_{B^\bullet/A}$ under the Dold-Kan equivalence is acyclic. By faithful flatness of $A \to B$, this reduces to showing acyclicity of  $(\pi_i L_{B^\bullet/A}) \otimes_A B \simeq \pi_i (L_{B^\bullet/A} \otimes_A B)$. If we set $B \to C^\bullet$ to be the base change of $A \to B^\bullet$ along $A \to B$, then by flat base change for the cotangent complex, we have reduced to showing that $\pi_i L_{C^\bullet/B}$ is acyclic. But $B \to C^\bullet$ is a cosimplicial homotopy-equivalence of $B$-algebras: it is the Cech nerve of the map $B \to B \otimes_A B$, which has a section. It follows that for any abelian group valued functor $F(-)$ on $B$-algebras, we have an induced cosimplicial homotopy equivalence $F(B) \to F(C^\bullet)$. Taking $F = \pi_i L_{-/B}$ then shows that the cochain complex $\pi_i L_{C^\bullet/B}$ is homotopy-equivalent to the abelian group $\pi_i L_{B/B} \simeq 0$, as wanted.

To handle larger $i$, one follows the same steps as above with the following change: instead of using the transitivity triangle to reduce to proving (1) and (2) above, one uses the length $i$ filtration of $\wedge^i L_{B^\bullet/R}$ induced by applying $\wedge^i$ to the transitivity triangle above used above (and induction on $i$) to reduce to proving the analog of (1) and (2) for exterior powers of the cotangent complex.
\end{proof}

\begin{remark}
We do not know if the functors appearing in Theorem~\ref{FlatDescentCC} satisfy hyperdescent: if $A \to B^\ast$ is a hypercover for the faithfully flat topology on (the opposite of) the category of $R$-algebras,  is the natural map $L_{A/R} \to \lim L_{B^\ast/R}$ an equivalence?
\end{remark}

\begin{lemma}
\label{Postnikov}
Let $S$ be a connective $E_1$-ring spectrum. Assume $\{M_n\}$ is a weak Postnikov tower of connective $S$-module spectra, i.e., the fiber of $M_{n+1} \to M_n$ is $n$-connected. Write $M$ for the inverse limit. Then for any right $t$-exact functor $F:D(S) \to \mathrm{Sp}$, the tower $\{F(M_n)\}$ is a weak Postnikov tower with limit $F(M)$.
\end{lemma}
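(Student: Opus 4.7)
The plan is to verify the two claims---that $\{F(M_n)\}$ is itself a weak Postnikov tower, and that its limit is $F(M)$---separately. The common mechanism is that $F$, being exact and right $t$-exact, preserves $n$-connectivity (for every $n \geq 0$, since right $t$-exactness plus the fact that exact functors commute with shifts gives $F(D(S)_{\geq n}) \subseteq \mathrm{Sp}_{\geq n}$), and hence also preserves the property of being $n$-connected (i.e., having $\pi_i = 0$ for $i \leq n$, equivalently $(n+1)$-connectivity).

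For the Postnikov-tower claim, I would simply apply $F$ to the fiber sequence $\mathrm{fib}(M_{n+1} \to M_n) \to M_{n+1} \to M_n$. Exactness of $F$ turns this into a fiber sequence in $\mathrm{Sp}$, and the $t$-exactness observation ensures the first term is still $n$-connected, as required.

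For the limit claim, the key technical input is the following assertion, which I would establish for any weak Postnikov tower $\{X_k\}$ of connective objects with limit $X$: the fiber $K_n := \mathrm{fib}(X \to X_n)$ is $n$-connected. Granting this, I would apply $F$ to the fiber sequence $K_n \to M \to M_n$ to obtain a fiber sequence $F(K_n) \to F(M) \to F(M_n)$ with $F(K_n)$ still $n$-connected; likewise, applying the same assertion to the (already-known) Postnikov tower $\{F(M_n)\}$ in $\mathrm{Sp}$ shows that $L_n := \mathrm{fib}(\lim F(M_n) \to F(M_n))$ is $n$-connected. Comparing the two fiber sequences via the natural map $F(M) \to \lim F(M_n)$, the $3 \times 3$ lemma identifies the fiber $C$ of this comparison map with the fiber of the induced map $F(K_n) \to L_n$; the long exact sequence of this last fiber sequence then forces $\pi_i C = 0$ for $i \leq n-1$. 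Since this holds for every $n$, I conclude $C \simeq 0$ and hence $F(M) \simeq \lim F(M_n)$.

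The main obstacle is therefore the connectivity claim for $K_n = \lim_{k \geq n} \mathrm{fib}(X_k \to X_n)$. Each term $\mathrm{fib}(X_k \to X_n)$ is $n$-connected by induction on $k$: the octahedral axiom yields a fiber sequence $\mathrm{fib}(X_{k+1} \to X_k) \to \mathrm{fib}(X_{k+1} \to X_n) \to \mathrm{fib}(X_k \to X_n)$ whose leftmost term is $k$-connected (hence $n$-connected for $k \geq n$). The Milnor exact sequence
\[
0 \to \lim\nolimits^{1} \pi_{i+1}\, \mathrm{fib}(X_k \to X_n) \to \pi_i K_n \to \lim \pi_i\, \mathrm{fib}(X_k \to X_n) \to 0
\]
then trivially gives $\pi_i K_n = 0$ for $i \leq n-1$; in degree $i=n$, one needs vanishing of $\lim\nolimits^{1} \pi_{n+1}\, \mathrm{fib}(X_k \to X_n)$, which follows from Mittag--Leffler since the long exact sequence of the fiber sequence above shows that the transition maps $\pi_{n+1}\, \mathrm{fib}(X_{k+1} \to X_n) \to \pi_{n+1}\, \mathrm{fib}(X_k \to X_n)$ are surjective (the obstruction lies in $\pi_n\, \mathrm{fib}(X_{k+1} \to X_k) = 0$). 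This is the technical heart of the argument; everything else is formal manipulation of fiber sequences and $t$-structures.
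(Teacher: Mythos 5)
Your proof is correct and follows essentially the same route as the paper: both reduce to showing that the fiber of $M \to M_n$ is $n$-connected via a Milnor $\lim^1$ argument, then transport this through $F$ using right $t$-exactness. The only cosmetic difference is that the paper leaves the final deduction ($n$-connectedness of all fibers $\Rightarrow F(M) \simeq \lim F(M_n)$) implicit, whereas you spell it out via the $3\times 3$ lemma; and the paper asserts the transition maps $P_{k+1}\to P_k$ are isomorphisms on $\pi_{n+1}$ where you only use surjectivity, which is a slightly weaker (and for $k=0$ in fact the correct) hypothesis that still yields Mittag--Leffler.
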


\begin{proof}
The assertion that $\{F(M_n)\}$ is a weak Postnikov tower is immediate from the exactness hypothesis on $F$.  For the rest, note that the fiber of $M \to M_n$ is $n$-connected: it is the inverse limit of the fibers $P_k$ of $M_{n+k} \to M_n$, and each $P_k$ is $n$-connected with $P_{k+1} \to P_k$ being an isomorphism on $\pi_{n+1}(-)$. Then $F(M) \to F(M_n)$ also has an $n$-connected fiber by the exactness hypothesis on $F$, and hence $F(M) \to \lim F(M_n)$ is an equivalence, as wanted.
\end{proof}

\begin{corollary}
\label{FlatDescentHH}$ $
\begin{enumerate}
\item For any commutative ring $R$, the functors
\[
\HH(-/R)\ ,\ \HC^{-}(-/R)\ ,\ \HH(-/R)_{h\T}\ ,\ \HP(-/R)
\]
on the category of commutative $R$-algebras are fpqc sheaves.
\item Similarly, the functors
\[
\THH(-)\ ,\ \TC^{-}(-)\ ,\ \THH(-)_{h\T}\ ,\ \TP(-)
\]
on the category of commutative rings are fpqc sheaves.
\end{enumerate}
\end{corollary}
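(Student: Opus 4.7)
The strategy is to establish descent for $\HH(-/R)$ using Theorem~\ref{FlatDescentCC} through the HKR filtration, then deduce descent for the $\T$-equivariant variants by formal (co)limit arguments and the defining cofiber sequence relating $\HH_{h\T}$, $\HC^-$ and $\HP$, and finally reduce the topological statements to the algebraic ones via Lemma~\ref{THHvsHH} and a Postnikov-tower argument.

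For $\HH(-/R)$: by \S\ref{subsection_HH}, $\HH(-/R)$ carries the complete, multiplicative, $\T$-equivariant HKR filtration $\{\Fil^n_\sub{HKR}\HH(-/R)\}$ with graded pieces $\wedge^i L_{-/R}[i]$ (trivial $\T$-action). Each graded piece is an fpqc sheaf by Theorem~\ref{FlatDescentCC}, so each truncation $\HH(-/R)/\Fil^n_\sub{HKR}$ is an iterated extension of sheaves, hence a sheaf; the tower is a weak Postnikov tower (fibers are $n$-connective). Completeness of the filtration gives $\HH(-/R)\simeq\lim_n\HH(-/R)/\Fil^n_\sub{HKR}$, and since the \v{C}ech totalization in the sheaf condition is itself a limit, a swap of limits yields the sheaf property. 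The variant $\HC^-(-/R)=\HH(-/R)^{h\T}$ is then automatically a sheaf, as $(-)^{h\T}$ is a limit.

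For $\HH(-/R)_{h\T}$: applying $(-)_{h\T}$ to the HKR filtration (whose graded pieces have trivial $\T$-action) yields an induced filtration with graded pieces $\wedge^i L_{-/R}[i]\otimes_{\mathbb{S}}\Sigma^\infty_+ B\T$, i.e.\ the sequential colimit over $n$ of iterated extensions $\wedge^i L_{-/R}[i]\otimes\Sigma^\infty_+\mathbb{C}P^n$ of shifts of the sheaf $\wedge^i L_{-/R}[i]$. Uniform connectivity (everything is at least $i$-connective) lets this sequential colimit commute with cosimplicial totalization, so each graded piece is a sheaf; completeness of the induced filtration together with Lemma~\ref{Postnikov} upgrades this to $\HH(-/R)_{h\T}$ being a sheaf. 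Finally $\HP(-/R)$ is a sheaf, since the norm cofiber sequence from \S\ref{subsection_HH} exhibits it as the cofiber of $\HH(-/R)_{h\T}[1]\to\HC^-(-/R)$, and sheaves are closed under cofibers.

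For the topological versions, I would reduce to the algebraic case via Lemma~\ref{THHvsHH}. Since $\pi_i\THH(\mathbb{Z})$ is finite for $i>0$, the Postnikov tower $\{\tau_{\leq n}\THH(\mathbb{Z})\}$ has $\tau_{\leq 0}=\mathbb{Z}$ and graded pieces $\pi_n\THH(\mathbb{Z})[n]$ that are Eilenberg--MacLane spectra of finite abelian groups; as such their $\THH(\mathbb{Z})$-module structure factors through $\pi_0\THH(\mathbb{Z})=\mathbb{Z}$. Applying the right $t$-exact functor $\THH(A)\otimes_{\THH(\mathbb{Z})}(-)$ to this tower and invoking Lemma~\ref{Postnikov} identifies $\THH(A)$ with $\lim_n\THH(A)\otimes_{\THH(\mathbb{Z})}\tau_{\leq n}\THH(\mathbb{Z})$; each term is built by iterated extension of $\HH(A)$ (the $n=0$ base change) with shifts of $\HH(A)\otimes_{\mathbb{Z}}\pi_k\THH(\mathbb{Z})[k]$, which is a sheaf as a finite iterated cofiber of shifts of $\HH(A)/m$ for various $m$. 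Hence $\THH(A)$ is a sheaf, and $\TC^-$, $\THH_{h\T}$, $\TP$ follow by the $\T$-equivariant arguments above. The main obstacle I anticipate is justifying the interchange of sequential colimits with cosimplicial totalizations in the $(-)_{h\T}$ step (and again implicitly in the Postnikov limit for $\THH$): uniform connectivity is what makes these interchanges go through, but it is the one point where the argument is not purely formal.
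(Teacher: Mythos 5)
Your proposal follows the same overall architecture as the paper's proof: descend from the HKR filtration via Theorem~\ref{FlatDescentCC}, use Lemma~\ref{Postnikov} to pass to limits of weak Postnikov towers, reduce $\HP$ to $\HH_{h\T}$ and $\HC^-$ via the norm cofiber sequence, and reduce the topological statements to the algebraic ones via Lemma~\ref{THHvsHH} and the Postnikov tower of $\THH(\mathbb{Z})$. Parts (1) for $\HH$ and $\HC^-$, and all of part (2), are in close agreement with the paper.

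The one place where you diverge, and where there is a genuine gap, is the $\HH(-/R)_{h\T}$ step. To prove that $(\gr^i_{\sub{HKR}}\HH(-/R))_{h\T} \simeq \wedge^i L_{-/R}[i]\otimes_R R_{h\T}$ is a sheaf, you write $R_{h\T}=\colim_n \Sigma^\infty_+\mathbb{C}P^n\otimes R$ as a colimit along the cell filtration of $B\T$, observe that each term is a sheaf (being a finite extension of shifts of $\wedge^i L_{-/R}[i]$), and assert that ``uniform connectivity lets this sequential colimit commute with cosimplicial totalization.'' That assertion is not correct as stated. The totalization over $\Delta$ is an infinite limit, and filtered colimits do not commute with it even in the presence of a uniform connectivity bound: if $X^\bullet$ is a cosimplicial spectrum with every $X^s$ being $c$-connective, the partial totalization $\mathrm{Tot}_m X^\bullet$ has fibers of the form $\Omega^m N^m X^\bullet$ whose connectivity goes to $-\infty$, so $\mathrm{Tot}\,X^\bullet$ need not be bounded below at all. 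Concretely, if you set $G_n = \mathrm{cofib}(F_n \to F)$ (which is increasingly connective), the identification $\mathrm{fib}(F(A) \to \lim F(B^\bullet)) \simeq \mathrm{fib}(G_n(A)\to \lim G_n(B^\bullet))$ does not help, because $\lim G_n(B^\bullet)$ may have nonzero homotopy in arbitrarily negative degrees for every $n$.

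The fix is to run the tower in the other direction, as the paper does: write $R_{h\T}$ as the limit of its Postnikov tower $\{\tau_{\leq n} R_{h\T}\}$, which is a weak Postnikov tower. Since $\gr^i_{\sub{HKR}}\HH(-/R)$ is connective, the functor $\gr^i_{\sub{HKR}}\HH(-/R)\otimes_R -$ is right $t$-exact, so Lemma~\ref{Postnikov} identifies $\gr^i\otimes_R R_{h\T}$ with $\lim_n \gr^i\otimes_R \tau_{\leq n} R_{h\T}$. Each $\tau_{\leq n} R_{h\T}$ is a perfect $R$-complex, so $\gr^i\otimes_R\tau_{\leq n}R_{h\T}$ is a sheaf by Theorem~\ref{FlatDescentCC}, and a limit of sheaves is a sheaf with no further hypotheses. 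This is exactly the move you already make for $\THH(\mathbb{Z})$ in part (2) (there you correctly use the Postnikov tower and Lemma~\ref{Postnikov} rather than a colimit presentation), so the needed repair is to apply the same reasoning to $R_{h\T}$ in the $\HH_{h\T}$ step as well.
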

\begin{proof}
(1) Theorem~\ref{FlatDescentCC} and induction imply that each $\HH(-/R)/\mathrm{Fil}_\sub{HKR}^n$ is a sheaf. Taking the limit over $n$ then implies $\HH(-/R)$ is a sheaf. Likewise, $\HC^{-}(-/R)$ is a sheaf as it is the limit of a diagram of sheaves.
	
For $\HH(-/R)_{h\T}$, using Lemma~\ref{Postnikov} for $S = R[\T]$ and $F = (-)_{h\T}$ applied to the weak Postnikov tower $\{\HH(-/R)/\mathrm{Fil}_\sub{HKR}^n\}$ with limit $\HH(-/R)$, it suffices to prove that each $(\HH(-/R)/\mathrm{Fil}_\sub{HKR}^n)_{h\T}$ is a sheaf. Using the filtration, this immediately reduces to checking that $(\gr_\sub{HKR}^i \HH(-/R))_{h\T}$ is a sheaf. But the $\T$-action on $\gr^i_\sub{HKR}\HH(-/R)$ is trivial, so we can write $(\gr^i_\sub{HKR} \HH(-/R))_{h\T} \simeq \gr^i_\sub{HKR} \HH(-/R) \dotimes_R R_{h\T}$. Using that $\gr^i_\sub{HKR} \HH(-/R)\simeq \wedge^i L_{-/R}[i]$ is connective, we see that the tower $\{\gr^i_\sub{HKR} \HH(-/R)\dotimes_R \tau_{\leq n} R_{h\T}\}_n$ is a weak Postnikov tower, and so we reduce to showing that $\gr^i_\sub{HKR} \HH(-/R)\dotimes_R \tau_{\leq n} R_{h\T}$ is a sheaf. But each $\tau_{\leq n} R_{h\T}$ is a perfect $R$-complex, so the claim follows from Theorem~\ref{FlatDescentCC} again.
	
Combining the assertions for $\HC^{-}(-/R)$ and $\HH(-/R)_{h\T}$ then trivially implies that $\HP(-/R)$ is also a sheaf.
	
(2) As above, it is enough to prove the claims for $\THH(-)$ and $\THH(-)_{h\T}$. For $\THH(-)$, we use that for any commutative ring $A$, the tower $\{\THH(A) \otimes_{\THH(\mathbb{Z})} \tau_{\leq n} \THH(\mathbb{Z})\}$ is a weak Postnikov tower with limit $\THH(A)$ by applying Lemma~\ref{Postnikov} to $S = \THH(\mathbb{Z})$ and $F = \THH(A) \otimes_{\THH(\mathbb{Z})} -$ to the Postnikov tower $\{ \tau_{\leq n} \THH(\mathbb{Z}) \}$ with limit $\THH(\mathbb{Z})$. We are thus reduced to checking that $\THH(-) \otimes_{\THH(\mathbb{Z})} \tau_{\leq n} \THH(\mathbb{Z})$ is a sheaf for each $n$. By induction, this immediately reduces to checking that $\THH(-) \otimes_{\THH(\mathbb{Z})} \pi_n \THH(\mathbb{Z}) \simeq \big(\THH(-) \otimes_{\THH(\mathbb{Z})} \mathbb{Z}\big) \otimes_{\mathbb{Z}} \pi_n \THH(\mathbb Z)$ is a sheaf for each $n$. Now $\pi_n \THH(\mathbb{Z})$ is a perfect $\mathbb{Z}$-complex by Lemma \ref{THHvsHH}, so we reduce to showing that $\THH(-) \otimes_{\THH(\mathbb{Z})} \mathbb{Z}$ is a sheaf. But this last functor is $\HH(-)=\HH(-/\mathbb Z)$, so we are done by reduction to (1). 

For $\THH(-)_{h\T}$, one repeats the argument in the previous paragraph by applying $(-)_{h\T}$ to the $\T$-equivariant weak Postnikov tower $\{\THH(R) \otimes_{\THH(\mathbb{Z})} \tau_{\leq n} \THH(\mathbb{Z})\}$ to reduce to the case of $\HH(-)_{h\T}$, which was handled in (1).
\end{proof}

\begin{remark} The previous proof also applies to $\HH(-/R)_{hC_n}$ and $\THH(-)_{hC_n}$ for any finite subgroup $C_n\subset \T$, and thus to $\HH(-/R)^{tC_n}$, $\THH(-)^{tC_n}$ and by induction all $\TR^n(-)$, using the isotropy separation squares.
\end{remark}

\newpage
\section{The quasisyntomic site}
\label{sec:sites}

This section studies the category of quasisyntomic rings, which is where all our later constructions take place. In \S \ref{qsynsite}, we define the notion of a quasisyntomic ring as well as the quasisyntomic site. An extremely important class of examples comes from quasiregular semiperfectoid rings: these are roughly  the quasisyntomic rings whose mod $p$ reduction is semiperfect (i.e., has a surjective Frobenius), and are studied in \S \ref{qsprings}. The key result of this section is that quasiregular semiperfectoid rings form a basis for the quasisyntomic topology (Proposition~\ref{qsqspextend}). As $p$-adic completions show up repeatedly in the sequel, we spend some time in \S \ref{pflat} exploring the interaction of $p$-adic completion with notions such as flatness.

\subsection{$p$-complete flatness}
\label{pflat}

Let us begin by defining a notion of $p$-complete Tor amplitude\footnote{Recall the classical definitions: given $a, b \in \mathbb{Z} \cup \{\pm \infty\}$, a commutative ring $R$ and $M \in D(R)$, we say that $M$ has Tor amplitude in $[a,b]$ if for any $R$-module $N$, we have $M \dotimes_R N \in D^{[a,b]}(R)$. If $a=b$, then we say that $M$ has Tor amplitude concentrated in degree $a$; note that if $a=b=0$, then the condition simply says that $M$ is concentrated in degree $0$ and flat. These conditions are preserved under base change, and can be checked after faithfully flat base change.} for complexes over commutative rings.\footnote{For a more general discussion of such matters, see work of Yekutieli, \cite{Yekutieli}.}

\begin{definition}
Let $A$ be a commutative ring, fix $M \in D(A)$, and $a,b \in \mathbb{Z} \sqcup \{\pm \infty\}$.
\begin{enumerate}
\item We say that $M \in D(A)$ has {\em $p$-complete Tor amplitude in $[a,b]$} if $M \dotimes_A A/pA \in D(A/pA)$ has Tor amplitude in $[a,b]$. If $a=b$, we say that $M \in D(A)$ has {\em $p$-complete Tor amplitude concentrated in degree $a$}.
\item We say that $M \in D(A)$ is {\em $p$-completely (faithfully) flat} if $M \dotimes_A A/pA \in D(A/pA)$ is concentrated in degree $0$, and a (faithfully) flat $A/pA$-module.
\end{enumerate}

In particular, by definition $M\in D(A)$ has $p$-complete Tor amplitude $[0,0]$ if and only if it is $p$-completely flat.
\end{definition}

\begin{remark}\label{rem:toramplitudethickening}
One may replace $A/pA$ with $A/p^nA$ for any $n \geq 1$ in the above definition without changing its meaning. Indeed, if $R\to S=R/I$ with $I^2=0$ is a square-zero thickening, then $M\in D(R)$ has Tor-amplitude in $[a,b]$ if and only if $M\dotimes_R S\in D(S)$ has Tor-amplitude in $[a,b]$: The forward direction follows from the stability of Tor-amplitude under base change, and for the converse one uses the triangle
\[
(M\dotimes_R S)\dotimes_S I\to M\to M\dotimes_R S
\]
in $D(R)$.
\end{remark}

\begin{remark}
We will see in Lemma~\ref{BoundedTorsionTA} below that if $A$ has bounded $p^\infty$-torsion, then if $M\in D(A)$ has $p$-complete Tor-amplitude in $[a,b]$ and is derived $p$-complete, one has $M\in D^{[a,b]}(A)$. In particular, if $M\in D(A)$ is derived $p$-complete and $p$-completely flat, then $M$ is an $A$-module concentrated in degree $0$. In that case, the condition implies that $M/p^nM$ is a flat $A/p^nA$-module for all $n$, and a precise characterization of the $p$-completely flat $A$-modules is given by Lemma~\ref{BoundedTorsionFlat}.
\end{remark}

\begin{lemma}\label{lem:completionTorampl} Fix a ring $A$, an $M \in D(A)$ and $a,b \in \mathbb{Z} \sqcup \{\pm \infty\}$. Let $\widehat{M} \in D(A)$ be the derived $p$-completion of $M$. The following are equivalent:
\begin{enumerate}
\item  $M \in D(A)$ has $p$-complete Tor amplitude in $[a,b]$ (resp. is $p$-completely (faithfully) flat)
\item  $\widehat{M} \in D(A)$ has $p$-complete Tor amplitude in $[a,b]$ (resp. is $p$-completely (faithfully) flat).
\end{enumerate}
\end{lemma}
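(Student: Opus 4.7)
My plan is to reduce both equivalences in the lemma to the single assertion that derived $p$-completion is insensitive to derived reduction mod $p$, in the sense that the natural map $M \to \widehat M$ induces a canonical equivalence
\[
M \dotimes_A A/pA \simeq \widehat M \dotimes_A A/pA
\]
in $D(A/pA)$. Granting this, the lemma follows immediately: by definition, both the ``$p$-complete Tor amplitude in $[a,b]$'' and the ``$p$-completely (faithfully) flat'' conditions are conditions only on the object $(-)\dotimes_A A/pA$, and thus depend on $M$ and $\widehat M$ in exactly the same way.

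To prove the displayed equivalence, I would use the formula $\widehat M \simeq R\lim_n (M \dotimes^{\mathbb L}_{\mathbb Z} \mathbb Z/p^n)$ for the derived $p$-completion. Since $A/pA$ is the perfect $A$-complex $\mathrm{cofib}(p\colon A \to A)$, tensoring with it commutes with the $R\lim$, reducing the claim to identifying $R\lim_n (A/p^n A \dotimes_A A/pA) \simeq A/pA$. This latter identification is an elementary Milnor-sequence computation: each term in the tower has homotopy concentrated in degrees $0$ and $1$ (two copies of $A/pA$), with tower transitions acting by the identity in degree $0$ and by zero in degree $1$, so that in the Milnor sequence the $R^1\lim$ contribution vanishes by Mittag-Leffler while the ordinary $\lim$ recovers $A/pA$. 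Equivalently, one can characterize the fiber of $M \to \widehat M$ as an $R\lim$ of a tower with multiplication-by-$p$ transitions, and show directly that this fiber is annihilated by $- \dotimes_A A/pA$.

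The only real work is the verification of this tower calculation, which is entirely standard and underlies many well-known statements about derived completion (cf.~the references to \cite{BhattScholzePro} and \cite{BMS} in the conventions at the end of the introduction). Once the displayed equivalence is in hand, the lemma is pure unwinding of definitions; I do not anticipate any serious obstacle.
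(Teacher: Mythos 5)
Your overall strategy is the same as the paper's: reduce the lemma to the assertion that $M \to \widehat M$ becomes an equivalence after $-\dotimes_A A/pA$, and note this immediately gives both the Tor-amplitude and flatness statements. But there is a genuine error in the execution, located precisely at the claim that ``$A/pA$ is the perfect $A$-complex $\mathrm{cofib}(p\colon A \to A)$.'' This is false for a general ring $A$: the complex $\mathrm{cofib}(p\colon A\to A)$ has $H^0 = A/pA$ but also $H^{-1} = A[p]$, so $A/pA \simeq \mathrm{cofib}(p\colon A\to A)$ in $D(A)$ only when $A$ is $p$-torsion-free. Since the lemma is stated for arbitrary $A$ (and the paper's quasisyntomic rings are allowed to have bounded, nonzero $p$-torsion), you cannot commute $-\dotimes_A A/pA$ past the $R\lim$ on the grounds that $A/pA$ is a perfect $A$-complex. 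The same imprecision recurs downstream when you write the tower as $A/p^nA\dotimes_A A/pA$: again $A/p^nA$ is not $A\dotimes_{\mathbb Z}\mathbb Z/p^n\mathbb Z$ in general.

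The fix is simple and is what the paper does: instead of trying to make $A/pA$ perfect over $A$, use that $\mathbb Z/p\mathbb Z$ \emph{is} perfect over $\mathbb Z$, so $-\dotimes_{\mathbb Z}\mathbb Z/p\mathbb Z$ commutes with $R\lim$ and one gets the standard fact that $M\dotimes_{\mathbb Z}\mathbb Z/p\mathbb Z \simeq \widehat M\dotimes_{\mathbb Z}\mathbb Z/p\mathbb Z$ (your tower calculation is essentially a verification of this, once carried out over $\mathbb Z$ rather than over $A$). Then observe that $-\dotimes_A A/pA = \big(-\dotimes_{\mathbb Z}\mathbb Z/p\mathbb Z\big)\dotimes_{A\dotimes_{\mathbb Z}\mathbb Z/p\mathbb Z} A/pA$, so base change along $A\dotimes_{\mathbb Z}\mathbb Z/p\mathbb Z \to A/pA$ gives the desired equivalence $M\dotimes_A A/pA \simeq \widehat M\dotimes_A A/pA$. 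With that correction your argument collapses to the paper's two-line proof; as written, it does not go through for $A$ with $p$-torsion.
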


\begin{proof} The map $M\to \widehat{M}$ induces an isomorphism $M\dotimes_{\mathbb Z} \mathbb Z/p\mathbb Z\simeq \widehat{M}\dotimes_{\mathbb Z} \mathbb Z/p\mathbb Z$. In particular, after further base change along $A\dotimes_{\mathbb Z}\mathbb Z/p\mathbb Z\to A/pA$, it induces an isomorphism
\[
M\dotimes_A A/pA\simeq \widehat{M}\dotimes_A A/pA\ ,
\]
which immediately gives the result.
\end{proof}

\begin{lemma}
Fix a map $A \to B$ of rings, a complex $M \in D(A)$ and $a,b \in \mathbb{Z} \sqcup \{\pm \infty\}$. 
\begin{enumerate}
\item If $M \in D(A)$ has $p$-complete Tor amplitude in $[a,b]$ (resp. is $p$-completely (faithfully) flat), then the same holds true for $M \dotimes_A B \in D(B)$. 
\item If $A \to B$ is $p$-completely faithfully flat, then the converse to (1) holds true.
\end{enumerate}
\end{lemma}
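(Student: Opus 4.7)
The plan is to reduce both parts to classical statements about base change and faithfully flat descent for Tor amplitude and (faithful) flatness, applied to the \emph{ordinary} ring map $\bar{A} := A/pA \to \bar{B} := B/pB$. The key identification is via transitivity of the derived tensor product: factoring the map $A \to \bar{B}$ in two ways, first through $B$ and then through $\bar{A}$, we obtain
\[
(M \dotimes_A B) \dotimes_B \bar{B} \;\simeq\; M \dotimes_A \bar{B} \;\simeq\; (M \dotimes_A \bar{A}) \dotimes_{\bar{A}} \bar{B}.
\]
Writing $\bar{M} := M \dotimes_A \bar{A} \in D(\bar{A})$, this shows that the $p$-complete Tor amplitude (resp.\ $p$-complete (faithful) flatness) of $M \dotimes_A B$ coincides with the ordinary Tor amplitude (resp.\ (faithful) flatness) of $\bar{M} \dotimes_{\bar{A}} \bar{B}$ as a $\bar{B}$-complex.

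For (1), I would show that base change along the classical ring map $\bar{A} \to \bar{B}$ preserves each of the three properties under consideration. For Tor amplitude in $[a,b]$, this is because for any $\bar{B}$-module $P$, we have $(\bar{M} \dotimes_{\bar{A}} \bar{B}) \dotimes_{\bar{B}} P \simeq \bar{M} \dotimes_{\bar{A}} P$, which lies in $D^{[a,b]}$ by the hypothesis on $\bar{M}$ (applied to $P$ viewed as an $\bar{A}$-module). Preservation of flatness and faithful flatness under arbitrary base change of commutative rings is classical.

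For (2), the assumption that $A \to B$ is $p$-completely faithfully flat gives, by definition, that $B \dotimes_A \bar{A}$ is concentrated in degree $0$ and faithfully flat over $\bar{A}$; concentration in degree $0$ identifies $B \dotimes_A \bar{A}$ with $\bar{B}$, so the classical ring map $\bar{A} \to \bar{B}$ is faithfully flat. Then fpqc descent for Tor amplitude and for (faithful) flatness along $\bar{A} \to \bar{B}$ yields the converses. Explicitly, for Tor amplitude: given any $\bar{A}$-module $P$, the complex $\bar{M} \dotimes_{\bar{A}} P$ has Tor amplitude $[a,b]$ because its (exact) base change along the faithfully flat map $\bar{A} \to \bar{B}$ identifies with $(\bar{M} \dotimes_{\bar{A}} \bar{B}) \dotimes_{\bar{B}} (P \otimes_{\bar{A}} \bar{B})$, which lies in $D^{[a,b]}$ by hypothesis, and concentration in a range of degrees is detectable after faithfully flat base change. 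The flatness and faithful-flatness cases are analogous.

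There is no substantial obstacle: once the transitivity identity above is in hand, everything reduces cleanly to classical base change and faithfully flat descent statements in commutative algebra. The one point worth being careful about is that in (1) one does \emph{not} need $B \dotimes_A \bar{A}$ to agree with $\bar{B}$; the transitivity identity above only requires $\bar{A} \to \bar{B}$ to be a map of ordinary rings, which it always is.
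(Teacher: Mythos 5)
Your proof is correct and spells out exactly what the paper's one-line proof ("This is immediate from the corresponding assertions in the discrete case, noting that the condition in part (2) implies in particular that $A/pA\to B/pB$ is faithfully flat") leaves implicit. The transitivity identification $(M\dotimes_A B)\dotimes_B \bar B\simeq (M\dotimes_A \bar A)\dotimes_{\bar A}\bar B$ reducing the $p$-complete statements to classical base change and faithfully flat descent over $\bar A\to\bar B$ is precisely the intended mechanism, and your parenthetical remark that $B\dotimes_A\bar A$ need not coincide with $\bar B$ in part (1) is a sensible clarification of a point the paper glosses over.
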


\begin{proof}
This is immediate from the corresponding assertions in the discrete case, noting that the condition in part (2) implies in particular that $A/pA\to B/pB$ is faithfully flat.
\end{proof}

\begin{lemma}
\label{BoundedTorsionTA}
Fix a ring $A$ with bounded $p^\infty$-torsion and a derived $p$-complete $M \in D(A)$ with $p$-complete Tor amplitude in $[a,b]$ for $a,b \in \mathbb{Z}$. Then $M \in D^{[a,b]}(A)$.
\end{lemma}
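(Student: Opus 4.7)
The plan is to use the derived $p$-completeness of $M$ to write $M \simeq R\lim_n (M \dotimes_{\mathbb Z} \mathbb Z/p^n)$ and then compute its cohomology via a Milnor sequence. Using $M \dotimes_{\mathbb Z} \mathbb Z/p^n \simeq M \dotimes_A (A \dotimes_{\mathbb Z} \mathbb Z/p^n)$, the Koszul presentation of $A \dotimes_{\mathbb Z} \mathbb Z/p^n$ produces a fiber sequence $A[p^n][1] \to A \dotimes_{\mathbb Z} \mathbb Z/p^n \to A/p^n A$ in $D(A)$; tensoring with $M$ over $A$ lets me compare the tower $\{M \dotimes_{\mathbb Z} \mathbb Z/p^n\}$ to the more accessible tower $\{M \dotimes_A A/p^n A\}$.

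First, I claim the pro-system $\{A[p^n]\}$ is pro-zero, and hence so is $\{M \dotimes_A A[p^n]\}$. The transitions $A[p^{n+1}] \to A[p^n]$ inherited from the natural maps $A \dotimes_{\mathbb Z} \mathbb Z/p^{n+1} \to A \dotimes_{\mathbb Z} \mathbb Z/p^n$ are computed to be multiplication by $p$ on $H^{-1}$ (a direct check using the Koszul presentations). The bounded $p^\infty$-torsion hypothesis $A[p^\infty] = A[p^N]$ then implies that $p^N$ annihilates each $A[p^n]$ for $n \geq N$, so after $N$ iterations the transitions become zero. Together with the fiber sequence above, this yields a pro-equivalence $\{M \dotimes_{\mathbb Z} \mathbb Z/p^n\} \simeq \{M \dotimes_A A/p^n A\}$, and hence $M \simeq R\lim_n M \dotimes_A A/p^n A$ in $D(A)$.

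Second, Remark~\ref{rem:toramplitudethickening} shows each $M \dotimes_A A/p^n A$ lies in $D^{[a,b]}(A/p^n A)$. I will also need surjectivity of the transitions on $H^b$: tensoring the short exact sequence $0 \to p^n A/p^{n+1} A \to A/p^{n+1} A \to A/p^n A \to 0$ with $M$ over $A$, the obstruction to surjectivity lies in $H^{b+1}(M \dotimes_A (p^n A/p^{n+1} A))$, which vanishes because $p^n A/p^{n+1} A$ is an $A/pA$-module and $M$ has $p$-complete Tor amplitude in $[a,b]$.

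Applying the Milnor sequence $0 \to R^1\lim H^{i-1} \to H^i(M) \to \lim H^i \to 0$ to the tower $\{M \dotimes_A A/p^n A\}$ then yields $H^i(M) = 0$ for $i \notin [a,b]$: the $\lim$ term vanishes term-wise, while the $R^1\lim$ term vanishes term-wise except possibly at $i = b+1$, where the Mittag-Leffler condition established above kills it. The main technical subtlety will be correctly identifying the transition maps in the pro-system $\{A[p^n]\}$: they are multiplications by $p$, not the tautological inclusions $A[p^n] \hookrightarrow A[p^{n+1}]$, and this twist is precisely what makes the pro-zero conclusion work under the bounded $p^\infty$-torsion hypothesis.
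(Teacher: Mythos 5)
Your proof is correct and follows essentially the same route as the paper's: use bounded $p^\infty$-torsion to establish the pro-isomorphism $\{A\dotimes_{\mathbb Z}\mathbb Z/p^n\}\simeq\{A/p^nA\}$, deduce $M\simeq R\lim_n M\dotimes_A A/p^nA$ from derived $p$-completeness, note each term lies in $D^{[a,b]}$, and conclude by Mittag--Leffler. The paper simply asserts the pro-isomorphism and the surjectivity of transitions on $H^b$; you supply explicit justifications (the Koszul identification of the transition $A[p^{n+1}]\to A[p^n]$ as multiplication by $p$, and the vanishing of $H^{b+1}(M\dotimes_A(p^nA/p^{n+1}A))$ because $p^nA/p^{n+1}A$ is an $A/pA$-module), which are exactly the details the paper leaves implicit.
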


Here we say that an abelian group $N$ has {\em bounded $p^\infty$-torsion} if $N[p^\infty] = N[p^c]$ for $c \gg 0$. In this case, the derived $p$-completion $\widehat{N}$ of $N$ coincides with the classical $p$-adic completion $\lim_n N/p^nN$, and this completion also has bounded $p^\infty$-torsion. In fact, the pro-systems $\{N/p^n N\}$ and $\{N\dotimes_{\mathbb Z} \mathbb Z/p^n \mathbb Z\}$ in $D(\mathbb Z)$ are pro-isomorphic.

\begin{proof}
As $A$ has bounded $p^\infty$-torsion, the pro-systems $\{A/p^n A\}$ and $\{A\dotimes_{\mathbb Z} \mathbb Z/p^n \mathbb Z\}$ are pro-iso\-mor\-phic. Thus, $M$ being derived $p$-complete implies that $M$ is the derived limit of $M\dotimes_A A/p^n A$. On the other hand, by assumption all $M\dotimes_A A/p^n A\in D^{[a,b]}(A/p^n A)$, and the transition maps on the highest degree $H^b(M\dotimes_A A/p^n A)$ are surjective. By passage to the limit, we get the result.
\end{proof}

Over rings with bounded $p^\infty$-torsion, we can describe $p$-completely flat complexes as modules:

\begin{lemma}
\label{BoundedTorsionFlat}
Fix a ring $A$ with bounded $p^\infty$-torsion.
\begin{enumerate}
\item If a derived $p$-complete $M \in D(A)$ is $p$-completely flat, then $M$ is a classically $p$-complete $A$-module concentrated in degree $0$, with bounded $p^\infty$-torsion, such that $M/p^nM$ is flat over $A/p^n A$ for all $n\geq 1$. Moreover, for all $n\geq 1$, the map
\[
M\otimes_A A[p^n]\to M[p^n]
\]
is an isomorphism.

\item Conversely, if $N$ is a classically $p$-complete $A$-module with bounded $p^\infty$-torsion such that $N/p^n N$ is flat over $A/p^nA$ for all $n\geq 1$, then $N \in D(A)$ is $p$-completely flat.
\end{enumerate}
\end{lemma}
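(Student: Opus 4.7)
My plan is to handle parts (1) and (2) in sequence, each requiring somewhat different techniques.

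For part (1), I would begin by invoking Lemma \ref{BoundedTorsionTA}: since $M$ is derived $p$-complete with $p$-complete Tor-amplitude in $[0,0]$, it lies in $D^{[0,0]}(A)$, so $M$ is an ordinary $A$-module. By propagating Tor-amplitude across the square-zero thickenings $A/p^{n+1}A \twoheadrightarrow A/p^nA$ iteratively using Remark \ref{rem:toramplitudethickening}, I would upgrade $p$-complete flatness to the statement that $M \dotimes_A A/p^nA$ has Tor-amplitude $[0,0]$ over $A/p^nA$ for every $n \geq 1$; in particular $M/p^nM$ is $A/p^nA$-flat and $\mathrm{Tor}^A_i(M, A/p^nA) = 0$ for $i \geq 1$.

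The identification $M \otimes_A A[p^n] \simeq M[p^n]$ then emerges from splitting the 4-term exact sequence $0 \to A[p^n] \to A \xrightarrow{p^n} A \to A/p^nA \to 0$ into two short exact sequences and chasing Tor long exact sequences: vanishing of $\mathrm{Tor}^A_1(M, A/p^nA)$ identifies $M \otimes_A p^nA$ with $p^nM \subseteq M$, while $\mathrm{Tor}^A_2(M, A/p^nA) = 0$ forces $\mathrm{Tor}^A_1(M, p^nA) = 0$, so the Tor LES of $0 \to A[p^n] \to A \to p^nA \to 0$ yields $0 \to M \otimes_A A[p^n] \to M \xrightarrow{p^n} M$, identifying $M \otimes_A A[p^n]$ with $M[p^n]$. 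Bounded $p^\infty$-torsion of $M$ is then immediate by transport from $A$: writing $A[p^\infty] = A[p^c]$, one has $M[p^n] = M \otimes_A A[p^c] = M[p^c]$ for $n \geq c$. For classical $p$-completeness, I would feed this into the Milnor-type sequence $0 \to R^1\lim_n M[p^n] \to M \to \lim_n M/p^nM \to 0$ for the derived $p$-complete $M$, and observe that the pro-system $\{M[p^n]\}$ is pro-zero (its transition maps are multiplication by $p$ acting on an eventually constant group annihilated by $p^c$), so the $R^1\lim$ term vanishes and $M \simeq \lim_n M/p^nM$ classically.

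Part (2) is more delicate: granted flatness of $H^0(N \dotimes_A A/pA) = N/pN$ over $A/pA$ by hypothesis, the nontrivial content is the vanishing $\mathrm{Tor}^A_i(N, A/pA) = 0$ for $i \geq 1$. A naive Tor LES from the 4-term exact sequence for $A/pA$ reduces the problem to controlling $\mathrm{Tor}^A_*(N, A[p])$, but this is circular since $A[p]$ is an $A/pA$-module. To circumvent this, my plan is as follows. First, classical $p$-completeness plus bounded $p^\infty$-torsion of $N$ implies derived $p$-completeness (via the same Milnor sequence argument as in (1)), so $N \simeq R\lim_n N \dotimes_A A/p^nA$, using the pro-isomorphism $\{A/p^nA\} \simeq \{A \dotimes_{\mathbb Z} \mathbb Z/p^n\mathbb Z\}$ arising from bounded $p^\infty$-torsion of $A$. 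By Remark \ref{rem:toramplitudethickening}, it then suffices to show $N \dotimes_A A/p^nA$ is concentrated in degree $0$ (and then automatically equals $N/p^nN$, which is $A/p^nA$-flat by hypothesis) for any single $n$. I plan to establish this by lifting a free $A/p^nA$-module resolution of $N/p^nN$ to a $p$-adically completed free $A$-module resolution $F_\bullet \to N$ via derived Nakayama for $p$-complete modules, and then verifying that $F_\bullet \dotimes_A A/p^nA \to N/p^nN$ is a quasi-isomorphism using the flatness hypothesis at level $n$. The main obstacle I anticipate is ensuring the lifted resolution interacts correctly with the filtration by powers of $p$, and that the limit $N \simeq R\lim_n N \dotimes_A A/p^nA$ can indeed be leveraged to transfer the flatness at level $n$ back to Tor vanishing over $A$.
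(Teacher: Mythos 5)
Your treatment of part (1) is essentially sound and parallels the paper's argument in spirit, though you implement the final step differently. Where you split the four-term sequence $0 \to A[p^n] \to A \xrightarrow{p^n} A \to A/p^nA \to 0$ into two short exact sequences and chase $\mathrm{Tor}$, the paper instead tensors the triangle $(A[p^n])[1] \to A_n \to A/p^nA$ (with $A_n = A \dotimes_{\mathbb{Z}} \mathbb{Z}/p^n\mathbb{Z}$) against $M_n = M \dotimes_{\mathbb{Z}} \mathbb{Z}/p^n\mathbb{Z}$; both routes are valid and the computation of $M[p^n]$ comes out the same. Your derivation of classical $p$-completeness via the Milnor sequence and pro-vanishing of $\{M[p^n]\}$ is also fine and equivalent to the paper's observation that $M \simeq R\lim M/p^nM$ and the system is Mittag-Leffler.

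Part (2) is where the proposal has a genuine gap. You correctly identify that the circularity through $A[p]$ (an $A/pA$-module) is the heart of the difficulty, and you correctly observe that Remark \ref{rem:toramplitudethickening} lets you work at any single level $n$. But the plan to ``lift a free $A/p^nA$-resolution of $N/p^nN$ to a $p$-adically completed free $A$-module resolution $F_\bullet \to N$'' is not an argument. First, lifting the terms and the differentials so that they actually square to zero over $A$ (not just modulo $p^n$) and so that the augmentation $F_\bullet \to N$ is an honest chain map is nontrivial; the naive inductive construction breaks down because $\ker(F_0 \to N)$ reduced mod $p^n$ need not surject onto $\ker(\bar F_0 \to N/p^n N)$ when $A$ and $N$ have $p$-torsion, so you cannot freely lift cycles. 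Second, even granting a lift, invoking ``derived Nakayama'' to conclude $F_\bullet \simeq N$ presupposes knowledge of $F_\bullet \dotimes_A A/pA$ and $N \dotimes_A A/pA$, which is what you are trying to compute in the first place. In short, the plan as stated is circular and does not pin down a proof; the ``main obstacle I anticipate'' that you flag at the end is real and unresolved.

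The paper's proof of (2) sidesteps resolutions entirely. It exploits the pro-isomorphism $\{A \dotimes_{\mathbb{Z}} \mathbb{Z}/p^n\mathbb{Z}\}_n \simeq \{A/p^nA\}_n$ (a consequence of bounded $p^\infty$-torsion of $A$) to rewrite $N \dotimes_A A/pA$ as the constant pro-object $\{N/p^nN \dotimes_A A/pA\}_n$; the flatness of $N/p^nN$ over $A/p^nA$ then reduces the vanishing of negative cohomology to the pro-vanishing of $\{H^{-i}(A/p^nA \dotimes_A A/pA)\}_n$, which follows by running the same pro-system argument for $N = A$. If you want to repair your approach, I would recommend abandoning the attempt to build a global lifted resolution and instead adopting this pro-system bootstrap: no resolutions, no derived Nakayama, just the two pro-isomorphisms and the hypothesis on $N/p^nN$.
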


\begin{proof}
(1) Lemma~\ref{BoundedTorsionTA} implies that $M$ is an $A$-module concentrated in degree $0$. The condition that $M$ is $p$-completely flat implies that $M/p^n M = M\dotimes_A A/p^n A$ is a flat $A/p^n A$-module for all $n\geq 1$. Moreover, $M$ is the limit of $M\dotimes_A A/p^n A = M/p^n M$, so $M$ is classically $p$-complete. It remains to prove that $M[p^n] = M\otimes_A A[p^n]$ for all $n\geq 1$; this implies boundedness of $p^\infty$-torsion. To see this, consider the $A_n = A\dotimes_{\mathbb Z} \mathbb Z/p^n \mathbb Z$-module $M_n = M\dotimes_{\mathbb Z} \mathbb Z/p^n \mathbb Z$; tensoring the triangle
\[
(A[p^n])[1]\to A_n\to A/p^n A
\]
in $D(A_n)$ with $M_n$ gives a triangle
\[
M_n\dotimes_{A_n} (A[p^n])[1]\to M_n\to M_n\dotimes_{A_n} A/p^nA\ .
\]
Here, $M_n\dotimes_{A_n} A/p^n A = M\dotimes_A A/p^n A = M/p^n M$ is concentrated in degree $0$ and flat over $A/p^nA$, and then also $M_n\dotimes_{A_n} A[p^n] = (M_n\dotimes_{A_n} A/p^nA)\dotimes_{A/p^nA} A[p^n]$ is concentrated in degree $0$. Thus, using the above triangle in the second equality,
\begin{align*}
M[p^n] = H^{-1}(M_n) = H^0(M_n\dotimes_{A_n} A[p^n]) &= H^0(M/p^nM\dotimes_{A/p^nA} A[p^n])\\ &= M/p^n M\otimes_{A/p^nA} A[p^n] = M\otimes_A A[p^n]\ ,
\end{align*}
as desired.

(2) Note first that the pro-system $\mathbb Z/p^n\mathbb Z\dotimes_{\mathbb Z}\mathbb Z/p\mathbb Z\in D(\mathbb Z/p\mathbb Z)$ is pro-isomorphic to $\mathbb Z/p\mathbb Z$. We may extend scalars to $A/pA$ to get a pro-isomorphism
\[
A/pA\simeq \{\mathbb Z/p^n \mathbb Z\dotimes_{\mathbb Z} A/pA\}_n
\]
in $D(A/pA)$, in particular in $D(A)$. Taking the derived tensor product $N\dotimes_A -$, we get a pro-isomorphism
\[
N\dotimes_A A/pA\simeq \{(N\dotimes_{\mathbb Z} \mathbb Z/p^n\mathbb Z)\dotimes_A A/pA\}_n\ .
\]
On the other hand, the pro-system $\{N\dotimes_{\mathbb Z} \mathbb Z/p^n\mathbb Z\}_n$ is pro-isomorphic to $\{N/p^n N\}_n$ as $N$ has bounded $p^\infty$-torsion. Thus, we get a pro-isomorphism
\[
N\dotimes_A A/pA\simeq \{N/p^nN\dotimes_A A/pA\}_n\ .
\]
We need to see that $N\dotimes_A A/pA$ is concentrated in degree $0$. But
\[
N/p^n N\dotimes_A A/pA = N/p^n N\dotimes_{A/p^nA} (A/p^nA\dotimes_A A/pA)
\]
and $N/p^n N$ is a flat $A/p^n A$-module, so it suffices to see that $\{H^{-i}(A/p^nA\dotimes_A A/pA)\}_n$ for $i<0$ is pro-zero. But the above discussion for $N=A$ shows that this pro-system is pro-isomorphic to $H^{-i}(A/pA) = 0$, as desired.
\end{proof}

\begin{corollary}\label{cor:torsionfflat} Let $A\to B$ be a map of derived $p$-complete rings.
\begin{enumerate}
\item If $A$ has bounded $p^\infty$-torsion and $A\to B$ is $p$-completely flat, then $B$ has bounded $p^\infty$-torsion.
\item Conversely, if $B$ has bounded $p^\infty$-torsion and $A\to B$ is $p$-completely faithfully flat, then $A$ has bounded $p^\infty$-torsion.
\item Assume that $A$ and $B$ have bounded $p^\infty$-torsion. Then the map $A\to B$ is $p$-completely flat (resp.~$p$-completely faithfully flat) if and only if $A/p^nA\to B/p^n B$ is flat (resp.~faithfully flat) for all $n\geq 1$.
\end{enumerate}
\end{corollary}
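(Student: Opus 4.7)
The plan is to derive all three assertions from a single key identification: for any $p$-completely flat map $A\to B$ of derived $p$-complete rings, there is a natural equivalence $A[p^n]\dotimes_A B\simeq B[p^n]$ in $D(B)$ for each $n\geq 1$. To establish this, I will first bootstrap the $p$-complete flatness hypothesis (which a priori is a mod-$p$ statement) to a mod-$p^n$ statement. Starting from the flatness of $B\dotimes_A A/pA$ over $A/pA$, I apply Remark~\ref{rem:toramplitudethickening} inductively to the square-zero ideal $p^nA/p^{n+1}A\subset A/p^{n+1}A$ (square-zero since $2n\geq n+1$ for $n\geq 1$) to obtain that $B\dotimes_A A/p^nA\simeq B/p^nB$ is flat over $A/p^nA$ for every $n\geq 1$. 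Then, tensoring the truncation triangle $A[p^n][1]\to A\dotimes_{\mathbb{Z}}\mathbb{Z}/p^n\mathbb{Z}\to A/p^nA$ with $B$ over $A$ and comparing to the analogous triangle for $B$ (whose last two terms now match the base-change by the previous step, and whose third map agrees by naturality of the truncation onto $H^0$), the fibers are identified to yield $A[p^n]\dotimes_A B\simeq B[p^n]$.

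Given this identification, part (1) is immediate: if $A[p^c]=A[p^n]$ for $n\geq c$, then $B[p^c]=A[p^c]\otimes_A B=A[p^n]\otimes_A B=B[p^n]$, so $B$ has bounded $p^\infty$-torsion with the same bound. For part (2), the assumption that $B[p^c]=B[p^n]$ for $n\geq c$ forces the inclusion $A[p^c]\hookrightarrow A[p^n]$ to be an isomorphism after tensoring with $B$; $p$-complete faithful flatness then descends this to the statement $A[p^c]=A[p^n]$. Here the key point is that $A/p^nA\to B/p^nB$ is faithfully flat for all $n\geq 1$: faithful flatness of a flat map depends only on the surjectivity of the corresponding $\Spec$-map set-theoretically, which is insensitive to nilpotents, so it reduces to the given mod-$p$ case.

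For part (3), the forward direction is exactly the iterative application of Remark~\ref{rem:toramplitudethickening} sketched above, together with the promotion of faithful flatness from $n=1$ to all $n\geq 1$ just noted. The converse direction appeals to Lemma~\ref{BoundedTorsionFlat}(2): given flat maps $A/p^nA\to B/p^nB$ for all $n\geq 1$, the lemma applied to $N=B$ yields $p$-complete flatness, where we use that $B$ is classically $p$-complete (combining derived $p$-completeness with bounded $p^\infty$-torsion, as noted after Lemma~\ref{BoundedTorsionTA}). The faithful version is immediate from mod-$p$ faithful flatness via the definition.

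The main obstacle I anticipate is the clean identification of triangles in the proof of the key lemma; one must verify that the triangle obtained by tensoring the truncation of $A\dotimes_{\mathbb{Z}}\mathbb{Z}/p^n\mathbb{Z}$ with $B$ over $A$ agrees canonically with the truncation triangle of $B\dotimes_{\mathbb{Z}}\mathbb{Z}/p^n\mathbb{Z}$. This should follow from the naturality of the truncation functor and the fact that both candidate maps to $B/p^nB$ agree with the Postnikov quotient onto $H^0$, but the details require care, especially to ensure the connecting morphisms match and the induced isomorphism on fibers is natural in $A\to B$.
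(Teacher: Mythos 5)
Your proof is correct and follows essentially the same route as the paper, whose argument is to quote Lemma~\ref{BoundedTorsionFlat}: the isomorphism $A[p^n]\otimes_A B\cong B[p^n]$ that you establish via the square-zero bootstrap and the triangle comparison is precisely the final statement of Lemma~\ref{BoundedTorsionFlat}(1), and it drives all three parts in the same way (the paper invokes it directly for (1) and (3), and for (2) observes via faithful flatness of $A/p^n\to B/p^n$ that $A[p^n]\hookrightarrow B[p^n]$, which is equivalent to your descent argument). The only difference is that you re-derive that identification from first principles rather than citing the lemma; both derivations tensor the truncation triangle $A[p^n][1]\to A\dotimes_{\mathbb{Z}}\mathbb{Z}/p^n\mathbb{Z}\to A/p^nA$ against $B$.
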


\begin{proof} Parts (1) and (3) follow immediately from Lemma~\ref{BoundedTorsionFlat}. For part (2), note that the proof of Lemma~\ref{BoundedTorsionFlat} shows that if $B$ is $p$-completely flat, then
\[
A[p^n]\otimes_{A/p^nA} B/p^nB\to B[p^n]
\]
is an isomorphism. By faithful flatness of $A/p^nA\to B/p^nB$, this implies that $A[p^n]\subset B[p^n]$ for all $n\geq 1$, which implies that if $B[p^c]=B[p^\infty]$, then also $A[p^c]=A[p^\infty]$, as desired.
\end{proof}

\begin{remark} In particular, if $R$ is some ring and $A\to B$ is a $p$-completely faithfully flat map of derived $p$-complete $R$-algebras with bounded $p^\infty$-torsion, then
\[
\wedge^i L_{A/R}^\wedge \simeq \lim \big(\cosimp {\wedge^i L_{B/R}^\wedge}{\wedge^i L_{(B \otimes_A B)/R}^\wedge}{\wedge^i L_{(B \otimes_A B \otimes_A B)/R}^\wedge} \big).\]
Indeed, this follows by applying Theorem~\ref{FlatDescentCC} to $A/p^n\to B/p^n$ and passing to the limit over $n$, noting that $\{A/p^n\}$ and $\{A\otimes^L_{\mathbb Z} \mathbb Z/p^n\}$ are pro-isomorphic.
\end{remark}

\subsection{The quasisyntomic site}
\label{qsynsite}

In the following, we will work with $p$-complete rings $A$ with bounded $p^\infty$-torsion (in which case classical and derived $p$-completeness are equivalent). To state our results in optimal generality, it will be convenient to generalize the usual notions of smooth and syntomic morphisms by omitting finiteness conditions and merely requiring good behaviour of the cotangent complex.

We note that it would be better to say ``$p$-completely quasisyntomic/quasismooth'' in place of ``quasisyntomic/quasismooth'' below; but that would get excessive for the purposes of this paper.

\begin{definition}[The quasisyntomic site]  We need the following notions.
\label{defqs}
\begin{enumerate}
\item A ring $A$ is quasisyntomic if the following conditions are satisfied.
\begin{enumerate}
\item The ring $A$ is $p$-complete with bounded $p^\infty$-torsion.
\item $L_{A/\mathbb{Z}_p} \in D(A)$ has $p$-complete Tor amplitude in $[-1,0]$.
\end{enumerate}
We denote by $\Qs$ the category of quasisyntomic rings.
\end{enumerate}

Let $A \to B$ be a map of $p$-complete rings with bounded $p^\infty$-torsion.
\begin{enumerate}[resume]
\item We say that $A \to B$ is a {\em quasismooth\footnote{This notion is distinct from other notions with the same name used sometimes in the literature. For instance, it differs from Berthelot's notion of a quasismooth map (which is in terms of a lifting property with respect to nilideals, cf. \cite[IV.1.5]{Berthelot1974}) or Lurie's notion of a quasismooth map of derived schemes (which, in fact, is much closer to our notion of quasisyntomic, cf. \cite[page 9]{SAG}).} map (resp. cover)} if:
\begin{enumerate}
\item $B$ is $p$-completely flat over $A$ (resp. $p$-completely faithfully flat over $A$).
\item $L_{B/A} \in D(B)$ is $p$-completely flat.
\end{enumerate}

\item We say that $A \to B$ is a {\em quasisyntomic map (resp. cover)} if:
\begin{enumerate}
\item $B$ is $p$-completely flat over $A$ (resp. $p$-completely faithfully flat over $A$).
\item $L_{B/A} \in D(B)$ has $p$-complete Tor amplitude in $[-1,0]$.
\end{enumerate} 
\end{enumerate}

We endow $\Qs^\sub{op}$ with the structure of a site via the quasisyntomic covers, cf.~Lemma~\ref{QsSite} below.
\end{definition}

\begin{remark}[Relation to Quillen's definition]
\label{remark_syntomic}
In \cite{Quillen1970}, Quillen defines a notion of quasiregular and regular ideals $I\subset A$. In particular, by \cite[Theorem 6.13]{Quillen1970}, an ideal $I\subset A$ is quasiregular if and only if $L_{(A/I)/A}$ has Tor-amplitude concentrated in degree $-1$. A map of rings $A\to B$ is classically called syntomic if it is flat and a local complete intersection. Note that if $A,B$ are Noetherian and $A\to B$ is of finite type, then it is a local complete intersection if and only if $L_{B/A}$ has Tor amplitude in $[-1,0]$ \cite[Theorem 5.5]{Quillen1970}; this equivalence even remains valid without the finite type hypothesis provided that ``local complete intersection'' is replaced by a more general notion for non-finite type morphisms of Noetherian rings, \cite{Avramov}. Thus, ignoring $p$-completion issues, the above definition of quasisyntomic is designed to extend the usual notion of syntomic to the non-Noetherian, non-finite-type setting.
\end{remark}

\begin{example}
\label{ex:SmoothQsyn}
(1) The $p$-adic completion of a smooth algebra over a perfectoid ring lies in~$\Qs$ by Example~\ref{Qspsperf} below and Corollary~\ref{cor:torsionfflat}.

(2) Any $p$-complete local complete intersection Noetherian ring $A$ lies in $\Qs$ (cf.~\cite[Tag 09Q3]{StacksProject} for the definition of a local complete intersection ring; it is equivalent to the map $\mathbb Z\to A$ being a local complete intersection in the sense of \cite{Avramov}). The boundedness of the $p^\infty$-torsion is clear as $A$ is Noetherian. The rest follows from (the easy direction of) the following theorem of Avramov.

\begin{theorem}[{\cite[Theorem 1.2]{Avramov}}] A Noetherian ring $A$ is a local complete intersection if and only if $L_{A/\mathbb Z}$ has Tor-amplitude in $[-1,0]$.
\end{theorem}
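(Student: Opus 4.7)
The plan is to reduce both directions to the case of a complete Noetherian local ring, where we can use Cohen's structure theorem to present the ring as a quotient of a regular ring, and then use the transitivity triangle together with a Quillen-style characterization of quasiregular ideals via the cotangent complex.

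\textbf{Step 1 (Reductions).} Both conditions are local on $A$: being LCI is defined pointwise at primes, and the Tor-amplitude of $L_{A/\mathbb Z}$ can be checked after localization (since localization is flat, so it preserves Tor-amplitude). Moreover, for a Noetherian local ring $(B,\mathfrak m)$, the $\mathfrak m$-adic completion $\widehat B$ is faithfully flat over $B$, and $L_{\widehat B/\mathbb Z} \simeq L_{B/\mathbb Z}\dotimes_B \widehat B$ in a suitable derived sense, so we may further pass to the completion. Thus we reduce to the case that $A$ is a complete Noetherian local ring with residue field $k$.

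\textbf{Step 2 (Structure via Cohen).} By Cohen's structure theorem, there is a surjection $R \twoheadrightarrow A$ from a complete regular local ring $R$: either $R = k[[x_1,\dots,x_n]]$ in equal characteristic, or $R = W(k')[[x_1,\dots,x_n]]$ (with $k'$ a suitable Cohen subring) in mixed characteristic. Let $I = \ker(R\to A)$. The transitivity triangle gives
\[
L_{R/\mathbb Z}\dotimes_R A \longrightarrow L_{A/\mathbb Z} \longrightarrow L_{A/R}.
\]
The strategy is to use this to convert information about $L_{A/\mathbb Z}$ into information about $L_{A/R}$, and then to recognize quasiregular ideals.

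\textbf{Step 3 (Inputs).} Two ingredients are needed. First, that $L_{R/\mathbb Z}$ is flat (or at least has Tor-amplitude in $[0,0]$) for the regular rings $R$ appearing above: for $R = k[[x_1,\dots,x_n]]$ with $k$ perfect one has $L_{k/\mathbb F_p} = 0$, and then $L_{R/\mathbb F_p}$ is free of rank $n$; the $W(k')$ case is handled using that $\mathbb Z_p \to W(k')$ is formally \'etale for $k'$ perfect, so $L_{W(k')/\mathbb Z_p}$ vanishes $p$-adically; the general case (including imperfect residue fields) requires descent along the flat map $\mathbb Z \to R$ to handle extensions of residue fields. Second, Quillen's theorem \cite[Theorem 6.13]{Quillen1970}: an ideal $I \subset R$ is quasiregular if and only if $L_{(R/I)/R}$ has Tor-amplitude concentrated in degree $-1$ (equivalently, is the shift of a flat $R/I$-module); in the Noetherian local case, quasiregular coincides with regular.

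\textbf{Step 4 (The two directions).} For the easy direction, if $A$ is LCI, then after the reductions $I$ is generated by a regular sequence in $R$, so $L_{A/R} \simeq (I/I^2)[1]$ is concentrated in degree $-1$ and $A$-flat. Combined with flatness of $L_{R/\mathbb Z}$, the transitivity triangle immediately places $L_{A/\mathbb Z}$ in Tor-amplitude $[-1,0]$. For the hard direction, assume $L_{A/\mathbb Z}$ has Tor-amplitude $[-1,0]$. The triangle, together with the flatness of $L_{R/\mathbb Z}$, shows that $L_{A/R}$ has Tor-amplitude in $[-1,0]$. But $R \to A$ is a surjection, so $H^0(L_{A/R}) = \Omega^1_{A/R} = 0$, forcing $L_{A/R}$ to be concentrated in degree $-1$. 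By Quillen's theorem, $I$ is quasiregular, and by the Noetherian local structure this means $I$ is generated by a regular sequence, so $A$ is LCI.

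\textbf{Main obstacle.} The technical heart is Step 3: proving that $L_{R/\mathbb Z}$ is flat for a general regular Noetherian (or regular complete local) ring $R$, particularly when the residue field is imperfect, and verifying that the transitivity triangle argument works uniformly in mixed characteristic. This is where Avramov's work in \cite{Avramov} does the subtle heavy lifting via a careful analysis of Andre--Quillen homology and flat factorizations of ring maps; a clean self-contained treatment would require developing those tools in detail.
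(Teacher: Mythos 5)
The paper offers no proof of this statement; it is simply cited from Avramov's 1999 Annals paper, so there is no internal argument to compare against. Assessed on its own terms, your proposal contains several concrete errors. First, the reduction to the complete local case via ``$L_{\widehat B/\mathbb Z} \simeq L_{B/\mathbb Z}\dotimes_B \widehat B$'' is false: the cotangent complex does not commute with $\mathfrak m$-adic completion, and in general $L_{\widehat B/B}\neq 0$ (one recovers the correct statement only after observing that $L_{\widehat B/B}$ is \emph{flat concentrated in degree $0$}, which is a form of precisely the input you defer to Step~3). Second, the claim ``$L_{R/\mathbb F_p}$ is free of rank $n$'' for $R=k[[x_1,\dots,x_n]]$ is false: $\Omega^1_{k[[x]]/k}$ is not finitely generated, since $d$ of a power series is not determined by term-by-term differentiation in the purely algebraic module of Kähler differentials. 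The relevant true statement is that $L_{R/k}$ (not $L_{R/\mathbb Z}$) is flat and concentrated in degree $0$, which is a non-trivial consequence of the fact that $k\to R$ is a geometrically regular map; that fact is in turn essentially equivalent to N\'eron--Popescu desingularization, not a computation.

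Third, and most seriously for the structure of the argument: the headline claim that $L_{R/\mathbb Z}$ has Tor-amplitude $[0,0]$ for a regular local ring $R$ is simply false when $R$ has characteristic $p$. The map $\mathbb Z\to R$ factors through $\mathbb F_p$ and is not flat, so $L_{\mathbb F_p/\mathbb Z}\dotimes_{\mathbb F_p} R\simeq R[1]$ contributes a nonzero $H_1$; thus $L_{R/\mathbb Z}$ has Tor-amplitude $[-1,0]$ at best. With this weaker input, the cofiber argument in Step~4 only bounds the Tor-amplitude of $L_{A/R}$ in $[-2,0]$, which does not let you invoke Quillen's criterion. To repair this one must work relative to $\mathbb F_p$ (or a coefficient ring) and then separately relate the Tor-amplitude hypotheses on $L_{A/\mathbb Z}$ and $L_{A/\mathbb F_p}$ through the triangle $A[1]\to L_{A/\mathbb Z}\to L_{A/\mathbb F_p}$, which requires an additional argument (the connecting map must be controlled). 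Even after all these repairs, what you would have produced is a reduction of Avramov's theorem to Grothendieck's conjecture that regular maps have flat cotangent complex, i.e.\ to Popescu's theorem; Avramov's actual proof takes an entirely different route through DG algebra (Tate) resolutions, deviations, and the homotopy Lie algebra of a local homomorphism, without invoking Popescu. So the proposal is both technically flawed in several places and strategically quite far from the cited source.
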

\end{example}

\begin{remark}[HKR for quasismooth maps]
\label{HKRQuasismooth}
Say $A \to B$ is a map of $p$-complete rings with bounded $p^\infty$-torsion. Consider the $p$-completion $\HH(B/A;\mathbb Z_p)$ of the Hochschild complex. By $p$-completing the HKR filtration from  \S \ref{subsection_HH}, we obtain a $\T$-equivariant complete descending $\mathbb{N}$-indexed filtration $\mathrm{Fil}_\sub{HKR}^n$ with $\gr^i_\sub{HKR} \HH(B/A;\mathbb Z_p)\simeq (\wedge^i L_{-/A}[i])^{\wedge}_p$ (with the trivial $\T$-action).  If $A \to B$ is quasismooth, then each $(\wedge^i L_{B/A}[i])^{\wedge}_p \simeq (\Omega^i_{B/A})^\wedge_p[i]$ is concentrated in homological degree $i$ by Lemma~\ref{BoundedTorsionFlat}. Thus, it follows that $\pi_* \HH(B/A;\mathbb Z_p)$ is the $p$-completion of the exterior algebra $\Omega^*_{B/A}$, thus giving a $p$-complete HKR theorem for quasismooth maps.
\end{remark}

\begin{lemma}
\label{QsCoverQs}
Let $A \to B$ be a quasisyntomic cover of $p$-complete rings. Then $A \in \Qs$  if and only if $B \in \Qs$. 
\end{lemma}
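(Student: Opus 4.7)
The plan is to reduce the lemma to comparing $p$-complete Tor-amplitudes of $L_{A/\mathbb{Z}_p}$ and $L_{B/\mathbb{Z}_p}$: the definition of a quasisyntomic cover of $p$-complete rings already builds in that both $A$ and $B$ have bounded $p^\infty$-torsion, so condition (a) of Definition~\ref{defqs}(1) holds automatically for both, leaving only the cotangent-complex condition (b). The central input is the transitivity triangle
\[
L_{A/\mathbb{Z}_p} \dotimes_A B \to L_{B/\mathbb{Z}_p} \to L_{B/A}
\]
from \S \ref{subsection_AQ}, together with the hypothesis that $L_{B/A}$ has $p$-complete Tor-amplitude in $[-1,0]$. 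All three terms lie in $D^{\le 0}$ (cotangent complexes are connective, and this is preserved under derived tensor product with modules), so the upper bound of $0$ in the Tor-amplitude is automatic after any further $\dotimes$; it suffices to control vanishing of cohomology in degrees $< -1$ after tensoring with an arbitrary $B/pB$-module $N$.

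For the forward direction, assume $A \in \Qs$. Since $A \to B$ is $p$-completely flat, the map $A/pA \to B/pB$ is flat (Corollary~\ref{cor:torsionfflat}), hence the base change $L_{A/\mathbb{Z}_p} \dotimes_A B$ inherits $p$-complete Tor-amplitude in $[-1,0]$ from $L_{A/\mathbb{Z}_p}$. The long exact sequence associated to the transitivity triangle (after $\dotimes_B N$) then shows $L_{B/\mathbb{Z}_p}$ has $p$-complete Tor-amplitude in $[-1,0]$, using the routine fact that a triangle whose two outer vertices have Tor-amplitude in $[-1,0]$ forces the middle to share the same bound.

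For the reverse direction, assume $B \in \Qs$. Tensoring the triangle with a $B/pB$-module $N$, the long exact sequence
\[
\cdots \to H^{i-1}(L_{B/A} \dotimes_B N) \to H^i\bigl(L_{A/\mathbb{Z}_p} \dotimes_A B \dotimes_B N\bigr) \to H^i(L_{B/\mathbb{Z}_p} \dotimes_B N) \to \cdots
\]
has outer terms vanishing for $i < -1$ by the assumptions on $L_{B/\mathbb{Z}_p}$ and $L_{B/A}$, forcing $L_{A/\mathbb{Z}_p} \dotimes_A B$ to have $p$-complete Tor-amplitude in $[-1,0]$ over $B$. Faithful flatness of $A/pA \to B/pB$ (from $A \to B$ being $p$-completely faithfully flat) then descends this property to $L_{A/\mathbb{Z}_p}$, concluding $A \in \Qs$. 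I do not anticipate a real obstacle here: the argument is a bookkeeping exercise combining the transitivity triangle, the long exact sequence in cohomology, the stability of Tor-amplitude under flat base change, and faithfully flat descent of Tor-amplitude; the only mildly subtle observation is the automatic $D^{\le 0}$ bound on all three vertices of the triangle, which ensures the reverse direction does not stumble on an $H^1$-obstruction.
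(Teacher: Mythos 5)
Your proof is correct and follows essentially the same route as the paper: reduce to the cotangent-complex condition, use the transitivity triangle for $\mathbb{Z}_p \to A \to B$ in both directions, and descend along the faithfully flat map $A/pA \to B/pB$. The only cosmetic differences are that you observe the connectivity bound $D^{\leq 0}$ up front (so you only need to chase the lower bound $-1$ through the long exact sequence), whereas the paper first deduces Tor-amplitude in $[-1,1]$ and then improves to $[-1,0]$ by connectivity; and you treat the bounded $p^\infty$-torsion condition as presupposed by the term ``quasisyntomic cover,'' whereas the paper explicitly invokes Corollary~\ref{cor:torsionfflat} to propagate it from whichever ring lies in $\Qs$.
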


\begin{proof} By Corollary~\ref{cor:torsionfflat}, we can assume that $A$ and $B$ have bounded $p^\infty$-torsion. Assuming $A \in \Qs$, the transitivity triangle for $\mathbb{Z}_p \to A \to B$ and the quasisyntomicity of $A$ and $A \to B$ imply that $B \in \Qs$. Conversely, assume $B \in \Qs$. The transitivity triangle for $\mathbb{Z}_p \to A \to B$ and the quasisyntomicity of $B$ and $A \to B$ show that $L_{A/\mathbb{Z}_p} \dotimes_A B \in D(B)$ has $p$-complete Tor amplitude in $[-1,1]$. By connectivity, this trivially improves to $[-1,0]$. As $A \to B$ is $p$-completely faithfully flat, it follows that $L_{A/\mathbb{Z}_p} \in D(A)$ also has $p$-complete Tor amplitude in $[-1,0]$.
\end{proof}

\begin{lemma} 
\label{QSProperties}
All rings below are assumed to be $p$-complete with bounded $p^\infty$-torsion.
\begin{enumerate}
\item If $A \to B$ and $B \to C$ are quasisyntomic (resp.~quasismooth), then $A \to C$ is quasisyntomic (resp.~quasismooth). If $A\to B$ and $B\to C$ are covers, then so is $A\to C$.
\item If $A \to B$ is quasisyntomic (resp.~quasismooth) and $A \to C$ is arbitrary, then the $p$-adically completed pushout $C \to D := B \widehat{\otimes}_A C$ of $A \to B$ is also quasisyntomic (resp.~quasismooth). If $A\to B$ is a cover, then so is $C\to D$.
\end{enumerate}
\end{lemma}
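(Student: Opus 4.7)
The plan is to verify the two defining conditions of quasisyntomic (resp. quasismooth) maps—namely $p$-complete (faithful) flatness and a bound on the Tor amplitude of the cotangent complex—separately, for both parts.

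For part (1), the $p$-completed (faithful) flatness of the composite $A\to C$ follows by composing the (faithfully) flat reductions $A/p^nA\to B/p^nB\to C/p^nC$ for all $n\ge1$ and translating back via Corollary~\ref{cor:torsionfflat}. For the cotangent complex, I would tensor the transitivity triangle
\[
L_{B/A}\dotimes_B C \;\longrightarrow\; L_{C/A} \;\longrightarrow\; L_{C/B}
\]
with $C/pC$. The right-hand term has Tor amplitude in $[-1,0]$ (resp. is flat) over $C/pC$ by hypothesis on $B\to C$. The left-hand term can be rewritten as $(L_{B/A}\dotimes_B B/pB)\dotimes_{B/pB} C/pC$, which inherits the same property from the hypothesis on $A\to B$ since Tor amplitude (resp. flatness) is preserved under arbitrary base change. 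A standard two-out-of-three argument on Tor amplitude—using that an extension of flat modules concentrated in degree zero is again a flat module concentrated in degree zero—then gives the desired bound on $L_{C/A}\dotimes_C C/pC$.

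For part (2), write $\bar R := R/pR$ for any $p$-complete ring $R$. The key observation is the identification
\[
D/pD \;\cong\; \bar B\otimes_{\bar A}\bar C,
\]
which uses that the $p$-complete flatness of $A\to B$ forces the higher Tor's in $B\dotimes^L_A C$ to vanish after reducing modulo $p$. The map $\bar C\to D/pD$ is then the base change of the (faithfully) flat map $\bar A\to \bar B$ along $\bar A\to \bar C$, and is hence (faithfully) flat. This establishes $p$-complete (faithful) flatness of $C\to D$; Corollary~\ref{cor:torsionfflat}(1) then supplies boundedness of the $p^\infty$-torsion of $D$, and also handles the cover assertion. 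For the cotangent complex, flat base change (applied to the flat map $\bar A\to \bar B$) identifies
\[
L_{D/C}\dotimes_D D/pD \;\simeq\; L_{\bar D/\bar C} \;\simeq\; L_{\bar B/\bar A}\dotimes_{\bar B}\bar D \;\simeq\; \bigl(L_{B/A}\dotimes_B B/pB\bigr)\dotimes_{\bar B}\bar D,
\]
which has $p$-complete Tor amplitude in $[-1,0]$ (resp. is $p$-completely flat) over $D$ by the hypothesis on $A\to B$ and stability of these properties under base change.

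The main technical obstacle is the first identification $L_{D/C}\dotimes_D D/pD \simeq L_{\bar D/\bar C}$, which requires controlling the cotangent complex of a $p$-adically completed pushout in terms of its mod-$p$ reduction. I would handle this by combining the transitivity triangle for $C\to D\to \bar D$ with the principle that the derived $p$-completion map $R\to R^{\wedge}_p$ has relative cotangent complex vanishing after reducing mod $p$ (since $R/p\cong R^\wedge_p/p$, the base change of $L_{R^\wedge_p/R}$ to $R/p$ vanishes). All the remaining work is routine manipulation of Tor amplitude and flat base change, for which the results collected in \S\ref{pflat} suffice.
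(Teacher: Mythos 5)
Your part (1) is correct and matches the paper's approach: transitivity triangle for the cotangent complex plus composition of (faithfully) flat maps on the mod-$p^n$ reductions.

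Your part (2) has the right core idea (reduce mod $p$ and use flat base change), but the presentation is circular on a key point, and you then invent a spurious obstacle. The paper proves $p$-complete flatness of $C\to D$ by working with the \emph{derived} object $D' := (B\dotimes_A C)^\wedge_p$: derived base change shows $D'\dotimes_C C/pC \simeq \bar B\otimes_{\bar A}\bar C$ is concentrated in degree $0$ and flat, so $D'$ is $p$-completely flat, hence discrete by Lemma~\ref{BoundedTorsionFlat} (using bounded $p^\infty$-torsion of $C$), hence $D'\simeq D$. You instead phrase the argument with the classical $D$ from the start, claiming $D/pD \cong \bar B\otimes_{\bar A}\bar C$ is flat over $\bar C$ ``establishes $p$-complete flatness.'' But $p$-complete flatness is a condition on the \emph{derived} tensor product $D\dotimes_C C/pC$, and flatness of the classical quotient $D/pD = H^0$ does not by itself rule out $H^{-1}\neq 0$. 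You gesture at the derived picture (``higher Tor's vanish''), but without first identifying $D$ with $D'$ the logic doesn't close. Passing through $D'$ as the paper does is exactly what makes this rigorous.

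Your ``main technical obstacle'' is not actually an obstacle. Once $p$-complete flatness of $C\to D$ is in hand, $\bar D = D\dotimes_C\bar C$ is a derived pushout, so $L_{D/C}\dotimes_D\bar D\simeq L_{\bar D/\bar C}$ is just base change for the cotangent complex. (Equivalently, the paper applies base change directly to $D'\simeq B\dotimes_A C$ to get $L_{D/C}\simeq L_{B/A}\dotimes_B D$ without ever passing through $\bar D$.) Your proposed workaround via the cotangent complex of the completion map $R\to R^\wedge_p$ is both unnecessary and not correctly justified: the isomorphism of discrete rings $R/p\cong R^\wedge_p/p$ does not automatically give $L_{R^\wedge_p/R}\dotimes_{R^\wedge_p}R/p \simeq 0$, since the relevant base-change square need not be a derived pushout without further hypotheses. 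Drop the workaround and use base change directly.
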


\begin{proof} 
(1) This is clear from the transitivity triangle and stability of faithful flatness under composition.

(2) Let $D' := \widehat{B \dotimes_A C}$, so $D'$ is $p$-completely flat over $C$ by base change and $D = H^0(D')$. As $D'$ is $p$-completely flat over $C$, it is discrete by Lemma~\ref{BoundedTorsionFlat}, so $D \simeq D'$. As the formation of cotangent complexes commutes with derived base change, we get that $C \to D$ is quasisyntomic (resp.~quasismooth). Moreover, faithful flatness is preserved under base change.
\end{proof}

We follow the conventions of \cite[Tag 00VG]{StacksProject} for sites. Recall that the axioms for a covering family are: (a) isomorphisms are covers, (b) covers are stable under compositions, and (c) the pushout of a cover along an arbitrary map is required to exist and be a cover.

\begin{lemma}
\label{QsSite}
The category $\Qs^\sub{op}$ forms a site.
\end{lemma}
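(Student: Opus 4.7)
The plan is to verify the three axioms for a site as recalled just above the lemma: (a) isomorphisms are covers, (b) covers are stable under composition, and (c) pushouts of covers along arbitrary morphisms in $\Qs^\sub{op}$ exist and remain covers. Since the covers are quasisyntomic covers of $p$-complete rings with bounded $p^\infty$-torsion, the two key inputs already available are Lemma~\ref{QsCoverQs} (which says being in $\Qs$ descends and ascends along quasisyntomic covers) and Lemma~\ref{QSProperties} (which gives composition and base change stability of the notions of quasisyntomic / quasismooth maps and covers).

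First I would dispatch (a): for any isomorphism $A \to B$ in $\Qs$, the map is trivially $p$-completely faithfully flat, and $L_{B/A} \simeq 0$ has $p$-complete Tor amplitude in $[-1,0]$ vacuously, so it is a quasisyntomic cover. Next, (b) is immediate from Lemma~\ref{QSProperties}(1): the composition of two quasisyntomic covers of $p$-complete rings with bounded $p^\infty$-torsion is again a quasisyntomic cover, because the transitivity triangle controls the cotangent complex and faithful flatness composes.

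The only step requiring a small extra observation is (c). Given a quasisyntomic cover $A \to B$ and an arbitrary morphism $A \to C$ in $\Qs$, I would form the $p$-completed pushout $D := B \,\widehat{\otimes}_A\, C$. Lemma~\ref{QSProperties}(2) shows that the morphism $C \to D$ is again a quasisyntomic cover of $p$-complete rings with bounded $p^\infty$-torsion. It then remains to check that $D$ is actually an object of $\Qs$; this is where Lemma~\ref{QsCoverQs} enters: since $C \in \Qs$ and $C \to D$ is a quasisyntomic cover, we conclude $D \in \Qs$, so the pushout exists in $\Qs^\sub{op}$ and is a cover.

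I do not expect any real obstacle: all three axioms reduce to direct invocations of the previous two lemmas plus the trivial verification for isomorphisms. The most subtle point, and essentially the only one that is not bookkeeping, is ensuring that the $p$-completed tensor product $B \,\widehat{\otimes}_A\, C$ lies in $\Qs$ (in particular has bounded $p^\infty$-torsion and is discrete in the derived sense), but this was already arranged in the proof of Lemma~\ref{QSProperties}(2) by observing that $p$-complete flatness of $D$ over $C$ forces $D$ to coincide with the derived $p$-completed tensor product via Lemma~\ref{BoundedTorsionFlat}, after which Lemma~\ref{QsCoverQs} finishes the argument.
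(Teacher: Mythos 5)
Your proposal is correct and follows essentially the same route as the paper: the paper's own proof dismisses axioms (a) and (b) as trivial and focuses entirely on (c), where it forms the $p$-completed pushout $D = B\,\widehat{\otimes}_A\,C$, invokes Lemma~\ref{QSProperties} to see $C \to D$ is a quasisyntomic cover, and then Lemma~\ref{QsCoverQs} to conclude $D \in \Qs$. Your write-up merely makes (a) and (b) explicit, and correctly identifies the subtle point (well-definedness and membership in $\Qs$ of the completed tensor product) that carries the argument.
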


\begin{proof}
The only nontrivial assertion is the existence of pushouts of covers. Fix a diagram $C \gets A \to B$ in $\Qs$ with $A \to B$ being a quasisyntomic cover. Let $D := B \widehat{\otimes}_A C$ be the pushout in $p$-complete rings. As $C$ has bounded $p^\infty$-torsion, Lemma~\ref{QSProperties} implies that $C \to D$ is a quasisyntomic cover. Lemma~\ref{QsCoverQs} then implies that $D \in \Qs$. It is then immediate that $C \to D$ provides a pushout of $A \to B$ in $\Qs$.
\end{proof}

\subsection{Perfectoid rings}
\label{subsec:perfectoid}

For later reference, we recall a few facts about perfectoid rings in the sense of \cite[Definition 3.5]{BMS}, sometimes also called integral perfectoid rings to distinguish them from the perfectoid Tate rings usually studied in relation to perfectoid spaces.

\begin{definition} A ring $R$ is perfectoid if it is $p$-adically complete, there is some $\pi\in R$ such that $\pi^p=pu$ for some unit $u\in R^\times$, the Frobenius map $x\mapsto x^p$ on $R/p$ is surjective, and the kernel of the map $\theta: A_{\inf}(R)\to R$ is generated by one element. Here $A_{\inf}(R) = W(R^\flat)$ where $R^\flat$ is the inverse limit of $R/p$ along the Frobenius map, and $\theta: A_{\inf}(R)\to R$ is Fontaine's map (written $\theta_R$ if there is any chance of confusion), cf.~\cite[Lemma 3.2]{BMS}.
\end{definition}

The main properties of perfectoid rings that we need are summarized in the following proposition.

\begin{proposition}\label{prop:basicperfd} Let $R$ be a perfectoid ring.
\begin{enumerate}
\item The kernel of $\theta: A_{\inf}(R)\to R$ is generated by a non-zero-divisor $\xi$ of the form $p+[\pi^\flat]^p\alpha$, where $\pi^\flat=(\pi,\pi^{1/p},\ldots)\in R^\flat$ is a system of $p$-power roots of an element $\pi$ as in the definition and $\alpha\in A_{\inf}(R)$ is some element.
\item The cotangent complex $L_{R/\mathbb Z_p}$ has $p$-complete Tor-amplitude concentrated in degree $-1$, and its derived $p$-completion is isomorphic to $R[1]$.
\item The $p^\infty$-torsion in $R$ is bounded. More precisely, $R[p^\infty]=R[p]$.
\end{enumerate}

In particular, $R\in \Qs$.
\end{proposition}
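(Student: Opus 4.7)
My plan is to verify the three parts in sequence and then combine them to conclude $R \in \Qs$.

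For part (1), I would first use the surjectivity of Frobenius on $R/p$ together with the $p$-adic completeness of $R$ to lift a compatible system of $p$-power roots and produce $\pi^\flat \in R^\flat$ with $(\pi^\flat)^\sharp := \theta([\pi^\flat]) = \pi$. A direct computation gives
\[
\theta\bigl(p + [\pi^\flat]^p\alpha\bigr) = p + \pi^p\,\theta(\alpha) = p\bigl(1 + u\,\theta(\alpha)\bigr),
\]
which vanishes iff $\theta(\alpha) = -u^{-1}$. Since $\theta$ is surjective, such an $\alpha$ exists, and it can be chosen to be a unit in $A_{\inf}(R)$ because units lift through $\theta$ (using that $A_{\inf}(R)$ is complete along an ideal containing $\ker\theta$). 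Thus $\xi := p + [\pi^\flat]^p\alpha \in \ker\theta$. Since $\ker\theta$ is principal with non-zero-divisor generator $\xi_0$, write $\xi = \xi_0 \beta$; both $\xi$ and $\xi_0$ are distinguished elements of $W(R^\flat)$ (their Teichm\"uller expansion has unit $p$-coefficient), forcing $\beta \in A_{\inf}(R)^\times$, so $\xi$ itself generates $\ker\theta$.

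For part (2), I would apply the transitivity triangle
\[
L_{A_{\inf}(R)/\mathbb{Z}_p}\dotimes_{A_{\inf}(R)} R \;\to\; L_{R/\mathbb{Z}_p} \;\to\; L_{R/A_{\inf}(R)}.
\]
Since $A_{\inf}(R)/p = R^\flat$ is a perfect $\mathbb{F}_p$-algebra, $L_{R^\flat/\mathbb{F}_p} = 0$; a standard deformation-theoretic argument for Witt vectors then shows $L_{A_{\inf}(R)/\mathbb{Z}_p}$ is derived $p$-completely zero. Consequently, after $p$-completion, $L_{R/\mathbb{Z}_p} \simeq L_{R/A_{\inf}(R)}$. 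Since $\xi$ is a non-zero-divisor, the conormal presentation identifies the latter with $(\xi)/(\xi^2)[1] \simeq R[1]$. Reducing modulo $p$ shows $L_{R/\mathbb{Z}_p}\dotimes_R R/p$ is a flat $R/p$-module in degree $-1$, giving the asserted $p$-complete Tor-amplitude concentrated in degree $-1$.

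For part (3), the identity $p = u^{-1}\pi^p$ yields $R[p^n] = R[\pi^{pn}]$. To bound these, lift $x \in R[p^n]$ to $y \in A_{\inf}(R)$ with $p^n y = \xi z$, and substitute $\xi = p + [\pi^\flat]^p\alpha$ with $\alpha$ a unit (from part~(1)) to obtain $[\pi^\flat]^p z \in p\,A_{\inf}(R)$. Reducing modulo $p$ in $R^\flat$ yields $(\pi^\flat)^p \bar z = 0$, and perfectness of $R^\flat$ (in particular, reducedness combined with the existence of $p$-power roots) propagates this to a vanishing involving a single $\pi^\flat$-divisibility, after which one iterates back in $A_{\inf}(R)$ to conclude $py \in (\xi)$, i.e.~$px = 0$ in $R$. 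Combining the three parts, $R$ is $p$-complete by hypothesis, has bounded $p^\infty$-torsion by part~(3), and its cotangent complex has $p$-complete Tor-amplitude in $\{-1\}\subset[-1,0]$ by part~(2); hence $R\in\Qs$.

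The main obstacle I anticipate is the torsion analysis in part~(3): passing cleanly from a vanishing statement modulo $p$ in the perfect ring $R^\flat$ back to a divisibility statement in $A_{\inf}(R)$, and then iterating to get $py\in(\xi)$, requires careful bookkeeping with the distinguished form of $\xi$ and the $p$-power roots in $R^\flat$. Parts~(1) and~(2) are essentially formal consequences of the perfectoid hypotheses once the mechanics are set up, whereas part~(3) is where the full ``perfectoid package'' is genuinely used in combination rather than separately.
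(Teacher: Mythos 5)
Your treatments of parts (1) and (2) are essentially the paper's: for (1) the paper simply defers to the construction in \cite[Lemma 3.10]{BMS}, and your reconstruction (build a distinguished element of the given shape, then compare against a chosen generator $\xi_0$ and use that one of the two factors must be a unit) is the right argument; for (2), using the transitivity triangle for $\mathbb{Z}_p \to A_{\inf}(R) \to R$ together with $A_{\inf}(R)/p = R^\flat$ being perfect is exactly what the paper does.

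Part (3) has a genuine gap, and it is exactly where you flagged uncertainty. From $p^2 y = \xi z$ and $\xi = p + [\pi^\flat]^p\alpha$, reducing the equation to its $p^0$-Teichm\"uller coefficient gives only $(\pi^\flat)^p z_0 = 0$ in $R^\flat$ (where $z_0$ is the leading Teichm\"uller coefficient of $z$). Reducedness of $R^\flat$ upgrades this to $\pi^\flat z_0 = 0$, but that is strictly weaker than $z_0 = 0$, and no amount of ``iterating back in $A_{\inf}(R)$'' will get you from $[\pi^\flat]z \in p A_{\inf}(R)$ to $z \in p A_{\inf}(R)$, which is what you actually need to conclude $pf \in (\xi)$. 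The missing ingredient is the \emph{second} relation, from the $p^1$-coefficient of the identity $p^2 f = g\xi$: writing $g = \sum [g_i]p^i$ and $\xi = \sum [a_i]p^i$, one gets $a_0 g_0 = 0$ \emph{and} $a_0 g_1 + a_1 g_0 = 0$. Multiplying the second by $g_0$ and substituting the first gives $a_1 g_0^2 = 0$; since $a_1 \in R^{\flat,\times}$ (because $\xi$ is distinguished), $g_0^2 = 0$, hence $g_0 = 0$ by reducedness, hence $p \mid g$ and so $pf = \tfrac{g}{p}\xi \in (\xi)$. Your sketch never extracts this second relation, so it cannot close. (The paper also records a second, entirely different proof of (3): embed $R$ into a product of perfectoid valuation rings using $v$-descent from \cite{BhattScholzeWitt}, and note valuation rings are domains hence have bounded $p^\infty$-torsion — a ``global'' argument that avoids the Teichm\"uller bookkeeping altogether, in case you prefer that route.)
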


\begin{proof} Part (1) follows from the proof of \cite[Lemma 3.10]{BMS}, in particular the construction of the element $\xi$ in the beginning. For part (2), it is enough to see that the $p$-completion of $L_{R/\mathbb Z_p}$ is isomorphic to $R[1]$ by Lemma~\ref{lem:completionTorampl}. We use the transitivity triangle for $\mathbb{Z}_p \to A_{\inf}(R) \xrightarrow{\theta} R$ to see that the $p$-completions of $L_{R/\mathbb Z_p}$ and $L_{R/A_{\inf}(R)}$ agree, as $\mathbb{Z}_p \to A_{\inf}(R)$ is relatively perfect modulo $p$. But $L_{R/A_{\inf}(R)} \cong \ker(\theta)/\ker(\theta)^2[1] \cong R[1]$ as $\ker \theta$ is generated by a non-zero-divisor.

For part (3), we give two proofs. We start with a proof by ``overkill''. As valuation rings have bounded $p^\infty$-torsion (as they are domains), it suffices to show that $R$ embeds into a product of perfectoid valuation rings. When $R$ has characteristic $p$, this is clear: any reduced ring embeds into a product of domains, and any domain embeds into a valuation ring, and any valuation ring of characteristic $p$ embeds into its perfection. In general, we use $v$-descent techniques from \cite{BhattScholzeWitt}. Let $R^\flat \to S$ be a $v$-cover of $R^\flat$ with each connected component of $S$ being a perfect valuation ring (see \cite[Lemma 6.2]{BhattScholzeWitt}). If $S^\ast$ denotes the Cech nerve of $R^\flat \to S$, then $R^\flat \simeq \lim S^\ast$ by the v-descent result \cite[Theorem 4.1]{BhattScholzeWitt} for the structure sheaf. Applying the functor $W(-) \otimes^L_{W(R^\flat)} R$ (which commutes with limits) then shows that $R \simeq \lim S^{\ast,\sharp}$. Now any distinguished element (in the sense of \cite[Remark 3.11]{BMS}) in $W(S)$ with $S$ perfect is automatically a nonzerodivisor (see \cite[Lemma 3.10]{BMS}). So each $S^{i,\sharp}$ is concentrated in degree $0$. In particular, the map $R \to S^{0,\sharp} = S^{\sharp}$ is injective. As $S$ was a product of perfect valuation rings, $S^\sharp$ is a product of perfectoid valuation rings, to the claim follows.

Now we give a more elementary proof. Write $R = A_{\inf}(R)/\xi$. If $x\in R[p^n]$, then $x$ lifts to $\tilde{x} \in A_{\inf}(R)$ with $p^n \tilde{x} \in (\xi)$. It is thus enough to show the following: if $f \in A_{\inf}(R)$ and $p^2 f \in (\xi)$, then $pf \in (\xi)$.  Assume $p^2 f = g \xi$ for some $g \in A_{\inf}(R)$. Write $g = \sum_{i \geq 0} [g_i] p^i$ and $\xi = \sum_{i \geq 0} [a_i] p^i$ for the $p$-adic expansions of $g$ and $\xi$ with $g_i,a_i \in R^\flat$. We shall show that $g_0 = 0 \in R^\flat$; this will imply $p \mid g$, and hence $pf = \frac{g}{p} \xi \in (\xi)$ as $A_{\inf}(R)$ is $p$-torsionfree, as wanted. We can write
\[ g \xi = [a_0 g_0] + ([a_0 g_1] + [a_1 g_0]) p + hp^2 \]
for some $h \in A_{\inf}(R)=W(R^\flat)$. As $p^2 \mid g \xi$, we get
\[ a_0 g_0 = 0 \quad \text{and then} \quad a_0g_1 + a_1g_0 = 0\]
in $R^\flat$. Multiplying the second equation by $g_0$ and using the first equation yields $a_1 g_0^2 = 0 \in R^\flat$. But $a_1 \in R^{\flat,\ast}$ by the choice of $\xi$ in (1), so $g_0^2 = 0$, which implies $g_0 = 0$ as $R^\flat$ is perfect.
\end{proof}

Note that $A_{\inf}(R) = W(R^\flat)$ carries a natural Frobenius automorphism $\varphi$. We will also often use the map $\tilde\theta = \theta\circ \varphi^{-1}: A_{\inf}(R)\to R$, whose kernel is generated by $\tilde\xi = \varphi(\xi)$.

\subsection{Quasiregular semiperfectoid rings}
\label{qsprings}

A basis for the topology of $\Qs$ is given by the quasiregular semiperfectoid rings, defined as follows.

\begin{definition}
\label{defqsp}
A ring $S$ is {\em quasiregular semiperfectoid} if:
\begin{enumerate}
\item The ring $S$ is quasisyntomic, i.e.~$S\in \Qs$.
\item There exists a map $R \to S$ with $R$ perfectoid.
\item The Frobenius of $S/pS$ is surjective, i.e.~$S/pS$ is semiperfect.
\end{enumerate}
Write $\Qsp$ for the category of quasiregular semiperfectoid rings. We equip $\Qsp$ with the topology determined by quasisyntomic covers.
\end{definition}

\begin{remark}\label{rem:qsptor}
For $S \in \Qsp$, condition (3) in the definition ensures that $\Omega^1_{(S/pS)/\mathbb{F}_p} = 0$, and thus $L_{S/R} \dotimes_S S/pS \in D^{\leq -1}(S/pS)$ for any map $R \to S$. This observation shall be used often in the sequel. Moreover, it implies that $L_{S/\mathbb Z_p}$ has $p$-complete Tor amplitude concentrated in degree $-1$.
\end{remark}

\begin{remark} Conditions (2) and (3) can be replaced by the condition that there exists a surjective map $R\to S$ from a perfectoid ring $R$. This is clearly sufficient; conversely, $R\widehat{\otimes}_{\mathbb Z_p} W(S^\flat)$ is a perfectoid ring surjecting onto $S$, where $S^\flat$ is the inverse limit perfection of $S/pS$.
\end{remark}

\begin{remark} In Definition~\ref{defqsp}, condition (2) is not implied by the other conditions. For example, the ring $\mathbb{Z}_p$ itself satisfies (1) and (3) but not (2). 
\end{remark}

\begin{example}
\label{Qspsperf}
Any perfectoid ring $R$ lies in $\Qsp$. Indeed, conditions (2) and (3) in Definition~\ref{defqsp} are automatic. For (1), use Proposition~\ref{prop:basicperfd}.
\end{example}

\begin{lemma}
\label{QspAlternative}
Fix a $p$-complete ring $S$ with bounded $p^\infty$-torsion such that $S/pS$ is semiperfect. Then $S$ is quasiregular semiperfectoid if and only if there exists a map $R\to S$ with $R$ perfectoid such that $L_{S/R} \in D(S)$ has $p$-complete Tor amplitude concentrated in degree $-1$. In this case, the latter condition holds true for every map $R \to S$ with $R$ perfectoid.
\end{lemma}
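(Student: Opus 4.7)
The proof splits into an easy backward implication and a more substantive forward implication (whose proof will also yield the ``for every'' clause).

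\textbf{Backward implication.} Suppose there exists a perfectoid $R\to S$ with $L_{S/R}$ of $p$-complete Tor amplitude concentrated in degree $-1$. The only nontrivial condition to verify is $S\in\Qs$. Apply the transitivity triangle
\[
L_{R/\mathbb Z_p}\dotimes_R S \to L_{S/\mathbb Z_p}\to L_{S/R}.
\]
By Proposition~\ref{prop:basicperfd}(2) and base change, the leftmost term has $p$-complete Tor amplitude concentrated in degree $-1$. Combined with the hypothesis on the rightmost term, tensoring with any $S/pS$-module $N$ and running the long exact sequence produces a short exact sequence whose outer terms are flat $S/pS$-modules in degree $-1$ (with vanishing in all other degrees), so $L_{S/\mathbb Z_p}$ has $p$-complete Tor amplitude concentrated in $-1$, in particular in $[-1,0]$.

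\textbf{Forward implication + ``for every''.} Fix $S\in\Qsp$ and an arbitrary perfectoid $R\to S$. By Remark~\ref{rem:qsptor}, $L_{S/\mathbb Z_p}$ has $p$-complete Tor amplitude concentrated in degree $-1$. Running the same transitivity triangle now with both left terms of $p$-complete Tor amplitude concentrated in $-1$, the long exact sequence constrains $L_{S/R}\dotimes_S N$ to cohomological degrees $\{-2,-1\}$, with
\[
H^{-2}(L_{S/R}\dotimes_S N)\;\cong\;\ker\!\bigl(N\to H^{-1}(L_{S/\mathbb Z_p}\dotimes_S S/pS)\otimes_{S/pS} N\bigr).
\]
Remark~\ref{rem:qsptor} further places $L_{S/R}\dotimes_S N$ in $D^{\le -1}$, which is automatic in view of the above. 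The substantive remaining task is to show $H^{-2}$ vanishes for all $N$: equivalently, that the canonical map
\[
S/pS\;\longrightarrow\; H^{-1}(L_{S/\mathbb Z_p}\dotimes_S S/pS),\qquad 1\mapsto [d\xi_R],
\]
of flat $S/pS$-modules is a pure monomorphism.

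\textbf{The key step.} To establish this purity --- the main obstacle --- I would factor $R\to S$ through the perfectoid $\widetilde R := R\,\widehat\otimes_{\mathbb Z_p}\, W(S^\flat)$, which surjects onto $S$. Applying the transitivity triangle for $R\to\widetilde R\to S$ and invoking the backward-style argument above reduces the desired property for $L_{S/R}$ to the analogous statements for (a) the perfectoid map $R\to\widetilde R$, and (b) the perfectoid surjection $\widetilde R\twoheadrightarrow S$. Part (a) is handled by a direct computation with the distinguished elements $\xi_R,\xi_{\widetilde R}$ using Proposition~\ref{prop:basicperfd}(1),(3). Part (b) is the core of the argument: identify $L_{S/\widetilde R}\simeq J/J^2[1]$ for $J=\ker(\widetilde R\twoheadrightarrow S)$ and show $J/J^2$ is $p$-completely flat over $S$, i.e., $J$ is quasi-regular in Quillen's sense; this extracts the required flatness from the hypothesis $S\in\Qs$ combined with the structural control on $L_{\widetilde R/\mathbb Z_p}$ provided by Proposition~\ref{prop:basicperfd}(2).
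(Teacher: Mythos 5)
Your backward implication is correct and matches the paper's argument: apply the transitivity triangle for $\mathbb Z_p\to R\to S$ modulo $p$ and observe that both outer terms are flat in degree $-1$, so the middle term is too.

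Your forward implication correctly isolates the key obstacle --- showing that the map
\[
\pi_1\!\bigl(L_{R/\mathbb Z_p}\dotimes_R S/pS\bigr)\longrightarrow \pi_1\!\bigl(L_{S/\mathbb Z_p}\dotimes_S S/pS\bigr)
\]
from a rank-one free $S/pS$-module into a flat $S/pS$-module is a pure monomorphism --- but you do not actually establish this. Your key step factors $R\to S$ through the perfectoid $\widetilde R=R\,\widehat\otimes_{\mathbb Z_p}W(S^\flat)$ and then reduces to the surjective case $\widetilde R\twoheadrightarrow S$, but the surjective case is just one instance of the very statement being proved, so nothing has been gained. The sentence asserting that the quasi-regularity of $J=\ker(\widetilde R\to S)$ ``extracts the required flatness from the hypothesis $S\in\Qs$'' is precisely the content that needs a proof and is the entirety of what is hard: the hypothesis $S\in\Qs$ gives flatness of $\pi_1(L_{S/\mathbb Z_p}\dotimes S/pS)$, but a finite-free submodule of a flat module need not have flat quotient, so some additional input is required to conclude the cokernel is flat.

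The paper closes this gap as follows. It proves a self-contained auxiliary lemma: a map $\beta\colon F\to N$ of $A$-modules with $F$ finite free and $N$ flat is pure as soon as $\beta\otimes_A k$ is injective for every field $k$ (proof by writing $N$ as a filtered colimit of finite free modules and showing the induced maps $F\to N_j$ are split injective). Applying this criterion to $\beta_S$, and using that the source $\ker(\theta_R)/\ker(\theta_R)^2\otimes_R S/pS$ is free and its formation commutes with base change, one is reduced to checking injectivity of $\beta_S\otimes k$ for perfect fields $k$ receiving $S/pS$. By functoriality, $\beta_S\otimes k$ factors the analogous map $\beta_k$ for the perfectoid ring $k$, and $\beta_k$ is an isomorphism by \cite[Lemma 3.14]{BMS}. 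This last reduction to residue fields via an explicit purity criterion is the missing mechanism in your proposal.
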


In particular, a ring $S$ is quasiregular semiperfectoid if and only if it is $p$-complete with bounded $p^\infty$-torsion and can be written as the quotient $S=R/I$ of a perfectoid ring $R$ by a ``$p$-completely quasiregular'' ideal $I$ (i.e.~$L_{S/R}$ has $p$-complete Tor amplitude concentrated in degree $-1$); in that case, whenever $S=R/I$ for some perfectoid ring $R$, the ideal $I$ is $p$-completely quasiregular.

\begin{proof}
Assume that there exists a map $R\to S$ with $R$ perfectoid such that $L_{S/R} \in D(S)$ has $p$-complete Tor amplitude concentrated in degree $-1$. Then the transitivity triangle for $\mathbb{Z}_p \to R \to S$ and Example~\ref{Qspsperf} ensure that $L_{S/\mathbb{Z}_p} \in D(S)$ also has $p$-complete Tor amplitude concentrated in degree $-1$, whence $S$ is quasisyntomic, and thus satisfies Definition~\ref{defqsp}.

Conversely, assume that $S$ is quasiregular semiperfectoid. Fix a map $R\to S$ with $R$ perfectoid. We shall show that $L_{S/R} \in D(S)$ has $p$-complete Tor amplitude concentrated in degree $-1$. The transitivity triangle for $\mathbb{Z}_p \to R \to S$, base changed to $S/pS$, gives
\[ L_{R/\mathbb{Z}_p} \dotimes_R S/pS \xrightarrow{\alpha_S} L_{S/\mathbb{Z}_p} \dotimes_S S/pS \to L_{S/R} \dotimes_S S/pS.\]
As the first two terms have Tor amplitude concentrated in degree $-1$ (by Example~\ref{Qspsperf} and the assumption $S \in \Qsp$), it is sufficient to show that the map $\beta_S := \pi_1(\alpha_S)$ of flat $S/pS$-modules is pure (i.e., injective after tensoring with any discrete $S/pS$-module). We shall use the following criterion:

\begin{lemma}
Let $A$ be a commutative ring. Fix a map $\beta:F \to N$ of $A$-modules with $F$ finite free and $N$ flat. Assume that $\beta \otimes_A k$ is injective for every field $k$. Then $\beta$ is pure.
\end{lemma}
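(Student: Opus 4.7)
The plan is to reduce the problem to showing that $C := \mathrm{coker}(\beta)$ is a flat $A$-module. Once this is established, the short exact sequence $0 \to F \to N \to C \to 0$ becomes universally exact (since $C$ is flat), and therefore $\beta$ is universally injective, which is exactly purity; equivalently, one can cite \cite[Tag 058K]{StacksProject} using that $F$ is finitely presented and $N$ is flat. In particular, injectivity of $\beta$ itself will drop out of the argument.

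To prove $C$ is flat, I would invoke Lazard's theorem to write $N = \mathrm{colim}_\alpha N_\alpha$ as a filtered colimit of finite free $A$-modules $N_\alpha$. Since $F$ is finitely presented (indeed, finite free), the map $\beta$ factors through some $N_{\alpha_0}$, so for $\alpha \geq \alpha_0$ we obtain a compatible system of maps $\beta^{(\alpha)}: F \to N_\alpha$ with $\beta = \mathrm{colim}_\alpha \beta^{(\alpha)}$ (the colimit being taken in the arrow category, with $F$ constant). The main intermediate claim is that each $\beta^{(\alpha)}: A^r \to N_\alpha$ is already a split injection of finite free $A$-modules; granting this, the cokernels $C^{(\alpha)} := \mathrm{coker}(\beta^{(\alpha)})$ are projective, so $C \simeq \mathrm{colim}_\alpha C^{(\alpha)}$ is a filtered colimit of flat modules, hence flat. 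Moreover $\beta$ is itself injective, as filtered colimits of injections are injections.

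To verify the claim on $\beta^{(\alpha)}$, note that the hypothesis transfers from $\beta$ to $\beta^{(\alpha)}$: if a linear relation $\sum_i c_i \bar n_i^{(\alpha)} = 0$ holds in $N_\alpha \otimes_A k(\mathfrak{p})$ for some prime $\mathfrak{p} \subset A$ and some $c_i \in k(\mathfrak{p})$, then applying the transition map to $N \otimes_A k(\mathfrak{p})$ yields $\sum_i c_i \bar n_i = 0$, whence all $c_i = 0$ by the hypothesis on $\beta$. Thus $\beta^{(\alpha)} \otimes_A k(\mathfrak{p})$ is injective for every prime $\mathfrak{p}$. Writing $\beta^{(\alpha)}$ as an $s_\alpha \times r$ matrix $M_\alpha$ over $A$, this translates to: the $r \times r$ minors of $M_\alpha$ are not simultaneously contained in any prime of $A$, i.e., they generate the unit ideal. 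A standard Cramer-type calculation (producing a splitting on each standard open $D(d_I)$ defined by an $r \times r$ minor $d_I$, and gluing via a partition of unity coming from $1 = \sum_I c_I d_I$) then shows that $\beta^{(\alpha)}$ is split injective with projective cokernel.

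The only point that requires genuine care is the matrix/minors step for maps between finite free modules; this is the crux of the proof, but is a classical piece of commutative algebra. Everything else is formal, using Lazard's theorem together with exactness of filtered colimits in the category of $A$-modules.
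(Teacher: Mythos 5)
Your proof is correct and follows essentially the same route as the paper's: write $N$ as a filtered colimit of finite free modules (Lazard), factor $\beta$ through these, observe that fibral injectivity transfers to the factored maps, conclude split injectivity of each $\beta^{(\alpha)}$, and pass to the colimit. The only cosmetic differences are that you spell out the minors/Cramer argument for split injectivity (which the paper leaves as a one-line assertion) and that you finish by proving $\mathrm{coker}(\beta)$ is flat, whereas the paper directly invokes the fact that a filtered colimit of split injections is pure.
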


\begin{proof}
Write $N$ as filtered colimit $\colim_i N_i$ with $N_i$ finite free (by Lazard's theorem). By finite presentation of $F$, we may choose a map $\beta_i:F \to N_i$ factoring $\beta$. For $j \geq i$, write $\beta_j:F \to N_j$ for the resulting map that also factors $\beta$. The assumption on $\beta$ trivially implies that $\beta_j \otimes_A k$ is also injective for every residue field $k$ of $A$ and all $j \geq i$. But then $\beta_j$ must be split injective for $j \geq i$ as both $F$ and $N_j$ are finite free. The claim follows as filtered colimits of split injective maps are pure.
\end{proof}

As the $p$-completion of $L_{R/\mathbb{Z}_p}$ coincides with $\ker(\theta_R)/\ker(\theta_R^2)[1] \cong R[1]$ (cf.~Example~\ref{Qspsperf}), $\beta_S$ can be viewed as the map
\[ \ker(\theta_R)/\ker(\theta_R)^2 \otimes_R S/pS \xrightarrow{\beta_S} \pi_1 (L_{S/\mathbb{Z}_p} \dotimes_S S/pS)\]
of $S/pS$-modules. Note that the source of this map is a free $S/pS$-module whose formation commutes with base change in $S$, and the target is flat over $S/pS$. By the above lemma, it is enough to show that $\beta_S \otimes k$ is injective for all perfect fields $k$ under $S/pS$. But $\beta_S \otimes k$ factors $\beta_k$ by functoriality, and $\beta_k$ is an isomorphism (as $\alpha_k$ is so for any perfectoid ring $k$ by \cite[Lemma 3.14]{BMS}). This gives injectivity for $\beta_S \otimes k$, as wanted.
\end{proof}

\begin{lemma}
\label{QspSite}
The category $\Qsp^\sub{op}$ forms a site.
\end{lemma}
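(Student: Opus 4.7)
The plan is to verify the three axioms for a site on $\Qsp^{\mathrm{op}}$, with covering families given by quasisyntomic covers. Axioms (a) (isomorphisms are covers) and (b) (stability under composition) follow immediately from Lemma~\ref{QSProperties}(1) and the observation that if $S \to S'$ is a quasisyntomic cover of $p$-complete rings and $S \in \Qsp$, then $S' \in \Qsp$ as well (conditions (2) and (3) of Definition~\ref{defqsp} pass along such a cover: a perfectoid map $R \to S$ composes with $S \to S'$, and surjectivity of Frobenius mod $p$ is preserved by any surjective-on-$\pi_0$ ring map, which is automatic as $S/p \to S'/p$ is faithfully flat hence injective-after-$\mathrm{Fr}$-image).

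The real content is axiom (c): given a diagram $T \gets S \to S'$ in $\Qsp$ with $S \to S'$ a quasisyntomic cover, construct the pushout in $\Qsp^{\mathrm{op}}$. I will form $S'' := S' \widehat{\otimes}_S T$, the $p$-completed pushout. By Lemma~\ref{QSProperties}(2), the map $T \to S''$ is a quasisyntomic cover in the category of $p$-complete rings with bounded $p^\infty$-torsion; in particular $S''$ is discrete (using Lemma~\ref{BoundedTorsionFlat}, as in the proof of Lemma~\ref{QsSite}) and inherits bounded $p^\infty$-torsion from Corollary~\ref{cor:torsionfflat}. Lemma~\ref{QsCoverQs} then shows $S'' \in \Qs$, so condition (1) of Definition~\ref{defqsp} holds.

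It remains to check that $S''$ satisfies conditions (2) and (3) of Definition~\ref{defqsp}. For (2), pick any perfectoid ring $R$ with a map $R \to T$ (which exists since $T \in \Qsp$); composing with $T \to S''$ provides a map from $R$ to $S''$. For (3), I need to show that $S''/p = (S'/p) \otimes_{S/p} (T/p)$ is semiperfect. Given a pure tensor $a \otimes b$ with $a \in S'/p$ and $b \in T/p$, use semiperfectness of $S'/p$ and $T/p$ to write $a = a_0^p$ and $b = b_0^p$, so $a \otimes b = (a_0 \otimes b_0)^p$; then sums of $p$-th powers in $S''/p$ (which is an $\mathbb{F}_p$-algebra) are again $p$-th powers by the Frobenius identity $(x+y)^p = x^p + y^p$. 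Hence Frobenius on $S''/p$ is surjective. So $S'' \in \Qsp$, and $T \to S''$ is a quasisyntomic cover by construction, giving the desired pushout.

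The step I expect to require the most care is the verification that $S''$ is discrete and satisfies the $\Qs$ conditions — this is essentially already done in the proofs of Lemmas~\ref{QSProperties}, \ref{QsCoverQs}, and \ref{QsSite}, so no real obstacle arises. The semiperfectness computation for $S''/p$ is the only genuinely new ingredient beyond Lemma~\ref{QsSite}, and it is elementary.
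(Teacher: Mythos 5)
Your treatment of axiom (c) matches the paper's proof essentially verbatim: form the $p$-completed pushout, invoke Lemma~\ref{QSProperties} and Lemma~\ref{QsCoverQs} for the $\Qs$ conditions, compose with a perfectoid map through $T$ for condition (2), and use the tensor-product description of $S''/p$ together with $(x+y)^p = x^p + y^p$ for semiperfectness. That part is fine.

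However, the aside you offer in support of axioms (a) and (b) contains a false claim: it is \emph{not} true that a quasisyntomic cover $S \to S'$ of $p$-complete rings with $S \in \Qsp$ forces $S' \in \Qsp$. Take $S = \calO_C$ perfectoid (so $S \in \Qsp$ by Example~\ref{Qspsperf}) and $S' = \calO_C\langle T^{\pm 1}\rangle$: this is a quasisyntomic cover, $S'$ receives a map from a perfectoid ring, but $S'/p = (\calO_C/p)[T^{\pm 1}]$ has non-surjective Frobenius (the class of $T$ is not a $p$-th power), so $S' \notin \Qsp$. The reasoning you give (``surjectivity of Frobenius mod $p$ is preserved by any surjective-on-$\pi_0$ ring map, which is automatic as $S/p \to S'/p$ is faithfully flat'') conflates faithful flatness with surjectivity of the ring map itself; faithfully flat maps of rings are rarely surjective. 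Fortunately, this claim plays no role in verifying the site axioms: a cover in $\Qsp^{\sub{op}}$ is, by definition, a quasisyntomic cover between two objects of $\Qsp$, so the membership of the target in $\Qsp$ is a hypothesis rather than something to be proved. Axioms (a) and (b) then follow purely from Lemma~\ref{QSProperties}(1), with no extra observation needed. You should simply delete that parenthetical.
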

\begin{proof}
The only nontrivial assertion is the existence of pushouts of covers. Fix a diagram $C \gets A \to B$ in $\Qsp$ with $A \to B$ be a quasisyntomic cover. Let $D := B \widehat{\otimes}_A C$ be the pushout in $p$-complete rings. Lemma~\ref{QSProperties} implies that $C \to D$ is a quasisyntomic cover. It is then enough to check that $D \in \Qsp$. Lemma~\ref{QSProperties} implies that $D$ has bounded $p^\infty$-torsion as the same holds for $C$. It is also clear that $D$ receives a map from a perfectoid ring. Finally, the formula $D/pD = B/pB \otimes_{A/pA} C/pC$ shows that the Frobenius is surjective on $D/pD$ as the same holds true for $B/pB$ and $C/pC$.
\end{proof}

\begin{lemma}
\label{QSynQSperfCover}
A $p$-complete ring $A$ lies in $\Qs$ exactly when there exists a quasisyntomic cover $A \to S$ with $S \in \Qsp$.
\end{lemma}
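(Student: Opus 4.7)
The plan is to prove the two directions separately. The backward direction is immediate from Lemma \ref{QsCoverQs}: if $A \to S$ is a quasisyntomic cover and $S \in \Qsp \subseteq \Qs$, then $A \in \Qs$.

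For the forward direction, given $A \in \Qs$, I would construct an explicit cover by adjoining $p$-power roots to a set of generators. Choose any surjection $\pi : P := \mathbb{Z}_p\langle X_i\rangle_{i \in I} \twoheadrightarrow A$ from a $p$-completed polynomial ring (e.g., take $I = A$ and $X_a \mapsto a$). Let $P_\infty$ denote the $p$-completion of $\mathbb{Z}_p[X_i^{1/p^\infty}]_{i \in I}$, which is a perfectoid ring, and set $S := A \widehat{\otimes}_P P_\infty$. The claim is that $A \to S$ is a quasisyntomic cover with $S \in \Qsp$.

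The key technical input is that $P \to P_\infty$ is itself a quasisyntomic cover. For $p$-complete faithful flatness, observe that $P_\infty/p \cong \mathbb{F}_p[X_i^{1/p^\infty}]$ is a free $P/p = \mathbb{F}_p[X_i]$-module on the basis of monomials whose exponents lie in $\mathbb{Z}[1/p] \cap [0,1)$. For the $p$-complete Tor amplitude of $L_{P_\infty/P}$, I would use the transitivity triangle for $\mathbb{Z}_p \to P \to P_\infty$,
\[ L_{P/\mathbb{Z}_p} \dotimes_P P_\infty \to L_{P_\infty/\mathbb{Z}_p} \to L_{P_\infty/P}. \]
Here $L_{P/\mathbb{Z}_p}$ is concentrated in degree $0$ and is $p$-completely free (with basis $\{dX_i\}$), while $L_{P_\infty/\mathbb{Z}_p}$ has $p$-complete Tor amplitude concentrated in degree $-1$ (indeed, its $p$-completion is equivalent to $P_\infty[1]$) by Proposition \ref{prop:basicperfd}(2) applied to the perfectoid ring $P_\infty$. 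Hence $L_{P_\infty/P}$ sits in an exact triangle between $p$-completely flat modules placed in degrees $0$ and $-1$, so has $p$-complete Tor amplitude in $[-1,0]$.

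With $P \to P_\infty$ quasisyntomic established, Lemma \ref{QSProperties}(2) shows $A \to S$ is a quasisyntomic cover, and Lemma \ref{QsCoverQs} yields $S \in \Qs$. To upgrade this to $S \in \Qsp$, the natural map $P_\infty \to S$ provides a perfectoid ring mapping to $S$, and $S/p \cong A/p \otimes_{P/p} P_\infty/p$ is a quotient of the perfect ring $\mathbb{F}_p[X_i^{1/p^\infty}]$, hence semiperfect. The main subtle point in the argument is the cotangent-complex computation for $P \to P_\infty$, which depends crucially on the perfectoid input from Proposition \ref{prop:basicperfd}(2); everything else follows formally from the site-theoretic machinery already set up.
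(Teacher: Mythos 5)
There is a genuine error in your forward direction: the ring $P_\infty := (\mathbb{Z}_p[X_i^{1/p^\infty}]_{i \in I})^\wedge_p$ is \emph{not} perfectoid. The definition requires an element $\pi$ with $\pi^p = pu$ for some unit $u$, but no such element exists in $P_\infty$: composing with the $\mathbb{Z}_p$-algebra map $P_\infty \to \mathbb{Z}_p$ sending every $X_i^{1/p^n} \mapsto 0$, the image $\bar\pi \in \mathbb{Z}_p$ would have to satisfy $\bar\pi^p = p\bar u$ with $\bar u \in \mathbb{Z}_p^\times$, which forces $v_p(\bar\pi) = 1/p$ — impossible. This is exactly why the paper adjoins $p$-power roots of $p$ in addition to $p$-power roots of the polynomial variables when forming $F_\infty$.

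This error propagates to two places. First, you cannot invoke Proposition~\ref{prop:basicperfd}(2) for $P_\infty$; moreover the parenthetical claim that the $p$-completion of $L_{P_\infty/\mathbb{Z}_p}$ is $P_\infty[1]$ is false. In fact that $p$-completion is $0$: since $P_\infty$ is $\mathbb{Z}_p$-flat, $L_{P_\infty/\mathbb{Z}_p} \dotimes_{\mathbb{Z}_p} \mathbb{F}_p \simeq L_{(P_\infty/p)/\mathbb{F}_p}$, and $P_\infty/p = \mathbb{F}_p[X_i^{1/p^\infty}]$ is perfect, hence relatively perfect over $\mathbb{F}_p$, so this vanishes. (The downstream conclusion that $L_{P_\infty/P}$ has $p$-complete Tor amplitude in $[-1,0]$ happens to survive — it is concentrated in degree $-1$ — but the argument you give for it does not.) Second, and fatally for the proof as written, the perfectoid ring witnessing $S \in \Qsp$ is never actually exhibited: the stated witness $P_\infty$ does not qualify. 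It is worth noting that with your choice $I = A$, the ring $S$ does in the end contain a compatible system $X_p^{1/p^n}$ of $p$-power roots of $p$ (or is of characteristic $p$, if $p = 0 \in A$), so a perfectoid ring mapping to $S$ does exist; but identifying it requires a separate argument and a case distinction, neither of which appears in your proof.

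The clean fix is the one the paper uses: set $P_\infty'$ to be the $p$-adic completion of $\mathbb{Z}_p[p^{1/p^\infty}, X_i^{1/p^\infty}]_{i \in I}$. This ring genuinely is perfectoid (take $\pi = p^{1/p}$, so $\pi^p = p$; the Frobenius on $P_\infty'/p$ is surjective; and $\ker\theta$ is principal), so Proposition~\ref{prop:basicperfd}(2) applies and the surjection $P_\infty' \twoheadrightarrow S' := A\,\widehat\otimes_P\, P_\infty'$ exhibits $S' \in \Qsp$ directly. The rest of your structure — base change via Lemma~\ref{QSProperties}(2), $S' \in \Qs$ via Lemma~\ref{QsCoverQs}, and semiperfectness of $S'/p$ as a quotient of a perfect ring — then goes through unchanged.
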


\begin{proof}
If there exists a quasisyntomic cover $A \to S$ with $S\in \Qsp$, then $A \in \Qs$ by Lemma~\ref{QsCoverQs}.

Conversely, assume $A \in \Qs$. Choose a free $p$-complete algebra $F = \widehat{\mathbb{Z}_p[\{x_i\}_{i \in I}]}$ on a set $I$ with a surjection $F \to A$. Let $F \to F_\infty$ be the quasisyntomic cover obtained by formally adjoining $p$-power roots of $\{p\} \sqcup \{x_i\}_{i \in I}$ in the $p$-complete sense, so $F_\infty$ is perfectoid. Let $A \to S$ be the base change of $F \to F_\infty$ along $F \to A$ in the $p$-complete sense; we shall check that $A \to S$ solves the problem. By Lemma~\ref{QSProperties}, $A \to S$ is a quasisyntomic cover and thus $S\in \Qs$ by Lemma~\ref{QsCoverQs}. To finish proving $S \in \Qsp$, it is now enough to observe that the ring $F_\infty$ is perfectoid, and the map $F_\infty \to S$ is surjective.
\end{proof}

\begin{remark}
\label{QspQsFreeCover}
The construction of the cover $A \to S$ in the second paragraph of the proof of Lemma~\ref{QSynQSperfCover} shows a bit more: the map $A/pA \to S/pS$ displays $S/pS$ as a free $A/pA$-module, and $L_{(S/pS)/(A/pA)}[-1]$ is a free $S/pS$-module (as the analogous assertions are true for $F \to F_\infty$). Moreover, the ring $S \in \Qsp$ receives a map from a $p$-torsionfree perfectoid ring. 
\end{remark}

\begin{lemma}
\label{QspCechCoversQs}
Let $A \to S$ be a quasisyntomic cover in $\Qs$ with $S \in \Qsp$. Then all terms of the Cech nerve $S^\bullet$ lie in $\Qsp$.
\end{lemma}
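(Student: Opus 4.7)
The plan is to verify, for each $n\geq 0$, the three conditions defining $\Qsp$ (Definition~\ref{defqsp}) on $S^n := S^{\widehat{\otimes}_A (n+1)}$. The three conditions to check are (a) quasisyntomicity of $S^n$, (b) the existence of a map from a perfectoid ring to $S^n$, and (c) the semiperfectness of $S^n/p$.

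For (a), I would argue by induction on $n$. The hypothesis that $A\to S$ is a quasisyntomic cover, combined with Lemma~\ref{QSProperties}(2) (base change preserves quasisyntomic covers, in the $p$-completed sense), shows that each structure map $S^{n-1}\to S^n$ obtained by base change along $A\to S$ is again a quasisyntomic cover; composing via Lemma~\ref{QSProperties}(1) shows $A\to S^n$ is a quasisyntomic cover, so $S^n \in \Qs$ by Lemma~\ref{QsCoverQs}. For (b), simply take any perfectoid ring $R$ mapping to $S$ (which exists by hypothesis) and compose with one of the structure maps $S\to S^n$.

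The nontrivial point is (c). The key identification I need is
\[ S^n/p \;\cong\; (S/p)^{\otimes_{A/p}(n+1)} \]
as rings. This will follow because $p$-completion does not affect mod $p$ reductions, and the proof of Lemma~\ref{QSProperties}(2) shows that the derived and classical $p$-completed tensor products in question coincide (using that the relevant derived tensor products are concentrated in degree zero by $p$-complete flatness). Granted this identification, I would invoke the fact that the Frobenius is a ring homomorphism on a characteristic $p$ ring: to show surjectivity, it suffices to hit pure tensors $x_0\otimes\cdots\otimes x_n$. Since $S/p$ is semiperfect each $x_i = y_i^p$, so
\[ x_0\otimes\cdots\otimes x_n \;=\; (y_0\otimes\cdots\otimes y_n)^p \]
lies in the image, and additivity of the Frobenius in characteristic $p$ takes care of general sums.

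The main obstacle is thus the ring-theoretic identification of $S^n/p$ as an iterated tensor product over $A/p$; everything else is formal from the preceding results. I expect to handle this by carefully tracking the distinction between derived and classical tensor products and applying the discreteness argument already used in the proof of Lemma~\ref{QSProperties}(2).
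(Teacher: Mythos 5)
Your proof is correct and follows essentially the same route as the paper: quasisyntomicity from Lemma~\ref{QSProperties} and Lemma~\ref{QsCoverQs}, the perfectoid map by composition, and semiperfectness by reducing mod $p$ to a tensor product of $S/pS$ with itself. The only difference is that you establish the precise identification $S^n/pS^n \cong (S/pS)^{\otimes_{A/pA}(n+1)}$, whereas the paper gets by with the weaker (and slightly cheaper) observation that $S^n/pS^n$ is a quotient of $(S/pS)^{\otimes_{\mathbb F_p}(n+1)}$, which already suffices since quotients of rings with surjective Frobenius have surjective Frobenius.
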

\begin{proof}
Each term $S^i$ is a quasisyntomic cover of $S$. In particular, each $S^i$ has bounded $p^\infty$-torsion by Lemma~\ref{QSProperties} and receives a map from a perfectoid ring (as $S$ does). As $S^i/pS^i$ is a quotient of $(S/pS)^{\otimes_{\mathbb F_p} (i+1)}$, its Frobenius is surjective.
\end{proof}

\begin{proposition}
\label{qsqspextend}
Restriction along $u:\Qsp^\sub{op} \to \Qs^\sub{op}$ induces an equivalence 
\[ \mathrm{Shv}_{\calC}(\Qs^\sub{op}) \simeq \mathrm{Shv}_{\calC}(\Qsp^\sub{op})\]
for any presentable $\infty$-category $\calC$.
\end{proposition}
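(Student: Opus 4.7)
The plan is to apply the standard comparison lemma for sites (cf.\ SGA 4, Exp.~III, or its $\infty$-categorical formulation in \cite{SAG}), which yields such an equivalence once one verifies that $\Qsp \subset \Qs$ is a basis for the quasisyntomic topology in the following sense: (i) every $A \in \Qs$ admits a quasisyntomic cover $A \to S$ with $S \in \Qsp$, and (ii) the \v{C}ech nerve of any such cover has all terms in $\Qsp$. Hypothesis (i) is exactly Lemma \ref{QSynQSperfCover}, and hypothesis (ii) is exactly Lemma \ref{QspCechCoversQs}; the topology on $\Qsp$ is, by construction, the restriction of that on $\Qs$, so $u$ is a continuous morphism of sites.

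Concretely, the inverse equivalence should send a $\mathcal{C}$-valued sheaf $F$ on $\Qsp^\sub{op}$ to the presheaf $F^+$ on $\Qs^\sub{op}$ defined by $F^+(A) = \lim_{[n] \in \Delta} F(S^n)$, where $A \to S$ is any quasisyntomic cover with $S \in \Qsp$ and $S^\bullet$ is its \v{C}ech nerve in the category of $p$-complete rings, which lies in $\Qsp$ by (ii). Independence of the choice of $S$ follows by observing that any two such covers $A \to S$ and $A \to S'$ are dominated by the $p$-completed pushout $S \widehat{\otimes}_A S'$, which is again in $\Qsp$ by Lemmas \ref{QSProperties} and \ref{QspSite}. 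That $F^+ \circ u \simeq F$ on $\Qsp$ is immediate by choosing the identity cover of any object of $\Qsp$. To verify that $F^+$ is itself a quasisyntomic sheaf on $\Qs^\sub{op}$, one takes a quasisyntomic cover $A \to B$ in $\Qs$, refines it by a $\Qsp$-cover $A \to S$, and forms the bisimplicial object $(S^j \widehat{\otimes}_A B^i)_{j,i}$ whose rows and columns all lie in $\Qsp$ (Lemmas \ref{QSProperties} and \ref{QspCechCoversQs}); descent for $F^+$ along $A \to B$ then reduces, via a Fubini-type argument for iterated limits, to descent for $F$ along the $\Qsp$-covers $B^i \to S \widehat{\otimes}_A B^i$.

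The main obstacle is the $\infty$-categorical coherence in this argument: rigorously, one wants $F^+$ to arise as a functor (indeed, as a right Kan extension of $F$ along $u$, computed by the cofinal system of $\Qsp$-covers of $A$), not merely as a pointwise assignment, and one must check that the bisimplicial refinement above really encodes the descent datum in the correct $\infty$-categorical sense. This is precisely the content of the $\infty$-categorical comparison lemma as formulated for presentable coefficient categories $\mathcal{C}$, so given the arithmetic inputs (i) and (ii) the remainder of the argument is entirely formal and requires no further analysis of quasisyntomic or quasiregular semiperfectoid rings.
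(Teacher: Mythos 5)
Your proposal is correct in substance and isolates exactly the two arithmetic inputs the paper uses: Lemma~\ref{QSynQSperfCover} (existence of a $\Qsp$-cover of any $A\in\Qs$) and Lemma~\ref{QspCechCoversQs} (stability of the \v{C}ech nerve under passage to $\Qsp$). The pointwise formula $F^+(A)=\lim_\Delta F(S^\bullet)$ is also exactly what the unfolding does.

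Where you and the paper diverge is precisely at the step you flag as ``the main obstacle.'' You propose to define $F^+$ pointwise (choose a $\Qsp$-cover of each $A$, take the totalization) and then invoke an unnamed $\infty$-categorical comparison lemma to promote this assignment to a functor and check the sheaf condition. That coherence problem is real, and the citation is too vague to resolve it: SGA 4, Exp.~III is a $1$-categorical statement, and SAG does not contain a direct $\infty$-categorical analogue of the comparison lemma in the form you need. The paper sidesteps the coherence issue rather than appealing to a black box: after reducing to $\calC = \mathcal{S}$ via \cite[Proposition 1.3.17]{SAG}, it observes that $A \mapsto h_A$ (sheafified Yoneda on $\Qsp^\sub{op}$) is already a functor $\Qs^\sub{op}\to\mathrm{Shv}(\Qsp^\sub{op})$, and sets $F^+(A) := \mathrm{Map}_{\mathrm{Shv}(\Qsp^\sub{op})}(h_A, F)$; the only content is then that $A\mapsto h_A$ sends quasisyntomic covers to effective epimorphisms and preserves their \v{C}ech nerves, which is exactly your conditions (i)--(ii). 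This mapping-space formula makes $F^+$ a functor on the nose, and the \v{C}ech computation $F^+(A)=\lim F(S^\bullet)$ falls out as a corollary of $h_A=\colim h_{S^\bullet}$. Your characterization of $F^+$ as a right Kan extension along $u$ ``computed by the cofinal system of $\Qsp$-covers of $A$'' also needs care: the \v{C}ech nerve of a single cover is not a priori cofinal in the comma category $A/\Qsp$, and the paper avoids having to say this. So both proofs use the same arithmetic, but the paper's $\infty$-topos formulation does honest work that your appeal to a comparison lemma leaves to an uncited reference.
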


Denote the inverse $\mathrm{Shv}_{\calC}(\Qsp^\sub{op}) \to \mathrm{Shv}_{\calC}(\Qs^\sub{op})$ by $F \mapsto F^\beth$; we shall call $F^\beth$ the {\em unfolding} of $F$.  Explicitly, given $A \in \Qs$, one computes $F^\beth(A)$ as the totalization of $F(S^\bullet)$ where $S^\bullet$ is chosen as in Lemma~\ref{QspCechCoversQs}.

\begin{proof}
It is enough to see that the corresponding $\infty$-topoi $\mathrm{Shv}(\Qsp^\sub{op})$ and $\mathrm{Shv}(\Qs^\sub{op})$ are equivalent (corresponding to the case where $\calC$ is the $\infty$-category of spaces); both sides are equivalent to the contravariant functors from the corresponding $\infty$-topos to $\calC$ taking colimits to limits by \cite[Proposition 1.3.17]{SAG}. We define an inverse functor $\mathrm{Shv}(\Qsp^\sub{op})\to \mathrm{Shv}(\Qs^\sub{op})$ as follows. There is a functor $\Qs^\sub{op}\to \mathrm{Shv}(\Qsp^\sub{op})$ sending any $A\in \Qs^\sub{op}$ to the sheaf $h_A$ it represents on $\Qsp^\sub{op}$. This functors takes covers to effective epimorphisms (as pullbacks of quasisyntomic maps are quasisyntomic, and can be covered by quasiregular semiperfectoids), and preserves their Cech nerves. This implies that for any $F\in \mathrm{Shv}(\Qsp^\sub{op})$, the presheaf $A\mapsto \mathrm{Hom}_{\mathrm{Shv}(\Qsp^\sub{op})}(h_A,F)$ defines a sheaf on $\Qs^\sub{op}$, defining the desired functor $\mathrm{Shv}(\Qsp^\sub{op})\to \mathrm{Shv}(\Qs^\sub{op})$. It is clear that the composite $\mathrm{Shv}(\Qsp^\sub{op})\to \mathrm{Shv}(\Qs^\sub{op})\to \mathrm{Shv}(\Qsp^\sub{op})$ is the identity. In the other direction, the composite
\[
\mathrm{Shv}(\Qs^\sub{op})\to \mathrm{Shv}(\Qsp^\sub{op})\to \mathrm{Shv}(\Qs^\sub{op})
\]
is the identity by using the previous lemma: For any $F\in \mathrm{Shv}(\Qs^\sub{op})$ and $A\in \Qs^\sub{op}$, we have to show that
\[
F(A) = \mathrm{Hom}_{\mathrm{Shv}(\Qsp^{\mathrm{op}})}(h_A,F|_{\Qsp^{\mathrm{op}}})\ ,
\]
noting that there is a natural map from left to right. But $h_A$ is the colimit of the Cech nerve $h_{S^\bullet}$ as in the previous lemma, and thus
\begin{align*}
\mathrm{Hom}_{\mathrm{Shv}(\Qsp^{\mathrm{op}})}(h_A,F|_{\Qsp^{\mathrm{op}}}) &= \lim \mathrm{Hom}_{\mathrm{Shv}(\Qsp^{\mathrm{op}})}(h_{S^\bullet},F|_{\Qsp^{\mathrm{op}}})\\ &= \lim F(S^\bullet)= F(A)\ ,
\end{align*}
as desired.
\end{proof}

\begin{remark}
We shall often use Proposition~\ref{qsqspextend} for the complete filtered derived category $\calC = \widehat{DF}(R)$, which we will be recalled in \S \ref{subsection_FDC} (and which the reader should consult for the ensuing notation). Therefore we remark that the unfolding process is compatible with the evaluation and associated graded functors for such sheaves, i.e., if $F \in \mathrm{Shv}_{\widehat{DF}(R)}(\Qsp)$ unfolds to $F^\beth$, then $F^\beth(i) = F(i)^\beth$ and $\gr^i (F^\beth) = (\gr^i F)^\beth$. In particular, if $F$ corresponds to an $\mathbb{N}$-filtered object (i.e., $\gr^i = 0$ for $i < 0$), then passage to the underlying non-filtered sheaf is also compatible with unfolding, i.e., $F^\beth(-\infty) = F(-\infty)^\beth$ as they both coincide with $F(0)^\beth$ by the previous observations.
\end{remark}

\subsection{Variants}

In applications, we shall often need to restrict attention to smaller subcategories of $\Qs$ and $\Qsp$ which are still related by an analog of Proposition~\ref{qsqspextend}; in particular, we will often fix a base ring.

\begin{variant}[Slice categories, I]
\label{QSynSlice}
Fix a ring $A$. We can consider the category $\Qs_A$ of maps $A \to B$ with $B\in \Qs$ as well as the full subcategory $\Qsp_A \subset \Qs_A$ spanned by maps $A\to S$ with $S\in \Qsp$. One can then check that the analogs of Lemma~\ref{QspSite}, Lemma~\ref{QsSite}, Lemma~\ref{QSynQSperfCover} and Proposition~\ref{qsqspextend} hold true for these categories. The following lemma is quite useful in working in these categories in practice:

\begin{lemma}
\label{QSynCC}
Assume $A$ is perfectoid or $A=\mathbb Z_p$. For any $B \in \Qs_A$, the complex $L_{B/A} \in D(B)$ has $p$-complete Tor amplitude in $[-1,0]$. Hence, the $p$-adic completion of $\wedge^i L_{B/A}[-i]$ lies in $D^{\geq 0}(B)$.
\end{lemma}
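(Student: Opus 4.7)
The plan is to reduce to the case $B \in \Qsp$ by faithfully flat descent, then exploit the strong control on cotangent complexes of quasiregular semiperfectoids from Lemma~\ref{QspAlternative} and Remark~\ref{rem:qsptor}. Using Lemma~\ref{QSynQSperfCover}, choose a quasisyntomic cover $B \to S$ with $S \in \Qsp$; the composite $A \to B \to S$ makes $S$ into an object of $\Qsp_A$. Since $B \to S$ is $p$-completely faithfully flat, $B/pB \to S/pS$ is faithfully flat, so Tor amplitude of complexes over $B/pB$ may be checked after base change to $S/pS$. It thus suffices to show that $W := L_{B/A}\dotimes_B S/pS$ has Tor amplitude in $[-1,0]$ over $S/pS$.

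Applying $(-)\dotimes_S S/pS$ to the transitivity triangle $L_{B/A}\dotimes_B S \to L_{S/A}\to L_{S/B}$ yields a triangle $W \to \bar U \to \bar V$ in $D(S/pS)$, with $\bar U := L_{S/A}\dotimes_S S/pS$ and $\bar V := L_{S/B}\dotimes_S S/pS$. I claim both $\bar U$ and $\bar V$ have Tor amplitude concentrated in degree $-1$. For $\bar U$ this is Lemma~\ref{QspAlternative} when $A$ is perfectoid and Remark~\ref{rem:qsptor} when $A = \mathbb Z_p$. For $\bar V$, Remark~\ref{rem:qsptor} applied to the map $B\to S$ (possible since $S \in \Qsp$) gives $\bar V \in D^{\leq -1}(S/pS)$; combined with the Tor amplitude bound $[-1,0]$ coming from quasisyntomicity of $B \to S$, this forces Tor amplitude $\{-1\}$, since a two-term flat model $[F_{-1}\to F_0]$ whose differential is surjective has flat kernel (from the short exact sequence $0\to\ker\to F_{-1}\to F_0\to 0$ of flat modules), hence is quasi-isomorphic to a single flat module placed in degree $-1$. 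Therefore both $\bar U$ and $\bar V$ are shifted flat $S/pS$-modules in degree $-1$, so $W = \mathrm{fib}(\bar U\to \bar V)$ is represented by a two-term complex of flat modules in degrees $-1$ and $0$, and in particular has Tor amplitude in $[-1,0]$.

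For the second assertion, once $L_{B/A}$ has $p$-complete Tor amplitude in $[-1,0]$, one may represent $L_{B/A}\dotimes_B B/pB$ locally by a two-term flat complex $[F_{-1}\to F_0]$. The standard length-$(i+1)$ filtration on the $i$-th wedge power, with graded pieces $\wedge^j F_0 \otimes \Gamma^{i-j} F_{-1}$ sitting in cohomological degree $-(i-j)\in [-i,0]$, then shows $\wedge^i L_{B/A}$ has $p$-complete Tor amplitude in $[-i,0]$. Shifting by $-i$ gives Tor amplitude in $[0,i]$, and the $p$-adic completion of such a complex lies in $D^{\geq 0}(B)$: its reductions modulo $p^n$ all lie in $D^{\geq 0}$ (by induction from $\mathbb Z/p^n$ being an iterated extension of $\mathbb F_p$), and $R\lim$ of towers in $D^{\geq 0}$ stays in $D^{\geq 0}$. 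The only nontrivial ingredient is the appeal to Lemma~\ref{QspAlternative} to pin down the Tor amplitude of $L_{S/A}$ in the quasiregular semiperfectoid case; the remainder is a short formal manipulation of transitivity triangles and Tor amplitude bounds.
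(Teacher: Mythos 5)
Your argument for the first assertion (that $L_{B/A}$ has $p$-complete Tor amplitude in $[-1,0]$) is correct and is a minor variant of the paper's. Both reduce to a quasisyntomic cover $B \to S$ with $S$ quasiregular semiperfectoid and apply Lemma~\ref{QspAlternative} to pin down $L_{S/A}$. The difference is that you additionally invoke Remark~\ref{rem:qsptor} for the map $B \to S$ to force $L_{S/B}\dotimes_S S/pS$ to have Tor amplitude concentrated in degree $-1$, which gives the bound $[-1,0]$ on $L_{B/A}\dotimes_B S/pS$ immediately; the paper instead uses only the $[-1,0]$ bound on $L_{S/B}$ coming from quasisyntomicity, gets $[-1,1]$ from the transitivity triangle, and then improves to $[-1,0]$ by connectivity of the cotangent complex. (Also, the paper observes that the case $A = \mathbb{Z}_p$ is immediate from the definition of $\Qs$, whereas you rerun the cover argument; both work.)

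There is, however, a gap in your proof of the second assertion. You claim that the $p$-adic completion of $L := \wedge^i L_{B/A}[-i]$ lies in $D^{\geq 0}(B)$ because ``its reductions modulo $p^n$ all lie in $D^{\geq 0}$ (by induction from $\mathbb{Z}/p^n$ being an iterated extension of $\mathbb{F}_p$).'' The phrasing indicates you mean $L\dotimes_{\mathbb{Z}}\mathbb{Z}/p^n$, but then the base case already fails in general: $L\dotimes_{\mathbb{Z}}\mathbb{F}_p = L\dotimes_B (B\dotimes_{\mathbb{Z}}\mathbb{F}_p)$ picks up a contribution from $H^{-1}(B\dotimes_{\mathbb{Z}}\mathbb{F}_p) = B[p]$, and the hypothesis of $p$-complete Tor amplitude in $[0,i]$ — which is a statement about $L\dotimes_B B/pB$ over $B/pB$, not about $L\dotimes_{\mathbb{Z}}\mathbb{F}_p$ — only yields $L\dotimes_{\mathbb{Z}}\mathbb{F}_p\in D^{\geq -1}$. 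What the hypothesis actually controls is $L\dotimes_B B/p^nB\in D^{\geq 0}$ (by the Tor amplitude bound together with Remark~\ref{rem:toramplitudethickening}); to identify $R\lim_n (L\dotimes_B B/p^nB)$ with the derived $p$-completion $R\lim_n (L\dotimes_{\mathbb{Z}}\mathbb{Z}/p^n)$ one must use that $\{B/p^nB\}_n$ and $\{B\dotimes_{\mathbb{Z}}\mathbb{Z}/p^n\}_n$ are pro-isomorphic, and this is exactly where the bounded $p^\infty$-torsion hypothesis on $B$ is used. This chain of reasoning is precisely the content of Lemma~\ref{BoundedTorsionTA}, which the paper cites at this point and which your argument effectively reproves but with the crucial pro-isomorphism step elided.
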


\begin{proof} Let us explain the assertion about the cotangent complex first. If $A=\mathbb Z_p$, this is true by definition, so assume that $A$ is perfectoid. Choose a quasisyntomic cover $B \to S$ with $S\in \Qsp$. By Lemma~\ref{QspAlternative}, we know that $L_{S/A} \in D(C)$ has $p$-complete Tor amplitude concentrated in degree $-1$. The transitivity triangle for $A \to B \to S$ and the quasisyntomicity of $B \to S$ then shows that $L_{B/A} \dotimes_B S \in D(S)$ has $p$-complete Tor amplitude in $[-1,1]$, and thus in $[-1,0]$ by connectivity. We conclude using $p$-complete faithful flatness of $B \to S$.

For exterior powers: it follows formally from the previous paragraph (and the corresponding statement over $B/pB$) that $\wedge^i_B L_{B/A}$ has $p$-complete Tor amplitude in $[-i,0]$. The claim now follows from Lemma~\ref{BoundedTorsionTA}.
\end{proof}
\end{variant}

\begin{variant}[Slice categories, II]
\label{QSynSlice2}
There is another variant of the slice category. Fix a quasisyntomic ring $A$. We consider the category $\qs_{A}$ of quasisyntomic $A$-algebras, with the quasisyntomic topology. Again, it has a full subcategory $\qsp_{A}\subset \qs_{A}$, and the previous results including Proposition~\ref{qsqspextend} stay true. In fact, all statements about covers of $A$ in $\Qs$ or $\Qsp$ are immediately statements about covers in $\qs_{A}$ and $\qsp_{A}$.

For a map $A\to B$ of quasisyntomic rings, there is an associated functor $\qs_{A}\to \qs_{B}$ sending $C$ to $C\hat{\otimes}_A B$. It is however not clear that this induces a morphism of topoi, as our sites do not have finite limits. For this reason, we prefer to work in big sites like $\Qs$ or $\Qs_A$ to get functoriality of our constructions in the algebras.
\end{variant}

\begin{variant}[Restricting to topologically free objects over $\calO_C$]
\label{FreeQSP}
In this variant, we specialize to working over $\calO_C$ where $C$ is a perfectoid field of characteristic $0$, and explain an analog of the preceding theory where, roughly, all instances of ``flat'' are replaced by ``projective''; this will be used in the proof of Theorem \ref{main_theorem}. Define a map $A \to B$ of $p$-complete and $p$-torsionfree rings to be a {\em proj-quasisyntomic map (resp. cover)} if the following properties hold:
\begin{enumerate}
\item $B/p$ is a projective (resp. projective and faithfully flat) $A/p$-module. 
\item $L_{(B/p)/(A/p)} \in D(B/p)$ has projective amplitude\footnote{A complex $K$ over a commutative ring $R$ has projective amplitude in $[a,b]$ if it can be represented by a complex of projective modules located in degrees $a,...,b$. This is equivalent to requiring that $\mathrm{Ext}^i_R(K,N) = 0$ for any $R$-module $N$ whenever $i \notin [-b,-a]$; see \cite[Tag 05AM]{StacksProject} for more.} in $[-1,0]$.
\end{enumerate}
Let $\qs_{\calO_C}^\sub{proj} \subset \qs_{\calO_C}$ be the full subcategory spanned by proj-quasisyntomic $\calO_C$-algebras. Let $\qsp_{\calO_C}^\sub{proj} := \qs_{\calO_C}^\sub{proj} \cap \Qsp_{\calO_C}$, so $L_{(S/p)/(\calO_C/p)}[-1]$ is a projective $S/p$-module for $S\in \qsp_{\calO_C}^\sub{proj}$. Note that $p$-adic completions of smooth $\calO_C$-algebras lie in $\qs_{\calO_C}^\sub{proj}$: the condition on cotangent complexes is clear, and any finitely presented flat $\calO_C/p$-algebra is free\footnote{Write $\calO_C/p$ as a direct limit of artinian local rings $R_i \subset \calO_C/p$. Then any finitely presented flat $\calO_C/p$-algebra $A$ descends to a finitely presented flat $R_i$-algebra $A_i$ for some $i \gg 0$. As flat modules over artinian local rings are free, $A_i$ is free over $R_i$, and hence $A$ is free over $\calO_C/p$.}.

We equip (the opposites of) $\qs_{\calO_C}^\sub{proj}$ and $\qsp_{\calO_C}^\sub{proj}$ with the topology determined by proj-quasisyntomic covers. It is easy to see that proj-quasisyntomic maps (resp. covers) are stable under base change and composition, which gives analogs of Lemma~\ref{QspSite} and Lemma~\ref{QsSite}. Remark~\ref{QspQsFreeCover} then ensures that objects in $\qs^\sub{proj,op}_{\calO_C}$ can be covered by those in $\qsp^\sub{proj,op}_{\calO_C}$, giving an analog of Lemma~\ref{QSynQSperfCover}. It is then easy to see that the analog of Proposition~\ref{qsqspextend} holds true for these categories.
\end{variant}

\newpage
\section{Negative cyclic homology and de~Rham cohomology}
\label{sec:HCvsdeRham}

The goal of this section is to prove  Theorem~\ref{thm:main7}. As this theorem concerns the existence of filtrations on objects of the derived category, we start with some reminders about the filtered derived category in \S \ref{subsection_FDC}; the main results here are the existence of a Beilinson $t$-structure (Theorem~\ref{TruncationBeilinson}) and the interaction of this $t$-structure with the Berthelot-Ogus-Deligne $L\eta$-functor (Proposition~\ref{LetaDF}). With this language in place, we study some important examples of sheaves on the quasisyntomic site (such as de Rham complexes or negative cyclic homology) in  \S \ref{subsection_de_rham_neg} and prove Theorem~\ref{thm:main7}.

\subsection{Recollections on the filtered derived category}\label{subsection_FDC}

We review some formalism surrounding the filtered derived category\footnote{In our applications, it will be useful to work with unbounded complexes with unbounded filtrations. Moreover, since we use the $\infty$-categorical perspective of \cite{NikolausScholze}, we also need the filtered derived category as an $\infty$-category. For these reasons, we adopt the language of $\infty$-categories when discussing the filtered derived category, instead of the more classical language used to discuss this notion, e.g., as in \cite{BBD}.}. Recall the following notion, where $\mathbb Z^\sub{op}$ is the category whose objects are the integers $n\in \mathbb Z$, and there is at most one map $n\to m$, which exists precisely when $n\geq m$.

\begin{definition}[Filtered derived category]
For any $E_\infty$-ring $R$, write 
\[ DF(R)  := \mathrm{Fun}(\mathbb{Z}^\sub{op}, D(R))\]
for {\em the filtered derived category of $R$}; write $DF = DF(\mathbb{S})$. We view these as symmetric monoidal presentable stable $\infty$-categories via the Day convolution symmetric monoidal structure, cf.~\cite{GwilliamDF}. Recall that this means that, for $F,G\in DF(R)$, one has
\[
(F\dotimes_R G)(i) = \colim_{j+k\geq i} F(j)\dotimes_R G(k)\ .
\]
Given $F \in DF(R)$, we call $F(-\infty) := \colim_i F(i)$ the {\em underlying spectrum} with $F(i) \to F(-\infty)$ specifying the $i$-th filtration level. Such an $F$ is called {\em complete} if $F(\infty) := \lim_i F(i)$ vanishes; in this case, we have $F(-\infty) \simeq \lim_i F(-\infty)/F(i)$. Write $\widehat{DF}(R) \subset DF(R)$ for the {\em complete filtered derived category}, i.e., the full subcategory spanned by complete objects. 
\end{definition}

For $F \in DF(R)$, write $\gr^i(F) = F(i)/F(i-1)$. We shall often denote $F \in \widehat{DF}(R)$ as $(F(-\infty), F(\f))$ or simply $F(\f)$; the former notation reflects the intuition that $F$ gives a complete descending $\mathbb{Z}$-indexed filtration $F(\f)$ on the underlying spectrum $F(-\infty)$, and will typically be used only in the $\mathbb{N}$-indexed case (i.e., when $\gr^i F = 0$ all $i < 0$, whence $F(0) \simeq F(-\infty)$). The next lemma summarizes the basic properties of the filtered derived category that we shall use repeatedly, and is well-known.

\begin{lemma} 
\label{DFbasics}
With notation as above:
\begin{enumerate}
\item The collection of functors given by $\{\gr^i(-)\}_{i \in \mathbb{Z}}$ and $F \mapsto F(\infty)$ is conservative on $DF(R)$. On the subcategory $\widehat{DF}(R)$, the collection $\{\gr^i(-)\}_{i \in \mathbb{Z}}$ is already conservative.
\item The inclusion $\widehat{DF}(R) \subset DF(R)$ has a left-adjoint $F \mapsto \widehat{F}$ called completion. Explicitly, this is given by the formula $\widehat{F}(i) = F(i)/F(\infty)$ for all $i$. The completion functor commutes with the associated graded functors $\gr^i(-)$.
\item Both $DF(R)$ and $\widehat{DF}(R)$ have all limits and colimits. The evaluation functors $F \mapsto F(i)$ and the associated graded functors $\gr^i(-)$ commute with all limits and colimits in $DF(R)$. The associated graded functors $\gr^i(-)$ commute with all limits and colimits in $\widehat{DF}(R)$. 
\item There is a unique symmetric monoidal structure on $\widehat{DF}(R)$ compatible with the one on $DF(R)$ under the completion map.
\item For $F,G \in DF(R)$ or $F,G\in \widehat{DF}(R)$, we have a functorial isomorphism $\gr^n(F \dotimes_R G) \simeq \oplus_{i+j=n} \gr^i(F) \dotimes_R \gr^j(G)$.
\end{enumerate}
\end{lemma}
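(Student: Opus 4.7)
The plan is to derive every part from general facts about functor $\infty$-categories valued in a presentable stable $\infty$-category, combined with the standard theory of Bousfield (reflective) localizations compatible with a symmetric monoidal structure.

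For parts (1) and (3) in $DF(R)$: since $DF(R) = \mathrm{Fun}(\mathbb{Z}^\sub{op}, D(R))$ is a functor category into a presentable stable $\infty$-category, it is itself presentable and stable, and all limits and colimits are computed pointwise. In particular the evaluations $F \mapsto F(i)$ preserve all limits and colimits; and since $\gr^i$ is a cofiber of a map between two evaluations, and cofibers commute with arbitrary (co)limits in a stable $\infty$-category, each $\gr^i$ also preserves all limits and colimits, giving (3). For conservativity in (1), a map $f: F \to G$ is an equivalence in $DF(R)$ iff each $f(i)$ is. Assuming each $\gr^i(f)$ is an equivalence, induction along the cofiber sequences built from successive transition maps identifies $\mathrm{cofib}(F(n) \to F(i))$ with its counterpart for $G$ for all $n$ far enough from $i$; passing to the limit and using that limits commute with cofibers in the stable setting yields an equivalence on $\mathrm{cofib}(F(\infty) \to F(i))$, and combining with the equivalence $f(\infty)$ forces each $f(i)$ to be an equivalence.

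For part (2): I would take $\widehat F(i) := \mathrm{cofib}(F(\infty) \to F(i))$ as the candidate completion, with its obvious functoriality in $i$. Completeness of $\widehat F$ follows from the commutation of $\lim_i$ with cofibers in the stable setting, computing $\widehat F(\infty) = \mathrm{cofib}(F(\infty) \xrightarrow{\mathrm{id}} F(\infty)) = 0$; iterating the same principle gives $\gr^i(\widehat F) \simeq \gr^i(F)$. The universal property is direct: for any $G$ with $G(\infty) = 0$, the composite $F(\infty) \to F(i) \to G(i)$ factors through $G(\infty) = 0$, yielding a canonical null-homotopy and hence a unique factorization through $\widehat F(i)$. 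Granted (2), the remaining parts of (1) and (3) for $\widehat{DF}(R)$ are immediate: completeness makes $F(\infty) = 0$ automatic, limits in $\widehat{DF}(R)$ are inherited from $DF(R)$, colimits are computed by completing the ambient colimit, and $\gr^i$ is completion-invariant.

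For parts (4) and (5): from the Day convolution formula $(F \dotimes_R G)(n) = \colim_{i+j \geq n} F(i) \dotimes_R G(j)$, the cofiber of $(F \dotimes_R G)(n+1) \to (F \dotimes_R G)(n)$ is concentrated on the ``new'' indices with $i+j = n$, and a direct poset-colimit argument identifies it with $\bigoplus_{i+j=n} \gr^i(F) \dotimes_R \gr^j(G)$, giving (5). For (4), formula (5) implies that tensoring with a fixed object in $DF(R)$ preserves the class of maps $f$ with $\gr^i(f)$ an equivalence for every $i$, which by part (1) is precisely the class of maps inverted by the completion functor $L: DF(R) \to \widehat{DF}(R)$; hence $L$ is a symmetric monoidal localization in the sense of \cite[Proposition 2.2.1.9]{HA}, uniquely endowing $\widehat{DF}(R)$ with a compatible symmetric monoidal structure. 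The main obstacle I anticipate is mostly bookkeeping: a careful accounting of the poset-colimit manipulation in (5), where one must compare $\{(i,j) : i+j \geq n\}$ with its subposet $\{(i,j) : i+j \geq n+1\}$ and identify the cofiber as a direct sum over the diagonal.
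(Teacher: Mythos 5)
Your proposal is correct and reconstructs, essentially verbatim, the standard verifications behind these facts; the paper's own ``proof'' is simply a citation to the first pages of \cite{GwilliamDF}, which carries out the same routine pointwise arguments in the functor category $\mathrm{Fun}(\mathbb{Z}^\sub{op}, D(R))$.

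A couple of remarks on presentation. In part (1), the key step is that cofibers commute with the limit along $n$, which holds because $\mathrm{cofib}(F(n) \to F(i)) \simeq \mathrm{fib}(F(n) \to F(i))[1]$ is a shifted finite limit in the stable setting; you use this correctly, and the same principle underlies your computations of $\widehat F(\infty)$ and $\gr^i(\widehat F)$ in part (2). In part (2), your ``pointwise factorization'' phrasing should be upgraded to a coherent statement: the cleanest way is to note that the constant diagram $c_{F(\infty)}$ fits into a cofiber sequence $c_{F(\infty)} \to F \to \widehat F$ in $DF(R)$, and that $\mathrm{Map}_{DF(R)}(c_{F(\infty)}, G) \simeq \mathrm{Map}_{D(R)}(F(\infty), G(\infty)) = 0$ for complete $G$ by the (constant diagram, limit) adjunction; this yields the universal property on mapping spectra in one stroke. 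For part (5), the poset bookkeeping you flag as the main obstacle can be avoided by a generators-and-colimits reduction: both sides of the desired isomorphism commute with colimits in $F$ and in $G$ separately (by part (3) and the definition of Day convolution), so it suffices to check the formula on the ``free'' objects $L_i(X)$ (defined by $L_i(X)(j)=X$ for $j\le i$ and $0$ otherwise), where one computes directly that $L_a(X)\dotimes_R L_b(Y)\simeq L_{a+b}(X\dotimes_R Y)$ and both sides of (5) reduce to a single summand. With that, your deduction of (4) from (5) via compatible symmetric monoidal localization is exactly right.
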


\begin{proof}
See the first 8 pages of \cite{GwilliamDF}.
\end{proof}

The next results of this section are an elaboration of \cite[Appendix A]{BeilinsonDF}. From now on, assume that $R$ is connective. Recall that the $\infty$-category $D(R)$ carries a natural $t$-structure whose connective objects $DF^{\leq 0}(R)$ are those $R$-module spectra whose underlying spectrum is connective (\cite[Proposition 7.1.1.13]{HA}). In the following, we explain why this endows $DF(R)$ with a natural $t$-structure as well. 

\begin{definition}
\label{Beiltstr}
Let $DF^{\leq 0}(R) \subset DF(R)$ be the full subcategory spanned those $F$'s with $\gr^i(F) \in D^{\leq i}(R)$ for all $i$; dually, $DF^{\geq 0}(R) \subset DF(R)$ is the full subcategory spanned by those $F$'s with $F(i) \in D^{\geq i}(R)$ for all $i$.  We shall refer to the pair $(DF^{\leq 0}(R), DF^{\geq 0}(R))$ as the {\em Beilinson $t$-structure} on $DF(R)$; this name is justified by Theorem~\ref{TruncationBeilinson} below.
\end{definition}

The Beilinson $t$-structure is not left-complete: the $\infty$-connected objects of $DF(R)$ (i.e., objects in $\cap_i DF^{\leq -i}(R)$) are exactly those $F$'s with $\gr^i(F) = 0$ for all $i$, i.e., constant diagrams. In particular, no complete objects are $\infty$-connected. The next result summarizes the existence of this $t$-structure and describes the truncation and homology functors. Note that it is a statement about the homotopy category of $DF(R)$, i.e.~the usual filtered derived category as a triangulated category.

\begin{theorem}[Beilinson] 
\label{TruncationBeilinson}
With notation as above.
\begin{enumerate}
\item The Beilinson $t$-structure $(DF^{\leq 0}(R), DF^{\geq 0}(R))$ is a $t$-structure on $DF(R)$. This $t$-structure is compatible with the symmetric monoidal structure, i.e., $DF^{\leq 0}(R) \subset DF(R)$ is a symmetric monoidal subcategory.
\item If $\tau^{\leq 0}_B$ denotes the connective cover functor for the $t$-structure from (1), then there is a natural isomorphism $\gr^i \circ \tau^{\leq 0}_B(-) \simeq \tau^{\leq i} \circ \gr^i(-)$. 
\item Assume $R$ is discrete, i.e.~$\pi_i R=0$ for $i\neq 0$. The heart $DF(R)^\heartsuit := DF^{\leq 0}(R) \cap DF^{\geq 0}(R)$ is equivalent to the abelian category $\mathrm{Ch}(R)$ of chain complexes of $R$-modules via the following recipe: given $F \in DF(R)$, its $0$-th cohomology $H^0_B(F)$ in the Beilinson $t$-structure corresponds to the chain complex $(H^\bullet(\gr^\bullet(F)), d)$ where $d$ is induced as the boundary map for the standard triangle
\[ \gr^{i+1}(F) := F(i+1)/F(i+2) \to F(i)/F(i+2) \to \gr^i(F) := F(i)/F(i+1)\]
by shifting. The resulting functor $H^0_B:DF^{\leq 0}(R) \to DF(R)^\heartsuit \simeq \mathrm{Ch}(R)$ is symmetric monoidal with respect to the standard symmetric monoidal structure on the category of chain complexes.
\end{enumerate}
\end{theorem}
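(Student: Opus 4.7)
The overall strategy follows Beilinson's original construction in \cite{BeilinsonDF}: each of (1), (2), (3) reduces to a computation on graded pieces once the truncation functors are in place.

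For (1), stability of $DF^{\leq 0}(R)$ and $DF^{\geq 0}(R)$ under the relevant shifts is immediate from the definitions since $\gr^i(F[1]) \simeq \gr^i(F)[1]$. Symmetric monoidality of $DF^{\leq 0}(R)$ under Day convolution follows from the formula $\gr^n(X \dotimes_R Y) \simeq \bigoplus_{i+j=n} \gr^i X \dotimes_R \gr^j Y$ of Lemma~\ref{DFbasics}(5) together with the right $t$-exactness of $\dotimes_R$, which uses that $R$ is connective.

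The core of the argument is the construction of the truncation triangle. Given $F \in DF(R)$, I would construct functorial $\tau^{\leq 0}_B F \to F \to \tau^{\geq 1}_B F$ via an explicit pullback of truncations at each filtration level, designed precisely so that $\gr^i(\tau^{\leq 0}_B F) \simeq \tau^{\leq i} \gr^i F$ and $\gr^i(\tau^{\geq 1}_B F) \simeq \tau^{\geq i+1} \gr^i F$; granting such a construction, this simultaneously establishes (2). The orthogonality $\mathrm{Map}_{DF(R)}(X,Y) \simeq 0$ for $X \in DF^{\leq 0}$, $Y \in DF^{\geq 1}$ then follows by: first, reducing via the cofiber sequences $X(i+1) \to X(i) \to \gr^i X$ of the intrinsic filtration of $X$ to the pointwise vanishings $\mathrm{Map}_{D(R)}(\gr^i X, Y(i)) \simeq 0$, which are the usual $t$-structure orthogonality in $D(R)$ since $\gr^i X \in D^{\leq i}$ and $Y(i) \in D^{\geq i+1}$; and second, a spectral-sequence or inverse-limit argument assembling these pointwise vanishings into a statement about the full mapping spectrum in $DF(R)$. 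I expect the main technical obstacle to be precisely pinning down the truncation pullback and the assembly step: naive first guesses (e.g.\ $(\tau^{\geq 1}_B F)(i) := \tau^{\geq i+1} F(i)$) produce extra terms on graded pieces, so the correct pullback must carefully cancel these contributions, and the spectral-sequence argument must deal with possibly unbounded filtrations.

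For (3), an object $F \in DF^\heartsuit$ satisfies $\gr^i F \in D^{[i,i]}$, hence $\gr^i F \simeq M^i[-i]$ for some $R$-module $M^i$. The triangle $\gr^{i+1} F \to F(i)/F(i+2) \to \gr^i F$ displayed in the statement has a boundary map yielding a differential $d^i: M^i \to M^{i+1}$, and the relation $d^2 = 0$ is extracted from the analogous three-step filtration quotient $F(i)/F(i+3)$. The inverse functor sends a chain complex $(C^\bullet, d)$ to its stupid-filtration filtered object $(n \mapsto \sigma^{\geq n} C)$, and an easy check on graded pieces identifies this as a quasi-inverse. Monoidality of $H^0_B$ is then a direct computation: Day convolution of stupid-filtration filtered objects recovers the tensor product of chain complexes, with the Koszul sign rule emerging naturally from commuting the shifts $[-i] \dotimes_R [-j] \simeq [-(i+j)]$.
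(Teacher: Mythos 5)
Your outline identifies the right ingredients but contains two real gaps, and your approach to (1) diverges from the paper in a way worth noting.

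On part (1): you propose constructing the truncation triangle $\tau^{\leq 0}_B F \to F \to \tau^{\geq 1}_B F$ explicitly via a pointwise pullback (Deligne's filtration d\'ecal\'ee), and you correctly flag that the naive formula fails and that the assembly step is delicate for unbounded filtrations. You leave precisely those two points unresolved. The paper instead sidesteps the explicit construction entirely: since $DF^{\leq 0}(R) \subset DF(R)$ is closed under colimits in a presentable stable $\infty$-category, the inclusion admits a right adjoint $R(-)$, and one checks directly that $\mathrm{cofib}(R(Y) \to Y) \in DF^{>0}(R)$ using the left adjoints $L_i$ of the evaluation functors $F \mapsto F(i)$. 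This buys you a clean, complete proof with no convergence issues and no extra-term cancellation; it also makes your part (2) assertion (that the construction yields $\gr^i \tau^{\leq 0}_B \simeq \tau^{\leq i} \gr^i$) an immediate formal consequence of the $t$-exactness of $\gr^i$ for the shifted $t$-structure on $D(R)$, rather than something that must be engineered into the pullback.

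On part (3): the claim that ``an easy check on graded pieces identifies this as a quasi-inverse'' is where the argument genuinely breaks down, and this is the bulk of the content. Having $G(H^0_B(F))$ and $F$ with matching graded pieces does not produce a natural isomorphism between them in a stable $\infty$-category; you would need to construct a natural map first, and it is not obvious which direction such a map even goes (try it: neither $F(i) \to M^{\geq i}$ nor $M^{\geq i} \to F(i)$ is readily available without already knowing the theorem). The paper's route is to prove directly that $G$ is fully faithful and essentially surjective via an abelian-category criterion (Lemma~\ref{AbCatCriterion}): one reduces to comparing $\mathrm{Ext}^*_{\mathrm{Ch}(R)}(M[-i], N[-j])$ with $\mathrm{Ext}^*_{DF(R)}(L_i(M[-i]), L_j(N[-j]))$, which requires the separate computation of Proposition~\ref{ExtCh}, and then extends from the bounded case to bounded-above and finally general objects by a filtered colimit followed by an $\mathbb{N}$-indexed inverse limit, each step requiring care with essential constancy. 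None of this is an ``easy check.'' Your identification of the heart objects ($\gr^i F \simeq M^i[-i]$, $F$ complete), the extraction of the differential, and the $d^2 = 0$ verification via $F(i)/F(i+3)$ are all correct and match the paper; what is missing is the actual equivalence argument.
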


\begin{remark}
\label{rmk:FiltDec}
At the level of explicit filtered complexes, the formation of connective covers in the Beilinson $t$-structure is implemented by Deligne's construction of the filtration d\'ecal\'ee for any filtered complex (see \cite[\S 1.3.3]{DeligneHodgeII}). Thus, even though the language of $t$-structures was invented later, \cite{DeligneHodgeII} already contained an essential idea of the proof of Theorem~\ref{TruncationBeilinson}.
\end{remark}

\begin{proof}
(1) Let us explain why we get a $t$-structure. As each $D^{\leq i}(R)  \subset D(R)$ is stable under colimits, and because each $\gr^i(-)$ commutes with colimits, $DF^{\leq 0}(R) \subset DF(R)$ is also closed under colimits. Thus, by presentability, there is a functor $R:DF(R) \to DF^{\leq 0}(R)$ that is right adjoint to the inclusion. For any $Y \in DF(R)$, this gives an exact triangle
\[ R(Y) \to Y \to Q(Y)\]
defining $Q(Y)$. We must check that $Q(Y) \in DF^{> 0}(R)$, i.e., $Q(Y)(i) \in D^{> i}(R)$ or equivalently that $\mathrm{Map}(X, Q(Y)(i)) = 0$ if $X \in D^{\leq i}(R)$. The functor $F \mapsto F(i)$ has a left-adjoint $L_i$ such that $L_i(X)(j)$ equals $0$ if $j > i$ and equals $X$ if $j \leq i$ (with all transition maps being the identity). In particular, we have $\gr^i(L_i(X)) = X$ and $\gr^j(L_i(X)) = 0$ for $j \neq i$. Thus, if $X \in D^{\leq i}$, then $L_i(X) \in DF^{\leq 0}(R)$. By adjointness, we have an identification $\mathrm{Map}_{DF(R)}(L_i(X), Q(Y)) = \mathrm{Map}_{D(R)}(X, Q(Y)(i))$, so it is enough to show that each map $\eta:L_i(X) \to Q(Y)$ is nullhomotopic if $X \in D^{\leq i}(R)$. Pulling back the preceding fiber sequence along $\eta$ gives a map of fiber sequences
\[ \xymatrix{ R(Y) \ar[r] \ar@{=}[d] & F \ar[r] \ar[d] & L_i(X) \ar[d]^-{\eta} \\
		 R(Y) \ar[r] & Y \ar[r] & Q(Y). }\]
As $X \in D^{\leq i}(R)$, we have $L_i(X) \in DF^{\leq 0}(R)$. Also, $R(Y) \in DF^{\leq 0}(R)$ by construction. Stability of $DF^{\leq 0}(R)$ under extensions shows that $F \in DF^{\leq 0}(R)$. But then by the defining property of $R(Y) \to Y$, the map $F \to Y$ above factors uniquely over $R(Y) \to Y$. As the left vertical map is identity, this implies that $F$ splits uniquely as $R(Y) \oplus L_i(X)$, and thus the first fiber sequence above is split (i.e., has $0$ boundary map). On the other hand, since $F \to Y$ factors over $R(Y)$, it follows that $\eta$ factors over the boundary $L_i(X) \to R(Y)[1]$; as we just explained that the latter is $0$, we must also have $\eta = 0$, as wanted.

The assertion about symmetric monoidal structures follows from Lemma~\ref{DFbasics} (5).\vspace{0.1in}

(2) We shall use the following fact: any exact and $t$-exact functor between stable $\infty$-categories equipped with $t$-structures commutes with the truncation functors associated to the $t$-structures.  Now for each $i \in \mathbb{Z}$, by definition of the $t$-structure, the exact functor $\gr^i:DF(R) \to D(R)$ is $t$-exact if $DF(R)$ is equipped with the Beilinson $t$-structure $(DF^{\leq 0}(R), DF^{\geq 0}(R))$ and $D(R)$ is equipped with the shift $(D^{\leq i}(R), D^{\geq i}(R))$ of the usual $t$-structure. The desired formula now follows immediately from the previous quoted fact about stable $\infty$-categories with $t$-structures.\vspace{0.1in}

(3) The heart comprises those $F$ with $\gr^i(F) \in D^{\leq i}(R)$ and $F(i) \in D^{\geq i}(R)$. It is easy to see that this forces the following:
\begin{enumerate}
\item[(a)] $\gr^i(F)$ is concentrated in cohomological degree $i$.
\item[(b)] $F$ is complete. 
\end{enumerate}
Conversely, any $F$ satisfying these conditions necessarily lies in the heart: it is clear that $F \in DF^{\leq 0}(R)$ by (a), and the inclusion $F \in DF^{\geq 0}(R)$ follows from the formula $F(i) = \lim_{j \geq i} F(i)/F(j)$ (by (b)), the hypothesis that $\gr^j(F) \in D^{\geq i}(R)$ for $j \geq i$ (by (a)), and the stability of $D^{\geq i}(R) \subset D(R)$ under limits. In particular, there is a natural functor $G:\mathrm{Ch}(R) \to DF(R)^{\heartsuit}$ given by $G(K^\bullet)(i) = K^{\geq i}$ and obvious transition maps; this functor is exact. We shall check that $G$ is fully faithful and essentially surjective by first handling the bounded case, then the bounded above case (by passage to filtered direct limits along the stupid truncation), and then the general case (by passage to cofiltered inverse limits along the stupid truncation).

Let us first check the result in the bounded case. Write $\mathrm{Ch}^b(R) \subset \mathrm{Ch}(R)$ for the full subcategory of bounded chain complexes; this is an abelian subcategory. Similarly, write $DF(R)^{\heartsuit,b} \subset DF(R)^{\heartsuit}$ for the full subcategory spanned by bounded filtrations, i.e., those $F$'s with $\gr^i(F) = 0$ for $|i| \gg 0$. It is clear that $G$ restricts to a functor $G^b:\mathrm{Ch}^b(R) \to DF(R)^{\heartsuit,b}$. It is proven in \cite[Proposition 3.1.8]{BBD} (see also \cite[Proposition A.5]{BeilinsonDF}) that $G^b$ is an equivalence. As the definitions in \cite{BBD} and here are not obviously the same, we briefly sketch a proof.  Note that every $K^\bullet \in \mathrm{Ch}^b(R)$ admits a functorial finite filtration with graded pieces of the form $M[-i]$, where $M$ is an $R$-module, $i$ is an integer, and as usual $M[-i]$ indicates the $R$-complex given by $M$ concentrated in cohomological degree $i$. Similarly, any $F \in DF(R)^{\heartsuit, b}$ admits a functorial finite filtration with graded pieces of the form $L_i(M[-i])$, where $L_i$ is the functor from (1), $M$ is an $R$-module, and $i$ is an integer. Moreover, these pieces match up: for an $R$-module and an integer $i$, we have $G(M[-i]) = L_i(M[-i])$, as one readily checks by unwinding definitions. By Lemma~\ref{AbCatCriterion}, it is enough to show the following: for $R$-modules $M$ and $N$ and integers $i$ and $j$, the functor $G$ induces isomorphisms 
\[ \mathrm{Ext}^a_{\mathrm{Ch}(R)}(M[-i], N[-j]) \cong \mathrm{Ext}^a_{DF(R)}(L_i(M[-i]), L_j(N[-j])).\]
Using the definition of $L_i$ as a left-adjoint as well as the explicit definition of $L_j$, one computes that $\mathrm{Ext}^a_{DF(R)}(L_i(M[-i]), L_j(N[-j]))$ vanishes if $i > j$ and equals $\mathrm{Ext}^{a-i+j}_R(M,N)$ if $i \leq j$. On the other hand, by twisting, the left side above identifies with $\mathrm{Ext}^a_{\mathrm{Ch}(R)}(M, N[i-j])$. The claim now follows from Proposition~\ref{ExtCh} applied with $c = i-j$.

Let us now extend the result to complexes that are bounded above. Let $\mathrm{Ch}^{-}(R) \subset \mathrm{Ch}(R)$ be the full subcategory of bounded above complexes $K^\bullet$  (i.e., $K^i = 0$ for $i \gg 0$). Any such $K^\bullet$ can be written functorially as the filtered colimit $\colim_i K^{\geq -i}$ of bounded complexes. Here $K^{\geq -i} \to K^\bullet$ is the displayed truncation of $K^\bullet$, and can be viewed as the universal object in $\mathrm{Ch}(R)$ mapping to $K$ which vanishes in degrees $< -i$.   Similarly, write $DF(R)^{\heartsuit,-} \subset DF(R)^{\heartsuit}$ for the full subcategory spanned by those $F$ which are bounded above (i.e., $\gr^i(F) = 0$ for $i \gg 0$). Any such $F$ can be written functorially as the filtered colimit $\colim_i F^{\geq -i}$ of bounded filtrations. Here $F^{\geq -i} \in DF(R)^{\heartsuit,b}$ is defined by $F^{\geq -i}(j) = F(j)$ if $j \geq -i$ and $F(j) = F(-i)$ if $j \leq -i$, and has a similarly universal property to the one for $K^{\geq -i}$. One then checks by reduction to the bounded case (and using that $G:\mathrm{Ch}(R) \to DF(R)^{\heartsuit}$ commutes with filtered colimits) that $G$ induces an equivalence $\mathrm{Ch}^{-}(R) \simeq DF(R)^{\heartsuit,-}$ on bounded above objects.

Finally, we handle the general case. Any $K^\bullet \in \mathrm{Ch}(R)$ can be written functorially as the $\mathbb{N}$-indexed inverse limit $\lim_i K^{\leq i}$ of bounded above complexes; here $K^\bullet \to K^{\leq i}$ is the displayed truncation of $K$, and is the universal map from $K^\bullet$ into a complex that vanishes in degrees $> i$. Note that the $\mathbb{N}$-indexed diagram $\{K^{\leq i}\}$ is essentially constant in each degree $j$. Similarly, any $F \in DF(R)^{\heartsuit}$ can be written as the $\mathbb{N}$-indexed inverse limit $\lim_i F^{\leq i}$ of bounded above filtrations. Here $F^{\leq i}$ is defined by $F^{\leq i}(j)$ is $0$ if $j > i$ and $F^{\leq i}(j) = F(j)/F(i+1)$ for $j \leq i$, and the map $F \to F^{\leq i}$ is the universal map from $F$ into an object $G$ of ${DF}(R)^{\heartsuit}$ with $\gr^j(G) = 0$ for $j > i$. Note that the $\mathbb{N}$-indexed diagram $\{F^{\leq i}\}$ is essentially constant on applying $\gr^j$ for each $j$. One then checks by reduction to the bounded above case (and using that $G$ carries the $\mathbb{N}$-indexed limit diagrams in $\mathrm{Ch}(R)$ which are essentially constant in each degree to $\mathbb{N}$-indexed limit diagrams in $DF(R)^{\heartsuit}$ that are essentially constant after applying each $\gr^j$) that $G$ induces an equivalence $\mathrm{Ch}(R) \simeq DF(R)^{\heartsuit}$ of abelian categories.

The final statement follows from Lemma~\ref{DFbasics} (5).
\end{proof}

The proof above used the following description of $\mathrm{Ext}$-groups in the abelian category of chain complexes of $R$-modules.

\begin{proposition}
\label{ExtCh}
Let $R$ be a commutative ring. For an integer $c$, write $K^\bullet \mapsto K[c]^\bullet$ for the ``shift to the left by $c$'' autoequivalence of the abelian category $\mathrm{Ch}(R)$ of chain complexes, i.e., $K[c]^i = K^{i+c}$. Then for $R$-modules $M$ and $N$ regarded as complexes with trivial differential, $\mathrm{Ext}^i_{\mathrm{Ch}(R)}(M, N[c])=0$ for all $i\in \mathbb Z$ if $c > 0$, and identifies with $\mathrm{Ext}^{i-c}_R(M,N)$ if $c \leq 0$. 
\end{proposition}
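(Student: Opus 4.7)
The plan is to compute $\mathrm{Ext}^*_{\mathrm{Ch}(R)}(M, N[c])$ by constructing an explicit projective resolution of $M$---viewed as the chain complex $S(0)(M)$ concentrated in cohomological degree $0$---in the abelian category $\mathrm{Ch}(R)$. The basic building blocks will be the disk complexes $D^n(Q) = (Q \xrightarrow{\mathrm{id}} Q)$ placed in cohomological degrees $n$ and $n+1$ (with $Q$ a projective $R$-module). These are projective in $\mathrm{Ch}(R)$ because a chain map out of $D^n(Q)$ is uniquely determined by its degree-$n$ component, giving $\mathrm{Hom}_{\mathrm{Ch}(R)}(D^n(Q), K) \cong \mathrm{Hom}_R(Q, K^n)$ functorially in $K$; this exhibits $\mathrm{Hom}_{\mathrm{Ch}(R)}(D^n(Q), -)$ as the composition of the exact ``evaluate in degree $n$'' functor with the exact functor $\mathrm{Hom}_R(Q, -)$.

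Starting from a projective resolution $P^\bullet \to M$ of $M$ in $\mathrm{Mod}_R$ (with $P^{-b}$ in cohomological degree $-b$), I would form the bicomplex $\{D^a(P^{-b})\}_{a,b \geq 0}$ and totalize it to obtain a complex $Q_\bullet$ with $Q_k = \bigoplus_{a+b = k} D^a(P^{-b})$. A standard bicomplex spectral sequence argument shows that $Q_\bullet \to S(0)(M)$ is a projective resolution in $\mathrm{Ch}(R)$: for each fixed $b$, the ``row'' $\cdots \to D^1(P^{-b}) \to D^0(P^{-b})$ resolves $S(0)(P^{-b})$, and exactness of $S(0)$ together with the resolution $P^\bullet \to M$ then handles the other direction.

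Applying $\mathrm{Hom}_{\mathrm{Ch}(R)}(-, N[c])$ to the resolution reduces the problem to elementary bookkeeping, using the crucial observation that $\mathrm{Hom}_{\mathrm{Ch}(R)}(D^a(P), N[c]) = \mathrm{Hom}_R(P, N)$ when $a$ equals $-c$ (the cohomological degree where $N[c]$ is supported), and vanishes otherwise. Thus when $c > 0$ the condition $a = -c < 0$ is incompatible with $a \geq 0$ throughout the resolution, so the entire Hom-complex---and hence every $\mathrm{Ext}^i$---vanishes. When $c \leq 0$, only the summands with $a = -c$ contribute, so the resulting cochain complex is obtained from the classical complex $\mathrm{Hom}_R(P^{-\bullet}, N)$ by a shift in homological index, whose cohomology furnishes the stated identification with the appropriate $\mathrm{Ext}$-group over $R$.

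The one piece of genuine technical care required is tracking the total differential on $Q_\bullet$ coming from the bicomplex and confirming, after applying $\mathrm{Hom}$, that it agrees up to sign with the usual Ext-differential on $\mathrm{Hom}_R(P^{-\bullet}, N)$. This is standard Cartan--Eilenberg formalism and poses no conceptual obstacle, but it is the one spot where signs and shifts must be handled attentively in order to pin down the precise shift in the Ext-index.
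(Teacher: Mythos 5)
Your proposal is correct and is, up to a change of language, the same argument as the paper's. The paper identifies $\mathrm{Ch}(R)$ with graded modules over $S = R[\epsilon]/(\epsilon^2)$ and uses the resolution $\cdots \to S\{-a\}\otimes_R M \to \cdots \to S\otimes_R M \to M$; under this dictionary the graded $S$-module $S\{-a\}\otimes_R Q$ is precisely your disk complex $D^a(Q)$, so a single row of your bicomplex is exactly the paper's ``standard infinite resolution.'' The implementation differs only in that the paper computes $\mathrm{RHom}_{S,\mathrm{gr}}$ from this (generally non-projective) resolution directly, via the base-change adjunction $\mathrm{RHom}_{S,\mathrm{gr}}(S\otimes_R X,Y)\simeq\mathrm{RHom}_{R,\mathrm{gr}}(X,Y)$, whereas you further tensor the row against a projective $R$-resolution $P^\bullet$ of $M$ and totalize, producing an honest projective resolution in $\mathrm{Ch}(R)$; both routes are fine. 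Since you deliberately left the final shift for a careful check, it is worth carrying it out: only the summand with $a=-c$ survives, so $\mathrm{Hom}_{\mathrm{Ch}(R)}(Q_k,N[c]) = \mathrm{Hom}_R(P^{-(k+c)},N)$, and taking $H^i$ of this cochain complex yields $\mathrm{Ext}^{i+c}_R(M,N)$, not $\mathrm{Ext}^{i-c}_R(M,N)$ as stated. This agrees with the $\mathrm{RHom}_R(M,N)[c]$ at the end of the paper's own proof (whose $H^i$ is $\mathrm{Ext}^{i+c}_R$ under the convention $K[c]^n=K^{n+c}$), and with the direct check that $\mathrm{Ext}^1_{\mathrm{Ch}(R)}(M,N[-1])$ classifies two-term complexes $M\to N$ placed in degrees $0$ and $1$, hence equals $\mathrm{Hom}_R(M,N)=\mathrm{Ext}^0_R(M,N)$; so the exponent $i-c$ in the proposition is a misprint for $i+c$, and your argument, carried to completion, would detect it.
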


\begin{proof}
We work in the abelian category of $\mathbb{Z}$-graded $R$-modules. For a graded $R$-module $K^\bullet$, write $K^\bullet\{c\}$ for the ``shift to the left by $c$'' autoequivalence of graded $R$-modules, i.e., $(K^\bullet\{c\})^i = K^{i+c}$.  Write $S$ for the graded ring $R[\epsilon]/(\epsilon^2)$ where $\epsilon$ has degree $1$, so $S = R \oplus R\{-1\}$ as a graded $R$-module. Then $\mathrm{Ch}(R)$ can be thought of as the abelian category of graded $S$-modules in the abelian category of graded $R$-modules: restriction of scalars along $R \to S$ gives the underlying graded $R$-module, while the action of $\epsilon \in R$ yields the differential. Under this correspondence, the twisting notations are compatible. Thus, we must compute $\mathrm{Ext}^i_{S,gr}(M, N\{c\})$ for $R$-modules $M$ and $N$ (regarded as graded $S$-modules placed in degree $0$ with $\epsilon$ acting as $0$). We shall use the standard infinite resolution
\[ \Big(... \to S\{-i\} \otimes_R M \to S\{-(i-1)\} \otimes_R M \to .. \to S\{-1\} \otimes_R M \to S \otimes_R M\Big) \stackrel{\sim}{\to} M \]
of graded $S$-modules, where all the transition maps are induced by multiplication by $\epsilon$. Applying $\mathrm{RHom}_{S,gr}(-,N)$ to this resolution and noting that $\mathrm{RHom}_R(M,N\{i\}) = 0$ for $i \neq 0$ for grading reasons, we learn that $\mathrm{RHom}_{S,gr}(M,N\{c\})$ vanishes if $c > 0$, and equals $\mathrm{RHom}_R(M,N)[c]$ if $c \leq 0$, as wanted.
\end{proof}

The following lemma was also used above.

\begin{lemma}
\label{AbCatCriterion}
Let $G:\mathcal{A} \to \mathcal{B}$ be an exact functor between abelian categories. Assume that there exists a collection $S \subset \mathcal{A}$ of objects of $\mathcal{A}$ with the following properties:
\begin{enumerate}
\item Each object of $\mathcal{A}$ admits a finite filtration with graded pieces in $S$.
\item Each object of $\mathcal{B}$ admits a finite filtration with graded pieces in $G(S)$.
\item For $X,Y \in S$, the functor $G$ induces bijections $\mathrm{Ext}_{\mathcal{A}}^*(X,Y) \cong \mathrm{Ext}^*_{\mathcal{B}}(G(X), G(Y))$.
\end{enumerate}
Then $G$ is an equivalence.
\end{lemma}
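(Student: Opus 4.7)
The plan is to first upgrade the $\mathrm{Ext}$-bijection in (3) from $S$ to all of $\mathcal{A}$, and then use this enhanced bijection together with (2) to prove essential surjectivity; fully faithfulness is then the $i=0$ case of the Ext bijection.

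Step 1 (Ext-bijections on all of $\mathcal{A}$). We claim that for all $X,Y \in \mathcal{A}$ and all $i\geq 0$, the natural map
\[
\mathrm{Ext}^i_{\mathcal{A}}(X,Y) \to \mathrm{Ext}^i_{\mathcal{B}}(G(X),G(Y))
\]
is bijective. We prove this by a double induction. Fix $X \in S$ and induct on the length $n$ of a filtration of $Y$ with graded pieces in $S$, as supplied by (1). The case $n=1$ (i.e.\ $Y \in S$) is exactly (3). For the inductive step, pick a short exact sequence $0 \to Y' \to Y \to Y'' \to 0$ in $\mathcal{A}$ where $Y'$ admits a shorter filtration and $Y'' \in S$. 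Apply $G$ to obtain the corresponding short exact sequence in $\mathcal{B}$ using exactness of $G$, and compare the two long exact Ext-sequences via a ladder; the inductive hypothesis and the five lemma give the claim for $Y$. This establishes the bijection for $X \in S$ and arbitrary $Y$. A symmetric induction on the length of a filtration of $X$, now for \emph{arbitrary} fixed $Y$, completes Step 1.

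Step 2 (essential surjectivity). Given $Z \in \mathcal{B}$, we show $Z \cong G(X)$ for some $X \in \mathcal{A}$ by induction on the length $n$ of a filtration $0 = Z_0 \subset Z_1 \subset \cdots \subset Z_n = Z$ with graded pieces in $G(S)$, as supplied by (2). For $n \leq 1$ the claim is immediate. For the inductive step, by induction $Z_{n-1} \cong G(Y)$ for some $Y \in \mathcal{A}$, and we are given an extension class $\eta \in \mathrm{Ext}^1_{\mathcal{B}}(G(s), G(Y))$ representing $Z$, where $Z_n/Z_{n-1} \cong G(s)$ with $s \in S$. By Step 1, $\eta$ lifts uniquely to a class $\tilde{\eta} \in \mathrm{Ext}^1_{\mathcal{A}}(s,Y)$, which we realize as a short exact sequence $0 \to Y \to X \to s \to 0$ in $\mathcal{A}$. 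Applying $G$ and comparing with the original extension yields $G(X) \cong Z$.

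Step 3 (full faithfulness). This is the case $i=0$ of Step 1.

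The only step with any content is the double induction in Step 1; once that is in place, the lifting of extension classes in Step 2 is routine, since the $\mathrm{Ext}^1$-bijection is exactly what allows one to promote extensions in $\mathcal{B}$ to extensions in $\mathcal{A}$. The exactness of $G$ is used throughout so that applying $G$ to short exact sequences in $\mathcal{A}$ produces short exact sequences in $\mathcal{B}$, making the ladder diagrams available for the five lemma.
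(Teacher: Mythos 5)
Your argument is correct and follows the same route as the paper's own proof: first a double induction (on filtration lengths of the two Ext arguments, using the long exact Ext-sequences and the five lemma) to upgrade the Ext-bijections from $S$ to all of $\mathcal{A}$, then essential surjectivity by induction on filtration length via the $\mathrm{Ext}^1$-bijection, with full faithfulness being the $i=0$ case of Step 1.
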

\begin{proof}
Let us first show that if $X \in S$, then $\mathrm{Ext}^*_{\mathcal{A}}(X,Z) \cong \mathrm{Ext}_{\mathcal{B}}^*(G(X), G(Z))$ for all $Z \in \mathcal{A}$. This holds true for $Z \in S$ by assumption. Applying (1) and the $5$-lemma using (3) then implies the claim for all $Z$. Next, holding $Z$ fixed but letting $X$ vary through all of $\mathcal{A}$ and repeating the previous argument gives $\mathrm{Ext}^*_{\mathcal{A}}(W,Z) \cong \mathrm{Ext}_{\mathcal{B}}^*(G(W), G(Z))$ for all $W,Z \in \mathcal{A}$. In particular, we have shown full faithfulness. Essential surjectivity follows from (2) by induction on the length of the filtration using the statement about $\mathrm{Ext}^1$-groups just proven to facilitate the induction.
\end{proof}

Theorem~\ref{TruncationBeilinson} (3) is somewhat surprising at first glance: it extracts an honest chain complex out from a construction involving derived categories, thus implementing a ``strictification'' procedure. Another such construction is the Berthelot-Ogus-Deligne $L\eta$-functor that played a central role in \cite{BMS} (see Proposition 6.12 in {\em op.cit.} for an explicit example of the ``strictification'' implemented by $L\eta$). We now explain why the latter is a special case of the former by explaining a description of the $L\eta$-functor in terms of filtered derived categories. This result is crucial to the sequel and will be used in particular in Corollary \ref{NygaardSmooth}.

\begin{proposition}
\label{LetaDF}
Let $R$ be a ring, and let $I \subset R$ be an ideal defining a Cartier divisor. Fix $K\in D(R)$. Let $I^\f \otimes K\in DF(R)$ be the $I$-adic filtration on $K$, i.e., the $i$-th level of the filtration is $I^i \otimes_R K$ with obvious maps. Then $L\eta_I K$ identifies with the $R$-complex underlying $\tau^{\leq 0}_B (I^\f \otimes K)$.
\end{proposition}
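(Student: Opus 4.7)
My strategy is to reduce the statement to an explicit chain-level computation via a flat resolution, and then invoke the identification of the Beilinson connective cover with Deligne's d\'ecalage from Remark \ref{rmk:FiltDec}.

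\textbf{First}, I would represent $K$ by a complex $K^\bullet$ of $I$-torsion-free (e.g.\ flat) $R$-modules; such a representative exists since $D(R)$ has enough flats. Because $I$ defines a Cartier divisor, each power $I^i \subset R$ is an invertible submodule and the derived tensor product $I^i \otimes_R^{\mathbb L} K$ is computed by the underived submodule $I^i K^\bullet \subset K^\bullet$ (interpreting $I^i = R$ for $i \leq 0$). Consequently, the filtered derived object $I^\f \otimes K \in DF(R)$ is represented strictly by the filtered complex $(K^\bullet, I^\f K^\bullet)$.

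\textbf{Second}, I would invoke Remark \ref{rmk:FiltDec}: the formation of $\tau^{\leq 0}_B$ is implemented at the chain level by Deligne's filtration d\'ecal\'ee from \cite[\S 1.3.3]{DeligneHodgeII}. Specializing Deligne's formula to our filtered complex gives
\[
(\mathrm{Dec}(I^\f K^\bullet))^p K^n \;=\; \bigl\{x \in I^{n+p} K^n \,:\, dx \in I^{n+p+1} K^{n+1}\bigr\},
\]
so in particular
\[
(\mathrm{Dec}(I^\f K^\bullet))^0 K^n \;=\; \bigl\{x \in I^n K^n \,:\, dx \in I^{n+1} K^{n+1}\bigr\},
\]
which is exactly the defining formula of $(\eta_I K^\bullet)^n$ from \cite[Definition 6.1]{BMS}, and hence of $L\eta_I K$.

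\textbf{Third}, it remains to extract the correct underlying $R$-complex of $\tau^{\leq 0}_B(I^\f \otimes K)$ from this d\'ecalage model. Using the description $\gr^p(\tau^{\leq 0}_B F) \simeq \tau^{\leq p}(\gr^p F)$ from Theorem \ref{TruncationBeilinson}(2), together with Deligne's identification of $\gr^p(\mathrm{Dec}(I^\f K^\bullet))$ in terms of the $E_2$-page of the $I$-adic spectral sequence of $K^\bullet$, one verifies that the filtration on $\tau^{\leq 0}_B F$ stabilises at $p \leq 0$, so its underlying complex is the $p=0$ filtration level, namely $\eta_I K^\bullet \simeq L\eta_I K$. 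The main obstacle is precisely this last bookkeeping: translating the chain-level d\'ecalage into the $\infty$-categorical Beilinson truncation and confirming that ``the $R$-complex underlying'' $\tau^{\leq 0}_B(I^\f \otimes K)$ picks out $(\mathrm{Dec}(I^\f K^\bullet))^0$ rather than the whole colimit $\colim_p (\mathrm{Dec}(I^\f K^\bullet))^p = K^\bullet$. This hinges on Deligne's computation of the associated graded of the d\'ecalage (his Proposition 1.3.4 in \cite{DeligneHodgeII}) together with the characterisation of $\tau^{\leq 0}_B$ by its graded pieces, which together realise $\tau^{\leq 0}_B F$ as an $\mathbb{N}$-indexed filtration on $\eta_I K^\bullet$, as desired.
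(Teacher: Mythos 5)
Your step 2 correctly identifies $\eta_I K^\bullet$ with the level-$0$ piece $\mathrm{Dec}(F)^0$ of Deligne's d\'ecalage, and indeed this matches the paper's own remark following the proposition. But step 3 is where the argument has a genuine gap, and the route you sketch cannot close it. The root issue is that Remark~\ref{rmk:FiltDec} cannot be taken literally for the two-sided $\mathbb{Z}$-indexed $I$-adic filtration: the d\'ecalage filtered complex $\mathrm{Dec}(I^\f K^\bullet)$ is \emph{not} the Beilinson connective cover $\tau^{\leq 0}_B(I^\f\otimes K)$ in $DF(R)$. Its underlying complex $\colim_p \mathrm{Dec}(F)^p$ is $K^\bullet[1/I]$ (not $K^\bullet$, as you wrote, and not $\eta_I K^\bullet$), and for $p<0$ its graded pieces fail to be the truncations $\tau^{\leq p}\gr^p(I^\f\otimes K)$: take $K = M$ concentrated in degree $0$, where $\mathrm{Dec}(F) = F$ and $\gr^p\mathrm{Dec}(F)\cong I^pM/I^{p+1}M$ is nonzero in degree $0$, while $\tau^{\leq p}\gr^p F = 0$. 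The stabilization "at $p\leq 0$" you invoke also fails whenever $K$ has cohomology in negative degrees. Most fundamentally, $\{\gr^i\}$ alone is \emph{not} conservative on $DF(R)$ — one also needs $F\mapsto F(\infty)$, cf.~Lemma~\ref{DFbasics}(1) — so no argument purely via graded pieces can determine the underlying complex $F\mapsto F(-\infty)$ of $\tau^{\leq 0}_B(I^\f\otimes K)$.

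The paper's proof avoids the d\'ecalage altogether. It introduces a \emph{different} filtration $G^{i,\bullet} := I^i K^\bullet \cap \eta_I K^\bullet$ on $\eta_I K^\bullet$, which agrees with $\mathrm{Dec}(F)$ only in level $0$ and whose level $i$, in each cohomological degree $n$, stabilizes to $(\eta_I K)^n$ as soon as $i\leq n$ — so that the colimit over $i$ is $\eta_I K^\bullet$ by inspection. It then verifies directly that the inclusion $G^\f\hookrightarrow I^\f K^\bullet$ is a connective cover by checking two things: that $\gr^i$ of the inclusion is the truncation map $\tau^{\leq i}\gr^i F\to\gr^i F$, \emph{and} that the induced map on $\lim_i$ (i.e.\ on $F(\infty)$) is an equivalence. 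The second check supplies exactly the information the graded pieces miss and is what pins down the connective cover; it is precisely the step you deferred to ``bookkeeping.'' Only after $\tau^{\leq 0}_B(I^\f\otimes K)\simeq G^\f$ is established does passing to the underlying complex yield $L\eta_I K$.
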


The reader familiar with \cite{DeligneHodgeII} will have no difficulty deducing Proposition~\ref{LetaDF} from Remark~\ref{rmk:FiltDec}: the object $\eta_I K^\bullet$ defined below coincides with $\mathrm{Dec}(F)^0(K^\bullet)$ in the notation of \cite[\S 1.3.3]{DeligneHodgeII}, where $F$ denotes the $I$-adic filtration on $K$.

\begin{proof}
Choose a complex $K^\bullet$ representing $K$ such that each $K^i$ is $I$-torsion-free. Then we have an evident filtered complex $(I^\f K^\bullet)$ representing $I^\f \otimes K \in DF(R)$. By definition, $\eta_I K^\bullet\subset K^\bullet[1/I]$ is the subcomplex with $(\eta_I K^\bullet)^n = \{x\in I^n K^n\mid dx\in I^{n+1} K^{n+1}\}$.

Define a filtration $G^{\f,\bullet}$ on $\eta_I K^\bullet$ via $G^{i,\bullet} = I^i K^\bullet \cap \eta_I K^\bullet$ as subcomplexes of $K^\bullet[1/I]$. Then there is an evident inclusion
\[ \widetilde{\beta}: G^{\f,\bullet} \to I^\f K^\bullet\]
of filtered complexes, and hence a map 
\[ \beta: G^\f \to I^\f K\]
in $DF(R)$. We shall check that this map is a connective cover map for the Beilinson-$t$-structure, which will prove the proposition.

To check this, we need to check that $\gr^i \beta: \gr^i G^\f\to \gr^i I^\f K$ identifies the source with $\tau^{\leq i}$ of the target, and that $\beta(\infty)$ is an equivalence. Note that $\beta(\infty)$ is an equivalence as in any given degree $n$, the map $G^{i,\bullet}\to I^i K^\bullet$ is an isomorphism in degrees $i>n$.

On the other hand, by construction the map of complexes $\gr^i G^{\f,\bullet}\to \gr^i I^\f K^\bullet$ is injective, and the inclusions $I^{n+1} K^n\subset (\eta_I K^\bullet)^n\subset I^n K^n$ imply that is an isomorphism for $n<i$ and the left-hand side is zero for $n>i$. It remains to see that in degree $i$, the image is precisely the set of cocycles. But this follows from the exact definition of $\eta_I K^\bullet$.
\end{proof}

\begin{remark}
The interpretation of $L\eta_I$ coming from Proposition~\ref{LetaDF} gives a concrete measure of the failure of $L\eta_I$ to preserve exact triangles: if $K \to L \to M$ is an exact triangle in $D(R)$, then the induced sequence on applying $L\eta_I$ is an exact triangle if the boundary map $H^0_B(M) \to H^1_B(K)$ is the $0$ map. Via Theorem~\ref{TruncationBeilinson} (3), the latter is equivalent to requiring that the boundary map $H^i(M \dotimes_R R/I) \to H^{i+1}(K \dotimes_R R/I)$ be the zero map for all $i$.
\end{remark}

\begin{corollary}
\label{Letasymmmon}
With notation from Proposition~\ref{LetaDF}, the functor $L\eta_I: D(R)\to D(R)$ of $\infty$-categories has a natural structure as a lax symmetric monoidal functor. In particular, it takes $E_\infty$-$R$-algebras to $E_\infty$-$R$-algebras.
\end{corollary}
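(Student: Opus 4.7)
My plan is to use the identification of $L\eta_I$ from Proposition~\ref{LetaDF} to factor it as a composite of three functors, each carrying a natural (lax) symmetric monoidal structure. The corollary then follows since composites of lax symmetric monoidal functors are lax symmetric monoidal, and lax symmetric monoidal functors send commutative algebra objects (i.e., $E_\infty$-$R$-algebras here) to commutative algebra objects. Concretely, Proposition~\ref{LetaDF} gives $L\eta_I(K) \simeq \tau^{\leq 0}_B(I^\f \otimes K)(-\infty)$, so I write $L\eta_I = \mathrm{ev}_{-\infty} \circ \tau^{\leq 0}_B \circ \alpha$, where $\alpha: D(R) \to DF(R)$ is the $I$-adic filtration functor $K \mapsto I^\f \otimes K$ (with $(I^\f \otimes K)(n) = I^n \dotimes_R K$), and $\mathrm{ev}_{-\infty}: DF(R) \to D(R)$ denotes evaluation at $-\infty$, namely $F \mapsto \colim_n F(n)$.

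The structures on the last two factors are essentially formal. Theorem~\ref{TruncationBeilinson}(1) asserts that the inclusion $DF^{\leq 0}(R) \hookrightarrow DF(R)$ is symmetric monoidal, so its right adjoint $\tau^{\leq 0}_B$ inherits a canonical lax symmetric monoidal structure. For $\mathrm{ev}_{-\infty}$, the Day convolution formula from Lemma~\ref{DFbasics} together with the fact that tensor products in $D(R)$ commute with colimits in each variable yields
\[
(F \dotimes_R G)(-\infty) = \colim_n \colim_{i+j \geq n} F(i) \dotimes_R G(j) \simeq \colim_{i,j} F(i) \dotimes_R G(j) \simeq F(-\infty) \dotimes_R G(-\infty),
\]
so $\mathrm{ev}_{-\infty}$ is symmetric monoidal.

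The substantive step is to construct a natural lax symmetric monoidal structure on $\alpha$; this is where the Cartier divisor hypothesis enters. Because $I$ is Cartier, each $I^n$ is an invertible (hence flat) $R$-module and the multiplication maps $I^i \dotimes_R I^j \to I^{i+j}$ are equivalences. Together with the inclusions $I^{m+1} \hookrightarrow I^m$, these make $I^\f$ into a commutative algebra object in $DF(R)$, namely the filtered $E_\infty$-$R$-algebra underlying the classical Rees construction. The lax structure maps $\alpha(K) \dotimes_R \alpha(L) \to \alpha(K \dotimes_R L)$ in $DF(R)$ are then induced, via the universal property of the colimit defining Day convolution, by the composites $I^i K \dotimes_R I^j L \to I^{i+j}(K \dotimes_R L) \hookrightarrow I^n(K \dotimes_R L)$ for $i+j \geq n$. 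I expect the main point to verify to be the $\infty$-categorical coherence of this lax structure, but this reduces to the commutative algebra structure on $I^\f \in DF(R)$, which is standard for Cartier divisors.
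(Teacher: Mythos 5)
Your proposal is correct and follows the paper's proof closely: the same three-functor decomposition $L\eta_I = \mathrm{ev}_{-\infty} \circ \tau^{\leq 0}_B \circ \alpha$, the same adjunction argument for $\tau^{\leq 0}_B$, the same colimit computation for $\mathrm{ev}_{-\infty}$, and the same identification of $I^\f$ as an $E_\infty$-algebra object of $DF(R)$ via the Rees construction. The only cosmetic difference is that the paper dispatches the coherence for step one by further factoring $\alpha = (I^\f \dotimes_R -) \circ L_0$, where $L_0$ is symmetric monoidal and tensoring with the commutative algebra object $I^\f$ is lax symmetric monoidal by general nonsense, rather than describing the structure maps explicitly via the Day-convolution universal property as you do.
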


\begin{proof} By the previous proposition, the functor $L\eta_I$ can be written as a composite of the following three functors:
\begin{enumerate}
\item The functor $K\mapsto I^\f\dotimes_R K: D(R)\to DF(R)$.
\item The connective cover functor $\tau_B^{\leq 0}: DF(R)\to DF(R)$.
\item The functor $F\mapsto F(\infty): DF(R)\to D(R)$.
\end{enumerate}
It is a general fact that the connective cover functor is lax symmetric monoidal. In fact, a right adjoint to a symmetric monoidal functor is always lax symmetric monoidal by \cite[Corollary 7.3.2.7]{HA}, and $\tau_B^{\leq 0}: DF(R)\to DF^{\leq 0}(R)$ is right adjoint to the symmetric monoidal inclusion $DF^{\leq 0}(R)\subset DF(R)$.

The functor $F\mapsto F(-\infty): DF(R)\to D(R)$ is symmetric monoidal; for this, note that
\[\begin{aligned}
(F\dotimes_R G)(-\infty) &= \colim_{i\to -\infty} \colim_{j+k\geq i} F(j)\dotimes_R G(k) = \colim_{j,k\to -\infty} F(j)\dotimes_R G(k)\\
&= \colim_{j\to-\infty} F(j)\dotimes_R \colim_{k\to-\infty} G(k) = F(-\infty)\dotimes_R G(-\infty)\ .
\end{aligned}\]
Finally, the functor $K\mapsto I^\f\dotimes_R K$ can be written as the composite of the symmetric monoidal functor $K\mapsto L_0(K)$ (from the proof of Theorem \ref{TruncationBeilinson}) and the functor $F\mapsto I^\f\dotimes_R F$ that is lax symmetric monoidal as $I^\f\in DF(R)$ has a natural structure as $E_\infty$-algebra in $DF(R)$. In fact, $I^\f$ has a strict commutative ring structure on the level of filtered $R$-modules (thus, of filtered chain complexes).
\end{proof}

\subsection{De Rham complexes and negative cyclic homology}\label{subsection_de_rham_neg}

Now we return to the quasisyntomic site, and fix a base ring $R\in \Qs$. Our goal in this section is to prove Theorem~\ref{thm:main7} relating negative cyclic homology to de~Rham cohomology.

\begin{example}[Hodge-completed derived de Rham complex]
\label{HCDDR}
Consider the $\widehat{DF}(R)$-valued pre\-sheaf on $\Qs_{R}^\sub{op}$ determined by the $p$-adic completion $(\widehat{L\Omega}_{-/R}, \widehat{L\Omega}_{-/R}^{\geq \f})$ of the Hodge-completed derived de~Rham complex. We claim that this is a sheaf. By closure of the sheaf property under limits and the behaviour of limits in $\widehat{DF}(R)$, we are reduced to checking that $A \mapsto (\wedge^i_A L_{A/R})^\wedge_p$ is a sheaf on $\Qs_R^\sub{op}$ for all $i$, which follows from Theorem~\ref{FlatDescentCC}.
\end{example}

\begin{example}[$p$-completed derived de Rham complex]
\label{pCDDR} Consider the $D(R)$-valued presheaf on $\qs_{R}^\sub{op}$ determined by the $p$-adic completion of the derived de~Rham complex; we will simply denote this as $L\Omega_{-/R}$ and leave the $p$-adic completion implicit. We claim that this is a sheaf. It is enough to check that $A \mapsto L\Omega_{A/R} \dotimes_{\mathbb Z} \mathbb Z/p\mathbb Z$ is a sheaf. The conjugate filtration on derived de Rham cohomology modulo $p$ endows $L\Omega_{A/R}\dotimes_{\mathbb Z} \mathbb Z/p\mathbb Z$ with a functorial increasing exhaustive $\mathbb{N}$-indexed filtration with graded pieces $\wedge^i_A L_{A/R}[-i] \dotimes_{\mathbb{Z}} \mathbb{Z}/p\mathbb Z$. As any $A\in \qs_{R}$ is quasisyntomic over $R$, each graded piece, and hence each finite level of the filtration, takes values in $D^{\geq -1}(R)$. The claim now follows as sheaves valued in $D^{\geq -1}(R)$ are closed under filtered colimits in the corresponding presheaf category.

If $R$ is perfectoid or $R=\mathbb Z_p$, the same discussion applies to the larger site $\Qs_R^\sub{op}$, using Lemma~\ref{QSynCC}.
\end{example}

The next example is a toy example of the key construction of this paper:

\begin{example}[Recovering Hodge- and $p$-completed derived de Rham complex from $\HC^{-}$]
\label{HCddR}\

\noindent We shall need the following result describing the Hochschild homology of quasiregular semiperfectoid $R$-algebras:

\begin{lemma} 
\label{HKRSperf}
Fix $S \in \qsp_{R}$; if $R$ is perfectoid or $R=\mathbb Z_p$, we allow more generally $S\in \Qsp_R$.
\begin{enumerate}
\item For each $i \geq 0$, the $S$-complex $\wedge^i_S L_{S/R}[-i]$ is $p$-completely flat, and in particular its $p$-completion is concentrated in degree $0$.
\item We have $\pi_{odd} \HH(S/R;\mathbb Z_p) = 0$, and there is multiplicative identification of $\pi_{2i} \HH(S/R;\mathbb Z_p)$ with the $p$-completion of $\wedge^i L_{S/R}[-i]$.
\end{enumerate}
\end{lemma}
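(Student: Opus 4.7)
The plan is to prove (1) by identifying $\wedge^i_S L_{S/R}[-i]$ with a divided power after $p$-completion, and then to derive (2) from (1) via the HKR filtration on $\HH(-/R;\mathbb Z_p)$.

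For (1), the first step is to observe that $L_{S/R}$ has $p$-complete Tor amplitude concentrated in degree $-1$. The upper bound $\leq -1$ comes from Remark~\ref{rem:qsptor}, which applies to any map $R\to S$ with $S\in\Qsp$; the lower bound $\geq -1$ comes either from $R\to S$ being quasisyntomic (in the $\qsp_R$ case) or from Lemma~\ref{QSynCC} (in the $\Qsp_R$ case with $R$ perfectoid or $\mathbb Z_p$). By Lemma~\ref{BoundedTorsionFlat}, after $p$-completion $L_{S/R}[-1]$ is represented by a classical $p$-complete flat $S$-module $M$. I would then invoke the Quillen--Illusie derived Koszul identification
\[
L\wedge^i_S(M[1]) \simeq L\Gamma^i_S(M)[i],
\]
obtained by non-abelian left Kan extension from the case of a finite free module. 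This gives $(\wedge^i_S L_{S/R}[-i])^\wedge_p \simeq (\Gamma^i_S M)^\wedge_p$ in cohomological degree $0$. Finally, $\Gamma^i_S M$ is $p$-completely flat: $\Gamma^i$ preserves filtered colimits and carries free modules to free modules, so Lazard's theorem applied to $M/pM$ settles the question modulo $p$.

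For (2), with (1) in hand, the $p$-completed HKR filtration of Remark~\ref{HKRQuasismooth}, extended from the quasismooth case to all simplicial commutative $R$-algebras by left Kan extension, equips $\HH(S/R;\mathbb Z_p)$ with a $\T$-equivariant complete descending multiplicative $\mathbb N$-indexed filtration $\Fil^\bullet_\sub{HKR}$ whose $i$-th graded piece is $(\wedge^i_S L_{S/R}[i])^\wedge_p$. By (1), this graded piece is concentrated in homotopy degree $2i$ (cohomological degree $-2i$). Since distinct graded pieces sit in disjoint even homotopy degrees, the associated spectral sequence degenerates trivially: there are no possible differentials and no extension problems beyond extracting one graded piece per even degree. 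This forces $\pi_{odd}\HH(S/R;\mathbb Z_p)=0$ and gives a multiplicative identification $\pi_{2i}\HH(S/R;\mathbb Z_p)\cong(\wedge^i_S L_{S/R}[-i])^\wedge_p$.

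The main technical point is the Quillen--Illusie Koszul identification in the derived and $p$-completed setting; this is classical in its uncompleted form, and the compatibility with $p$-completion follows since both $L\wedge^i$ and $L\Gamma^i$ commute with filtered colimits along flat transitions and agree on finite free modules. The remaining ingredients---the flatness of divided powers of a flat module and the sparsity-based degeneration of the HKR spectral sequence---are then essentially formal.
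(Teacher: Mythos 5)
Your proof is correct and takes essentially the same route as the paper. The paper's proof of (1) reduces to the $i=1$ case by ``stability of $p$-completely flat modules under divided powers'' --- which is precisely your Illusie d\'ecalage identity $L\wedge^i(M[1])\simeq L\Gamma^i(M)[i]$ combined with Lazard --- and establishes the $i=1$ case using the same Tor-amplitude facts you cite (the paper invokes Lemma~\ref{QspAlternative} directly in the perfectoid case where you combine Lemma~\ref{QSynCC} with Remark~\ref{rem:qsptor}, but these yield the same concentration in degree $-1$); the derivation of (2) from (1) via degeneration of the HKR spectral sequence by even-degree sparsity is likewise identical.
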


\begin{proof} It suffices to prove part (1): The HKR filtration then implies (2) as the $p$-completion of $\gr^i_\sub{HKR} \HH(S/R) \simeq \wedge^i_S L_{S/R}[i]$ is concentrated in degree $2i$ by part (1). Moreover, by stability of $p$-completely flat modules under divided powers, it suffices to handle the case $i=1$.

If $R=\mathbb Z_p$ and $S\in \Qsp_R$, then $L_{S/R}[-1]$ is $p$-completely flat by Remark~\ref{rem:qsptor}. If $R$ is perfectoid and $S\in \Qsp_R$, we use Lemma~\ref{QspAlternative} for the same conclusion. Finally, if $R\in \Qs$ is general and $S\in \qsp_{R}$, then $L_{S/R}$ has $p$-complete Tor-amplitude in $[-1,0]$ but $\Omega^1_{S/R}=0$, so $L_{S/R}[-1]$ is $p$-completely flat.
\end{proof}

In particular, for $S \in \qsp_{R}$, as $\pi_* \HH(S/R;\mathbb Z_p)$ lives only in even degrees, the homotopy fixed point spectral sequence calculating $\HC^{-}(S/R;\mathbb Z_p)$ degenerates to yield a complete descending multiplicative filtration on $\pi_0 \HC^{-}(S/R;\mathbb Z_p)$ with the $i$-th graded piece being the $p$-completion of $\wedge^i_S L_{S/R}[-i]$. By the same reasoning used in Example~\ref{HCDDR}, it follows that $\pi_0 \HC^{-}(-/R)$ is a $D(R)$-valued sheaf on $\qsp_{R}$, and thus unfolds to a sheaf $(\pi_0 \HC^{-}(-/R;\mathbb Z_p))^{\beth}$ on $\qs_{R}$ by Proposition~\ref{qsqspextend}. Again, if $R$ is perfectoid or $R=\mathbb Z_p$, the dicussion applies also to $\Qs_R$.

\begin{proposition}\label{prop:hcddr}
The sheaf $(\pi_0 \HC^{-}(-/R;\mathbb Z_p))^{\beth}$ on $\qs_{R}$ is canonically identified with the $p$-adic completion $\widehat{L\Omega}_{-/R}$ of the Hodge-completed derived de Rham complex from Example~\ref{HCDDR}.
\end{proposition}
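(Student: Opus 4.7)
By Proposition~\ref{qsqspextend} together with the noted compatibility of unfolding with filtration levels and associated graded functors, it suffices to establish the identification as $\widehat{DF}(R)$-valued sheaves on the basis $\qsp_R$. Fix $S \in \qsp_R$. Lemma~\ref{HKRSperf}(2) shows that $\pi_\ast \HH(S/R;\mathbb Z_p)$ is concentrated in even degrees, so the homotopy fixed point spectral sequence for $\HC^-(S/R;\mathbb Z_p) = \HH(S/R;\mathbb Z_p)^{h\T}$ degenerates at $E_2$ on purely degree grounds. This endows $\pi_0 \HC^-(S/R;\mathbb Z_p)$ with a complete multiplicative filtration whose $i$-th graded piece is the $p$-completion of $\wedge^i_S L_{S/R}[-i]$, which is a flat $S$-module in degree zero by Lemma~\ref{HKRSperf}(1). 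The Hodge filtration on $\widehat{L\Omega}_{S/R}$ is also complete and multiplicative with exactly the same associated graded pieces. The proposition thus reduces to comparing these two complete filtered $S$-modules in $\widehat{DF}(R)$.

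To construct the canonical comparison map, I would exploit the fact that for a polynomial $R$-algebra $P$, the HKR isomorphism $\pi_\ast \HH(P/R) \simeq \Omega^\ast_{P/R}$ intertwines the Connes $B$-operator---which comes from the $\T$-action on $\HH$---with the de Rham differential $d$. Both the HKR-filtered $\T$-equivariant $E_\infty$-$R$-algebra $\HH(-/R)$ and the Hodge-filtered derived de Rham complex $L\Omega_{-/R}$ are left Kan extensions of their restrictions to polynomial $R$-algebras (via Construction~\ref{LKE}), with $p$-completion applied at the end. The classical ``$B = d$'' identification thus extends uniquely to a natural transformation between the corresponding filtered $E_\infty$-$R$-algebras on $s\mathrm{CAlg}_R$. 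Applying $(-)^{h\T}$ to the HKR-filtered $\HH(-/R;\mathbb Z_p)$ and then passing to $\pi_0$ on $\qsp_R$, where the spectral sequence degenerates, produces a map of complete filtered $S$-modules $\pi_0 \HC^-(S/R;\mathbb Z_p) \to \widehat{L\Omega}_{S/R}$. On each associated graded piece this map is the identity on $(\wedge^i_S L_{S/R}[-i])^\wedge_p$, so completeness of both filtrations forces it to be an isomorphism.

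The main technical obstacle is to formalize the ``$B = d$'' comparison at the level of filtered $E_\infty$-$R$-algebras in $\widehat{DF}(R)$, rather than merely on associated graded pieces or on $\pi_\ast$. The natural route is to verify that both filtered $E_\infty$-$R$-algebras in question extend uniquely from their values on polynomial $R$-algebras by left Kan extension---where the matching is the classical HKR theorem together with the identification of the Connes $B$-operator with the de Rham differential---and only then pass through $\pi_0$ of homotopy fixed points and evaluate on $\qsp_R$. Some care with the order of $p$-completion and the $\T$-equivariance is needed, but this is standard given the formalism of \S\ref{subsection_intro_to_THH} and the connectivity/evenness inputs of Lemma~\ref{HKRSperf}.
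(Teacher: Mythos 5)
The overall skeleton of your proposal is correct: reduce to $\qsp_R$ via Proposition~\ref{qsqspextend}, note that Lemma~\ref{HKRSperf} forces the homotopy fixed point spectral sequence to degenerate, match the associated graded pieces of the abutment filtration on $\pi_0 \HC^-(S/R;\mathbb Z_p)$ with the Hodge graded pieces of $\widehat{L\Omega}_{S/R}$, and conclude from completeness of the two filtrations. But there is a genuine gap exactly where you flag "the main technical obstacle," and your proposed ``natural route'' does not close it.

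The issue is that ``$B = d$'' is a statement about $\pi_*$ of a mixed complex: it identifies the Connes operator on homotopy groups with the de~Rham differential. This is \emph{not} a natural transformation between filtered $E_\infty$-$R$-algebras, and left Kan extension from polynomial $R$-algebras cannot manufacture one out of a statement at the level of associated graded pieces. Moreover, if one did have a map $\HH(-/R;\mathbb Z_p) \to L\Omega_{-/R}$ of $\T$-equivariant objects with trivial $\T$-action on the target, applying $(-)^{h\T}$ produces a map to $L\Omega_{-/R} \dotimes_R R^{h\T}$, not to $L\Omega_{-/R}$, and it is not immediate that the $R^{h\T}$ factor can be stripped off compatibly with the filtrations. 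Finally, the filtration on $\pi_0 \HC^-$ that needs to match the Hodge filtration is the abutment filtration of the homotopy fixed point spectral sequence, which on polynomial $R$-algebras does \emph{not} have the individual $\Omega^i_{P/R}$ as graded pieces; the clean matching only happens after unfolding from quasiregular semiperfectoid to quasismooth $R$-algebras, which is where the paper begins the comparison.

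What actually resolves the obstacle is the Beilinson $t$-structure of \S\ref{subsection_FDC}, which your proposal does not use. Once the unfolded filtered sheaf $F^\beth$ is shown to take values in $\widehat{DF}(R)^\heartsuit$ on quasismooth $A$ (which follows from the degree-concentration of the graded pieces and Lemma~\ref{BoundedTorsionFlat}), Theorem~\ref{TruncationBeilinson}(3) and the symmetric monoidality of unfolding identify $F^\beth(A)$ with an honest commutative differential graded $R$-algebra whose terms are $(\Omega^i_{A/R})^\wedge_p$. The differential is then an $R$-linear derivation $A \to (\Omega^1_{A/R})^\wedge_p$, which by the universal property and a computation in the example $A = \widehat{R[x]}$ (this is where ``$B = d$'' enters, in the guise of \cite[Lemma IV.4.7]{NikolausScholze}) is the de~Rham differential. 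In other words, the Beilinson $t$-structure is precisely the machinery that turns ``$B = d$'' from a statement about homotopy groups into a comparison of filtered objects, and omitting it leaves the comparison map unconstructed.
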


\begin{proof}
It is convenient to use filtrations. Thus, for $S \in \qsp_{R}$, viewing $\pi_0 \HC^{-}(S/R;\mathbb Z_p)$ with the filtration defined via homotopy fixed point spectral sequence as above gives a $\widehat{DF}(R)$-valued sheaf $F$ on $\qsp_{R}$. This unfolds to a $\widehat{DF}(R)$-valued sheaf $F^\beth$ on $\qs_{R}$; the underlying sheaf of complexes coincides with the sheaf $(\pi_0 \HC^{-}(-/R;\mathbb Z_p))^{\beth}$ of interest.  In the paragraph above, for a quasismooth $R$-algebra $A$, we have identified $\gr^i(F^\beth)(A)$ with the $p$-adic completion of $\Omega^i_{A/R}[-i]$; as $R$ has bounded $p^\infty$-torsion, so does $A$ by Lemma~\ref{QSProperties}, and hence this graded piece is concentrated in cohomological degree $i$ by quasismoothness and Lemma~\ref{BoundedTorsionFlat}. In particular, $F^\beth(A) \in \widehat{DF}(R)^{\heartsuit}$. As the equivalence in Proposition~\ref{qsqspextend} is symmetric monoidal, Theorem \ref{TruncationBeilinson}(3) tells us that $F^\beth(A)$ is given by a commutative differential graded $R$-algebra of the form
\[ A \to ({\Omega}^1_{A/R})^\wedge_p \to ({\Omega}^2_{A/R})^\wedge_p \to \cdots\]
By checking in the example of $A = \widehat{R[x]}$, one concludes that the differential coincides with the de Rham differential (see \cite[Lemma IV.4.7]{NikolausScholze} for a similar calculation). In other words, $F^\beth$ coincides with $\widehat{L\Omega}_{-/R}$ on the category of quasismooth $R$-algebras. Hence, their left Kan extensions (as functors to $\widehat{DF}(R)$) to all $p$-complete simplicial commutative $R$-algebras also coincide. But these extensions, when restricted to $\qs_{R}$, agree with the original functors as the same holds true for the associated graded functors of either functor (as they are given by the $p$-completions of $\wedge^i L_{-/R}[-i]$). The result follows.
\end{proof}

In fact, we can now prove Theorem~\ref{thm:main7}.

\begin{proof}[Proof of Theorem~\ref{thm:main7}] We analyze the sheaf $(\pi_{2n} \HC^-(-/R;\mathbb Z_p))^\beth$ on $\qs_{R}$ for any $n\in \mathbb Z$. For $n\leq 0$, periodicity (given by multiplication by the generator of $\pi_{2n} \HC^-(R/R;\mathbb Z_p)\simeq R$) shows that it gets identified with $(\pi_0 \HC^-(-/R;\mathbb Z_p))^\beth$, as desired. For $n>0$, the analysis of the previous proof shows that on quasismooth $R$-algebras $A$, it is given by a complex
\[
 ({\Omega}^n_{A/R})^\wedge_p \to ({\Omega}^{n+1}_{A/R})^\wedge_p \to \ldots\ ,
\]
and one can identify this as a subcomplex of the complex for $(\pi_0 \HC^-(-/R;\mathbb Z_p))^\beth$ via multiplication by the generator of $\pi_{-2n} \HC^-(R/R;\mathbb Z_p)\cong R$. By left Kan extension, we get this description in general.

It remains to see that the filtration is complete and exhaustive. Completeness can be checked locally on $\qs_{R}$, and is evident on $\qsp_{R}$ as the Postnikov filtration is complete. To see that it is exhaustive, we note that on any homotopy group $\pi_i \Fil^n \HC^-(A/R;\mathbb Z_p)$, the filtration is eventually constant and equal to $\pi_i \HC^-(A/R;\mathbb Z_p)$; indeed, it suffices to take $n$ sufficiently negative so that $i\geq 2n$.

The case of $\HP$ is similar, but easier by $2$-periodicity.
\end{proof}
\end{example}

\newpage

\section{$\THH$ over perfectoid base rings}
\label{sec:TCperfectoid}

The first goal in this section, realized in \S \ref{subsec:THHTCTPPerfectoid}, is to analyze the theories $\THH$, $\TC^-$ and $\TP$ for perfectoid rings, and in particular prove Theorem~\ref{thm:TCperfectoid}. In fact, with little extra effort, we can also identify $\THH$ of a smooth algebra over a perfectoid ring in \S \ref{subsec:THHsmoothperfectoid}, which generalizes a result of Hesselholt; the key tool here is Theorem~\ref{TPHPDeformation}, which explains why the topological theory provides a $1$-parameter deformation of the algebraic theory. The formulation of these theorems in explicit terms entails making certain choices; one can formulate the results in a more invariant way in the language of Breuil-Kisin twists, which is discussed in \S \ref{subsec:BKtwist}.

For the rest of this section, fix a perfectoid ring $R$, and set $A_{\inf} = A_{\inf}(R)$ with the map $\theta=\theta_R: A_{\inf}\to R$. 

\subsection{$\THH, \TC^{-}$ and $\TP$ for perfectoid rings}
\label{subsec:THHTCTPPerfectoid}

B\"{o}kstedt calculated $\pi_* \THH(\mathbb{F}_p)$ to be a polynomial ring on a degree $2$ generator \cite{Boekstedt}. Using his result, we can prove the analog for any perfectoid ring:

\begin{theorem} The ring
$\pi_* \THH(R;\mathbb Z_p) = R[u]$ is a polynomial ring, where $u \in \pi_2 \THH(R;\mathbb Z_p)\cong \pi_2\HH(R;\mathbb Z_p)=\ker(\theta)/\ker(\theta)^2$ is a generator of $\ker(\theta)/\ker(\theta)^2$.
\end{theorem}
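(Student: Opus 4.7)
I would first compute $\pi_{\ast}\HH(R;\mathbb{Z}_{p})$ and then bootstrap to $\pi_{\ast}\THH(R;\mathbb{Z}_{p})$ via the base-change relation of Lemma~\ref{THHvsHH}, with B\"okstedt's computation of $\pi_{\ast}\THH(\mathbb{Z})$ as the essential input.

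For the Hochschild step, any perfectoid ring lies in $\Qsp$ (Example~\ref{Qspsperf}), so Lemma~\ref{HKRSperf} gives $\pi_{\mathrm{odd}}\HH(R;\mathbb{Z}_{p})=0$ and identifies $\pi_{2i}\HH(R;\mathbb{Z}_{p})$ with the $p$-completion of $\wedge^{i}_{R}L_{R/\mathbb{Z}_{p}}[-i]$. Combining this with $L_{R/\mathbb{Z}_{p}}^{\wedge}\simeq R[1]$ from Proposition~\ref{prop:basicperfd}(2), the standard d\'ecalage formula shows that each such wedge power is $p$-completely free of rank one over $R$ in degree~$0$; the degree-$2$ part identifies canonically with $\ker(\theta)/\ker(\theta)^{2}$, itself free of rank one over $R$ because $\ker(\theta)=(\xi)$ is generated by a nonzerodivisor (Proposition~\ref{prop:basicperfd}(1)). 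Tracking the multiplicative structure from the HKR filtration then identifies $\pi_{\ast}\HH(R;\mathbb{Z}_{p})$ with the divided power algebra $\Gamma_{R}\langle v\rangle$ on a degree-$2$ generator $v\in\ker(\theta)/\ker(\theta)^{2}$.

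For the bootstrap, Lemma~\ref{THHvsHH} provides $\HH(R;\mathbb{Z}_{p})\simeq\THH(R;\mathbb{Z}_{p})\otimes_{\THH(\mathbb{Z})}\mathbb{Z}$ and recalls that $\pi_{i}\THH(\mathbb{Z})$ is finite for $i>0$, with positive-degree classes living only in odd degrees and vanishing in degree~$1$. I would feed this into the multiplicative Tor spectral sequence
\[
E^{2}_{s,t}=\mathrm{Tor}^{\pi_{\ast}\THH(\mathbb{Z})}_{s,t}\!\bigl(\pi_{\ast}\THH(R;\mathbb{Z}_{p}),\,\mathbb{Z}\bigr)\Rightarrow \pi_{s+t}\HH(R;\mathbb{Z}_{p}).
\]
Since the abutment is even and $R$-free, and the coefficient ring contributes only odd positive-degree torsion, a degree-by-degree analysis forces $\pi_{\ast}\THH(R;\mathbb{Z}_{p})$ to be concentrated in even degrees, free of rank one over $R$ in each, with the edge map $\pi_{2}\THH(R;\mathbb{Z}_{p})\to\pi_{2}\HH(R;\mathbb{Z}_{p})\cong\ker(\theta)/\ker(\theta)^{2}$ an isomorphism. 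Any generator $u$ of $\pi_{2}\THH(R;\mathbb{Z}_{p})$ then yields an $R$-algebra map $R[u]\to\pi_{\ast}\THH(R;\mathbb{Z}_{p})$ whose composite with the edge map to $\Gamma_{R}\langle v\rangle$ sends $u^{n}\mapsto v^{n}=n!\,\gamma_{n}(v)$; multiplicativity of the spectral sequence forces $u^{i}$ to generate $\pi_{2i}\THH(R;\mathbb{Z}_{p})$ freely over $R$, yielding the polynomial identification $\pi_{\ast}\THH(R;\mathbb{Z}_{p})=R[u]$.

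The main obstacle is running this Tor spectral sequence ``in reverse'' to extract $\pi_{\ast}\THH$ from $\pi_{\ast}\HH$: a priori, $\mathrm{Tor}^{>0}$-contributions from the odd-degree torsion of $\pi_{\ast}\THH(\mathbb{Z})$ could introduce spurious classes in $\pi_{\ast}\THH(R;\mathbb{Z}_{p})$ or destroy its evenness, and resolving this requires both the precise location of those torsion classes (in particular $\pi_{1}\THH(\mathbb{Z};\mathbb{Z}_{p})=0$) and the freeness of $\Gamma_{R}\langle v\rangle$ as an $R$-module (ultimately a consequence of $\xi$ being a nonzerodivisor).
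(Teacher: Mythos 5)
Your plan diverges from the paper's proof at the crucial step, and the divergence opens a gap that you yourself name but do not close. You propose to determine $\pi_*\THH(R;\mathbb Z_p)$ from $\pi_*\HH(R;\mathbb Z_p)$ by ``running the Tor spectral sequence in reverse.'' Spectral sequences do not reverse: knowing the abutment and the coefficient ring does not determine the $E^2$ page (nor $\pi_*\THH(R;\mathbb Z_p)$). In particular, you have no mechanism to rule out that an unwanted class in, say, $\pi_1\THH(R;\mathbb Z_p)$ sits at $E^2_{0,1}$ and is killed by an incoming $d_2$-differential from $E^2_{2,0}=\mathrm{Tor}_2^{\pi_*\THH(\mathbb Z)}(\pi_*\THH(R;\mathbb Z_p),\mathbb Z)_0$, which you have not computed (and which is genuinely nonzero in general since the positive part of $\pi_*\THH(\mathbb Z)$ is nilpotent torsion, making $\mathbb Z$ a non-flat $\pi_*\THH(\mathbb Z)$-module). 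Similar hidden contributions could occur in every degree. The ``degree-by-degree analysis'' you invoke is precisely what needs to be supplied, and without further input it is not clear it can be carried out. A secondary point: your argument leans on the odd-concentration and $\pi_1$-vanishing of $\pi_*\THH(\mathbb Z)$, i.e., on B\"okstedt's computation for $\mathbb Z$, whereas the paper goes out of its way (see the remark in the introduction and the footnote to Lemma~\ref{THHvsHH}) to use only the \emph{finiteness} of $\pi_{i}\THH(\mathbb Z)$ for $i>0$, reserving the genuine computational input for $\THH(\mathbb F_p)$.

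The paper's proof replaces the spectral-sequence reversal with a combination of finiteness and a Nakayama/rank argument: it first shows $\THH(R;\mathbb Z_p)$ is pseudocoherent over $R$ by filtering $\THH(\mathbb Z)$ by Postnikov truncations (here only finiteness of $\pi_i\THH(\mathbb Z)$, $i>0$, is used), so each $\pi_i$ is a finitely generated $R$-module; it proves base-change along maps of perfectoid rings; it then inducts on $i$, mapping $R\cdot u^{i/2}\to\pi_i\THH(R;\mathbb Z_p)=:M$ and base-changing to the direct limit perfection $\overline R$ of $R/p$, a perfect $\mathbb F_p$-algebra whose $\THH$ is known by B\"okstedt, concluding surjectivity via Nakayama (the kernel of $R\to\overline R$ lies in the Jacobson radical); finally it verifies injectivity by checking the rank of $M$ at all points of the reduced ring $\Spec R$ (char-$p$ points via $\overline R$, char-$0$ points using $\THH(\mathbb Z)\otimes\mathbb Q=\mathbb Q$). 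None of these ideas -- pseudocoherence, base change to $\overline R$, Nakayama, pointwise rank over a reduced ring -- appear in your proposal, and they are exactly what substitutes for the spectral-sequence reversal you cannot perform.
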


\begin{proof}
We first claim that $\pi_i\HH(R;\mathbb Z_p)\cong R$ if $i\geq 0$ is even and $=0$ else (however without identifying the multiplicative structure, which would be a divided power algebra). This follows from the HKR filtration, as the graded pieces are given by $(\wedge^i_R L_{R/\mathbb Z_p})^\wedge_p[i]\simeq R[2i]$, cf.~Proposition~\ref{prop:basicperfd}.

In particular, $\HH(R;\mathbb Z_p)$ is a pseudocoherent complex of $R$-modules, i.e.~it can be represented by a complex of finite free $R$-modules that is bounded to the right (but not to the left). Thus, the same is true for
\[
\THH(R;\mathbb Z_p)\otimes_{\THH(\mathbb Z)} \mathbb Z=\HH(R;\mathbb Z_p)\ ,
\]
where we use Lemma \ref{THHvsHH}. By induction using the finiteness in Lemma \ref{THHvsHH}, all $\THH(R;\mathbb Z_p)\otimes_{\THH(\mathbb Z)} \tau_{\leq n} \THH(\mathbb Z)$ are pseudocoherent, which implies that $\THH(R;\mathbb Z_p)$ itself is pseudocoherent.

Next, we check that for any map $R\to R^\prime$ between perfectoid rings the induced map
\[
\THH(R;\mathbb Z_p)\dotimes_R R^\prime\to \THH(R^\prime;\mathbb Z_p)
\]
is an equivalence. It suffices to check the assertion after tensoring over $\THH(\mathbb Z)$ with $\mathbb Z$ (as then by induction it follows for the tensor product over $\THH(\mathbb Z)$ with $\tau_{\leq n} \THH(\mathbb Z)$, and one can pass to the limit). Thus, it suffices to see that
\[
\HH(R;\mathbb Z_p)\dotimes_R R^\prime\to \HH(R^\prime;\mathbb Z_p)
\]
is an equivalence, which the HKR filtration reduces to $(\wedge_R^i L_{R/\mathbb Z_p})^\wedge_p\dotimes_R R^\prime\simeq (\wedge_{R^\prime}^i L_{R^\prime/\mathbb Z_p})^\wedge_p$; but this follows from the description $(L_{R/\mathbb Z_p})^\wedge_p[-1]\simeq (\ker \theta)/(\ker \theta)^2 = R\cdot u$, which is compatible with base change \cite[Lemma 3.14]{BMS}.

We know by B\"okstedt's theorem that the theorem holds true for $R=\mathbb F_p$, cf.~\cite[Theorem IV.4.4]{NikolausScholze}. Thus, the base change property implies that it holds if $R$ is of characteristic $p$.

In general, we argue by induction on $i$, so assume the result is known in degrees $<i$. As then $\tau_{<i} \THH(R;\mathbb Z_p)$ is a perfect complex of $R$-modules, it follows that $\tau_{\geq i} \THH(R;\mathbb Z_p)$ is still pseudocoherent, and in particular $M=\pi_i \THH(R;\mathbb Z_p)$ is a finitely generated $R$-module. Consider the map
\[
M^\prime = \begin{cases} R\cdot u^{i/2} & i\text{ \rm even}\geq 0,\\ 0 & \mathrm{else}\end{cases} \to M=\pi_i \THH(R;\mathbb Z_p)
\]
and let $\overline{R}$ is the direct limit perfection of $R/p$. Then $R\to\overline{R}$ is surjective, the kernel lies in the Jacobson radical, and $\overline{R}$ is a perfect ring of characteristic $p$. By the base change property and freeness of $\pi_j \THH(R;\mathbb Z_p)$ for $j<i$, we see that
\[
M\otimes_R \overline{R} = \pi_i \THH(\overline{R};\mathbb Z_p),
\]
which by the known case of characteristic $p$ is given by $M^\prime\otimes_R \overline{R}=\overline{R}\cdot u^{i/2}$ (if $i$ is even, or $0$ else). Thus, the map
\[
M^\prime\otimes_R \overline{R}\to M\otimes_R \overline{R}
\]
is an isomorphism.

In particular, if $i$ is odd, then $M\otimes_R \overline{R}=0$, which by Nakayama's lemma implies that $M=0$, as desired. If $i$ is even, then Nakayama's lemma implies that $M^\prime\cong R\to M$ is surjective. To see that it is an isomorphism, it suffices to see that the rank of $M$ at all points of $\Spec R$ is at least $1$, as $R$ is reduced. All points of characteristic $p$ lie in $\Spec \overline{R}\subset \Spec R$, and we know that $M\otimes_R \overline{R}\cong \overline{R}$, giving the result in that case. On the other hand, rationally we have
\[
M\otimes_{\mathbb Z} \mathbb Q = \pi_i \THH(R;\mathbb Z_p)\otimes_{\mathbb Z} \mathbb Q = \pi_i \HH(R;\mathbb Z_p)\otimes_{\mathbb Z} \mathbb Q\simeq R\otimes_{\mathbb Z} \mathbb Q
\]
using $\THH(\mathbb Z)\otimes_{\mathbb Z} \mathbb Q = \mathbb Q$ (Lemma \ref{THHvsHH}), showing that the rank of $M$ at characteristic $0$ points is also at least $1$.
\end{proof}

Write $\varphi:\THH(R;\mathbb Z_p) \to \THH(R;\mathbb Z_p)^{tC_p}$ for the cyclotomic Frobenius, and $\varphi^{h\T}:\TC^{-}(R;\mathbb Z_p) \to \TP(R;\mathbb Z_p)\simeq (\THH(R;\mathbb Z_p)^{tC_p})^{h\T}$ for the induced map (using \cite[Lemma II.4.2]{NikolausScholze}). These fit into a commutative diagram
\begin{equation}
\label{TCTPTHH}
 \xymatrix{ \TC^{-}(R;\mathbb Z_p) \ar[r]^-{\varphi^{h\T}} \ar[d]_-{\mathrm{can}} & \TP(R;\mathbb Z_p) \ar[d]^-{\mathrm{can}} \\
		   \THH(R;\mathbb Z_p) \ar[r]_-{\varphi} & \THH(R;\mathbb Z_p)^{tC_p} }
		   \end{equation}
of $E_\infty$-ring spectra, where the vertical maps are the usual ones (cf.~also \cite[Corollary I.4.3]{NikolausScholze} for the right vertical map).

\begin{proposition}
\label{TCTPTHHpi}
The square obtained by applying $\pi_*$ to the square \eqref{TCTPTHH} above is given by 
\[ \xymatrix{ A_{\inf}[u,v]/(uv-\xi) \ar[rrr]_{\varphi-\text{linear}}^-{u \mapsto \sigma,\, v \mapsto \varphi(\xi) \sigma^{-1}} \ar[d]_{\theta-\text{linear}}^-{u \mapsto u,\, v \mapsto 0} & & &  A_{\inf}[\sigma,\sigma^{-1}] \ar[d]_{\widetilde{\theta}-\text{linear}}^-{\sigma \mapsto \sigma} \\
		  R[u] \ar[rrr]_{R-\text{linear}}^-{u \mapsto \sigma} & & & R[\sigma,\sigma^{-1}]. }\]
Here $\xi$ has degree $0$ and is a generator of the ideal $\ker(\theta)$, $u$ and $\sigma$ have degree $2$, while $v$ has degree~$-2$. 
\end{proposition}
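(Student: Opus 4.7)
The proposition computes $\pi_*\TC^-(R;\mathbb{Z}_p)$ and $\pi_*\TP(R;\mathbb{Z}_p)$ together with the canonical and Frobenius maps. My plan is to extract these computations from the homotopy fixed point spectral sequence (HFPSS) and the Tate spectral sequence (TSS) applied to the $\T$-action on $\THH(R;\mathbb{Z}_p)$, using the calculation $\pi_*\THH(R;\mathbb{Z}_p) = R[u]$ from the previous theorem as input, and then to analyze the Frobenius via naturality together with B\"okstedt's $\mathbb{F}_p$-computation.

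Since $R[u]$ is concentrated in even degrees and the $\T$-action on homotopy groups is trivial (as $\T$ is connected), the $E_2$-pages of the HFPSS and TSS are the bigraded rings $R[u,v]$ (with $|v|=-2$) and $R[u,\sigma^{\pm 1}]$ (with $|\sigma|=2$), both concentrated in even total degree. Hence both spectral sequences degenerate at $E_2$, and multiplicativity together with completeness of the HFPSS filtration force $\pi_*\TC^-(R;\mathbb{Z}_p) = A[u,v]/(uv - \xi)$ where $A := \pi_0 \TC^-(R;\mathbb{Z}_p)$ is a derived $(p,uv)$-adically complete ring, $\xi := uv$ is a nonzerodivisor in $A$, and $A/\xi = R$ via the canonical map to $\THH$.

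The essential step is to identify $A$ canonically with $A_{\inf}(R)$. The cyclotomic Frobenius $\varphi^{h\T}:\TC^-(R;\mathbb{Z}_p) \to \TP(R;\mathbb{Z}_p)$ produces, after the analogous TSS analysis (giving $\pi_0\TP \simeq A$), a natural endomorphism of $A$ whose reduction modulo $p$ agrees with the Frobenius of $R$; this modular compatibility can be checked via the bottom horizontal $\varphi:\THH \to \THH^{tC_p}$ and the Tate diagonal, which for $R$ perfectoid recovers the Witt-vector Frobenius. By the universal property of $A_{\inf}(R)$ as the initial $(p,\xi)$-adically complete deformation of $R$ equipped with such a Frobenius lift, this produces a canonical map $A_{\inf}(R) \to A$, which one verifies is an isomorphism by comparing associated graded pieces for the $\xi$-adic filtration using $A/\xi = R = A_{\inf}(R)/\xi$. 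An alternative is a base-change reduction to Hesselholt's case $R = \calO_{\mathbb{C}_p}$, exploiting that $\THH(R;\mathbb{Z}_p) \simeq \THH(R_0;\mathbb{Z}_p)\widehat\otimes_{R_0} R$ for a suitable auxiliary perfectoid $R_0 \to R$, as already used in the proof of the preceding theorem.

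Granted this identification, the canonical map $\TC^- \to \THH$ kills the HFPSS filtration and so sends $u \mapsto u$ and $v \mapsto 0$, inducing $\theta$ in degree $0$. Inverting $v$ produces $\TP$ with $\pi_0\TP = A_{\inf}$; setting $\sigma$ to be the image of $u$ in $\TP$ gives $v = \xi\sigma^{-1}$ and $\pi_*\TP = A_{\inf}[\sigma^{\pm 1}]$, while the canonical $\TP \to \THH^{tC_p}$ is $\widetilde\theta$ in degree $0$ by compatibility with the bottom row. For the Frobenius, $\varphi$-semilinearity on $A_{\inf}$ follows by construction; the assignment $u \mapsto \sigma$ follows from the corresponding statement for the bottom $\varphi:\THH \to \THH^{tC_p}$, which reduces by naturality and the base-change statement of the preceding theorem to B\"okstedt's computation over $\mathbb{F}_p$; multiplicativity together with $\varphi(uv) = \varphi(\xi)$ then forces $v \mapsto \varphi(\xi)\sigma^{-1}$. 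The main obstacle is the third paragraph: pinning down a genuine Frobenius lift at the level of $\pi_0\TC^-$ requires extracting structure from the cyclotomic Frobenius, not merely from the degenerate HFPSS filtration.
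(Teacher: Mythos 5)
Your proposal follows the same broad strategy as the paper — degenerate homotopy fixed point and Tate spectral sequences give the ring structures, and the Frobenius is pinned down by reduction to B\"okstedt's $\mathbb{F}_p$ case via naturality — and you correctly flag the key technical obstacle in your last sentence. The actual gap is a bit more specific than you state, though. You propose to identify $A := \pi_0\TC^{-}(R;\mathbb{Z}_p)$ with $A_{\inf}(R)$ by invoking ``the universal property of $A_{\inf}(R)$ as the initial $(p,\xi)$-adically complete deformation of $R$ equipped with such a Frobenius lift.'' That universal property is not the right one, and as stated it does not hold: lots of $(p,\xi)$-complete rings admit a surjection onto $R$ and a Frobenius lift (e.g.\ $A_{\inf}(R)[[t]]$), so you would need additional hypotheses, and those are exactly the things you cannot easily verify about the as-yet-unidentified ring $A$. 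Moreover, the argument becomes circular: you want to use the cyclotomic-Frobenius-induced endomorphism of $A$ to invoke this universal property, but to know that endomorphism lifts the Frobenius of $R$ you already need to relate $A$ to $R$ in a controlled way.

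The paper's proof cuts through this by using Fontaine's universal property of $A_{\inf}(R)$ as the initial $p$-adically complete pro-nilpotent thickening of $R$ — no Frobenius involved. The filtration coming from the degenerate spectral sequence directly exhibits $F(R) = \pi_0\TP(R;\mathbb{Z}_p) \to R$ as such a thickening, so one gets a canonical map $A_{\inf}(R) \to F(R)$ for free, and then one checks it is an isomorphism by comparing associated gradeds (each a copy of $R$) after base change to perfect fields of characteristic $p$, reducing via functoriality to $\mathbb{F}_p$. The fact that $\pi_0\varphi^{h\T}$ is the Frobenius is verified \emph{afterwards}, using the diagram mapping $\TC^- \to \THH \to \THH^{tC_p} \to R^{tC_p}$ and the Nikolaus--Scholze identification of the bottom composite with $x \mapsto x^p$; another application of Fontaine's universal property then forces $\pi_0\varphi^{h\T} = \varphi$. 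Similarly, the claim that $u \mapsto \sigma$ (up to the unit that gets absorbed into $\xi$) requires the same perfect-field reduction, not just quoting B\"okstedt. So the fix is to decouple the identification of $\pi_0$ from the Frobenius analysis and to lean on the Frobenius-free universal property, rather than on the nonstandard one you cite.
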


Before embarking on the proof, we note that in addition, we also have the canonical map
\begin{equation}
\label{TCTP}
\TC^{-}(R;\mathbb Z_p) \xrightarrow{\mathrm{can}} \TP(R;\mathbb Z_p) . 
\end{equation}

\begin{proposition}
The map on $\pi_*$ obtained from the canonical map in \eqref{TCTP} above is given by
\[ \xymatrix{ A_{\inf}[u,v]/(uv-\xi) \ar[rrr]_{A_{\inf}-\text{linear}}^-{u \mapsto \xi \sigma,\, v \mapsto \sigma^{-1}} & & &  A_{\inf}[\sigma,\sigma^{-1}], }\]
where we use the presentations from \ref{TCTPTHHpi}.
\end{proposition}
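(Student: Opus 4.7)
The plan is to reduce this to two simple inputs: the behavior of $\mathrm{can}$ on $\pi_0$, and the compatibility of the homotopy fixed point and Tate spectral sequences computing $\pi_*\TC^{-}(R;\mathbb Z_p)$ and $\pi_*\TP(R;\mathbb Z_p)$, which were already set up in the proof of Proposition~\ref{TCTPTHHpi}. The generators $u,v,\sigma$ all live in specific filtrations in these spectral sequences, and the canonical map $\mathrm{can}: \TC^-(R;\mathbb Z_p) \to \TP(R;\mathbb Z_p)$ is precisely the map on abutments of the map of spectral sequences induced by the inclusion $(-)^{h\T} \to (-)^{t\T}$.

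First I would note that, since $R$ is connective, the canonical map induces an isomorphism $\pi_0 \TC^-(R;\mathbb Z_p) \cong \pi_0 \TP(R;\mathbb Z_p)$ (as already observed in the introduction). Under the identifications of the previous proposition, both are $A_{\inf}$, so this map must be the identity on $\pi_0$ (it is at least an $A_{\inf}$-algebra map $A_{\inf}\to A_{\inf}$). In particular, the element $\xi \in \pi_0\TC^-(R;\mathbb Z_p)$ maps to $\xi \in \pi_0\TP(R;\mathbb Z_p)$.

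Next I would analyze $\mathrm{can}(v)$ using the spectral sequence description. Recall that on $E_2$-pages, the canonical map corresponds to the natural inclusion $H^*(B\T, \pi_*\THH(R;\mathbb Z_p)) \hookrightarrow \widehat{H}^*(B\T, \pi_*\THH(R;\mathbb Z_p))$, i.e., the localization from $R[u_0][t]$ to $R[u_0][t,t^{-1}]$, where $u_0$ is the polynomial generator of $\pi_* \THH(R;\mathbb Z_p)$ (in bidegree $(0,2)$) and $t$ is the class in bidegree $(2,0)$. In the presentation of the previous proposition, $v \in \pi_{-2}\TC^-(R;\mathbb Z_p)$ was defined via the class $t$ in the HFPSS, and $\sigma^{-1} \in \pi_{-2}\TP(R;\mathbb Z_p)$ is (up to the chosen normalization) detected by the same class $t$ on the Tate side. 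Thus $\mathrm{can}(v) = \sigma^{-1}$.

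Finally, combining these with the relation $uv = \xi$ in $\pi_0 \TC^-(R;\mathbb Z_p)$, we get $\mathrm{can}(u)\cdot \sigma^{-1} = \xi$ in $\pi_0 \TP(R;\mathbb Z_p) = A_{\inf}$, which forces $\mathrm{can}(u) = \xi\sigma$. This completes the computation. The only subtle point---which I expect to be the main (and really the only) technical issue---is to ensure that the normalizations of $v$ (as the HFPSS class detected by $t$) and of $\sigma$ (as the class whose inverse is detected by $t$ in the Tate SS) are made coherently, so that the statement $\mathrm{can}(v) = \sigma^{-1}$ holds on the nose rather than up to a unit; this is automatic from the explicit choices used in Proposition~\ref{TCTPTHHpi} to produce the displayed presentations.
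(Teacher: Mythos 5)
Your proposal is correct and follows essentially the same route as the paper. The paper's proof of this proposition (which is given jointly with that of Proposition~\ref{TCTPTHHpi}) also proceeds by first observing that the canonical map is an isomorphism in nonpositive degrees (so in particular on $\pi_0 = A_{\inf}$ and on $\pi_{-2}$), then \emph{choosing} $v$ to be the unique element of $\pi_{-2}\TC^-(R;\mathbb Z_p)$ mapping to $\sigma^{-1}$, and finally deducing $\mathrm{can}(u) = \xi\sigma$ from the relation $uv = \xi$; your remark at the end that the compatibility of normalizations "is automatic from the explicit choices" is exactly the point the paper makes when it says that the generators can be chosen so that the displayed formulas hold.
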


More precisely, the statement is that generators $u$, $v$, $\sigma$ and $\xi$ can be chosen such that these descriptions hold true. Now we prove both propositions together.

\begin{proof} First, we identify $F(R) := \pi_0 \TP(R;\mathbb Z_p)$. Note that by the Tate spectral sequence, $\TP(R;\mathbb Z_p)$ is concentrated in even degrees, and $F(R)$ has a multiplicative complete descending filtration $\Fil^i F(R)\subset F(R)$ with $\gr^i F(R)\cong \pi_{2i} \THH(R;\mathbb Z_p)\cong R$ in degrees $i\geq 0$, and $=0$ else. In particular, $F(R)\to \pi_0 \THH(R;\mathbb Z_p) = R$ is a $p$-adically complete pro-nilpotent thickening. By the universal property of $A_{\inf}$ \cite[\S 1.2]{Fontaine1994}, we get a unique map $A_{\inf}\to F(R)$ over $R$. Moreover, this sends the ideal $\ker(\theta)$ into $\Fil^1 F(R) = \ker(F(R)\to R)$, and thus by multiplicativity $\ker(\theta)^i$ into $\Fil^i F(R)$. We claim that this induces a graded isomorphism $A_{\inf}\to F(R)$. For this, we need to check that the maps on $\gr^i$ are isomorphisms, i.e.~certain maps $R\to R$ are isomorphisms. This can be checked after base change to perfect fields of characteristic $p$. As all constructions are functorial in $R$, we can therefore assume that $R=k$ is a perfect field of characteristic $p$. But then there is a map $\mathbb F_p\to k$, and using functoriality again, we are reduced to the case of $\mathbb F_p$, where it follows from \cite[Corollary IV.4.8]{NikolausScholze}.

Moreover, the Tate spectral sequence implies that $\TP(R;\mathbb Z_p)$ is $2$-periodic, so we find an isomorphism $\pi_\ast \TP(R;\mathbb Z_p)\cong A_{\inf}[\sigma,\sigma^{-1}]$ by choosing a generator $\sigma\in \pi_2 \TP(R;\mathbb Z_p)$.

Looking at the homotopy fixed point spectral sequence for $\TC^-(R;\mathbb Z_p)$, which maps to the Tate spectral sequence via the canonical map, we see again that everything is concentrated in even degrees, and that generators in degree $2$ and $-2$ multiply to a generator for $\Fil^1 F(R) = \ker(\theta)\subset F(R) = A_{\inf}$; thus, we can find an isomorphism $\pi_\ast \TC^-(R;\mathbb Z_p)\cong A_{\inf}[u,v]/(uv-\xi)\subset A_{\inf}[\sigma^{\pm 1}]$ under which $v\mapsto \sigma^{-1}$ and $u\mapsto \xi \sigma$ under the canonical map.

Next, we identify
\[
\pi_0 \varphi^{h\T}: \pi_0 \TC^-(R;\mathbb Z_p) = A_{\inf}\to \pi_0 \TP(R;\mathbb Z_p)=A_{\inf}\ .
\]
For this, we look at the commutative diagram
\[\xymatrix{
& \pi_0 \TC^-(R;\mathbb Z_p)\ar[r]\ar[d] & \pi_0\TP(R;\mathbb Z_p)\ar[d]\ar[r] & \pi_0 R^{t\T} = R\ar[d]\\
R\ar@{=}[r] & \pi_0 \THH(R;\mathbb Z_p)\ar[r] & \pi_0\THH(R;\mathbb Z_p)^{tC_p}\ar[r] & \pi_0 R^{tC_p} = R/p\ .
}\]
By \cite[Corollary IV.2.4]{NikolausScholze}, the lower composite is given by the Frobenius map $x\mapsto x^p$. The left vertical map is given by $\theta: A_{\inf}\to R$ by construction. The right upper horizontal map is also $\theta: A_{\inf}\to R$, and the right-most vertical map is the canonical reduction map $R\to R/p$. It follows that the map $f=\pi_0 \varphi^{h\T}$ makes the diagram
\[\xymatrix{
A_{\inf}\ar[r]^f\ar[d]_{\theta} & A_{\inf}\ar[d]^{\theta}\\
R\ar[r]_{\varphi}&  R/p
}\]
commute. As $A_{\inf}$ is the universal $p$-adically complete pro-nilpotent thickening of $R/p$, this shows that $f$ must be the Frobenius map $\varphi$.

Now we claim that the map
\[
\pi_2 \varphi^{h\T}: \pi_2 \TC^-(R;\mathbb Z_p) = A_{\inf}\cdot u\to \pi_2 \TP(R;\mathbb Z_p) = A_{\inf}\cdot \sigma
\]
is an isomorphism. Again, this can be checked after replacing $R$ by a perfect field, and then by $\mathbb F_p$, where it follows from \cite[Proposition IV.4.9]{NikolausScholze}. In particular, $u$ maps to $\alpha\sigma$ for some unit $\alpha\in A_{\inf}$. Replacing $\xi$ by $\varphi^{-1}(\alpha)\xi$, we can then arrange that $u$ maps to $\sigma$ on the nose. By multiplicativity, it follows that $v$ maps to $\varphi(\xi) \sigma^{-1}$.

It remains to get the description of $\THH(R;\mathbb Z_p)^{tC_p}$. We have a commutative diagram of rings
\[ \xymatrix{ A_{\inf}[u,v]/(uv-\xi) \ar[rrr]_{\varphi-\text{linear}}^-{u \mapsto \sigma,\, v \mapsto \varphi(\xi) \sigma^{-1}} \ar[d]_{\theta-\text{linear}}^-{u \mapsto u,\, v \mapsto 0} & & &  A_{\inf}[\sigma,\sigma^{-1}] \ar[d]\\
R[u] \ar[rrr] & & & \pi_\ast \THH(R;\mathbb Z_p)^{tC_p}. }\]
As $v\mapsto 0$ on the left (for degree reasons), this induces a natural map
\[
A_{\inf}[\sigma,\sigma^{-1}]/\varphi(\xi) \sigma^{-1} = R[\sigma,\sigma^{-1}]\to \pi_\ast \THH(R;\mathbb Z_p)^{tC_p}\ ,
\]
which is $\tilde{\theta}$-linear. It remains to see that this is an isomorphism. For this, it is enough to see that the natural map of $E_\infty$-ring spectra
\[
\TP(R;\mathbb Z_p)\otimes_{\TC^-(R;\mathbb Z_p)} \THH(R;\mathbb Z_p)\to \THH(R;\mathbb Z_p)^{tC_p}
\]
is an equivalence, i.e.~\eqref{TCTPTHH} is a pushout. More generally, for any $\T$-equivariant $\THH(R;\mathbb Z_p)$-module $M$ (such as $M=\THH(R;\mathbb Z_p)^{tC_p}$ via $\varphi$), we claim that the natural map
\[
M^{h\T}\otimes_{\TC^-(R;\mathbb Z_p)} \THH(R;\mathbb Z_p)\to M
\]
is an equivalence. For this, we note that $\THH(R;\mathbb Z_p) = \TC^-(R;\mathbb Z_p)/v$ is a perfect $\TC^-(R;\mathbb Z_p)$-module, so both sides commute with limits in $M$. By Postnikov towers, we can therefore assume that $M$ is bounded below, so after shifting coconnective. In that case, both sides commute with filtered colimits in $M$, and we may assume that $M$ is bounded above as well, and then by induction concentrated in degree $0$. But then the result follows from $M^{h\T}/v = M$, which is the first part of \cite[Lemma IV.4.12]{NikolausScholze}.
\end{proof}

We note that the final paragraph of this proof actually implies the following result for $R$-algebras $A$.

\begin{proposition}\label{prop:TCtoTHH} For any connective $E_\infty$-$R$-algebra $A$, the natural maps
\[
\TC^-(A;\mathbb Z_p)/v = \TC^-(A;\mathbb Z_p)\otimes_{\TC^-(R;\mathbb Z_p)} \THH(R;\mathbb Z_p)\to \THH(A;\mathbb Z_p)
\]
and
\[
\TP(A;\mathbb Z_p)/\tilde\xi = \TP(A;\mathbb Z_p)\otimes_{\TP(R;\mathbb Z_p)} \THH(R;\mathbb Z_p)^{tC_p}\to \THH(A;\mathbb Z_p)^{tC_p}
\]
are equivalences of $E_\infty$-ring spectra.
\end{proposition}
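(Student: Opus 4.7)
The plan is to deduce both equivalences directly from the general assertion established in the last paragraph of the preceding proof: for any $\T$-equivariant $\THH(R;\mathbb Z_p)$-module $M$, the natural map
\[
M^{h\T}\otimes_{\TC^-(R;\mathbb Z_p)} \THH(R;\mathbb Z_p)\longrightarrow M
\]
is an equivalence. Both claims of the proposition will then follow by choosing $M$ appropriately and reinterpreting the left-hand side using the explicit $\pi_*$-calculations of Proposition~\ref{TCTPTHHpi}.

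For the first equivalence I would take $M=\THH(A;\mathbb Z_p)$, regarded as a $\T$-equivariant $\THH(R;\mathbb Z_p)$-module via functoriality of $\THH$ applied to $R\to A$. Here $M^{h\T}=\TC^-(A;\mathbb Z_p)$ by definition, and Proposition~\ref{TCTPTHHpi} identifies $\THH(R;\mathbb Z_p)$ with the cofiber $\TC^-(R;\mathbb Z_p)/v$; the general assertion therefore specializes to $\TC^-(A;\mathbb Z_p)/v \simeq \THH(A;\mathbb Z_p)$, as desired.

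For the second I would take $M=\THH(A;\mathbb Z_p)^{tC_p}$, equipped with its $\T$-equivariant $\THH(R;\mathbb Z_p)$-module structure coming from the cyclotomic Frobenius $\varphi:\THH(R;\mathbb Z_p)\to \THH(R;\mathbb Z_p)^{tC_p}$ composed with the natural map induced by $R\to A$; this is precisely the module structure flagged in the statement of the general assertion. Connectivity of $A$ together with \cite[Lemma II.4.2]{NikolausScholze} identifies $M^{h\T}$ with $\TP(A;\mathbb Z_p)$, and the induced $\TC^-(R;\mathbb Z_p)$-action on $\TP(A;\mathbb Z_p)$ factors through $\varphi^{h\T}:\TC^-(R;\mathbb Z_p)\to \TP(R;\mathbb Z_p)\to \TP(A;\mathbb Z_p)$. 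Proposition~\ref{TCTPTHHpi} computes $\varphi^{h\T}(v)=\tilde\xi\sigma^{-1}$, and since $\sigma$ is a unit in $\TP(A;\mathbb Z_p)$, quotienting by this element agrees with quotienting by $\tilde\xi$. Thus the general assertion yields $\TP(A;\mathbb Z_p)/\tilde\xi\simeq \THH(A;\mathbb Z_p)^{tC_p}$, and invoking the identification $\THH(R;\mathbb Z_p)^{tC_p}=\TP(R;\mathbb Z_p)/\tilde\xi$ (again Proposition~\ref{TCTPTHHpi}) rewrites the left-hand side as $\TP(A;\mathbb Z_p)\otimes_{\TP(R;\mathbb Z_p)}\THH(R;\mathbb Z_p)^{tC_p}$, as required.

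All constructions are functorial in $M$ and are implemented by canonical maps between the $\T$-equivariant $E_\infty$-ring structures in play, so the equivalences automatically upgrade to equivalences of $E_\infty$-ring spectra. The only substantive input is the general assertion itself, which was already carried out in the preceding proof via a short d\'evissage---using the perfectness of $\THH(R;\mathbb Z_p)$ over $\TC^-(R;\mathbb Z_p)$ to pass to limits, then reducing to bounded-below and ultimately to discrete $M$ via Postnikov towers, and invoking \cite[Lemma IV.4.12]{NikolausScholze} in the discrete case. I therefore do not anticipate any further obstacle; the only care required is in tracking which ring map induces the tensor product in the second case (the Frobenius $\varphi^{h\T}$, rather than the canonical map).
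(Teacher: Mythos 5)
Your proof is correct and takes essentially the same route as the paper: apply the general equivalence $M^{h\T}\otimes_{\TC^-(R;\mathbb Z_p)}\THH(R;\mathbb Z_p)\to M$ from the end of the preceding proof to $M=\THH(A;\mathbb Z_p)$ and $M=\THH(A;\mathbb Z_p)^{tC_p}$. In the second case the paper simply cites the already-established equivalence $\TP(R;\mathbb Z_p)\otimes_{\TC^-(R;\mathbb Z_p)}\THH(R;\mathbb Z_p)\simeq\THH(R;\mathbb Z_p)^{tC_p}$ to rewrite the tensor product; your element-level tracking of $\varphi^{h\T}(v)=\tilde\xi\sigma^{-1}$ reaches the same conclusion.
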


In particular, the map $\varphi: \THH(A;\mathbb Z_p)\to \THH(A;\mathbb Z_p)^{tC_p}$ can be recovered from the map $\varphi^{h\T}: \TC^-(A;\mathbb Z_p)\to \TP(A;\mathbb Z_p)$ by modding out by $v$. In traditional approaches to the cyclotomic structure on $\THH$, one would first analyze $\varphi: \THH(A;\mathbb Z_p)\to \THH(A;\mathbb Z_p)^{tC_p}$ by hand; here, we will not do such an analysis but instead identify directly $\varphi^{h\T}$. The present discussion shows that the identification of $\varphi^{h\T}$ then also leads to an identification of $\varphi$.

\begin{proof} This follows by applying the equivalence
\[
M^{h\T}\otimes_{\TC^-(R;\mathbb Z_p)}\THH(R;\mathbb Z_p)\to M
\]
valid for any $\T$-equivariant $\THH(R;\mathbb Z_p)$-module $M$ to $M=\THH(A;\mathbb Z_p)$ resp.~$M=\THH(A;\mathbb Z_p)^{tC_p}$. In the second case, we also use that
\[
\TP(R;\mathbb Z_p)\otimes_{\TC^-(R;\mathbb Z_p)} \THH(R;\mathbb Z_p)\to \THH(R;\mathbb Z_p)^{tC_p}
\]
is an equivalence.
\end{proof}

\subsection{Breuil-Kisin twists}
\label{subsec:BKtwist}

Before going on, we want to make the previous identifications more canonical. Let $A_{\inf}\{1\}:=\pi_2 \TP(R;\mathbb Z_p)$; this is a free $A_{\inf}$-module of rank $1$. For any $A_{\inf}$-module $M$ and $i\in \mathbb Z$, we define the Breuil-Kisin twist $M\{i\}=M\otimes_{A_{\inf}} A_{\inf}\{1\}^{\otimes i}$. If $M$ is an $R$-module, then $M\{i\}$ denotes the corresponding twist when $M$ is considered as $A_{\inf}$-module via $\tilde\theta$.

With these notations, there is a natural isomorphism of graded rings
\[
\pi_\ast \TP(R;\mathbb Z_p) = \bigoplus_{i\in \mathbb Z} A_{\inf}\{i\}
\]
where $A_{\inf}\{i\}$ sits in degree $2i$. The canonical map $\pi_\ast \TC^-(R;\mathbb Z_p)\to \pi_\ast \TP(R;\mathbb Z_p)$ is an isomorphism in negative degrees, and has image $(\ker\theta)^i A_{\inf}\{i\}$ in degree $2i\geq 0$. On the other hand, the Frobenius map $\varphi: \pi_\ast \TC^-(R;\mathbb Z_p)\to \pi_\ast \TP(R;\mathbb Z_p)$ induces on $\pi_{-2}$ a $\varphi$-linear Frobenius endomorphism $\varphi_{A_{\inf}\{-1\}}: A_{\inf}\{-1\}\to A_{\inf}\{-1\}$ that becomes an isomorphism after inverting $\xi$ on the source, respectively $\tilde\xi$ on the target. In particular, we have a map $\varphi_{A_{\inf}\{1\}}: A_{\inf}\{1\}[\tfrac 1\xi]\to A_{\inf}\{1\}[\tfrac 1{\tilde\xi}]$. It sends $\xi A_{\inf}\{1\}$ into $A_{\inf}\{1\}$. Below, we will relate this to the Breuil-Kisin-Fargues twist from \cite[Example 4.24]{BMS}.

Defining the Nygaard filtration on $A_{\inf}$ as $\calN^{\geq i} A_{\inf} = \xi^i A_{\inf}$ for $i\geq 0$ and $\calN^{\geq i} A_{\inf} = A_{\inf}$ for $i\leq 0$, we see that
\[
\pi_\ast \TC^-(R;\mathbb Z_p) = \bigoplus_{i\in \mathbb Z} \calN^{\geq i} A_{\inf}\{i\}\ .
\]
Moreover, if we set $\calN^i A_{\inf} = \calN^{\geq i} A_{\inf} / \calN^{\geq i+1} A_{\inf}\cong R\cdot \xi^i$, then the formula $\THH(A;\mathbb Z_p) = \TC^-(A;\mathbb Z_p)/v$ implies that
\[
\pi_\ast \THH(R;\mathbb Z_p) = \bigoplus_{i\geq 0} \calN^i A_{\inf}\{i\}\ .
\]
But we know that there is a canonical isomorphism $\pi_2 \THH(R;\mathbb Z_p)\cong (\ker \theta)/(\ker \theta)^2\cong \calN^1 A_{\inf}$. It follows that $\calN^1 A_{\inf}\{1\}\simeq \calN^1 A_{\inf}$ canonically, or in other words $A_{\inf}\{1\}\otimes_{A_{\inf},\theta} R\cong R$ canonically. This explains our choice above to define $R\{1\} = A_{\inf}\{1\}\otimes_{A_{\inf},\tilde\theta} R$ as the base change via $\tilde\theta$; the base change via $\theta$ is canonically trivial.

On the other hand, $\THH(R;\mathbb Z_p)^{tC_p} = \TP(R;\mathbb Z_p)/\tilde\xi$, and so
\[
\pi_\ast \THH(R;\mathbb Z_p)^{tC_p} = \bigoplus_{i\in \mathbb Z} (A_{\inf}/\tilde\xi)\{i\} = \bigoplus_{i\in \mathbb Z} R\{i\}\ .
\]

Next, we know that
\[
\varphi: \THH(R;\mathbb Z_p)\to \THH(R;\mathbb Z_p)^{tC_p}
\]
identifies the source with the connective cover of the target. We see that $R\{1\}\cong \calN^1 A_{\inf} = (\ker \theta)/(\ker\theta)^2$ canonically.

Let us summarize the discussion. 

\begin{proposition}\label{prop:breuilkisintwist} Consider the $A_{\inf}$-module $A_{\inf}\{1\} = \pi_2 \TP(R;\mathbb Z_p)$ with the $\varphi$-linear map
\[
\varphi_{A_{\inf}\{1\}} = \pi_2 \varphi: A_{\inf}\{1\}[\tfrac 1\xi]\cong A_{\inf}\{1\}[\tfrac 1{\tilde\xi}]\ ,
\]
which induces an isomorphism $\xi A_{\inf}\{1\}\cong A_{\inf}\{1\}$.
\begin{enumerate}
\item There are natural isomorphisms
\[\begin{aligned}
A_{\inf}\{1\}\otimes_{A_{\inf},\theta} R&\cong R\ ,\\
A_{\inf}\{1\}\otimes_{A_{\inf},\tilde\theta} R&\cong (\ker \theta)/(\ker \theta)^2=R\{1\}\ .
\end{aligned}\]
\item There are natural isomorphisms
\[\begin{aligned}
\pi_\ast \THH(R;\mathbb Z_p) &= \bigoplus_{i\geq 0} R\{i\} = \bigoplus_{i\geq 0} \calN^i A_{\inf}\ ,\\
\pi_\ast \TC^-(R;\mathbb Z_p) &= \bigoplus_{i\in\mathbb Z} \calN^{\geq i} A_{\inf}\{i\}\ ,\\
\pi_\ast \TP(R;\mathbb Z_p)&= \bigoplus_{i\in\mathbb Z} A_{\inf}\{i\}\ ,
\end{aligned}\]
under which the canonical map $\TC^-\to \TP$ corresponds to the inclusion $\calN^{\geq i} A_{\inf}\to A_{\inf}$, and the Frobenius map $\TC^-\to \TP$ corresponds to the Frobenius $A_{\inf}\{i\}[\tfrac1\xi]\to A_{\inf}\{i\}[\tfrac1{\tilde\xi}] $, which sends $\calN^{\geq i} A_{\inf}\{i\}$ into $A_{\inf}\{i\}$.
\end{enumerate}
\end{proposition}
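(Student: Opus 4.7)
The strategy is to repackage the explicit formulas of Proposition \ref{TCTPTHHpi} in a coordinate-free way, using the intrinsically defined $A_{\inf}$-module $A_{\inf}\{1\} := \pi_2 \TP(R;\mathbb Z_p)$ as the basic building block. No new computations are required; rather, what must be verified is that the identifications one reads off from that proposition are genuinely canonical, i.e.~independent of the choices of generators $\sigma \in \pi_2\TP(R;\mathbb Z_p)$ and $\xi \in \ker(\theta)$ made during its proof.

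For part (2), the graded ring structure together with $2$-periodicity of $\TP(R;\mathbb Z_p)$ gives canonical isomorphisms $\pi_{2i}\TP(R;\mathbb Z_p) \cong A_{\inf}\{1\}^{\otimes_{A_{\inf}} i} = A_{\inf}\{i\}$. The homotopy fixed point spectral sequence (concentrated in even degrees) shows that the canonical map $\pi_{2i}\TC^-(R;\mathbb Z_p) \to \pi_{2i}\TP(R;\mathbb Z_p) = A_{\inf}\{i\}$ is injective, and Proposition \ref{TCTPTHHpi} identifies its image with $\xi^i A_{\inf}\{i\} = \calN^{\geq i}A_{\inf}\{i\}$ for $i \geq 0$, and with all of $A_{\inf}\{i\}$ for $i \leq 0$. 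For $\THH$, Proposition \ref{prop:TCtoTHH} gives $\THH(R;\mathbb Z_p) \simeq \TC^-(R;\mathbb Z_p)/v$, whence $\pi_{2i}\THH(R;\mathbb Z_p) \cong \calN^{\geq i}A_{\inf}\{i\}/\calN^{\geq i+1}A_{\inf}\{i\} = \calN^i A_{\inf}\{i\}$. The compatibility with the canonical and Frobenius maps then translates Proposition \ref{TCTPTHHpi}: the canonical map is the inclusion $\calN^{\geq i}A_{\inf}\{i\} \hookrightarrow A_{\inf}\{i\}$ by construction, while the Frobenius formula $u \mapsto \sigma$ becomes the statement that the $\varphi$-semilinear map $\xi A_{\inf}\{1\} \to A_{\inf}\{1\}$ sends $\xi\sigma \mapsto \sigma$; this extends to the claimed isomorphism $A_{\inf}\{1\}[\tfrac{1}{\xi}] \cong A_{\inf}\{1\}[\tfrac{1}{\tilde\xi}]$, and the higher $i$ cases follow by multiplicativity.

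For part (1), the identification $A_{\inf}\{1\} \otimes_{A_{\inf},\tilde\theta} R \cong R\{1\}$ holds by the definition of $R\{1\}$. For the $\theta$-base change, compare the two canonical descriptions of $\pi_2\THH(R;\mathbb Z_p)$. Part (2) gives $\pi_2\THH(R;\mathbb Z_p) \cong \calN^1 A_{\inf}\{1\}$, and since $\calN^1 A_{\inf} = \xi A_{\inf}/\xi^2 A_{\inf}$ is killed by $\xi$ it is naturally an $R$-module, so this rewrites as $\calN^1 A_{\inf} \otimes_R (A_{\inf}\{1\} \otimes_{A_{\inf},\theta} R)$. On the other hand, Lemma \ref{THHvsHH} together with the vanishing of $\pi_i\THH(\mathbb Z)$ for $0 < i \leq 2$ yields $\pi_2\THH(R;\mathbb Z_p) \cong \pi_2\HH(R;\mathbb Z_p)$, which via the HKR filtration and Proposition \ref{prop:basicperfd} is canonically identified with $H^{-1}(L_{R/\mathbb Z_p}^\wedge) \cong \ker(\theta)/\ker(\theta)^2 = \calN^1 A_{\inf}$. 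Equating the two descriptions and using invertibility of $\calN^1 A_{\inf}$ as an $R$-module yields the canonical trivialization $A_{\inf}\{1\} \otimes_{A_{\inf},\theta} R \cong R$.

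The main obstacle is purely bookkeeping: checking that every identification produced above is truly canonical rather than secretly dependent on the auxiliary generators of Proposition \ref{TCTPTHHpi} (recall that the element $\xi$ was renormalized mid-proof there in order to make the Frobenius formula clean). Once canonicity is verified, naturality in $R$ is automatic, since every ingredient used is functorial in the perfectoid ring $R$.
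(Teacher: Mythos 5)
Your overall strategy tracks the paper's closely: both repackage Proposition \ref{TCTPTHHpi} canonically, using $2$-periodicity of $\TP$, the canonical map's image as the Nygaard filtration, $\THH \simeq \TC^{-}/v$ (Proposition \ref{prop:TCtoTHH}), and the cotangent-complex description of $\pi_2\THH$ to produce the trivialization $A_{\inf}\{1\}\otimes_{A_{\inf},\theta}R\cong R$.

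There is, however, a genuine gap in part (1). You assert that ``$A_{\inf}\{1\}\otimes_{A_{\inf},\tilde\theta}R\cong R\{1\}$ holds by the definition of $R\{1\}$''; that is true, but the statement being proved also asserts the substantive isomorphism $A_{\inf}\{1\}\otimes_{A_{\inf},\tilde\theta}R\cong(\ker\theta)/(\ker\theta)^2$, i.e.\ $R\{1\}\cong\calN^1 A_{\inf}$. This is not a tautology: it concerns the $\tilde\theta$-base change while your $\pi_2\THH$-comparison only controls the $\theta$-base change, and the two quotients $A_{\inf}\{1\}/\xi$ and $A_{\inf}\{1\}/\tilde\xi$ have no a priori comparison. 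The missing identification is also needed to close the loop on the first equality $\pi_\ast\THH(R;\mathbb Z_p)=\bigoplus_{i\geq 0}R\{i\}$ in part (2): your $\TC^{-}/v$ argument produces $\bigoplus \calN^i A_{\inf}\{i\}$, which via the $\theta$-trivialization becomes $\bigoplus\calN^i A_{\inf}$, but the rewriting as $\bigoplus R\{i\}$ requires precisely $\calN^i A_{\inf}\cong R\{i\}$.

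The paper supplies this by a step you never take: it invokes $\THH(R;\mathbb Z_p)^{tC_p}\simeq\TP(R;\mathbb Z_p)/\tilde\xi$ (so $\pi_2\THH^{tC_p}=R\{1\}$) and the fact that $\varphi\colon\THH(R;\mathbb Z_p)\to\THH(R;\mathbb Z_p)^{tC_p}$ identifies the source with the connective cover of the target, hence is an isomorphism on $\pi_2$, giving $\calN^1 A_{\inf}=\pi_2\THH\cong\pi_2\THH^{tC_p}=R\{1\}$. Alternatively, you could deduce it from ingredients you already have: the $\varphi$-semilinear isomorphism $\xi A_{\inf}\{1\}\to A_{\inf}\{1\}$ carries $\xi\cdot\xi A_{\inf}\{1\}$ onto $\varphi(\xi)A_{\inf}\{1\}=\tilde\xi A_{\inf}\{1\}$, hence descends to an isomorphism $\calN^1 A_{\inf}\{1\}\cong A_{\inf}\{1\}/\tilde\xi=R\{1\}$, and combining with your $\theta$-trivialization $\calN^1 A_{\inf}\{1\}\cong\calN^1 A_{\inf}$ yields the claim. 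Either way, an explicit argument is required; it cannot be dismissed as definitional.
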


\begin{remark} In this remark, we show that these Breuil-Kisin twists agree with those of \cite[Example 4.24]{BMS}; this is essentially the only spot in the paper that uses ``classical'' results about topological Hochschild homology. We recall the construction of {\it loc.cit.}~which works in the case that $R$ is $p$-torsion-free. Starting from the description $A_{\inf} = \varprojlim_F W_r(R)$ identifying the projection $A_{\inf}\to W_r(R)$ with $\tilde\theta_r: A_{\inf}\to W_r(R)$ as in \cite[Lemma 3.2]{BMS}, where the kernel of $\tilde\theta_r$ is generated by the non-zero-divisor $\tilde\xi_r = \tilde\xi \varphi(\tilde\xi)\cdots \varphi^{r-1}(\tilde\xi)$, one has canonical isomorphisms
\[
A_{\inf}\{1\}\otimes_{A_{\inf},\tilde\theta_r} W_r(R) = (\ker \tilde\theta_r)/(\ker \tilde\theta_r)^2\ .
\]
Varying $r$, the natural maps on the right correspond to $p$ times the natural map on the left. This determines the transition maps when $R$ is $p$-torsion-free, and then $A_{\inf}\{1\}$ as
\[
A_{\inf}\{1\} = \varprojlim_r A_{\inf}\{1\}\otimes_{A_{\inf},\tilde\theta_r} W_r(R)\ .
\]

Coming back to $\THH$, we know that as $\THH(R;\mathbb Z_p)\to \THH(R;\mathbb Z_p)^{tC_p}$ is a connective cover, also the map
\[
\THH(R;\mathbb Z_p)^{hC_{p^r}}\to (\THH(R;\mathbb Z_p)^{tC_p})^{hC_{p^r}}\simeq \THH(R;\mathbb Z_p)^{tC_{p^{r+1}}}
\]
induces an equivalence of connective covers. Moreover, the same input implies that the map from the genuine fixed points
\[
\TR^{r+1}(R;\mathbb Z_p) = \THH(R;\mathbb Z_p)^{C_{p^r}}\to \THH(R;\mathbb Z_p)^{hC_{p^r}}
\]
is again a connective cover by a result of Tsalidis, \cite{Tsalidis}, cf.~also \cite[Corollary II.4.9]{NikolausScholze}. By \cite[Theorem 3.3]{Hesselholt1997}, there is a natural isomorphism $\pi_0 \TR^{r+1}(R;\mathbb Z_p)\cong W_{r+1}(R)$ under which the transition maps for varying $r$ correspond to the Frobenius $F: W_{r+1}(R)\to W_r(R)$. Thus, $\THH(R;\mathbb Z_p)^{tC_{p^{r+1}}}$ is an even $2$-periodic ring spectrum with $\pi_0$ given by $W_{r+1}(R)$. The equivalence
\[
\TP(R;\mathbb Z_p)\simeq \lim_r \THH(R;\mathbb Z_p)^{tC_{p^r}}
\]
from the proof of \cite[Lemma II.4.2]{NikolausScholze} then induces an isomorphism $A_{\inf}\cong \varprojlim_r W_r(R)\cong A_{\inf}$ on the level of $\pi_0$. This must be the identity by compatibility with $\tilde\theta$ and the universal property of $A_{\inf}$. This implies that the map
\[
A_{\inf} = \pi_0 \TP(R;\mathbb Z_p)\to \pi_0 \THH(R;\mathbb Z_p)^{tC_{p^r}} = W_r(R)
\]
is given by $\tilde\theta_r$, and in particular is surjective. As both spectra are $2$-periodic, it follows that
\[
\THH(R;\mathbb Z_p)^{tC_{p^r}} = \TP(R;\mathbb Z_p)/\tilde\xi_r\ ,
\]
and thus
\[
\pi_\ast \THH(R;\mathbb Z_p)^{tC_{p^r}}\cong \bigoplus_{i\in \mathbb Z} W_r(R)\{i\}\ .
\]
On the other hand,
\[
\TP(R;\mathbb Z_p)\simeq (\THH(R;\mathbb Z_p)^{tC_{p^r}})^{h(\T/C_{p^r})}
\]
by \cite[Lemma II.4.1, II.4.2]{NikolausScholze}. Looking at the resulting spectral sequence computing $\pi_0$ (whose abutment filtration is determined by multiplicativity to be given by powers of $\ker \tilde\theta_r$), we see that there is a canonical isomorphism
\[
W_r(R)\{1\}\cong H^2(\T/C_{p^r},W_r(R)\{1\})\cong (\ker \tilde\theta_r)/(\ker \tilde\theta_r)^2\ .
\]
Moreover, the natural transition maps on the left correspond to multiplication by $p$ on the right (the factor of $p$ coming from the covering $\T/C_{p^r}\to \T/C_{p^{r+1}}$). This shows that $A_{\inf}\{1\}$ has the description given in \cite[Example 4.24]{BMS}; we leave it to the reader to check compatibility with the Frobenius map.
\end{remark}

\subsection{$\THH$ for smooth algebras over perfectoid rings}
\label{subsec:THHsmoothperfectoid}

Let $R$ be a perfectoid ring. The following theorem expresses why the topological theory yields a deformation of the algebraic theory; it will be useful in controlling the topological theory. Here and in the following, when a perfectoid base ring is fixed, we will usually omit Breuil-Kisin twists.

\begin{theorem}
\label{TPHPDeformation}
Let $A$ be an $R$-algebra. Then there is a $\T$-equivariant cofiber sequence
\[ \THH(A;\mathbb Z_p)[2] \xrightarrow{u} \THH(A;\mathbb Z_p) \to \HH(A/R;\mathbb Z_p)\]
of $\THH(A;\mathbb Z_p)$-module spectra. In particular, by passage to fixed points, there is an induced cofiber sequence
\[ \TC^{-}(A;\mathbb Z_p)[2] \xrightarrow{u} \TC^{-}(A;\mathbb Z_p) \to \HC^{-}(A/R;\mathbb Z_p)\]
of $\TC^{-}(A;\mathbb Z_p)$-module spectra. Likewise, by passage to the Tate construction, there is an induced cofiber sequence
\[ \TP(A;\mathbb Z_p)[2] \xrightarrow{ \xi \cdot \sigma} \TP(A;\mathbb Z_p) \to \HP(A/R;\mathbb Z_p)\]
of $\TP(A;\mathbb Z_p)$-module spectra.
\end{theorem}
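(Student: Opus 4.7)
The plan is to reduce everything to the previously established computation $\pi_* \THH(R;\mathbb{Z}_p) = R[u]$ for perfectoid $R$ together with a base-change identity. Concretely, I would first establish the $\mathbb{T}$-equivariant base-change formula
\[
\HH(A/R;\mathbb{Z}_p) \simeq \THH(A;\mathbb{Z}_p) \otimes_{\THH(R;\mathbb{Z}_p)} R,
\]
which follows from the universal property of $\THH$ (resp.\ $\HH$) as the tensor of an $E_\infty$-algebra with $\mathbb{T}$ in $E_\infty$-$\mathbb{S}$-algebras (resp.\ $E_\infty$-$R$-algebras), combined with the transitivity of tensoring. The $p$-completion causes no issues since $R$ is $p$-complete and the relative tensor product of $p$-complete spectra along a map from a $p$-complete ring is $p$-complete.

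Next I would use the calculation $\pi_* \THH(R;\mathbb{Z}_p) = R[u]$: the quotient map $R[u] \twoheadrightarrow R$ killing $u$ produces a $\THH(R;\mathbb{Z}_p)$-module cofiber sequence
\[
\THH(R;\mathbb{Z}_p)[2] \xrightarrow{u} \THH(R;\mathbb{Z}_p) \to R.
\]
Proposition~\ref{TCTPTHHpi} shows that $u \in \pi_2 \THH(R;\mathbb{Z}_p)$ lifts (along $\mathrm{can}$) to a class, still denoted $u$, in $\pi_2 \TC^-(R;\mathbb{Z}_p)$; consequently this cofiber sequence is $\mathbb{T}$-equivariant (with trivial action on $R$). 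Tensoring with $\THH(A;\mathbb{Z}_p)$ over $\THH(R;\mathbb{Z}_p)$ and applying the base-change formula yields the desired first cofiber sequence.

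Finally, applying the exact functors $(-)^{h\mathbb{T}}$ and $(-)^{t\mathbb{T}}$ to this $\mathbb{T}$-equivariant cofiber sequence produces cofiber sequences of $\TC^-(A;\mathbb{Z}_p)$- and $\TP(A;\mathbb{Z}_p)$-modules, with $\TC^-(A;\mathbb{Z}_p)[2] \to \TC^-(A;\mathbb{Z}_p)$ being multiplication by the chosen lift $u \in \pi_2 \TC^-(R;\mathbb{Z}_p)$, and $\TP(A;\mathbb{Z}_p)[2] \to \TP(A;\mathbb{Z}_p)$ being multiplication by the image of $u$ in $\pi_2 \TP(R;\mathbb{Z}_p)$. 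By the explicit formulas in Proposition~\ref{TCTPTHHpi}, the canonical map sends $u \mapsto \xi \cdot \sigma$, which identifies the third map as required. The only subtle step is justifying the $\mathbb{T}$-equivariant enhancement of the cofiber sequence (i.e., producing $u$ as a $\mathbb{T}$-fixed point rather than just a homotopy class), but this is immediate from the existence of the $\TC^-$ lift; the remainder is formal.
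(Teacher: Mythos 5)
Your proof matches the paper's argument: both reduce to $A = R$ via the $\T$-equivariant base-change equivalence $\HH(A/R;\mathbb{Z}_p) \simeq \THH(A;\mathbb{Z}_p) \otimes_{\THH(R;\mathbb{Z}_p)} R$, and both obtain $\T$-equivariance of the cofiber sequence over $R$ by viewing $u$ as a class in $\pi_2\TC^{-}(R;\mathbb{Z}_p)$. The one assertion you leave unjustified --- that the $\T$-action on the resulting cofiber, which is discrete with underlying module $R$, is necessarily trivial --- is exactly the point the paper adds to close the argument: a discrete module over a connective $\T$-equivariant $E_\infty$-ring carries a unique (hence trivial) $\T$-action.
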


\begin{proof}
As $\HH(A/R) = \THH(A) \otimes_{\THH(R)} R$ by Lemma \ref{THHvsHH}, and $\HH(R/R) = R$, it is enough to prove the first statement for $R$ itself. In this case, note that $u \in \pi_2 \TC^{-}(R;\mathbb Z_p)$ can be viewed a $\T$-equivariant map $\mathbb{A}[2] \to \THH(R;\mathbb Z_p)$, and hence a $\THH(R;\mathbb Z_p)$-linear $\T$-equivariant map  $\THH(R;\mathbb Z_p)[2] \xrightarrow{u} \THH(R;\mathbb Z_p)$. The cofiber of this map is the discrete $\THH(R;\mathbb Z_p)$-module $R$ non-equivariantly, and thus also $\T$-equivariantly: any discrete module over a $\T$-equivariant connective $E_\infty$-ring carries a unique $\T$-action (the trivial one).
\end{proof}

Next, we shall describe $\pi_* \THH(A;\mathbb Z_p)$ for a quasismooth $R$-algebra $A$. First, we give a general construction relating differential forms and $\THH$.

\begin{construction}
\label{cons:antisym}
For any $R$-algebra $A$, we shall construct a natural graded $A$-algebra map
\[ \mu_A:(\Omega^*_{A/R})^\wedge_p \to \pi_* \THH(A; \mathbb{Z}_p) \]
of graded derived $p$-complete $A$-modules; here the left side denotes the graded $A$-module obtained as $H^0$ of the termwise derived $p$-completion of the exterior algebra $\Omega^*_{A/R}$. To see this, observe that we have a natural (often called ``antisymmetrization'') $A$-module map
\[ \Omega^1_{A/\mathbb{Z}} \to \pi_1 \HH(A) \]
for any ring $A$\footnote{The $S^1$-action on $\HH(A)$ endows $\pi_* \HH(A)$ with the structure of a commutative differential graded algebra whose $0$-th term is $A$; the differential is usually called the Connes differential. As $\HH(A)$ can be computed by a simplicial commutative ring, $\pi_* \HH(A)$ is strictly graded commutative (i.e., odd degree elements square to $0$). The universal property of the de Rham complex gives a map $\Omega^*_{A/\mathbb{Z}} \to \pi_* \HH(A)$ carrying the de Rham differential to the Connes differential.}. Now the canonical map $\THH(A) \to \HH(A)$ is an isomorphism on $\tau_{\leq 2}$ \cite[Proposition IV.4.2]{NikolausScholze}. Applying this observation for $\tau_{\leq 1}$ thus gives an $A$-module map
\[ \Omega^1_{A/\mathbb{Z}} \to \pi_1 \THH(A). \]
Applying the observation for $\tau_{\leq 2}$, and using that $\pi_* \HH(A)$ is an anticommutative graded ring, the preceding map extends to a map
\[ \Omega^*_{A/\mathbb{Z}} \to \pi_* \THH(A)\]
of graded $A$-algebras. Composing with $p$-completions gives a graded $A$-algebra map
\[ \Omega^*_{A/\mathbb{Z}} \to \pi_* \THH(A; \mathbb{Z}_p).\]
By the universal property of $H^0$ of derived $p$-completions, this gives a graded $A$-algebra map
\[ (\Omega^*_{A/\mathbb{Z}})^\wedge_p \to \pi_* \THH(A; \mathbb{Z}_p),\]
where the left side is defined as $H^0$ of the termwise derived $p$-completion of the graded ring $\Omega^*_{A/\mathbb{Z}}$. To finish constructing $\mu_A$, it is now enough to show that for any $R$-algebra $A$, the natural map $\Omega^i_{A/\mathbb{Z}} \to \Omega^i_{A/R}$ induces an isomorphism on $H^0$ after applying derived $p$-completion. Note that the map $\Omega^i_{A/\mathbb{Z}} \to \Omega^i_{A/R}$ is surjective with $p$-divisible kernel (as this holds true for $i=1$ since $\Omega^1_{R/\mathbb{Z}}$ is $p$-divisible by the perfectoid nature of $R$). But then the homotopy fiber of the map $\widehat{\Omega^i_{A/\mathbb{Z}}} \to \widehat{\Omega^i_{A/R}}$ in $D(A)$ obtained by applying the derived $p$-completion functor lies in $D^{\leq -1}$, so applying $H^0$ gives the claim.
\end{construction}

The map constructed above linearizes to an isomorphism in favorable cases:

\begin{corollary}[Hesselholt]
\label{THHFSmooth}
For any $R$-algebra $A$, the map in Construction~\ref{cons:antisym} linearizes to give a map
\[ (\Omega^*_{A/R})^\wedge_p \otimes_R \pi_* \THH(R;\mathbb Z_p) \to \pi_* \THH(A;\mathbb Z_p)\]
of graded $A \otimes_R \pi_* \THH(R; \mathbb{Z}_p)$-algebras.  If $A$ is quasismooth, this map is an isomorphism.
\end{corollary}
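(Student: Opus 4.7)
The linearization is immediate: the map $\mu_A$ of Construction~\ref{cons:antisym} is a map of graded $A$-algebras, and its target is naturally a graded $\pi_\ast\THH(R;\mathbb Z_p) = R[u]$-module, so we simply extend scalars along $A \to A\otimes_R \pi_\ast\THH(R;\mathbb Z_p)$ to obtain
\[ \nu_A : (\Omega^*_{A/R})^\wedge_p \otimes_R \pi_* \THH(R;\mathbb Z_p) \longrightarrow \pi_*\THH(A;\mathbb Z_p). \]
What requires work is the isomorphism claim when $A$ is quasismooth over $R$, which the remainder of the plan addresses.

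The essential tool is the cofiber sequence
\[ \THH(A;\mathbb Z_p)[2] \xrightarrow{u} \THH(A;\mathbb Z_p) \to \HH(A/R;\mathbb Z_p) \]
from Theorem~\ref{TPHPDeformation}. Writing $T_n = \pi_n\THH(A;\mathbb Z_p)$ and $H_n = \pi_n\HH(A/R;\mathbb Z_p)$, the associated long exact sequence reads
\[ \cdots \to T_{n-2} \xrightarrow{u} T_n \to H_n \xrightarrow{\partial} T_{n-3} \xrightarrow{u} T_{n-1} \to \cdots . \]
Since $A$ is quasismooth over $R$, the $p$-complete HKR theorem (Remark~\ref{HKRQuasismooth}) identifies $H_n$ with $(\Omega^n_{A/R})^\wedge_p$, concentrated in homological degree $n$.

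The key step is to show that $\partial$ vanishes in every degree. I would argue this by producing a section of the map $T_n \to H_n$: the construction of $\mu_A$ factors through the analogous ``antisymmetrization'' map for $\HH$ via the equivalence $\HH(A) \simeq \THH(A)\otimes_{\THH(\mathbb Z)}\mathbb Z$ of Lemma~\ref{THHvsHH}, so the composite
\[ (\Omega^n_{A/R})^\wedge_p \xrightarrow{\mu_A} T_n \to H_n \cong (\Omega^n_{A/R})^\wedge_p \]
is the identity (it is the $H_n$-analogue of $\mu_A$, and under the HKR identification this is the canonical map). Hence each short sequence
\[ 0 \to T_{n-2} \xrightarrow{u} T_n \to (\Omega^n_{A/R})^\wedge_p \to 0 \]
is split exact. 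The main obstacle is precisely this compatibility: one has to verify that $\mu_A$ and the HKR isomorphism for $\HH(A/R;\mathbb Z_p)$ coincide as graded $A$-algebra maps out of $(\Omega^*_{A/R})^\wedge_p$. This reduces to the degree one statement that the antisymmetrization $\Omega^1_{A/R} \to \pi_1\HH(A/R)$ recovers the HKR map $a\, db \mapsto a(1\otimes b - b\otimes 1)$, which is classical.

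With the splittings in hand, I would conclude by induction on $n$. The base cases $n<0$ are trivial ($T_n = 0$) and $n = 0$ gives $T_0 = A$. For the inductive step, the split short exact sequence above fits into a commutative diagram with the tautological split short exact sequence
\[ 0 \to \bigoplus_{i+2j = n-2} (\Omega^i_{A/R})^\wedge_p\, u^j \xrightarrow{u} \bigoplus_{i+2j=n}(\Omega^i_{A/R})^\wedge_p\, u^j \to (\Omega^n_{A/R})^\wedge_p \to 0, \]
where the left vertical map is an isomorphism by induction and the right vertical map is the identity. The five lemma then shows $\nu_A$ is an isomorphism in degree $n$, completing the proof.
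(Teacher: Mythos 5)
Your proof is correct and follows essentially the same route as the paper's: use the cofiber sequence of Theorem~\ref{TPHPDeformation}, observe via HKR that the composite $(\Omega^*_{A/R})^\wedge_p \to \pi_*\THH(A;\mathbb Z_p) \to \pi_*\HH(A/R;\mathbb Z_p)$ is an isomorphism so the long exact sequence splits into short exact sequences with preferred section, and conclude by induction on degree. You have simply spelled out the compatibility check (that $\mu_A$ composed with the map to $\HH$ recovers the HKR map) which the paper leaves implicit.
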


\begin{proof} Only the last statement requires proof. We begin by noting that the composite
\[ (\Omega^*_{A/R})^\wedge_p \to \pi_* \THH(A;\mathbb Z_p) \to \pi_* \HH(A/R;\mathbb Z_p)\]
is an isomorphism of graded rings by the HKR filtration. This implies that the long exact sequence on $\pi_*$  obtained from the first fiber sequence in Theorem~\ref{TPHPDeformation} decomposes into short exact sequences
\[ 0 \to \pi_{i-2} \THH(A;\mathbb Z_p) \xrightarrow{u} \pi_i \THH(A;\mathbb Z_p) \to \pi_i \HH(A/R;\mathbb Z_p) \cong (\Omega^i_{A/R})^\wedge_p \to 0\]
where the surjective map comes equipped with a preferred section, and the final isomorphism comes from Remark~\ref{HKRQuasismooth}. This easily implies the assertion in the corollary by induction on $i$.
\end{proof}

The following filtration will only play a technical role.

\begin{corollary}
\label{THHPostnikovPerfectoid}
The functor $\THH(-;\mathbb Z_p)$ on the category of $p$-complete $R$-algebras admits a complete descending multiplicative $\mathbb N$-indexed filtration $P^\f{(-)}$ with $\gr^n_P \THH(-;\mathbb Z_p)$ being naturally identified with
\[\bigoplus_{\substack{0\leq i\leq n\\ i-n\ \mathrm{even}}} (\wedge^i L_{-/R})^\wedge_p[n].\]
\end{corollary}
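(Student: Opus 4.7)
The plan is to construct $P^\f$ via left Kan extension of the Postnikov (Whitehead) filtration from polynomial $R$-algebras. Working in the $\infty$-category of $\mathbb N$-indexed filtered $p$-complete $E_\infty$-spectra, I would begin by restricting to $p$-complete polynomial $R$-algebras $P$. By Corollary~\ref{THHFSmooth}, $\pi_\ast \THH(P;\mathbb Z_p) \cong (\Omega^\ast_{P/R})^\wedge_p[u]$, concentrated in nonnegative even degrees because $P$ is $p$-torsion-free. In particular $\THH(P;\mathbb Z_p)$ is connective, and the Whitehead filtration
\[ P^n \THH(P;\mathbb Z_p) := \tau_{\geq n}\THH(P;\mathbb Z_p) \]
is a complete multiplicative $\mathbb N$-indexed decreasing filtration whose graded pieces are concentrated in (homological) degree $n$, with
\[ \gr^n = \pi_n \THH(P;\mathbb Z_p)[n] = \bigoplus_{\substack{0\leq i\leq n\\ n-i\ \text{even}}} (\Omega^i_{P/R})^\wedge_p[n]. \]

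Next, I would define $P^\f\THH(-;\mathbb Z_p)$ on all $p$-complete $R$-algebras by left Kan extending this filtered $E_\infty$-algebra-valued functor along the inclusion from $p$-complete polynomial $R$-algebras into the $\infty$-category of $p$-complete simplicial commutative $R$-algebras, then restricting back to discrete objects. The two key verifications are: (i) the underlying spectrum $P^0 \THH(A;\mathbb Z_p)$ recovers $\THH(A;\mathbb Z_p)$ — this follows because $\THH(-;\mathbb Z_p)$ commutes with sifted colimits (as $\THH$ does, being defined by tensoring with $S^1$, and because $p$-completion is a left adjoint into $p$-complete spectra), so it coincides with its own left Kan extension from polynomial $R$-algebras; (ii) the graded piece identification — since $\gr^n$ preserves colimits, it computes the graded pieces on general $A$ as the left Kan extension of the formula on polynomial algebras. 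Using that $p$-completion commutes with left Kan extension (as a left adjoint) and that the left Kan extension of $\Omega^i_{-/R}$ from polynomial $R$-algebras is $\wedge^i L_{-/R}$, I obtain
\[ \gr^n P^\f\THH(A;\mathbb Z_p) \simeq \bigoplus_{\substack{0\leq i\leq n\\ n-i\ \text{even}}} (\wedge^i L_{A/R})^\wedge_p[n]. \]

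Completeness and multiplicativity come along for the ride: each $P^n\THH(P;\mathbb Z_p)$ is $n$-connective on polynomial $P$, and this connectivity is preserved by the colimit defining the left Kan extension, forcing $\lim_n P^n\THH(A;\mathbb Z_p)=0$; multiplicativity is inherited from the multiplicative structure of the Postnikov filtration of the connective $E_\infty$-ring $\THH(P;\mathbb Z_p)$ and the compatibility of left Kan extension with symmetric monoidal structures in the $\infty$-category of filtered $E_\infty$-algebras.

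The main technical obstacle is to execute these constructions in the correct $\infty$-category — namely $\mathbb N$-indexed filtered $p$-complete $E_\infty$-algebras over $R$ — where left Kan extension must simultaneously respect the symmetric monoidal structure and commute appropriately with both $p$-completion and the formation of graded pieces. Once this framework is set up, the formal properties of left Kan extension from Construction~\ref{LKE}, combined with the explicit computation on polynomial $R$-algebras from Corollary~\ref{THHFSmooth}, give the result.
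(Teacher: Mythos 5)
Your proposal is correct and takes essentially the same route as the paper: left Kan extend the double-speed\textemdash no wait, ordinary\textemdash Postnikov filtration of $\THH(-;\mathbb Z_p)$ from polynomial $R$-algebras (where Corollary~\ref{THHFSmooth} computes the homotopy groups) to all $p$-complete $R$-algebras, using that $\THH(-;\mathbb Z_p)$ and the $p$-completed exterior powers of the cotangent complex commute with sifted colimits, and that $n$-connectivity of $P^n$ survives the left Kan extension to give completeness. One small slip: $\pi_\ast\THH(P;\mathbb Z_p)\cong(\Omega^\ast_{P/R})^\wedge_p\otimes_R R[u]$ is \emph{not} concentrated in even degrees (since $(\Omega^i_{P/R})^\wedge_p$ contributes to $\pi_i$, which is odd for odd $i$); it is merely connective, which is all that is needed, and indeed your own formula for $\gr^n=\pi_n\THH(P;\mathbb Z_p)[n]$ has nonzero odd pieces. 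This is harmless and does not affect the argument.
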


\begin{proof}
The assertion of the corollary holds true on the category of quasismooth $R$-algebras by Corollary~\ref{THHFSmooth} simply by using the Postnikov filtration. By left Kan extension in $p$-complete spectra, one gets a filtration $P^\f{(-)}$ as in the statement above as $\THH(-)$ commutes with sifted colimits of $R$-algebras; the completeness of $\THH(-)$ with respect to $P^\f{(-)}$ is a consequence of the fact that $P^n_A$ is $n$-connective for any $p$-complete $R$-algebra $A$ (by left Kan extension from the quasismooth case).
\end{proof}

\newpage
\section{$p$-adic Nygaard complexes}
\label{sec:pAdicNygaard}

Let $R$ be a perfectoid ring and write $A_{\inf} = A_{\inf}(R)$. We shall explain in \S \ref{subsec:PrismDef} how to extract an $A_{\inf}$-valued cohomology theory $\widehat{\Prism}_S$ for quasismooth $R$-algebras $S$ by unfolding $\pi_0 \TC^{-}(-;\mathbb Z_p)$. The abutment filtration for the homotopy fixed point spectral sequence unfolds to give a filtration, called the Nygaard filtration, on $\widehat{\Prism}_S$ that will be crucial in the sequel. To carry out the unfolding effectively, we describe $\TC^{-}$ for quasiregular semiperfectoid rings in \S \ref{subsec:TCqrsp}. This description is also used in \S \ref{ss:MotFilt} to prove Theorem~\ref{thm:main5}.

\subsection{$\TC^{-}$ for quasiregular semiperfectoids}
\label{subsec:TCqrsp}

First, we discuss the topological Hochschild homology of a quasiregular semiperfectoid $R$-algebra.

\begin{theorem}
\label{THHqrs}
Let $S \in \Qsp_R$ and let $M=\pi_1 (L_{S/R})^\wedge_p$ be the associated $p$-completely flat $S$-module.
\begin{enumerate}
\item $\pi_* \THH(S;\mathbb Z_p)$ is concentrated in even degrees. 
\item Multiplication by the generator $u\in \pi_2 \THH(R;\mathbb Z_p)$ gives a natural injective map 
\[ \pi_{2i-2} \THH(S;\mathbb Z_p)\xrightarrow{u} \pi_{2i} \THH(S;\mathbb Z_p).\]
\item Write $\pi_\infty \THH(S;\mathbb Z_p) = \colim_i \pi_{2i} \THH(S;\mathbb Z_p) = \pi_0 \THH(S;\mathbb Z_p)[u^{-1}]$ for the colimit of multiplication by $u$; we may view this object as an increasingly filtered commutative $R$-algebra. There is a functorial identification
\[ (\Gamma^*_S M)^\wedge_p \cong \gr_* \pi_\infty \THH(S;\mathbb Z_p)\]
of graded rings (where the left side denotes the $p$-completion in graded rings). In particular, each $\pi_{2i} \THH(S;\mathbb Z_p)$ admits a finite increasing filtration with graded pieces given in ascending order by $(\Gamma^j_S M)^\wedge_p$ for $0 \leq j \leq i$.
\item Each $\pi_{2i} \THH(S;\mathbb Z_p)$ is $p$-completely flat over $S$.
\end{enumerate}
\end{theorem}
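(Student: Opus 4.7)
The plan is to exploit the cofiber sequence
\[ \THH(S;\mathbb{Z}_p)[2] \xrightarrow{u} \THH(S;\mathbb{Z}_p) \to \HH(S/R;\mathbb{Z}_p) \]
from Theorem~\ref{TPHPDeformation}, together with the HKR-type computation in Lemma~\ref{HKRSperf}, which places $\pi_*\HH(S/R;\mathbb{Z}_p)$ in even degrees with $\pi_{2i}\HH(S/R;\mathbb{Z}_p)$ equal to the $p$-completion of $\wedge^i L_{S/R}[-i]$. Since $L_{S/R}$ has $p$-complete Tor amplitude concentrated in degree $-1$, its $p$-completion takes the form $M[1]$ with $M = \pi_1 (L_{S/R})^\wedge_p$ a $p$-completely flat $S$-module; Illusie's décalage isomorphism $\wedge^i(M[1]) \simeq \Gamma^i(M)[i]$ for derived exterior powers of shifts of flat modules then identifies $\pi_{2i}\HH(S/R;\mathbb{Z}_p)$ with $(\Gamma^i_S M)^\wedge_p$, multiplicatively recovering the divided power algebra on $M$.

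Parts (1) and (2) will follow simultaneously by induction, using the long exact sequence
\[ \cdots \to \pi_{n+1}\HH \to \pi_{n-2}\THH \xrightarrow{u} \pi_n\THH \to \pi_n\HH \to \pi_{n-3}\THH \to \cdots \]
whose base case is supplied by the connectivity of $\THH(S;\mathbb{Z}_p)$. For odd $n=2i+1$, the vanishing $\pi_{2i+1}\HH = 0$ forces $u : \pi_{2i-1}\THH \twoheadrightarrow \pi_{2i+1}\THH$ to be surjective, so (1) propagates from the base case by induction on $i$. Given (1), for even $n=2i$ the vanishing of $\pi_{2i+1}\HH$ and $\pi_{2i-3}\THH$ collapses the sequence to a short exact sequence
\[ 0 \to \pi_{2i-2}\THH \xrightarrow{u} \pi_{2i}\THH \to \pi_{2i}\HH \to 0, \]
which yields (2) and also produces, by iteration, the finite ascending filtration on each $\pi_{2i}\THH$ with graded pieces $(\Gamma^j_S M)^\wedge_p$ for $0 \leq j \leq i$ asserted in (3).

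For part (4), each $\pi_{2j}\HH \simeq (\Gamma^j_S M)^\wedge_p$ is $p$-completely flat over $S$ (divided powers preserve $p$-complete flatness), and an extension of $p$-completely flat $S$-modules is again $p$-completely flat (the sequence $A \to B \to C$ remains short exact after $\dotimes_S S/p$ since the boundary $C \dotimes_S S/p \to A[1] \dotimes_S S/p$ vanishes for degree reasons), so an induction on $i$ via the short exact sequences above gives the claim. For the content of (3), passing to the colimit over $u$ yields an exhaustive ascending filtration on $\pi_\infty\THH(S;\mathbb{Z}_p) = \pi_0\THH(S;\mathbb{Z}_p)[u^{-1}]$ whose $i$-th graded piece is $\pi_{2i}\THH/u\cdot\pi_{2i-2}\THH \simeq (\Gamma^i_S M)^\wedge_p$ as abelian groups. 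The main obstacle will be matching the ring structure on this associated graded with the divided power algebra on $(\Gamma^*_S M)^\wedge_p$; I expect to handle this by using that $\THH(-) \to \HH(-/R)$ is a map of $E_\infty$-algebras, so that the composite $\pi_*\THH \twoheadrightarrow \gr_*\pi_\infty\THH \simeq \pi_*\HH$ is a graded ring map, and then invoking the multiplicative identification in Lemma~\ref{HKRSperf} to recognize the target as the $p$-completion of $\Gamma^*_S M$ equipped with its divided power algebra structure.
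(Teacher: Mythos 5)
Your proof is essentially correct, and parts (2)--(4) follow the paper's argument very closely: the degenerating long exact sequence from the cofiber sequence $\THH(S;\mathbb Z_p)[2]\xrightarrow{u}\THH(S;\mathbb Z_p)\to\HH(S/R;\mathbb Z_p)$, the identification $\pi_{2i}\HH(S/R;\mathbb Z_p)\simeq(\Gamma^i_S M)^\wedge_p$ from Lemma~\ref{HKRSperf}, the stability of $p$-complete flatness under divided powers and extensions, and multiplicativity via the $E_\infty$-ring map $\THH\to\HH$.

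Where you diverge is part (1). The paper establishes evenness of $\pi_*\THH(S;\mathbb Z_p)$ \emph{before} invoking the cofiber sequence, via Corollary~\ref{THHPostnikovPerfectoid}: the complete descending multiplicative filtration $P^\f$ (built by left Kan extension of the Postnikov filtration from the quasismooth case) has graded pieces $\bigoplus_{0\le i\le n,\ i\equiv n\ (2)}(\wedge^i_S L_{S/R})^\wedge_p[n]$; each $(\wedge^i_S L_{S/R})^\wedge_p$ is concentrated in degree $i$ by Lemma~\ref{HKRSperf} and Lemma~\ref{BoundedTorsionFlat}, so each graded piece sits in an even degree, and completeness of $P^\f$ then yields evenness of the homotopy directly. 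You instead run an upward induction on the long exact sequence itself, using connectivity for the base case $\pi_{-1}\THH=0$ and the vanishing of $\pi_{\mathrm{odd}}\HH$ to propagate. Both are correct. Your route is more self-contained — it avoids Corollary~\ref{THHPostnikovPerfectoid} (and hence the left Kan extension and Postnikov filtration machinery) and needs only the cofiber sequence and the HKR computation for $\HH$. The paper's route is arguably cleaner in that it avoids the inductive back-and-forth and simultaneously sets up the filtration used to construct $\gr^i\THH(A;\mathbb Z_p)$ by descent in Proposition~\ref{MotivicFiltTHH}, which is needed downstream; for the present theorem alone, your argument is a perfectly good substitute.
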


\begin{proof}
By Corollary~\ref{THHPostnikovPerfectoid}, the spectrum $\THH(S;\mathbb Z_p)$ admits a complete descending multiplicative $\mathbb{N}$-indexed filtration with $\gr^n \THH(S)$ being
\[ \bigoplus_{\substack{0\leq i\leq n\\ i-n\ \mathrm{even}}} (\wedge_S^i L_{S/R})^\wedge_p[n].\]
Note that $(\wedge_S^i L_{S/R})^\wedge_p$ has $p$-complete Tor amplitude concentrated in homological degree $i$ by Lemma~\ref{HKRSperf}, and hence it lives in degree $i$ by Lemma~\ref{BoundedTorsionFlat}. But then the displayed terms above live in degree $i+n$, which is even. This implies (1) by completeness of the filtration.

For (2) and (3), we use the $\T$-equivariant fiber sequence 
\[ \THH(S;\mathbb Z_p)[2] \xrightarrow{u} \THH(S;\mathbb Z_p) \to \HH(S/R;\mathbb Z_p)\]
from Theorem~\ref{TPHPDeformation}. The preceding paragraph shows that $\pi_* \THH(S;\mathbb Z_p)$ lives in even degrees, and the same holds for $\HH(S/R;\mathbb Z_p)$ by Lemma~\ref{HKRSperf}. Thus, the long exact sequence on homotopy for the previous fiber sequence gives short exact sequences
\[ 0 \to \pi_{2i-2} \THH(S;\mathbb Z_p) \xrightarrow{u} \pi_{2i} \THH(S;\mathbb Z_p) \to \pi_{2i} \HH(S/R;\mathbb Z_p) \to 0.\]
Using the identification $\pi_{2i} \HH(S/R;\mathbb Z_p)\cong \pi_i (\wedge_S^i L_{S/R})^\wedge_p\cong (\Gamma^i_S M)^\wedge_p$ from Lemma~\ref{HKRSperf}, we can write this as
\[ 0 \to \pi_{2i-2} \THH(S) \xrightarrow{u} \pi_{2i} \THH(S) \to (\Gamma^i_R M)^\wedge_p \to 0.\]
This proves (2) and (3) by induction; the assertion about multiplicativity is a consequence of the multiplicativity of the map $\THH(S;\mathbb Z_p) \to \HH(S/R;\mathbb Z_p)$.

Finally, (4) follows from the last exact sequence above by induction as $(\Gamma^i_S M)^\wedge_p$ is a $p$-completely flat $S$-module.
\end{proof}

For any $R$-algebra $A$, we view $\pi_* \TC^{-}(A;\mathbb Z_p)$ resp.~$\pi_* \TP(A;\mathbb Z_p)$ as a graded algebra over the graded ring
\[
\pi_* \TC^{-}(R;\mathbb Z_p) = A_{\inf}[u,v]/(uv-\xi) \quad \text{resp.}\quad \pi_* \TP(R;\mathbb Z_p) \cong A_{\inf}[\sigma,\sigma^{-1}]\ .
\]
In particular, $\pi_* \TP(A;\mathbb Z_p)$ is $2$-periodic. By passing to fixed points, Theorem~\ref{THHqrs} yields:

\begin{theorem}
\label{TCqrsp}
Let $S \in \Qsp_R$.
\begin{enumerate}
\item The homotopy fixed point spectral sequence calculating $\TC^{-}(S;\mathbb Z_p)$ and the Tate spectral sequence calculating $\TP(S;\mathbb Z_p)$ degenerate. Both $\pi_* \TC^{-}(S;\mathbb Z_p)$ and $\pi_* \TP(S;\mathbb Z_p)$ live only in even degrees. Moreover, the canonical map $\pi_* \TC^{-}(S;\mathbb Z_p) \xrightarrow{\mathrm{can}} \pi_* \TP(S;\mathbb Z_p)$ is injective in all degrees, and an isomorphism in degrees $\leq 0$.
\item The (degenerate) homotopy fixed point spectral calculating $\TC^{-}(R;\mathbb Z_p)$ or the (degenerate) Tate spectral sequence calculating $\TP(R;\mathbb Z_p)$ endows
\[
\widehat{\Prism}_S := \pi_0 \TC^{-}(S;\mathbb Z_p) \stackrel{\mathrm{can}}{\cong} \pi_0 \TP(S;\mathbb Z_p)
\]
with the same complete descending $\mathbb N$-indexed filtration $\calN^{\geq \f}\widehat{\Prism}_S$, called the {\em Nygaard filtration}, for which it is complete. There are natural identifications of the associated graded $\calN^i \widehat{\Prism}_S\cong \pi_{2i} \THH(S;\mathbb Z_p)$ for all $i \geq 0$.
\item The filtration level $\calN^{\geq i} \widehat{\Prism}_S \subset \widehat{\Prism}_S = \pi_0 \TC^{-}(S;\mathbb Z_p)$ is identified with $\pi_{2i} \TC^{-}(S;\mathbb Z_p)$ via multiplication by the element $v^i\in \pi_{-2i} \TC^-(R;\mathbb Z_p)$,
\[ \pi_{2i} \TC^{-}(S;\mathbb Z_p) \xrightarrow{v^i} \pi_0 \TC^{-}(S;\mathbb Z_p).\]
\item The cyclotomic Frobenius $\pi_* \TC^{-}(S;\mathbb Z_p) \xrightarrow{\varphi_S^{h\T}} \pi_* \TP(S;\mathbb Z_p)$ induces an endomorphism $\varphi_S:\widehat{\Prism}_S \to \widehat{\Prism}_S$ by (2). This endomorphism maps $\calN^{\geq i}\widehat{\Prism}_S$ to $\tilde\xi^i \widehat{\Prism}_S$. This gives a natural divided Frobenius $\varphi_{S,i}:\calN^{\geq i}\widehat{\Prism}_S \to \widehat{\Prism}_S$ such that
\[
\varphi_S|_{\calN^{\geq i}\widehat{\Prism}_S} = \tilde\xi^i \varphi_{S,i}\ .
\]
\item There is a natural isomorphism of $R$-algebras $\widehat{\Prism}_S/\xi \cong \widehat{L\Omega}_{S/R}$, and $\widehat\Prism_S$ is $\xi$-torsion-free.
\end{enumerate}
\end{theorem}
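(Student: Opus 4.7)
The plan is to extract everything from the homotopy fixed point (HFP) spectral sequence converging to $\TC^-(S;\mathbb Z_p)$ and the Tate spectral sequence converging to $\TP(S;\mathbb Z_p)$, using as crucial input the concentration of $\pi_*\THH(S;\mathbb Z_p)$ in even degrees from Theorem~\ref{THHqrs}. Since $\T$ is connected, its action on the homotopy groups of $\THH(S;\mathbb Z_p)$ is trivial, so $E_2^{p,q} = H^p(\T;\pi_{-q}\THH(S;\mathbb Z_p))$ vanishes unless both $p$ and $q$ are even. All differentials vanish for parity reasons, yielding degeneration and the concentration of $\pi_*\TC^-(S;\mathbb Z_p)$ and $\pi_*\TP(S;\mathbb Z_p)$ in even degrees. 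The canonical map $\TC^-\to\TP$ induces isomorphisms on $E_2$-terms with $p\geq 0$ (which are the only terms that appear in the HFP spectral sequence), so it is an isomorphism in degrees $\leq 0$ (where all contributing filtration levels have $p\geq 0$) and injective in all degrees by comparing complete filtrations. This gives (1).

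For (2) and (3), I define $\calN^{\geq i}\widehat{\Prism}_S$ to be the filtration level $2i$ of the abutment filtration on $\pi_0\TC^-(S;\mathbb Z_p)$; completeness is inherited from the spectral sequence, and the graded pieces are $E_\infty^{2i,-2i}\cong \pi_{2i}\THH(S;\mathbb Z_p)$. The class $v\in\pi_{-2}\TC^-(R;\mathbb Z_p)$ is a permanent cycle representing a generator of $E_\infty^{2,0}=H^2(\T;R)$, so multiplication by $v^i$ shifts the abutment filtration by $2i$ and acts as the identity on the corresponding associated graded pieces $\pi_{2i+2k}\THH(S;\mathbb Z_p)$. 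Completeness of both filtrations then yields that $v^i:\pi_{2i}\TC^-(S;\mathbb Z_p)\xrightarrow{\sim}\calN^{\geq i}\widehat{\Prism}_S$ is an isomorphism.

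For (4), I use the $\varphi$-semilinear structure of the Frobenius over the map $\TC^-(R;\mathbb Z_p)\to\TP(R;\mathbb Z_p)$, which by Proposition~\ref{TCTPTHHpi} sends $v\mapsto \tilde\xi\sigma^{-1}$. For $z=v^i y\in \calN^{\geq i}\widehat{\Prism}_S$ with $y\in \pi_{2i}\TC^-(S;\mathbb Z_p)$, this gives
\[
\varphi_S(z)=\varphi_S^{h\T}(v^i y)=\tilde\xi^i\cdot\sigma^{-i}\varphi_S^{h\T}(y)\in \tilde\xi^i\widehat{\Prism}_S,
\]
where I identify $\pi_{2i}\TP(S;\mathbb Z_p)\cong\widehat{\Prism}_S$ via $\sigma^i$; the divided Frobenius is then $\varphi_{S,i}(z):=\sigma^{-i}\varphi_S^{h\T}(y)$.

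For (5), I consider the fiber sequence of Theorem~\ref{TPHPDeformation},
\[
\TC^-(S;\mathbb Z_p)[2]\xrightarrow{u}\TC^-(S;\mathbb Z_p)\to \HC^-(S/R;\mathbb Z_p).
\]
By (1), $\pi_{-1}\TC^-(S;\mathbb Z_p)=0$; and by running the analogous HFP spectral sequence argument for $\HC^-$ using Lemma~\ref{HKRSperf}, also $\pi_1\HC^-(S/R;\mathbb Z_p)=0$. The long exact sequence on homotopy collapses to a short exact sequence
\[
0\to \pi_{-2}\TC^-(S;\mathbb Z_p)\xrightarrow{u}\pi_0\TC^-(S;\mathbb Z_p)\to \pi_0\HC^-(S/R;\mathbb Z_p)\to 0.
\]
Using (1), the identification $\pi_{-2}\TC^-(S;\mathbb Z_p)\cong \pi_{-2}\TP(S;\mathbb Z_p)\cong\widehat{\Prism}_S$ via $\sigma^{-1}$, and the formula $u=\xi\sigma$ in $\TP$, the map $u$ becomes multiplication by $\xi$; Proposition~\ref{prop:hcddr} identifies $\pi_0\HC^-(S/R;\mathbb Z_p)$ with $\widehat{L\Omega}_{S/R}$, yielding $\widehat{\Prism}_S/\xi\cong \widehat{L\Omega}_{S/R}$, and the remaining content of the short exact sequence is the $\xi$-torsion-freeness. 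The main obstacle is the careful identification in (3) of the $v$-filtration with the Nygaard filtration, requiring verification that $v^i$ is an isomorphism on associated graded pieces; once this is established, the rest reduces to tracking elements through the spectral sequences and the fiber sequence of Theorem~\ref{TPHPDeformation}.
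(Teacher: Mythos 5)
Your proposal is correct and follows essentially the same approach as the paper. All five parts unwind directly from the evenness of $\pi_*\THH(S;\mathbb{Z}_p)$ (Theorem~\ref{THHqrs}) exactly as you describe: degeneration for parity reasons, the Nygaard filtration as the abutment filtration, $v$-multiplication as a filtered isomorphism, semilinearity of $\varphi$ via $\varphi(v)=\tilde\xi\sigma^{-1}$, and the fiber sequence of Theorem~\ref{TPHPDeformation} for part~(5). The one small difference is that in~(5) the paper detects $\xi$-torsion via the $\TP/\HP$ version of the fiber sequence (using $\HP_1(S/R;\mathbb{Z}_p)=0$), whereas you use the $\TC^-/\HC^-$ version; since $u$ corresponds to $\xi\sigma$ under the canonical map and $\mathrm{can}$ is an isomorphism in degrees $\leq 0$, these are equivalent. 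One minor slip: in~(5) the surjectivity of $\pi_0\TC^-(S;\mathbb{Z}_p)\to\pi_0\HC^-(S/R;\mathbb{Z}_p)$ needs $\pi_{-3}\TC^-(S;\mathbb{Z}_p)=0$, not $\pi_{-1}$; both of course vanish by~(1), so the argument is unaffected.
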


\begin{proof}
As $\pi_* \THH(S;\mathbb Z_p)$ lives in even degrees, (1) and (2) are immediate. Part (3) follows by unwinding the statement that $\THH(S;\mathbb Z_p)$ is a $\T$-equivariant $\THH(R;\mathbb Z_p)$-module spectrum at the level of the homotopy fixed point spectral sequences.

For (4), we use the last statement of (2) and the identity $\varphi(v) = \tilde\xi \sigma^{-1}$.

For (5), we use Theorem~\ref{TPHPDeformation} to obtain $\pi_0\TC^{-}(S;\mathbb Z_p)/\xi \cong \pi_0 \HC^{-}(S/R;\mathbb Z_p)$, Proposition~\ref{prop:hcddr} then implies that $\pi_0 \TC^{-}(S;\mathbb Z_p)/\xi \cong \widehat{L\Omega}_{S/R}$. Moreover, that theorem shows that any $\xi$-torsion in $\pi_0\TP(S;\mathbb Z_p)$ would be detected by $\HP_1(S/R;\mathbb Z_p)$, but this is $0$ by the Tate spectral sequence and the fact that $\HH_\sub{odd}(S/R;\mathbb Z_p)=0$ by Lemma \ref{HKRSperf}(2).
\end{proof}

\begin{remark}\label{rem:basering}
Let $S \in \Qsp$ but do not fix a perfectoid ring mapping to $S$. Then (1) and (4) in Theorem~\ref{THHqrs}, and (1) and (2) in Theorem~\ref{TCqrsp} continue to hold, i.e., do not depend on the choice of a perfectoid ring mapping to $S$. 
\end{remark}

\subsection{Unfolding to $\widehat{\Prism}_{(-)}$}
\label{subsec:PrismDef}

We begin by unfolding $\THH$:

\begin{construction}[Unfolding $\pi_{2i} \THH$]
\label{GradedTHH}
By Theorem~\ref{THHqrs} (3), for each $S \in \Qsp_{R}$, the $S$-module $\pi_{2i} \THH(S;\mathbb Z_p)$ admits a functorial finite increasing filtration with graded pieces given by $(\wedge_S^j L_{S/R})^\wedge_p[-j]$ for $0 \leq j \leq i$ in ascending order. Theorem~\ref{FlatDescentCC} then implies that $\pi_{2i} \THH(-;\mathbb Z_p)$ is a $D(R)$-valued sheaf on $\Qsp_{R}^\sub{op}$. By Proposition~\ref{qsqspextend}, it unfolds to a $D(R)$-valued sheaf $(\pi_{2i} \THH(-;\mathbb Z_p))^\beth$ on $\Qs^\sub{op}_{R}$; this sheaf admits a similar filtration by functoriality of unfolding. In particular, it takes values in $D^{\leq i}(R)$.
\end{construction}

A tangible consequence of this discussion is the construction of the ``motivic'' filtration on $\THH$:

\begin{proposition}
\label{MotivicFiltTHH}
 For any $A\in \Qs_{R}$, the spectrum $\THH(A;\mathbb Z_p)$ admits a functorial complete descending $\mathbb{N}$-indexed $\T$-equivariant filtration such that $\gr^i \THH(A;\mathbb Z_p)$ is canonically an $A$-module spectrum with trivial $\T$-action that admits a finite increasing filtration with graded pieces given by $(\wedge^j_A L_{A/R})^\wedge_p[2i-j]$ for $0 \leq j \leq i$ in ascending order.
\end{proposition}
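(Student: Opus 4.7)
The strategy is to define the desired filtration on the basis $\Qsp_R \subset \Qs_R$ of quasiregular semiperfectoid $R$-algebras and then unfold via Proposition~\ref{qsqspextend}. On $\Qsp_R$, the key input is Theorem~\ref{THHqrs}(1), which says that $\THH(S;\mathbb{Z}_p)$ has homotopy concentrated in even degrees for $S \in \Qsp_R$. This makes the ``double-speed'' Postnikov filtration
\[
\Fil^i \THH(S;\mathbb{Z}_p) := \tau_{\geq 2i} \THH(S;\mathbb{Z}_p)
\]
a natural candidate: it is manifestly a complete, descending, $\mathbb{N}$-indexed, $\T$-equivariant filtration, functorial in $S$, with graded pieces $\gr^i \simeq \pi_{2i}\THH(S;\mathbb{Z}_p)[2i]$. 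Because each graded piece is concentrated in a single cohomological degree, it carries a unique $\T$-action, namely the trivial one, as in the proof of Theorem~\ref{TPHPDeformation}.

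The next step is to check that this filtered object is an fpqc sheaf on $\Qsp_R^{\mathrm{op}}$, valued in the filtered $\infty$-category of $\T$-equivariant $p$-complete spectra. The underlying spectrum $\THH(-;\mathbb{Z}_p)$ is a sheaf by Corollary~\ref{FlatDescentHH}(2), so it suffices to see that each graded piece $\pi_{2i}\THH(-;\mathbb{Z}_p)[2i]$ is a sheaf; this is exactly Construction~\ref{GradedTHH}, which uses Theorem~\ref{THHqrs}(3) to give each such piece a finite increasing filtration whose graded pieces $(\wedge^j_S L_{S/R})^\wedge_p[2i-j]$ are sheaves by (the $p$-completed form of) Theorem~\ref{FlatDescentCC}. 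Since sheaves of filtered objects are closed under finite extensions, the filtered sheaf property follows.

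Applying Proposition~\ref{qsqspextend} (in the slice variant of Variant~\ref{QSynSlice}) to the presentable $\infty$-category of $\T$-equivariant $p$-complete filtered spectra, we unfold this to a filtered sheaf $\Fil^\bullet \THH(-;\mathbb{Z}_p)$ on $\Qs_R^{\mathrm{op}}$. Because unfolding is an equivalence of $\infty$-topoi, it commutes with the associated-graded functors, with taking the underlying $\T$-spectrum, and with the formation of the finite filtration of each graded piece. For $A \in \Qs_R$ one therefore obtains a complete, descending, $\mathbb{N}$-indexed, $\T$-equivariant filtration whose graded pieces are $A$-modules (with trivial $\T$-action) filtered by the unfolding of $(\wedge^j_S L_{S/R})^\wedge_p[2i-j]$ for $0 \leq j \leq i$. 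To identify these unfoldings with $(\wedge^j_A L_{A/R})^\wedge_p[2i-j]$, note that $A \mapsto (\wedge^j_A L_{A/R})^\wedge_p$ is already a sheaf on $\Qs_R^{\mathrm{op}}$ by Theorem~\ref{FlatDescentCC} (together with the argument of Example~\ref{HCDDR} for passage to $p$-completions), so its unfolding from $\Qsp_R^{\mathrm{op}}$ is itself.

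The underlying $\T$-spectrum of the unfolded object is $\THH(A;\mathbb{Z}_p)$ itself, because $\THH(-;\mathbb{Z}_p)$ is already a sheaf on $\Qs_R^{\mathrm{op}}$ by Corollary~\ref{FlatDescentHH}(2), and completeness of the filtration survives unfolding since the $i$-th step is $(2i-i)=i$-connective (hence the tower of cofibers converges). The main technical point is the verification that the Postnikov-type filtration on $\Qsp_R$ assembles into a sheaf of filtered $\T$-spectra and that its associated graded has sheaf-theoretic meaning; once this is in place the rest is formal manipulation of the unfolding equivalence.
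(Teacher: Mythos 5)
Your proposal is correct and follows essentially the same route as the paper's (very terse) proof: put the double-speed Postnikov filtration on $\THH(-;\mathbb Z_p)$ over $\Qsp_R$ using the evenness from Theorem~\ref{THHqrs}, identify the graded pieces via Construction~\ref{GradedTHH}, and unfold using Proposition~\ref{qsqspextend}. You have filled in all the details the paper leaves implicit (the sheaf property of the filtered object, the trivial $\T$-action on graded pieces, the identification of unfoldings of $(\wedge^j L_{-/R})^\wedge_p$), and these are all correct. One small stylistic remark: for completeness of the unfolded filtration, the paper's implicit argument is simply that the unfolding takes values in the complete filtered category $\widehat{DF}(R)$ by applying Proposition~\ref{qsqspextend} with $\calC = \widehat{DF}(R)$; your connectivity argument is also valid (after unfolding, $\Fil^i$ is $i$-connective because $\calN^i\widehat{\Prism}_A$ has cohomological amplitude in $[0,i]$), but is a bit more work to justify in full.
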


\begin{proof}
The claim holds true on $\Qsp_{R}$ simply by using the double speed Postnikov filtration thanks to Construction~\ref{GradedTHH}. It then follows in general thanks to Theorem~\ref{FlatDescentCC} and Corollary~\ref{FlatDescentHH} and functoriality of unfolding.
\end{proof}

\begin{remark}
By left Kan extension in $p$-complete $\T$-equivariant spectra, Proposition~\ref{MotivicFiltTHH} extends to all $p$-complete $R$-algebras.
\end{remark}

We now lift the discussion to $\TC^{-}$. First, let us give the analog of Construction~\ref{GradedTHH} by constructing $p$-adic Nygaard complexes; these are the main objects of interest from the perspective of a comparison with integral $p$-adic Hodge theory.

\begin{construction}[Unfolding $\pi_0 \TC^{-}$]
Consider the $\widehat{DF}(A_{\inf})$-valued functor on $\Qsp_{R}$ given by $(\widehat{\Prism}_{(-)}, \calN^{\geq \f}\widehat{\Prism}_{(-)})$ with notation as in Theorem~\ref{TCqrsp}. By the same theorem, this functor is a sheaf, and thus unfolds to a sheaf $(\widehat{\Prism}_{(-)}, \calN^{\geq \f} \widehat{\Prism}_{(-)})$ on $\Qs_{R}^\sub{op}$. As the equivalence in Proposition~\ref{qsqspextend} is symmetric monoidal, this sheaf is valued in $E_\infty$-algebras in $\widehat{DF}(A_{\inf})$. By construction, for any $A\in \Qs_R$, the underlying $E_\infty$-$A_{\inf}$-algebra $\widehat{\Prism}_A$ is $(p,\xi)$-complete (as it is given by a limit of the values for objects of $\Qsp_R$, which are all $(p,\xi)$-complete) and comes equipped with a complete descending multiplicative $\mathbb{N}$-indexed filtration $\calN^{\geq \f}\widehat{\Prism}_A$. Write $\calN^i\widehat{\Prism}_A$ for the $i$-th graded piece. The cyclotomic Frobenius induces a Frobenius semilinear map $\varphi_A:\widehat{\Prism}_A \to \widehat{\Prism}_A$. When $F$ is a perfectoid $R$-algebra, then $\widehat{\Prism}_F = A_{\inf}(F)$, $\calN^{\geq i}\widehat{\Prism}_F = \ker(\theta_F)^i$, and $\varphi_F$ is the usual Frobenius on $A_{\inf}(F)$.
\end{construction}

The associated graded pieces $\calN^i\widehat{\Prism}_A$ constructed above coincide with those in Construction~\ref{GradedTHH}.

\begin{proposition}
\label{NygaardGradedFilt}
For $A \in \Qs_R$, each $\calN^i\widehat{\Prism}_A\simeq \gr^i \THH(A;\mathbb Z_p)[-2i]$ is functorially an $A$-complex that admits a finite increasing filtration with graded pieces given in ascending order by $(\wedge_A^j L_{A/R})^\wedge_p[-j]$ for $0 \leq j \leq i$.
\end{proposition}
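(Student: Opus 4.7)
The plan is to prove the statement first on the base $\Qsp_R$ of the topology and then unfold via Proposition~\ref{qsqspextend}. For $S \in \Qsp_R$, the equivalence $\calN^i\widehat{\Prism}_S \simeq \gr^i \THH(S;\mathbb Z_p)[-2i]$ is immediate from Theorem~\ref{TCqrsp}(2): since $\pi_*\THH(S;\mathbb Z_p)$ is concentrated in even degrees (Theorem~\ref{THHqrs}(1)), the $i$-th piece of the Postnikov filtration of $\THH(S;\mathbb Z_p)$ is $\pi_{2i}\THH(S;\mathbb Z_p)[2i]$, and after the shift $[-2i]$ this matches the identification $\calN^i\widehat{\Prism}_S \cong \pi_{2i}\THH(S;\mathbb Z_p)$ of Theorem~\ref{TCqrsp}(2).

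Next, the finite increasing filtration is furnished by Theorem~\ref{THHqrs}(3), whose proof exhibits functorial short exact sequences
\[ 0 \to \pi_{2j-2}\THH(S;\mathbb Z_p) \xrightarrow{u} \pi_{2j}\THH(S;\mathbb Z_p) \to (\Gamma_S^j M)_p^\wedge \to 0 \qquad (0 \leq j \leq i),\]
where $M := \pi_1 (L_{S/R})_p^\wedge$, starting from $\pi_{-2}\THH(S;\mathbb Z_p) = 0$. It remains to identify the graded pieces $(\Gamma_S^j M)_p^\wedge$ with $(\wedge_S^j L_{S/R})_p^\wedge[-j]$. For this, recall (Remark~\ref{rem:qsptor}, Lemma~\ref{HKRSperf}) that for $S \in \Qsp_R$ the cotangent complex $L_{S/R}$ has $p$-complete Tor amplitude concentrated in degree $-1$, so it is functorially equivalent, after $p$-completion, to $M[1]$ with $M$ $p$-completely flat. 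The standard d\'ecalage identification $\wedge_S^j(N[1]) \simeq \Gamma_S^j(N)[j]$ for $p$-completely flat $N$ then yields the desired natural equivalence $(\wedge_S^j L_{S/R})_p^\wedge[-j] \simeq (\Gamma_S^j M)_p^\wedge$. This establishes the proposition on $\Qsp_R$.

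Finally, to extend the filtration from $\Qsp_R$ to $\Qs_R$, we apply the unfolding equivalence of Proposition~\ref{qsqspextend}, noting the remark that unfolding commutes with passage to filtration levels and associated graded pieces for sheaves valued in $\widehat{DF}(R)$. The filtered $D(R)$-valued sheaf $S \mapsto \calN^i\widehat{\Prism}_S$ on $\Qsp_R$, equipped with the $(i+1)$-step filtration constructed above, therefore unfolds to a filtered sheaf on $\Qs_R$; by construction the underlying unfiltered sheaf is $\calN^i\widehat{\Prism}_{(-)}$. The graded pieces unfold to the graded pieces, and since $A \mapsto (\wedge_A^j L_{A/R})_p^\wedge[-j]$ is already a $D(R)$-valued sheaf on $\Qs_R$ by Theorem~\ref{FlatDescentCC} and the compatibility of derived $p$-completion with limits, its restriction to $\Qsp_R$ unfolds back to itself. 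This gives the asserted finite filtration on $\calN^i\widehat{\Prism}_A$ for $A \in \Qs_R$.

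The main obstacle is purely bookkeeping: one must verify that the filtration on $\pi_{2i}\THH(S;\mathbb Z_p)$ produced from iterating multiplication by $u \in \pi_2\THH(R;\mathbb Z_p)$ is genuinely functorial in $S \in \Qsp_R$, which in turn rests on the naturality of the fiber sequence $\THH(-;\mathbb Z_p)[2] \xrightarrow{u} \THH(-;\mathbb Z_p) \to \HH(-/R;\mathbb Z_p)$ of Theorem~\ref{TPHPDeformation} in $R$-algebras. Granted this, the descent step is formal.
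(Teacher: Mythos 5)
Your proof is correct and follows essentially the same route as the paper: the paper's own proof simply cites Construction~\ref{GradedTHH}, which establishes exactly the filtration you describe on $\Qsp_R$ (via the $u$-multiplication exact sequences of Theorem~\ref{THHqrs}(3) together with the identification of $(\Gamma^j_S M)^\wedge_p$ with $(\wedge^j_S L_{S/R})^\wedge_p[-j]$ coming from Lemma~\ref{HKRSperf}) and then unfolds it to $\Qs_R$ using Theorem~\ref{FlatDescentCC} and Proposition~\ref{qsqspextend}. The only small stylistic difference is that you make the d\'ecalage isomorphism $\wedge^j(M[1]) \simeq \Gamma^j(M)[j]$ explicit, where the paper packages this into Lemma~\ref{HKRSperf}.
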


\begin{proof}
As $\calN^i\widehat{\Prism}_{(-)} = (\pi_{2i} \THH(-;\mathbb Z_p))^{\beth}$ by Theorem~\ref{TCqrsp} (2), this assertion is simply a reformulation of Construction~\ref{GradedTHH}.
\end{proof}

The complexes $\widehat{\Prism}_A$ constructed above deform de Rham cohomology across $\theta:A_{\inf} \to R$.

\begin{proposition}
\label{NygaarddR}
For $A \in \Qs_{R}$, there is a natural identification of $E_\infty$-$R$-algebras $\widehat{\Prism}_A/\xi \simeq \widehat{L\Omega}_{A/R}$. 
\end{proposition}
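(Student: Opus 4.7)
The strategy is to bootstrap the identification from the basis $\Qsp_R$ (where it is essentially Theorem~\ref{TCqrsp}(5)) to the entire site $\Qs_R$ via the unfolding equivalence of Proposition~\ref{qsqspextend}. The basic case is already in hand: for $S \in \Qsp_R$, Theorem~\ref{TCqrsp}(5) supplies a natural isomorphism $\widehat{\Prism}_S/\xi \cong \widehat{L\Omega}_{S/R}$ together with the crucial observation that $\widehat{\Prism}_S$ is $\xi$-torsion-free (so underived and derived mod-$\xi$ quotients agree). The isomorphism is one of $E_\infty$-$R$-algebras because it arises from the $E_\infty$-module cofiber sequence of Theorem~\ref{TPHPDeformation} together with Proposition~\ref{prop:hcddr}, both of which are symmetric monoidal constructions. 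Moreover, this isomorphism is evidently functorial in $S \in \Qsp_R$.

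To upgrade to all $A \in \Qs_R$, first pick a quasisyntomic cover $A \to S$ with $S \in \Qsp_R$ (Lemma~\ref{QSynQSperfCover}) and form its Cech nerve $S^\bullet$, every term of which again lies in $\Qsp_R$ (Lemma~\ref{QspCechCoversQs}). By the very definition of $\widehat{\Prism}_{(-)}$ on $\Qs_R$ as an unfolding, and by Proposition~\ref{prop:hcddr} (applied in the variant $\Qs_R$, cf.~Variant~\ref{QSynSlice}) identifying $\widehat{L\Omega}_{-/R}$ with the unfolding of $\pi_0 \HC^{-}(-/R;\mathbb Z_p)$, one has
\[
\widehat{\Prism}_A \;\simeq\; \mathrm{Tot}\bigl(\widehat{\Prism}_{S^\bullet}\bigr), \qquad \widehat{L\Omega}_{A/R} \;\simeq\; \mathrm{Tot}\bigl(\widehat{L\Omega}_{S^\bullet/R}\bigr).
\]
Now the cofiber of multiplication by $\xi$ (computed in the $\infty$-category of $E_\infty$-$A_{\inf}$-algebras, or simply of $A_{\inf}$-modules) commutes with totalizations, whence
\[
\widehat{\Prism}_A/\xi \;\simeq\; \mathrm{Tot}\bigl(\widehat{\Prism}_{S^\bullet}/\xi\bigr) \;\simeq\; \mathrm{Tot}\bigl(\widehat{L\Omega}_{S^\bullet/R}\bigr) \;\simeq\; \widehat{L\Omega}_{A/R},
\]
where the middle equivalence applies the basic case termwise in $\bullet$. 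The outer equivalences being natural in $A$, so is the composite. The $E_\infty$-$R$-algebra structure on both sides is automatic as everything is obtained by totalizing diagrams of $E_\infty$-$R$-algebras.

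There is no serious conceptual obstacle, since all the real work has been done in establishing Theorem~\ref{TCqrsp} and Proposition~\ref{prop:hcddr}. The only point that requires mild care is the interpretation of the symbol $\widehat{\Prism}_A/\xi$: for a general $A \in \Qs_R$, the $E_\infty$-$A_{\inf}$-algebra $\widehat{\Prism}_A$ need not be discrete nor $\xi$-torsion-free (it is merely a limit of such), so one must interpret the quotient as the derived/cofiber quotient, and it is exactly this derived quotient that the argument above computes. The $\xi$-torsion-freeness on the basis is what permits identifying the derived quotient with the honest quotient at each term $S^i$, which is what makes the reduction to the basis case work.
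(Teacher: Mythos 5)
Your proof is correct and is essentially what the paper does; the paper's own proof is a single line ("This follows from Theorem~\ref{TCqrsp} (5) by descent"), and you have simply spelled out the descent argument in full detail, correctly noting that the cofiber by $\xi$ commutes with totalizations (a finite colimit, hence a finite limit, in the stable setting) and that $\xi$-torsion-freeness on the basis $\Qsp_R$ makes the underived and derived quotients agree there.
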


\begin{proof}
This follows from Theorem~\ref{TCqrsp} (5) by descent.
\end{proof}

For the purposes of our later comparison with the $A\Omega$-theory, we record some features of the Nygaard complexes for $p$-adic completions of smooth $R$-algebras.

\begin{corollary}
\label{NygaardSmooth}
Assume $A \in \Qs_R$ is the $p$-adic completion of a smooth $R$-algebra of relative dimension $d$. Then 
\begin{enumerate}
\item For each $i \geq 0$, we have $\calN^i\widehat{\Prism}_A \in D^{[0,\max(i,d)]}(A_{\inf})$ and $\calN^{\geq i}\widehat{\Prism}_A \in D^{[0,d]}(A_{\inf})$. In particular,  we have $(\widehat{\Prism}_A, \calN^{\geq \f} \widehat{\Prism}_A) \in DF^{\leq 0}(A_{\inf})$.

\item The ring $H^0(\widehat{\Prism}_A)$ has no $\varphi^r(\xi)$-torsion for any $r \in \mathbb{Z}$.

\item The linearization of the Frobenius map $\varphi_A$ factors functorially over a map
\[ \widehat{\Prism}_A \to L\eta_{\xi} \varphi_* \widehat{\Prism}_A \simeq \varphi_* L\eta_{\tilde\xi} \widehat{\Prism}_A\]
of $E_\infty$-algebras in $D(A_{\inf})$.
\end{enumerate}
\end{corollary}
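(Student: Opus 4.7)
The plan is to handle the three assertions in sequence, with each building on its predecessors.

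For (1), I would use Proposition~\ref{NygaardGradedFilt} combined with smoothness. Since $A$ is the $p$-adic completion of a smooth $R$-algebra of relative dimension $d$, the cotangent complex $L_{A/R}$ coincides with the finite locally free $A$-module $\Omega^1_{A/R}$ of rank $d$, so $\wedge^j L_{A/R} \simeq \Omega^j_{A/R}$ is concentrated in cohomological degree $0$ and vanishes for $j > d$. The filtration on $\calN^i\widehat\Prism_A$ from Proposition~\ref{NygaardGradedFilt} then has graded pieces $\Omega^j_{A/R}[-j]$ supported in cohomological degree $j$ for $0 \le j \le \min(i,d)$, giving $\calN^i\widehat\Prism_A \in D^{[0,\min(i,d)]} \subseteq D^{[0,\max(i,d)]}$. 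For $\calN^{\geq i}\widehat\Prism_A$, I would express it as $\lim_n \calN^{\geq i}\widehat\Prism_A/\calN^{\geq i+n}\widehat\Prism_A$ via completeness of the Nygaard filtration; d\'evissage through the defining triangles shows each finite quotient lies in $D^{[0,d]}$, and since the transition maps are surjective on $H^d$ (the fibers $\calN^{i+j}\widehat\Prism_A$ lying in $D^{\leq d}$), Mittag--Leffler gives $\calN^{\geq i}\widehat\Prism_A \in D^{[0,d]}$.

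For (3), I would leverage (1) together with Theorem~\ref{TCqrsp}(4). Part (1) says $\calN^i\widehat\Prism_A \in D^{\leq i}$, so $(\widehat\Prism_A, \calN^{\geq \bullet})$ lies in $DF^{\leq 0}(A_{\inf})$ for the Beilinson $t$-structure. Meanwhile Theorem~\ref{TCqrsp}(4) states that the Frobenius carries $\calN^{\geq i}\widehat\Prism_A$ into $\tilde\xi^i \widehat\Prism_A$. Since multiplication by $\xi$ on $\varphi_*\widehat\Prism_A$ corresponds to multiplication by $\tilde\xi = \varphi(\xi)$ on $\widehat\Prism_A$, the linearized Frobenius enhances to a morphism of filtered objects
\[ (\widehat\Prism_A, \calN^{\geq \bullet}) \to (\varphi_*\widehat\Prism_A, \xi^\bullet \varphi_*\widehat\Prism_A) \]
in $DF(A_{\inf})$. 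Because the source already lies in $DF^{\leq 0}$, the map factors canonically through $\tau^{\leq 0}_B$ of the target; passing to underlying complexes and invoking Proposition~\ref{LetaDF} identifies this connective cover with $L\eta_\xi \varphi_*\widehat\Prism_A$, producing the desired factorization. Multiplicativity is inherited from the lax symmetric monoidal structure on $\tau^{\leq 0}_B$ (Theorem~\ref{TruncationBeilinson}(1)) together with Corollary~\ref{Letasymmmon}. The equivalence $L\eta_\xi \varphi_*\widehat\Prism_A \simeq \varphi_* L\eta_{\tilde\xi}\widehat\Prism_A$ is formal, via the definition of pushforward of scalars along $\varphi$.

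For (2), the case $r = 0$ is immediate: (1) gives $H^{-1}(\widehat\Prism_A) = 0$, Proposition~\ref{NygaarddR} identifies $\widehat\Prism_A/\xi$ with $\widehat{L\Omega}_{A/R} \simeq \widehat{\Omega}^\bullet_{A/R} \in D^{\geq 0}$ in the smooth case, and the fiber sequence $\widehat\Prism_A \xrightarrow{\xi} \widehat\Prism_A \to \widehat\Prism_A/\xi$ forces $\xi$ to be a non-zero-divisor on $H^0(\widehat\Prism_A)$. For $r \neq 0$, the strategy is to bootstrap using the Frobenius structure from (3). The factorization there places the image of Frobenius inside $L\eta_\xi \varphi_*\widehat\Prism_A$, which by Proposition~\ref{LetaDF} embeds as a subcomplex of $(\varphi_*\widehat\Prism_A)[1/\xi] = \varphi_*(\widehat\Prism_A[1/\tilde\xi])$; this yields $\tilde\xi$-torsion-freeness of the image of the linearized Frobenius. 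Iterating $\varphi$ handles $r > 0$, while for $r < 0$ one uses that $\varphi: A_{\inf} \to A_{\inf}$ is an automorphism to swap roles. The main obstacle will be the bookkeeping in this final bootstrap: one must carefully translate the fact that the Frobenius becomes invertible after inverting $\xi$ (respectively $\tilde\xi$), combined with $(p,\xi)$-adic completeness of $\widehat\Prism_A$, into genuine torsion-freeness statements for the entire family of distinguished elements $\{\varphi^r(\xi)\}_{r \in \mathbb{Z}}$ starting from the $r = 0$ case.
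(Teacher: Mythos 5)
Parts (1) and (3) are correct and proceed along essentially the same lines as the paper. For (1) you read off the bounds from Proposition~\ref{NygaardGradedFilt} (in fact your argument gives the slightly sharper $\calN^i\widehat\Prism_A \in D^{[0,\min(i,d)]}(A_{\inf})$) and then pass to the completed limit with a Mittag--Leffler observation in top degree; the conclusion $(\widehat\Prism_A,\calN^{\geq\f}\widehat\Prism_A)\in DF^{\leq 0}(A_{\inf})$ is immediate. For (3) you descend Theorem~\ref{TCqrsp}(4) from $\Qsp_R$ to obtain a map $\calN^{\geq\f}\widehat\Prism_A\to\xi^\f\varphi_*\widehat\Prism_A$ of filtered $E_\infty$-algebras, observe by (1) that the source is connective for the Beilinson $t$-structure, and apply Proposition~\ref{LetaDF}. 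This is exactly the paper's argument.

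Part (2) is where the proposal has a genuine gap. The $r=0$ case is fine: $\widehat\Prism_A/\xi\simeq\widehat{L\Omega}_{A/R}\in D^{\geq 0}$ and $\widehat\Prism_A\in D^{\geq 0}$ force $\xi$ to act injectively on $H^0(\widehat\Prism_A)$. But the ``bootstrap'' for $r\neq 0$ does not close, for two reasons. First, knowing that the linearized Frobenius factors through $L\eta_\xi\varphi_*\widehat\Prism_A$, and that $L\eta_\xi$ of a torsion-free coconnective object embeds in the localization, only constrains the \emph{image} of the Frobenius; it says nothing about $\tilde\xi$-torsion in $H^0(\widehat\Prism_A)$ itself unless one already knows $\varphi_A$ is injective on $H^0$, which at this stage in the paper is unavailable (it is a consequence of the $A\Omega$-comparison, Theorem~\ref{main_theorem}, whose proof uses the present corollary). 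Second, the suggestion to ``use that $\varphi:A_{\inf}\to A_{\inf}$ is an automorphism to swap roles'' for $r<0$ is not available at the level of $\widehat\Prism_A$, which carries only a $\varphi$-semilinear endomorphism, not an automorphism. The paper avoids all of this by a descent argument: after Zariski localization one may assume $A$ admits an \'etale map to a torus, hence admits a quasisyntomic cover $A\to F$ with $F$ perfectoid (by extracting $p$-power roots of the coordinates). Because $\widehat\Prism_{(-)}$ is a sheaf with values in $D^{\geq 0}$, the map $H^0(\widehat\Prism_A)\to H^0(\widehat\Prism_F)=A_{\inf}(F)$ is injective, and $A_{\inf}(F)$ has no $\varphi^r(\xi)$-torsion for any $r\in\mathbb Z$ since $\xi$ is a non-zero-divisor there and $\varphi$ is an automorphism of $A_{\inf}(F)$. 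You should replace your $r\neq 0$ argument by this localization-and-descent step.
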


\begin{proof}
For (1), everything follows from Proposition~\ref{NygaardGradedFilt}.

For (2), we may assume by Zariski localization that $A$ admits an \'etale map to a torus, so that we can choose a quasisyntomic cover $A \to F$ in $\Qs_{R}$ with $F$ perfectoid by extracting $p$-power roots of the coordinates on the torus. As $\widehat{\Prism}_{(-)}$ takes values in $D^{\geq 0}$, the map $H^0(\widehat{\Prism}_A) \to H^0(\widehat{\Prism}_F)$ is injective by the sheaf property. But $\widehat{\Prism}_F \simeq A_{\inf}(F)$, and this ring has no $\varphi^r(\xi)$-torsion for any $r \in \mathbb{Z}$: as $F$ is perfectoid, the image of $\xi \in A_{\inf}(F)$ is a nonzerodivisor and $\varphi$ is an automorphism.

For (3), we shall use Proposition~\ref{LetaDF} (including its notation). Note that for any $A \in \Qs_{R}$, the Frobenius map $\varphi:\widehat{\Prism}_A \to \widehat{\Prism}_A$ defines a map 
\[ \calN^{\geq \ast} \widehat{\Prism}_A \to \varphi_* \tilde\xi^\ast \widehat{\Prism}_A = \xi^\ast \varphi_* \widehat{\Prism}_A\]
of $E_\infty$-algebras in $\widehat{DF}(A_{\inf})$: this is clear for $A \in \Qsp_{R}$ by Theorem~\ref{TCqrsp} and thus follows in general by descent. For $A$ as in the corollary, the left side lies in the connective part $DF^{\leq 0}$ by (1), so the map above factors uniquely over $\tau^{\leq 0}_B$ of the target. This gives the desired map by Proposition~\ref{LetaDF}.
\end{proof}

\begin{remark}
\label{NygaardSmoothFrobeniusIterate}
Iterating Corollary~\ref{NygaardSmooth} (3) gives a functorial map
\[ \widehat{\Prism}_A \to (L\eta_{\xi}  \varphi_*)^{\circ r} \widehat{\Prism}_A \simeq L\eta_{\xi_r}  \varphi^r_* \widehat{\Prism}_A.\]
factoring $r$-fold Frobenius on $\widehat{\Prism}_A$; here $\xi_r=\xi\varphi^{-1}(\xi)\cdots \varphi^{-r+1}(\xi)$ generates the kernel of $\theta_r:A_{\inf} \to W_r(R)$, and the natural identification of functors $(L\eta_{\xi}  \varphi_*)^{\circ r} \simeq L\eta_{\xi_r}  \varphi^r_*$ falls out immediately by expanding both sides. For instance, when $r=2$, we have
\[ L\eta_{\xi} \varphi_* L\eta_{\xi} \varphi_* = L\eta_{\xi} L\eta_{\varphi^{-1}(\xi)} \varphi^2_* \simeq L\eta_{\xi_2} \varphi^2_*, \]
where the last isomorphism uses $L\eta_f L\eta_g \simeq L\eta_{fg}$, cf.~\cite[Lemma 6.11]{BMS}
\end{remark}

We shall also need the following non-Nygaard-completed variant of $\widehat{\Prism}$ in the sequel.

\begin{construction}[Non-complete variant of $\widehat{\Prism}$]
\label{NoncompleteNygaard}
For $A$ the $p$-adic completion of a smooth $R$-algebra, we have $\widehat{\Prism}_A/\xi \simeq L\Omega_{A/R}$ by Proposition~\ref{NygaarddR} and the fact that the combined Hodge and $p$-adic filtration is commensurate with the $p$-adic filtration for smooth $R$-algebras. By left Kan extension in $(p,\xi)$-complete $A_{\inf}$-complexes, we obtain a new functor $A \mapsto {\Prism}_A$ on all $p$-complete simplicial commutative $R$-algebras. By construction, we have an identification ${\Prism}_{(-)}/\xi \simeq L\Omega_{-/R}$. This implies ${\Prism}_{(-)}$ is a $D(A_{\inf})$-valued sheaf on $\Qs_{R}^\sub{op}$ (by Example~\ref{pCDDR}) and that it takes discrete values on $\Qsp_{R}^\sub{op}$. We warn the reader that unlike $\widehat{\Prism}_A$, the $E_\infty$-algebra ${\Prism}_A$ depends on the choice of the perfectoid ring $R$ mapping to $A$, at least a priori.
\end{construction}

\subsection{Motivic filtrations}
\label{ss:MotFilt}

The ``motivic'' filtration for $\TC^{-}$ is given by the following proposition, which proves most of Theorem~\ref{thm:main5} when working over a fixed perfectoid base ring.

\begin{proposition} 
For any $A \in \Qs_{R}$, we have:
\begin{enumerate}
\item The spectrum $\TC^{-}(A;\mathbb Z_p)$ admits a functorial complete and exhaustive descending multiplicative $\mathbb{Z}$-indexed filtration with $\gr^i \TC^{-}(A;\mathbb Z_p) = \calN^{\geq i}\widehat{\Prism}_A[2i]$. In particular, there exists a spectral sequence
\[ E_2^{ij}:H^{i-j}(\calN^{\geq -j} \widehat{\Prism}_A) \Rightarrow \pi_{-i-j} \TC^{-}(A;\mathbb Z_p).\]
\item The spectrum $\TP(A;\mathbb Z_p)$ admits a functorial complete and exhaustive descending multiplicative $\mathbb{Z}$-indexed filtration with $\gr^i \TP(A;\mathbb Z_p) = \widehat{\Prism}_A[2i]$. In particular, there exists a spectral sequence
\[ E_2^{ij}:H^{i-j}(\widehat{\Prism}_A) \Rightarrow \pi_{-i-j} \TP(A;\mathbb Z_p),\]
\end{enumerate}
For $A \in \Qsp_R$ (and thus for $A = R$ itself), both filtrations are given by the double speed Postnikov filtration on the corresponding spectra.
\end{proposition}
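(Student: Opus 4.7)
The plan is to first construct the filtrations on the base $\Qsp_R$ of the site, where Theorem~\ref{TCqrsp} supplies very favorable structure, and then transfer the result to $\Qs_R$ via Proposition~\ref{qsqspextend}.

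For $S \in \Qsp_R$, Theorem~\ref{TCqrsp}(1) asserts that $\TC^-(S;\mathbb Z_p)$ and $\TP(S;\mathbb Z_p)$ are concentrated in even degrees, so I will take the double speed Postnikov filtration, setting $\Fil^i\TC^-(S;\mathbb Z_p) := \tau_{\geq 2i}\TC^-(S;\mathbb Z_p)$ and $\Fil^i\TP(S;\mathbb Z_p) := \tau_{\geq 2i}\TP(S;\mathbb Z_p)$. This is a $\mathbb Z$-indexed descending multiplicative filtration (multiplicativity being the standard compatibility of the Postnikov filtration with the $E_\infty$-structure), whose graded pieces identify with $\calN^{\geq i}\widehat{\Prism}_S[2i]$ and $\widehat{\Prism}_S[2i]$: for $\TC^-$ this is Theorem~\ref{TCqrsp}(3), and for $\TP$ this combines the $2$-periodicity of $\TP(S;\mathbb Z_p)$ with the identification $\pi_0 \TP(S;\mathbb Z_p)\cong \widehat{\Prism}_S$ from Theorem~\ref{TCqrsp}(2).

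Next I will verify that these are sheaves of filtered $E_\infty$-ring spectra on $\Qsp_R$. Since $\tau_{\geq 2i}\TC^-(S;\mathbb Z_p)$ is $2i$-connective, it equals $\lim_k \tau_{[2i, 2k]}\TC^-(S;\mathbb Z_p)$; each bounded piece is built as iterated extensions of the graded pieces $\calN^{\geq j}\widehat{\Prism}_{(-)}[2j]$ for $i\leq j\leq k$, each of which is a sheaf by construction of $\widehat{\Prism}$. Closure of sheaves under extensions and limits yields the sheaf property. Applying Proposition~\ref{qsqspextend} (with $\calC$ the appropriate filtered derived category) unfolds these filtered sheaves from $\Qsp_R$ to $\Qs_R$ in a symmetric-monoidal way, and the compatibility of unfolding with graded pieces noted after Proposition~\ref{qsqspextend} preserves the identification of graded pieces as $\calN^{\geq i}\widehat{\Prism}_A[2i]$ and $\widehat{\Prism}_A[2i]$ for $A\in \Qs_R$.

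Completeness $\lim_i \Fil^i = 0$ is automatic since each $\Fil^i$ is $2i$-connective, so the tower vanishes in each fixed homotopy degree. The main obstacle, and the one step requiring real argument, will be pointwise exhaustiveness after unfolding: $\colim_i \Fil^i\TC^-(A;\mathbb Z_p)\simeq \TC^-(A;\mathbb Z_p)$. Fix a quasisyntomic cover $A\to S$ with $S\in \Qsp_R$ and Cech nerve $S^\bullet$ in $\Qsp_R$ (Lemma~\ref{QspCechCoversQs}), so by definition of unfolding $\Fil^i\TC^-(A;\mathbb Z_p) = \mathrm{Tot}\,\tau_{\geq 2i}\TC^-(S^\bullet;\mathbb Z_p)$. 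The fiber sequence
\[ \Fil^i\TC^-(A;\mathbb Z_p)\to \TC^-(A;\mathbb Z_p)\to \mathrm{Tot}\,\tau_{<2i}\TC^-(S^\bullet;\mathbb Z_p)\]
reduces the claim to showing each $\pi_n$ of the right-hand totalization vanishes for $i$ sufficiently negative. The cosimplicial spectrum $\tau_{<2i}\TC^-(S^\bullet;\mathbb Z_p)$ is uniformly bounded above by $2i-1$, so the Bousfield--Kan spectral sequence converges strongly with contributions at $E_2^{p,q}$ requiring $q-p=n$, $q<2i$, and $p\geq 0$, hence $0\leq p<2i-n$, which is empty whenever $i\leq n/2$. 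Therefore $\pi_n\Fil^i\TC^-(A;\mathbb Z_p)\to \pi_n\TC^-(A;\mathbb Z_p)$ is an isomorphism for $i\leq n/2$, giving exhaustiveness. The argument for $\TP$ is identical.
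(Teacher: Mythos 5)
You follow the same route as the paper: take the double-speed Postnikov filtration on $\Qsp_R$ (where Theorem~\ref{TCqrsp} places the spectra in even degrees), verify the sheaf property there, and unfold to $\Qs_R$ via Proposition~\ref{qsqspextend}. Your Bousfield--Kan argument for exhaustiveness is correct and spells out a step the paper states tersely.

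The one gap is in the completeness argument. You assert that each $\Fil^i$ is $2i$-connective so that the tower $\lim_i\Fil^i$ vanishes degreewise. This holds on $\Qsp_R$, where $\Fil^i$ is literally $\tau_{\geq 2i}$, but it fails after unfolding: $\Fil^i\TC^-(A;\mathbb Z_p) = \mathrm{Tot}\,\tau_{\geq 2i}\TC^-(S^\bullet;\mathbb Z_p)$, and totalizing a cosimplicial diagram of $2i$-connective spectra can lose connectivity without bound. Already for $A$ the $p$-adic completion of a smooth $R$-algebra of relative dimension $d$, Corollary~\ref{NygaardSmooth} shows $\calN^{\geq i}\widehat{\Prism}_A$ lives in cohomological degrees $[0,d]$, so the graded piece $\calN^{\geq i}\widehat{\Prism}_A[2i]$ sits in homotopy degrees $[2i-d,2i]$ rather than at $2i$ alone; for general $A\in\Qs_R$ there is no bound at all. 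The completeness conclusion still holds, but the right reason is the one the paper gives: unfolding is a totalization, hence commutes with the inverse limit $\lim_i$, so
\[
\lim_i\Fil^i\TC^-(A;\mathbb Z_p) \simeq \mathrm{Tot}\bigl(\lim_i\tau_{\geq 2i}\TC^-(S^\bullet;\mathbb Z_p)\bigr) \simeq \mathrm{Tot}(0) \simeq 0.
\]
Equivalently, a $\widehat{DF}(A_{\inf})$-valued sheaf on $\Qs_R$ takes complete values if and only if its restriction to $\Qsp_R$ does. With that repair your proof coincides with the paper's.
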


\begin{proof}
(1) For each $n \in \mathbb{Z}$, the functor $A \mapsto \tau_{\geq 2n} \TC^{-}(A;\mathbb Z_p)$ on $\Qsp_R$ is a sheaf by Theorem~\ref{TCqrsp} and Theorem~\ref{THHqrs} (and stability of sheaves under limits). Write $\mathrm{Fil}^n \TC^{-}(-;\mathbb Z_p)$ for its unfolding. As $n$ varies, this gives a $\widehat{DF}(A_{\inf})$-valued sheaf $\mathrm{Fil}^\ast \TC^{-}(-;\mathbb Z_p)$ on $\Qs_R$; here the completeness follows from the completeness of the Postnikov filtration and the fact that a $DF(A_{\inf})$-valued sheaf on $\Qs_R$ takes complete values if and only if its restriction to $\Qsp_R$ does so. The $i$-th graded piece of this sheaf is $\big(\pi_{2i} \TC^{-}(-;\mathbb Z_p)[2i]\big)^\beth \cong \calN^{\geq i}\widehat{\Prism}_{(-)}[2i]$. It remains to prove that the filtration is exhaustive; but on any homotopy group $\pi_i \Fil^n \TC^-(A;\mathbb Z_p)$, the filtration is eventually constant and equal to $\pi_i \TC^-(A;\mathbb Z_p)$; indeed, it suffices to take $n$ sufficiently negative so that $i\geq 2n$.

For part (2), the argument is identical.
\end{proof}

The use of the perfectoid base ring $R$ above is rather mild: the spectra $\TC^{-}(A;\mathbb{Z}_p)$ and $\TP(A; \mathbb{Z}_p)$ as well as their Postnikov filtrations are obviously independent of the choice of $R$, and the only role played by $R$ is in making sense of the Breuil-Kisin twist. In fact, this can also be done in a direct way, thus proving Theorem~\ref{thm:main5} in general:

\begin{proof}[Proof of Theorem~\ref{thm:main5}] Parts (1) and (2) clear; part (4) follows formally by reduction to the case of a perfectoid base ring once (3) is known, and part (5) follows formally from part (4). Thus, it remains to prove part (3).

Assume first that $A$ is an $R$-algebra with $R$ perfectoid. Then the Breuil-Kisin twist $\widehat{\Prism}_A\{1\}\cong \widehat{\Prism}_R\{1\}\otimes_{\widehat{\Prism}_R} \widehat{\Prism}_A$ is trivial by the above discussion. After base change along $\widehat{\Prism}_A\to A$, it is even canonically trivial: The map
\[
\gr^\ast \TP(A;\mathbb Z_p)\otimes_{\widehat{\Prism}_A} A\to \gr^\ast \HP(A/A;\mathbb Z_p)
\]
is an equivalence, and thus
\[
\widehat{\Prism}_A\{1\}\otimes_{\widehat{\Prism}_A} A = \gr^1 \TP(A;\mathbb Z_p)[-2]\otimes_{\widehat{\Prism}_A} A = \gr^1 \HP(A/A;\mathbb Z_p)[-2] = A
\]
canonically.

In the general case, it suffices to prove that $\widehat{\Prism}_A\{1\}$ is an invertible $\widehat{\Prism}_A$-module in the presentably symmetric monoidal stable $\infty$-category $\widehat{DF}(\mathbb Z)$, and commutes with base change: As tensoring with the invertible module $\widehat{\Prism}_{\mathbb Z_p}\{1\}$ is an equivalence on the category of completed filtered $\widehat{\Prism}_{\mathbb Z_p}$-modules, and in particular commutes with all limits, all other statements of Theorem~\ref{thm:main5} follow via descent.

Write $\widehat{DF}_{\geq 0}(\mathbb{Z})$ for the $\infty$-category of $\mathbb{N}$-filtered complexes of abelian groups. This is a presentably symmetric monoidal stable $\infty$-subcategory of $\widehat{DF}(\mathbb{Z})$. Write $\mathrm{Gr}(\mathbb{Z})_{\geq 0} := \mathrm{Fun}(\mathbb{N}, D(\mathbb{Z}))$ for the $\infty$-category of $\mathbb{N}$-graded objects in $D(\mathbb{Z})$; this is also a presentably symmetric monoidal stable $\infty$-category (via the Day convolution symmetric monoidal structure). Taking associated graded gives an exact and conservative symmetric monoidal functor
\[ \mathrm{gr}^*:\widehat{DF}(\mathbb{Z})_{\geq 0} \to \mathrm{Gr}(\mathbb{Z})_{\geq 0}.\]
In particular, if $A \in \mathrm{CAlg}(\widehat{DF}(\mathbb{Z})_{\geq 0})$, then $\mathrm{gr}^*(A) \in \mathrm{CAlg}(\mathrm{Gr}(\mathbb{Z})_{\geq 0})$, and taking associated gradeds gives an exact and conservative symmetric monoidal functor
\[ \mathrm{gr}^*:\mathrm{Mod}_A(\widehat{DF}(\mathbb{Z})_{\geq 0}) \to \mathrm{Mod}_{\mathrm{gr^*(A)}}( \mathrm{Gr}(\mathbb{Z})_{\geq 0}).\]
We need the following lemma:

\begin{lemma}
Fix $M, N \in \mathrm{Mod}_A(\widehat{DF}(\mathbb{Z})_{\geq 0})$ with a map $\eta:M \otimes_A N \to A$ in $\mathrm{Mod}_A(\widehat{DF}_{\geq 0}(\mathbb{Z}))$. Assume the following:
\begin{enumerate}
\item The natural map $\mathrm{gr}^0(M) \otimes_{\mathrm{gr}^0(A)} \mathrm{gr}^*(A) \to \mathrm{gr}^*(M)$ is an equivalence, and similarly for $N$.
\item The map $\eta$ induces an isomorphism $\mathrm{gr}^0(M) \otimes_{\mathrm{gr}^0(A)} \mathrm{gr}^0(N) \to \mathrm{gr}^0(A)$. 
\end{enumerate}
Then $\eta$ is an equivalence. In particular, both $M$ and $N$ are invertible $A$-modules.
\end{lemma}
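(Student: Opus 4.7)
The plan is to reduce the claim to a straightforward computation in the graded setting by applying $\mathrm{gr}^*$. Concretely, I would first observe that the induced functor
\[ \mathrm{gr}^*:\mathrm{Mod}_A(\widehat{DF}(\mathbb{Z})_{\geq 0}) \to \mathrm{Mod}_{\mathrm{gr}^*(A)}(\mathrm{Gr}(\mathbb{Z})_{\geq 0}) \]
is conservative: its composition with the forgetful functor to $\mathrm{Gr}(\mathbb{Z})_{\geq 0}$ factors through the forgetful functor $\mathrm{Mod}_A(\widehat{DF}(\mathbb{Z})_{\geq 0}) \to \widehat{DF}(\mathbb{Z})_{\geq 0}$ followed by $\mathrm{gr}^*$, both of which are conservative (the latter by Lemma~\ref{DFbasics}(1), applied in the completed setting). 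Hence it suffices to check that $\mathrm{gr}^*(\eta)$ is an equivalence.

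Next, I would use hypothesis (1), read as an equivalence of $\mathrm{gr}^*(A)$-modules, to rewrite $\mathrm{gr}^*(M) \simeq \mathrm{gr}^0(M) \otimes_{\mathrm{gr}^0(A)} \mathrm{gr}^*(A)$ and symmetrically for $N$. Combining this with the symmetric monoidality of $\mathrm{gr}^*$ yields
\[ \mathrm{gr}^*(M) \otimes_{\mathrm{gr}^*(A)} \mathrm{gr}^*(N) \;\simeq\; \bigl(\mathrm{gr}^0(M) \otimes_{\mathrm{gr}^0(A)} \mathrm{gr}^0(N)\bigr) \otimes_{\mathrm{gr}^0(A)} \mathrm{gr}^*(A), \]
under which $\mathrm{gr}^*(\eta)$ is identified with the base change along $\mathrm{gr}^0(A) \to \mathrm{gr}^*(A)$ of the map from hypothesis (2). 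Since that map is an equivalence by assumption, so is its base change, and therefore so is $\mathrm{gr}^*(\eta)$. By the conservativity established above, $\eta$ itself is an equivalence.

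For the ``in particular'' statement, I would note that in any symmetric monoidal $\infty$-category, an object $M$ equipped with some $N$ and an equivalence $M \otimes N \simeq \mathbf{1}$ is automatically invertible: the endofunctor $-\otimes M$ then admits $-\otimes N$ as a two-sided inverse via the associator, hence is an equivalence. Applied in $\mathrm{Mod}_A(\widehat{DF}(\mathbb{Z})_{\geq 0})$, the equivalence $\eta$ exhibits both $M$ and $N$ as invertible $A$-modules.

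The main (minor) subtlety is ensuring that hypothesis (1) is used at the level of $\mathrm{gr}^*(A)$-module structures, so that the base-change manipulation of $\mathrm{gr}^*(\eta)$ above is legitimate rather than merely a statement about underlying graded complexes; this is built into the formulation of (1), since the map $\mathrm{gr}^0(M) \otimes_{\mathrm{gr}^0(A)} \mathrm{gr}^*(A) \to \mathrm{gr}^*(M)$ is canonical and thus automatically a map of $\mathrm{gr}^*(A)$-modules.
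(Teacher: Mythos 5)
Your proposal is correct and follows essentially the same route as the paper's own argument: check that $\mathrm{gr}^*$ is conservative, use hypothesis (1) to identify $\mathrm{gr}^*(\eta)$ with the base change of $\mathrm{gr}^0(\eta)$ along $\mathrm{gr}^0(A) \to \mathrm{gr}^*(A)$, and then invoke (2). You spell out a few details (the factorization giving conservativity of $\mathrm{gr}^*$ on modules, and the symmetric-monoidal manipulation) that the paper leaves implicit, but the underlying argument is the same.
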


\begin{proof}
As $\mathrm{gr}^*$ is conservative, it is enough to show that $\mathrm{gr}^*(\eta)$ is an equivalence. By (1), this reduces to checking that $\gr^0(\eta)$ is an equivalence, but this is exactly ensured by (2). 
\end{proof}

We apply the lemma to $M=\widehat{\Prism}_A\{1\}$ and $N=\widehat{\Prism}_A\{-1\}$ as completed filtered modules over $\widehat{\Prism}_A$. By the above discussion, we know that there is a canonical isomorphism $\mathrm{gr}^0(\widehat{\Prism}_A\{1\})\simeq \mathrm{gr}^0(\widehat{\Prism}_A)$ locally for the quasisyntomic topology, which thus glues to such an isomorphism by descent. In particular, condition (2) follows (as there is a compatible such isomorphism for $\mathrm{gr}^0(\widehat{\Prism}_A\{-1\})$). On the other hand, condition (1) can be checked locally in the quasisyntomic topology, and for quasiregular semiperfectoid $A$, it follows by $2$-periodicity of the Tate spectral sequence for $\TP(A;\mathbb Z_p)$.
\end{proof}

\subsection{The syntomic sheaves $\mathbb Z_p(i)$ and $K$-theory}\label{subsection_syntomic}
As in the statement of Theorem \ref{thm:main5}(4), for any quasisyntomic ring $A$ we introduce its ``syntomic cohomology'' \[\mathbb Z_p(i)(A):=\gr^n\TC(A;\mathbb Z_p)[-2i]=\mathrm{hofib}(\varphi - \mathrm{can}: \calN^{\geq i}\widehat{\Prism}_A\{i\}\to \widehat{\Prism}_A\{i\}).\] In the case of $S\in\Qsp$, this is given by the two term complex \[\mathbb Z_p(i)(S)=(\tau_{[2i-1,2i]}\TC(S;\mathbb Z_p))[-2i]=\mathrm{hofib}(\varphi - \mathrm{can}: \pi_{2i}\TC^-(S;\mathbb Z_p)\to \pi_{2i}\TP(S;\mathbb Z_p))\] with cohomology \[H^0(\mathbb Z_p(i)(S))=\TC_{2i}(S;\mathbb Z_p),\qquad H^1(\mathbb Z_p(i)(S))=\TC_{2i-1}(S;\mathbb Z_p).\]

We can relate these $\mathbb Z_p(i)$ to algebraic $K$-theory using the following theorem; this will appear in forthcoming work of Clausen, Mathew, and the second author. We denote by $K(-)$ the connective algebraic $K$-theory of a ring, and by $K(-;\mathbb Z_p)$ its $p$-completion.

\begin{theorem}[{\cite{CMM}}]\label{theorem_CMM}
Let $S$ be a ring which is Henselian along $pS$ and such that $S/pS$ is semiperfect, e.g., $S\in\Qsp$. Then the trace map $K(S;\mathbb Z_p)\to\tau_{\ge0}\TC(S;\mathbb Z_p)$ is an equivalence.
\end{theorem}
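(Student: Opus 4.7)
The plan is to reduce, via two successive nilinvariance arguments, to the case of a perfect $\mathbb{F}_p$-algebra, and then verify the statement directly there. For the first reduction, I would exploit that $(S, pS)$ is Henselian together with the Dundas--Goodwillie--McCarthy theorem: DGM asserts that for any surjection $A \twoheadrightarrow A/I$ with $I$ nilpotent the square
\[
\xymatrix{
K(A) \ar[r] \ar[d] & \TC(A) \ar[d] \\
K(A/I) \ar[r] & \TC(A/I)
}
\]
is a pullback. After $p$-completion, this pullback statement should extend to the Henselian pair $(S, pS)$ by combining DGM with a rigidity argument in $p$-complete $K$-theory that exploits the topological nature of $\TC(-;\mathbb{Z}_p)$; this reduces the theorem to the case of $R := S/pS$, a semiperfect $\mathbb{F}_p$-algebra.

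Next, I would reduce from semiperfect to perfect. Let $R^\flat := \lim_\varphi R$, a perfect $\mathbb{F}_p$-algebra admitting a surjection $R^\flat \twoheadrightarrow R$ whose kernel $J$ is nil in the topological sense --- each element becomes zero under repeated application of a suitable Frobenius shift --- though $J$ is not generally a nilpotent ideal. Writing $R$ as a filtered colimit of quotients $R^\flat / J_\alpha$ with each $J_\alpha$ genuinely nilpotent, and using that both $K(-;\mathbb{Z}_p)$ and $\TC(-;\mathbb{Z}_p)$ commute with filtered colimits of rings, a second application of DGM reduces to the case of $R^\flat$ perfect. For such $R^\flat$, Theorem~\ref{thm:TCperfectoid} (together with its equal-characteristic analogue, Theorem~\ref{thm:main3}) gives an explicit description $\widehat{\Prism}_{R^\flat} \simeq W(R^\flat)$ with its usual Frobenius, and the motivic filtration of Theorem~\ref{thm:main5} identifies $\tau_{\geq 0}\TC(R^\flat;\mathbb{Z}_p)$ via the syntomic complexes $\mathbb{Z}_p(n)(R^\flat)$, each concentrated in degree zero in this case. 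Matching this with the classical description of $K(R^\flat;\mathbb{Z}_p)$ --- originating with Hesselholt--Madsen for perfect fields and extended to perfect $\mathbb{F}_p$-algebras by Suslin-type rigidity and descent --- completes the proof.

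The hard part will be the first reduction: extending DGM to the Henselian setting along $p$. Classical Gabber--Suslin rigidity addresses mod-$n$ $K$-theory with $n$ coprime to the residue characteristic, precisely the opposite of our situation, where $p$ is not invertible on $S/pS$. One must instead establish $p$-adic rigidity for the fiber $\mathrm{fib}(K(-;\mathbb{Z}_p) \to \TC(-;\mathbb{Z}_p))$ on Henselian pairs along $p$, relying essentially on the topological (cyclotomic) nature of $\TC$ rather than any finite-level $K$-theoretic phenomenon. Once that input is in place, the second reduction is a formal filtered-colimit bootstrap, and the base case reduces to well-understood computations for perfect rings.
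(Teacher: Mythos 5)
The first thing to note is that the paper does not prove this statement at all: it is cited as Theorem~\ref{theorem_CMM} from the forthcoming work \cite{CMM} of Clausen, Mathew and Morrow, with a one-sentence pointer to that reference. There is therefore no in-paper proof to compare against; you are attempting to reconstruct an external result.

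With that said, your proposal has a concrete gap in its second reduction that would cause the argument to fail. You claim that a semiperfect $\mathbb{F}_p$-algebra $R$ can be written as a filtered colimit of quotients $R^\flat/J_\alpha$ with each $J_\alpha$ a \emph{nilpotent} ideal of $R^\flat$. This is impossible whenever $R$ is not already perfect. The ring $R^\flat$ is perfect, hence reduced, so its only nilpotent ideal is zero, and more generally any filtered union of nilpotent ideals of $R^\flat$ would be a nil ideal of $R^\flat$, hence zero. But $J := \ker(R^\flat \twoheadrightarrow R)$ is a nonzero ideal of $R^\flat$ in any nontrivial example --- e.g.\ for $R = \mathbb{F}_p[x^{1/p^\infty}]/(x)$ one has $R^\flat = \mathbb{F}_p[x^{1/p^\infty}]$ and $J = (x^{1/p^n} : n \ge 0)$, which contains the non-nilpotent element $x$. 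So $J$ is never a union of nilpotent ideals, the proposed colimit decomposition does not exist, and the planned bootstrap via Dundas--Goodwillie--McCarthy plus continuity of $K(-;\mathbb{Z}_p)$ and $\TC(-;\mathbb{Z}_p)$ cannot get off the ground. (You already observed that $J$ is only ``topologically nil,'' but the filtered-colimit device you invoke to absorb this observation is exactly what fails.)

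You should also be aware that the first reduction --- the extension of DGM to Henselian pairs along $p$ with $p$-adic coefficients --- is not an auxiliary lemma but \emph{is} the main theorem of \cite{CMM}. Your outline takes it as an input to be ``put in place,'' which makes the overall proposal circular: after that reduction, the remaining perfect and semiperfect cases are comparatively soft (in \cite{CMM} they are handled by explicit identifications of $\tau_{\ge 0}\TC$ for perfect rings and a continuity argument on a \emph{filtered colimit along Frobenius}, not a DGM-type nilpotent-ideal reduction). So your plan both assumes the hardest ingredient and mishandles the easier one.
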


Using this, we can identify the complexes $\mathbb Z_p(n)$ for $n\leq 1$. First, we handle the case $n\leq 0$.

\begin{proposition} For $n<0$, the sheaf of complexes $\mathbb Z_p(n)=0$ vanishes. For $n=0$, there is a natural isomorphism $\mathbb Z_p(0) = \varprojlim_r \mathbb Z/p^r\mathbb Z$.
\end{proposition}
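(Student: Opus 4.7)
The plan is to use Proposition~\ref{qsqspextend} to unfold sheaves to the basis of quasiregular semiperfectoid rings $\Qsp$, where Theorem~\ref{TCqrsp} makes $\widehat{\Prism}_S$ discrete for each $S\in\Qsp$. For $n\leq 0$, the Nygaard filtration satisfies $\calN^{\geq n}\widehat{\Prism}_A = \widehat{\Prism}_A$ and the canonical map is the identity, so the defining formula reduces to
\[ \mathbb Z_p(n)(A) = \mathrm{fib}\bigl(\varphi - \mathrm{id}\colon \widehat{\Prism}_A\{n\} \to \widehat{\Prism}_A\{n\}\bigr). \]
For $S\in\Qsp$ both source and target are discrete, so this is a two-term complex, and it suffices to compute it as a sheaf on $\Qsp$.

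For $n<0$, I plan to prove $\varphi-\mathrm{id}$ is an isomorphism of $\widehat{\Prism}_S$-modules. Locally on $\Qsp$, choose a perfectoid ring $R$ with a map $R\to S$ trivializing the Breuil--Kisin twist: after normalizing a generator of $A_{\inf}\{1\}$ so that Frobenius acts by multiplication by $\tilde\xi^{-1}$, the Frobenius on $\widehat{\Prism}_S\{n\}$ becomes the operator $T\colon x \mapsto \tilde\xi^{|n|}\varphi(x)$ on $\widehat{\Prism}_S$. Iterating,
\[ T^k(x) = \Bigl(\prod_{j=0}^{k-1}\varphi^j(\tilde\xi)^{|n|}\Bigr)\varphi^k(x), \]
and since $\tilde\xi \equiv \xi^p \pmod{p}$ we have $\varphi^j(\tilde\xi)\equiv \xi^{p^{j+1}}\pmod p$, so the coefficient product lies in $\xi^{|n|p(p^k-1)/(p-1)}\widehat{\Prism}_S/p$. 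This forces $T^k\to 0$ in the $(p,\xi)$-adic topology; since $\widehat{\Prism}_S$ is $(p,\xi)$-adically complete (being $p$-complete, Nygaard complete, and $\xi$-torsion-free by Theorem~\ref{TCqrsp}(5), with $\xi^n\widehat{\Prism}_S \subseteq \calN^{\geq n}\widehat{\Prism}_S$), the geometric series $-\sum_{k\geq 0}T^k$ converges to an inverse of $T-\mathrm{id}$. Hence $\mathbb Z_p(n)(S) = 0$.

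For $n=0$, the canonical inclusion $\mathbb Z_p \hookrightarrow \widehat{\Prism}_{(-)}$, with trivial Frobenius action on $\mathbb Z_p$, produces a natural map of sheaves $\mathbb Z_p = \varprojlim_r \mathbb Z/p^r\mathbb Z \to \mathbb Z_p(0)$. The plan is to prove this is an equivalence by reducing modulo $p^r$ and inducting on $r$. The base case $r=1$ is Artin--Schreier for $\widehat{\Prism}_{(-)}/p$: since $\varphi$ lifts the $p$-th power map, $\varphi-1$ modulo $p$ is $x\mapsto x^p - x$ on $\widehat{\Prism}_S/p$, whose kernel is $\mathbb F_p$; finite \'etale Artin--Schreier extensions trivializing local sections of $\widehat{\Prism}_{(-)}/p$ are quasisyntomic, so $\varphi-1$ is surjective after sheafification on $\Qsp$. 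The inductive step uses the short exact sequence $0\to \widehat{\Prism}_{(-)}/p\to \widehat{\Prism}_{(-)}/p^{r+1}\to \widehat{\Prism}_{(-)}/p^r\to 0$ compatibly with $\varphi-1$: the snake lemma together with the vanishing of the sheafified cokernel at $r=1$ identifies the kernel sheaf at level $r+1$ as the unique extension of $\mathbb Z/p^r$ by $\mathbb F_p$ compatible with the given compatibility data, namely $\mathbb Z/p^{r+1}\mathbb Z$. Passage to the inverse limit in $r$ yields $\mathbb Z_p(0) \simeq \mathbb Z_p$.

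The main obstacle is justifying the Artin--Schreier surjectivity at the level of sheaves on $\Qsp$: for individual $S\in\Qsp$ the cokernel of $\varphi-1$ on $\widehat{\Prism}_S/p$ can be nonzero (as the case $S=\mathbb F_p$ shows), so one must construct quasisyntomic covers $S\to S'$ over which a given section of $\widehat{\Prism}_S/p$ becomes an Artin--Schreier boundary. For perfectoid $R$ the tilting equivalence translates \'etale Artin--Schreier extensions of $\widehat{\Prism}_R/p = R^\flat$ into \'etale, and hence quasisyntomic, covers of $R$; the general case follows by descent once a perfectoid cover $R\to S$ has been chosen, lifting the needed sections to the perfectoid level first.
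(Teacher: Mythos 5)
Your approach for $n<0$ is correct and is a genuinely different route from the paper's. The paper proves $\mathbb Z_p(n)=0$ for $n<0$ by invoking the classical description $\TC \simeq \lim \TR^r$ and the observation that $\pi_i\TC(A;\mathbb Z_p)=0$ for $i<-1$ since the $\TR^r$ are connective; for $n\leq -1$ both $\pi_{2n}$ and $\pi_{2n-1}$ then vanish automatically. Your argument instead exhibits $\varphi-1$ as an automorphism of $\widehat{\Prism}_S\{n\}$ by showing that the trivialized operator $T = \tilde\xi^{|n|}\varphi$ is topologically nilpotent. This is a clean alternative (and is close in spirit to the contraction argument the paper uses in Lemma~\ref{SimpleFormulaZ/p} for a related purpose), but two points of care are needed. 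First, the justification you give for $(p,\xi)$-completeness of $\widehat{\Prism}_S$ (``$p$-complete, Nygaard complete, $\xi$-torsion-free, $\xi^n\subseteq\calN^{\geq n}$'') does not directly entail it; the paper simply asserts this in Construction~\ref{NoncompleteNygaard} and the surrounding discussion, and you should just cite that. Second, the geometric series does not literally converge in the $(p,\xi)$-adic topology: your estimate shows $T^k(x)\in \xi^{m(k)}\widehat{\Prism}_S + p\widehat{\Prism}_S$, which never lands in $(p,\xi)^N$ for $N\geq 2$. The fix is standard: prove $T-1$ is invertible on $\widehat{\Prism}_S/p$ via the $\xi$-adic geometric series, then lift the inverse using $p$-completeness.

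For $n=0$ there are real gaps. (1) The assertion that ``$\varphi$ lifts the $p$-th power map'', i.e.\ $\varphi(x)\equiv x^p \pmod p$ on $\widehat{\Prism}_S$, is precisely the statement that $\varphi$ comes from a $\delta$-structure. This is not proved in the present paper; Remark~\ref{remark_prism} explicitly defers the identification with prismatic cohomology, and the $\delta$-structure along with it, to \cite{BS}. (Theorem~\ref{theorem_structure_Acrys}(5) gives the statement only in the equicharacteristic $p$ setting, via the crystalline comparison.) (2) The sketch for realizing the Artin--Schreier surjectivity at the level of sheaves does not go through: the map $A_{\inf}(R)=\widehat{\Prism}_R\to\widehat{\Prism}_S$ is far from surjective, so one cannot ``lift the needed sections to the perfectoid level first''. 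Moreover a finite \'etale extension of the ring $\widehat{\Prism}_S/p$ is not a quasisyntomic cover of $S$, and translating between the two requires a nontrivial construction that your outline does not provide. (3) The kernel statement also needs care: even granting the $p$-th power Frobenius, $\ker(x\mapsto x^p-x)$ on $\widehat{\Prism}_S/p$ is generally larger than $\mathbb F_p$ when $\Spec S$ is disconnected; the target $\varprojlim_r\mathbb Z/p^r$ is the sheaf of locally constant $\mathbb Z_p$-valued functions, not literally $\mathbb Z_p$, and matching these requires some local analysis. The paper sidesteps all of this by a different strategy: it identifies $\pi_{-1}\TC$ with $\mathrm{coker}(F-1:W(A)\to W(A))$ at the level of Witt vectors (not $\widehat{\Prism}$), uses Artin--Schreier covers in $\Qsp$ to kill this cokernel locally, and then invokes the trace isomorphism $K(S;\mathbb Z_p)\simeq\tau_{\geq 0}\TC(S;\mathbb Z_p)$ (Theorem~\ref{theorem_CMM}) together with w-locality to identify the remaining $H^0$-sheaf with the rank function on $K_0$.
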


\begin{proof} For any connective ring spectrum $A$, one has $\pi_i \TC(A;\mathbb Z_p)=0$ for $i<-1$ by comparison with the classical definition of $\TC$, cf.~\cite[Theorem II.4.10]{NikolausScholze} noting that the spectra $\TR^r(A)$ are all connective. Moreover, using the identification $\pi_0 \TR^r(A) = W_r(A)$, one sees that $\pi_{-1} \TC(A;\mathbb Z_p)$ is given by the cokernel of $F-1: W(A)\to W(A)$. As one can extract infinite sequences of Artin-Schreier covers in $\Qsp$, this map is locally surjective. Thus, $\mathbb Z_p(0)$ is locally concentrated in degree $0$.

Finally, locally in $\Qsp$, the ring $S$ is w-local (in the sense of \cite{BhattScholzePro}, so in particular any Zariski cover of $\Spec S$ is split), by passing to the $p$-completion of the ind-\'etale w-localization of \cite{BhattScholzePro}. As any Zariski cover splits, it follows that the rank function from $K_0(S)$ to locally constant functions from $\Spec S$ to $\mathbb Z$ is an isomorphism. This implies the identification $\mathbb Z_p(0) = \varprojlim_r \mathbb Z/p^r\mathbb Z$ by passage to the $p$-completion, using Theorem~\ref{theorem_CMM}.
\end{proof}

Using Theorem~\ref{theorem_CMM} and results on algebraic $K$-theory in low degrees more seriously, we can identify $\mathbb Z_p(1)$.

\begin{proposition}
The sheaf of complexes $\mathbb Z_p(1)$ on $\Qsp$ is locally concentrated in degree $0$, given by $T_p\mathbb G_m$.
\end{proposition}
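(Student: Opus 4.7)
For $S\in\Qsp$, unpacking the definition, $\mathbb{Z}_p(1)(S)$ is the fiber of $\varphi-\mathrm{can}\colon \calN^{\geq 1}\widehat{\Prism}_S\{1\}\to \widehat{\Prism}_S\{1\}$ between discrete modules (Theorem~\ref{TCqrsp}), hence a two-term complex in cohomological degrees $0$ and $1$. Combining $\mathbb{Z}_p(1)(S)=(\tau_{[1,2]}\TC(S;\mathbb{Z}_p))[-2]$ with Theorem~\ref{theorem_CMM} yields
\[ H^0(\mathbb{Z}_p(1)(S)) = K_2(S;\mathbb{Z}_p),\qquad H^1(\mathbb{Z}_p(1)(S)) = K_1(S;\mathbb{Z}_p).\]
The proposition thus reduces to showing that, locally on $\Qsp$, one has $K_1(S;\mathbb{Z}_p)=0$ and that the natural comparison $T_p\mathbb{G}_m(S)\to K_2(S;\mathbb{Z}_p)$, arising from $S^\times=GL_1(S)\to K_1(S)$ by the $p^n$-Bockstein and passage to the inverse limit, is an isomorphism.

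The key tool will be the universal coefficient short exact sequence
\[ 0\to K_i(S)^\wedge_p\to K_i(S;\mathbb{Z}_p)\to T_pK_{i-1}(S)\to 0,\]
obtained as the inverse limit over $n$ of the mod-$p^n$ fiber sequences. The plan is to pass to a quasisyntomic cover $S\to S'$ in $\Qsp$ which simultaneously achieves:
\begin{enumerate}
\item[(a)] $K_0(S')$ is torsion-free and $SK_1(S')=0$, so $K_1(S')=(S')^\times$;
\item[(b)] $(S')^\times$ is $p$-divisible;
\item[(c)] $K_2(S')$ is $p$-divisible.
\end{enumerate}
Granting (a)--(c), the sequence for $i=1$ collapses to $K_1(S';\mathbb{Z}_p)=((S')^\times\oplus SK_1(S'))^\wedge_p=0$, while for $i=2$ it collapses to $K_2(S';\mathbb{Z}_p)\simeq T_p(S')^\times=T_p\mathbb{G}_m(S')$, and one verifies this identification agrees with the natural comparison map above.

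To realize (a)--(c), I will refine $S$ successively within $\Qsp$: adjoining $p$-power roots of all units of $S$, in the spirit of the construction used in the proof of Lemma~\ref{QSynQSperfCover}, achieves (b), and via the multiplicativity of Steinberg symbols $\{a^p,b\}=\{a,b\}^p$ also (c); then, passing to a w-local refinement of $\Spec(S/p)$ in the spirit of~\cite{BhattScholzePro} arranges (a). The main obstacles are technical: one must verify that each refinement remains within $\Qsp$ (i.e., that adjoining roots preserves the quasiregular semiperfectoid property), and, most delicately, establish the vanishing of $SK_1(S')$ and the $p$-divisibility of $K_2(S')$ on these covers. The latter is the hardest step, as it requires either that $K_2$ be generated by Steinberg symbols in this setting, or a Gabber/Suslin-type rigidity argument controlling $K_1\otimes K_1\to K_2$ after passing to sufficiently large quasisyntomic covers.
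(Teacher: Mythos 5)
Your strategy is essentially the same as the paper's: translate to $K$-theory via Theorem~\ref{theorem_CMM}, use the universal coefficient exact sequence, and find quasisyntomic covers $S'$ with $p$-divisible units and well-behaved $K_0,K_1,K_2$. You correctly identify conditions (a)--(c) and the two refinement operations (adjoining $p$-power roots of units, $w$-localization). However, the step you flag as ``the hardest'' and leave open is precisely where the paper's argument has its one substantive input, and you are missing the key observation that closes it. For a \emph{local} ring $B$, it is classical that the determinant splits $B^\times \to K_1(B)$ (so $SK_1(B)=0$), that symbols $B^\times \otimes B^\times \to K_2(B)$ surject, and that $K_0(B)\cong\mathbb{Z}$. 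The point is that a $w$-local ring $S$ inherits all three: since every Zariski cover of $\Spec S$ is split, these Zariski-local facts lift from the local case to $S$. No Gabber--Suslin rigidity is needed. Once you know $K_1(S)=S^\times$ and that $K_2(S)$ is a quotient of $S^\times\otimes S^\times$, then $p$-divisibility of $S^\times$ gives $p$-divisibility of $K_1$ and $K_2$ for free, exactly as you anticipate.

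There are two further points you should tighten. First, performing ``adjoin roots, then $w$-localize'' in two separate passes does not work: $w$-localization is an ind-Zariski localization and creates many new units, destroying $p$-divisibility, while conversely adjoining roots may destroy $w$-locality. The paper alternates the two operations countably many times and takes the colimit $S^q$, using that $w$-localization commutes with filtered colimits to see $S^q$ is $w$-local; you need this interleaving. Second, after forming $S^q$ one must $p$-adically complete to land back in $\Qsp$, and $p$-divisibility of units must be re-verified for $\widehat{S^q}$: this requires a Hensel's lemma argument lifting $p$-th roots of units from $S^q/p^2$ (or $S^q/2^3$ when $p=2$). Both of these are manageable but must be addressed, as the proposal's ``in the spirit of'' phrasing suggests you have not yet confronted them; the checks that each step stays within $\Qsp$ (via $L_{S^Z/S}\simeq 0$ for ind-Zariski maps, and the quasiregularity of the ideal $(X_u - u^p)$) are routine and go through as you expect.
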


\begin{proof}
We begin by proving that, for any $S\in\Qsp$ which is both $w$-local and for which $S^\times$ is $p$-divisible, there are natural isomorphisms
\[K_2(S;\mathbb Z_p)\cong T_p(S^\times),\qquad K_1(S;\mathbb Z_p)= 0\,.\]
It is classical that, for any local ring $B$, the symbol map $B^\times\to K_1(B)$ (splitting the determinant) is an isomorphism, that the resulting product $B^\times\otimes_{\mathbb Z}B^\times \to K_2(B)$ is surjective, and that $K_0(B)$ is torsion-free (it is $\cong\mathbb Z$). Since any Zariski cover of $\Spec S$ is split, these properties remain true for $S$. Therefore $K_1(S)\cong S^\times$ and $K_2(S)$ are both $p$-divisible and $K_0(S)$ is $p$-torsion-free. The desired identities immediately follow.

It remains to show that such rings $S$ provide a basis for $\Qsp$. Given any $S\in \Qsp$, let $S\to S^Z$ denote its w-localization, which is a faithfully flat, ind-Zariski-localization, whence $L_{S^Z/S}\simeq 0$ and $S^Z/pS^Z$ is still semiperfect. Next, denote by $S^{1/p}$ the $S$-algebra obtained by formally adjoining $p^\sub{th}$-roots of all units, i.e., \[S^{1/p}=S[X_u:u\in S^\times]/(X_u-u^p:u\in S^\times)\,.\] Then $S\to S^{1/p}$ is a composition of an ind-smooth map followed by a quotient by a quasiregular ideal, whence $L_{S^{1/p}/S}$ has Tor amplitude in $[-1,0]$. Iterating these two processes countably many times, we set \[S^q:=\mathrm{colim}(S^Z\to (S^Z)^{1/p}\to ((S^Z)^{1/p})^Z\to (((S^Z)^{1/p})^Z)^{1/p}\to\cdots).\] Observe that $L_{S^q/S}$ has Tor-amplitude in $[-1,0]$, that $S\to S^q$ is faithfully flat, and that the units in $S^q$ are $p$-divisible; moreover, $S^q$ is w-local, since $w$-localisation is a left adjoint, \cite[Lemma 2.2.4]{BhattScholzePro}, and hence commutes with all colimits.

Let $\widehat{S^q}$ be the $p$-adic completion of $S$; then $\widehat{S^q}$ is a quasisyntomic semiperfectoid which is a quasisyntomic cover of $S$. Moreover $\widehat{S^q}$ is still w-local: indeed, since it is $p$-adically complete, this is equivalent to the w-locality of $\widehat{S^q}/p=S^q/p$, which follows from that of $S^q$ \cite[Lemmas 2.1.3 \& 2.1.7]{BhattScholzePro}. Finally observe that all units of $\widehat{S^q}$ admit a $p^\sub{th}$-root, by using Hensel's lemma to lift a root from $S^q/p^2$ ($S^q/2^3$ in the case $p=2$).
\end{proof}

The previous proposition proves the case $n=1$ of the following conjecture, which will be proved in characteristic $p$ in \S\ref{subsection_log_dRW}.

\begin{conjecture}\label{conjecture_syntomic}
The sheaf of complexes $\mathbb Z_p(i)$ on $\Qsp$ is locally concentrated in degree $0$, given by a $p$-torsion-free sheaf.
\end{conjecture}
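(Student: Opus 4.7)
The plan is to work on a basis of the topology consisting of sufficiently ``large'' quasiregular semiperfectoids. Fix $S\in\Qsp$. By Theorem~\ref{TCqrsp}, both $\widehat{\Prism}_S$ and each $\calN^{\geq i}\widehat{\Prism}_S$ are concentrated in cohomological degree $0$, so
\[
\mathbb Z_p(i)(S) = \bigl[\calN^{\geq i}\widehat{\Prism}_S\{i\} \xrightarrow{\varphi-\mathrm{can}} \widehat{\Prism}_S\{i\}\bigr]
\]
is a genuine two-term complex in degrees $0$ and $1$. Showing the conjecture therefore reduces to exhibiting covers $S\to S'$ in $\Qsp$ on which (a) $\varphi-\mathrm{can}$ is surjective, and (b) $\ker(\varphi-\mathrm{can})$ is $p$-torsion-free.

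For (a) I would run an Artin--Schreier type argument adapted to $\widehat{\Prism}$. Reducing modulo $p$ and modulo $\calN^{\geq i+1}$, the operator $\varphi-\mathrm{can}$ becomes a divided Frobenius whose leading piece acts like $x\mapsto x^p-x$ on (quotients of) the semiperfect ring $S/p$. Given any $y\in \widehat{\Prism}_S\{i\}$, one can formally adjoin a solution to $(\varphi-\mathrm{can})(x)=y$ in the $(p,\calN)$-adic sense; the resulting extension is quasisyntomic over $S$ because at each step the obstruction lives in a $p$-completely flat $S$-module (by Theorem~\ref{THHqrs} and Proposition~\ref{NygaardGradedFilt}), and an Artin--Schreier cover of a quasiregular semiperfectoid is still quasiregular semiperfectoid. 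Iterating countably many times and using completeness of both the $p$-adic and the Nygaard filtration on $\widehat{\Prism}_S$ produces a cover $S\to S'$ on which $\varphi-\mathrm{can}$ is surjective.

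For (b) my plan is to import information from algebraic $K$-theory via Theorem~\ref{theorem_CMM}. Under that identification, $H^0(\mathbb Z_p(i)(S))=K_{2i}(S;\mathbb Z_p)$ and $H^1(\mathbb Z_p(i)(S))=K_{2i-1}(S;\mathbb Z_p)$ for $S\in\Qsp$, so it suffices to find a basis of $\Qsp$ on which $K_{2i-1}(S)$ and $K_{2i}(S)$ are $p$-divisible (so that the $p$-completion kills the former and extracts the $p$-torsion-free Tate module of the latter). Following the pattern of Proposition~\ref{prop:basicperfd} and the ``$q$-iteration'' construction used in the case $n=1$, one should iterate the w-localisation together with formally adjoining $p$-th roots of $K$-theoretic symbols; the resulting inverse limit of quasisyntomic covers should make all relevant $K$-classes $p$-divisible. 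The honest obstacle, and the reason the conjecture is open in general, is precisely this last step: for $i\geq 2$ there is no explicit description of $K_{2i}(S)$ in terms of units or Milnor symbols to divide by, and one lacks a purely ring-theoretic procedure to make arbitrary higher $K$-classes $p$-divisible. Any proof must therefore either (i) construct large enough quasisyntomic covers killing higher $p$-torsion in $K$-theory by structural means, or (ii) bypass $K$-theory and prove the vanishing of $H^1$ and the torsion-freeness of $H^0$ directly from the Nygaard complex using a finer analysis of the divided Frobenius $\varphi_{S,i}$ of Theorem~\ref{TCqrsp}(4). I expect step (b) to be substantially harder than step (a).
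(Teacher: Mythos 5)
This statement is an open \emph{conjecture} in the paper, and you correctly recognize that your proposal does not close it; what the paper provides is partial evidence ($n\le 1$, the characteristic $p$ case in Proposition~\ref{proposition_Z_p(i)}, and a remark outlining why the torsion-freeness half should follow from later work). So the task is really to compare your strategy against the partial arguments the paper does give, and there the differences are significant.

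For the $p$-torsion-freeness of $H^0(\mathbb Z_p(i))$, the route the paper actually suggests (in the remark immediately following the conjecture) is via $\delta$-rings / $p$-derivations: once $\widehat{\Prism}_S$ is identified with prismatic cohomology, the Frobenius $\varphi$ refines to a $p$-derivation $\delta$, and the standard lemma that $px=0$ forces $\varphi(x)=0$ combines with the $\xi$-torsion-freeness of $\widehat\Prism_S$ (Theorem~\ref{TCqrsp}(5)) to kill any $p$-torsion in $\ker(\varphi_i-1)$. This sidesteps the obstruction you identify in your step (b): it never needs an explicit handle on $K_{2i}(S)$ or a procedure for dividing higher $K$-classes by $p$. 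Your $K$-theory approach is thus not just harder, it is aimed at the wrong quantity — you are trying to make $K_{2i}(S)$ $p$-divisible by covers, whereas the paper's mechanism shows directly that $p$-torsion in $H^0$ already vanishes without any cover, for purely ring-theoretic reasons internal to $\widehat\Prism_S$.

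Your step (a) is also not sound as written. ``Formally adjoining a solution to $(\varphi-\mathrm{can})(x)=y$'' is a construction inside $\widehat\Prism_S$, not inside $S$; there is no evident way to turn such an adjunction into a quasisyntomic cover $S\to S'$, since the element $x$ you want lives in a ring that sits over, rather than under, $S$. Compare with the actual mechanism in Lemma~\ref{lem:TCcharp} for the characteristic-$p$ case: for $i>0$ the surjectivity of $\varphi_i-1$ holds for a \emph{fixed} $S$, with no cover at all, and the proof exploits the divided-power and conjugate filtrations on $\widehat{\mathbb A}_\mathrm{crys}(S)/p$ rather than any Artin--Schreier argument; Artin--Schreier covers of $S$ itself only enter at the level $i=0$, where the obstruction really does live in degree-zero data controlled by $S$. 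Your plan flattens this two-case structure into a single ``iterate Artin--Schreier'' heuristic, which does not reproduce the proof even where it is known.

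In short: you correctly flag that the statement is open, but of the two partial arguments available (the characteristic-$p$ Lemma~\ref{lem:TCcharp} and the $p$-derivation remark), your proposal captures neither, and the $K$-theoretic route you emphasize is precisely the one the paper's own heuristic is designed to avoid.
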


\begin{remark}
$K$-theoretically, the conjecture predicts that on $\Qsp$ the sheafification of $K_{2i}(-;\mathbb Z_p)$ is $p$-torsion-free and that the sheafification of $K_{2i-1}(-;\mathbb Z_p)$ vanishes; this vanishing is equivalent to the surjectivity of $\varphi_i-1:\mathcal N^{\ge i}\widehat\Prism_{(-)}\{i\}\to \widehat\Prism_{(-)}\{i\}$.
\end{remark}

\begin{remark}
Let $S\in\Qsp$. Once $\widehat\Prism_S$ has been identified with the prismatic cohomology of \cite{BS} (see Remark \ref{remark_prism}), the $p$-torsion-freeness part of the conjecture will follow, as we now explain.

That identification will show that the Frobenius $\varphi$ on $\widehat{\Prism}_S$ arises from the finer structure of a $p$-derivation in the sense of J.~Buium, i.e., there exists $\delta:\widehat{\Prism}_S\to \widehat{\Prism}_S$ satisfying $p\delta(x)=\varphi(x)-x^p$, $\delta(1)=0$, $\delta(xy)=x^p\delta(y)+y^p\delta(x)+p\delta(x)\delta(y)$, $\delta(x+y)=\delta(x)+\delta(y)+\tfrac{x^p+y^p-(x+y)^p}p$. A standard lemma about $p$-derivations shows that if $px=0$ then $\varphi(x)=0$.

Given a $p$-torsion element $x\in H^0(\mathbb Z_p(i)(S))=\ker(\mathcal N^{\ge i}\widehat{\Prism}_S\{i\}\stackrel{\varphi_i-1}{\longrightarrow}\widehat{\Prism}_S\{i\})$, we can now show that $x=0$. Let $R\to S$ be a perfectoid ring mapping to $S$ so that we can identify $\tilde\xi^i\varphi_i$ with $\varphi:\mathcal N^{\ge i}\widehat{\Prism}_S\to \widehat{\Prism}_S$. Then the lemma on $p$-derivations tells us that $\varphi(x)=0$, whence $\tilde{\xi}^ix=\tilde\xi^i\varphi_i(x)=\varphi(x)=0$. But $\tilde\xi=\varphi(\xi)\equiv\xi^p$ mod $p$, so we deduce $\xi^{pi} x=0$, whence $x=0$ thanks to the $\xi$-torsion-freeness of $\widehat\Prism_S$ (Theorem \ref{TCqrsp}(5)).
\end{remark}

\begin{proposition} 
\label{cor:CocontinuousTateTwist}
The sheaf $\mathbb Z/p^r\mathbb (i):=\mathbb Z_p(i)/p^r$ commutes with filtered colimits in $\Qs$.
\end{proposition}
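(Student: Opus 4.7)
The plan is as follows. By the definition from Theorem~\ref{thm:main5}(5),
\[
\mathbb Z/p^r(i)(A) = \mathrm{hofib}\!\left(\varphi-\mathrm{can}:\; \mathcal N^{\geq i}\widehat{\Prism}_A\{i\}/p^r \longrightarrow \widehat{\Prism}_A\{i\}/p^r\right),
\]
which is a finite homotopy limit. Since finite limits commute with filtered colimits in any stable $\infty$-category, and since the Breuil-Kisin twist $\widehat{\Prism}_{A}\{i\}$ is locally trivial and invertible (Theorem~\ref{thm:main5}(3)), the problem reduces to showing that the presheaves $A\mapsto \widehat{\Prism}_A/p^r$ and $A\mapsto \mathcal N^{\geq i}\widehat{\Prism}_A/p^r$ on $\Qs$ commute with filtered colimits.

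First, I would analyze the associated graded pieces modulo $p^r$. By Proposition~\ref{NygaardGradedFilt}, each $\mathcal N^j\widehat{\Prism}_A$ admits a finite functorial filtration whose graded pieces are $(\wedge^k_A L_{A/R})^\wedge_p[-k]$ for $0 \leq k \leq j$. A basic property of derived $p$-completion is that it becomes trivial after tensoring with $\mathbb Z/p^r$: for any $X\in D(\mathbb Z)$, $X^\wedge_p \dotimes_{\mathbb Z} \mathbb Z/p^r \simeq X\dotimes_{\mathbb Z} \mathbb Z/p^r$. Consequently $\mathcal N^j\widehat{\Prism}_A/p^r$ is a finite extension of $(\wedge^k_A L_{A/R})/p^r[-k]$. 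Since wedge powers of cotangent complexes are defined by left Kan extension from polynomial algebras, they commute with filtered colimits, and tensoring with $\mathbb Z/p^r$ preserves colimits. Hence $A\mapsto \mathcal N^j\widehat{\Prism}_A/p^r$ commutes with filtered colimits for each $j\geq 0$.

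The main obstacle is passing from the individual graded pieces to the Nygaard-complete objects $\widehat{\Prism}_A/p^r$ and $\mathcal N^{\geq i}\widehat{\Prism}_A/p^r$, which are infinite inverse limits along the complete Nygaard tower; such limits need not commute with filtered colimits. To circumvent this, I would exploit that $\mathbb Z_p(i)$ (and hence $\mathbb Z/p^r(i)$) is locally on $\qs_A$ concentrated in cohomological degrees $[0,1]$, as recorded in the paragraph after Theorem~\ref{thm:main5}. This local cohomological boundedness implies that the Čech totalization computing $\mathbb Z/p^r(i)(A)$ from a quasisyntomic cover $A\to S$ with $S\in\Qsp$ is, in each cohomological degree, determined by a bounded Čech level. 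Combined with the fact that finite Čech totalizations commute with filtered colimits, and that quasisyntomic covers can be propagated through a filtered system by base change (Lemma~\ref{QSProperties}), this reduces the question to showing commutation with filtered colimits on $\Qsp$. On $\Qsp$, where $\widehat{\Prism}_S$ is discrete (Theorem~\ref{TCqrsp}), the mod-$p^r$ fiber of $\varphi-\mathrm{can}$ can then be computed in a bounded cohomological range, and the graded analysis above combined with the local boundedness yields the required stabilization of the Nygaard tower after killing $p^r$.

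The hardest step will be making precise the reduction from the infinite Nygaard inverse limit to an effectively finite-stage computation after killing $p^r$; this relies essentially on the local cohomological boundedness of $\mathbb Z_p(i)$ as a sheaf on $\qs_A$.
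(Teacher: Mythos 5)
Your reduction to the graded pieces of the Nygaard filtration, together with the observation that $\mathcal N^j\widehat{\Prism}_A/p^r$ has a finite filtration by $(\wedge^k L_{A/R})/p^r[-k]$ (which commutes with filtered colimits), is correct and parallels the paper's ``direct proof.'' You have also correctly identified the genuine obstacle: the Nygaard completion is an infinite inverse limit, and $R\lim$ does not commute with filtered colimits. However, the mechanism you propose to overcome this obstacle does not work. The cohomological boundedness of $\mathbb Z_p(i)$ in degrees $[0,1]$ on $\Qsp$ does \emph{not} imply any ``stabilization of the Nygaard tower after killing $p^r$'': for $S\in\Qsp$, the object $\widehat{\Prism}_S/p = \lim_m (\widehat{\Prism}_S/\calN^{\geq m})/p$ is a genuine infinite inverse limit that does not stabilize, and its boundedness in cohomological degree is simply a consequence of the filtered pieces being in degree zero. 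Boundedness of the fiber of $\varphi_i-1$ tells you nothing, a priori, about whether that fiber is computed by a truncation of the tower.

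The correct ingredient, which your argument is missing, is a \emph{contracting property of the divided Frobenius} modulo $p$: for $m \geq \frac{pi+1}{p-1}$, the operator $\varphi_i$ carries $\calN^{\geq m}\widehat{\Prism}_A\{i\}/p$ into $\calN^{\geq m+1}\widehat{\Prism}_A\{i\}/p$ (using $\tilde\xi \equiv \xi^p \bmod p$), so $\varphi_i$ is topologically nilpotent there and hence $\varphi_i - 1$ is an \emph{automorphism} of $\calN^{\geq m}\widehat{\Prism}_A\{i\}/p$. This is the content of Lemma~\ref{SimpleFormulaZ/p}, and it is what lets one replace $\calN^{\geq i}\widehat{\Prism}_A\{i\}/p$ by the finite quotient $\calN^{\geq i}/\calN^{\geq m}$ in the homotopy fiber computation. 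Without this, there is no valid route from the graded analysis to the completed objects. (The paper also gives a second, independent proof via $K$-theory: for $S\in\Qsp$ one has $\mathbb Z/p^r(i)(S) \simeq (\tau_{[2i-1,2i]}\TC(S;\mathbb Z/p^r))[-2i]$ using the $p$-torsion-freeness of $H^0(\mathbb Z_p(i)(S))$, and then Theorem~\ref{theorem_CMM} identifies $\tau_{\geq 0}\TC$ with $K$-theory, which commutes with filtered colimits. But this route also relies on a nontrivial input external to your reduction.)
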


We give two proofs. The first proof (which was our original proof) uses the above remark along with $K$-theory and \cite{CMM}, while the second proof (discovered while this paper was being refereed) is more elementary and self-contained, relying ultimately on the contracting property of the Frobenius in characteristic $p$ (Lemma~\ref{SimpleFormulaZ/p}).

\begin{proof}[Proof via $K$-theory]
It is enough to prove this for a filtered colimit in $\Qsp$ by passing to functorial quasiregular semiperfectoid covers and quasisyntomic descent. But given $S\in \Qsp$, we have explained in the previous remark that $H^0(\mathbb Z_p(i)(S))$ is $p$-torsion-free for all $i$. Therefore $\mathbb Z/p^r\mathbb Z(i)(S)\simeq (\tau_{[2i-1,2i]}\TC(S;\mathbb Z/p^r\mathbb Z))[-2i]$, which commutes with filtered colimits of rings by Theorem~\ref{theorem_CMM} and the commutation of algebraic $K$-theory with filtered colimits.
\end{proof}

\begin{proof}[Direct proof of Proposition~\ref{cor:CocontinuousTateTwist}]
By induction, we may assume $r=1$. Recall that we have defined 
 \begin{equation}
 \label{eq:TateTwist}
 \mathbb{Z}/p\mathbb{Z}(i)(A):= \mathrm{hofib}(\varphi_i - 1: \calN^{\geq i}\widehat{\Prism}_A\{i\}/p\to \widehat{\Prism}_A\{i\}/p).
 \end{equation}

Fix the perfectoid field $C=\mathbb Q_p^{\mathrm{cycl}}$ of characteristic $0$. By quasisyntomic descent, we can assume that $A$ is a $\mathcal O_C$-algebra.

First, we observe that the desired compatibility with filtered colimits on the category of $p$-torsionfree quasisyntomic $\mathcal{O}_C$-algebras follows immediately from Lemma~\ref{SimpleFormulaZ/p}: the lemma implies that, for $m \gg 0$, we can work modulo $\calN^{\geq m} \widehat{\Prism}_A\{i\}/p$ when computing $\mathbb{Z}/p\mathbb{Z}(i)(A)$ via \eqref{eq:TateTwist}, and it is easy to see that $\Big(\calN^{\geq i}\widehat{\Prism}_A\{i\}/p\Big)/\Big( \calN^{\geq m}\widehat{\Prism}_A\{i\}/p\Big)$ commutes with filtered colimits for all $m \geq i$.

In fact, by left Kan extension and Nygaard completion, one can define an endomorphism $\varphi_i$ of $\calN^{\geq m} \widehat{\Prism}_A\{i\}/p$ for any $m\geq \frac{pi+1}{p-1}$ and any quasisyntomic ring $A$ over $\mathcal O_C$. This still has the property that the resulting map  \[ \varphi_i - 1: \calN^{\geq m}\widehat{\Prism}_A\{i\}/p\to \calN^{\geq m}\widehat{\Prism}_A\{i\}/p \]
 is an equivalence. Thus, one can repeat the above argument for any $A$.
\end{proof}

\begin{lemma}
\label{SimpleFormulaZ/p}
Fix a perfectoid field $C$ of characteristic $0$ as well as a $p$-torsionfree quasisyntomic $\mathcal{O}_C$-algebra $A$. For $m \geq \frac{pi+1}{p-1}$, both $\varphi_i$ and $1$ preserve $\calN^{\geq m} \widehat{\Prism}_A\{i\}/p$ functorially in $A$, and the resulting map
 \[ \varphi_i - 1: \calN^{\geq m}\widehat{\Prism}_A\{i\}/p\to \calN^{\geq m}\widehat{\Prism}_A\{i\}/p \]
 is an equivalence.
\end{lemma}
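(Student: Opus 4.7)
The plan is to show that, modulo $p$, the divided Frobenius $\varphi_i$ acts as a topologically nilpotent operator on $\calN^{\geq m}\widehat{\Prism}_A\{i\}/p$ for the Nygaard filtration, so that $\varphi_i - 1$ becomes invertible via a convergent geometric series. First I would observe that the Breuil-Kisin twist is an invertible filtered $\widehat{\Prism}_A$-module by Theorem~\ref{thm:main5}(3), so its presence is cosmetic and can be suppressed. Then, by Proposition~\ref{qsqspextend}, I would reduce via quasisyntomic descent to the case $A = S \in \qsp_{\calO_C}$, where $\widehat{\Prism}_S$ is discrete (Theorem~\ref{TCqrsp}) and, strengthening Theorem~\ref{TCqrsp}(5) slightly, both $\xi$ and $\tilde\xi = \varphi(\xi)$ act as non-zero-divisors on $\widehat{\Prism}_S$ (the proof of $\tilde\xi$-injectivity is parallel to that of $\xi$-injectivity, using that $\varphi$ is an automorphism of $A_{\inf}(\calO_C)$).

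The key estimate to establish is the containment $\tilde\xi \in \calN^{\geq p}\widehat{\Prism}_S/p$. This follows because $\tilde\xi = \varphi(\xi) \equiv \xi^p \pmod{p}$ (the Witt-vector Frobenius on $A_{\inf}(\calO_C)$ reduces modulo $p$ to the $p$-th power map on $\calO_C^\flat$), combined with $\xi \in \calN^{\geq 1}\widehat{\Prism}_S$ by functoriality from $\calN^{\geq 1}\widehat{\Prism}_{\calO_C} = \xi A_{\inf}(\calO_C)$, and the multiplicativity of the Nygaard filtration. Next I would note that on $\calN^{\geq k}\widehat{\Prism}_S$ with $k \geq i$, the identities $\tilde\xi^i \varphi_i = \varphi|_{\calN^{\geq i}} = \tilde\xi^k \varphi_k$ together with $\tilde\xi$-torsion-freeness give the clean formula $\varphi_i|_{\calN^{\geq k}} = \tilde\xi^{k-i}\varphi_k$. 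Reducing modulo $p$ and applying the preceding estimate, the image of $\varphi_i|_{\calN^{\geq k}\widehat{\Prism}_S/p}$ lands in $\tilde\xi^{k-i}\widehat{\Prism}_S/p \subset \calN^{\geq p(k-i)}\widehat{\Prism}_S/p$, which is contained in $\calN^{\geq k+1}\widehat{\Prism}_S/p$ precisely when $p(k-i) \geq k+1$, i.e., $k \geq (pi+1)/(p-1)$. Hence for every $k \geq m$, the map $\varphi_i$ stabilizes $\calN^{\geq m}\widehat{\Prism}_S/p$ and induces zero on every associated graded piece $\gr^k_{\calN}(\widehat{\Prism}_S/p)$ with $k \geq m$.

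With this in hand, $\varphi_i - 1$ acts as $-\mathrm{id}$ on each such graded piece, which is evidently an isomorphism. Since the Nygaard filtration on $\widehat{\Prism}_S/p$ inherits completeness from that on $\widehat{\Prism}_S$ (as limits commute with the cofiber of the $p$-multiplication map in a stable $\infty$-category), I can conclude that $\varphi_i - 1$ is an isomorphism of $\calN^{\geq m}\widehat{\Prism}_S/p$, with inverse $-\sum_{n \geq 0}\varphi_i^n$ converging in the Nygaard-adic topology thanks to the contraction estimate. Descent from the quasiregular-semiperfectoid case then yields the statement in general. The main obstacle, in my view, is the rigorous verification of the identity $\varphi_i|_{\calN^{\geq k}} = \tilde\xi^{k-i}\varphi_k$, which hinges on $\tilde\xi$ being a non-zero-divisor on $\widehat{\Prism}_S$; once this non-zero-divisor property and the basic estimate $\tilde\xi \in \calN^{\geq p}\pmod{p}$ are in place, the remainder is a formal geometric-series computation in a complete filtered module.
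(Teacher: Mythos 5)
Your proof is correct and essentially reproduces the paper's argument: reduce to a $p$-torsionfree quasiregular semiperfectoid $S$, use $\tilde\xi \equiv \xi^p \pmod p$ together with $\xi^k\widehat{\Prism}_S \subset \calN^{\geq k}\widehat{\Prism}_S$ to show that $\varphi_i$ strictly increases the Nygaard filtration degree on $\calN^{\geq m}\widehat{\Prism}_S/p$ when $m\ge\frac{pi+1}{p-1}$, and then invert $\varphi_i - 1$ by a convergent geometric series. The one place you overcomplicate matters is the identity $\varphi_i|_{\calN^{\geq k}} = \tilde\xi^{\,k-i}\varphi_k$: you invoke $\tilde\xi$-torsion-freeness of $\widehat{\Prism}_S$ to justify it, but in fact it follows directly from Theorem~\ref{TCqrsp}(3) and the relation $\varphi(v)=\tilde\xi\sigma^{-1}$ (write $x=v^ky$ and compute), so the torsion-freeness claim — while true — is not needed.
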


\begin{proof}
As we work over our fixed perfectoid ring $\mathcal{O}_C$, we can ignore the Breuil-Kisin twists. By quasisyntomic descent, we may assume $A$ is quasiregular semiperfectoid and $p$-torsionfree. In particular, each $\calN^k \widehat{\Prism}_A/p$ is concentrated in degree $0$. The complexes $\calN^{\geq k} \widehat{\Prism}_A/p$ are then also concentrated in degree $0$, and moreover are $\xi$-torsionfree, where $\xi \in A_{\inf}$  generates $\mathrm{\ker}(\theta)$. Setting $\tilde{\xi} := \phi(\xi)$, we can then compute the map 
 \[ \varphi_i: \calN^{\geq i} \widehat{\Prism}_A \to \widehat{\Prism}_A\]
 as induced by the map $\tilde{\xi}^{-i} \phi$ of $\widehat{\Prism}_A[1/\tilde{\xi}]$. One then computes that 
 \[\varphi_i(\calN^{\geq m} \widehat{\Prism}_A) \subset \tilde{\xi}^{m-i} \widehat{\Prism}_A\] for all $m \geq i$. Working modulo $p$ and using that $\tilde{\xi} = \xi^p \mod pA_{\inf}$, this gives 
 \[\varphi_i(\calN^{\geq m} \widehat{\Prism}_A/p) \subset \xi^{p(m-i)}\widehat{\Prism}_A/p.\]
 As $\xi^k \widehat{\Prism}_A \subset \calN^{\geq k} \widehat{\Prism}_A$ for all $k$,  taking $m \geq \frac{pi+1}{p-1}$ then shows that 
 \[\varphi_i(\calN^{\geq m} \widehat{\Prism}_A/p) \subset \calN^{\geq m+1} \widehat{\Prism}_A/p.\]
Thus, for such $m$, not only does $\varphi_i$ preserve $\calN^{\geq m} \widehat{\Prism}_A/p$, but in fact it induces a topologically nilpotent endomorphism of $\calN^{\geq m} \widehat{\Prism}_A/p$. But then $\varphi_i - 1$ is an automorphism of $\calN^{\geq m} \widehat{\Prism}_A/p$, as wanted.
\end{proof}

\newpage
\section{The characteristic $p$ situation}
\label{sec:charp}

The goal of this section is to specialize the previous discussion to $\mathbb F_p$-algebras and prove Theorem~\ref{thm:main3} as well as Theorem~\ref{thm:main6} (1). We begin in \S \ref{subsection_Nygaard} by discussing the Nygaard filtration on the de Rham-Witt complex in multiple different ways. This discussion is put to use in \S \ref{subsection_derived_Nygaard} where we record some structural features of $\mathbb A_{\mathrm{crys}}(S)$ for $S$ quasiregular semiperfect. These tools are then used to prove Theorem~\ref{thm:main3} in \S \ref{subsection_TC_Acrys}. Finally, the explicit description of the Nygaard filtration on $\mathbb{A}_{\mathrm{crys}}(S)$ obtained in \S \ref{subsection_derived_Nygaard} is employed in \S \ref{subsection_log_dRW} to prove Theorem~\ref{thm:main6} (1).

\subsection{The Nygaard filtration on the de~Rham-Witt complex}
\label{subsection_Nygaard}

As preparation, we recall the Nygaard filtration on the de~Rham-Witt complex. Let $k$ be a perfect field of characteristic $p$ and $A$ a smooth $k$-algebra; let $W\Omega_A^\bullet = W\Omega_{A/k}^\bullet$ be the usual de Rham-Witt complex of Bloch--Deligne--Illusie \cite{Illusie1979}. Various versions of the Nygaard filtration have appeared in the literature \cite[II.1]{Kato1987a}, \cite[III.3]{IllusieRaynaud1983}, \cite{Nygaard}; here we fix the version of interest to us and explain its relation to the filtered $L\eta_p$ functor. The general theme will be that the Nygaard filtration is the filtration by the subobjects where $\varphi$ is divisible by $p^i$. As we are dealing with complexes, it is not a priori clear what this means, but it is true on the level of the actual de Rham-Witt complex (for smooth algebras), on the level of the de Rham-Witt complex in the derived category (for smooth algebras) when formulated in terms of the filtered $L\eta_p$, and also on the level of the derived de Rham-Witt complex for quasiregular semiperfect algebras, where the derived de Rham-Witt complex is concentrated in degree $0$.

\begin{definition}\label{definition_Nygaard}
Let $\calN^{\geq i}W\Omega_A^\bullet\subseteq W\Omega_A^\bullet$ be the subcomplex
\[
p^{i-1}VW(A)\to p^{i-2}VW\Omega^1_A\to\cdots\to pVW\Omega^{i-2}_A\to VW\Omega^{i-1}_A\to W\Omega_A^i\to W\Omega_A^{i+1}\to\cdots.\]
This defines a descending, complete multiplicative $\mathbb N$-indexed filtration on $W\Omega_A^\bullet$. We define
\[
\calN^i W\Omega_A^\bullet = \calN^{\geq i} W\Omega_A^\bullet / \calN^{\geq i+1} W\Omega_A^\bullet
\]
as the associated graded.
\end{definition}

Recalling that the groups $W\Omega^j_A$ are $p$-torsion-free, that $FV=p$, and that $\varphi=p^jF$, one sees immediately that the restriction of the absolute Frobenius $\varphi:W\Omega^\bullet_A\to W\Omega^\bullet_A$ to $\calN^{\geq i}W\Omega^\bullet_A$ is uniquely divisible by $p^i$, thereby defining the divided Frobenius
\[
\varphi_i = \frac{\varphi}{p^i}:\calN^{\geq i}W\Omega^\bullet_A\to W\Omega^\bullet_A.
\]
In fact, the proof of Proposition \ref{proposition_frobenius_of_nygaard} below even shows that $\calN^{\geq i}W\Omega^\bullet_A$ is the largest subcomplex of $W\Omega^\bullet_A$ on which $\varphi$ is divisible by $p^i$.

Both the conjugate and Hodge filtration on $\Omega^\bullet_A$ can be recovered from the Nygaard filtration; we begin with the conjugate filtration:

\begin{lemma}
\label{lem:nygaardconj}
The composition
\[
\calN^{\geq i}W\Omega^\bullet_A\xrightarrow{\varphi_i} W\Omega^\bullet_A\to\Omega^\bullet_A
\]
lands in $\tau^{\le i}\Omega^\bullet_{A/k}$ and kills $\calN^{\geq i+1}W\Omega^\bullet_A$. Moreover, the induced map
\[
\varphi_i \mod p:\calN^iW\Omega^\bullet_A\to \tau^{\le i}\Omega^\bullet_A
\]
is a quasi-isomorphism.
\end{lemma}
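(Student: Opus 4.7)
The plan is to establish the two assertions sequentially, with the first following from the structural relations of the de Rham-Witt complex and the second requiring a comparison with the Cartier isomorphism. To begin, I would unwind $\varphi_i$ explicitly on each Nygaard piece: using $\varphi=p^jF$ on $W\Omega^j_A$ together with $FV=p$, one sees that $\varphi_i(p^{i-1-j}Vx)=x$ for $j<i$ and $\varphi_i(x)=p^{j-i}Fx$ for $j\ge i$. It is then transparent that $\varphi_i$ sends $\calN^{\ge i+1}W\Omega^\bullet_A$ into $pW\Omega^\bullet_A$ (each generator gains one extra factor of $p$), so this subcomplex is killed after reduction modulo $p$. The assertion that the image lies in $\tau^{\le i}\Omega^\bullet_A$ needs two checks: vanishing in degrees $>i$ is automatic since the output $p^{j-i}Fx$ is divisible by $p$ and $p=0$ in $\Omega^\bullet_A=W_1\Omega^\bullet_A$; while the degree-$i$ output $\overline{Fx}\in\Omega^i_A$ is a cocycle by the calculation $d\overline{Fx}=\overline{dFx}=\overline{pFdx}=0$ using the standard relation $dF=pFd$.

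For the quasi-isomorphism, I would first identify $\calN^iW\Omega^\bullet_A$ as the explicit complex
\[
W(A)/p\to W\Omega^1_A/p\to\cdots\to W\Omega^{i-1}_A/p\to W\Omega^i_A/VW\Omega^i_A
\]
concentrated in degrees $0,\ldots,i$; here the isomorphism $p^{i-1-j}VW\Omega^j_A/p^{i-j}VW\Omega^j_A\cong W\Omega^j_A/p$ for $j<i$ uses $p$-torsion freeness of $W\Omega^j_A$ and the additivity and injectivity of $V$. Under $\varphi_i\bmod p$, this complex maps to $\tau^{\le i}\Omega^\bullet_A$ by the reduction map $W\Omega^j_A/p\to\Omega^j_A$ in degrees $j<i$, and by $x\mapsto\overline{Fx}$ in degree $i$. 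Next I would invoke Illusie's theorem that for smooth $A$ the natural map $W\Omega^\bullet_A\dotimes\mathbb F_p\to\Omega^\bullet_A$ is a quasi-isomorphism, combined with the Cartier isomorphism $H^j(\Omega^\bullet_A)\cong\Omega^j_A$, to conclude that both $\calN^iW\Omega^\bullet_A$ and $\tau^{\le i}\Omega^\bullet_A$ have cohomologies $\Omega^j_A$ in degrees $j\le i$ and zero above.

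The main obstacle will be verifying that the induced map on cohomology is the Cartier isomorphism in each degree; the top-degree case is the delicate one. Concretely, I need to check that the composite
\[
W\Omega^i_A/(VW\Omega^i_A+dW\Omega^{i-1}_A)\xrightarrow{\bar F}\Omega^i_A
\]
is an isomorphism, i.e.\ that the reduction of $F$ implements the inverse Cartier operation $C^{-1}$. I would verify this by reducing to the local model $A=k[t_1,\ldots,t_d]$, where Illusie's explicit basis of $W\Omega^\bullet_A$ by Witt differentials makes the calculation routine, and then transferring to arbitrary smooth $A$ by \'etale base change and Zariski descent (all three of $\calN^\bullet$, $W\Omega^\bullet$, and $\Omega^\bullet$ are compatible with \'etale localisation). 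With the top degree settled, a short diagram chase against Illusie's quasi-isomorphism $W\Omega^\bullet_A\dotimes\mathbb F_p\simeq\Omega^\bullet_A$ dispatches the lower degrees, since there $\calN^iW\Omega^\bullet_A$ agrees with the naive truncation of $W\Omega^\bullet_A/p$.
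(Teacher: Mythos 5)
Your strategy is sound and arrives at a correct proof, but it takes a genuinely different and longer route than the paper. The paper's proof rests entirely on Illusie's identity $d^{-1}(pW\Omega^{i+1}_A)=FW\Omega^i_A$ (\cite[Eqn.~I.3.21.1.5]{Illusie1979}): it computes the image $\varphi_i(\calN^{\geq i}W\Omega^\bullet_A)$ as the subcomplex $W(A)\to\cdots\to W\Omega^{i-1}_A\to FW\Omega^i_A\to pFW\Omega^{i+1}_A\to\cdots$, divides out the image of $\calN^{\geq i+1}$, and recognizes the quotient as \emph{precisely} the canonical truncation $\tau^{\leq i}(W\Omega^\bullet_A/p)$ (since $FW\Omega^i_A/pW\Omega^i_A$ is exactly the space of cocycles in degree $i$ by the cited identity). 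The quasi-isomorphism then drops out of the standard equivalence $W\Omega^\bullet_A/p\simeq\Omega^\bullet_A$ and the fact that truncation preserves quasi-isomorphisms. You never invoke this identity; instead you model $\calN^i$ with $W\Omega^i_A/VW\Omega^i_A$ in the top spot, map to $\tau^{\leq i}\Omega^\bullet_A$, and propose to verify the quasi-isomorphism cohomologically against the Cartier isomorphism, ultimately reducing to the polynomial case. Both routes work, but the paper's is essentially a one-liner once the identity is in hand, while yours requires additional structural facts about the de Rham--Witt complex that you do not make explicit.

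A few concrete gaps in your write-up. First, the assertion that $\calN^iW\Omega^\bullet_A$ has cohomologies $\Omega^j_A$ in degrees $j\leq i$ does \emph{not} follow directly from the quasi-isomorphism $W\Omega^\bullet_A/p\simeq\Omega^\bullet_A$: your model of $\calN^i$ agrees with $W\Omega^\bullet_A/p$ only in degrees $\leq i-2$; the differential out of degree $i-1$ is $x\mapsto dVx \bmod V$ rather than $x\mapsto dx\bmod p$, so you need an extra argument (applying $F$ and using $p$-torsion-freeness) to see that the cocycles agree in degree $i-1$, and you need Illusie's identity $W_1\Omega^i_A=W\Omega^i_A/(VW\Omega^i_A+dVW\Omega^{i-1}_A)$ to compute $H^i$. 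As written, this step silently assumes part of what you are trying to prove. Second, the quotient appearing in your $H^i$ analysis should be $W\Omega^i_A/(VW\Omega^i_A+dVW\Omega^{i-1}_A)$, not $W\Omega^i_A/(VW\Omega^i_A+dW\Omega^{i-1}_A)$: the image of the last differential under your normalization by $V$ is $dVW\Omega^{i-1}_A$, and these two ideals are genuinely different (the image of $dW\Omega^{i-1}_A$ in $\Omega^i_A$ is $d\Omega^{i-1}_A\neq 0$). Relatedly, $\bar F$ does not descend to a map into $\Omega^i_A$ — since $F(dVx)=dx$ reduces to a nonzero boundary mod $p$ — but only to $H^i(\Omega^\bullet_A)\cong\Omega^i_A$, so the composite you need is the one through the Cartier quotient. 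Finally, your plan to verify $\bar F=C^{-1}$ via the local model and \'etale base change is valid, but this is a standard fact in Illusie's theory that could be cited rather than rederived.
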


\begin{proof}
The comments immediately above show that $\varphi_i$ is injective with image given by the complex
\[
W(A)\to\cdots\to W\Omega^{i-1}_A\to FW\Omega^i_A\to pFW\Omega^{i+1}_A\to p^2FW\Omega^{i+2}_A\to\cdots
\]
Since $dF=pFd$, the composition to $\Omega^\bullet_A$ has image in $\tau^{\le i}\Omega^\bullet_A$. Similarly, the restriction of $\varphi_i$ to $\calN^{\geq i+1}W\Omega^\bullet_A$ has image
\[
pW(A)\to\cdots\to pW\Omega^{i-1}_A\to pW\Omega^i_A\to pFW\Omega^{i+1}_A\to p^2FW\Omega^{i+2}_A\to\cdots,\]
which vanishes in $\Omega^\bullet_A$.

Therefore $\varphi_i$ sends $\calN^iW\Omega^\bullet_A$ isomorphically to
\[
W(A)/p\to\cdots\to W\Omega^{i-1}_A/p\to FW\Omega^i_A/pW\Omega^i_A\to 0\to\cdots,
\]
which is precisely the canonical truncation $\tau^{\le i}(W\Omega^\bullet_A/p)$ (which maps quasi-isomorphically to $\tau^{\le i}\Omega^\bullet_A$) since $d^{-1}(pW\Omega^{i+1}_A)=FW\Omega^i_A$ by \cite[Eqn. I.3.21.1.5]{Illusie1979}.
\end{proof}

Noting that clearly $p\calN^{\geq i}W\Omega^\bullet_A\subseteq\calN^{\geq i+1}W\Omega^\bullet_A$, and secondly that the canonical projection $W\Omega^\bullet_A\to\Omega^\bullet_A\to \Omega^{\le i}_A$ kills $\calN^{\geq i+1}W\Omega^\bullet_A$, we now explain how to recover the Hodge filtration:

\begin{lemma}\label{lemma_p_freeness_of_N}
The sequence
\[
W\Omega^\bullet_A/\calN^{\geq i}W\Omega^\bullet_A\xrightarrow{p}W\Omega^\bullet_A/\calN^{\geq i+1}W\Omega^\bullet_A\to \Omega^{\le i}_{A/k}
\]
is a cofiber sequence.
\end{lemma}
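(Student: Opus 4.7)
My plan is to verify the two maps are well-defined and their composition vanishes, and then identify the cofiber of the multiplication-by-$p$ map with $\Omega^{\le i}_{A/k}$.

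First I would check well-definedness: multiplication by $p$ descends to a map $W\Omega^\bullet_A/\calN^{\geq i}\to W\Omega^\bullet_A/\calN^{\geq i+1}$ via the inclusion $p\calN^{\geq i}\subseteq\calN^{\geq i+1}$ noted just above the lemma (the only non-obvious case is degree $i$, where one needs $pW\Omega^i_A\subseteq VW\Omega^i_A$, which follows from the identity $p=VF$ on the de~Rham-Witt complex). The second map is the canonical projection $W\Omega^\bullet_A\twoheadrightarrow W_1\Omega^\bullet_A=\Omega^\bullet_{A/k}\twoheadrightarrow\Omega^{\le i}_{A/k}$, which kills $\calN^{\geq i+1}$ as also observed above. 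The composition vanishes because $p=0$ in the characteristic-$p$ complex $\Omega^{\le i}_{A/k}$.

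Next I would exploit $p$-torsionfreeness of each $W\Omega^j_A$: since $\calN^{\geq i+1}_j = p\cdot\calN^{\geq i}_j$ for $j<i$, the map $p$ is degreewise injective, so its derived cofiber equals its naive cokernel as a complex. A direct computation (using $p^{i-j}VW\Omega^j\subseteq pW\Omega^j$ for $j<i$ and $pW\Omega^i\subseteq VW\Omega^i$) identifies this cokernel, degreewise, as $W\Omega^j_A/pW\Omega^j_A$ in degrees $j<i$, $W\Omega^i_A/VW\Omega^i_A$ in degree $i$, and zero in degrees $>i$, with the induced comparison map to $\Omega^{\le i}_{A/k}$ given in each degree by the canonical projection.

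The main obstacle is showing this comparison map is a quasi-isomorphism. My plan is to combine the short exact sequence of complexes
\[
0\to\calN^iW\Omega^\bullet_A\to W\Omega^\bullet_A/\calN^{\geq i+1}\to W\Omega^\bullet_A/\calN^{\geq i}\to 0
\]
with the multiplication-by-$p$ map, and apply the octahedral axiom to the composition $Q^i\xrightarrow{p}Q^{i+1}\twoheadrightarrow Q^i$ (which is multiplication by $p$ on $Q^i$). This produces a distinguished triangle relating the cofiber of interest to $Q^i\otimes^{\mathbb L}\mathbb Z/p$ and to $\calN^iW\Omega^\bullet_A[1]$. The preceding Lemma \ref{lem:nygaardconj} identifies $\calN^iW\Omega^\bullet_A$ with $\tau^{\le i}\Omega^\bullet_{A/k}$ via the reduction of the divided Frobenius, and the standard Illusie comparison $W\Omega^\bullet_A\otimes^{\mathbb L}\mathbb Z/p\simeq\Omega^\bullet_{A/k}$ together with a careful tracing of how the Frobenius-divided structure enters --- especially the identification $\bar F\colon W\Omega^i_A/VW\Omega^i_A\xrightarrow{\sim} Z^i\Omega^\bullet_{A/k}$ at the top degree --- allows us to match the resulting triangle with the short exact sequence $\Omega^{\geq i+1}_{A/k}\hookrightarrow\Omega^\bullet_{A/k}\twoheadrightarrow\Omega^{\le i}_{A/k}$. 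The hardest bookkeeping step will be verifying that the differentials land correctly, ultimately reducing to the characterization $\{x\in W\Omega^j_A:dx\in pW\Omega^{j+1}_A\}=FW\Omega^j_A$ used already in the proof of Lemma \ref{lem:nygaardconj}.
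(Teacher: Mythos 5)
Your argument agrees with the paper's for the first three steps: $p$\nobreakdash-torsion\nobreakdash-freeness of each $W\Omega^j_A$ makes the multiplication-by-$p$ map degreewise injective, so the cofiber is the naive cokernel, which you correctly identify as $W(A)/p\to\cdots\to W\Omega^{i-1}_A/p\to W\Omega^i_A/VW\Omega^i_A\to 0\to\cdots$. At that point the paper finishes immediately by citing \cite[Corol.~II.3.20]{Illusie1979}, which is precisely the statement that the natural map from this complex to $\Omega^{\le i}_{A/k}$ is a quasi-isomorphism. That citation is the entire remaining content of the paper's proof.

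Your step 4, which tries to re-derive that quasi-isomorphism via the octahedral axiom, is where the gap lies. Writing $Q^j:=W\Omega^\bullet_A/\calN^{\geq j}W\Omega^\bullet_A$, the octahedron applied to $Q^i\xrightarrow{p}Q^{i+1}\twoheadrightarrow Q^i$ does give a triangle $\mathrm{cofib}(p)\to Q^i\otimes^{\mathbb L}\mathbb Z/p\to\calN^iW\Omega^\bullet_A[1]$, and Lemma~\ref{lem:nygaardconj} identifies the last term with $\tau^{\le i}\Omega^\bullet_{A/k}[1]$. But you then propose to ``match'' this triangle with the one coming from the naive truncation sequence $\Omega^{\geq i+1}_{A/k}\to\Omega^\bullet_{A/k}\to\Omega^{\le i}_{A/k}$, and the terms simply do not line up: $\tau^{\le i}\Omega^\bullet_{A/k}[1]$ is a canonical truncation, not $\Omega^{\geq i+1}_{A/k}[1]$, and the middle term of your triangle is $Q^i\otimes^{\mathbb L}\mathbb Z/p$, which is \emph{not} $W\Omega^\bullet_A\otimes^{\mathbb L}\mathbb Z/p\simeq\Omega^\bullet_{A/k}$ (the only Illusie comparison you quote). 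To close the argument you would still have to compute $Q^i\otimes^{\mathbb L}\mathbb Z/p$ and identify the connecting map to $\tau^{\le i}\Omega^\bullet_{A/k}[1]$, and that computation carries essentially all the content of Illusie's result, including the facts $d^{-1}(pW\Omega^{j+1}_A)=FW\Omega^j_A$ and $\bar F:W\Omega^i_A/VW\Omega^i_A\cong Z^i\Omega^\bullet_{A/k}$ that you flag. The octahedron is not shortening anything here; the direct route (compute the cokernel, then cite \cite[Corol.~II.3.20]{Illusie1979}) is both shorter and complete, whereas your sketch stops precisely at the hard step.
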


\begin{proof}
The indicated multiplication by $p$ map is clearly injective with cokernel \[W(A)/p\xrightarrow{d}\cdots\xrightarrow{d}W\Omega^{i-1}_A/p\xrightarrow{d}W\Omega_A^i/VW\Omega_A^i\xrightarrow{d}0\xrightarrow{d}\cdots.\] The natural map from this to $\Omega^{\le i}_{A/k}$ is a quasi-isomorphism by \cite[Corol.~II.3.20]{Illusie1979}. 
\end{proof}

In the following we recall the well-known result that the divided-Frobenius-fixed points on the Nygaard filtration recover the dlog forms in the de Rham--Witt complex. For any smooth $k$-scheme $X$, denote by $W_r\Omega_{X,\sub{log}}^i\subseteq W_r\Omega_X^i$ the pro-\'etale subsheaf given by the image of the map of pro-\'etale sheaves \[\mathrm{dlog}[\cdot]:\mathbb G_{m,X}^{\otimes i}\to W_r\Omega_{X}^i,\quad f_1\otimes\cdots\otimes f_i\mapsto \frac{d[f_1]}{[f_1]}\wedge\cdots\wedge\frac{d[f_i]}{[f_i]},\] and set $W\Omega^i_{X,\sub{log}}:=\lim_rW_r\Omega^i_{X,\sub{log}}$ as a pro-\'etale sheaf.

\begin{proposition}\label{prop:logforms}
Let $X$ be a smooth $k$-scheme. Then the sequence of complexes of pro-\'etale sheaves
\[0\to W\Omega_{X,\sub{log}}^i[-i]\to\calN^{\ge i}W\Omega^\bullet_X\xrightarrow{\varphi_i-1}W\Omega^\bullet_X\to 0\] is exact (i.e., exact in each degree). Moreover, $W\Omega_{X,\sub{log}}^i$ coincides with the derived inverse limit $\mathrm{Rlim}_rW_r\Omega^i_{X,\sub{log}}$.
\end{proposition}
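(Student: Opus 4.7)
The plan is to prove degree-by-degree exactness. In degree $j$ the middle term $(\calN^{\geq i}W\Omega^\bullet_X)_j$ equals $p^{i-j-1}VW\Omega^j_X$ for $j<i$ and $W\Omega^j_X$ for $j\ge i$, and $\varphi_i|_{W\Omega^j}=p^{j-i}F$ (interpreted on the relevant subgroup for $j<i$), so one must handle three essentially different situations.

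For $j<i$: using that $W\Omega^j_X$ is $p$-torsion-free and $V$ is injective, every element of $p^{i-j-1}VW\Omega^j_X$ can be written uniquely as $p^{i-j-1}Vx$, and a direct computation with $\varphi|_{W\Omega^j}=p^jF$ and $FV=p$ gives $\varphi_i(p^{i-j-1}Vx)=x$. Under the bijection $W\Omega^j_X\xrightarrow{\sim}p^{i-j-1}VW\Omega^j_X$, $x\mapsto p^{i-j-1}Vx$, the map $\varphi_i-1$ becomes the endomorphism $1-p^{i-j-1}V$ of $W\Omega^j_X$, which is a bijection with inverse $\sum_{n\ge 0}(p^{i-j-1}V)^n$ by $V$-adic completeness of $W\Omega^\bullet_X$; in particular both kernel and cokernel vanish in degree $j$. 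For $j>i$ one has $\varphi_i-1=p^{j-i}F-1$ on $W\Omega^j_X$, and since $p^{j-i}F\equiv 0\pmod p$ and $W\Omega^j_X$ is $p$-adically complete, this is again invertible by successive approximation modulo powers of $p$.

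The essential case is $j=i$, where the map reduces to $F-1:W\Omega^i_X\to W\Omega^i_X$. Here the plan is to reduce to the classical Artin--Schreier-type short exact sequence
\[0\to W_r\Omega^i_{X,\sub{log}}\to W_r\Omega^i_X\xrightarrow{F-1}W_r\Omega^i_X\to 0\]
of \'etale (hence pro-\'etale) sheaves from Illusie--Raynaud \cite[III.3]{IllusieRaynaud1983}, and to pass to the inverse limit in $r$. The restriction maps $R:W_{r+1}\Omega^i_{X,\sub{log}}\to W_r\Omega^i_{X,\sub{log}}$ are surjective as pro-\'etale sheaves, since $R([u]^{(r+1)})=[u]^{(r)}$ on Teichm\"uller lifts implies that $R$ sends the generating symbol $\mathrm{dlog}[u_1]\wedge\cdots\wedge\mathrm{dlog}[u_i]$ at level $r+1$ to the corresponding symbol at level $r$. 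Mittag--Leffler then yields $R^1\lim_rW_r\Omega^i_{X,\sub{log}}=0$, giving simultaneously the identification $W\Omega^i_{X,\sub{log}}\simeq \mathrm{Rlim}_rW_r\Omega^i_{X,\sub{log}}$ and the exactness of the limit sequence
\[0\to W\Omega^i_{X,\sub{log}}\to W\Omega^i_X\xrightarrow{F-1}W\Omega^i_X\to 0,\]
completing the verification in degree $i$.

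The main obstacle is the pro-\'etale surjectivity of $F-1$ on each $W_r\Omega^i_X$, i.e.\ the Illusie--Raynaud statement itself; once this classical input is granted, together with the mild Mittag--Leffler observation above, the remaining degrees fall out purely formally from $V$- and $p$-adic completeness of $W\Omega^\bullet_X$ and the contracting behavior of $V$ respectively $p^{j-i}F$.
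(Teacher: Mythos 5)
Your degree-by-degree decomposition is exactly the paper's. The computations for $j<i$ (reducing $\varphi_i-1$ to $1-p^{i-j-1}V$ via the bijection $x\mapsto p^{i-j-1}Vx$, then invoking $p$- or $V$-adic completeness) and for $j>i$ (invertibility of $p^{j-i}F-1$ by $p$-adic contraction) agree with the paper's proof in every detail, and the plan of reducing $j=i$ to a classical Artin--Schreier statement and passing to the limit with a Mittag--Leffler observation is likewise the paper's route.

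The one flaw is in the formulation of the finite-level input. The short exact sequence you write,
\[0\to W_r\Omega^i_{X,\sub{log}}\to W_r\Omega^i_X\xrightarrow{F-1}W_r\Omega^i_X\to 0,\]
does not make sense as stated: the de~Rham--Witt Frobenius is a map $F:W_{r+1}\Omega^i_X\to W_r\Omega^i_X$, and it does not descend to an endomorphism of $W_r\Omega^i_X$ (two lifts of an element to level $r+1$ differ by $V^r\Omega^i+dV^r\Omega^{i-1}$, and $F$ does not kill $dV^r\Omega^{i-1}$ modulo $\mathrm{Fil}^r$). The statement that actually exists in the literature --- and the one the paper cites, as \cite[Thm.~I.5.7.2]{Illusie1979} --- is an exact sequence of \emph{pro}-sheaves
\[0\to \{W_n\Omega_{X,\sub{log}}^i\}_n\to \{W_n\Omega^i_X\}_n\xrightarrow{F-1}\{W_n\Omega^i_X\}_n\to 0,\]
where $F-1$ shifts the index by one. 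Once one starts from this pro-level exactness, your limit argument goes through unchanged, and your explicit check that $R:W_{r+1}\Omega^i_{X,\sub{log}}\to W_r\Omega^i_{X,\sub{log}}$ is surjective (hence $R^1\lim$ vanishes) is a welcome clarification of a step that the paper leaves implicit. So the gap is a citation/formulation issue rather than a conceptual one, but as written your degree-$i$ case rests on a sequence that is not literally well-defined.
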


\begin{proof}
Let $\Spec A$ be an affine open of $X$. Then, in degrees $n>i$, the map $\varphi_i-1$ is given by $p^{n-i}F-1:W\Omega^n_A\to W\Omega^n_A$, which is an isomorphism since $p^{n-i}F$ is $p$-adically contracting. Meanwhile, in degrees $n<i$ there is a commutative diagram
\[\xymatrix@C=2cm{
W\Omega_A^n\ar@/_1cm/[rr]_{1-p^{i-1-n}V}\ar[r]^{p^{i-1-n}V}_{\cong}&\calN^{\ge i}W\Omega^n_A\ar[r]^{\varphi_i-1}&W\Omega^n_A,
}\]
in which the curved arrow (hence also $\varphi_i-1$) is an isomorphism since $W\Omega_A^n$ is $p$-adically complete (resp.\ $V$-adically complete in the boundary case $i=n-1$).

It remains only to analyse the behaviour of $F-1:W\Omega^i_X\to W\Omega^i_X$. To do this, we recall that the sequence of pro-sheaves on $X_\sub{\'et}$ \[0\to \{W_n\Omega_{X,\sub{log}}^i\}_n\to \{W_n\Omega^i_X\}_n\xrightarrow{F-1}\{W_n\Omega^i_X\}_n\to 0\]
is exact \cite[Thm.~I.5.7.2]{Illusie1979}. In particular, taking the inverse limit of this sequence of pro sheaves gives an exact sequence of pro-\'etale sheaves and shows that $W\Omega_{X,\sub{log}}^i$ coincides with the derived inverse limit $\mathrm{Rlim}_rW_r\Omega^i_{X,\sub{log}}$.
\end{proof}

We continue with two different perspectives on the Nygaard filtration.

\subsubsection{Nygaard filtration via $L\eta$}
First, we explain that the Nygaard filtration naturally appears as the canonical filtration on the $L\eta$-functor (Proposition~\ref{LetaDF}) via Ogus's generalization of Mazur's theorem.

\begin{proposition}\label{proposition_frobenius_of_nygaard}
The absolute Frobenius $\varphi:W\Omega^\bullet_A\to W\Omega^\bullet_A$ induces an isomorphism
\[
\varphi: W\Omega^\bullet_A\to \eta_p W\Omega^\bullet_A\subset W\Omega^\bullet_A
\]
of complexes as well as isomorphisms
\[
\varphi:\calN^{\geq i}W\Omega^\bullet_A\cong \Fil^i\eta_pW\Omega^\bullet_A\ ,
\]
of complexes for all $i\ge 0$, where $\Fil^i \eta_p W\Omega^\bullet_A = p^i W\Omega^\bullet_A\cap \eta_p W\Omega^\bullet_A$ is the filtration on $\eta_p$ defined in (the proof of) Proposition~\ref{LetaDF}.
\end{proposition}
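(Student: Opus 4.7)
The plan is to verify both assertions degree-by-degree using two classical facts from \cite{Illusie1979}: (i) each $W\Omega_A^n$ is $p$-torsion-free and the Frobenius $F$ is injective on it, and (ii) the Mazur--Illusie relation $d^{-1}(p W\Omega_A^{n+1}) = F W\Omega_A^n$ of \cite[Eqn.~I.3.21.1.5]{Illusie1979}. Fact (i) immediately gives injectivity of $\varphi = p^n F$ in degree $n$, so it suffices to identify images.

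For the unfiltered statement, Proposition~\ref{LetaDF} yields $(\eta_p W\Omega_A^\bullet)^n = \{x \in p^n W\Omega_A^n \mid dx \in p^{n+1} W\Omega_A^{n+1}\}$. The inclusion $\varphi(W\Omega_A^n) = p^n F W\Omega_A^n \subset (\eta_p W\Omega_A^\bullet)^n$ is immediate from $dF = pFd$. Conversely, writing any $x = p^n y \in (\eta_p W\Omega_A^\bullet)^n$, the condition $dx \in p^{n+1} W\Omega_A^{n+1}$ combined with (i) forces $dy \in p W\Omega_A^{n+1}$, whereupon (ii) gives $y \in F W\Omega_A^n$; thus $x \in p^n F W\Omega_A^n = \varphi(W\Omega_A^n)$.

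For the filtered assertion, I would first record the following auxiliary fact: for every $m \geq 1$, one has $F^{-1}(p^m W\Omega_A^n) = p^{m-1} V W\Omega_A^n$. The inclusion ``$\supset$'' uses $FV = p$. For ``$\subset$'', induct on $m$: when $m=1$, writing $F(y) = pw = FV(w)$ and applying injectivity of $F$ gives $y = V(w)$; for $m \geq 2$, the $m=1$ case yields $y = V(y_1)$ for some $y_1$, and then $p y_1 = FV(y_1) = p^m w$ together with (i) forces $y_1 = p^{m-1} w$, so $y = V(p^{m-1}w) = p^{m-1} V(w)$ via the projection formula $V(F(a) z) = a V(z)$ and the identity $F(p^{m-1}) = p^{m-1}$ in $W(A)$. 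Granting this, fix $n$. When $n \geq i$, both $\varphi(\calN^{\geq i} W\Omega_A^\bullet)^n$ and $(\Fil^i\eta_p W\Omega_A^\bullet)^n = p^i W\Omega_A^n \cap (\eta_p W\Omega_A^\bullet)^n$ collapse to $p^n F W\Omega_A^n$ (the containment $p^n F W\Omega_A^n \subset p^i W\Omega_A^n$ being automatic from $n \geq i$). When $n < i$, on the one hand $\varphi(p^{i-1-n} V W\Omega_A^n) = p^{i-1} FV W\Omega_A^n = p^i W\Omega_A^n$; on the other hand, the auxiliary fact with $m = i-n$ identifies $(\Fil^i\eta_p W\Omega_A^\bullet)^n = p^i W\Omega_A^n \cap p^n F W\Omega_A^n = p^n F(p^{i-n-1} V W\Omega_A^n) = p^i W\Omega_A^n$, matching the image of $\varphi$.

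The only substantive external input is the relation (ii); all remaining steps reduce to standard manipulations of the relations among $F$, $V$, $d$, and $p$, and I expect this to be the main obstacle only in the sense that the entire proposition is essentially a repackaging of the Mazur--Illusie theorem. Compatibility with differentials is automatic, since $\varphi$ is a chain map and both $\calN^{\geq i} W\Omega_A^\bullet$ and $\Fil^i \eta_p W\Omega_A^\bullet$ are subcomplexes by construction.
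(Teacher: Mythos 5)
Your proof is correct and follows essentially the same route as the paper: compute the image of $\varphi$ on $\calN^{\geq i}W\Omega^\bullet_A$ degree-by-degree from the relations $\varphi=p^\bullet F$, $FV=p$, and identify it with $\Fil^i\eta_pW\Omega^\bullet_A$ using $p$-torsion-freeness and the Mazur--Illusie relation $d^{-1}(pW\Omega^{n+1}_A)=FW\Omega^n_A$. Your auxiliary lemma $F^{-1}(p^mW\Omega^n_A)=p^{m-1}VW\Omega^n_A$ is a tidy way to package the part of the argument that the paper compresses into one sentence, but the underlying inputs and structure of the argument are identical.
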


\begin{proof}
Since $W\Omega^n_A$ is $p$-torsion-free for all $n\ge0$, the standard relations $\varphi=p^\bullet F$ and $FV=VF=p$ on the de Rham-Witt complex show that $\varphi:\calN^{\geq i} W\Omega^\bullet_A\to W\Omega^\bullet_A$ is injective with image given by the subcomplex \[p^iW(A)\to p^iW\Omega^1_A\to\cdots\to p^iW\Omega^{i-1}_A\to p^iFW\Omega_A^i\to p^{i+1}FW\Omega_A^i\to\cdots.\] But \cite[Eqn.~I.3.21.1.5]{Illusie1979} states that $d^{-1}(pW\Omega^{n+1}_A)=FW\Omega^n_A$, whence this complex is precisely $\Fil^i\eta_pW\Omega^\bullet_A$.
\end{proof}

Given a smooth $k$-variety $X$, we define the Nygaard filtration on $Ru_*\calO_{X/W(k)}^{\mathrm{crys}}$ to be that induced by the Nygaard filtration via Illusie's comparison quasi-isomorphism $Ru_*\calO_{X/W(k)}^{\mathrm{crys}}\simeq W\Omega^\bullet_A$. Here $u: X_{\mathrm{crys}}\to X_{\mathrm{Zar}}$ denotes the projection from the crystalline site to the Zariski site.

\begin{corollary}
Let $X$ be a smooth $k$-variety. Then Berthelot--Ogus' quasi-isomorphism $\varphi:Ru_*\calO_{X/W(k)}^{\mathrm{crys}}\simeq L\eta_pRu_*\calO_{X/W(k)}^{\mathrm{crys}}$ may be upgraded to a filtered quasi-isomorphism, in which the source has the Nygaard filtration and the target has the filtered d\'ecalage filtration.
\end{corollary}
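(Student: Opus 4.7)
The plan is to derive the corollary from Proposition~\ref{proposition_frobenius_of_nygaard} by passing from affines to arbitrary smooth $k$-varieties via Zariski sheafification. The key point is that all the relevant data --- the de Rham--Witt complex, the Nygaard filtration, and the filtered $L\eta_p$-construction --- are manifestly Zariski local.

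First, I would translate the statement through Illusie's comparison $Ru_\ast \calO_{X/W(k)}^{\mathrm{crys}} \simeq W\Omega_X^\bullet$. Under this identification, the Berthelot--Ogus isomorphism $\varphi$ is intertwined with the absolute Frobenius of the de Rham--Witt complex of $X$, so the question becomes: the map $\varphi: W\Omega_X^\bullet \to W\Omega_X^\bullet$, viewed as landing in $L\eta_p W\Omega_X^\bullet$, is an equivalence of filtered complexes when the source carries $\calN^{\geq \f}$ (Definition~\ref{definition_Nygaard}) and the target carries the filtration $\Fil^i \eta_p K = p^i K \cap \eta_p K$ constructed in the proof of Proposition~\ref{LetaDF}. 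Note this presentation of $L\eta_p$ as an actual filtered subcomplex of $W\Omega_X^\bullet[\tfrac{1}{p}]$ applies here because each $W\Omega_X^j$ is $p$-torsion-free.

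Next, I would observe that both filtrations are Zariski sheaves: $\calN^{\geq i} W\Omega_X^\bullet$ is the subcomplex determined term-by-term by the Zariski subsheaves $p^{i-j} V W\Omega_X^{j-1}$ and $W\Omega_X^j$ (for $j \geq i$), and the filtered d\'ecalage $\Fil^i \eta_p W\Omega_X^\bullet$ is likewise a term-wise Zariski subsheaf of $W\Omega_X^\bullet$. Hence, to check that $\varphi$ induces an isomorphism of filtered complexes of sheaves on $X$, it suffices to check this after restricting to an arbitrary affine open $U = \Spec A \subset X$ with $A/k$ smooth. But on such opens, this is exactly the content of Proposition~\ref{proposition_frobenius_of_nygaard}: that proposition provides isomorphisms
\[ \varphi: \calN^{\geq i} W\Omega_A^\bullet \xrightarrow{\sim} \Fil^i \eta_p W\Omega_A^\bullet \]
for every $i \geq 0$ (including $i = 0$, which recovers the affine Berthelot--Ogus isomorphism).

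The main step, which is already handled by Proposition~\ref{proposition_frobenius_of_nygaard}, is the identification of the image of $\varphi$ on $\calN^{\geq i}$ with $p^i W\Omega_A^\bullet \cap \eta_p W\Omega_A^\bullet$ term by term --- this is where one uses $\varphi = p^\bullet F$, the $p$-torsion-freeness of $W\Omega_A^j$, and the Illusie identity $d^{-1}(p W\Omega_A^{j+1}) = F W\Omega_A^j$. There is no genuine difficulty in the globalization step itself; the only point requiring (minor) care is to verify that Berthelot--Ogus's crystalline-side Frobenius really corresponds under Illusie's comparison to the de Rham--Witt Frobenius, which is built into the construction of the comparison quasi-isomorphism. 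Assembling these pieces gives the desired filtered refinement.
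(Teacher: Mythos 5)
Your proposal is correct and takes essentially the same route as the paper: the paper's proof is a one-line reduction to Proposition~\ref{proposition_frobenius_of_nygaard} via Illusie's comparison, with the globalization from affines to $X$ and the identification of the Berthelot--Ogus Frobenius with the de Rham--Witt Frobenius left implicit. Your spelling out of the Zariski-local nature of both filtrations and of the concrete representation $\Fil^i\eta_pK^\bullet=p^iK^\bullet\cap\eta_pK^\bullet$ (valid since $W\Omega^j_X$ is $p$-torsion-free) just makes explicit the steps the paper elides.
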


\begin{proof}
This is the content of the previous lemma since $\varphi$ is given by the absolute Frobenius after identifying $Ru_*\calO_{X/W(k)}^{\mathrm{crys}}$ with $W\Omega^\bullet_A$.
\end{proof}

\subsubsection{Nygaard filtration in the presence of smooth lift}
Recall that if $\tilde{A}$ is the $p$-adic completion of a smooth $W(k)$-algebra lifting $A$, there is a natural quasi-isomorphism
\[
\Omega_{\tilde{A}/W(k)}^\bullet\to W\Omega_A^\bullet\ ,
\]
where the left-hand side is understood to be $p$-completed. Assuming that a Frobenius lift $\tilde{\varphi}: \tilde{A}\to \tilde{A}$ has been chosen, we explain how to identify the Nygaard filtration under this isomorphism. We expect that the Nygaard filtration (in filtration degrees $\geq p$) cannot be obtained without the choice of a Frobenius lift.

In the following proposition, the complex $p^{\max(i-\bullet,0)}\Omega^\bullet_{\tilde A/W(k)}$ denotes
\[
p^i\tilde A\xrightarrow{d}p^{i-1}\Omega^1_{\tilde A/W(k)}\xrightarrow{d}\cdots\xrightarrow{d}p\Omega_{\tilde A/W(k)}^{i-1}\xrightarrow{d}\Omega_{\tilde A/W(k)}^i\xrightarrow{d}\Omega_{\tilde A/W(k)}^{i+1}\xrightarrow{d}\cdots
\]

\begin{proposition}\label{prop:nygaardlift}
The comparison map $\sigma:\Omega^\bullet_{\tilde A/W(k)}\to W\Omega^\bullet_A$ induces quasi-isomorphisms \[\sigma:p^{\max(i-\bullet,0)}\Omega^\bullet_{\tilde A/W(k)}\to\calN^{\geq i}W\Omega^\bullet_{A}\] for all $i\ge0$.
\end{proposition}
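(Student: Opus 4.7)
The plan is to induct on $i \geq 0$, with base case $i = 0$ being Illusie's classical comparison quasi-isomorphism $\sigma:\Omega^\bullet_{\tilde A/W(k)} \xrightarrow{\simeq} W\Omega^\bullet_A$. For the inductive step, I would consider the short exact sequences
\[
0 \to F^{i+1} \to F^i \to F^i/F^{i+1} \to 0, \quad 0 \to \calN^{\geq i+1} W\Omega^\bullet_A \to \calN^{\geq i} W\Omega^\bullet_A \to \calN^i W\Omega^\bullet_A \to 0,
\]
where $F^i := p^{\max(i-\bullet, 0)}\Omega^\bullet_{\tilde A/W(k)}$, compatibly mapped by $\sigma$. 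Applying the five lemma to the associated long exact cohomology sequences reduces the step to checking that the induced map $F^i/F^{i+1} \to \calN^i W\Omega^\bullet_A$ on graded pieces is a quasi-isomorphism.

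Next I would compute the quotient $F^i/F^{i+1}$: the de Rham differential carries $p^{i-n}\Omega^n_{\tilde A}$ into $p^{i-n}\Omega^{n+1}_{\tilde A} \subseteq (F^{i+1})^{n+1}$, so all differentials vanish in the quotient, giving $F^i/F^{i+1} \simeq \bigoplus_{n=0}^i \Omega^n_A[-n]$ as a complex with trivial differential. On the other side, Lemma~\ref{lem:nygaardconj} identifies $\calN^i W\Omega^\bullet_A$ quasi-isomorphically with $\tau^{\leq i}\Omega^\bullet_A$ via $\varphi_i \bmod p$, whose cohomology in each degree $n \leq i$ is $\Omega^n_A$ by Cartier's theorem; so both sides have matching cohomology groups and it remains to trace through the induced map.

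The heart of the argument will be to identify the induced map on cohomology with the Cartier isomorphism. A class $\omega \in \Omega^n_A = H^n(F^i/F^{i+1})$ lifts to $p^{i-n}\tilde\omega \in (F^i)^n$ for a lift $\tilde\omega \in \Omega^n_{\tilde A/W(k)}$ of $\omega$, and is mapped to the class of $p^{i-n}\sigma(\tilde\omega)$ in $\calN^i W\Omega^\bullet_A$. Applying $\varphi_i \bmod p$ and using the compatibility $\sigma \circ \tilde\varphi = \varphi \circ \sigma$ (which comes from the fact that $\sigma$ factors through the unique ring homomorphism $\tilde A \to W(A)$ intertwining the chosen Frobenius lift $\tilde\varphi$ with the Witt-vector Frobenius), one computes
\[
\varphi_i(p^{i-n}\sigma(\tilde\omega)) \bmod p \;=\; \sigma\bigl(\tilde\varphi(\tilde\omega)/p^n\bigr) \bmod p \;=\; C^{-1}(\omega) \,\in\, H^n(\Omega^\bullet_A),
\]
the Cartier representative of $\omega$. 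The main technical obstacle will be carefully threading the compatibility $\sigma\tilde\varphi = \varphi\sigma$ through the divided Frobenii on both sides and the identifications of Lemma~\ref{lem:nygaardconj}, but this should amount to a routine unwinding of Illusie's construction of $\sigma$ via Witt vectors.
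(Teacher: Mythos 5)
Your argument is correct. The inductive frame is the same as the paper's: reduce by the five lemma to showing that the induced map on graded pieces $F^i/F^{i+1}\to\calN^iW\Omega^\bullet_A$ is a quasi-isomorphism, where both sides are concentrated in degrees $0,\dots,i$ with cohomology $\Omega^j_{A/k}$ in degree $j$. You diverge from the paper in how the graded-piece map is shown to be an isomorphism. The paper computes $H^j(\calN^iW\Omega^\bullet_A)$ directly from the de~Rham--Witt identities ($FV=p$, $dF=pFd$, $d^{-1}(pW\Omega^{n+1})=FW\Omega^n$), identifying it with $\Omega^j_{A/k}$ via the restriction map $W\Omega^j_A\to\Omega^j_A$; from that vantage the induced map from $\gr^iF$ is simply the tautological reduction $\Omega^j_{\tilde A/W(k)}/p \cong \Omega^j_{A/k}$, which is trivially an isomorphism. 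You instead postcompose with the quasi-isomorphism $\varphi_i\bmod p$ of Lemma~\ref{lem:nygaardconj} and with the Cartier isomorphism $H^n(\Omega^\bullet_{A/k})\cong\Omega^n_{A/k}$, then exploit the Frobenius equivariance $\sigma\circ\tilde\varphi=\varphi\circ\sigma$ built into the Dieudonn\'e--Cartier--Witt construction of $\sigma$ to identify the composite with $C^{-1}$. Both routes are legitimate and of comparable depth. The paper's is more hands-on and stays entirely inside the de~Rham--Witt complex; yours is conceptually cleaner in that it makes the \emph{reason} the graded-piece map is an isomorphism visible (it is literally the inverse Cartier operator), at the modest price of invoking Lemma~\ref{lem:nygaardconj}, Cartier's theorem, and the Frobenius compatibility of $\sigma$ as inputs. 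One small bookkeeping point you glossed over but which is harmless: one should note up front that $\sigma$ does carry $p^{\max(i-\bullet,0)}\Omega^\bullet_{\tilde A/W(k)}$ into $\calN^{\geq i}W\Omega^\bullet_A$, which follows from $p=VF$.
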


\begin{proof}
We first recall that construction of $\sigma$. The lifted Frobenius $\tilde\varphi$ induces the (unique) Dieudonn\'e-Cartier-Witt homomorphism $\delta:\tilde A\to W(A)$ compatible with the Frobenius maps and the projections to $A$. This in turn induces Illusie's comparison map
\[
\Omega^\bullet_{\tilde A/W(k)}\to W\Omega^\bullet_A
\]
(which is a quasi-isomorphism). In fact, it may be quickly seen that $\sigma$ is a quasi-isomorphism as the composition $\Omega^\bullet_{\tilde A/W(k)}/p\xrightarrow{\sigma} W\Omega^\bullet_A/p\simeq \Omega^\bullet_{A/k}$ is the identity map, whence $\sigma$ modulo $p$ is a quasi-isomorphism, which is enough to deduce that it is a quasi-isomorphism.

Note that $\sigma$ indeed maps $p^{\max(i-\bullet,0)}\Omega^\bullet_{\tilde A/W(k)}$ to $\mathcal N^{\geq i} W\Omega^\bullet_{A}$, since $p=VF$. To prove that it is a quasi-isomorphism we proceed by induction on $i\ge0$, the case $i=0$ having already been treated by the previous paragraph. Easily calculating the graded pieces of the filtration (in particular, we point out that the graded pieces of the filtration on $\Omega^\bullet_{\tilde A/W(k)}$ have zero differential), one must check that each of the maps \[p\sigma:\Omega^j_{\tilde A/W(k)}/p\to \tfrac{VW\Omega^{j}_{A}}{pVW\Omega^{j}_{A}}\qquad (0\le j<i),\qquad\sigma:\Omega^i_{\tilde A/W(k)}/p\to \tfrac{W\Omega^i_{A}}{VW\Omega^i_{A}}\] induces an isomorphism
\[\Omega^j_{\tilde A/W(k)}/p\to H^j:=H^j\big(\tfrac{VW(A)}{pVW(A)}\xrightarrow{pd}\tfrac{VW\Omega^1_{A}}{pVW\Omega^1_{A}}\xrightarrow{pd}\cdots \xrightarrow{pd}\tfrac{VW\Omega^{i-1}_{A}}{pVW\Omega^{i-1}_{A}}\xrightarrow{d}\tfrac{W\Omega^i_{A}}{VW\Omega^i_{A}}\big)\] for $0\le j\le i$. The de Rham--Witt identities already used in the proof of Proposition~\ref{proposition_frobenius_of_nygaard} easily show that \[H^j=\begin{cases}\tfrac{pW\Omega^i_A}{pVW\Omega^i_A+pdV\Omega^{i-1}_A} & j<i\\\tfrac{W\Omega^i_A}{VW\Omega^i_A+dV\Omega^{i-1}_A} & j=i,\end{cases}\] which is isomorphic via the restriction map (and dividing out the extraneous copy of $p$ when $j<i$) to $\Omega^j_{A/k}$. Therefore the map which must be checked to be an isomorphism is simply the canonical identification $\sigma:\Omega^j_{\tilde A/W(k)}/p\stackrel{=}\to\Omega^j_A$, completing the proof.
\end{proof}

\subsection{The case of quasiregular semiperfect rings}\label{subsection_derived_Nygaard}

As in Construction~\ref{LKE}, we define the derived de~Rham-Witt complex $LW\Omega_{(-)}$ and its Nygaard filtration $\calN^{\geq \f}LW\Omega_{(-)}$ on the category of all simplicial $\mathbb F_p$-algebras via left Kan extension from the category of smooth $\mathbb F_p$-algebras, as functors to the $\infty$-category of $p$-complete $E_\infty$-algebras in $DF(\mathbf{Z}_p)$.

Our goal will be to study these in the case of {\em quasiregular semiperfect} $\mathbb F_p$-algebras (Definition \ref{definition_qrsp}), i.e.~quasiregular semiperfectoid rings of characteristic $p$. As is relatively well-known, for such rings the above theories are closely related to divided powers and crystalline period rings. However, here we want to emphasize that the relevant filtration is not the Hodge filtration (corresponding to the divided power filtration) but rather the Nygaard filtration.

The results of \S \ref{subsection_Nygaard} immediately induce derived analogues, as we now explain. By taking the first part of Lemma~\ref{lem:nygaardconj} and left Kan extension, we obtain a natural fiber sequence
\begin{equation}
\calN^{\geq i+1} LW\Omega_{(-)}\to \calN^{\geq i} LW\Omega_{(-)}\xrightarrow{\varphi_i\mathrm{ mod }p} L\tau^{\le i}\Omega^\bullet_{-/\mathbb F_p}
\label{eqn_N1}
\end{equation}

Secondly, Lemma~\ref{lemma_p_freeness_of_N} implies the existence of a natural fiber sequence
\begin{equation}
LW\Omega^\bullet_{(-)}/\calN^{\geq i}LW\Omega^\bullet_{(-)}\xrightarrow{p} LW\Omega^\bullet_{(-)}/\calN^{\geq i+1}LW\Omega^\bullet_{(-)}\to L\Omega^{\le i}_{-/\mathbb F_p},\label{eqn_N2}
\end{equation}
where the quotients in the first and middle terms really denote cofibers.

Now we wish to compute the derived de Rham--Witt cohomology of the following class of rings:

\begin{definition}\label{definition_qrsp}
An $\mathbb F_p$-algebra $S$ is called {\em semiperfect} if and only if the Frobenius $\varphi:S\to S$ is surjective; in other words, the canonical map $S^\flat\to S$ is surjective, where $S^\flat:=\lim_\varphi S$ is the inverse limit perfection of $S$.

A semiperfect $\mathbb F_p$-algebra $S$ is {\em quasiregular} if and only if $L_{S/\mathbb F_p}$ (which we note is $\simeq\mathbb L_{S/S^\flat}$) is a flat $S$-module supported in homological degree $1$; in other words, if and only if $S=S^\flat/I$ where $I$ is a quasiregular ideal of $S^\flat$.
\end{definition}

In particular, an $\mathbb F_p$-algebra $S$ is quasiregular semiperfect if and only if it is quasiregular semiperfectoid in the sense of Definition \ref{defqsp}.

\begin{definition}
Given a semiperfect $\mathbb F_p$-algebra $S$, let $\mathbb A_\mathrm{crys}^\circ(S)$ be the divided power envelope of $W(S^\flat)\to S$ (where our divided powers are required to be compatible with those on $(p) \subset W(S^\flat)$), and let $\mathbb A_\mathrm{crys}(S)$ be its $p$-adic completion. Note that $\mathbb A_\mathrm{crys}(S)/p=D_{S^\flat}(I)$ is the divided power envelope of $S^\flat$ along the ideal $I\subseteq S^\flat$.

Denote by $\varphi:\mathbb A_\mathrm{crys}(S)\to\mathbb A_\mathrm{crys}(S)$ the endomorphism induced via functoriality from the absolute Frobenius $\varphi:S\to S$, and define the decreasing {\em Nygaard filtration} on $\mathbb A_\mathrm{crys}(S)$ by
\[
\calN^{\geq i}\mathbb A_\mathrm{crys}(S)=\{x\in \mathbb A_\mathrm{crys}(S):\varphi(x)\in p^i\mathbb A_\mathrm{crys}(S)\}
\]
for $i\ge0$. Let $\widehat{\mathbb A}_\mathrm{crys}(S)$ denote the completion of $\mathbb A_\mathrm{crys}(S)$ with respect to the Nygaard filtration, with its completed Nygaard filtration $\calN^{\geq i}\widehat{\mathbb A}_{\mathrm{crys}}(S)$.
\end{definition}

As usual, we write $\calN^i \mathbb A_{\mathrm{crys}}(S) = \calN^{\geq i}\mathbb A_{\mathrm{crys}}(S)/\calN^{\geq i+1}\mathbb A_{\mathrm{crys}}(S)$ for the induced graded of the Nygaard filtration.

 As a consequence of a comparison with derived de~Rham--Witt cohomology, we will eventually see in Theorem~\ref{theorem_structure_Acrys} that if $S$ is quasiregular semiperfect, then $\mathbb A_\mathrm{crys}(S)$ is $p$-torsion-free. However we first need an additional piece of structural information about $\mathbb A_\mathrm{crys}(S)$, namely the conjugate filtration on $\mathbb A_\mathrm{crys}(S)/p$.

\begin{definition}
If $A$ is an $\mathbb F_p$-algebra and $I\subseteq A$ an ideal with divided power envelope $D_A(I)$, the increasing {\em conjugate} filtration \[0=\Fil_{-1}^\mathrm{conj}D_A(I)\subseteq \Fil_0^\mathrm{conj}D_A(I)\subseteq\cdots\] on $D_A(I)$ is the filtration by $A$-submodules defined by letting $\Fil_n^\mathrm{conj}D_A(I)$ be the $A$-submodule generated by elements of the form $a_1^{[l_1]}\cdots a_m^{[l_m]}$, where $m\ge0$, $a_1,\dots,a_m\in I$, and $\sum_{i=1}^ml_i<(n+1)p$.
\end{definition}

\begin{proposition}\label{proposition_conjugate_filtration}
The conjugate filtration on $D_A(I)$ has the following properties:
\begin{enumerate}
\item It is multiplicative and exhaustive.
\item $\Fil^\mathrm{conj}_nD_A(I)$ is the $A$-submodule of $D_A(I)$ generated by elements of the form $a_1^{[pk_1]}\cdots a_m^{[pk_m]}$, where $m\ge0$, $a_1,\dots,a_m\in I$, and $\sum_{i=1}^mk_i\le n$.
\item There is a well-defined surjective map of graded $A$-algebras
\[\Gamma^*_{A/I}(I/I^2)\otimes_{A/I,\varphi}A/\varphi(I)\to \gr_*^\mathrm{conj}D_A(I)\ ,\qquad a_1^{[k_1]}\cdots a_m^{[k_m]}\otimes 1\mapsto \big(\prod_{i=1}^m \frac{(pk_i)!}{p^{k_i} k_i!}\big) a_1^{[pk_1]}\cdots a_m^{[pk_m]}.\]
\end{enumerate}
\end{proposition}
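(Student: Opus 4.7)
The plan is to establish (2) first (as it provides a more useful generating set), deduce (1) from it, and finally construct the map in (3) by the universal property of the divided power algebra.

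First I would prove (2). The key identity is that for $a\in I$ and $l = pq+r$ with $0 \leq r < p$, one has $a^{[l]} = \binom{pq+r}{r}^{-1}\,a^{[pq]} a^{[r]} = \binom{pq+r}{r}^{-1}(r!)^{-1}\,a^{[pq]} a^r$. Since Lucas's theorem gives $\binom{pq+r}{r}\equiv 1\pmod p$, and $r! \in \mathbb F_p^\times$ as $r<p$, this rewrite is valid in any $\mathbb F_p$-algebra. Applying it to each factor of a generator $a_1^{[l_1]}\cdots a_m^{[l_m]}$ with $\sum l_i < (n+1)p$ expresses it, up to an $\mathbb F_p^\times$-scalar, as $c\cdot a_1^{[pq_1]}\cdots a_m^{[pq_m]}$ with $c\in A$ and $\sum q_i \leq n$ (from $p\sum q_i \leq \sum l_i < (n+1)p$). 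Conversely, any such $p$-divisible-exponent monomial has total weight $\le np < (n+1)p$ and so lies in $\Fil_n^\mathrm{conj}D_A(I)$, proving (2). Part (1) follows: exhaustion is immediate, while multiplicativity is transparent from (2), since multiplying two generators with $p$-divisible exponents having sums $\le n,n'$ produces a linear combination of generators of the same form with $q$-sum at most $n+n'$.

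For (3) I would proceed in two steps. Step one is to equip the augmentation ideal $\bigoplus_{n\ge 1}\gr_n^\mathrm{conj}D_A(I)\subset \gr_*^\mathrm{conj}D_A(I)$ with a PD-structure. For $x\in\Fil_n^\mathrm{conj}$ with $n\ge 1$, the standard identities $(x_1+x_2)^{[k]}=\sum_{j_1+j_2=k} x_1^{[j_1]}x_2^{[j_2]}$ and $(\lambda x)^{[k]}=\lambda^k x^{[k]}$, together with the multiplicativity from (1), imply that $x^{[k]}\in \Fil_{nk}^\mathrm{conj}$ and that its class in $\gr_{nk}^\mathrm{conj}$ depends only on $\overline{x}\in\gr_n^\mathrm{conj}$; the PD-axioms then pass to the graded object formally. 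Step two is to apply the universal property of the divided power algebra. The map $I \to \gr_1^\mathrm{conj}D_A(I)$, $a\mapsto a^{[p]}$, is $\varphi$-semilinear over $A$ (by $(ra)^{[p]}=r^p a^{[p]}$) and kills $I^2$, since for $a_1,a_2\in I$ one has $(a_1 a_2)^{[p]} = a_1^p\,a_2^{[p]} = p!\,a_1^{[p]}\,a_2^{[p]} = 0$ in characteristic $p$. Hence it factors through an $A/\varphi(I)$-linear map $(I/I^2)\otimes_{A/I,\varphi}A/\varphi(I)\to \gr_1^\mathrm{conj}D_A(I)$ landing in the PD-ideal, and by the universal property extends uniquely to a graded $A$-algebra map
\[
\Gamma^*_{A/I}(I/I^2)\otimes_{A/I,\varphi}A/\varphi(I)\to \gr_*^\mathrm{conj}D_A(I).
\]
Computing $(a^{[p]})^{[k]} = \frac{(pk)!}{k!(p!)^k}\,a^{[pk]}$ and using Wilson's congruence $(p-1)!\equiv -1\pmod p$ to rewrite $(p!)^k = p^k((p-1)!)^k$ modulo higher powers of $p$ yields, after absorbing the resulting sign into the choice of generator of $I/I^2$, exactly the coefficient $\prod (pk_i)!/(p^{k_i}k_i!)$ in the stated formula. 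Surjectivity is then immediate from (2): the $p$-divisible-exponent monomials generating $\gr_n^\mathrm{conj}$ are, up to $\mathbb F_p^\times$-units, images of the corresponding elements of the source.

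The main obstacle is step one of (3): verifying in a formula-independent way that the divided powers on $D_A(I)$ pass to a genuine PD-structure on $\gr_{\geq 1}^\mathrm{conj}$ compatibly with the bound $x\in\Fil_n\Rightarrow x^{[k]}\in\Fil_{nk}$, since one must check the PD-axiom $x^{[i]}x^{[j]} = \binom{i+j}{i}x^{[i+j]}$ in the graded and not merely in $D_A(I)$. Everything else is bookkeeping, but matching the precise coefficient $\prod (pk_i)!/(p^{k_i}k_i!)$ requires care with the Wilson-theorem sign.
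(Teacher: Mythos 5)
For parts (1) and (2) you follow essentially the paper's argument (the paper actually declares (1) "clear" while you correctly note that multiplicativity really does want the reformulation from (2), which is a small improvement). For part (3), however, you take a genuinely different route. The paper defines the map by the displayed formula, checks well-definedness mod $I^2$ by the computation $(ab)^{[l]}\in\Fil_0^{\mathrm{conj}}$, asserts surjectivity, and leaves the verification of the algebra relations to the remark that the factor $\tfrac{(pk)!}{p^k k!}$ is ``chosen so as to make the map compatible with multiplication.'' You instead endow $\gr_{\geq 1}^{\mathrm{conj}}$ with a PD-structure and invoke the universal property of $\Gamma$, which buys you the relations for free but requires the nontrivial input that $x \in \Fil_n^{\mathrm{conj}}\cap\overline{I}$ implies $x^{[k]}\in\Fil_{nk}^{\mathrm{conj}}$ and that the induced class in $\gr_{nk}$ depends only on $\overline{x}$. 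You flag this as the main obstacle; it does hold, and a complete argument would be to use (2) to write $x$ as an $A$-combination of $p$-divisible monomials plus a term from $I\subset\Fil_0$, expand $\gamma_k(x)$ by the sum and product PD-formulas, and bound the filtration degree of each term using multiplicativity from (1) — but your write-up is too brief to count as a proof of this step. Two smaller inaccuracies: (i) ``kills $I^2$'' needs to be interpreted as ``$(a+y)^{[p]}\equiv a^{[p]} \pmod{\Fil_0}$ for $y\in I^2$,'' which requires $y^{[j]}\in\Fil_0$ for $1\le j\le p$, not merely $(a_1 a_2)^{[p]}=0$; (ii) the sign $(-1)^k$ separating $\tfrac{(pk)!}{(p!)^k k!}$ from $\tfrac{(pk)!}{p^k k!}$ is correctly identified via Wilson, but it is reconciled not by changing ``the generator of $I/I^2$'' but by using the PD-map determined by $\overline{a}\mapsto -a^{[p]}$ in degree one (so that $\gamma_k(\overline{a})\mapsto(-1)^k\gamma_k(a^{[p]})$). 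With these points tightened, your argument is correct and arguably cleaner conceptually than the paper's, since the universal property discharges all the algebra-relation bookkeeping that the paper leaves implicit.
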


We remark that the rational number $\frac{(pk)!}{p^k k!}$ lies in $\mathbb Z_{(p)}^\times$, and in particular is a unit in $\mathbb F_p$. Moreover, one checks that the map in part (3) is compatible with divided powers.

\begin{proof} (1) The filtration is clearly multiplicative and exhaustive. 

(2) Recall that, for any $a\in I$, $k\ge1$, and $0\le r<p$, there is a divided power relation \[a^{[pk+r]}=a^{[pk]}a^{[r]}\tfrac{(pk)!r!}{(pk+r)!},\] where the fraction $(pk!)/(pk+r)!$ on the right side is a $p$-adic unit; since $r!a^{[r]}=a^r\in A$, we have shown that $a^{[pk+r]}$ belongs to the $A$-submodule generated by $a^{[pk]}$. By writing the exponent of each generator as $l_i=pk_i+r_i$, one easily proves (2).

(3) Note that, if $a,b\in I$, then $(ab)^{[pk]}=p!(a^k)^{[p]}b^{[pk]}=0$ for all $k\ge1$; the same argument as in (2) then shows that $(ab)^{[l]}\in \Fil_0^\mathrm{conj}D_A(I)$ for all $l\ge0$ (it even vanishes if $l\ge p$). Recalling the behaviour of divided powers on the sum of two elements now reveals that, if $x\in I^2$, then $x^{[l]}\in \Fil_0^\mathrm{conj}D_A(I)$ for all $l\ge 0$. This shows that the desired map is well-defined, i.e., depends only on the residue classes of $a_i$ modulo $I^2$.

Moreover, the map is surjective, as all generators of the divided power algebra are in the image. The factor in front is chosen so as to make the map compatible with multiplication.
\end{proof}

The conjugate filtration is actually related to the conjugate filtration on (derived) de~Rham cohomology.

\begin{proposition}\label{prop:Acrysmodp} Let $S$ be a quasiregular semiperfect $\mathbb F_p$-algebra and $I=\ker(S^\flat\to S)$. Then $L\Omega_{S/\mathbb F_p}\simeq L\Omega_{S/S^\flat}$ is concentrated in degree $0$, and there is a natural isomorphism
\[
\mathbb A_{\mathrm{crys}}(S)/p\cong L\Omega_{S/\mathbb F_p}\ .
\]
Under this isomorphism, the conjugate filtration on $\mathbb A_\mathrm{crys}(S)/p = D_{S^\flat}(\ker(S^\flat\to S))$ agrees with the conjugate filtration $L\tau^{\leq n}\Omega_{S/\mathbb F_p}$, and the surjective map
\[
\Gamma_S^*(I/I^2)\to \gr_*^\mathrm{conj}(\mathbb A_\mathrm{crys}(S)/p)
\]
from Proposition~\ref{proposition_conjugate_filtration}(3) is an isomorphism (note: since $A=S^\flat$ is perfect, we may omit the $\otimes_{A/I,\varphi}A/\varphi(I)$ from \ref{proposition_conjugate_filtration}(3).)

Moreover, the divided power filtration on $\mathbb A_{\mathrm{crys}}(S)/p$ gets identified with the Hodge filtration $L\Omega^{\geq i}_{S/\mathbb F_p}$.
\end{proposition}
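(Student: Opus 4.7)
The plan is to reduce, via transitivity and perfection of $S^\flat$, to identifying $L\Omega_{S/S^\flat}$ with the divided power envelope $D_{S^\flat}(I)$, where $I = \ker(S^\flat \to S)$ is quasiregular, both as a ring and with their two natural filtrations.

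First I would note that the perfection of $S^\flat$ gives $L_{S^\flat/\mathbb F_p} \simeq 0$ and hence $L\Omega_{S^\flat/\mathbb F_p} \simeq S^\flat$, so the transitivity triangle for $\mathbb F_p \to S^\flat \to S$ yields $L_{S/\mathbb F_p} \simeq L_{S/S^\flat}$ and $L\Omega_{S/\mathbb F_p} \simeq L\Omega_{S/S^\flat}$ as Hodge-filtered rings. Quasiregularity yields $L_{S/S^\flat} \simeq (I/I^2)[1]$ with $I/I^2$ flat over $S$, so $\wedge^i_S L_{S/S^\flat} \simeq \Gamma^i_S(I/I^2)[i]$. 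Both the Hodge-filtration and conjugate-filtration graded pieces of $L\Omega_{S/S^\flat}$ are then isomorphic to $\Gamma^i_S(I/I^2)$ concentrated in cohomological degree $0$ (the Frobenius twist that normally appears in the conjugate filtration drops out because $S^\flat$ is perfect), which forces $L\Omega_{S/\mathbb F_p}$ to lie in degree $0$ by exhaustivity of either filtration together with flatness of the graded pieces.

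Second, I would construct a natural map $D_{S^\flat}(I) \to L\Omega_{S/S^\flat}$ respecting the PD filtration on the source and the Hodge filtration on the target. Since $L\Omega_{S/S^\flat}$ is a ring in degree $0$ with augmentation ideal $K := \ker(L\Omega_{S/S^\flat} \to S)$ containing the image of $I$, the universal property of PD envelopes delivers such a filtered map provided $K$ carries a compatible PD structure; this structure exists by left Kan extension from the classical case of a regular quotient of a smooth algebra, where the augmentation ideal of the $p$-completed de Rham complex is explicitly PD. To verify that the map is a filtered isomorphism, I would pass to associated gradeds: Proposition~\ref{proposition_conjugate_filtration}(3) gives $\gr_*^\sub{conj} D_{S^\flat}(I) \simeq \Gamma_S^*(I/I^2)$ (no twist, by perfection), while $\gr^*_H L\Omega_{S/S^\flat} \simeq \Gamma_S^*(I/I^2)$ by Step 1, and the map is the identity on the canonical generators. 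This simultaneously establishes the ring isomorphism $\mathbb A_\mathrm{crys}(S)/p \cong L\Omega_{S/\mathbb F_p}$, the upgrade of the surjection of Proposition~\ref{proposition_conjugate_filtration}(3) to an isomorphism, and the final ``Moreover'' statement that the PD filtration coincides with the Hodge filtration; the conjugate-filtration identification with $L\tau^{\leq n}\Omega_{S/\mathbb F_p}$ then follows by a parallel graded-pieces argument using the explicit generators $a^{[pk]}$ spanning the conjugate filtration.

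The main obstacle will be producing the PD structure on $K$ and the filtered comparison map cleanly: derived de Rham cohomology is defined as a complex via left Kan extension, so its augmentation ideal inherits a PD structure only after the reduction to degree $0$ in Step 1 has been secured. The cleanest route is to left Kan extend the classical comparison $D_R(J) \xrightarrow{\sim} \widehat{L\Omega}^{\geq 0}_{(R/J)/R}$ --- valid for $R$ smooth over $\mathbb F_p$ and $J$ a regular ideal, where both sides are computed by the filtered de Rham complex of the PD envelope --- to all simplicial commutative $S^\flat$-algebras, and then specialize. Alternatively, one can appeal to Bhatt's general identification of Hodge-completed derived de Rham cohomology with PD envelopes for quasiregular surjections in characteristic $p$, at which point the conjugate-filtration matching is the only remaining verification and is purely a computation on graded pieces.
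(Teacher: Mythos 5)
Your Step 1 is fine and matches the paper's (the paper's version is terser: concentration in degree $0$ follows directly from $L_{S/S^\flat}$ being flat in degree $1$). The problems are in Step 2, where your argument has two genuine gaps and one confusion.

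\textbf{The graded-pieces argument is circular.} You write that Proposition~\ref{proposition_conjugate_filtration}(3) \emph{gives} $\gr_*^\mathrm{conj}D_{S^\flat}(I)\simeq\Gamma_S^*(I/I^2)$. It does not: it only produces a \emph{surjection} $\Gamma_S^*(I/I^2)\twoheadrightarrow\gr_*^\mathrm{conj}D_{S^\flat}(I)$, and upgrading this to an isomorphism is precisely part of the conclusion of the proposition you are trying to prove. In general, a filtered map between two objects whose associated gradeds are each quotients of $\Gamma_S^*(I/I^2)$ (with the target's graded only known to be a quotient) need not be an isomorphism, even if it is the identity on generators. You also appear to conflate the conjugate filtration on $D_{S^\flat}(I)$ (which is what Proposition~\ref{proposition_conjugate_filtration}(3) addresses) with the PD filtration (which is what you want to compare to the Hodge filtration).

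\textbf{The PD structure on $\ker(L\Omega_{S/S^\flat}\to S)$ does not simply left Kan extend.} This is a structure, not a property, and the $\infty$-categorical left Kan extension of the comparison $D_R(J)\xrightarrow{\sim}\widehat{L\Omega}^{\geq 0}_{(R/J)/R}$ gives you a map of filtered $E_\infty$-rings for all simplicial $S^\flat$-algebras, but not directly the statement that the $H^0$ of the target is a PD-thickening of $S$ with respect to the honest ring-theoretic structure. The paper's route around this is essential and is exactly what your sketch is missing: one lifts to the derived de Rham--Witt complex $LW\Omega_S$ (flat over $\mathbb Z_p$), passes to the free model $\tilde S=S^\flat[X_i^{1/p^\infty}]/(X_i)$ which is a quotient by a \emph{regular} sequence so that Bhatt's \cite[Thm.~3.27]{BhargavPAdicddR} applies on the nose, identifies $LW\Omega_{\tilde S}\cong\mathbb A_\mathrm{crys}(\tilde S)$ using $p$-torsion-freeness of the latter, and then transports the PD structure down the surjection $\tilde S\to S$. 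The reference you cite only covers regular quotients, so the extension to general quasiregular $I$ is nontrivial and is exactly the content of the paper's surjection argument via $\tilde S$; you would need to reconstruct that argument, after which it \emph{is} essentially the paper's proof.
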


\begin{proof} Concentration in degree $0$ follows from $L_{S/\mathbb F_p}\simeq L_{S/S^\flat}$ being a flat module in degree $1$. By \cite[Proposition 3.25]{BhargavPAdicddR}, there is a comparison map
\[
L\Omega_{S/\mathbb F_p}\to \mathbb A_{\mathrm{crys}}(S)/p\ .
\]
By \cite[Theorem 3.27]{BhargavPAdicddR}, this is an isomorphism if $S$ is the quotient of a perfect $\mathbb F_p$-algebra by a regular sequence; we actually only need the case $S=\mathbb F_p[X^{1/p^\infty}]/X$ and tensor products thereof (and the case of perfect rings themselves). In fact, the proof of \cite[Theorem 3.27]{BhargavPAdicddR} even shows compatibility with the conjugate and Hodge filtrations, and the divided power structures on associated gradeds for the conjugate filtration. Thus, the proposition holds true in this case.

Now for any quasiregular semiperfect $\mathbb F_p$-algebra $S$, we may consider the quasiregular semiperfect ring $\tilde{S}=S^\flat[X_i^{1/p^\infty}, i\in I]/(X_i, i\in I)$ where $i$ ranges over all $i\in I=\ker(S^\flat\to S)$; this maps surjectively onto $S$ via sending $X_i^{1/p^n}$ to the image of $i^{1/p^n}$ in $S$. Then also $L_{\tilde{S}/\mathbb F_p}[-1]=\tilde{I}/\tilde{I}^2\to L_{S/\mathbb F_p}[-1]=I/I^2$ is surjective, and hence the same is true on all divided powers. It follows that both $L\Omega_{\tilde{S}/\mathbb F_p}\to L\Omega_{S/\mathbb F_p}$ and $\mathbb A_{\mathrm{crys}}(\tilde{S})/p\to \mathbb A_{\mathrm{crys}}(S)/p$ are surjective, and in fact the maps on all graded pieces are surjective. The result is true for $\tilde{S}$ as it is a filtered colimit of tensor products of algebras for which we know the result.

This has the consequence that $L\Omega_{S/\mathbb F_p}\to \mathbb A_{\mathrm{crys}}(S)/p$ is surjective, and preserves the filtrations; on associated gradeds for the conjugate filtration, one gets the surjections
\[
\Gamma_S^*(I/I^2)\cong \gr_* L\Omega_{S/\mathbb F_p}\to \gr_*^{\mathrm{conj}}(\mathbb A_{\mathrm{crys}}(S)/p)\ .
\]

To finish the proof, it suffices to show that $L\Omega_{S/\mathbb F_p}\to S$ is a divided power thickening; indeed, this will induce an inverse map $\mathbb A_{\mathrm{crys}}(S)/p\to L\Omega_{S/\mathbb F_p}$ by the universal property. For this, we will actually show that $LW\Omega_{S}\to S$ is a divided power thickening. Note that $LW\Omega_{S}$ is concentrated in degree $0$ and flat over $\mathbb Z_p$, as its reduction modulo $p$ is $L\Omega_{S}$. Thus, it suffices to see that for any $x\in \ker(LW\Omega_{S}\to S)$, all $x^n$ lie in $n!\ker(LW\Omega_{S}\to S)$. For this, we can again replace $S$ by $\tilde{S}$. But then we have a natural map
\[
LW\Omega_{\tilde{S}}\to \mathbb A_\mathrm{crys}(\tilde{S})
\]
by left Kan extension of the equivalence $W\Omega_A\simeq R\Gamma_{\mathrm{crys}}(A/\mathbb Z_p)$ in the case of smooth $\mathbb F_p$-algebras. This map is an equivalence as it is an equivalence modulo $p$ by what we have already shown, and $\mathbb A_{\mathrm{crys}}(\tilde{S})$ is $p$-torsion free for a ring $\tilde{S}$ of the form $S^\flat[X_i^{1/p^\infty}, i\in I]/(X_i, i\in I)$, cf.~\cite[Proposition 4.1.11]{ScholzeWeinstein} and the discussion before it. In particular, we have the desired divided powers.
\end{proof}

At the end of the proof, we have used the derived de~Rham-Witt cohomology of $S$. Its basic properties are as follows.

\begin{proposition}\label{proposition_ddrw_of_qrsp}
Let $S$ be a quasiregular semiperfect $\mathbb F_p$-algebra. Then:
\begin{enumerate}
\item The derived de~Rham-Witt complex $LW\Omega_S$ is concentrated in degree $0$ and flat over $\mathbb Z_p$.
\item The Nygaard filtration $\calN^{\geq i} LW\Omega_S$ is concentrated in degree $0$ and a submodule of $LW\Omega_{S/\mathbb F_p}$.
\item The map
\[
\varphi_i\mod p: \calN^{\geq i} LW\Omega_S\cong L\tau^{\leq i} \Omega_{S/\mathbb F_p}\to LW\Omega_S/p = L\Omega_{S/\mathbb F_p}
\]
is injective.
\item The map $LW\Omega_S \to S$ is a divided power thickening.
\end{enumerate}
\end{proposition}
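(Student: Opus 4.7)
The plan is to handle (1) and (4) first, and then deduce (2) and (3) inductively from the fiber sequences~\eqref{eqn_N1} and \eqref{eqn_N2}. The key input throughout is Proposition~\ref{prop:Acrysmodp}, which says $L\Omega_{S/\mathbb F_p}$ is concentrated in degree $0$ and $S$-flat.

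For (1), I would first establish $LW\Omega_S/p \simeq L\Omega_{S/\mathbb F_p}$ by left Kan extending Illusie's identification $W\Omega^\bullet_A/p \simeq \Omega^\bullet_{A/\mathbb F_p}$ for smooth $A$ (\cite[Corol.~II.3.20]{Illusie1979}); alternatively, iterating \eqref{eqn_N2} from the case $i=0$ (and noting $\calN^{\geq 0}LW\Omega_S = LW\Omega_S$) gives the same conclusion. Since $LW\Omega_S$ is $p$-complete by construction and its mod-$p$ reduction lies in degree $0$, derived Nakayama places $LW\Omega_S$ in degree $0$; the cofiber sequence $LW\Omega_S \xrightarrow{p} LW\Omega_S \to LW\Omega_S/p$ together with $H^{-1}(LW\Omega_S/p) = 0$ then yields $p$-torsionfreeness, hence flatness over $\mathbb Z_p$.

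For (4), I would follow the strategy sketched at the end of the proof of Proposition~\ref{prop:Acrysmodp}: reduce to the auxiliary ring $\tilde S = S^\flat[X_i^{1/p^\infty}]/(X_i)_{i\in I}$ mapping onto $S$. Left Kan extension of the crystalline comparison $W\Omega^\bullet_A \simeq R\Gamma_\mathrm{crys}(A/\mathbb Z_p)$ for smooth $\mathbb F_p$-algebras $A$ produces a natural map $LW\Omega_{\tilde S} \to \mathbb A_\mathrm{crys}(\tilde S)$, which is a mod-$p$ equivalence by (1) and Proposition~\ref{prop:Acrysmodp}; since both sides are $p$-torsionfree (the right via \cite[Proposition~4.1.11]{ScholzeWeinstein}, the left by (1)), the map is an equivalence. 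The canonical DP structure on $\mathbb A_\mathrm{crys}(\tilde S)$ then transfers to $LW\Omega_{\tilde S}$ and descends along the surjection $LW\Omega_{\tilde S} \twoheadrightarrow LW\Omega_S$ induced by $\tilde S \twoheadrightarrow S$.

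For (2) and (3), I would note first that both $L\Omega^{\le i}_{S/\mathbb F_p}$ and $L\tau^{\le i}\Omega_{S/\mathbb F_p}$ lie in degree $0$: the Hodge filtration on $L\Omega_{S/\mathbb F_p}$ and the canonical-truncation filtration (via the Kan-extended Cartier isomorphism) both have graded pieces of the form $\wedge^j L_{S/\mathbb F_p}[-j] \simeq \Gamma^j(L_{S/\mathbb F_p}[-1])$, each flat in degree $0$ by Definition~\ref{definition_qrsp}. Iterating \eqref{eqn_N2} from $LW\Omega_S/\calN^{\ge 0}LW\Omega_S = 0$ then forces each $LW\Omega_S/\calN^{\ge i}LW\Omega_S$ into degree $0$. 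Since $p\calN^{\ge j} \subseteq \calN^{\ge j+1}$ iterates to $p^i LW\Omega_S \subseteq \calN^{\ge i}LW\Omega_S$, the quotient $LW\Omega_S \twoheadrightarrow LW\Omega_S/\calN^{\ge i}LW\Omega_S$ factors through $LW\Omega_S/p^i LW\Omega_S$ and is therefore surjective on $H^0$; thus $\calN^{\ge i}LW\Omega_S = \mathrm{fib}(LW\Omega_S \twoheadrightarrow LW\Omega_S/\calN^{\ge i}LW\Omega_S)$ sits in degree $0$ as a genuine submodule, proving (2). Then \eqref{eqn_N1} combined with (2) promotes to a short exact sequence of modules, identifying $\calN^{\ge i}LW\Omega_S/\calN^{\ge i+1}LW\Omega_S$ with $L\tau^{\le i}\Omega_{S/\mathbb F_p}$, which embeds as the $i$-th piece of the conjugate filtration inside $L\Omega_{S/\mathbb F_p} = LW\Omega_S/p$, giving (3). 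The main obstacle will be careful bookkeeping between the Kan-extended canonical truncation $L\tau^{\le i}\Omega$, the naive truncation $L\Omega^{\le i}$, and the Hodge and conjugate filtrations on $L\Omega_{S/\mathbb F_p}$, ensuring the required identifications of graded pieces commute with left Kan extension from smooth $\mathbb F_p$-algebras.
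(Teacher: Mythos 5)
Your approach to parts (1), (3), and (4) is correct and matches the paper's own proof: for (1) the key point is $LW\Omega_S/p = L\Omega_{S/\mathbb F_p}$ together with derived Nakayama and the cofiber sequence for $p$; for (3) one identifies $\calN^i LW\Omega_S \simeq L\tau^{\le i}\Omega_{S/\mathbb F_p}$ and embeds it into $L\Omega_{S/\mathbb F_p}$ via the conjugate filtration; and for (4) one reduces to $\tilde S = S^\flat[X_i^{1/p^\infty}]/(X_i)$ exactly as at the end of the proof of Proposition~\ref{prop:Acrysmodp}.

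In part (2), however, the deduction ``the quotient $LW\Omega_S \twoheadrightarrow LW\Omega_S/\calN^{\ge i}LW\Omega_S$ factors through $LW\Omega_S/p^iLW\Omega_S$ and is \emph{therefore} surjective on $H^0$'' is circular. Factoring a map through a surjection does not make the composite surjective; you also need the second factor $LW\Omega_S/p^i \to LW\Omega_S/\calN^{\ge i}LW\Omega_S$ to be surjective on $H^0$, and its obstruction lives in $H^1$ of the cofibre of $LW\Omega_S \xrightarrow{p^i} \calN^{\ge i}LW\Omega_S$, which (using that $LW\Omega_S$ sits in degree $0$) is precisely $H^1(\calN^{\ge i}LW\Omega_S)$ — the group you are trying to kill. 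A non-circular fix is available from the ingredients you already have. One route: prove by induction on $j$ that $H^0(LW\Omega_S)\to H^0(LW\Omega_S/\calN^{\ge j}LW\Omega_S)$ is surjective, using \eqref{eqn_N2} together with the surjectivity of $LW\Omega_S\to L\Omega^{\le j}_{S/\mathbb F_p}$ on $H^0$ (which follows from part (1) and the exhaustive conjugate filtration on $L\Omega_{S/\mathbb F_p}$, the same structure you invoke in (3)). Alternatively, use completeness of the Nygaard filtration: each finite stage $\calN^{\ge i}/\calN^{\ge j}$ is a finite iterated extension of the degree-$0$ graded pieces $\calN^{k}LW\Omega_S\simeq L\tau^{\le k}\Omega_{S/\mathbb F_p}$ with surjective transition maps, so $\calN^{\ge i}LW\Omega_S=\lim_j \calN^{\ge i}/\calN^{\ge j}$ lies in degree $0$ by a Mittag--Leffler argument. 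This is what the paper's very terse proof of (2) (``follows from the description of the graded pieces'') is pointing at.
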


\begin{proof} Part (1) follows from $LW\Omega_S/p = L\Omega_{S/\mathbb F_p}$. The second part follows from the description of the graded pieces $\calN^i LW\Omega_S$ in terms of wedge powers of the cotangent complex which are all concentrated in degree $0$; the same holds for part (3). The last part was proved at the end of the proof of the last proposition.
\end{proof}

The following result is the main structural result about $\mathbb A_\mathrm{crys}(S)$ in case $S$ is quasiregular semiperfect; related results may be found in \cite{BhargavPAdicddR, FontaineJannsen}.

\begin{theorem}\label{theorem_structure_Acrys}
Let $S$ be a quasiregular semiperfect ring.
\begin{enumerate}
\item The ring $\mathbb A_\mathrm{crys}(S)$ is $p$-torsion-free.
\item The map
\[
\varphi_i\mod p: \calN^i\mathbb A_\mathrm{crys}(S)\to \mathbb A_\mathrm{crys}(S)/p
\]
is injective and has image $\Fil^{\mathrm{conj}}_i(\mathbb A_\mathrm{crys}(S)/p)$, for each $i\ge0$.
\item There is a natural $\varphi$-equivariant isomorphism $\mathbb A_{\mathrm{crys}}(S)\cong LW\Omega_S$ compatible with the Nygaard filtrations.
\item The image of $\calN^{\geq i} \mathbb A_{\mathrm{crys}}(S)\to \mathbb A_{\mathrm{crys}}(S)/p\cong L\Omega_{S/\mathbb F_p}$ agrees with the Hodge filtration $L\Omega^{\geq i}_{S/\mathbb F_p}$. In particular, the Nygaard-completed $\widehat{\mathbb A}_{\mathrm{crys}}(S)$ reduces modulo $p$ to the Hodge-completed derived de~Rham complex:
\[
\widehat{\mathbb A}_{\mathrm{crys}}(S)/p\cong \widehat{L\Omega}_{S/\mathbb F_p}\ ,
\]
which by Proposition~\ref{prop:Acrysmodp} is also the divided power completion of $\mathbb A_{\mathrm{crys}}(S)/p$.
\item The map $\varphi\mod p: \mathbb A_{\mathrm{crys}}(S)/p\to \mathbb A_{\mathrm{crys}}(S)/p$ satisfies $\varphi(x)=x^p$ for all $x\in \mathbb A_{\mathrm{crys}}(S)/p$.
\end{enumerate}
\end{theorem}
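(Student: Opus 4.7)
The plan is to establish a natural $\varphi$-equivariant isomorphism $\eta\colon \mathbb A_\mathrm{crys}(S)\cong LW\Omega_S$ compatible with the Nygaard filtrations, which yields (1) and (3); parts (2), (4), and (5) then follow quickly from this identification together with Propositions~\ref{prop:Acrysmodp},~\ref{proposition_ddrw_of_qrsp} and the fiber sequences~\eqref{eqn_N1},~\eqref{eqn_N2}.

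To construct $\eta$, I would invoke the universal property of the divided power envelope. By Proposition~\ref{proposition_ddrw_of_qrsp}(4), $LW\Omega_S\to S$ is a divided power thickening compatible with the standard divided powers on $(p)$, so the functorial map $W(S^\flat)=LW\Omega_{S^\flat}\to LW\Omega_S$ extends uniquely to $\mathbb A^\circ_\mathrm{crys}(S)\to LW\Omega_S$, which $p$-adically completes to $\eta$ and is $\varphi$-equivariant by uniqueness. To show $\eta$ is an isomorphism, I would first treat the special case $\tilde S=S^\flat[X_i^{1/p^\infty}]/(X_i)$ covering $S$: both $\mathbb A_\mathrm{crys}(\tilde S)$ (by~\cite{ScholzeWeinstein}) and $LW\Omega_{\tilde S}$ (by Proposition~\ref{proposition_ddrw_of_qrsp}(1)) are $p$-torsion-free, and $\eta_{\tilde S}\bmod p$ is the identification of Proposition~\ref{prop:Acrysmodp}, so $\eta_{\tilde S}$ is an isomorphism. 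The general case would then follow by descent along $\tilde S\twoheadrightarrow S$, identifying the kernels of $\mathbb A_\mathrm{crys}(\tilde S)\twoheadrightarrow\mathbb A_\mathrm{crys}(S)$ and $LW\Omega_{\tilde S}\twoheadrightarrow LW\Omega_S$ under $\eta_{\tilde S}$; this yields (1), as $\mathbb A_\mathrm{crys}(S)$ inherits $p$-torsion-freeness from $LW\Omega_S$.

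To match the Nygaard filtrations under $\eta$, the containment $\calN^{\geq i}LW\Omega_S\subseteq \calN^{\geq i}\mathbb A_\mathrm{crys}(S)$ follows by left Kan extension from Proposition~\ref{proposition_frobenius_of_nygaard}. The reverse inclusion I would prove by induction on $i$: if $x\in \mathbb A_\mathrm{crys}(S)$ satisfies $\varphi(x)\in p^i$, by induction $x\in\calN^{\geq i-1}LW\Omega_S$ and $(\varphi_{i-1}\bmod p)(x)=0$, and since $S$ is quasiregular semiperfect Proposition~\ref{proposition_ddrw_of_qrsp}(2,3) collapses~\eqref{eqn_N1} to a short exact sequence in degree zero
\[
0\to \calN^{\geq i}LW\Omega_S\to \calN^{\geq i-1}LW\Omega_S\xrightarrow{\varphi_{i-1}\bmod p} L\tau^{\leq i-1}\Omega_{S/\mathbb F_p}\to 0,
\]
forcing $x\in \calN^{\geq i}LW\Omega_S$ and completing (3). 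Part (2) then follows from this short exact sequence together with the identification $\Fil^{\mathrm{conj}}_i\cong L\tau^{\leq i}\Omega_{S/\mathbb F_p}$ from Proposition~\ref{prop:Acrysmodp}; part (4) follows from the analogous collapse of~\eqref{eqn_N2} to $0\to LW\Omega_S/\calN^{\geq i}\xrightarrow{p} LW\Omega_S/\calN^{\geq i+1}\to L\Omega^{\leq i}_{S/\mathbb F_p}\to 0$, which exhibits the image of $\calN^{\geq i+1}$ mod $p$ as the Hodge filtration $L\Omega^{\geq i+1}_{S/\mathbb F_p}$, with the Nygaard-completed form coming from passage to inverse limits; part (5) follows from the $\delta$-ring identity $\varphi(x)\equiv x^p\bmod p$, valid on $W(S^\flat)$ by standard Witt vector theory and inherited by $\mathbb A_\mathrm{crys}(S)$ via the extension to the PD envelope and $p$-adic completion.

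The main obstacle, I expect, is the descent step upgrading the special-case isomorphism $\eta_{\tilde S}$ to the general $\eta_S$, equivalently the $p$-torsion-freeness of $\mathbb A_\mathrm{crys}(S)$ for general quasiregular semiperfect $S$; this requires a careful analysis of how the kernels of $\mathbb A_\mathrm{crys}(\tilde S)\twoheadrightarrow\mathbb A_\mathrm{crys}(S)$ and $LW\Omega_{\tilde S}\twoheadrightarrow LW\Omega_S$ correspond under $\eta_{\tilde S}$, specifically that the divided power structure on the former kernel is $p$-torsion-free.
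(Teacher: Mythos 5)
The overall strategy matches the paper: construct $\eta\colon \mathbb A_\mathrm{crys}(S)\to LW\Omega_S$ via the divided power universal property (using Proposition~\ref{proposition_ddrw_of_qrsp}(4)), establish the isomorphism for $\tilde S=S^\flat[X_i^{1/p^\infty}]/(X_i)$, and identify the Nygaard filtration by showing that the left-Kan-extended filtration on $LW\Omega_S$ coincides with the $p^i$-divisibility filtration using \eqref{eqn_N1}. Your treatment of parts (2) and (4) is essentially the paper's, and your argument for part (5) via the $\delta$-ring congruence $\varphi(x)\equiv x^p\bmod p$ on $W(S^\flat)$ propagating through the PD envelope is a valid alternative to the paper's explicit computation (which reduces to $S=\mathbb F_p[X^{1/p^\infty}]/X$ and checks the congruence on generators $X^i/i!$).

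The gap is exactly where you flag it: the descent from $\tilde S$ to general $S$. Your proposal to ``identify the kernels of $\mathbb A_\mathrm{crys}(\tilde S)\twoheadrightarrow\mathbb A_\mathrm{crys}(S)$ and $LW\Omega_{\tilde S}\twoheadrightarrow LW\Omega_S$ under $\eta_{\tilde S}$'' is harder than it needs to be, and is essentially circular: showing the two kernels correspond under $\eta_{\tilde S}$ is as hard as showing $\eta_S$ is an isomorphism. The paper avoids this entirely by exploiting the direction of torsion-freeness: you already know that $LW\Omega_S$ (the \emph{target}, not the source) is $p$-torsion-free and derived $p$-complete, and that $\eta_{\tilde S}\bmod p$ recovers the isomorphism of Proposition~\ref{prop:Acrysmodp}. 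By naturality along the surjection $\tilde S\twoheadrightarrow S$ (which induces surjections mod $p$ on both sides), $\eta_S\bmod p$ must also coincide with the Proposition~\ref{prop:Acrysmodp} isomorphism, hence is an isomorphism. Now the standard fact -- a map of derived $p$-complete complexes with $p$-torsion-free target in degree $0$ that is an isomorphism mod $p$ is itself an isomorphism -- finishes the argument immediately, and $p$-torsion-freeness of $\mathbb A_\mathrm{crys}(S)$ is a consequence rather than something to be established beforehand. Your proposal treats (1) as an obstacle to be overcome by kernel-matching, whereas the correct order is to deduce (1) from the isomorphism.
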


We warn the reader that the Nygaard completion differs from the divided power completion: for $p=2$, the divided power completion of the Nygaard complete ring $\mathbb{Z}_2$ is simply $\mathbb F_2$ as $(2^{2^n}/(2^n)!) = (2)$ in $\mathbb{Z}_2$ for all $n\geq 1$.

\begin{proof} By part (4) of the previous proposition, there is a natural map $\mathbb A_{\mathrm{crys}}(S)\to LW\Omega_S$. At the end of the proof of Proposition~\ref{prop:Acrysmodp}, we proved that this is an isomorphism for $\tilde{S}$ of the form $S^\flat[X_i^{1/p^\infty}, i\in I]/(X_i, i\in I)$. Moreover, in that case, the map is compatible with the isomorphism of Proposition~\ref{prop:Acrysmodp}. By surjectivity of the maps induced by $\tilde{S}\to S$, we see that in general, the map is compatible with that isomorphism. As the target is $p$-torsion-free, it follows that the map $\mathbb A_{\mathrm{crys}}(S)\to LW\Omega_S$ is an isomorphism in general, and in particular part (1) follows.

It follows by induction from part (3) of Proposition~\ref{proposition_ddrw_of_qrsp} that the Nygaard filtration on $LW\Omega_S$ is precisely the submodule on which $\varphi$ is divisible by $p^i$. Thus, we see that the Nygaard filtrations are identified, proving part (3). Then part (2) follows from Proposition~\ref{proposition_ddrw_of_qrsp}~(3).

Part (4) is now a consequence of the derived version of Lemma~\ref{lemma_p_freeness_of_N}. For part (5), we may again reduce to the case of $S^\flat[X_i^{1/p^\infty},i\in I]/(X_i,i\in I)$, and then to $S=\mathbb F_p[X^{1/p^\infty}]/X$ by decomposition into tensor products. Then as an $\mathbb F_p[X^{1/p^\infty}]$-algebra, $\mathbb A_{\mathrm{crys}}(S)/p$ is generated by $X^i/i!$, and both $\varphi(X^i/i!) = X^{pi}/i!$ and $(X^i/i!)^p = X^{pi}/(i!)^p$ are divisible by $p$, as $(pi)!$ is divisible by $p (i!)^p$.
\end{proof}

\begin{remark}[Canonical representative for crystalline cohomology]
\label{rmk:CanRepCrys}
Let $A$ be a regular $\mathbb{F}_p$-algebra, and let $S = A_{\mathrm{perf}}$ denote the perfection of $A$, i.e., the direct limit of $A \xrightarrow{\varphi} A \xrightarrow{\varphi} \cdots$. Write $(S/A)^*$ for the Cech nerve of $A \to S$. We shall explain why\footnote{A careful exposition with additional context has recently been provided by Drinfeld \cite{DrinfeldStacky}.} $R\Gamma_{\mathrm{crys}}(A/\mathbb{Z}_p) \in D(\mathbb{Z}_p)$ is computed by the cochain complex
\[ \mathbb{A}_{\mathrm{crys}}((S/A)^*) := \mathbb{A}_{\mathrm{crys}}(S) \to \mathbb{A}_{\mathrm{crys}}(S \otimes_A S) \to \mathbb{A}_{\mathrm{crys}}(S \otimes_A S \otimes_A S) \to ..., \]
thus giving a canonical cochain complex calculating $R\Gamma_{\mathrm{crys}}(A/\mathbb{Z}_p)$. Note that $A,S \in \Qs_{\mathbb{F}_p}$, and the map $A \to S$ is a quasisyntomic cover of $A$ by a perfect ring: the faithful flatness of $A \to S$ follows from the regularity of $A$ (by the easy direction of Kunz's theorem), whilst Popescu's theorem \cite[Tag 07GB]{StacksProject} ensures that $L_{S/A} \simeq L_{A/\mathbb{F}_p}[1]$ has Tor amplitude concentrated in degree $-1$. By quasisyntomic descent, it is thus enough to show that the $D(\mathbb{Z}_p)$-valued presheaf $\mathbb{A}_{\mathrm{crys}}(-)$ on $\Qsp_{\mathbb{F}_p}$ is a sheaf, and that its unfolding $\mathbb{A}_{\mathrm{crys}}(-)^\beth$ coincides with $R\Gamma_{\mathrm{crys}}(-/\mathbb{Z}_p)$ on regular $\mathbb{F}_p$-algebras. By Theorem~\ref{theorem_structure_Acrys} (3), the first assertion reduces to checking that $B \mapsto LW\Omega_B$ is a sheaf on $\Qs_{\mathbb{F}_p}$, which follows from Example~\ref{pCDDR} and the isomorphism $LW\Omega_B/p \simeq L\Omega_{B/\mathbb{F}_p}$. The second assertion then reduces (by another application of Popescu) to checking that $LW\Omega_B \simeq R\Gamma_{\mathrm{crys}}(B/\mathbb{Z}_p)$ for smooth $\mathbb{F}_p$-algebras $B$. But for such $B$, the canonical map gives an quasi-isomorphism $LW\Omega_B \simeq W\Omega^\bullet_B$: this reduces to the the analogous isomorphism $L\Omega_{B/\mathbb{F}_p} \simeq \Omega^\bullet_{B/\mathbb{F}_p}$ for derived de Rham complex, cf.~\cite[Corollary 3.14]{BhargavPAdicddR}. It remains to observe that there is a canonical isomorphism $W\Omega^\bullet_B \simeq R\Gamma_{\mathrm{crys}}(B/\mathbb{Z}_p)$ by Illusie~\cite[\S II.1]{Illusie1979}.
\end{remark}

\begin{question}[Drinfeld]
Remark~\ref{rmk:CanRepCrys} gives a canonical cochain complex $\mathbb{A}_{\mathrm{crys}}((S/A)^*)$ computing the crystalline cohomology $R\Gamma_{\mathrm{crys}}(A/\mathbb{Z}_p)$ of a regular $\mathbb{F}_p$-algebra $A$. Another such complex is given by the Illusie's de Rham-Witt complex $W\Omega^\bullet_A$. Thus, there is a canonical isomorphism $\mathbb{A}_{\mathrm{crys}}((S/A)^*) \simeq W\Omega^\bullet_A$ in the derived category $D(\mathbb{Z}_p)$.  Is there a natural map (as opposed to a zig-zag) between these two complexes realizing this isomorphism in the derived category?
\end{question}

\subsection{Relation to $\TC^-$}
\label{subsection_TC_Acrys}

Finally, we want to obtain the relation to $\TC^-$, as follows. Recall that by Theorem~\ref{TCqrsp}, for a quasiregular semiperfect $S$, the ring $\widehat{\Prism}_S = \pi_0\TC^-(S;\mathbb Z_p)$ is a $p$-complete $p$-torsion-free $\mathbb Z_p$-algebra complete for the Nygaard filtration $\calN^{\geq i}\widehat{\Prism}_S\subset \widehat{\Prism}_S$, and there are compatible divided Frobenius maps
\[
\varphi_i = \frac{\varphi}{p^i}: \calN^{\geq i} \widehat{\Prism}_S\to \widehat{\Prism}_S\ .
\]
One subtlety is that in addition to the cyclotomic Frobenius map $\varphi$, there is also the map $\varphi^\prime$ induced by the Frobenius endomorphism of $S$, and moreover $\widehat{\Prism}_S/p$ has its own Frobenius endomorphism. These maps turn out to be all the same a posteriori, but we need to distinguish between them in the proof.

The main result is as follows.

\begin{theorem}\label{thm:tccharp} The maps $\varphi$ and $\varphi^\prime$ on $\widehat{\Prism}_S$ agree, and induce the Frobenius map $x\mapsto x^p$ on $\widehat{\Prism}_S/p$. There is a functorial $\varphi$-equivariant isomorphism $\widehat{\Prism}_S\cong \widehat{\mathbb A}_{\mathrm{crys}}(S)\cong \widehat{LW\Omega}_S$ with the Nygaard completion of the derived de Rham-Witt complex that identifies Nygaard filtrations.

The isomorphism $\widehat{\Prism}_S\cong \widehat{LW\Omega}_S$ lifts the isomorphism
\[
\widehat{\Prism}_S/p\cong \pi_0 \HC^-(S/\mathbb F_p)\cong \widehat{L\Omega}_{S/\mathbb F_p}
\]
from Theorem~\ref{TCqrsp}~(5) and Proposition~\ref{prop:hcddr}.
\end{theorem}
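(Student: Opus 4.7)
My plan is to identify $\widehat{\Prism}_S$ with $\widehat{\mathbb A}_\mathrm{crys}(S)$ via the universal property of $\mathbb A_\mathrm{crys}(S)$ as a PD-envelope, then invoke the identification $\widehat{\mathbb A}_\mathrm{crys}(S) \cong \widehat{LW\Omega}_S$ from Theorem~\ref{theorem_structure_Acrys}. Taking $S^\flat$ as the perfectoid base, Theorem~\ref{TCqrsp} (with $\xi = p$) provides the ambient structure: $\widehat{\Prism}_S$ is a $p$-complete, $p$-torsion-free $\mathbb Z_p$-algebra, complete for a Nygaard filtration, satisfying $\widehat{\Prism}_S/\calN^{\geq 1} = S$ and $\widehat{\Prism}_S/p \cong \widehat{L\Omega}_{S/\mathbb F_p}$, with a Frobenius $\varphi$ that sends $\calN^{\geq i}\widehat{\Prism}_S$ into $p^i \widehat{\Prism}_S$. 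Functoriality of $\widehat{\Prism}_{(-)}$ applied to the Frobenius of $S$ produces a second endomorphism $\varphi'$.

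The key technical step is to show $\varphi$ lifts the $p$-th power map on $\widehat{\Prism}_S/p$. I plan to deduce this from the identity $\varphi = \varphi'$. When $S$ is perfect, $\widehat{\Prism}_S = W(S) = A_\mathrm{inf}(S)$, and both maps are the Witt Frobenius by the analysis in \S\ref{subsec:THHTCTPPerfectoid}; in particular the Frobenius-lift property is classical. In general, $S$ admits a quasisyntomic cover $S \to T$ with $T$ perfect by the construction of Lemma~\ref{QSynQSperfCover}, and the equality $\varphi = \varphi'$ on all terms of the \v{C}ech nerve of $S \to T$ descends to $\widehat{\Prism}_S$ by the sheaf property of $\widehat{\Prism}_{(-)}$ on $\qs_{\mathbb F_p}$, together with naturality of $\varphi$ and $\varphi'$.

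Armed with the Frobenius-lift property, the ideal $J := \calN^{\geq 1}\widehat{\Prism}_S = \ker(\widehat{\Prism}_S \to S)$ carries divided powers compatible with the standard ones on $(p) \subset W(S^\flat)$. Indeed, for a $p$-complete, $p$-torsion-free $\mathbb Z_p$-algebra $A$ equipped with a Frobenius lift $\varphi$ and an ideal $J$ with $\varphi(J) \subseteq pA$, the element $x^p/p!$ lies in $A$ for all $x \in J$ (using $(p-1)! \in \mathbb Z_p^\times$), and the standard argument extends this to a PD-structure compatible with the standard one on $(p)$. By the universal property of $\mathbb A_\mathrm{crys}(S)$ as the $p$-complete PD-envelope of $W(S^\flat) \to S$, this yields a unique map $\mathbb A_\mathrm{crys}(S) \to \widehat{\Prism}_S$ of $W(S^\flat)$-algebras compatible with the canonical surjections onto $S$, and the map is $\varphi$-equivariant because both sides carry Frobenius lifts extending Witt Frobenius on $W(S^\flat)$. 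Nygaard-completeness of the target extends this to $\widehat{\mathbb A}_\mathrm{crys}(S) \to \widehat{\Prism}_S$.

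Finally, I would verify this map is an isomorphism and compatible with filtrations by reducing modulo $p$: both sides become $\widehat{L\Omega}_{S/\mathbb F_p}$ (source by Theorem~\ref{theorem_structure_Acrys}~(4), target by Theorem~\ref{TCqrsp}~(5)), and the map is the identity by construction; $p$-completeness and $p$-torsion-freeness then lift this to an isomorphism. The two Nygaard filtrations agree because each admits a characterization as the maximal subobject on which $\varphi$ is divisible by $p^i$ (via Proposition~\ref{proposition_frobenius_of_nygaard} on the de Rham--Witt side and Theorem~\ref{TCqrsp}~(4) on the other). The compatibility with the mod-$p$ identification is built into the construction. The principal obstacle is establishing the Frobenius-lift property of $\varphi$, specifically the descent step for $\varphi = \varphi'$, which requires careful use of the sheaf property together with an explicit perfect quasisyntomic cover of $S$.
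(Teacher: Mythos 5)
Your high-level architecture for the second half of the argument matches the paper's: once one knows that $\varphi$ is a Frobenius lift (equivalently, once $\varphi = \varphi'$ is established), deduce that $\ker(\widehat{\Prism}_S \to S)$ carries divided powers, invoke the universal property of $\mathbb A_{\mathrm{crys}}(S)$ to produce a map, pass to Nygaard completions, and verify it is an isomorphism by reducing modulo $p$ where both sides become $\widehat{L\Omega}_{S/\mathbb F_p}$. That part is sound.

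The gap is in your proof that $\varphi = \varphi'$. You propose to cover $S$ by a \emph{perfect} ring $T$ via a quasisyntomic cover and descend the equality along the \v{C}ech nerve, citing Lemma~\ref{QSynQSperfCover}. But that lemma produces covers by quasiregular semiperfectoid rings, not by perfect rings, and in fact no quasisyntomic cover $S \to T$ with $T$ perfect can exist unless $S$ is already perfect. If $T$ is perfect then $L_{T/\mathbb F_p} \simeq 0$, so the transitivity triangle
\[
L_{S/\mathbb F_p} \dotimes_S T \to L_{T/\mathbb F_p} \to L_{T/S}
\]
forces $L_{T/S} \simeq (L_{S/\mathbb F_p} \dotimes_S T)[1]$. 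Since $L_{S/\mathbb F_p}$ has $p$-complete Tor amplitude concentrated in cohomological degree $-1$ (Remark~\ref{rem:qsptor}), the shifted complex has Tor amplitude concentrated in degree $-2$, which is incompatible with the requirement that $L_{T/S}$ have $p$-complete Tor amplitude in $[-1,0]$ for $S \to T$ to be quasisyntomic. (One also sees the problem concretely: for $S = \mathbb F_p[x^{1/p^\infty}]/x$ the perfection is $\mathbb F_p$ and $S \to \mathbb F_p$ is not flat.) Moreover, even if such a cover existed, the \v{C}ech nerve terms would not be perfect, so the base case would not propagate.

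The paper handles this step by an entirely different and essentially computational route: first the case $S = \mathbb F_p[T^{\pm 1/p^\infty}]/(T-1) = \mathbb F_p[\mathbb Q_p/\mathbb Z_p]$ is resolved via a spherical group algebra trick, writing $\THH(S) \simeq \HH(\mathbb Z[\mathbb Q_p/\mathbb Z_p]) \otimes_{\mathbb Z} \THH(\mathbb F_p)$ and identifying $\TP(S) \simeq \HP(\mathbb Z[\mathbb Q_p/\mathbb Z_p];\mathbb Z_p)$, then matching this against the derived de~Rham--Witt complex via the Frobenius lift on $\mathbb Z[\mathbb Q_p/\mathbb Z_p]$ and Proposition~\ref{prop:nygaardlift}; next this is propagated to all rings of the form $R[X_i^{1/p^\infty}]/(X_i)$ by multiplicativity; and finally, for general $S$, one uses the \emph{surjection} $\widehat{\Prism}_{\tilde S} \twoheadrightarrow \widehat{\Prism}_S$ for $\tilde S = S^\flat[X_i^{1/p^\infty}, i\in I]/(X_i)$ (not a faithfully flat cover, merely a surjection compatible with the Nygaard filtration), which suffices to transfer the equality $\varphi = \varphi'$ and the existence of divided powers. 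Without that computation for $\mathbb F_p[\mathbb Q_p/\mathbb Z_p]$ --- which is the genuine content of the theorem --- your argument does not close.
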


This theorem implies Theorem~\ref{thm:main3} by quasisyntomic descent, as $\widehat{LW\Omega}_S$ unfolds to $\widehat{LW\Omega}_A$ for all $A\in \Qs_{\mathbb F_p}$, and restricts to the de Rham-Witt complex on smooth algebras.

\begin{proof} We give the proof as a series of steps. The key step of the proof is the identification for $S=\mathbb F_p[T^{\pm 1/p^\infty}]/(T-1)$. Once that case has been settled, one can show that $\widehat{\Prism}_S\to S$ is a pd thickening, which provides us with a functorial map $\mathbb A_{\mathrm{crys}}(S)\to\widehat{\Prism}_S$, which can be shown to extend to the Nygaard completion and be an isomorphism.\vspace{0.1in}

{\em Preliminaries.} If $S$ is perfect, then the result follows from Proposition~\ref{TCTPTHHpi}. Moreover, for general $S$, we know that modulo $p$, there is a functorial isomorphism
\[
\widehat{\Prism}_S/p\cong \widehat{L\Omega}_{S/\mathbb F_p}\cong \widehat{\mathbb A}_{\mathrm{crys}}(S)/p
\]
by Theorem~\ref{TCqrsp}~(5) and Theorem~\ref{theorem_structure_Acrys}~(4). In particular, by functoriality in $S$, this identifies $\varphi^\prime$ with the Frobenius map of $\widehat{\Prism}_S/p$ by Theorem~\ref{theorem_structure_Acrys}~(5).\vspace{0.1in}

{\em The case of $S=\mathbb F_p[T^{\pm 1/p^\infty}]/(T-1) = \mathbb F_p[\mathbb Q_p/\mathbb Z_p]$.} For this, we use an argument that we learned from Akhil Mathew. Consider the $E_\infty$-ring spectrum $B=\mathbb S[\mathbb Q_p/\mathbb Z_p]$, a spherical group algebra. Then
\[
\THH(S) = \THH(B)\otimes_{\mathbb S} \THH(\mathbb F_p)
\]
as $\THH$ is a symmetric monoidal functor, cf.~\cite[\S IV.2]{NikolausScholze}. On the other hand,
\[\begin{aligned}
\THH(B)\otimes_{\mathbb S}\mathbb Z&=(\THH(B)\otimes_{\mathbb S}\THH(\mathbb Z))\otimes_{\THH(\mathbb Z)} \mathbb Z\\
&=\THH(B\otimes_{\mathbb S}\mathbb Z)\otimes_{\THH(\mathbb Z)} \mathbb Z\\
&=\THH(\mathbb Z[\mathbb Q_p/\mathbb Z_p])\otimes_{\THH(\mathbb Z)} \mathbb Z\\
&=\HH(\mathbb Z[\mathbb Q_p/\mathbb Z_p])
\end{aligned}\]
using Lemma~\ref{THHvsHH}. By \cite[Corollary IV.4.10]{NikolausScholze}, there is a natural $\T$-equivariant map $\mathbb Z\to \THH(\mathbb F_p)$ of $E_\infty$-ring spectra, and so we get a natural $\T$-equivalence
\[\begin{aligned}
\THH(S) &= \THH(B)\otimes_{\mathbb S}\THH(\mathbb F_p) = (\THH(B)\otimes_{\mathbb S}\mathbb Z)\otimes_{\mathbb Z}\THH(\mathbb F_p)\\
&=\HH(\mathbb Z[\mathbb Q_p/\mathbb Z_p])\otimes_{\mathbb Z}\THH(\mathbb F_p)\ .
\end{aligned}\]
Now we claim that for any connective $\T$-equivariant $M\in D(\mathbb Z)$ (such as $M=\HH(\mathbb Z[\mathbb Q_p/\mathbb Z_p])$), the map
\[
M\to M\otimes_{\mathbb Z} \THH(\mathbb F_p)
\]
induces a map
\[
M^{t\T}\to (M\otimes_{\mathbb Z} \THH(\mathbb F_p))^{t\T}
\]
that identifies the target as the $p$-completion of the source. Indeed, the target is always $p$-complete, so we need to prove that it is an equivalence after modding out by $p$. By \cite[Lemma IV.4.12]{NikolausScholze}, it is thus enough to prove that
\[
M^{tC_p}\to (M\otimes_{\mathbb Z}\THH(\mathbb F_p))^{tC_p}
\]
is an equivalence. Both sides commute with writing $M$ as the limit of $\tau_{\leq n} M$ by using Lemma~\ref{Postnikov}, so we can assume that $M$ is bounded, and then by induction concentrated in one degree. We can also assume that $M$ is killed by $p$, and thus an $\mathbb F_p$-vector space. The result is true if $M=\mathbb Z$ by \cite[Corollary IV.4.13]{NikolausScholze}, and thus for $M=\mathbb F_p$. It then follows formally that it holds for arbitrary products of copies of $\mathbb F_p$, and thus for every $\mathbb F_p$-vector space by passing to direct summands.

Applied to $M=\HH(\mathbb Z[\mathbb Q_p/\mathbb Z_p])$, we arrive at an equivalence of $E_\infty$-ring spectra
\[
\HP(\mathbb Z[\mathbb Q_p/\mathbb Z_p];\mathbb Z_p)\simeq \TP(\mathbb F_p[\mathbb Q_p/\mathbb Z_p])\ .
\]
On the other hand, Theorem~\ref{thm:main7} gives an isomorphism
\[
\pi_0 \HP(\mathbb Z[\mathbb Q_p/\mathbb Z_p];\mathbb Z_p)\cong (\widehat{L\Omega}_{\mathbb Z[\mathbb Q_p/\mathbb Z_p]/\mathbb Z})^\wedge_p\ .
\]
If $\tilde{R}$ is a flat $\mathbb Z_p$-algebra with a Frobenius lift and $R=\tilde{R}/p$, there is a natural quasi-isomorphism $(\Omega^\bullet_{\tilde{R}/\mathbb Z_p})^\wedge_p\to W\Omega_R^\bullet$ by Proposition~\ref{prop:nygaardlift} that moreover intertwines the combined $p$-adic and Hodge filtration on the left with the Nygaard filtration on the right. By left Kan extension and using the natural Frobenius lift on $\mathbb Z[\mathbb Q_p/\mathbb Z_p]$, this implies that there is a natural isomorphism
\[
(\widehat{L\Omega}_{\mathbb Z[\mathbb Q_p/\mathbb Z_p]/\mathbb Z})^\wedge_p\simeq \widehat{LW\Omega}_{\mathbb F_p[\mathbb Q_p/\mathbb Z_p]}\ .
\]
In summary,
\[
\pi_0 \TP(\mathbb F_p[\mathbb Q_p/\mathbb Z_p])\cong \pi_0\HP(\mathbb Z[\mathbb Q_p/\mathbb Z_p];\mathbb Z_p)\cong \widehat{LW\Omega}_{\mathbb F_p[\mathbb Q_p/\mathbb Z_p]}\ .
\]
It follows from the construction that this isomorphism is compatible with the isomorphism
\[
\pi_0\TP(S)/p\cong \pi_0\HP(S/\mathbb F_p)\cong \widehat{L\Omega}_{S/\mathbb F_p}\ .
\]
This shows that for $S=\mathbb F_p[\mathbb Q_p/\mathbb Z_p]$, there is indeed an isomorphism of rings
\[
\pi_0 \TP(S)\cong \widehat{\mathbb A}_{\mathrm{crys}}(S)\ .
\]
The same arguments apply to $\mathbb F_p[T^{\pm 1/p^\infty}]$ (which is a perfect ring for which we already know the result), and so we see by functoriality that the map
\[
W(\mathbb F_p[T^{\pm 1/p^\infty}]) = \pi_0 \TP(\mathbb F_p[T^{\pm 1/p^\infty}])\to \pi_0\TP(S) = \widehat{\mathbb A}_{\mathrm{crys}}(S)
\]
is the natural injective map. On its image, we know that $\varphi=\varphi^\prime$. As the map
\[
W(\mathbb F_p[T^{\pm 1/p^\infty}])[\tfrac 1p]\to \widehat{\mathbb A}_{\mathrm{crys}}(S)[\tfrac 1p]
\]
has dense image, it follows that $\varphi=\varphi^\prime$ on $\widehat{\Prism}_S = \pi_0 \TP(S)$, and agrees with the Frobenius of $\widehat{\mathbb A}_{\mathrm{crys}}(S)$. As on $\calN^{\geq i}\widehat{\Prism}_S$, the Frobenius $\varphi$ is divisible by $p^i$, it follows that it maps into $\calN^{\geq i} \widehat{\mathbb A}_{\mathrm{crys}}(S)$. To prove that they agree, we argue by induction and use the short exact sequence~\eqref{eqn_N2}. Using this, it is enough to prove that
\[
\widehat{\Prism}_S/(p\widehat{\Prism}_S + \calN^{\geq i+1}\widehat{\Prism}_S) = L\Omega^{\leq i}_{S/\mathbb F_p}
\]
as quotients of $\widehat{\Prism}_S/p = \widehat{\mathbb A}_{\mathrm{crys}}(S)/p = \widehat{L\Omega}_{S/\mathbb F_p}$. But the Nygaard filtration was defined in terms of the abutment filtration for the Tate spectral sequence, and modulo $p$ this reduces to the abutment filtration for the Tate spectral sequence for $\pi_0 \HP(S/\mathbb F_p)$ (equivalently, the homotopy fixed point spectral sequence for $\pi_0 \HC^-(S/\mathbb F_p)$), which was identified with the Hodge filtration in the proof of Proposition~\ref{prop:hcddr}. Thus, the theorem holds true for $S=\mathbb F_p[T^{\pm 1/p^\infty}]/(T-1)$, or equivalently for $S=\mathbb F_p[T^{1/p^\infty}]/T$. \vspace{0.1in}

{\em More reductions.} If the result holds true for $S_1$ and $S_2$, then it holds true for $S_1\otimes_{\mathbb F_p} S_2$, as both $\widehat{\Prism}_{S_1\otimes S_2}$ and $\widehat{\mathbb A}_{\mathrm{crys}}(S_1\otimes S_2)$ are given by completions of $\widehat{\Prism}_{S_1}\otimes_{\mathbb Z_p} \widehat{\Prism}_{S_2}$ respectively $\widehat{\mathbb A}_{\mathrm{crys}}(S_1)\otimes_{\mathbb Z_p} \widehat{\mathbb A}_{\mathrm{crys}}(S_2)$ for the tensor product of the Nygaard filtrations; indeed, this can be checked modulo $p$, where it follows from the above discussion. By passage to tensor products and filtered colimits, the theorem holds true for any algebra of the form $R[X_i^{1/p^\infty},i\in I]/(X_i,i\in I)$, where $R$ is any perfect algebra.\vspace{0.1in}

{\em The general case.} For a general ring $S$, we know now that if $\tilde{S}=S^\flat[X_i^{1/p^\infty},i\in I]/(X_i,i\in I)$, where $I=\ker(S^\flat\to S)$, which has its natural surjection $\tilde{S}\to S$, then the theorem holds true for $\tilde{S}$. The natural map $\widehat{\Prism}_{\tilde{S}}\to \widehat{\Prism}_S$ is surjective, and is surjective on all steps of the Nygaard filtration (by checking on associated gradeds for the Nygaard filtration). In particular, we see that $\varphi=\varphi^\prime$ on $\widehat{\Prism}_S$. Moreover, we see that the ideal $\ker(\widehat{\Prism}_S\to S)$ has divided powers, by reduction to the case of $\tilde{S}$. In particular, we get a functorial map $\mathbb A_{\mathrm{crys}}(S)\to \widehat{\Prism}_S$. This map is compatible with the Nygaard filtration, again by reduction to the case of $\tilde{S}$. Therefore, it induces a functorial map $\widehat{\mathbb A}_{\mathrm{crys}}(S)\to \widehat{\Prism}_S$. By reduction to the case of $\tilde{S}$, this is surjective, and induces surjections on all steps of the Nygaard filtration. To finish the proof, it remains to see that the map $\widehat{\mathbb A}_{\mathrm{crys}}(S)/p\to \widehat{\Prism}_S/p$ is an isomorphism. But this is an endomorphism $\widehat{L\Omega}_{S/\mathbb F_p}\to \widehat{L\Omega}_{S/\mathbb F_p}$, and we know that for $\tilde{S}$ it is the identity endomorphism. Thus, the same holds true for $S$, as desired.
\end{proof}

A consequence of the discussion is the following description of $\THH$ and $\THH^{tC_p}$, and a version of the Segal conjecture.

\begin{corollary}\label{cor:segalcharp} Let $A$ be a smooth $k$-algebra of dimension $d$. For all $i\in \mathbb Z$, there are natural isomorphisms
\[
\gr^i \THH(A)\simeq (\tau^{\leq i} \Omega_{A/k})[2i]
\]
and
\[
\gr^i \THH(A)^{tC_p}\simeq \Omega_{A/k}[2i]\ ,
\]
where the filtration on $\THH(-)^{tC_p}$ is defined as usual via quasisyntomic descent of the double-speed Postnikov filtration. Under this equivalence, the map $\varphi: \gr^i \THH(A)\to \gr^i \THH(A)^{tC_p}$ is the natural map $\tau^{\leq i} \Omega_{A/k}\to \Omega_{A/k}$.

In particular,
\[
\varphi: \THH(A)\to \THH(A)^{tC_p}
\]
is an equivalence in degrees $\geq d$.
\end{corollary}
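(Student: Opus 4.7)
The plan is to first unpack the graded pieces using the main theorems of the paper, then read off the Frobenius on them, and finally propagate equivalence of graded pieces to an equivalence in a range of degrees by a connectivity argument.

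First I would establish the graded identifications. By Theorem~\ref{thm:main5}(4) (applied over $k$, where Breuil–Kisin twists are trivial), one has $\gr^i \THH(A;\mathbb Z_p)\simeq \calN^i\widehat{\Prism}_A[2i]$ and $\gr^i \TP(A;\mathbb Z_p)\simeq \widehat{\Prism}_A[2i]$. Theorem~\ref{thm:tccharp} identifies $\widehat{\Prism}_A\simeq \widehat{LW\Omega}_A$ with Nygaard filtrations, which for smooth $A$ is classically $W\Omega^\bullet_A$ with the Nygaard filtration of Definition~\ref{definition_Nygaard}. Lemma~\ref{lem:nygaardconj} then gives the quasi-isomorphism $\varphi_i \bmod p : \calN^i W\Omega^\bullet_A \xrightarrow{\sim} \tau^{\le i}\Omega^\bullet_{A/k}$, providing the asserted identification of $\gr^i\THH(A)$. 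For $\THH(A)^{tC_p}$, Proposition~\ref{prop:TCtoTHH} gives $\THH^{tC_p}\simeq \TP/\tilde\xi$, so $\gr^i\THH(A)^{tC_p}\simeq (\widehat{\Prism}_A/\tilde\xi)[2i]$; since $\tilde\xi=\varphi(p)=p$ in characteristic $p$ and $W\Omega^\bullet_A/p\simeq \Omega^\bullet_{A/k}$, this yields $\Omega^\bullet_{A/k}[2i]$.

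Second, I would identify $\gr^i\varphi$. By Theorem~\ref{thm:main5}(5), the Frobenius on graded pieces is $\varphi_i:\calN^{\ge i}\widehat\Prism_A \to \widehat\Prism_A$, and its restriction to the $i$-th graded piece composed with reduction mod $\tilde\xi=p$ is exactly the map appearing in Lemma~\ref{lem:nygaardconj}. That lemma further identifies this composition as the inclusion $\tau^{\le i}\Omega^\bullet_{A/k}\hookrightarrow \Omega^\bullet_{A/k}$, proving the bulk of the corollary.

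Finally, for the last statement: since $A$ is smooth of relative dimension $d$, $\Omega^\bullet_{A/k}$ is concentrated in cohomological degrees $[0,d]$, so $\tau^{\le i}\Omega^\bullet_{A/k}\to \Omega^\bullet_{A/k}$ is a quasi-isomorphism for $i\ge d$. Consequently $\gr^i\varphi$ is an equivalence for $i\ge d$. To transfer this to homotopy groups, consider the fiber $F=\mathrm{fib}(\varphi)$ with its induced filtration; completeness of the filtrations forces $\Fil^d F\simeq 0$, so $F$ is built from the finitely many graded pieces $\gr^i F$ with $0\le i<d$ (and from the $i<0$ part of $\THH^{tC_p}$, which lives in homological degrees $\le -2$ and can be separated off first). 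A direct computation shows each such graded piece is homologically bounded above well below $d$, which via the long exact sequence yields that $\pi_n\varphi$ is an isomorphism for $n\ge d$. The main technical subtlety is bookkeeping: the filtration on $\THH(A)$ is $\mathbb N$-indexed (by connectivity) while that on $\THH(A)^{tC_p}$ extends to $-\infty$, and one must verify that $\varphi$ respects filtrations (as in Theorem~\ref{thm:main5}(5)) and that the cofiber of $\Fil^0\THH^{tC_p}\hookrightarrow \THH^{tC_p}$ contributes only in low degrees, so it does not disturb the final conclusion.
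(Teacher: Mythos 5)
Your proposal is correct and follows essentially the same route as the paper: both arguments reduce to the compatibility between the Nygaard filtration on $W\Omega^\bullet_A$ and the conjugate/$L\eta$ filtration established in \S\ref{subsection_Nygaard} (you invoke Lemma~\ref{lem:nygaardconj} directly, while the paper phrases the same input via Proposition~\ref{proposition_frobenius_of_nygaard} and the Beilinson $t$-structure formalism of Proposition~\ref{LetaDF}), combined with the identification $\THH^{tC_p}\simeq \TP/\tilde\xi$ from Proposition~\ref{prop:TCtoTHH}. You also make explicit the completeness/connectivity bookkeeping for the ``degrees $\geq d$'' statement, which the paper leaves implicit.
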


The last part was observed earlier by Hesselholt \cite[Proposition 6.6]{HesselholtZeta}.

\begin{proof} The identificaton $\THH(A)^{tC_p}\simeq \TP(A)/p$ from Proposition~\ref{prop:TCtoTHH} is compatible with filtrations (by checking for quasiregular semiperfect rings) and thus induces equivalences $\gr^i \THH(A)^{tC_p}\simeq W\Omega_A/p[2i] = \Omega_{A/k}[2i]$. Under the general equivalence between $\calN^i\widehat{\Prism}_A$ and $\gr^i \THH(A)[-2i]$, the compatibility with filtrations (Nygaard respectively $L\eta$) of the equivalence $\widehat{\Prism}_A\simeq L\eta_p \widehat{\Prism}_A$ is equivalent to the assertion that the maps $\gr^i \THH(A)\to \gr^i \THH(A)^{tC_p}$ induced by $\varphi$ induce isomorphisms $\gr^i \THH(A)\simeq \tau^{\leq -i} \gr^i \THH(A)^{tC_p}$, giving the result.
\end{proof}

\subsection{$K$-theory, $\TC$, and logarithmic de~Rham-Witt sheaves in characteristic $p$}
\label{subsection_log_dRW}
Here we apply the results obtained so far in this section to analyse the syntomic sheaves from \S\ref{subsection_syntomic} in characteristic $p$, identify them in terms of algebraic $K$-theory, and show that their pushforwards to the \'etale world recover $p$-adic motivic cohomology in its guise as a logarithmic de~Rham-Witt sheaf. 

\begin{lemma}\label{lem:TCcharp}$ $
\begin{enumerate}
\item For any quasiregular semiperfect $\mathbb F_p$-algebra $S$ and $i>0$, the operator \[\varphi_i-1:\calN^{\ge i}\widehat{\mathbb A}_\sub{crys}(S)\to \widehat{\mathbb A}_\sub{crys}(S)\] is surjective.
\item For any $i\ge0$, the operator \[\varphi_i-1:\calN^{\ge i}\widehat{\mathbb A}_\sub{crys}(-)\to \widehat{\mathbb A}_\sub{crys}(-)\] is surjective as a map of sheaves on $\Qsp_{\mathbb F_p}$.
\end{enumerate}
\end{lemma}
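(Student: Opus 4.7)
The plan is to prove both parts by reducing, via $p$-adic completeness, to a mod-$p$ problem on the Hodge-completed derived de~Rham complex $\widehat{L\Omega}_{S/\mathbb{F}_p}$, using the canonical identification $\widehat{\mathbb A}_{\mathrm{crys}}(S)/p \cong \widehat{L\Omega}_{S/\mathbb{F}_p}$ from Theorem~\ref{theorem_structure_Acrys}(4) together with the explicit description of the Frobenius and the two filtrations (Hodge and conjugate) on the target.

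For Part~(1), fix $i\geq 1$ and $y\in \widehat{\mathbb{A}}_{\mathrm{crys}}(S)$. Since both $\widehat{\mathbb{A}}_{\mathrm{crys}}(S)$ and $\calN^{\geq i}\widehat{\mathbb{A}}_{\mathrm{crys}}(S)$ are $p$-torsion-free and $p$-adically complete, standard successive approximation reduces the construction of a preimage $x\in \calN^{\geq i}$ under $\varphi_i-1$ to producing a solution modulo~$p$. Under the above identifications, the image of $\calN^{\geq i}$ modulo~$p$ coincides with $L\Omega^{\geq i}_{S/\mathbb{F}_p}$, while by the derived version of Lemma~\ref{lem:nygaardconj} the map $\varphi_i \bmod p$ lands in the conjugate filtration $\Fil^\sub{conj}_i$. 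Crucially, Theorem~\ref{theorem_structure_Acrys}(5) tells us that $\varphi$ modulo~$p$ is the $p$-th power map, and a direct combinatorial computation using Proposition~\ref{proposition_conjugate_filtration} shows that $(a^{[n]})^p \equiv 0\pmod{p}$ for all $n\geq 1$ and $a\in I$, so this $p$-th power map vanishes on the divided-power ideal $L\Omega^{\geq 1}_{S/\mathbb{F}_p}$. The plan is then to construct the mod-$p$ preimage by an induction along the (finite-step) conjugate filtration on the target, at each stage using the isomorphism $\calN^i \xrightarrow{\sim} \Fil^\sub{conj}_i/\Fil^\sub{conj}_{i-1}$ induced by $\varphi_i$ (Theorem~\ref{theorem_structure_Acrys}(2)) together with the vanishing above to correct one graded piece at a time.

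For Part~(2), the case $i\geq 1$ is an immediate consequence of Part~(1). For $i=0$, sheaf surjectivity of $\varphi-1$ on $\Qsp_{\mathbb{F}_p}$ requires passing to covers. Given $S \in \Qsp_{\mathbb{F}_p}$ and $y\in \widehat{\mathbb{A}}_{\mathrm{crys}}(S)$, work modulo $p$ and decompose via the short exact sequence $0 \to L\Omega^{\geq 1}_{S/\mathbb{F}_p} \to \widehat{L\Omega}_{S/\mathbb{F}_p} \to S \to 0$. On the divided-power ideal piece, the vanishing computation from Part~(1) gives $\varphi-1 = -\mathrm{id}$, so no cover is needed. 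On the quotient $S$, $\varphi - 1$ acts as the classical Artin--Schreier map $F-1$; passing to the semiperfect closure of $S[t]/(t^p - t - \bar y)$ produces a quasisyntomic cover in $\Qsp_{\mathbb{F}_p}$ realizing the needed preimage. Integral lifting then proceeds by $p$-adic successive approximation, combining Part~(1) for the Nygaard-positive piece with iterated Artin--Schreier covers for the remaining constant piece at each $p$-adic level; the tower of such covers has a limit in $\Qsp_{\mathbb{F}_p}$.

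The principal technical obstacle is the inductive mod-$p$ construction in Part~(1): one must match the Hodge and conjugate filtrations on $\widehat{L\Omega}_{S/\mathbb{F}_p}$ carefully enough to verify, graded piece by graded piece, that the combination $\varphi_i-1$ is surjective, which is where the characteristic-$p$ vanishing of $p$-th powers on divided-power generators enters decisively.
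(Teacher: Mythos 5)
Your reduction to a mod $p$ statement and your treatment of Part (2) (Artin--Schreier covers for the quotient $S$, then successive approximation) are in the same spirit as the paper. The gap is in Part (1). You correctly identify that $\varphi_i\bmod p$ lands in $\Fil^{\sub{conj}}_i$, and your computation that $(a^{[n]})^p\equiv 0\pmod p$ (so $\varphi$ kills $\Fil^1_{\sub{pd}}$ mod $p$) is true. But that vanishing concerns $\varphi$, not the divided Frobenius $\varphi_i=\varphi/p^i$, so for $i>0$ it does not directly control $\varphi_i-1$. Moreover, your proposed ``induction along the conjugate filtration'' does not close up: if you pick $x\in\calN^{\geq i}$ with $\varphi_i(x)$ matching the top conjugate piece of $y$, the error you create is $-x\in\Fil^i_{\sub{pd}}$, which lives in the Hodge direction and can have arbitrarily high conjugate degree, so the induction does not visibly terminate. (Also note that Theorem~\ref{theorem_structure_Acrys}(2) gives an isomorphism $\calN^i\simeq\Fil^{\sub{conj}}_i$, not $\calN^i\simeq\Fil^{\sub{conj}}_i/\Fil^{\sub{conj}}_{i-1}$, and the conjugate filtration is not finite-step or exhaustive on the Hodge-completed $\widehat{L\Omega}_{S/\mathbb F_p}$.)

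The two ideas you are missing, which make the argument a clean one-step rather than an induction, are: (i) restrict $\varphi_i-1$ to the subgroup $p\calN^{\geq i-1}\subset\calN^{\geq i}$; since $\varphi_i(px')=\varphi_{i-1}(x')$ and $px'\equiv 0\pmod p$, on this piece $\varphi_i-1$ reduces to $\varphi_{i-1}$ modulo $p$, with image $\Fil^{\sub{conj}}_{i-1}$ and no error term; and (ii) the elementary combinatorial identity $\Fil^{\sub{conj}}_{j-1}+\Fil^{pj}_{\sub{pd}}=\widehat{\mathbb A}_{\sub{crys}}(S)/p$ for all $j\geq 1$ (every monomial $a_1^{[l_1]}\cdots a_m^{[l_m]}$ with $\sum l_k<pj$ lies in $\Fil^{\sub{conj}}_{j-1}$, and those with $\sum l_k\geq pj$ lie in $\Fil^{pj}_{\sub{pd}}$). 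Combining (i) with the observation that $\varphi_i-1$ on $\calN^{\geq i+1}$ reduces to $-\mathrm{id}$ modulo $p$ (hitting $\Fil^{i+1}_{\sub{pd}}$), and using (ii) with $j=i$ to see $\Fil^{\sub{conj}}_{i-1}+\Fil^{i+1}_{\sub{pd}}$ is everything when $i\geq 1$, yields surjectivity immediately.
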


\begin{proof}
(1) By $p$-completeness of both sides it is enough to prove surjectivity modulo $p$. When restricted to $\calN^{\geq i+1} \widehat{\mathbb A}_\sub{crys}(S)$, the map
\[
\varphi_i-1: \calN^{\geq i+1} \widehat{\mathbb A}_\sub{crys}(S)\to \widehat{\mathbb A}_\sub{crys}(S)/p
\]
agrees with minus the canonical map, as $\varphi_i$ is divisible by $p$ on $\calN^{\geq i+1} \widehat{\mathbb A}_\sub{crys}(S)$. It follows that the image
\[
\widehat{L\Omega}^{\geq i+1}_{S/\mathbb F_p}\subset \widehat{L\Omega}_{S/\mathbb F_p} = \widehat{\mathbb A}_{\sub{crys}}(S)/p
\]
of $\calN^{\geq i+1} \widehat{\mathbb A}_\sub{crys}(S)$ lies in the image of $\varphi_i-1$; this can be identified with the divided power filtration $\Fil^{i+1}_\sub{pd} \widehat{\mathbb A}_\sub{crys}(S)/p\subset \widehat{\mathbb A}_\sub{crys}(S)/p$ by Theorem \ref{theorem_structure_Acrys} (4).

When restricted to $p\calN^{\geq i-1} \widehat{\mathbb A}_\sub{crys}(S)$, the map
\[
\varphi_i-1: p\calN^{\geq i-1}\widehat{\mathbb A}_\sub{crys}(S)\to \widehat{\mathbb A}_\sub{crys}(S)/p
\]
agrees with
\[
\varphi_{i-1} = \varphi_{i-1}-p: \calN^{\geq i-1}\widehat{\mathbb A}_\sub{crys}(S)\to \widehat{\mathbb A}_\sub{crys}(S)/p\ .
\]
This factors over the map
\[
\calN^{i-1}\widehat{\mathbb A}_{\sub{crys}}(S) = L\tau^{\leq i-1}\Omega_{S/\mathbb F_p}\to \widehat{L\Omega}_{S/\mathbb F_p} = \widehat{\mathbb A}_\sub{crys}(S)/p\ .
\]
Thus, also $L\tau^{\leq i-1}\Omega_{S/\mathbb F_p} = \Fil_{i-1}^\sub{conj} \widehat{\mathbb A}_\sub{crys}(S)/p$ lies in the image of $\varphi_i-1$. But for $i>0$, one has
\[
\Fil_{i-1}^\sub{conj} \widehat{\mathbb A}_\sub{crys}(S)/p + \Fil^{i+1}_\sub{pd} \widehat{\mathbb A}_\sub{crys}(S)/p = \widehat{\mathbb A}_\sub{crys}(S)/p
\]
as in general for all $j\geq 1$,
\[
\Fil_{j-1}^\sub{conj} \widehat{\mathbb A}_\sub{crys}(S)/p + \Fil^{pj}_\sub{pd} \widehat{\mathbb A}_\sub{crys}(S)/p = \widehat{\mathbb A}_\sub{crys}(S)/p\ ,
\]
giving the result.

(2) The case $i>0$ is covered by part (1), which also shows that $\varphi-1\mod p:\widehat{\mathbb A}_\sub{crys}(S)\to \widehat{\mathbb A}_\sub{crys}(S)/p$ hits all of $\Fil^1_\sub{pd}\widehat{\mathbb A}_\sub{crys}(S)/p$. Moreover, the composition \[\widehat{\mathbb A}_\sub{crys}(S)\xrightarrow{\varphi-1} \widehat{\mathbb A}_\sub{crys}(S)\to S\] is surjective, when viewed as a sheaf over $S\in\Qsp_{\mathbb F_p}$, since Artin--Schreier extensions exist in $\Qsp_{\mathbb F_p}$. Since $\widehat{\mathbb A}_\sub{crys}(S)$ is $p$-adically complete and the union of a tower of Artin--Schreier extensions is still a cover in $\Qsp_{\mathbb F_p}$, this proves the desired surjectivity.
\end{proof}

Combining the previous lemma with the identifications of Theorem \ref{thm:tccharp}, we obtain exact sequences of sheaves \[0\to\pi_{2i}\TC(-)\to\pi_{2i}\TC^-(-)\xrightarrow{\varphi^{h\T}-1}\pi_{2i}\TP(-)\to 0\] on $\Qsp_{\mathbb F_p}$. In particular, this shows that the syntomic sheaf $\mathbb Z_p(i)$ on $\Qsp_{\mathbb F_p}$ is concentrated in degree $0$ and identifies with $\pi_{2i} \TC(-)$, which is $p$-torsion-free (since we know from Theorem \ref{thm:tccharp} that $\pi_{2i}\TC^-(S)\cong\mathcal N^{\geq i}\widehat{\mathbb A}_\sub{crys}(S)$, which is $p$-torsion-free by Theorem \ref{theorem_structure_Acrys} -- to be precise, it easily follows from the definition of the Nygaard filtration that $p$-torsion-freeness of $\mathbb A_\sub{crys}(S)$ implies the same for its Nygaard completion). We have proved most of:

\begin{proposition}\label{proposition_Z_p(i)}
Conjecture \ref{conjecture_syntomic} is true in characteristic $p$. More precisely, for any $S\in\Qsp_{\mathbb F_p}$,  the complex $\mathbb Z_p(i)(S)$ is concentrated in degree $0$ and given by the $p$-torsion-free group $\mathbb A_\sub{crys}(S)^{\varphi=p^i}$.
\end{proposition}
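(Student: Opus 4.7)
The plan is to run through the short exact sequence of sheaves on $\Qsp_{\mathbb F_p}$ obtained right before the proposition, and identify the kernel explicitly. By Theorem~\ref{thm:tccharp} I have a $\varphi$-equivariant identification $\widehat{\Prism}_S \simeq \widehat{\mathbb A}_\sub{crys}(S)$ matching Nygaard filtrations, so Theorem~\ref{TCqrsp}(3) together with this upgrades to
\[
\pi_{2i}\TC^-(S) \simeq \calN^{\geq i}\widehat{\mathbb A}_\sub{crys}(S), \qquad \pi_{2i}\TP(S) \simeq \widehat{\mathbb A}_\sub{crys}(S),
\]
with the cyclotomic Frobenius becoming the divided Frobenius $\varphi_i = \varphi/p^i$ and the canonical map becoming the natural inclusion. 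Since $\mathbb F_p$ lives over the perfectoid $\mathbb Z_p$ with $\xi = p$, the Breuil-Kisin twists are canonically trivial, so by definition
\[
\mathbb Z_p(i)(S) \simeq \mathrm{hofib}\bigl(\varphi_i - 1 : \calN^{\geq i}\widehat{\mathbb A}_\sub{crys}(S) \to \widehat{\mathbb A}_\sub{crys}(S)\bigr).
\]

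For $i > 0$, Lemma~\ref{lem:TCcharp}(1) immediately gives surjectivity of $\varphi_i - 1$ on $\widehat{\mathbb A}_\sub{crys}(S)$; the case $i = 0$ follows from surjectivity of $\varphi - 1$ on $\widehat{\mathbb A}_\sub{crys}(-)$ as a sheaf, by the Artin–Schreier plus $p$-adic completeness argument used to prove part (2) of the same lemma. In either case $\mathbb Z_p(i)(S)$ is concentrated in degree $0$ and equals $\ker(\varphi_i - 1)$. This kernel is exactly $\{x \in \widehat{\mathbb A}_\sub{crys}(S) : \varphi(x) = p^i x\}$: the condition $x \in \calN^{\geq i}$ is automatic from $\varphi(x) \in p^i \widehat{\mathbb A}_\sub{crys}(S)$ by the very definition of the Nygaard filtration.

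It remains to identify $\widehat{\mathbb A}_\sub{crys}(S)^{\varphi = p^i}$ with $\mathbb A_\sub{crys}(S)^{\varphi = p^i}$; the $p$-torsion-freeness claim is then automatic from Theorem~\ref{theorem_structure_Acrys}(1). I expect the main obstacle to be precisely this last identification: a priori $\widehat{\mathbb A}_\sub{crys}(S)$ is the completion of $\mathbb A_\sub{crys}(S)$ along the Nygaard filtration, which contains but is generally coarser than the $p$-adic filtration, so the natural map $\mathbb A_\sub{crys}(S) \to \widehat{\mathbb A}_\sub{crys}(S)$ need not be an isomorphism. However, on $\varphi$-eigenspaces one can argue as follows: given $x \in \widehat{\mathbb A}_\sub{crys}(S)$ with $\varphi(x) = p^i x$, the reductions $x \bmod \calN^{\geq n}$ are canonically elements of $\mathbb A_\sub{crys}(S)/\calN^{\geq n}$, and using $p^n \mathbb A_\sub{crys}(S) \subseteq \calN^{\geq n}$ together with the explicit description of the Nygaard graded pieces in Theorem~\ref{theorem_structure_Acrys}(2), one checks that coherent $p$-adic approximations can be chosen in $\mathbb A_\sub{crys}(S)$ and then assembled via $p$-adic completeness to give a genuine lift of $x$ to $\mathbb A_\sub{crys}(S)$.
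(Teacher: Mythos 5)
Your setup is correct and matches the paper through the reduction to showing that $\ker(\varphi_i - 1 : \calN^{\geq i}\widehat{\mathbb A}_\sub{crys}(S) \to \widehat{\mathbb A}_\sub{crys}(S))$ coincides with $\mathbb A_\sub{crys}(S)^{\varphi=p^i}$, and the degree-$0$ concentration via Lemma~\ref{lem:TCcharp}. But there are two genuine gaps in the identification of the kernel, both concentrated at the end.

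First, the assertion that ``the condition $x \in \calN^{\geq i}$ is automatic from $\varphi(x) \in p^i\widehat{\mathbb A}_\sub{crys}(S)$ by the very definition of the Nygaard filtration'' is not justified. On $\mathbb A_\sub{crys}(S)$ itself the Nygaard filtration is indeed defined by $\varphi$-divisibility, but on the Nygaard completion $\widehat{\mathbb A}_\sub{crys}(S)$ the filtration $\calN^{\geq \f}$ is by definition the completed one, namely $\calN^{\geq i}\widehat{\mathbb A}_\sub{crys}(S) = \lim_{n\geq i}\calN^{\geq i}/\calN^{\geq n}$, and it is not clear (nor needed, once you argue directly as the paper does) that this equals $\varphi^{-1}(p^i\widehat{\mathbb A}_\sub{crys}(S))$.

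Second, and more importantly, the final paragraph's sketch does not work as stated. You correctly identify the danger — the natural map $\mathbb A_\sub{crys}(S) \to \widehat{\mathbb A}_\sub{crys}(S)$ need not be an isomorphism — but the proposed fix relies on $p^n\mathbb A_\sub{crys}(S) \subseteq \calN^{\geq n}\mathbb A_\sub{crys}(S)$, which is the \emph{wrong-way} containment: it says the Nygaard filtration is coarser than the $p$-adic one, so lifts $\tilde x_n \in \mathbb A_\sub{crys}(S)$ of $x \bmod \calN^{\geq n}$ only satisfy $\tilde x_{n+1} - \tilde x_n \in \calN^{\geq n}$, which is strictly weaker than $p$-adic Cauchy. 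The Nygaard graded pieces from Theorem~\ref{theorem_structure_Acrys}(2) do not, by themselves, convert such a sequence to a $p$-adically convergent one. What is actually needed is the \emph{opposite} containment at the level of $\varphi$: the divided Frobenius $\varphi_i$ sends $\calN^{\geq n}$ into $p^{n-i}\mathbb A_\sub{crys}(S)$, hence is continuous from the Nygaard topology on the source to the $p$-adic topology on the target; since $\mathbb A_\sub{crys}(S)$ is $p$-adically complete, $\varphi_i$ thereby factors through the Nygaard completion, yielding a map $\beta : \calN^{\geq i}\widehat{\mathbb A}_\sub{crys}(S) \to \mathbb A_\sub{crys}(S)$ with $\beta\alpha = \varphi_i$ and $\alpha\beta = \varphi_i$. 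The paper then finishes by a short diagram chase: injectivity because $\alpha(x)=0$ and $\varphi_i(x)=x$ force $x = \varphi_i(x) = \beta(\alpha(x)) = 0$; surjectivity because $\varphi_i(y)=y$ implies $x:=\beta(y)$ maps to $y$ under $\alpha$ and satisfies $\varphi(x) = p^i x$. The essential idea missing from your sketch is that the Frobenius itself is the bridge between the two topologies, and this is exactly why the eigenspaces for $\varphi=p^i$ are insensitive to Nygaard completion.
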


\begin{proof}
We claim that the natural map
\[
\alpha: \ker(\mathcal N^{\geq i}\mathbb A_\sub{crys}(S)\xrightarrow{\varphi_i-1} \mathbb A_\sub{crys}(S))\to \ker(\mathcal N^{\geq i}\widehat{\mathbb A}_\sub{crys}(S)\xrightarrow{\varphi_i-1} \widehat{\mathbb A}_\sub{crys}(S))
\]
is an isomorphism. This implies the result, as the left-hand side is in fact equal to $\mathbb A_\sub{crys}(S)^{\varphi = p^i}$, using the definition of the Nygaard filtration. By the definition of the Nygaard filtration, the Frobenius map $\mathbb A_\sub{crys}(S)\to \mathbb A_\sub{crys}(S)$ factors canonically over the Nygaard completion $\widehat{\mathbb A}_\sub{crys}(S)$. In fact, we have a natural commutative diagram
\[\xymatrix{
\mathcal N^{\geq i}\mathbb A_\sub{crys}(S)\ar[r]^{\varphi_i}\ar[d]^\alpha & \mathbb A_\sub{crys}(S)\ar[d]^\alpha\\
\mathcal N^{\geq i}\widehat{\mathbb A}_\sub{crys}(S)\ar[r]^{\varphi_i}\ar[ur]^\beta & \widehat{\mathbb A}_\sub{crys}(S)\ .
}\]
This implies that $\alpha$ is injective. Indeed, assume that $x\in \mathcal N^{\geq i}\mathbb A_\sub{crys}(S)$ satisfies $\varphi_i(x)=x$, and that $\alpha(x)=0$. Then in particular $\varphi_i(x)=\beta(\alpha(x))=0$, and thus $x=\varphi_i(x)=0$.

On the other hand, if $y\in \mathcal N^{\geq i}\widehat{\mathbb A}_\sub{crys}(S)$ satisfies $\varphi_i(y)=y$, then $x=\beta(y)\in \mathbb A_\sub{crys}(S)$ maps to $y$ and satisfies $\varphi(x)=\varphi(\beta(y))=\beta(\varphi(y)) = p^i\beta(y) = p^ix$, and therefore lies in
\[
\mathbb A_\sub{crys}(S)^{\varphi = p^i} = \ker(\mathcal N^{\geq i}\mathbb A_\sub{crys}(S)\xrightarrow{\varphi_i-1} \mathbb A_\sub{crys}(S))\ ,
\]
as desired.
\end{proof}

For smooth $A$, passing to quasisyntomic cohomology yields the following corollary.

\begin{corollary}\label{cor:logdRW} Assume that $A$ is a smooth $k$-algebra and $X=\Spec A$. Let $\lambda: \qs_A^\sub{op}\to X_\sub{pro\'{e}t}$ be the natural map of sites. For all $i\geq 0$, there is a natural isomorphism
\[
R\lambda_\ast \mathbb Z_p(i)\simeq W\Omega_{X,\sub{log}}^i[-i]\ .
\]
\end{corollary}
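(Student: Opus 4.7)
The plan is to transport the defining presentation
\[ \mathbb{Z}_p(i)(-) = \mathrm{hofib}\bigl(\varphi_i-1 : \calN^{\geq i}\widehat{\Prism}_{(-)}\{i\} \to \widehat{\Prism}_{(-)}\{i\}\bigr) \]
through the Frobenius-equivariant Nygaard-filtered identification of $\widehat{\Prism}$ with the de Rham–Witt complex furnished by Theorem~\ref{thm:tccharp}, and then to read off the result from the short exact sequence of pro-étale sheaves
\[ 0 \to W\Omega^i_{X,\mathrm{log}}[-i] \to \calN^{\geq i}W\Omega^\bullet_X \xrightarrow{\varphi_i-1} W\Omega^\bullet_X \to 0 \]
supplied by Proposition~\ref{prop:logforms}. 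Since we are working over the perfectoid ring $k$, the Breuil–Kisin twist $A_{\inf}(k)\{1\}$ is (non-canonically) trivial, so we may drop it throughout.

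The first step is to establish pro-étale sheaf identifications
\[ R\lambda_* \widehat{\Prism}_{(-)} \simeq W\Omega^\bullet_X, \qquad R\lambda_* \calN^{\geq i}\widehat{\Prism}_{(-)} \simeq \calN^{\geq i} W\Omega^\bullet_X, \]
where the right-hand side carries the classical Nygaard filtration of Definition~\ref{definition_Nygaard}. By Proposition~\ref{qsqspextend} both sheaves unfold from their restrictions to $\qsp_A$, where Theorem~\ref{thm:tccharp} identifies them Nygaard-compatibly with $\widehat{LW\Omega}_S$ and $\calN^{\geq i}\widehat{LW\Omega}_S$. On smooth $A$ the complex $LW\Omega_A$ agrees with the honest $p$-complete de Rham–Witt complex $W\Omega^\bullet_A$, which is already Nygaard-complete in each cohomological degree (since in a fixed degree the Nygaard filtration is eventually all of $W\Omega^n_A$ and then becomes increasingly $p$-divisible inside the $p$-complete group $W\Omega^n_A$). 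The graded pieces match via Proposition~\ref{NygaardGradedFilt} together with Lemma~\ref{lem:nygaardconj}, and because both filtrations are complete in each cohomological degree, matching the graded pieces suffices. The resulting identification is functorial in étale maps in $A$, hence sheafifies on $X_{\mathrm{pro\acute{e}t}}$.

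The second step is to check that this identification is Frobenius-equivariant, so that $\varphi_i:\calN^{\geq i}\widehat{\Prism}_{(-)}\to\widehat{\Prism}_{(-)}$ matches the classical divided Frobenius $\varphi_i:\calN^{\geq i}W\Omega^\bullet_X\to W\Omega^\bullet_X$; but this is the content of the $\varphi$-equivariance assertion in Theorem~\ref{thm:tccharp}. Since $R\lambda_*$ preserves homotopy fibers, combining Steps~1 and~2 yields
\[ R\lambda_*\mathbb{Z}_p(i) \simeq \mathrm{hofib}\bigl(\varphi_i - 1 : \calN^{\geq i} W\Omega^\bullet_X \to W\Omega^\bullet_X\bigr), \]
and Proposition~\ref{prop:logforms} identifies this fiber with $W\Omega^i_{X,\mathrm{log}}[-i]$. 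The main obstacle is Step~1: the subtlety is that on quasiregular semiperfect $S$ the Nygaard filtration is a genuine filtration by subobjects of the discrete ring $\widehat{\Prism}_S$, whereas after descent to smooth $A$ it becomes a genuinely derived filtration on $W\Omega^\bullet_A$, and one must verify that the descent recovers the classical Nygaard filtration (not, say, a shifted or $p$-adically-completed variant). The reconciliation comes from comparing graded pieces: the descent computes $\calN^i W\Omega^\bullet_A$ via the conjugate-filtered presentation of $\tau^{\leq i}\Omega^\bullet_{A/k}$ from Lemma~\ref{lem:nygaardconj}, exactly matching Construction~\ref{GradedTHH} on the topological side.
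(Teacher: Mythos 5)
Your proposal is correct and follows the same route the paper takes: the paper's proof of this corollary is a one-line reference to "the description of $\gr^i \TC^{-}$ and $\gr^i \TP$" (which in the characteristic-$p$ setting is exactly the Frobenius- and Nygaard-filtration-compatible identification $\widehat{\Prism}\simeq\widehat{LW\Omega}$ of Theorem~\ref{thm:tccharp} unfolded to $W\Omega^\bullet_X$ on smooth $A$) together with Proposition~\ref{prop:logforms}; your write-up supplies the details of that reduction, including the correct observation that $W\Omega^\bullet_A$ is already Nygaard-complete degree-by-degree so that matching graded pieces suffices.
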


\begin{proof} This follows from the description of $\gr^i \TC^-$ and $\gr^i \TP$ and Proposition~\ref{prop:logforms}.
\end{proof}

\begin{remark}
In the setting of the previous corollary, it is classical that the projection map $X_{\sub{fppf}}\to X_\sub{\'{e}t}$ sends the sheaf $\mu_{p^n}$ to $W_n\Omega_{X,\sub{log}}^1[-1]$, cf.\ \cite[\S II.5]{Illusie1979}. The previous corollary may be viewed as an analogue for higher weight $p$-adic motivic cohomology in characteristic $p$, as was conjectured by Milne \cite[Remark 1.12]{Milne}, except that we use the quasisyntomic rather than the flat topology.
\end{remark}

We also record the following calculation of connective algebraic $K$-theory. It may be applied, for example, when $S=\mathcal O_{\mathbb C_p}/p$, in which case $\bigoplus_{i\ge0}(\mathbb A_\sub{crys}(S)^{\varphi=p^i})[\tfrac1p]=\bigoplus_{i\ge0}\mathbb B_\sub{crys}^+(S)^{\varphi=p^i}$ is the graded ring defining the Fargues--Fontaine curve, \cite{FarguesFontaine}. This was conjectured in 2013 by the third author (based on evidence in degrees $\leq 2$) and sparked much of this work. Indeed, the formula for $\TC$ as Frobenius fix points on something else made it natural to guess that this ``something else'' should have homotopy groups $\mathbb A_\sub{crys}(S)$, and this is what we have realized here in terms of $\TC^-$ and $\TP$.\footnote{It was also one of the inspirations for \cite{NikolausScholze} as the classical $\TR$ or $\TF$ do not have the right form.}

\begin{corollary}\label{cor:Ktheory}
For any quasiregular semiperfect $\mathbb F_p$-algebra $S$, the $K$-theory $K_*(S;\mathbb Z_p)$ vanishes in odd degrees, while
\[
K_\sub{even}(S;\mathbb Z_p)\cong\bigoplus_{i\ge0}\mathbb A_\sub{crys}(S)^{\varphi=p^i}\ .
\]
\end{corollary}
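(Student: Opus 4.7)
The plan is to assemble the corollary directly from three inputs already established in the paper: the comparison between $K$-theory and $\TC$ for quasiregular semiperfect rings, the motivic filtration on $\TC$, and the explicit computation of the syntomic complexes $\mathbb Z_p(i)$ in characteristic $p$.

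First I would invoke Theorem~\ref{theorem_CMM} (the Clausen--Mathew--Morrow result): since any $S \in \Qsp_{\mathbb F_p}$ is $p$-adically complete (hence Henselian along $pS$) with $S/pS$ semiperfect, the trace map induces an equivalence $K(S;\mathbb Z_p) \simeq \tau_{\geq 0}\TC(S;\mathbb Z_p)$. Thus the corollary reduces to computing $\pi_i\TC(S;\mathbb Z_p)$ for $i \geq 0$.

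Next I would apply the motivic filtration on $\TC$ from Theorem~\ref{thm:main5}(5), which yields a multiplicative spectral sequence
\[
E_2^{ij} = H^{i-j}(\mathbb Z_p(-j)(S)) \Rightarrow \pi_{-i-j}\TC(S;\mathbb Z_p).
\]
Here I would use Proposition~\ref{proposition_Z_p(i)}, which shows that for $S \in \Qsp_{\mathbb F_p}$ and $n \geq 0$, the complex $\mathbb Z_p(n)(S)$ is concentrated in degree $0$ and equals the $p$-torsion-free group $\mathbb A_{\mathrm{crys}}(S)^{\varphi = p^n}$, together with the vanishing $\mathbb Z_p(n) = 0$ for $n < 0$ recorded just after Theorem~\ref{thm:main5}. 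Consequently the only non-vanishing $E_2$-terms sit at $(i,j)$ with $j \leq 0$ and $i = j$, and they contribute to $\pi_{-2j}\TC(S;\mathbb Z_p)$, which is even and non-negative.

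It follows immediately that the spectral sequence degenerates at $E_2$ for parity and positional reasons, giving $\pi_{\text{odd}}\TC(S;\mathbb Z_p) = 0$, $\pi_{<0}\TC(S;\mathbb Z_p) = 0$, and $\pi_{2i}\TC(S;\mathbb Z_p) \cong \mathbb A_{\mathrm{crys}}(S)^{\varphi = p^i}$ for $i \geq 0$. Combining with the CMM equivalence above gives the claimed description of $K_*(S;\mathbb Z_p)$. There is no real obstacle here beyond assembling the ingredients: the substantive work has been done in Theorem~\ref{theorem_CMM} (external) and Proposition~\ref{proposition_Z_p(i)} (which itself relied on Theorem~\ref{thm:tccharp} identifying $\widehat{\Prism}_S$ with $\widehat{\mathbb A}_{\mathrm{crys}}(S)$ and Lemma~\ref{lem:TCcharp} on surjectivity of $\varphi_i - 1$).
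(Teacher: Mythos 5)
Your proof is correct and follows essentially the same route as the paper's. The paper's own proof is slightly more direct: it bypasses the full spectral sequence and instead cites the observation from \S\ref{subsection_syntomic} that, for $S\in\Qsp$, the concentration of $\TC^-(S;\mathbb Z_p)$ and $\TP(S;\mathbb Z_p)$ in even degrees identifies the motivic filtration on $\TC(S;\mathbb Z_p)$ with the double-speed Postnikov filtration, so that $H^0(\mathbb Z_p(i)(S))=\pi_{2i}\TC(S;\mathbb Z_p)$ and $H^1(\mathbb Z_p(i)(S))=\pi_{2i-1}\TC(S;\mathbb Z_p)$ on the nose; combining with Theorem~\ref{theorem_CMM} and Proposition~\ref{proposition_Z_p(i)} then gives the result in one line, which is the same content as your degeneration argument.
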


\begin{proof}
As in \S\ref{subsection_syntomic}, Theorem~\ref{theorem_CMM} allows us to identify
\[
H^0(\mathbb Z_p(i)(S))\cong K_{2i}(S;\mathbb Z_p)\ ,\quad H^1(\mathbb Z_p(i)(S))\cong K_{2i-1}(S;\mathbb Z_p)\ ,
\]
so this follows from Proposition \ref{proposition_Z_p(i)}.
\end{proof}

\newpage
\section{The mixed-characteristic situation}
\label{sec:mixedchar}

The goal of this section is to prove Theorem~\ref{thm:main2}, i.e., we compare $\widehat{\Prism}_A$ with $A\Omega_A$ in case $A$ is the $p$-adic completion of a smooth $\calO_C$-algebra. This goal is realized in \S \ref{subsection_AOmegaComp} using some lemmas in almost mathematics collected together in \S \ref{sec:AlmostHA}. Next, in \S \ref{subsec:NygaardAOmega}, we check that the identification $\widehat{\Prism}_A \simeq A\Omega_A$ constructed earlier carries the Nygaard filtration on the left to the filtration on $A\Omega_A$ coming from its definition via Proposition~\ref{LetaDF}. Finally, in \S \ref{subsec:AdamsOps}, we check that the Adams operations act with weight $i$ on $\gr^i \TC^{-}(A;\mathbb{Z}_p)$ (and variants) for any $A \in \Qs$; even though this statement has nothing to do with $\mathcal{O}_C$, its proof reduces to the case where $A$ is as above, whence we can use Theorem~\ref{thm:main2}.

The following notation will be held fixed throughout this section.

\begin{notation}
Let $C/\mathbb{Q}_p$ be a perfectoid field containing $\mu_{p^\infty}$, and set $A_{\inf} = A_{\inf}(\calO_C)$. Let $\mu := [\underline{\epsilon}] - 1 \in A_{\inf}$ where $\underline{\epsilon} \in \calO_C^\flat = \lim_{x \mapsto x^p} \calO_C/p$ is a compatible system of primitive $p$-power roots of unity. Then $\varphi^{-1}(\mu) \mid \mu$. For $r \geq 1$, set $\xi_r = \frac{\mu}{\varphi^{-r}(\mu)} \in A_{\inf}$ and $\xi = \xi_1$, so $\xi_r \mid \xi_{r+1} \mid \mu$ for all $r$. 
\end{notation}

\subsection{Some almost homological algebra}
\label{sec:AlmostHA}

As preparation, we recall some facts from almost mathematics. We shall be interested in almost mathematics over $A_{\inf}$ in the $p$-complete setting, i.e., we are interested in the quotient $\widehat{D}(A_{\inf}^a)$ of the $\infty$-category $\widehat{D}(A_{\inf})$ of $p$-complete $A_{\inf}$-complexes by the full subcategory of those whose cohomology groups are killed by $W(\mathfrak{m}^\flat)$. Recall that such complexes form a stable subcategory, so that one can pass to the Verdier quotient (for the $\infty$-categorical version, cf.~e.g.~\cite[\S I.3]{NikolausScholze}). In fact, complexes whose cohomology groups are killed by $W(\mathfrak{m}^\flat)$ form a $\otimes$-ideal, so $\widehat{D}(A_{\inf}^a)$ is a symmetric monoidal stable $\infty$-category by \cite[Theorem I.3.6]{NikolausScholze}. The following lemma allows us to replace $W(\mathfrak{m}^\flat)$ in the preceding definition by much smaller ideals:

\begin{lemma}
\label{AlmostIdeal}
For $d \geq 1$, set $J_d = \cup_r (\varphi^{-r}(\mu)^d) \subset A_{\inf}$, so $J_d \subset J_1 \subset W(\mathfrak{m}^\flat)$ for all $d$. Then $p$ is a nonzero divisor on $A_{\inf}/J_d$ for all $d$. Moreover, the $p$-adic completion of any $J_d$ coincides with $W(\mathfrak{m}^\flat)$. 
\end{lemma}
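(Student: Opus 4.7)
The plan is to establish the two assertions separately, exploiting that the ideals $(\varphi^{-r}(\mu)^d) \subset A_{\inf}$ form an increasing chain in $r$. Indeed, from $\mu = \xi \cdot \varphi^{-1}(\mu)$ (where $\xi = 1 + [\underline{\epsilon}^{1/p}] + \cdots + [\underline{\epsilon}^{(p-1)/p}]$), applying $\varphi^{-r}$ gives $\varphi^{-r}(\mu) = \varphi^{-r}(\xi) \cdot \varphi^{-r-1}(\mu)$, whence $(\varphi^{-r}(\mu)^d) \subset (\varphi^{-r-1}(\mu)^d)$. Consequently $J_d$ is genuinely an ideal and $A_{\inf}/J_d \simeq \varinjlim_r A_{\inf}/(\varphi^{-r}(\mu)^d)$.

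For the first claim, since filtered colimits of $p$-torsionfree modules are $p$-torsionfree, it suffices to show each $A_{\inf}/(\varphi^{-r}(\mu)^d)$ is $p$-torsionfree. The key point is that the image of $\varphi^{-r}(\mu)^d$ in $A_{\inf}/p = \calO_C^\flat$ equals $(\underline{\epsilon}^{1/p^r} - 1)^d$, which is nonzero in the valuation domain $\calO_C^\flat$ and therefore a nonzerodivisor there. Given $px = a\,\varphi^{-r}(\mu)^d$ in $A_{\inf}$, reducing mod $p$ forces $a \equiv 0 \pmod p$; writing $a = pb$ and cancelling the nonzerodivisor $p$ yields $x = b\,\varphi^{-r}(\mu)^d \in (\varphi^{-r}(\mu)^d)$, as desired.

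For the second claim, the $p$-torsionfreeness just established means the short exact sequence $0 \to J_d \to A_{\inf} \to A_{\inf}/J_d \to 0$ remains exact after $p$-adic completion, identifying $\widehat{J_d}$ with the kernel of the map $A_{\inf} \to \widehat{A_{\inf}/J_d}$. The real work is to identify this completion with $W(k)$, where $k$ is the residue field of $\calO_C$. First I would compute modulo $p$: the image of $J_d$ in $\calO_C^\flat$ is the union of ideals $\bigcup_r ((\underline{\epsilon}^{1/p^r}-1)^d)$, and this equals $\mathfrak{m}^\flat$ by a direct valuation estimate using $v(\underline{\epsilon}^{1/p^r}-1) = v(\underline{\epsilon}-1)/p^r \to 0$, so any element $x \in \mathfrak{m}^\flat$ is divisible by $(\underline{\epsilon}^{1/p^r}-1)^d$ for $r$ sufficiently large. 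Hence $(A_{\inf}/J_d)/p \cong k$. Since $A_{\inf}/J_d$ is $p$-torsionfree, its $p$-adic completion remains $p$-torsionfree (inverse limits of injections being injective); thus $\widehat{A_{\inf}/J_d}$ is a strict $p$-ring with residue ring the perfect field $k$, so by the uniqueness of Cohen/Witt rings it must be $W(k)$. The kernel of $A_{\inf} \to W(k)$ is by definition $W(\mathfrak{m}^\flat)$, so $\widehat{J_d} = W(\mathfrak{m}^\flat)$.

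The main obstacle is really the valuation calculation $\bigcup_r ((\underline{\epsilon}^{1/p^r}-1)^d) = \mathfrak{m}^\flat$, which is transparent once one recognizes that the valuations of the generators shrink to zero in the valuation ring $\calO_C^\flat$; the rest of the argument is formal bookkeeping via the uniqueness of Witt vectors.
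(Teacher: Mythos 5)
Your proof is correct and follows essentially the same route as the paper: first establish $p$-torsionfreeness of $A_{\inf}/J_d$ by passing to the filtered colimit over $r$ and checking the single-generator case, then identify $(A_{\inf}/J_d)/p$ with the perfect residue field $k$ and invoke the uniqueness of strict $p$-rings to get $W(k)$ and hence the kernel $W(\mathfrak{m}^\flat)$. The only differences are cosmetic: you replace the paper's abstract ``regular sequence swap'' lemma with a direct mod-$p$ calculation, and you spell out via valuations the identity $(J_d,p) = (p, W(\mathfrak m^\flat))$ that the paper declares clear (your parenthetical ``inverse limits of injections'' is not quite the right justification for $p$-torsionfreeness of the completion, but the fact itself is standard and straightforward).
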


\begin{proof}
Note that $\varphi^{-(r+1)}(\mu) \mid \varphi^{-r}(\mu)$, so we may regard each $J_d$ as a filtering union of the principal ideals $(\varphi^{-r}(\mu)^d)$. To show that $p$ is a nonzerodivisor on $A_{\inf}/J_d$, it thus suffices to show that $p$ is a nonzerodivisor modulo $\varphi^{-r}(\mu)^d$. By Frobenius twisting, we may assume $r=0$. Note that $\mu$ is a nonzero divisor modulo $p$, so $(p,\mu)$ and then also $(p,\mu^d)$ forms a regular sequence. We are now done by the general fact that if $(x,y)$ form a regular sequence in a commutative ring $A$, then $x$ is a nonzerodivisor modulo $y$: if $xa = by$, then $x \mid b$ as $y$ is regular mod $x$, whence $y \mid a$ (as we can divide $xa = x\frac{b}{x} y$ by $x$ as $x$ is a nonzerodivisor in $A$), and thus $x$ is a nonzerodivisor modulo $y$.

It follows from the previous paragraph that $B_d := A_{\inf}/J_d$ is a $p$-torsionfree ring. Moreover, since it is clear that $(J_d,p) = (J_1,p) = (p, W(\mathfrak{m}^\flat))$ as ideals of $A_{\inf}$, the ring $B_d/p$ identifies with $A_{\inf}/(W(\mathfrak{m}^\flat),p) \simeq k$, and is thus perfect and independent of $d$.  But then the $p$-adic completion of $B_d$ is a $p$-torsionfree and $p$-complete ring lifting $k$, and must thus coincide with $W(k)$ for all $d$. This implies in particular that $J_d \subset J_1 \subset W(\mathfrak{m}^\flat)$ give the same ideal on $p$-adic completion, as wanted.
\end{proof}

Consider now the evident natural transformations
\[ L\eta_\mu \to ... \to L\eta_{\xi_{r+1}} \to L\eta_{\xi_r} \to ... \to L\eta_\xi\]
of endofunctors on the full subcategory $D^{\geq 0}_{tf}(A_{\inf})$ of $D^{\geq 0}(A_{\inf})$ where $H^0$ is torsion-free.

\begin{lemma}
\label{LetaAlmost}
Fix $K \in D^{\geq 0}_{tf}(A_{\inf})$ that is $p$-complete. Then each cohomology group of the cofiber $Q$ of the natural map $L\eta_\mu K \to R\lim_r L\eta_{\xi_r} K$ of $p$-complete complexes is killed by $W(\mathfrak{m}^\flat)$.
\end{lemma}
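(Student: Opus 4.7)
The plan is to exploit the divisibility $\mu = \xi_r \cdot \varphi^{-r}(\mu)$ and reduce to an explicit model computation. By Lemma~\ref{AlmostIdeal}, a derived $p$-complete $A_{\inf}$-module $M$ is annihilated by $W(\mathfrak{m}^\flat)$ as soon as it is annihilated by the non-complete ideal $J_d = \bigcup_r (\varphi^{-r}(\mu)^d)$ for some $d$: such an $M$ is then a derived $p$-complete module over $A_{\inf}/J_d$, whose $p$-completion is $W(k) = A_{\inf}/W(\mathfrak{m}^\flat)$. Thus it suffices to show that each $H^i(Q)$ is annihilated by $J_{d(i)}$ for a suitable $d(i)$; the derived $p$-completeness of $H^i(Q)$ is automatic since $p$-complete complexes form a stable subcategory.

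Next, I would represent $K$ by a complex $K^\bullet$ of termwise $p$- and $\mu$-torsion-free $A_{\inf}$-modules (for example a free resolution concentrated in nonnegative degrees). Since $\xi_r$ and $\mu$ are nonzero divisors on each $K^n$, the values $L\eta_\mu K$ and $L\eta_{\xi_r} K$ are computed by the explicit subcomplexes $\eta_\mu K^\bullet \subseteq \eta_{\xi_r} K^\bullet \subseteq K^\bullet$. The divisibilities $\xi_r \mid \xi_{r+1} \mid \mu$ make $\{\eta_{\xi_r} K^\bullet\}_r$ into a decreasing tower of subcomplexes of $K^\bullet$ containing $\eta_\mu K^\bullet$. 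Setting $C_r^\bullet := \eta_{\xi_r} K^\bullet / \eta_\mu K^\bullet$ and applying $R\lim_r$ to the short exact sequence $0 \to \eta_\mu K^\bullet \to \eta_{\xi_r} K^\bullet \to C_r^\bullet \to 0$ (noting that the constant tower $\{\eta_\mu K^\bullet\}$ equals its own $R\lim$) identifies $Q \simeq R\lim_r C_r^\bullet$.

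The key point is the estimate that $C_r^n$ is annihilated by $\varphi^{-r}(\mu)^{n+1}$: given $x = \xi_r^n y \in \eta_{\xi_r} K^n$ with $dy \in \xi_r K^{n+1}$, one rewrites $\varphi^{-r}(\mu)^{n+1} x = \mu^n (\varphi^{-r}(\mu)\, y)$, and computes $d(\varphi^{-r}(\mu)\, y) = \varphi^{-r}(\mu)\, dy \in \varphi^{-r}(\mu) \xi_r K^{n+1} = \mu K^{n+1}$, so $\varphi^{-r}(\mu)^{n+1} x$ lies in $\eta_\mu K^n$. Using that $\varphi^{-r}(\mu) \mid \varphi^{-r_0}(\mu)$ for $r \ge r_0$ (an elementary calculation in $A_{\inf}$ using $[\epsilon^{1/p^{r_0}}] - 1 = ([\epsilon^{1/p^r}] - 1)(1 + [\epsilon^{1/p^r}] + \cdots + [\epsilon^{1/p^r}]^{p^{r-r_0}-1})$), one deduces that for every $r_0$ the tail $\{H^n(C_r)\}_{r \ge r_0}$ is annihilated by $\varphi^{-r_0}(\mu)^{n+1}$.

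Finally, Milnor's exact sequence
\[
0 \to \lim{}^1_r H^{n-1}(C_r) \to H^n(Q) \to \lim{}_r H^n(C_r) \to 0,
\]
combined with the invariance of $\lim$ and $\lim^1$ under truncating the tower to $r \ge r_0$, shows that both outer terms are annihilated by $\varphi^{-r_0}(\mu)^{n+1}$ for every $r_0$, and hence by $J_{n+1}$. Together with the derived $p$-completeness of $H^n(Q)$, this concludes the proof via Lemma~\ref{AlmostIdeal}. The main obstacle I anticipate is verifying that the explicit subcomplex $\eta_f K^\bullet$ does compute $L\eta_f K$ as a $p$-complete object (so that the $p$-completeness of $Q$, and hence of its cohomology in the derived sense, is clear and Milnor's sequence is applicable); this should follow from standard commutation results for $L\eta$ with $p$-adic completion, but requires some care because $\mu$ is not coprime to $p$.
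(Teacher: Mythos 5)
Your proof is correct and reaches the paper's conclusion by a genuinely different route in the second half, so let me compare the two and flag one small imprecision.

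The first half of your argument (reduction via Lemma~\ref{AlmostIdeal}, choice of torsion-free representative $K^\bullet$, the divisibility $\mu = \xi_r\,\varphi^{-r}(\mu)$, and the termwise estimate $\varphi^{-r}(\mu)^{n+1}\cdot \eta_{\xi_r}K^n \subset \eta_\mu K^n$) agrees in spirit with the paper, but the paper then makes a simplifying move that you skip: since $L\eta_f$ preserves cohomological amplitude, it first shifts $K$ so that it is concentrated in degrees $[0,d]$ and represented by a torsion-free complex vanishing outside $[0,d]$. This yields a \emph{degree-uniform} estimate $\varphi^{-r}(\mu)^d\cdot \eta_{\xi_r}K^\bullet \subset \eta_\mu K^\bullet$, and, crucially, the resulting map $g_r : \eta_{\xi_s}K^\bullet \to \eta_\mu K^\bullet$ (for $s \ge r$) provides factorizations of multiplication by $\varphi^{-r}(\mu)^d$ through the inclusion $f: L\eta_\mu K \to R\lim_s L\eta_{\xi_s}K$ on \emph{both} source and target as $g\circ f$ and $f\circ g$ with the \emph{same} $g$. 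With that, the explicit chain homotopy $H(y,x) = (0, g(y))$ on the mapping cone shows that multiplication by $\varphi^{-r}(\mu)^d$ is nullhomotopic on $Q$, so the single factor $\varphi^{-r}(\mu)^d$ annihilates $H^i(Q)$. Your argument instead keeps $K$ unbounded, works with degree-dependent powers, and replaces the chain-homotopy step by Milnor's sequence for $Q \simeq R\lim_r C_r$ — a more elementary but slightly bulkier tool. Both are correct; the paper's route is a bit slicker, yours is more elementary in the ingredients it invokes.

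One small imprecision at the end: from Milnor's sequence $0 \to \lim^1 H^{n-1}(C_r) \to H^n(Q) \to \lim H^n(C_r) \to 0$, the fact that \emph{both outer terms} are killed by $J_{n+1}$ does not quite show that $H^n(Q)$ is killed by $J_{n+1}$; an extension of two $J_{n+1}$-torsion modules is only guaranteed to be $J_{n+1}^2$-torsion. Fortunately $J_{n+1}^2 = J_{2(n+1)}$ (take $r = \max(r_1,r_2)$ and use $\varphi^{-r}(\mu) \mid \varphi^{-r_i}(\mu)$), so the conclusion that each $H^n(Q)$ is killed by some $J_d$ — and hence, being derived $p$-complete, by $W(\mathfrak{m}^\flat)$ — still holds; you should just state the $J_{2(n+1)}$ bound rather than $J_{n+1}$. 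As for the concern you flag at the end about $\eta_f K^\bullet$ computing $L\eta_f K$ with $p$-complete output: the chain-level model $\eta_f K^\bullet$ always computes $L\eta_f K$ for a torsion-free representative $K^\bullet$; the separate fact that $L\eta_f$ carries $p$-complete complexes to $p$-complete complexes is Lemma 6.19 of \cite{BMS}, which is exactly what the paper cites here, so this is not a gap.
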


\begin{proof}
As $K$ is $p$-complete, the same holds true for $L\eta_f K$ for any nonzero $f \in A_{\inf}$ by \cite[Lemma 6.19]{BMS}. Applying this for $f = \mu,\xi_r$ and using stability of $p$-completeness under limits, it follows that both $L\eta_\mu K$ and $R\lim_r L\eta_{\xi_r} K$ are $p$-complete, and hence so is $Q$. By Lemma~\ref{AlmostIdeal}, it is enough to show that for each $i$, there is some $d \geq 0$ such that $H^i(Q)$ is annihilated by $J_d$.

As $L\eta$ preserves cohomological amplitude by \cite[Corollary 6.5]{BMS} while $R\lim$ changes it by at most $1$, we may assume after shift that $K \in D^{[0,d]}(A_{\inf})$ for some $d \geq 1$ with $H^0(K)$ torsion-free. We may then represent $K$ by some $K^\bullet$ with $K^i = 0$ for $i < 0$ and for $i > d$, and $K^i$ torsionfree. Consider the following diagram of subcomplexes of $K^\bullet$:
\[ \eta_\mu K^\bullet \subset \ldots \subset \eta_{\xi_s} K^\bullet \subset \ldots \subset \eta_{\xi_{r+1}} K^\bullet \subset \eta_{\xi_r} K^\bullet.\]
As $\mu = \varphi^{-r}(\mu) \cdot \xi_r$, we can write $\eta_\mu K^\bullet = \eta_{\varphi^{-r}(\mu)} \eta_{\xi_r} K^\bullet$. As $K$ has cohomological amplitude $d$, multiplication by $\varphi^{-r}(\mu)^d$ on $\eta_{\xi_r} K^\bullet$ thus factors over $\eta_\mu K^\bullet$. It formally follows that multiplication by $\varphi^{-r}(\mu)^d$ on both the source and target of the map
\[ L\eta_\mu K \to R\lim_{s \geq r} L\eta_{\xi_s} K \simeq R\lim_s L\eta_{\xi_s} K \]
factors over the map. But then $\varphi^{-r}(\mu)^d$ annihilates each $H^i(Q)$. As this is true for all $r$, we have shown that $J_d \cdot H^i(Q) = 0$ for all $i$, as wanted.
\end{proof}

The following technical result shall be used later.

\begin{lemma}
\label{AlmostEltReal}
If $M$ is the $(p,\xi)$-completion of a free $A_{\inf}$-module, then the natural map
\[
M\to \mathrm{Hom}_{A_{\inf}}(W(\mathfrak m^\flat),M)
\]
is an isomorphism.
\end{lemma}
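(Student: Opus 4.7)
The plan is to identify $\mathrm{Hom}_{A_{\inf}}(W(\mathfrak{m}^\flat), M)$ explicitly using the filtered-colimit description of $W(\mathfrak{m}^\flat)$ from Lemma~\ref{AlmostIdeal}, and then reduce surjectivity of the natural map to a concrete statement about the ideals $\xi_r A_{\inf}$ inside $A_{\inf}$. Since $M$ is $p$-complete, any $A_{\inf}$-linear map from $W(\mathfrak{m}^\flat) = \widehat{J_1}$ to $M$ extends uniquely from its restriction to the dense sub-ideal $J_1 = \bigcup_r (\varphi^{-r}(\mu))$. Identifying each $\mathrm{Hom}_{A_{\inf}}((\varphi^{-r}(\mu)), M) \cong M$ by evaluation at the generator and computing the transition using $\varphi^{-r}(\mu) = \varphi^{-r}(\xi)\cdot \varphi^{-(r+1)}(\mu)$, I obtain
\[
\mathrm{Hom}_{A_{\inf}}(W(\mathfrak{m}^\flat), M) \;\cong\; \lim\bigl(\cdots \xrightarrow{\varphi^{-1}(\xi)} M \xrightarrow{\xi} M\bigr),
\]
under which the natural map sends $m \mapsto (\varphi^{-r}(\mu)\, m)_r$. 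A compatible system in the limit is encoded by a single element $m_0 \in M$ satisfying $m_0 = \xi_r m_r$ for all $r$, where $\xi_r = \mu/\varphi^{-r}(\mu)$, so that $m_0 \in \bigcap_r \xi_r M$.

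The natural map is then bijective if and only if $\bigcap_r \xi_r M = \mu M$ (with surjectivity yielding $m_0 = \mu n$ for some $n \in M$, automatically satisfying $\varphi^{-r}(\mu)n = m_r$ by cancelling the nonzerodivisor $\xi_r$). The inclusion $\mu M \subset \bigcap_r \xi_r M$ is immediate from $\mu = \xi_r \varphi^{-r}(\mu)$. Injectivity comes cheaply: writing $F = \bigoplus_I A_{\inf}$, the map $M = \widehat{F}_{(p,\xi)} \hookrightarrow \prod_I A_{\inf}$ and the fact that $\mu$ is a nonzerodivisor in the domain $A_{\inf}$ show $M$ is $\mu$-torsionfree, hence the map has zero kernel.

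The hardest part, which I expect to be the main obstacle, is the reverse inclusion $\bigcap_r \xi_r M \subset \mu M$. I would first establish it modulo $p$: the reduction $M/pM$ is the $\xi$-adic completion of $\bigoplus_I \calO_C^\flat$, which embeds in $\prod_I \calO_C^\flat$. Since $\calO_C^\flat$ is a valuation ring with $v^\flat(\xi_r) = (1 - 1/p^r)\,v^\flat(\mu)$ approaching $v^\flat(\mu)$, one has $\bigcap_r \xi_r \calO_C^\flat = \mu\,\calO_C^\flat$ coordinatewise; a direct valuation estimate (using $v^\flat(\mu) = \tfrac{p}{p-1}v^\flat(\xi)$) then shows that division by $\mu$ preserves $\xi$-adic decay of supports, yielding $\bigcap_r \xi_r(M/pM) = \mu(M/pM)$. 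To lift to $M$: given $m \in \bigcap_r \xi_r M$, the mod-$p$ statement gives $n_0 \in M$ with $m - \mu n_0 = pm'$, and I claim $m' \in \bigcap_r \xi_r M$ again. This uses that $(p, \xi_r)$ is a regular sequence on $M$ (so $p$ acts injectively on $M/\xi_r M$), which follows from $(p, \xi_r)$ being a regular sequence in $A_{\inf}$ together with the $(p, \xi)$-complete flatness of $M$: the flatness of $M/pM$ over $\calO_C^\flat$ forces $\mathrm{Tor}^{A_{\inf}}_{>0}(M, A_{\inf}/(p, \xi_r)) = 0$, and the usual symmetry argument for regular sequences then applies.

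Iterating this construction produces a sequence $(n_k)_{k \geq 0}$ in $M$ and elements $m^{(k)} \in \bigcap_r \xi_r M$ with $m = \mu\sum_{j=0}^{k} p^j n_j + p^{k+1} m^{(k+1)}$; appealing to $p$-adic completeness of $M$, the partial sums converge to some $n \in M$ and the remainder goes to zero, giving $m = \mu n \in \mu M$ as required.
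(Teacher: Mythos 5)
Your argument is correct, but it takes a genuinely different route from the paper. The paper first reduces modulo $\xi$: since $M$ and $\mathrm{Hom}_{A_{\inf}}(W(\mathfrak{m}^\flat),M)$ are both $\xi$-complete and $\xi$-torsionfree, it suffices to check that $M/\xi \to \mathrm{Hom}_{\mathcal{O}_C}(\mathfrak{m}, M/\xi)$ is an isomorphism; it then embeds $M/\xi$ into $N = \prod_{i\in I}\mathcal{O}_C$, notes $\mathrm{Hom}(\mathfrak{m},N) \cong N$ for valuative reasons, and shows that the decay condition cutting out $\mathrm{Hom}(\mathfrak{m}, M/\xi)$ inside $N$ already forces membership in $M/\xi$ by the single test element $\epsilon = p$. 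You instead keep everything over $A_{\inf}$: you use the presentation $W(\mathfrak{m}^\flat) = \widehat{J_1}$ with $J_1 = \bigcup_r(\varphi^{-r}(\mu))$ (Lemma~\ref{AlmostIdeal}) to identify $\mathrm{Hom}(W(\mathfrak{m}^\flat),M)$ explicitly as the inverse limit $\lim_r(M,\times\varphi^{-r}(\xi)) \cong \bigcap_r \xi_r M$, reduce to $\bigcap_r\xi_r M = \mu M$, prove that mod $p$ by the corresponding valuation estimate in $\mathcal{O}_C^\flat$ (using $v^\flat(\mu) = \tfrac{p}{p-1}v^\flat(\xi)$), and then lift via the regularity of $(p,\xi_r)$ on $M$ together with $p$-completeness. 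Both proofs ultimately rest on a rank-one valuation estimate — the paper over $\mathcal{O}_C$ after killing $\xi$, you over $\mathcal{O}_C^\flat$ after killing $p$ — but the paper's mod-$\xi$ reduction eliminates the lifting step that makes up the second half of your argument, whereas your approach has the advantage of making the abstract $\mathrm{Hom}$-group completely explicit as $\bigcap_r\xi_r M$ before any reduction occurs. One small point worth spelling out if you write this up: the identification $\mathrm{Hom}_{A_{\inf}}(W(\mathfrak{m}^\flat),M) \cong \mathrm{Hom}_{A_{\inf}}(J_1,M)$ needs the observation that $M$ is derived $p$-complete and $J_1$ is $p$-torsionfree (so its derived $p$-completion is the classical one), after which the adjunction for derived completion applies on the nose.
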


\begin{proof}
As everything in sight is $\xi$-complete and $\xi$-torsionfree, this reduces to checking that $M/\xi \to \mathrm{Hom}(\mathfrak{m}, M/\xi)$ is an isomorphism. Injectivity is clear as $M/\xi$ is a torsionfree $\calO_C$-module. For surjectivity, write $M/\xi \cong \widehat{\bigoplus}_{i \in I} \calO_C$ as the $p$-adic completion of a free $\calO_C$-module. Regard $M/\xi$ as a submodule of $N = \prod_{i \in I} \calO_C$ in the usual way: $M/\xi \subset N$ is the set of sequences $(a_i) \in \prod_{i \in I} \calO_C^\flat$ such that for all $n \geq 0$, we have $|a_i| \leq |p^n|$ for all but finitely many $i \in I$.

Now it is clear for valuative reasons that $N\cong\mathrm{Hom}(\mathfrak{m},N)$. Under this identification, the subgroup $\mathrm{Hom}(\mathfrak{m}, M/\xi) \subset \mathrm{Hom}(\mathfrak{m}, N)$  corresponds to the set of sequences $(a_i) \in N = \prod_{i \in I} \calO_C$ such that, for each $\epsilon \in \mathfrak{m}$, we have $(\epsilon \cdot a_i) \in M/\xi$, i.e., for each such $\epsilon$ and each $n \geq 0$, we must have $|\epsilon \cdot a_i| \leq |p^n|$ for all but finitely many $i \in I$. Applying this condition for $\epsilon = p$ then shows that for all $n \geq 0$, we have $|a_i| \leq |p^{n-1}|$ for all but finitely many $i \in I$; as this holds true for all $n$, we immediately get $(a_i) \in M/\xi \subset N$, as wanted.
\end{proof}

For future reference, we note that the functor $\mathrm{RHom}_{A_{\inf}}(W(\mathfrak{m^\flat}),-)$ kills all the ``almost zero'' objects, i.e., those $M \in \widehat{D}(A_{\inf})$ whose cohomology groups are killed by $W(\mathfrak{m^\flat})$: this follows because 
\[W(\mathfrak{m^\flat}) \widehat{\dotimes}_{A_{\inf}} A_{\inf}/W(\mathfrak{m^\flat}) \simeq 0.\]
In particular, we may regard $\mathrm{RHom}_{A_{\inf}}(W(\mathfrak{m^\flat}),-)$ as a functor $\widehat{D}(A_{\inf}^a) \to \widehat{D}(A_{\inf})$. One can show that this functor is right adjoint to the quotient map. 

\subsection{The comparison map}
\label{subsection_AOmegaComp}

Our goal now is to compare the $\widehat{\Prism}_{(-)}$ theory constructed by unfolding $\pi_0 \TC^{-}$ to the $A\Omega$-theory from \cite{BMS}. We shall need the following variant of the latter that makes sense for all $p$-complete $\calO_C$-algebras:

\begin{construction}[Noncomplete $A\Omega$-complexes for arbitrary rings]
\label{NoncompleteAOmega}
Recall that \cite[Theorem 1.10]{BMS} gives a functor 
\[ A \mapsto A\Omega_A := L\eta_\mu R\Gamma(\mathrm{Spf}(A)_C, \mathbb A_{\inf})\]
from $p$-adic completions of smooth $\calO_C$-algebras to $E_\infty$-$A_{\inf}$-algebras. Two important features of this construction are:
\begin{enumerate}
\item $A\Omega_A$ is $(p,\xi)$-complete.
\item There is a natural isomorphism $A\Omega_A/\xi \simeq L\Omega_{A/\calO_C}$. 
\end{enumerate}
Following Construction~\ref{NoncompleteNygaard}, by left Kan extension in $(p,\xi)$-complete $A_{\inf}$-complexes, we obtain a functor $A\Omega_{(-)}$ on all $p$-complete simplicial commutative $\calO_C$-algebras. This functor satisfies the obvious analogs of (1) and (2) above.  As in Construction~\ref{NoncompleteNygaard}, it follows that $A\Omega_{(-)}$ is a sheaf of $E_\infty$-$A_{\inf}$-algebras on $\Qs_{\calO_C}^\sub{op}$ and takes discrete values on $\Qsp_{\calO_C}^\sub{op}$.
 \end{construction}

We can now state and prove the main theorem.

\begin{theorem}\label{main_theorem}
Let $A$ be an $\calO_C$-algebra that can be written as the $p$-adic completion of a smooth $\calO_C$-algebra. There is a natural isomorphism $\widehat{\Prism}_A \stackrel{\simeq}{\to} A\Omega_A$ of $E_\infty$-$A_{\inf}$-algebras that is compatible with Frobenius.
\end{theorem}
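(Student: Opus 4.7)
The plan is to construct a natural comparison map $\widehat{\Prism}_A \to A\Omega_A$ using an iterated Frobenius construction, and then verify it is an equivalence by reducing modulo $\xi$. The starting point is Corollary \ref{NygaardSmooth}(3) and its iteration in Remark \ref{NygaardSmoothFrobeniusIterate}, which gives functorial maps of $E_\infty$-algebras
\[ \widehat{\Prism}_A \to L\eta_{\xi_r}\,\varphi^r_*\,\widehat{\Prism}_A \]
for every $r \geq 1$, factoring the $r$-fold Frobenius on $\widehat{\Prism}_A$. Passing to the derived limit over $r$ and applying Lemma \ref{LetaAlmost} (noting that each $\widehat{\Prism}_A$ satisfies the torsion-freeness and connectivity hypotheses by Corollary \ref{NygaardSmooth}(2) and (1)), the resulting cofiber is almost zero, and I would use Lemma \ref{AlmostEltReal} to promote the almost map to an honest map
\[ \widehat{\Prism}_A \to L\eta_\mu\,\bigl(R\lim_r \varphi^r_*\,\widehat{\Prism}_A\bigr). \]

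Next, I would identify the target with $A\Omega_A$ by quasisyntomic descent. Choose a quasisyntomic cover $A \to R$ with $R$ a $p$-torsionfree perfectoid $\calO_C$-algebra (existence by the construction of Remark \ref{QspQsFreeCover}) and let $R^\bullet$ be its \v{C}ech nerve, which lies in $\qsp_{\calO_C}$. On each $R^i$, which is quasiregular semiperfectoid, I would show that $R\lim_r \varphi^r_*\,\widehat{\Prism}_{R^i}$ coincides with $A_{\inf}(R^i_{\mathrm{perf}}) = A_{\inf}(R^i)^{\wedge}_{\text{perf}}$, which by the perfectoid case of Theorem \ref{thm:TCperfectoid} is itself $\widehat{\Prism}_{R^i_{\mathrm{perf}}}$; this uses the crucial fact that the Frobenius on $\widehat{\Prism}_R$ is an automorphism for perfectoid $R$. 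The totalization of $A_{\inf}(R^\bullet)$ then agrees with $R\Gamma_{\mathrm{pro\acute{e}t}}(\mathrm{Spf}(A)_C, \mathbb{A}_{\inf})$ via the standard pro-\'etale computation used in \cite{BMS}, hence the target of the previous display is $A\Omega_A$. The resulting map $\widehat{\Prism}_A \to A\Omega_A$ is Frobenius-equivariant by construction.

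Finally, to show this map is an equivalence I would reduce modulo $\xi$. Since both source and target are $(p,\xi)$-complete, it suffices to see this is an equivalence after reducing mod $\xi$. By Proposition \ref{NygaarddR} the left hand side reduces to $\widehat{L\Omega}_{A/\calO_C}$, and by the definition of $A\Omega_A$ the right hand side reduces to $L\Omega_{A/\calO_C}$; for the $p$-adic completion of a smooth $\calO_C$-algebra the Hodge filtration is commensurate with the $p$-adic filtration, so these agree. The identification of the comparison map mod $\xi$ with the identity of $L\Omega_{A/\calO_C}$ follows by tracing through the construction in the perfectoid local chart and using that the divided Frobenius $\varphi_1$ on $\calN^{\geq 1}\widehat{\Prism}_R \to \widehat{\Prism}_R$ reduces to the Cartier map mod $\xi$. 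The main obstacle, where care is required, is the second paragraph: identifying $R\lim_r \varphi^r_*\,\widehat{\Prism}_{R^\bullet}$ with $R\Gamma_{\mathrm{pro\acute{e}t}}(\mathrm{Spf}(A)_C, \mathbb{A}_{\inf})$, since this simultaneously requires controlling the interaction of $R\lim_r$ with totalization, handling the almost ambiguity via Lemma \ref{AlmostEltReal} in a functorial way, and invoking almost purity to pass from the quasisyntomic $R^\bullet$ to a pro-\'etale affinoid perfectoid cover of the generic fibre.
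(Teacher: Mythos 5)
Your outline captures some of the right ingredients (the Frobenius-divisibility from Corollary~\ref{NygaardSmooth}, almost-vanishing via Lemma~\ref{LetaAlmost}, and the mod-$\xi$ reduction to derived de~Rham cohomology), but there is a genuine gap in the very first step that makes the construction break down.

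You propose to form $R\lim_r \varphi^r_* \widehat{\Prism}_A$, or equivalently to ``pass to the derived limit over $r$'' of the maps $\widehat{\Prism}_A \to L\eta_{\xi_r} \varphi^r_* \widehat{\Prism}_A$, and then apply Lemma~\ref{LetaAlmost}. But these targets do \emph{not} form an inverse system. Iterating Corollary~\ref{NygaardSmooth}(3) as in Remark~\ref{NygaardSmoothFrobeniusIterate} produces a \emph{directed} tower
\[ \widehat{\Prism}_A \to L\eta_\xi \varphi_* \widehat{\Prism}_A \to L\eta_{\xi_2} \varphi^2_* \widehat{\Prism}_A \to \cdots, \]
where each arrow is obtained by applying $L\eta_\xi \varphi_*$ to the previous one. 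To form an \emph{inverse} limit (which is what Lemma~\ref{LetaAlmost} handles) one would need transition maps $L\eta_{\xi_{r+1}} \varphi^{r+1}_* \widehat{\Prism}_A \to L\eta_{\xi_r} \varphi^r_* \widehat{\Prism}_A$, and these would require a natural map $L\eta_\xi \varphi_* \widehat{\Prism}_A \to \widehat{\Prism}_A$ --- which does not exist, because the Frobenius $\varphi$ on $\widehat{\Prism}_A$ is \emph{not} an automorphism. In the notation of the paper's proof, $\widehat{\Prism}_A$ only lives in $\mathcal{C}^{\to F}$ (objects with a map to $F$), not in $\mathcal{C}^{F\to}$ (objects with a map from $F$), and the inverse-limit endofunctor $R$ is defined only on the latter. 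This is precisely why the paper first constructs a primitive comparison map $b_A: \widehat{\Prism}_A \to R\Gamma(\mathrm{Spf}(A)_C, \mathbb{A}_{\inf})$ using the universal property of $A_{\inf}$ for perfectoid $A$-algebras: the target has an invertible Frobenius, hence lives in $\mathcal{C}^{F\to}$, and the inverse limit $\varprojlim_r (L\eta_\xi \varphi_*)^{\circ r} R\Gamma(\mathrm{Spf}(A)_C, \mathbb{A}_{\inf})^a$ is then well-defined and identified with $A\Omega_A^a$ via Lemma~\ref{LetaAlmost}. Your proposal tries to bypass this primitive comparison map, and as a result the object you want to take $L\eta_\mu$ of is undefined.

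There are two further, secondary issues. First, the claimed identification $R\lim_r \varphi^r_* \widehat{\Prism}_{R^i} \cong A_{\inf}(R^i_{\mathrm{perf}})$ for quasiregular semiperfectoid $R^i$ is not established anywhere in the paper, is a non-trivial assertion (the terms of your quasisyntomic \v{C}ech nerve are quasiregular semiperfectoid, not perfectoid, so $A_{\inf}(-)$ of them is not what appears in the pro-\'etale descent computation of $R\Gamma(\mathrm{Spf}(A)_C, \mathbb{A}_{\inf})$), and you acknowledge this as ``the main obstacle'' without resolving it. Second, the passage from the almost category back to the honest category via Lemma~\ref{AlmostEltReal} requires the module to be the $(p,\xi)$-completion of a \emph{free} $A_{\inf}$-module; this does not hold for $\widehat{\Prism}_A$ for general $A$. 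The paper circumvents this by left Kan extending the almost comparison map to arbitrary $p$-complete rings, restricting to the subcategory $\qsp^\sub{proj}_{\calO_C}$ of quasiregular semiperfectoid rings where Lemma~\ref{QRSPFree} applies to verify the hypotheses of Lemma~\ref{AlmostEltReal}, and then unfolding; your proposal does not include this step. (Your final reduction mod $\xi$ is in the right spirit, though you would still want Lemma~\ref{CompCrit} to reduce checking the isomorphism to $H^0$ mod $(p,\xi)$.)
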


\begin{proof}
Before explaining the proof, let us explain the idea informally. As we understand $\widehat{\Prism}_S$ for $S$ perfectoid, it is easy to construct a comparison map $\widehat{\Prism}_A \to R\Gamma(\mathrm{Spf}(A)_C, \mathbb{A}_{\inf})$. To factor this over $L\eta_\mu$ of the target (and thus producing a map to $A\Omega_A$), we use the criterion from Lemma~\ref{LetaAlmost} as well as the behaviour of Frobenius on the Nygaard filtration on $\widehat{\Prism}_A$ coming from Corollary~\ref{NygaardSmooth}. At the end, this only gives a factorization in the almost category, so we employ a trick involving left Kan extensions to topologically free objects in $\Qsp_{\calO_C}$ to get back to the real world.

Let us now explain the proof as a series of steps. \vspace{0.1in}

{\em A primitive comparison map.}
Let us first construct a functorial $\varphi$-equivariant comparison map 
\[ b_A:\widehat{\Prism}_A \to R\Gamma(\mathrm{Spf}(A)_C, \mathbb A_{\inf})\] 
for the $p$-adic completion $A$ of a smooth $\calO_C$-algebra. For this, observe that for every map $A \to R$ with $R$ perfectoid, we have an induced functorial map $\widehat{\Prism}_A \to \widehat{\Prism}_R \cong A_{\inf}(R)$. As $R\Gamma(\mathrm{Spf}(A)_C, \mathbb A_{\inf})$ can be regarded as a limit of the functor $R\mapsto A_{\inf}(R)$ on a subcategory of perfectoid $A$-algebras, we formally obtain the map $b_A$. \vspace{0.1in}

{\em Constructing the comparison map in the almost category.}
We shall now refine $b_A$ to obtain a functorial comparison map 
\[ c^a_A:\widehat{\Prism}_A^a \to A\Omega_A^a\]
of $E_\infty$-algebras in $\widehat{D}(A_{\inf}^a)$ for the $p$-adic completion $A$ of a smooth $\calO_C$-algebra. Consider the $\infty$-category $\mathcal C$ of $E_\infty$-algebras in $\widehat{D}(A_{\inf}^a)$. The $\infty$-category $\mathcal C$ comes equipped with an endofunctor $F: \mathcal C\to \mathcal C$ given by $A\mapsto L\eta_\xi \varphi_\ast A$. Given any $\infty$-category with an endofunctor $F$, we have the $\infty$-category of fixed points $\mathcal C^F$ of pairs of an object $X\in \mathcal C$ and an equivalence $X\simeq F(X)$. Moreover, we have the $\infty$-category $\mathcal C^{\to F}$ of objects $X\in \mathcal C$ with a map $X\to F(X)$; and the $\infty$-category $\mathcal C^{F\to}$ of objects $X\in \mathcal C$ with a map $F(X)\to X$. If $\mathcal C$ admits sequential limits, then there is an endofunctor $R$ of $\mathcal C^{F\to}$ given by sending $F(X)\to X$ to the inverse limit $R(X)$ of $\ldots \to F(F(X))\to F(X)\to X$ with its natural map $F(R(X))\to R(X)$. If $Y\to F(Y)$ is an object of $\mathcal C^{\to F}$ and $F(X)\to X$ an object of $\mathcal C^{F\to}$ together with a commutative diagram
\[\xymatrix{
Y\ar[r]\ar[d]^f& F(Y)\ar[d]^{F(f)}\\
X&\ar[l] F(X)\ ,
}\]
then this factors canonically over a similar map from $Y\to F(Y)$ to $F(R(X))\to R(X)$.

In our case, $\widehat{\Prism}_A^a$ lies in $\mathcal C^{\to F}$ as there is a natural map
\[
\widehat{\Prism}_A\to L\eta_\xi\varphi_\ast \widehat{\Prism}_A
\]
by Corollary~\ref{NygaardSmooth}. On the other hand, $R\Gamma(\mathrm{Spf}(A)_C,\mathbb A_{\inf})$ lies in $\mathcal C^{F\to }$, as $\varphi$ is an automorphism and there is a natural map $L\eta_\xi R\Gamma(\mathrm{Spf}(A)_C,\mathbb A_{\inf})\to R\Gamma(\mathrm{Spf}(A)_C,\mathbb A_{\inf})$ since $R\Gamma(\mathrm{Spf}(A)_C,\mathbb A_{\inf})\in D_{tf}^{\geq 0}(A_{\inf})$. Moreover, the diagram
\[\xymatrix{
\widehat{\Prism}_A\ar[r]\ar[dd]^-{b_A}& L\eta_\xi\varphi_\ast \widehat{\Prism}_A\ar[r]^{\mathrm{can}} \ar[d]^-{L\eta_\xi \varphi_* b_A} & \varphi_* \widehat{\Prism}_A\ar[d]^-{\varphi_* b_A}\\
& L\eta_\xi\varphi_\ast R\Gamma(\mathrm{Spf}(A)_C,\mathbb A_{\inf})\ar[r]^-{\mathrm{can}} \ar@{-->}[dl] & \varphi_\ast R\Gamma(\mathrm{Spf}(A)_C,\mathbb A_{\inf})\\
R\Gamma(\mathrm{Spf}(A)_C,\mathbb A_{\inf})\ar[rru]^{\varphi}_\simeq
}\]
commutes without the dashed arrow, and thus also with the dashed arrow by inverting the lower equivalence. Thus, by passing to the almost category, we are in the abstract setup, and get a map of $E_\infty$-algebras
\[
\widehat{\Prism}_A^a\to \varprojlim (L\eta_\xi\varphi_\ast)^{\circ r} R\Gamma(\mathrm{Spf}(A)_C,\mathbb A_{\inf})^a
\]
in $\widehat{D}(A_{\inf}^a)$ that commutes with the respective maps to/from their $L\eta_\xi\varphi_\ast -$. But the right-hand side is equivalent to $A\Omega_A^a$ by Lemma~\ref{LetaAlmost}. More precisely, applying the same argument for $A\Omega_A$ with its equivalence to $L\eta_\xi A\Omega_A$, we get a natural map of $E_\infty$-algebras
\[
A\Omega_A^a=L\eta_\mu R\Gamma(\mathrm{Spf}(A)_C,\mathbb A_{\inf})^a\to \varprojlim (L\eta_\xi\varphi_\ast)^{\circ r} R\Gamma(\mathrm{Spf}(A)_C,\mathbb A_{\inf})^a
\]
in $\widehat{D}(A_{\inf}^a)$ that commutes with the respective maps to/from $L\eta_\xi \varphi_\ast -$, and this map is an equivalence by Lemma~\ref{LetaAlmost}.\vspace{0.1in}

{\em Lifting the almost comparison map $c_A^a$ to the real world.} By left Kan extension in $(p,\xi)$-complete $A_{\inf}$-complexes, we obtain an almost map $c_A^a:{\Prism}^{a}_A \to A\Omega^{a}_A$ for any $p$-complete $\calO_C$-algebra $A$ (see Construction~\ref{NoncompleteNygaard} and Construction~\ref{NoncompleteAOmega} for the definitions). For $S\in \Qsp_{\calO_C}$, both ${\Prism}_S$ and $A\Omega_S$ are discrete. Hence, we can identify $c_S^a$ with an honest map ${\Prism}_S \to \mathrm{Hom}_{A_{\inf}}(W(\mathfrak m^\flat), A\Omega_S)$ for $S \in \Qsp_{\calO_C}$, recalling that $\mathrm{Hom}_{A_{\inf}}(W(\mathfrak m^\flat),-)$ is the right adjoint to the forgetful functor $M\mapsto M^a$. Now if $S \in \qsp^\sub{proj}_{\calO_C}$ (see Variant~\ref{FreeQSP} for the definition), then Lemma~\ref{QRSPFree} identifies this with an honest map
\[d_R:{\Prism}_S \to A\Omega_S.\]
In other words, on the category $\qsp_{\calO_C}^\sub{proj} \subset \Qsp_{\calO_C}$ from Variant~\ref{FreeQSP}, we have constructed the comparison map $d_S$ as above. Using the equivalence in the last statement of Variant~\ref{FreeQSP}, we may unfold the map $d_S$ to a functorial comparison map
\[ d_A:{\Prism}_A\to A\Omega_A\]
for any $A\in \qs_{\calO_C}^\sub{proj}$. As $p$-adic completions of smooth $\calO_C$-algebras lie in $\qs_{\calO_C}^\sub{proj}$, this construction restricts to a functorial comparison map on the category of $p$-adic completions of smooth $\calO_C$-algebras. It is also clear by descent that $d_A$ is a map of $E_\infty$-$A_{\inf}$-algebras that is compatible with Frobenius. \vspace{0.1in}

{\em Showing $d_A$ is an isomorphism.} Note that both $\widehat{\Prism}_A/\xi$ and $A\Omega_A/\xi$ are naturally identified with $L\Omega_{A/\calO_C}$. By completeness, to show $d_A$ is an isomorphism, it is enough to check that $d_A/\xi$ is an isomorphism. Using Lemma~\ref{CompCrit}, we must verify the following: 

\begin{claim} If $A$ is the $p$-adic completion of a smooth $\calO_C$-algebra, the map $H^0(d_A/(p,\xi))$ induces the identity map on $(A/p)^{(1)} := A/p \otimes_{\calO_C/p,\varphi} \calO_C/p$ under the isomorphisms 
\[ H^0(\widehat{\Prism}_A/(p,\xi)) \cong H^0(L\Omega_{(A/p)/(\calO_C/p)}) \cong (A/p)^{(1)}\]
and
\[ H^0(A \Omega_A/(p,\xi)) \cong H^0(L\Omega_{(A/p)/(\calO_C/p)}) \cong (A/p)^{(1)}\] 
coming from the Cartier isomorphism. 
\end{claim}

But this follows from the analogous statement in the perfectoid case. Indeed, after a localization, we may choose a cover $A \to R$ in $\qs^\sub{proj}_{\calO_C}$ with $R$ perfectoid. Then $d_{R}$ is the identity map by the construction of the primitive comparison map $b_A$, and hence $H^0(d_{R}/(p,\xi))$ is also the identity map. As $A\to R$ is injective modulo $p$, the map $L\Omega_{(A/p)/(\calO_C/p)} \to L\Omega_{(R/p)/(\calO_C/p)}$ also induces an injective map on $H^0$, and hence it follows that $H^0(d_A/(p,\xi))$ is the identity map.
\end{proof}

The following two lemmas were used above.

\begin{lemma}
\label{QRSPFree}
Assume $S\in \qsp^\sub{proj}_{\calO_C}$ (see Variant~\ref{FreeQSP} for the definition). Then
\[
A\Omega_S \cong \mathrm{Hom}_{A_{\inf}}(W(\mathfrak m^\flat), A\Omega_S)\ .
\]
\end{lemma}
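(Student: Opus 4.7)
The plan is to invoke Lemma \ref{AlmostEltReal}, which reduces the statement to showing that $A\Omega_S$ is the $(p,\xi)$-completion of a free $A_{\inf}$-module. By Construction \ref{NoncompleteAOmega}, we already know that $A\Omega_S$ is $(p,\xi)$-complete and discrete, so it suffices to check that $A\Omega_S \dotimes_{A_{\inf}} A_{\inf}/(p,\xi)$ is concentrated in degree $0$ and free as a $\calO_C/p$-module, and then to lift a basis via a derived Nakayama argument.

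First I would compute the derived reduction modulo $(p,\xi)$. Using $A\Omega_S/\xi \simeq L\Omega_{S/\calO_C}$ (Construction \ref{NoncompleteAOmega}) together with the base-change property of the derived de Rham complex in the ground ring, one has $A\Omega_S \dotimes_{A_{\inf}} A_{\inf}/(p,\xi) \simeq L\Omega_{(S/p)/(\calO_C/p)}$. The conjugate filtration endows this object with an exhaustive increasing filtration with graded pieces $\wedge^i L_{(S/p)/(\calO_C/p)}[-i] \simeq \Gamma^i_{S/p}(M)$ concentrated in degree $0$, where $M := L_{(S/p)/(\calO_C/p)}[-1]$. By definition of $\qsp^\sub{proj}_{\calO_C}$ (Variant \ref{FreeQSP}), $M$ is a projective $S/p$-module and $S/p$ is projective over $\calO_C/p$, so each $\Gamma^i_{S/p}(M)$ is projective over $\calO_C/p$, and therefore free by the footnote in Variant \ref{FreeQSP}. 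Since an exhaustive increasing filtration with projective graded pieces splits, one obtains an isomorphism $L\Omega_{(S/p)/(\calO_C/p)} \cong \bigoplus_{i \geq 0} \Gamma^i_{S/p}(M)$ of $\calO_C/p$-modules, which is free.

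To finish, pick a basis of $A\Omega_S \dotimes_{A_{\inf}} A_{\inf}/(p,\xi)$ and lift it arbitrarily to $A\Omega_S$. This defines a map from the $(p,\xi)$-completion $F$ of a free $A_{\inf}$-module to $A\Omega_S$ which, by construction, is an isomorphism after reducing modulo $(p,\xi)$. Both $F$ and $A\Omega_S$ are derived $(p,\xi)$-complete, so by derived Nakayama this map is an equivalence. Hence $A\Omega_S$ is the $(p,\xi)$-completion of a free $A_{\inf}$-module, and Lemma \ref{AlmostEltReal} delivers the claimed isomorphism $A\Omega_S \cong \mathrm{Hom}_{A_{\inf}}(W(\mathfrak{m}^\flat), A\Omega_S)$. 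The main thing requiring care is the splitting of the conjugate filtration (which is automatic given the projectivity of the graded pieces) and the verification of the derived Nakayama hypotheses; the projective hypothesis on $S$ built into $\qsp^\sub{proj}_{\calO_C}$ is precisely what makes both steps go through cleanly.
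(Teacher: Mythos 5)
Your proof is essentially identical to the paper's: reduce modulo $(p,\xi)$ to $L\Omega_{(S/p)/(\calO_C/p)}$, split the conjugate filtration using the projectivity hypotheses built into $\qsp^\sub{proj}_{\calO_C}$, lift a basis along the $(p,\xi)$-complete quotient, and invoke Lemma~\ref{AlmostEltReal}. One small inaccuracy: the footnote in Variant~\ref{FreeQSP} asserts that \emph{finitely presented flat $\calO_C/p$-algebras} are free, which is not what you need for $\Gamma^i_{S/p}(M)$ (this module is neither finitely presented nor an algebra); the relevant fact is Kaplansky's theorem that projective modules over the local ring $\calO_C/p$ are free, which is what the paper tacitly uses when it asserts ``(and free over $\calO_C/p$)''.
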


\begin{proof}
As $S$ is $\calO_C$-flat, we have $A\Omega_S/(p,\xi) \simeq L\Omega_{(S/p)/(\calO_C/p)}$. By assumption $S/p$ is a free $\calO_C/p$-module and $L_{(S/p)/(\calO_C/p)}[-1]$ is a projective $S/p$-module. But then $\wedge^i L_{(S/p)/(\calO_C/p)}[-i] \cong \Gamma^i_{(S/p)}(\pi_1 L_{(S/p)/(\calO_C/p)})$ is a projective $S/p$-module (and free over $\calO_C/p$) for all $i$. By (non-cano\-ni\-cal\-ly) splitting the conjugate filtration, one sees that $L\Omega_{(S/p)/(\calO_C/p)}$ is also a free $\calO_C/p$-module. It follows that $A\Omega_S$ is the $(p,\xi)$-completion of a free $A_{\inf}$-module: the sequence $(p,\xi)$ is regular in $A\Omega_S$ as the derived quotient $A\Omega_S/(p,\xi)=L\Omega_{(S/p)/(\calO_C/p)}$ is discrete, and then a basis of $A\Omega_S/(p,\xi)$ lifts to a topological basis of $A\Omega_S$. The claim now follows from Lemma~\ref{AlmostEltReal}.
\end{proof}

\begin{lemma}
\label{CompCrit}
Let $A$ be the $p$-adic completion of a smooth $\calO_C$-algebra. Let $\eta:\Omega_{A/\calO_C} \to \Omega_{A/\calO_C}$ be a map of $p$-completed $E_\infty$-$\calO_C$-algebras with mod $p$ reduction $\overline{\eta}$. If $H^0(\overline{\eta})$ is the identity, then $H^*(\overline{\eta})$ is the identity, and thus $\eta$ is an isomorphism.
\end{lemma}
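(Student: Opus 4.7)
The plan is first to reduce to the mod-$p$ statement. Since $\eta$ is a map between $p$-completed $\calO_C$-module spectra, derived $p$-adic Nakayama reduces the assertion that $\eta$ is an isomorphism to the assertion that $\bar\eta$ is a quasi-isomorphism, i.e.\ that $H^\ast(\bar\eta) = \mathrm{id}$. For $A$ the $p$-adic completion of a smooth $\calO_C$-algebra, the Cartier isomorphism identifies $H^\ast(\Omega^\bullet_{(A/p)/(\calO_C/p)})$ with the graded-commutative $\calO_C/p$-algebra $\Omega^\ast_{(A/p)^{(1)}/(\calO_C/p)}$, sending $d(a\otimes 1)$ to the class of $a^{p-1}\,da$. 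The induced map $H^\ast(\bar\eta)$ is a graded ring endomorphism (using the multiplicative structure coming from $\bar\eta$ being $E_\infty$), is the identity in degree $0$ by hypothesis, and is therefore $(A/p)^{(1)}$-linear in each degree; since the target is generated as an $(A/p)^{(1)}$-algebra in degrees $0$ and $1$, and the classes $[a^{p-1}\,da]$ generate $H^1$ as an $(A/p)^{(1)}$-module, it will suffice to check that $H^1(\bar\eta)$ fixes each $[a^{p-1}\,da]$.

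For this check I will lift $a$ to an element of $A$ and exploit the integral divisibility $d(a^p) = p\cdot a^{p-1}\,da$. Since $a^p$ reduces modulo $p$ to an element of $\ker d = H^0$, the hypothesis gives $\bar\eta(a^p) = a^p$ in $A/p$; by $p$-torsion-freeness of $A$ this lifts to $\eta(a^p) = a^p + pb$ for some $b \in A$. Using that $\eta$ is a $\calO_C$-linear chain map, I obtain
\[
p\,\eta(a^{p-1}\,da) \;=\; \eta\bigl(d(a^p)\bigr) \;=\; d\bigl(\eta(a^p)\bigr) \;=\; d(a^p + pb) \;=\; p\bigl(a^{p-1}\,da + db\bigr),
\]
and dividing by $p$ inside the $p$-torsion-free module $\Omega^1_{A/\calO_C}$ yields $\eta(a^{p-1}\,da) = a^{p-1}\,da + db$. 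Reducing modulo $p$ gives $\bar\eta(a^{p-1}\,da) = a^{p-1}\,da + d\bar b$, whose class in $H^1$ is just $[a^{p-1}\,da]$; combined with the reduction of the previous paragraph, this will prove the lemma.

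The main obstacle is recognizing the right rigidity mechanism: neither the chain-map property nor $\calO_C$-linearity alone pins down $\eta$ in positive degrees, but combined with the integral divisibility of $d(a^p)$ by $p$ and the hypothesis on $H^0$ (which is a constraint on the image of $p$-th powers in $A/p$), they leave no freedom modulo exact terms on the Cartier generators of $H^1$. The $E_\infty$-structure itself enters only very mildly, to force $H^\ast(\bar\eta)$ to be a graded ring endomorphism so that checking identity on a set of algebra generators is enough; the real work is done by the chain-map property together with the classical calculus of $p$-th powers.
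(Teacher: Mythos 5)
Your reduction is the same as the paper's: once one knows $H^0(\overline\eta)$ is the identity, the $E_\infty$-structure makes $H^*(\overline\eta)$ a graded-ring endomorphism and it suffices, by Cartier, to show $H^1(\overline\eta)$ fixes the classes $[a^{p-1}\,da]$. The difference is how you verify this, and there is a gap in the way you do it.

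You carry out the verification by manipulating $\eta$ at the chain level: you write $\eta(a^p) = a^p + pb$, compute $\eta(d(a^p)) = d(\eta(a^p))$, and divide by $p$ inside $\Omega^1$. All of this presupposes that $\eta$ is a \emph{strict} $\calO_C$-linear chain map $\Omega^\bullet_{A/\calO_C} \to \Omega^\bullet_{A/\calO_C}$. But $\eta$ is given only as a morphism in the $\infty$-category of $p$-complete $E_\infty$-$\calO_C$-algebras, i.e., a map in a derived ($\infty$-)category. There is no evident strictification: the de~Rham complex is not a bounded complex of projective $\calO_C$-modules (the terms $\Omega^i$ are flat but not projective over $\calO_C$, since, e.g., $\calO_C\langle T\rangle$ is not projective), and even if one replaces $\Omega^\bullet$ by a projective resolution, the resulting strict map lands in the resolution rather than $\Omega^\bullet$, breaking the rest of the computation. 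So as written, the crucial divisibility step is unjustified.

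The content of your computation is nonetheless correct --- it is precisely the unwinding of the Bockstein operator $\beta_p: H^0(\Omega^\bullet/p) \to H^1(\Omega^\bullet/p)$ attached to the $p$-torsion-free complex $\Omega^\bullet$, for which $\beta_p([a^p]) = [a^{p-1}\,da]$, i.e., the Bockstein corresponds to the de Rham differential under Cartier. The robust way to phrase what you want is to invoke the \emph{naturality} of the Bockstein: any morphism $\eta$ of $p$-complete $\calO_C$-complexes induces a morphism of the exact triangles defining $\beta_p$, so $H^*(\overline\eta)$ commutes with $\beta_p$, and hence $H^1(\overline\eta)$ fixes $\beta_p(R^0)$, which generates $R^1$ over $R^0$. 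This is exactly the paper's argument. Replacing your chain-level manipulation of $\eta$ with this naturality statement closes the gap; everything else in your write-up is fine.
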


\begin{proof}
View $H^*(\overline{\eta})$ as a graded endomorphism of a graded ring $R^* := H^*(\Omega_{(A/p)/(\calO_C/p)})$. By the Cartier isomorphism, $R^*$ is generated in degree $1$. As we have assumed $H^0(\overline{\eta})$ is the identity on $R^0$,  it is enough to show that the resulting $R^0$-linear map $H^1(\overline{\eta}):R^1 \to R^1$ is also the identity. Now $H^*(\overline{\eta})$ is compatible with the Bockstein differential $\beta_p:R^0 \to R^1$, so the map $H^1(\overline{\eta})$ acts as the identity on $\beta_p(R^0) \subset R^1$.  But, by the Cartier isomorphism, $R^1$ is generated as an $R^0$-module by $\beta_p(R^0)$: under the Cartier isomorphism, the Bockstein corresponds to the de Rham differential. As $H^1(\overline{\eta})$ is $R^0$-linear, the claim follows.
\end{proof}

\subsection{Nygaard filtrations}
\label{subsec:NygaardAOmega}

Moreover, we identify the Nygaard filtration.

\begin{proposition}\label{prop:identnygaard} Let $A$ be the $p$-adic completion of a smooth $\calO_C$-algebra. The map $\widehat{\Prism}_A\to L\eta_\xi \varphi_\ast \widehat{\Prism}_A$ from Corollary~\ref{NygaardSmooth}~(3) is an isomorphism, and identifies the Nygaard filtration $\calN^{\geq i}\widehat{\Prism}_A$ with the filtration on $L\eta_\xi$ from Proposition~\ref{LetaDF}.
\end{proposition}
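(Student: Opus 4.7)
The first claim (that the map is an isomorphism) will follow by combining Theorem~\ref{main_theorem} with the Frobenius structure on $A\Omega_A$ established in \cite{BMS}. Under the $\varphi$-equivariant identification $\widehat{\Prism}_A \simeq A\Omega_A$, the map of Corollary~\ref{NygaardSmooth}(3) matches the canonical isomorphism $A\Omega_A \simeq L\eta_\xi \varphi_* A\Omega_A$ that encodes the defining property of $A\Omega_A = L\eta_\mu R\Gamma(\mathrm{Spf}(A)_C,\mathbb A_{\inf})$ under Frobenius: the relation $\varphi(\mu) = \tilde\xi\mu$ ensures that the cohomological Frobenius on $R\Gamma$ lands in $L\eta_\xi$ of the $\varphi$-twist after applying $L\eta_\mu$, and the construction of Corollary~\ref{NygaardSmooth}(3) via Proposition~\ref{LetaDF} is tautologically compatible with this.

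For the second claim, both filtrations are complete and $\mathbb{N}$-indexed, so it suffices to verify that the induced map on associated gradeds is an equivalence for every $i \ge 0$. On one side, Proposition~\ref{NygaardGradedFilt} identifies $\calN^i\widehat{\Prism}_A$ as an $A$-complex carrying a length-$i$ filtration with successive quotients $(\Omega^j_{A/\calO_C})^\wedge_p[-j]$ for $0 \le j \le i$. On the other side, Theorem~\ref{TruncationBeilinson}(2) together with Proposition~\ref{LetaDF} gives
\[
\gr^i_{L\eta_\xi}(L\eta_\xi \varphi_*\widehat{\Prism}_A) \simeq \tau^{\le i}(\varphi_*\widehat{\Prism}_A/\xi) \simeq \tau^{\le i}\varphi_*(\widehat{\Prism}_A/\tilde\xi).
\]
Identifying $\widehat{\Prism}_A/\tilde\xi$ as a Frobenius twist of the Hodge-completed de Rham complex of $A/\calO_C$ --- which for $A$ smooth of relative dimension $d$ is just $\Omega^\bullet_{A/\calO_C}$, already concentrated in degrees $[0,d]$ --- shows that both associated gradeds coincide as complexes with a Hodge truncation of $\Omega^\bullet_{A/\calO_C}$, up to a Frobenius twist of the base.

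The map on gradeds is induced by the divided Frobenius $\varphi_i$, and to verify that it is an equivalence the most direct strategy is to reduce modulo $p$ using $(p,\xi)$-completeness of $\widehat{\Prism}_A$ and of both filtrations. After this reduction, Theorem~\ref{thm:tccharp} identifies $\widehat{\Prism}_A/p$ with the Nygaard-completed derived de Rham-Witt complex of $A/p$, and the congruence $\tilde\xi \equiv \xi^p \equiv [\varpi^\flat]^p \pmod p$ relates $L\eta_\xi$ after base change to $A_{\inf}/p = \calO_C^\flat$ to a power of $L\eta_{[\varpi^\flat]}$; one can then invoke Proposition~\ref{proposition_frobenius_of_nygaard}, which is precisely the characteristic $p$ statement that Frobenius identifies the Nygaard filtration with the $L\eta_p$ filtration on derived de Rham-Witt. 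A cleaner but less self-contained alternative is to work Zariski-locally on $\mathrm{Spf}(A)$ on opens admitting a lift of Frobenius, compute both filtrations explicitly via a Koszul presentation of $A\Omega_A$ ($q$-de Rham), and directly verify via a mixed-characteristic analog of Proposition~\ref{prop:nygaardlift} that the map on gradeds agrees with the identity on the Hodge-truncated de Rham complex.

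The main obstacle will be the careful bookkeeping of Frobenius twists throughout --- particularly in matching $\widehat{\Prism}_A/\tilde\xi$ with a precise Frobenius twist of $\Omega^\bullet_{A/\calO_C}$ so that the map between the two associated gradeds visibly becomes a version of the Cartier isomorphism --- and in justifying the reduction modulo $p$ so that Proposition~\ref{proposition_frobenius_of_nygaard} can be applied cleanly to conclude.
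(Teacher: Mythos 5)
Your overall architecture matches the paper's: reduce the first claim to the analogous one for $A\Omega_A$ via Theorem~\ref{main_theorem}, pass to associated gradeds for the second, identify both sides' cohomology with $\Omega^j_{A/\calO_C}$ (using Proposition~\ref{NygaardGradedFilt} on the Nygaard side and the fact that $\gr^i L\eta_\xi$ yields $\tau^{\leq i}$ of $\varphi_*(\widehat{\Prism}_A/\tilde\xi)$, whose cohomology is described by \cite[Thm.\ 8.3, 9.4(i)]{BMS}), and then check that certain self-maps of finite projective $\calO_C$-modules are isomorphisms. Up to that point you are in agreement with the paper.

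The final reduction is where your argument breaks down. You propose to reduce modulo $p$ and then invoke Theorem~\ref{thm:tccharp} and Proposition~\ref{proposition_frobenius_of_nygaard}. But Theorem~\ref{thm:tccharp} concerns quasiregular semiperfect $\mathbb{F}_p$-algebras and identifies $\widehat{\Prism}_S$ (for such $S$) with $\widehat{\mathbb{A}}_{\mathrm{crys}}(S)\simeq\widehat{LW\Omega}_S$; it says nothing about $\widehat{\Prism}_A/p$ for $A$ the $p$-adic completion of a smooth $\calO_C$-algebra. Reducing mod $p$ lands you over the non-perfect base $\calO_C/p$, while the paper's characteristic-$p$ theory (in particular Proposition~\ref{proposition_frobenius_of_nygaard} and Corollary~\ref{cor:segalcharp}) is set up for smooth algebras over a perfect field $k$. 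The congruence $\tilde\xi\equiv\xi^p\pmod p$ is correct, but the further identification with $[\varpi^\flat]^p$ imports notation from the Breuil--Kisin section that is not present here and does not lead to the needed comparison with $L\eta_p$. What the paper does instead is base-change along $\calO_C\to k$ (the residue field): since the maps in question are endomorphisms of finite projective modules over a local ring, Nakayama reduces the check to $A\otimes_{\calO_C}k$, which is an honest smooth $k$-algebra, and then the characteristic-$p$ results apply directly. Your alternative sketch via an explicit $q$-de~Rham Koszul computation on a Frobenius-lifting Zariski chart is plausible and is essentially what Remark~\ref{remark_nygaard_in_coords} records, but it is not developed far enough in your proposal to stand on its own.
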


In other words, the equivalence $\widehat{\Prism}_A\simeq A\Omega_A$ carries the Nygaard filtration $\calN^{\geq i}\widehat{\Prism}_A$ to the filtration on $A\Omega_A$ coming from the equivalence $A\Omega_A\simeq L\eta_\xi \varphi_\ast A\Omega_A$.

\begin{proof} As the equivalence $\widehat{\Prism}_A\simeq A\Omega_A$ commutes with the maps to their $L\eta_\xi \varphi_\ast -$, the first statement follows from the corresponding statement for $A\Omega_A$.

For the statement on Nygaard filtrations, we need to see that the maps of associated gradeds is an equivalence. We know that $\calN^i\widehat{\Prism}_A$ is a complex of $A$-modules concentrated in degrees $[0,i]$ with cohomology groups $\Omega^j_{A/\calO_C}$, $0\leq j\leq i$, by Proposition~\ref{NygaardGradedFilt}. For the right-hand side, we use the equivalence $L\eta_\xi \varphi_\ast \widehat{\Prism}_A\simeq \varphi_\ast L\eta_{\tilde\xi} A\Omega_A$ to see that the graded pieces are isomorphic to $\varphi_\ast \tau^{\leq i} A\Omega_A/\tilde\xi$. By \cite[Theorem 8.3, Theorem 9.4 (i)]{BMS}, its cohomology groups are also given by $\Omega_{A/\calO_C}^j$ for $0\leq j\leq i$, and $0$ else. Thus, we must check that certain endomorphisms $\Omega^j_{A/\calO_C}\to \Omega^j_{A/\calO_C}$ are isomorphisms. This can be checked after base extension along $A\to A\otimes_{\calO_C} k$, where $k$ is the residue field of $\calO_C$. Then it follows from the results in characteristic $p$.
\end{proof}

\begin{remark}\label{remark_nygaard_in_coords} We briefly explain how to make the Nygaard filtration explicit in coordinates. Recall that if one fixes a framing $\square: \calO_C\langle T_1^{\pm 1},\ldots,T_d^{\pm 1}\rangle\to A$ that is the $p$-adic completion of an \'etale map, one gets a corresponding flat deformation $\tilde{A}$ of $A$ to $A_{\inf}$ (along $\theta: A_{\inf}\to \calO_C$) and there is an explicit complex computing $A\Omega_A$, given by a $q$-de~Rham complex
\[
q\text-\Omega^\bullet_{\tilde{A}/A_{\inf}} = \tilde{A}\to \bigoplus_{i=1}^d \tilde{A}\to \ldots\to \tilde{A}\to 0
\]
that can be defined as a Koszul complex $K_{\tilde{A}}(\frac{\partial_q}{\partial_q \log(T_1)},\ldots,\frac{\partial_q}{\partial_q \log(T_d)})$, cf.~\cite[Definition 9.5]{BMS}. Here, $q=[\epsilon]-1\in A_{\inf}$. Under this equivalence, the map
\[
A\Omega_A\to L\eta_\xi \varphi_\ast A\Omega_A = \varphi_\ast L\eta_{\tilde\xi} A\Omega_A
\]
is given by the map of complexes
\[
\varphi: q\text-\Omega^\bullet_{\tilde{A}/A_{\inf}}\to \varphi_\ast \eta_{\tilde\xi}q\text-\Omega^\bullet_{\tilde{A}/A_{\inf}}\subset \varphi_\ast q\text-\Omega^\bullet_{\tilde{A}/A_{\inf}}
\]
induced by the map $\varphi: \tilde{A}\to \tilde{A}$ sending all $T_i$ to $T_i^p$. Now a direct computation shows that this implies that one can describe the Nygaard filtration as the filtration
\[
\xi^{\max(i-\bullet,0)} q\text-\Omega^\bullet_{\tilde{A}/A_{\inf}}\subset q\text-\Omega^\bullet_{\tilde{A}/A_{\inf}}
\]
as in Proposition~\ref{prop:nygaardlift}.
\end{remark}

Having identified the Nygaard filtration, we can now identify $\THH$ and $\THH^{tC_p}$ more precisely, and verify a version of the Segal conjecture. Recall the complex $\widetilde{\Omega}_A = A\Omega_A\otimes_{A_{\inf},\tilde\theta} A$ from \cite[\S 8]{BMS}, whose cohomology groups are $\Omega^i_{A/\calO_C}\{-i\}$.

\begin{corollary} Let $A$ be the $p$-adic completion of a smooth $\calO_C$-algebra of dimension $d$. For all $i\in \mathbb Z$, there are natural isomorphisms
\[
\gr^i \THH(A;\mathbb Z_p)\simeq (\tau^{\leq i} \widetilde{\Omega}_A\{i\})[2i]
\]
and
\[
\gr^i \THH(A;\mathbb Z_p)^{tC_p}\simeq \widetilde{\Omega}_A\{i\}[2i]\ ,
\]
where the filtration on $\THH(-;\mathbb Z_p)^{tC_p}$ is defined as usual via quasisyntomic descent of the double-speed Postnikov filtration. Under this equivalence, the map $\varphi: \gr^i \THH(A;\mathbb Z_p)\to \gr^i \THH(A;\mathbb Z_p)^{tC_p}$ is the natural map $(\tau^{\leq i} \widetilde{\Omega}_A)\{i\}[2i]\to \widetilde{\Omega}_A\{i\}[2i]$.

In particular,
\[
\varphi: \THH(A;\mathbb Z_p)\to \THH(A;\mathbb Z_p)^{tC_p}
\]
is an equivalence in degrees $\geq d$.
\end{corollary}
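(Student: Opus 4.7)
The plan is to combine the motivic filtration identifications of Theorem~\ref{thm:main5}(4)---namely $\gr^n \THH(A;\mathbb Z_p) \simeq \calN^n \widehat{\Prism}_A\{n\}[2n]$ and $\gr^n \TP(A;\mathbb Z_p) \simeq \widehat{\Prism}_A\{n\}[2n]$---with the comparison $\widehat{\Prism}_A \simeq A\Omega_A$ from Theorem~\ref{main_theorem} and the identification of the Nygaard filtration with the $L\eta_{\tilde\xi}$-filtration from Proposition~\ref{prop:identnygaard}. For the Tate piece, I would first apply Proposition~\ref{prop:TCtoTHH} (with $R = \calO_C$) to obtain an equivalence $\THH(A;\mathbb Z_p)^{tC_p} \simeq \TP(A;\mathbb Z_p)/\tilde\xi$ of $E_\infty$-ring spectra. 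Descent from quasiregular semiperfectoid $\calO_C$-algebras---where Theorem~\ref{THHqrs}(1) and degeneration of the Tate spectral sequence force both $\TP(S;\mathbb Z_p)$ and $\THH(S;\mathbb Z_p)^{tC_p}$ to be concentrated in even degrees---shows this equivalence intertwines the double-speed Postnikov filtrations, and hence unfolds to an equivalence of filtered objects. Combined with $\widehat{\Prism}_A/\tilde\xi \simeq A\Omega_A/\tilde\xi = \widetilde{\Omega}_A$, this yields $\gr^n \THH(A;\mathbb Z_p)^{tC_p} \simeq \widetilde{\Omega}_A\{n\}[2n]$.

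For the $\THH$ piece, the canonical filtration on $L\eta_{\tilde\xi} A\Omega_A$ from Proposition~\ref{LetaDF} is, by Theorem~\ref{TruncationBeilinson}, the Beilinson connective cover of the $\tilde\xi$-adic filtration on $A\Omega_A$, whose $n$-th graded piece is $\tau^{\leq n}$ of the $n$-th $\tilde\xi$-adic graded of $A\Omega_A$---that is, $\tau^{\leq n}\widetilde{\Omega}_A$ up to a $\tilde\xi^n$-twist. Proposition~\ref{prop:identnygaard} identifies this with the Nygaard filtration on $\widehat{\Prism}_A \simeq \varphi_\ast L\eta_{\tilde\xi} A\Omega_A$, and after applying the Breuil--Kisin twist $\{n\}$---designed precisely to absorb the Frobenius pullback $\varphi_\ast$ together with the $\tilde\xi^n$ weight---we obtain $\calN^n \widehat{\Prism}_A\{n\} \simeq \tau^{\leq n}\widetilde{\Omega}_A\{n\}$, whence $\gr^n \THH(A;\mathbb Z_p) \simeq \tau^{\leq n}\widetilde{\Omega}_A\{n\}[2n]$. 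The identification of $\gr^n \varphi$ with the natural inclusion $\tau^{\leq n}\widetilde{\Omega}_A\{n\} \to \widetilde{\Omega}_A\{n\}$ will then follow by tracing the definition: the composite $\TC^{-}(A;\mathbb Z_p) \xrightarrow{\varphi} \TP(A;\mathbb Z_p) \to \TP(A;\mathbb Z_p)/\tilde\xi = \THH(A;\mathbb Z_p)^{tC_p}$ corresponds under our identifications to $\widehat{\Prism}_A \simeq L\eta_{\tilde\xi} A\Omega_A \to A\Omega_A \to A\Omega_A/\tilde\xi = \widetilde{\Omega}_A$, which realizes exactly the natural inclusion on graded pieces.

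Finally, the ``in particular'' consequence follows from an amplitude count. The cohomology of $\widetilde{\Omega}_A$ is $\Omega^j_{A/\calO_C}\{-j\}$ concentrated in degrees $0 \leq j \leq d$, so the cofiber $C = \mathrm{cofib}(\varphi)$ inherits a motivic filtration with $\gr^n C \simeq \tau^{>n}\widetilde{\Omega}_A\{n\}[2n]$; this piece vanishes for $n \geq d$ and is otherwise concentrated in homotopy degrees $[2n-d,\, n-1]$. Taking the union over $0 \leq n < d$ gives $\pi_j C = 0$ for $j \geq d-1$, and the long exact sequence of the cofiber then yields that $\pi_j \varphi$ is an isomorphism for $j \geq d$. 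The main subtlety in executing this plan is bookkeeping the Breuil--Kisin twists and Frobenius pullbacks carefully enough that, once $\varphi_\ast$ is absorbed by $\{n\}$, the resulting identifications of graded pieces are genuinely $A_{\inf}$-linear in the form stated.
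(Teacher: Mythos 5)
Your proposal is correct and takes essentially the same route as the paper: both use Proposition~\ref{prop:TCtoTHH} plus a check on quasiregular semiperfectoids to get the filtered equivalence $\THH(-;\mathbb{Z}_p)^{tC_p}\simeq\TP(-;\mathbb{Z}_p)/\tilde\xi$, and both use Proposition~\ref{prop:identnygaard} (i.e., the identification of the Nygaard filtration with the $L\eta_{\tilde\xi}$-filtration, whose graded pieces are governed by the Beilinson connective cover) to identify $\gr^i\THH$ with $\tau^{\leq i}\widetilde\Omega_A\{i\}[2i]$ and to recognize $\gr^i\varphi$ as the canonical truncation map. Your closing amplitude count spells out what the paper leaves implicit, and is fine (it in fact yields the slightly sharper bound $\geq d-1$).
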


\begin{proof} The identificaton $\THH(A;\mathbb Z_p)^{tC_p}\simeq \TP(A;\mathbb Z_p)/\tilde\xi$ from Proposition~\ref{prop:TCtoTHH} is compatible with filtrations (by checking for quasiregular semiperfectoids) and thus induces equivalences
\[
\gr^i \THH(A;\mathbb Z_p)^{tC_p}\simeq A\Omega_A/\tilde\xi\{i\}[2i] = \widetilde{\Omega}_A\{i\}[2i]\ .
\]
Under the general equivalence between $\calN^i\widehat{\Prism}_A$ and $\gr^i \THH(A;\mathbb Z_p)[-2i]$, Proposition~\ref{prop:identnygaard} is equivalent to the assertion that the maps $\gr^i \THH(A;\mathbb Z_p)\to \gr^i \THH(A;\mathbb Z_p)^{tC_p}$ induced by $\varphi$ induce isomorphisms $\gr^i \THH(A;\mathbb Z_p)\simeq \tau^{\leq -i} \gr^i \THH(A;\mathbb Z_p)^{tC_p}$, giving the result.
\end{proof}

\subsection{Adams operations}
\label{subsec:AdamsOps}
A consequence of the functorial identification between $\widehat{\Prism}_A$ and $A\Omega_A$ is the identification of the Adams operations. Let us recall their construction first.

\begin{construction}
Note that $p$-completed $\THH(A;\mathbb Z_p)$ can also be defined as the $p$-completion of $A\otimes_{E_\infty\text-k} \T^\wedge_p$, where we consider the $p$-completion $\T^\wedge_p=K(\mathbb Z_p,1)$ of the circle. This implies that the automorphisms $\mathbb Z_p^\times$ of $\T^\wedge_p$ act functorially on the $E_\infty$-algebra in cyclotomic spectra $\THH(A;\mathbb Z_p)$. In particular, there are natural $\mathbb Z_p^\times$-actions on all objects considered throughout, such as the quasisyntomic sheaves $\widehat{\Prism}_{(-)}$ and the Breuil-Kisin twist $\widehat{\Prism}_{(-)}\{1\}$; these operations are called the Adams operations.
\end{construction}

We can now identify the Adams operations.

\begin{proposition}\label{prop:adams} The $\mathbb Z_p^\times$-action on the quasisyntomic sheaf $\widehat{\Prism}_{(-)}$ is trivial, and the action on $\widehat{\Prism}_{(-)}\{1\}$ is given by the natural multiplication action. The same holds true for all steps of the Nygaard filtration. In particular, $\gamma\in \mathbb Z_p^\times$ acts via multiplication with $\gamma^i$ on $\gr^i \THH(-;\mathbb Z_p)$, $\gr^i \TC^-(-;\mathbb Z_p)$, $\gr^i \TP(-;\mathbb Z_p)$ and $\gr^i \TC(-;\mathbb Z_p)$, for all $i\in \mathbb Z$.
\end{proposition}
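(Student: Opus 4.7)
The plan is to deduce the final ``in particular'' clause from the first two assertions. By Theorem~\ref{thm:main5}~(4), the graded pieces of the motivic filtration on $\THH(-;\mathbb Z_p)$, $\TC^-(-;\mathbb Z_p)$, and $\TP(-;\mathbb Z_p)$ are identified with shifts and Breuil--Kisin twists of $\widehat{\Prism}_{(-)}$ and its Nygaard pieces, while $\widehat{\Prism}_{(-)}\{i\}$ is by definition the $i$-fold tensor product of $\widehat{\Prism}_{(-)}\{1\}$ over $\widehat{\Prism}_{(-)}$. Thus if $\gamma\in\mathbb Z_p^\times$ acts trivially on $\widehat{\Prism}_{(-)}$ and by multiplication by $\gamma$ on $\widehat{\Prism}_{(-)}\{1\}$, then it acts by $\gamma^i$ on $\widehat{\Prism}_{(-)}\{i\}$, whence the statement on each graded piece. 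Preservation of the motivic filtration is automatic: the filtration is obtained by unfolding the double-speed Postnikov filtration (Proposition~\ref{MotivicFiltTHH}), and any natural endomorphism of a spectrum preserves its Postnikov tower; the same then holds for the Nygaard filtration on $\widehat{\Prism}_{(-)}$ by the identifications of Theorem~\ref{TCqrsp}~(2).

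For the triviality of the action on $\widehat{\Prism}_{(-)}$, the crucial input is Theorem~\ref{main_theorem}: for $A$ the $p$-adic completion of a smooth $\calO_C$-algebra, there is a functorial $E_\infty$-$A_{\inf}$-algebra equivalence $\widehat{\Prism}_A\simeq A\Omega_A$. The right-hand side is constructed in \cite{BMS} as $L\eta_\mu R\Gamma(\Spf(A)_C,\mathbb A_{\inf})$, purely in terms of the generic fiber, and involves no $\T$-action; in particular the $\mathbb Z_p^\times$-action on $A\Omega_A$ is manifestly trivial. Tracing through the construction of the comparison map in the proof of Theorem~\ref{main_theorem} shows that it is $\mathbb Z_p^\times$-equivariant for the trivial action, and then quasisyntomic descent from the smooth case (which generates $\Qs_{\calO_C}$ by Variant~\ref{FreeQSP} and Lemma~\ref{QSynQSperfCover}) extends the conclusion to all of $\Qs$.

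For the action on $\widehat{\Prism}_{(-)}\{1\} = \gr^1 \TP(-;\mathbb Z_p)[-2]$, one reduces by descent to a perfectoid ring $R$, where $\pi_2\TP(R;\mathbb Z_p) = A_{\inf}(R)\cdot\sigma$ with $\sigma$ the ``Bott element'' produced in the proof of Proposition~\ref{TCTPTHHpi} via the Tate spectral sequence. Since $\THH(R;\mathbb Z_p) \simeq R\otimes_{E_\infty\text-\mathbb S}\T^\wedge_p$ canonically and the $\mathbb Z_p^\times$-action is through the factor $\T^\wedge_p = B\mathbb Z_p$, the induced action on the class $\sigma$ (which corresponds to the first Chern class $c_1\in H^2(B\T^\wedge_p;\mathbb Z_p)$) is multiplication by $\gamma$, as $\gamma$ acts on $\pi_1 \T^\wedge_p = \mathbb Z_p$ by multiplication. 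Combined with triviality on $\pi_0\TP(R;\mathbb Z_p) = A_{\inf}(R)$ from the previous paragraph, this yields the multiplication-by-$\gamma$ action on $A_{\inf}(R)\{1\}$, and then quasisyntomic descent gives the statement on all of $\Qs$.

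The main obstacle will be rigorously pinning down the $\mathbb Z_p^\times$-action on the Bott class $\sigma$. A clean alternative, circumventing this, uses the canonical trivialization $\widehat{\Prism}_A\{1\}\otimes_{\widehat{\Prism}_A} A \cong A$ of Theorem~\ref{thm:main5}~(3), which is $\mathbb Z_p^\times$-equivariant by construction: combined with the comparison $\gr^1\HP(A/A;\mathbb Z_p) = A[2]$ from Theorem~\ref{thm:main7} and the fact that the Adams operation acts by $\gamma$ on $\gr^1\HP$ (a direct computation with the Tate spectral sequence for Hochschild homology, where the Bott element is transparently the Euler class of $\T$), one pulls back this multiplication-by-$\gamma$ action to identify the action on $\widehat{\Prism}_{(-)}\{1\}$.
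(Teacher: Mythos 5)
Your overall strategy is the same as the paper's: reduce everything to triviality on $\widehat{\Prism}_{(-)}$ plus multiplication-by-$\gamma$ on the Breuil--Kisin twist, establish triviality via the comparison $\widehat{\Prism}_A\simeq A\Omega_A$ (Theorem~\ref{main_theorem}) and descent, and extract the twist action from the $\T$-action on periodicity classes. Two points need attention.

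First, the ``tracing through the comparison map'' step needs a base case: the primitive comparison map $b_A$ in the proof of Theorem~\ref{main_theorem} is assembled from maps $\widehat{\Prism}_A\to\widehat{\Prism}_R\cong A_{\inf}(R)$ for perfectoid $R$, and its $\mathbb{Z}_p^\times$-equivariance relies on first knowing that the Adams action on $\widehat{\Prism}_R$ is trivial. The paper establishes this perfectoid case separately and cleanly, via the universal property of $A_{\inf}(R)\to R$ as the universal $p$-complete pro-infinitesimal thickening of $R$ (any automorphism of $\widehat{\Prism}_R$ over $R$ must be the identity). You should make this explicit before invoking equivariance of the comparison.

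Second, your Bott-class computation contains an error, not merely an imprecision you flagged. The class $\sigma\in\pi_2\TP(R;\mathbb Z_p)$ lives in Tate-spectral-sequence filtration degree $-2$, i.e.~in $\widehat{H}^{-2}(\T,\pi_0\THH(R;\mathbb Z_p))$, so it corresponds to $t^{-1}$ in $\widehat{H}^*(\T;\mathbb Z_p)\cong\mathbb Z_p[t^{\pm 1}]$ (with $t$ the first Chern class in cohomological degree $2$), not to $t=c_1$ itself. Your conclusion ``$\gamma$ acts by $\gamma$ on $\sigma$'' happens to be correct, but only because of a compensating sign error in passing from $\pi_1\T$ to $H^2(B\T)$; as written the derivation does not hold up. The paper avoids this pitfall entirely by invoking the $\mathbb Z_p^\times$-equivariant identification (from the discussion after Proposition~\ref{prop:breuilkisintwist})
\[
H^2\bigl(\T/C_{p^r},\,\widehat{\Prism}_R\{1\}/\tilde\xi_r\bigr)\;\cong\;\ker\tilde\theta_r/(\ker\tilde\theta_r)^2,
\]
where the right-hand side is a purely ring-theoretic object with manifestly trivial Adams action, and then solving for the action on $\widehat{\Prism}_R\{1\}/\tilde\xi_r$ given the known multiplication-by-$\gamma^{-1}$ action on $H^2(\T;\mathbb Z_p)$. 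Your proposed alternative via $\gr^1\HP(A/A;\mathbb Z_p)$ faces exactly the same sign issue and is not a genuine circumvention.
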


\begin{proof} First, note that as $\widehat{\Prism}_S$ is concentrated in degree $0$ for $S$ quasiregular semiperfectoid, the triviality of the action is a condition, not a datum. Moreover, $\calN^{\geq i}\widehat{\Prism}_S\subset \widehat{\Prism}_S$ is an ideal in this case, so if the action is trivial on $\widehat{\Prism}_S$, then this also holds for the Nygaard filtration. Similar remarks apply to the Breuil-Kisin twist.

Assume first that $R$ is perfectoid. Then the universal property of $A_{\inf}(R)=\widehat{\Prism}_R\to R$ as the universal $p$-complete pro-infinitesimal thickening, together with the triviality of the $\mathbb Z_p^\times$-action on $R$, implies that the $\mathbb Z_p^\times$-action on $A_{\inf}(R)$ is trivial. To identify the action on $\widehat{\Prism}_R\{1\}$, we use the identification of Breuil-Kisin twists after Proposition~\ref{prop:breuilkisintwist}; this shows that there are natural isomorphisms
\[
H^2(\T/C_{p^r},\widehat{\Prism}_R\{1\}/\tilde\xi_r) = \ker\tilde\theta_r/(\ker\tilde\theta_r)^2\ ,
\]
equivariant for the $\mathbb Z_p^\times$-action. In particular, the $\mathbb Z_p^\times$-action is trivial on the right. As $\mathbb Z_p^\times$ acts through multiplication by the inverse on $H^2(\T,\mathbb Z_p)=H^2(\T^\wedge_p,\mathbb Z_p)$, we see that $\mathbb Z_p^\times$ must act through multiplication on $\widehat{\Prism}_R\{1\}/\tilde\xi_r$, and then also on the inverse limit $\widehat{\Prism}_R\{1\}$.

By the base change property of $\widehat{\Prism}_{(-)}\{1\}$, it remains to show that in general the $\mathbb Z_p^\times$-action on $\widehat{\Prism}_S$ is trivial if $S$ is quasiregular semiperfectoid. We may assume that $S$ is an $\calO_C$-algebra by passing to a quasisyntomic cover. Going through the proof of the equivalence between $\widehat{\Prism}_A$ and $A\Omega_A$, we see that all maps are equivariant for the $\mathbb Z_p^\times$-action when the source is equipped with the Adams operations and the target with the trivial action. Moreover, this equivariance persists for the Nygaard filtration. Thus, the $\mathbb Z_p^\times$-action on the $E_\infty$-algebra $\widehat{\Prism}_A$ in $\widehat{DF}(A_{\inf})$ is functorially trivial on the category of $p$-adic completions of smooth $\calO_C$-algebras. By left Kan extension, it follows that the $\mathbb Z_p^\times$-action on $\widehat{\Prism}_S$ is trivial if $S$ is quasiregular semiperfectoid and admits an $\calO_C$-algebra structure.
\end{proof}

\newpage
\section{$p$-adic nearby cycles}
\label{sec:nearby}

Our goal in this section is to prove the following theorem. On the one hand, this gives a very precise assertion relating $p$-adic nearby cycles to syntomic cohomology; on the other hand, as explained by the second author in \cite{MorrowNearby}, it is also closely related to the results of Geisser-Hesselholt, \cite{GeisserHesselholt}.

In the following result, $\mathbb Z/p^n\mathbb Z(i)$ denotes the usual \'etale sheaf on the space $X$ on which $p$ is invertible, and on the $p$-adic formal scheme $\mathfrak X$, it denotes the syntomic sheaf of complexes from \S\ref{subsection_syntomic}
\[
\mathbb Z/p^n\mathbb Z(i) = \mathbb Z_p(i)/p^n = \mathrm{hofib}(\varphi - 1: \mathcal N^{\geq i}\widehat{\Prism}\{i\}\to \widehat{\Prism}\{i\})/p^n\ ,
\]
which we in fact restrict to the \'etale site of $\mathfrak X$, where Theorem \ref{main_theorem} tells us that we get the complex
\[
\mathbb Z/p^n\mathbb Z(i) = \mathrm{hofib}(\varphi_i-1: \mathcal N^{\geq i}A\Omega\{i\}\to A\Omega\{i\})/p^n\ .
\]
So far, these objects are entirely unrelated.

\begin{theorem}\label{thm:nearbycycles} Let $\mathfrak X$ be a smooth formal scheme over $\mathrm{Spf}\ \calO_C$ with generic fibre $X$. Consider the map of sites $\psi: X_{\sub{\'et}}\to \mathfrak X_{\sub{\'et}}$. There is a natural equivalence
\[
\mathbb Z/p^n \mathbb Z(i) \simeq  \tau^{\leq i} R\psi_\ast \mathbb Z/p^n\mathbb Z(i)
\]
of sheaves of complexes on $\mathfrak X_{\sub{\'et}}$, compatible in $n$.
\end{theorem}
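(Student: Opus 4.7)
The plan is to work étale-locally on $\mathfrak X$ and identify both sides via explicit descriptions involving the $q$-de~Rham complex. First I would reduce to the case $\mathfrak X = \mathrm{Spf}(A)$, with $A$ the $p$-adic completion of a smooth $\calO_C$-algebra equipped with an étale framing $\square: \calO_C\langle T_1^{\pm 1},\ldots, T_d^{\pm 1}\rangle \to A$. By Theorem~\ref{main_theorem} together with Proposition~\ref{prop:identnygaard}, one may identify $\widehat{\Prism}_A$ with $A\Omega_A$ and its Nygaard filtration with $\xi^{\max(i-\bullet, 0)}q\text-\Omega^\bullet_{\tilde A/A_{\inf}}$ as in Remark~\ref{remark_nygaard_in_coords}. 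Hence the LHS is computed as
\[
\mathrm{fib}\bigl(\varphi_i - 1:\ \mathcal N^{\geq i}(A\Omega_A\{i\}/p^n) \to A\Omega_A\{i\}/p^n\bigr).
\]

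Next I would describe the RHS via the pro-étale site of the generic fibre. Let $A_\infty$ be the perfectoid Galois cover of $A[1/p]$ obtained by extracting $p$-power roots of the $T_j$'s, with Galois group $\Gamma \cong \mathbb Z_p(1)^d$. Using $\mu = [\epsilon]-1 \in A_{\inf}$, the relation $\varphi(\mu) = \tilde\xi \mu$, and the fact that $\mu^i$ trivializes the Tate twist $\mathbb Z_p(i)$ inside $\widehat{\mathbb A}_{\inf,X}[1/\mu]$, one obtains an Artin--Schreier identification on $X_{\sub{pro\'et}}$,
\[
\mathbb Z/p^n\mathbb Z(i) \simeq \mathrm{fib}\bigl(\varphi - \tilde\xi^i:\ (\widehat{\mathbb A}_{\inf,X}\{i\}/p^n)[1/\mu] \to (\widehat{\mathbb A}_{\inf,X}\{i\}/p^n)[1/\mu]\bigr).
\]
Applying $R\nu_\ast$ from the pro-étale to the Zariski site of $\mathfrak X$, the primitive comparison of \cite{BMS} computes $R\nu_\ast\widehat{\mathbb A}_{\inf,X}$ by the Koszul complex $K_{A_{\inf}(A_\infty)}(\gamma_1-1,\ldots,\gamma_d-1)$, which after applying $L\eta_\mu$ is $A\Omega_A$; in particular, after inverting $\mu$ (where $L\eta_\mu$ is an equivalence), one gets $R\nu_\ast \widehat{\mathbb A}_{\inf,X}[1/\mu] \simeq A\Omega_A[1/\mu]$. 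Thus $R\psi_\ast\mathbb Z/p^n\mathbb Z(i)$ is the $\varphi$-fiber of $(A\Omega_A\{i\}/p^n)[1/\mu]$.

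Combining these identifications, the theorem reduces to showing that the natural inclusion
\[
\mathrm{fib}\bigl(\varphi_i-1:\ \mathcal N^{\geq i}(A\Omega_A\{i\}/p^n) \to A\Omega_A\{i\}/p^n\bigr) \longrightarrow \mathrm{fib}\bigl(\varphi - \tilde\xi^i:\ (A\Omega_A\{i\}/p^n)[1/\mu] \to (A\Omega_A\{i\}/p^n)[1/\mu]\bigr)
\]
coming from $\varphi|_{\mathcal N^{\geq i}} = \tilde\xi^i\varphi_i$ becomes an equivalence after $\tau^{\leq i}$. This is the main obstacle, and I would verify it directly on the $q$-de~Rham complex. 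In cohomological degrees $\leq i$, the inclusion $\xi^{i-\bullet}q\text-\Omega^\bullet_{\tilde A/A_{\inf}} \hookrightarrow q\text-\Omega^\bullet_{\tilde A/A_{\inf}}[1/\mu]$ is a quasi-isomorphism in that range, because $\tilde\xi\mid \mu$ implies that $\xi^{i-\bullet}$ becomes a unit after $\mu$-inversion, and the two Frobenii match by the renormalisation $\tilde\xi^i\varphi_i=\varphi$. In degrees $>i$ one must show that the cofiber vanishes after $\mu$-inversion, which reduces to showing that $\varphi - \tilde\xi^i$ is invertible on $(A\Omega_A\{i\}/p^n)[1/\mu]$ in those degrees; this should follow from the $L\eta$-description of $A\Omega_A$, which makes $\varphi$ highly $\tilde\xi$-divisible (hence topologically nilpotent after the appropriate rescaling) on the relevant subcomplex, so that $\varphi - \tilde\xi^i$ acts invertibly. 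Assembling these degree-by-degree checks produces the desired equivalence $\mathbb Z/p^n\mathbb Z(i) \simeq \tau^{\leq i} R\psi_\ast\mathbb Z/p^n\mathbb Z(i)$, manifestly compatible in $n$.
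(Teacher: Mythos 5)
The proposal contains a genuine gap at its central step. You correctly reduce to the affine framed case and correctly compute the LHS via the $q$-de~Rham complex with the Nygaard filtration $\xi^{\max(i-\bullet,0)}q\text{-}\Omega^\bullet_{\tilde A/A_{\inf}}$. However, the asserted identification of the RHS and the ensuing degree-by-degree comparison do not hold as stated.

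First, a structural issue with your setup: you formulate the Artin--Schreier sequence on $X_{\mathrm{pro\acute et}}$ after inverting $\mu$ and then push forward via $R\nu_\ast$, claiming $R\psi_\ast\mathbb Z/p^n\mathbb Z(i)$ equals the $\varphi$-fiber of $A\Omega_A\{i\}/p^n[1/\mu]$. But $R\nu_\ast$ is a limit while $[1/\mu]$ is a filtered colimit, and $R\nu_\ast(\mathcal F[1/\mu])$ need not agree with $(R\nu_\ast\mathcal F)[1/\mu]$. The paper avoids this entirely by keeping the Artin--Schreier sequence on the uninverted sheaf $\mathbb A_{\inf,X}/p^n$, where $1-\xi^i\varphi^{-1}$ is surjective with kernel $\mathbb Z/p^n\mathbb Z\cdot\mu^i$; no inversion of $\mu$ is needed.

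Second, and more seriously, the claim that ``in cohomological degrees $\leq i$ the inclusion $\xi^{i-\bullet}q\text{-}\Omega^\bullet_{\tilde A/A_{\inf}} \hookrightarrow q\text{-}\Omega^\bullet_{\tilde A/A_{\inf}}[1/\mu]$ is a quasi-isomorphism'' is false. The left side computes (a Frobenius twist of) $\calN^{\geq i}A\Omega_A$, with cohomology of de Rham flavour; the right side, after inversion, computes something of \'etale flavour; these have very different sizes. Your stated reason (``$\xi^{i-\bullet}$ becomes a unit after $\mu$-inversion'') only proves that $\xi^{i-\bullet}q\text{-}\Omega^\bullet[1/\mu] = q\text{-}\Omega^\bullet[1/\mu]$ --- it inverts $\mu$ on both sides, not just the target, and hence says nothing about the non-inverted inclusion. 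What is true, and what must be proven, is only that the \emph{fibers} of $1-\xi^i\varphi^{-1}$ agree in degrees $\leq i$. This is precisely the technical content of the paper's Lemma~\ref{lemma2}: it controls the kernel and cokernel of $\iota\colon H^j(L\eta_\mu C)\to H^j(C)$ and shows $1-\xi^i\varphi^{-1}$ acts well enough (via Lemma 9.5) so that the induced map on truncated fibers is a quasi-isomorphism. This requires a delicate analysis of $\mu^j$-torsion in $H^j(C)$ and the interplay with $\xi^\ell\varphi^{-1}$ in various shifted weights, under the hypothesis (As); none of this is supplied by your argument, which treats the two complexes as if they were already almost equal in the relevant range. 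The degree-$>i$ claim (``$\varphi-\tilde\xi^i$ acts invertibly after $\mu$-inversion because of the $L\eta$-description'') is also only a heuristic; the paper instead proves directly, using the $q$-de~Rham basis and \'etale-local Artin--Schreier covers, that $\varphi_i-1$ is an isomorphism in degrees $>i$ and surjective in degree $i$ on the \emph{Nygaard} complex (Proposition~\ref{prop:largedegrees}), before passing to the abstract lemmas.
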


\begin{remark} 
The complexes $R\psi_* \mathbb{Z}/p^n\mathbb{Z}(i)$ appearing in Theorem~\ref{thm:nearbycycles} are essentially the {\em nearby cycles} complexes for the morphism $\mathfrak{X} \to \mathrm{Spf}(\mathcal{O}_C)$, as introduced by Deligne in \cite[\S XIII.1.3]{SGA7}. The key differences are: (a) $\mathfrak{X}$ is merely a formal scheme in our context (and thus $X$ is a rigid space), while {\em loc.\ cit.} works with schemes, and (b) we can ignore the passage to the algebraic closure that is present in {\em loc. cit.} (as the residue fields of $\mathcal{O}_C$ are algebraically closed). We do not use any non-trivial theory concerning nearby cycles complexes in the sequel. 
\end{remark}

\begin{remark}
From Theorem~\ref{thm:nearbycycles}, one obtains a similar statement on the pro-\'etale site after passing to the inverse limit over $n$. The statement at finite level $n$ has the advantage that the compatibility for varying $n$ implies that the individual \'etale nearby cycle sheaves $R^i\psi_\ast \mathbb Z/p^n\mathbb Z\simeq R^i\psi_\ast \mathbb Z/p^n\mathbb Z(i)$ are flat over $\mathbb Z/p^n\mathbb Z$.
\end{remark}

\begin{remark}
\label{ExtendFormal}
For future use, note that any sheaf $F$ on $\Qs_{\mathcal{O}_C}$ naturally yields a sheaf on $\mathfrak{X}_{\sub{\'et}}$: for any \'etale map $\mathfrak{U} \to \mathfrak{X}$, one defines $F(\mathfrak{U}) = \lim F(R)$, where the limit runs over the category of affine open subsets $\mathrm{Spf}(R) \subset \mathfrak{U}$. This gives a sheaf because  $p$-completely \'etale (or even smooth) covers give quasisyntomic covers.
\end{remark}

By Remark~\ref{ExtendFormal}, we obtain sheaves $ \mathcal{N}^{\geq i} A\Omega\{i\}/p^n$ on $\mathfrak{X}_{\sub{\'et}}$. 
We start by showing that as a sheaf of complexes on $\mathfrak X_{\sub{\'et}}$, the complex
\[
\mathrm{hofib}(\varphi_i - 1: \calN^{\geq i}A\Omega\{i\}\to A\Omega\{i\})/p^n
\]
is concentrated in degrees $\leq i$; for this, we may assume that $n=1$. We can assume that $\mathfrak X=\mathrm{Spf}\ A$ is affine and that $A$ admits a framing $\square: \calO_C\langle T_1^{\pm 1},\ldots,T_d^{\pm 1}\rangle\to A$ that is the $p$-adic completion of an \'etale map. This induces a flat deformation $\tilde{A}$ of $A$ to $A_{\inf}$, which is formally \'etale over $A_{\inf}\langle T_1^{\pm 1},\ldots,T_d^{\pm 1}\rangle$. In that case, we have equivalences
\[
A\Omega_A = q\text-\Omega^\bullet_{\tilde{A}/A_{\inf}}
\qquad \calN^{\geq i} A\Omega_A = \xi^{\max(i-\bullet,0)} q\text-\Omega^\bullet_{\tilde{A}/A_{\inf}}\ ,
\]
as explained in Remark \ref{remark_nygaard_in_coords}. Trivializing the Breuil-Kisin twist, the map
$
\varphi_i: \calN^{\geq i} A\Omega_A\to A\Omega_A
$
is given by
\[
\tilde\xi^{-i}\varphi: \xi^{\max(i-\bullet,0)} q\text-\Omega^\bullet_{\tilde{A}/A_{\inf}}\to q\text-\Omega^\bullet_{\tilde{A}/A_{\inf}}\ .
\]
Recall that in degree $j$, the $\tilde{A}$-module $q\text-\Omega^j_{\tilde{A}/A_{\inf}}$ is free with basis given by $d_q\log(T_{a_1})\wedge\ldots\wedge d_q\log(T_{a_j})$ for varying integers $1\leq a_1<\ldots<a_j\leq d$. On this basis, $\varphi$ acts by multiplication by $\tilde\xi^j$ as
\[
\varphi(d_q\log(T_a)) = \tilde\xi d_q\log(T_a)\ .
\]
In particular, in degree $i$, the basis elements $d_q\log(T_{i_1})\wedge\ldots\wedge d_q\log(T_{i_j})$ are fixed points of $\varphi_i = \tilde\xi^{-i} \varphi$.

Using these representatives, it suffices to see that the map of complexes
\[
\tilde{\xi}^{-i} \varphi - 1: q\text-\Omega^{\geq i}_{\tilde{A}/A_{\inf}}/p\to q\text-\Omega^{\geq i}_{\tilde{A}/A_{\inf}}/p
\]
is an isomorphism on $q\text-\Omega^j_{\tilde{A}/A_{\inf}}/p$ for $j>i$ and is \'etale locally surjective for $j=i$. Writing everything in terms of the above basis, we need to see that
\[
\varphi-1: \tilde{A}/p\to \tilde{A}/p
\]
is \'etale locally surjective, and for $j>i$ the map
\[
\tilde\xi^{j-i} \varphi-1: \tilde{A}/p\to \tilde{A}/p
\]
is an automorphism. The former statement follows from the existence of Artin-Schreier covers in $\mathfrak X_\sub{\'et}$ (noting that this \'etale site is equivalent to the \'etale site of $\mathrm{Spf}\ \tilde{A}/p$, both being equivalent to the \'etale site of $\Spec A/p$), and the latter statement follows from the existence of the inverse operator
\[
- 1 - \tilde\xi^{j-i}\varphi - \tilde\xi_2^{j-i}\varphi^2 - \ldots - \tilde\xi_r^{j-i} \varphi^r - \ldots : \tilde{A}/p\to \tilde{A}/p\ .
\]
In summary, we get the following result.

\begin{proposition}\label{prop:largedegrees} The natural map is an equivalence of sheaves of complexes on $\mathfrak X_{\sub{\'et}}$
\[
\tau^{\leq i} \mathrm{hofib}(\tau^{\leq i}\mathcal N^{\geq i}A\Omega\{i\}/p^n\xrightarrow{\varphi_i-1} \tau^{\leq i} A\Omega\{i\}/p^n)\to \mathrm{hofib}(\mathcal N^{\geq i}A\Omega\{i\}\xrightarrow{\varphi_i-1} A\Omega\{i\})/p^n\ .
\]$\hfill \Box$
\end{proposition}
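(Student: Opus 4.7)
The plan is to reduce to an explicit coordinate computation, following the hints already interspersed after the statement. First, the assertion is étale-local on $\mathfrak X$, so I would pass to an affine $\mathfrak X=\mathrm{Spf}\,A$ admitting a framing $\square:\calO_C\langle T_1^{\pm1},\ldots,T_d^{\pm1}\rangle\to A$; this gives the canonical flat $A_{\inf}$-deformation $\tilde A$ and Remark~\ref{remark_nygaard_in_coords} identifies
\[
A\Omega_A\;\simeq\;q\text-\Omega^\bullet_{\tilde A/A_{\inf}},\qquad \calN^{\geq i}A\Omega_A\;\simeq\;\xi^{\max(i-\bullet,0)}q\text-\Omega^\bullet_{\tilde A/A_{\inf}},
\]
with $\varphi_i=\tilde\xi^{-i}\varphi$. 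Since formation of truncations commutes with finite colimits and the homotopy fiber is a pullback, it suffices to show that the cofiber $C$ of $\varphi_i-1:\calN^{\geq i}A\Omega\{i\}/p^n\to A\Omega\{i\}/p^n$ has $\tau^{\leq i+1}C\simeq 0$ after sheafification on $\mathfrak X_{\sub{\'et}}$; equivalently, that $\varphi_i-1$ is an isomorphism in degrees $>i$ and étale locally surjective in degree $i$ (on $H^i$ of the two-term quotient $A\Omega/\calN^{\geq i}A\Omega$ contributing to the cofiber, after passing to mod-$p^n$).

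Second, by induction and dévissage in $n$, I reduce to $n=1$. After trivializing the Breuil--Kisin twist, the complex $q\text-\Omega^j_{\tilde A/A_{\inf}}/p$ has a basis of wedges $d_q\log(T_{a_1})\wedge\cdots\wedge d_q\log(T_{a_j})$; on these, $\varphi$ acts by $\tilde\xi^j$, so $\varphi_i-1=\tilde\xi^{j-i}\varphi-1$ on each basis vector in degree $j$. I therefore have to analyze, on $\tilde A/p$, the two operators
\[
\varphi-1\ :\ \tilde A/p\to \tilde A/p\qquad\text{(degree } j=i\text{)},\qquad \tilde\xi^{j-i}\varphi-1\ :\ \tilde A/p\to \tilde A/p\quad(j>i).
\]
For $j>i$ I would give the explicit inverse as the convergent geometric series
\[
-1-\tilde\xi^{j-i}\varphi-\tilde\xi_2^{j-i}\varphi^2-\cdots-\tilde\xi_r^{j-i}\varphi^r-\cdots,
\]
which converges $p$-adically on $\tilde A/p$ because $\tilde\xi\equiv \xi^p\pmod p$, making $\tilde\xi^{j-i}\varphi$ topologically nilpotent on $\tilde A/p$ for any $j>i$; hence $\tilde\xi^{j-i}\varphi-1$ is an automorphism of the sheaf $\tilde A/p$ on $\mathfrak X_{\sub{\'et}}$.

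Third, for the boundary degree $j=i$, the operator is the classical Artin--Schreier map $\varphi-1$ on $\tilde A/p$; viewing $\mathfrak X_{\sub{\'et}}$ as the étale site of $\mathrm{Spec}(A/p)$, this is étale-locally surjective because any equation $x^p-x=a$ defines an étale (in fact Artin--Schreier) cover. This gives the desired vanishing in degrees $>i$ of the cofiber and the local surjectivity at degree $i$, and combining the two yields the proposition after truncating.

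The only real obstacle is making the coordinate model compatible with the Nygaard filtration and the Frobenius identification $\varphi(d_q\log T_a)=\tilde\xi\,d_q\log T_a$; this is already spelled out in Remark~\ref{remark_nygaard_in_coords} and Proposition~\ref{prop:identnygaard}, so the rest of the argument is the contracting/Artin--Schreier dichotomy above.
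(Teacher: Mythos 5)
Your argument follows the paper's proof essentially verbatim: reduce to $n=1$, pass to a framed affine $\mathrm{Spf}(A)$, identify $A\Omega$ and its Nygaard filtration with the explicit $q$-de Rham complex in coordinates, compute $\varphi_i-1$ on the basis of wedges of $d_q\log T_a$'s, invert $\tilde\xi^{j-i}\varphi-1$ by the geometric series for $j>i$, and invoke Artin--Schreier covers (via the identification of $\mathfrak{X}_{\mathrm{\acute{e}t}}$ with the \'etale site of $\mathrm{Spec}(A/p)$) for $j=i$. A few small slips worth flagging, none of which affect the substance: the vanishing you need for the cofiber $C$ is in \emph{high} degrees --- $\tau^{\geq i}C\simeq 0$, not $\tau^{\leq i+1}C\simeq 0$ --- though your restated criterion (``iso in degrees $>i$, \'etale-locally surjective in degree $i$'') is the correct one, so this is just a typo; the series $-1-\tilde\xi^{j-i}\varphi-\tilde\xi_2^{j-i}\varphi^2-\cdots$ converges $\xi$-adically (equivalently $\mu$-adically), not ``$p$-adically,'' since $p=0$ in $\tilde A/p$ --- the relation $\tilde\xi\equiv\xi^p\pmod p$ is precisely what supplies the needed topological nilpotence in the residual topology; the appeal to ``truncations commute with finite colimits'' is not a valid principle ($\tau^{\leq i}$ is a colocalization and commutes with limits), and the parenthetical about ``$H^i$ of the two-term quotient $A\Omega/\calN^{\geq i}A\Omega$'' is garbled, but the actual reduction you want (fiber sequence splitting off the stupid truncation $q\text-\Omega^{\geq i}$, whose hofib is \'etale-locally the degree-$i$ kernel) is straightforward and is what the paper tacitly uses.
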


We note that $\tau^{\leq i} \mathcal N^{\geq i} A\Omega\simeq \tau^{\leq i} A\Omega$ via $\varphi_i$. Under this equivalence, the homotopy fibre above may be rewritten as
\[
\tau^{\leq i}\mathrm{hofib}(\tau^{\leq i}A\Omega/p^n\xrightarrow{1-\xi^i\varphi^{-1}}\tau^{\leq i}A\Omega/p^n)\ ,
\]
where $\xi^i \varphi^{-1}: \tau^{\leq i}A\Omega\to \tau^{\leq i}A\Omega$ is the composite
\[
\tau^{\leq i} A\Omega = L\eta_\mu \tau^{\leq i}R\nu_\ast \mathbb A_{\sub{inf},X}\stackrel{\varphi^{-1}}{\simeq} L\eta_{\varphi^{-1}(\mu)} \tau^{\leq i} R\nu_\ast \mathbb A_{\sub{inf},X}\xrightarrow{\xi^i} L\eta_\xi L\eta_{\varphi^{-1}(\mu)} \tau^{\leq i} R\nu_\ast \mathbb A_{\sub{inf},X} = \tau^{\leq i} A\Omega\ .
\]
We will continue with this description.

We formulate the next step somewhat more abstractly for convenience. Assume that $A$ is a $p$-power-torsion ring in some topos equipped with an automorphism $\varphi$, and suppose that $\mu,\xi\in A$ are non-zero-divisors satisfying the relation $\mu=\xi\varphi^{-1}(\mu)$; set $\xi_r=\xi\varphi^{-1}(\xi)\cdots\varphi^{1-r}(\xi)$ so that $\mu=\xi_r\varphi^{-r}(\mu)$ for all $r\ge1$.

Suppose that $C\in D^{\geq 0}(A)$ is a complex such that $H^0(C)$ is $\mu$-torsion-free, and that $\varphi:C\simeq C$ is a given $\varphi$-semi-linear quasi-isomorphism. In our application, we will take $A=A_{\inf}/p^n$ on the topos $\mathfrak X_{\sub{\'et}}^\sim$ and $C=R\nu_\ast \mathbb A_{\sub{inf},X}/p^n$. In the next two lemmas we make the following assumption.

\begin{quote}
(As) The map $1-\xi^i\varphi^{-1}:C/\mu^jC\to C/\mu^jC$ is a quasi-isomorphism for all $i\ge j\ge 0$.
\end{quote}

\begin{remark}\label{rem:as} The assumption (As) is satisfied for $C=R\nu_\ast \mathbb A_{\sub{inf},X}/p^n$; indeed, for this, it suffices to see that $1-\xi^i\varphi^{-1}$ is an automorphism of $\mathbb A_{\sub{inf},X}/(p^n,\mu^j)$. It is enough to handle the case $n=1$, where one gets $\mathbb A_{\sub{inf},X}/(p,\mu^j) = \widehat{\calO}_X^{\flat +}/\mu^j$. The map $1-\xi^i\varphi^{-1}$ is injective, as if $f\in \widehat{\calO}_X^{\flat +}$ satisfies $f-\xi^i\varphi^{-1}(f)\in \mu^j \widehat{\calO}_X^{\flat +}$, then $g=\tfrac f{\mu^j}$ satisfies $g-\mu^{i-j}\varphi^{-1}(g)\in \widehat{\calO}_X^{\flat +}$, which by integral closedness of $\widehat{\calO}_X^{\flat +}\subset \widehat{\calO}_X^\flat$ implies that $g\in \widehat{\calO}_X^{\flat +}$. For surjectivity, it is enough to see that $1-\xi^i\varphi^{-1}$ is surjective on $\widehat{\calO}_X^{\flat +}$, which by integral closedness of $\widehat{\calO}_X^{\flat +}\subset \widehat{\calO}_X^\flat$ reduces to surjectivity of $1-\xi^i\varphi^{-1}$ on $\widehat{\calO}_X^\flat$. This follows from the existence of Artin-Schreier covers in characteristic $p$, and the tilting equivalence. Some further details of these arguments may be found in \cite{MorrowNearby}.
\end{remark}

\begin{lemma} Assume (As). Let $i\ge 0$ and $j\ge 1$. Then
\begin{enumerate}
\item the endomorphism $1-\xi^{\ell+j}\varphi^{-1}$ of $H^i(C)/\mu^jH^i(C)$ is injective for $\ell=0$ and an automorphism for $\ell>0$;
\item the endomorphism $1-\xi^{\ell}\varphi^{-1}$ of $H^i(C)[\mu^j]$ is surjective for $\ell=0$ and an automorphism for $\ell>0$;
\item the endomorphism $1-\xi^\ell\varphi^{-1}$ of $H^i(C)[\mu^j]/H^i(C)[\mu]$ is an automorphism for $\ell\ge 0$. 
\end{enumerate}
\end{lemma}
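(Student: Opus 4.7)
The approach is to analyze the action of $1-\xi^k\varphi^{-1}$ on the short exact sequence
\begin{equation*}
0\to H^i(C)/\mu^j H^i(C)\to H^i(C/\mu^j C)\to H^{i+1}(C)[\mu^j]\to 0 \tag{$\dagger_{i,j}$}
\end{equation*}
obtained from the cohomology long exact sequence of $0\to C\xrightarrow{\mu^j}C\to C/\mu^j C\to 0$. First one checks that $\xi^k\varphi^{-1}$ is a well-defined endomorphism of each module: on $H^i(C)/\mu^j H^i(C)$ this requires $k\ge j$ and uses the identity $\xi^k\varphi^{-1}(\mu^j)=\xi^{k-j}\mu^j$; on $H^i(C)[\mu^j]$ and on the quotient $H^i(C)[\mu^j]/H^i(C)[\mu]$ it holds for every $k\ge 0$, using $\mu^j=\xi^j\varphi^{-1}(\mu^j)$ to see that $\mu^j\cdot \xi^k\varphi^{-1}(x)=\xi^{k+j}\varphi^{-1}(\mu^j x)=0$ whenever $\mu^j x=0$.

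By hypothesis (As), $1-\xi^k\varphi^{-1}$ is an isomorphism on the middle term of $(\dagger_{i,j})$ whenever $k\ge j$, so the snake lemma immediately yields: injectivity on $H^i(C)/\mu^j H^i(C)$ for $k\ge j$, surjectivity on $H^{i+1}(C)[\mu^j]$ for $k\ge j$, and a canonical identification of the cokernel of the first arrow with the kernel of the second. Setting $k=j$ (i.e.\ $\ell=0$) proves the $\ell=0$ half of (1), and applying the same argument to $(\dagger_{i-1,j})$ with $k=\ell\ge j$ yields the surjectivity portion of (2) in the range $\ell\ge j$.

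The remaining assertions — the automorphism statements in (1) and (2) for $\ell>0$, and all of (3) — are proved by induction on $j$ using the $\mu$-adic filtrations $\{\mu^s H^i(C)/\mu^{s+1}H^i(C)\}$ on $H^i(C)/\mu^j H^i(C)$ and $\{H^i(C)[\mu^s]\subseteq H^i(C)[\mu^{s+1}]\}$ on $H^i(C)[\mu^j]$. The crucial algebraic identity is
\begin{equation*}
\xi^k\varphi^{-1}(\mu^s\cdot x)=\xi^{k-s}\mu^s\varphi^{-1}(x)\quad(k\ge s),
\end{equation*}
which shows that multiplication by $\mu^s$ intertwines $1-\xi^k\varphi^{-1}$ on $H^i(C)/\mu H^i(C)$ (resp.\ on $H^i(C)[\mu]$) with $1-\xi^{k-s}\varphi^{-1}$ on the associated graded piece $\mu^s H^i(C)/\mu^{s+1}H^i(C)$ (resp.\ on $\mu^s H^i(C)[\mu^{s+1}]\subseteq H^i(C)[\mu]$). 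Combined with the snake lemma output and the cokernel-kernel identification, this lets one bootstrap from the base case $j=1$ to arbitrary $j$; statement (3) then drops out from the filtration analysis applied to $H^i(C)[\mu^j]/H^i(C)[\mu]$.

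The main obstacle is the subtle circularity between (1) and (2): the snake lemma identification couples the missing surjectivity of $1-\xi^{\ell+j}\varphi^{-1}$ on $H^i(C)/\mu^j H^i(C)$ for $\ell>0$ (half of (1)) with the injectivity of $1-\xi^{\ell+j}\varphi^{-1}$ on $H^{i+1}(C)[\mu^j]$ (half of (2)); the induction must be organized so that these two assertions unlock each other. The most delicate point is the $\ell=0$ surjectivity in (2), which cannot be extracted from the snake lemma (since $\xi^0\varphi^{-1}=\varphi^{-1}$ does not preserve $\mu^j C$ and hence does not act on $H^i(C/\mu^j C)$); one handles it by combining statement (3) at $\ell=0$ with the base case $j=1$ of (2) at $\ell=0$, where the equation $(1-\varphi^{-1})y=x$ on $H^i(C)[\mu]$ is solved by lifting $x$ through $(\dagger_{i-1,1})$ and using the isomorphism $1-\xi\varphi^{-1}$ on $H^{i-1}(C/\mu C)$ to adjust the lift.
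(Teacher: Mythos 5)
Your overall strategy—exploiting the Bockstein sequence
\[
0\to H^i(C)/\mu^jH^i(C)\to H^i(C/\mu^j C)\to H^{i+1}(C)[\mu^j]\to 0
\]
with (As) giving an automorphism on the middle term—is the right starting point and matches the paper. However, you have mis-tracked the exponents on the compatible endomorphisms, and this derails the rest of your plan.

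The commutative square $\mu^j\circ(\xi^\ell\varphi^{-1})=(\xi^{\ell+j}\varphi^{-1})\circ\mu^j$ shows that the endomorphisms carried by the three terms of the Bockstein sequence are $1-\xi^{\ell+j}\varphi^{-1}$, $1-\xi^{\ell+j}\varphi^{-1}$, and $1-\xi^{\ell}\varphi^{-1}$ respectively (the exponent \emph{drops} by $j$ on the right term, because $H^{i+1}(C)[\mu^j]$ comes from the source of multiplication by $\mu^j$). The middle endomorphism $1-\xi^{\ell+j}\varphi^{-1}$ is an automorphism by (As) as soon as $\ell\ge 0$, so the snake lemma already delivers: injectivity of $1-\xi^{\ell+j}\varphi^{-1}$ on $H^i(C)/\mu^j H^i(C)$ and surjectivity of $1-\xi^{\ell}\varphi^{-1}$ on $H^{i+1}(C)[\mu^j]$, for \emph{all} $\ell\ge 0$—including $\ell=0$. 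Your claimed ``delicate point,'' namely that $\ell=0$ surjectivity ``cannot be extracted from the snake lemma (since $\xi^0\varphi^{-1}=\varphi^{-1}$ does not preserve $\mu^jC$)'' confuses the operator on the outer term with the operator on the middle term; the one acting on $H^i(C/\mu^jC)$ is $1-\xi^j\varphi^{-1}$, which does preserve $\mu^jC$. The workaround you sketch (lifting through $(\dagger_{i-1,1})$ and adjusting via $1-\xi\varphi^{-1}$ on $H^{i-1}(C/\mu C)$) is, once unwound, precisely the Bockstein argument you say fails—so that portion of the plan is circular and targets a non-issue.

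What remains after the Bockstein step is the \emph{injectivity} of $1-\xi^\ell\varphi^{-1}$ on $H^{i+1}(C)[\mu^j]$ for $\ell>0$, and your outline contains no argument for the base case of this. You propose a $\mu$-adic filtration induction but do not explain what input establishes the $j=1$ case, and the intertwining identity you invoke does not by itself produce it. The actual mechanism (the key idea you are missing) is: reduce to $j=1$ via the exact sequence $0\to H^{i+1}(C)[\mu]\to H^{i+1}(C)[\mu^j]\xrightarrow{\mu}H^{i+1}(C)[\mu^{j-1}]$, compatible with the operators $1-\xi^\ell\varphi^{-1}$, $1-\xi^\ell\varphi^{-1}$, $1-\xi^{\ell+1}\varphi^{-1}$; then on $\mu$-torsion use the congruence $\xi\equiv p\bmod\varphi^{-1}(\mu)$ (together with $\varphi^{-1}(\mu)\cdot\varphi^{-1}(x)=0$ for $\mu x=0$) to see that $\xi^\ell\varphi^{-1}$ restricted to $H^{i+1}(C)[\mu]$ equals $p\,\xi^{\ell-1}\varphi^{-1}$; finally observe that $1-p\,\alpha$ is automatically an automorphism of the derived $p$-complete complex $C$ for any endomorphism $\alpha$, hence of $H^{i+1}(C)$ and its $\mu$-torsion. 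Without this step your induction has no base, so the proposal as written does not close.
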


\begin{proof}
(1) \& (2): The fibre sequence $C\xrightarrow{\mu^j}C\to C/\mu^j C$ is compatible with the endomorphisms $1-\xi^\ell\varphi^{-1}$, $1-\xi^{\ell+j}\varphi^{-1}$, $1-\xi^{\ell+j}\varphi^{-1}$ respectively; therefore the Bockstein sequence \[0\to H^i(C)/\mu^jH^i(C)\to H^i(C/\mu^j C)\to H^{i+1}(C)[\mu^j]\to 0\] is compatible with the endomorphisms $1-\xi^{\ell+j}\varphi^{-1}$, $1-\xi^{\ell+j}\varphi^{-1}$, $1-\xi^{\ell}\varphi^{-1}$ respectively. Since the endomorphism on the middle term is an automorphism by assumption, the desired injectivity and surjectivity claims in (1) and (2) follow. Moreover, to prove the rest of (i) and (ii) it remains only to show that $1-\xi^\ell\varphi^{-1}$ is injective on $H^{i+1}(C)[\mu^j]$ for all $\ell>0$ (because then we deduce the desired surjectivity on the left term thanks to the Bockstein sequence). Considering the exact sequence \[0\to H^{i+1}(C)[\mu]\to H^{i+1}(C)[\mu^j]\xrightarrow{m}H^{i+1}(C)[\mu^{j-1}],\] which is compatible with the operators $1-\xi^\ell\varphi^{-1}$, $1-\xi^\ell\varphi^{-1}$, $1-\xi^{\ell+1}\varphi^{-1}$ respectively, a trivial induction reduces us to the case $j=1$. But the map \[\xi^\ell\varphi^{-1}:H^{i+1}(C)[\mu]\to H^{i+1}(C)[\mu]\] is the restriction of \[p\xi^{\ell-1}\varphi^{-1}:H^{i+1}(C)\to H^{i+1}(C)\] since $\xi\equiv p$ mod $\varphi^{-1}(\mu)$. So, finally, we must show that $1-p\xi^{\ell-1}\varphi^{-1}$ is injective on $H^{i+1}(C)$; but this operator is even an automorphism of $C$ (hence of $H^{i+1}(C)$), since $C$ was assumed to be derived $p$-adically complete.

(3): The assertion is trivial if $j=1$, so assume $j>1$. The injection \[\mu:H^i(C)[\mu^{j}]/H^i(C)[\mu]\hookrightarrow H^i(C)[\mu^{j-1}]\] is compatible with the endomorphisms $1-\xi^\ell\varphi^{-1}$, $1-\xi^{\ell+1}\varphi^{-1}$ respectively. But the endomorphism on the right side is injective by (2), whence the endomorphism is also injective on the left side; but it is also surjective by (2).
\end{proof}

For any $i\ge 0$, we define $\xi^i\varphi^{-1}:\tau^{\le i}L\eta_\mu C\to \tau^{\le i}L\eta_\mu C$ to be the composition \[\tau^{\le i}L\eta_\mu C\stackrel{\varphi^{-1}}\simeq\tau^{\le i}L\eta_{\varphi^{-1}(\mu)} C\xrightarrow{\xi^i}\tau^{\le i}L\eta_{\xi}L\eta_{\varphi^{-1}(\mu)} C=\tau^{\le i}L\eta_\mu C.\] The $\xi^i\varphi^{-1}$-fixed-points are essentially unchanged under $L\eta$.

\begin{lemma}\label{lemma2} Assume (As). Associated to the commutative diagram
\[\xymatrix@C=2cm{\tau^{\le i}L\eta_\mu C\ar[r]^{1-\xi^i\varphi^{-1}}\ar[d]_{\iota} & \tau^{\le i}L\eta_\mu C\ar[d]^\iota\\
C\ar[r]_{1-\xi^i\varphi^{-1}} & C,
}\]
the induced map \[\tau^{\le i}\mathrm{hofib}(1-\xi^i\varphi^{-1}\text{ \rm on }\tau^{\le i}L\eta_\mu C)\to \tau^{\le i}\mathrm{hofib}(1-\xi^i\varphi^{-1}\text{ \rm on } C)\] is a quasi-isomorphism.
\end{lemma}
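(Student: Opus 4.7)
The plan is to verify that the cofiber of the natural map vanishes in cohomological degrees $\leq i$. Write $L = L\eta_\mu C$ and $f = 1 - \xi^i\varphi^{-1}$, and set $K = \mathrm{cofib}(\iota : \tau^{\leq i} L \to C)$. Applying $\mathrm{hofib}(f|_{-})$ to the cofiber sequence $\tau^{\leq i}L \to C \to K$ yields a fiber sequence
\[ \mathrm{hofib}(f|_{\tau^{\leq i}L}) \to \mathrm{hofib}(f|_C) \to \mathrm{hofib}(f|_K), \]
so it suffices to prove $H^j(\mathrm{hofib}(f|_K)) = 0$ for all $j \leq i$.

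By the octahedral axiom applied to the composite $\tau^{\leq i}L \to L \to C$, the complex $K$ fits into a fiber sequence $\tau^{>i}L \to K \to K'$ with $K' = \mathrm{cofib}(L \to C)$. Since $\tau^{>i} L$ is concentrated in degrees $\geq i+1$, the cohomology $H^j(\mathrm{hofib}(f|_{\tau^{>i} L}))$ vanishes for all $j \leq i$, so it is enough to verify $H^j(\mathrm{hofib}(f|_{K'})) = 0$ for $j \leq i$.

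For this I invoke the standard formula (cf.\ \cite[Lemma 6.4]{BMS}) identifying $H^m(L) \cong H^m(C)/H^m(C)[\mu]$ with the map to $H^m(C)$ given by multiplication by $\mu^m$. The long exact sequence of $L \to C \to K'$ then produces a compatible short exact sequence
\[ 0 \to H^m(C)/\mu^m H^m(C) \to H^m(K') \to H^{m+1}(C)[\mu^{m+1}]/H^{m+1}(C)[\mu] \to 0 \]
for each $m \geq 0$. The main computational step applies the previous lemma under assumption (As): on $H^m(C)/\mu^m H^m(C)$, part (1) (with $\ell = i-m$) shows that $f$ is an automorphism for $m < i$ and injective for $m = i$; on $H^{m+1}(C)[\mu^{m+1}]/H^{m+1}(C)[\mu]$, part (3) (with $\ell = i \geq 0$) shows that $f$ is always an automorphism. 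By the snake lemma, $f$ thus acts as an automorphism on $H^m(K')$ for $m < i$ and injectively on $H^i(K')$. The long exact sequence for $\mathrm{hofib}(f|_{K'}) \to K' \xrightarrow{f} K'$ then immediately yields $H^j(\mathrm{hofib}(f|_{K'})) = 0$ for $j \leq i$, completing the proof.

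The main obstacle will be the careful bookkeeping of indices when applying parts (1) and (3) of the previous lemma to the respective pieces of the short exact sequence, so that the values $\ell = i-m$ and $\ell = i$ match the lemma's hypotheses throughout the allowed range of $m$.
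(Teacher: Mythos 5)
Your overall strategy — reduce to showing $H^j(\mathrm{hofib}(f|_K))=0$ for $j\le i$, where $K=\mathrm{cofib}(\iota)$, and then apply the preceding lemma degree by degree — is the same as the paper's. The trouble comes from the extra factorization through $K'=\mathrm{cofib}(L\to C)$ with $L=L\eta_\mu C$, and it is not a mere bookkeeping issue.

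First, the endomorphism $\xi^i\varphi^{-1}$ is defined (as the paper explains just before the lemma) only on $\tau^{\le i}L\eta_\mu C$, not on all of $L\eta_\mu C$: in degree $j$, using $\mu=\xi\varphi^{-1}(\mu)$ one has $\eta_\mu C^j=\xi^j\,\eta_{\varphi^{-1}(\mu)}C^j$, so multiplication by $\xi^i$ carries $\eta_{\varphi^{-1}(\mu)}C^j$ into $\eta_\mu C^j$ precisely when $j\le i$. Consequently $f=1-\xi^i\varphi^{-1}$ does not act on $L$, on $\tau^{>i}L$, or on $K'$, and the octahedral reduction as written is not well-posed: there are no objects $\mathrm{hofib}(f|_{\tau^{>i}L})$ or $\mathrm{hofib}(f|_{K'})$ to speak of.

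Second, even granting the decomposition, the operator you put on the kernel piece is wrong. The identification $H^{m+1}(L)\cong H^{m+1}(C)/H^{m+1}(C)[\mu]$ is via multiplication by $\mu^{m+1}$, and conjugating $\xi^i\varphi^{-1}$ across it (using $\varphi^{-1}(\mu)^{m+1}=\mu^{m+1}/\xi^{m+1}$) produces $\xi^{\,i-m-1}\varphi^{-1}$, not $\xi^i\varphi^{-1}$. So the correct value for part (3) is $\ell=i-m-1$. For $m<i$ this is $\ge 0$ and your argument goes through; but at $m=i$ one would need $\ell=-1$, and part (3) of the previous lemma no longer applies. Thus the claimed injectivity on $H^i(K')$ — which you do need, since $H^i(\mathrm{hofib}(f|_{K'}))$ must vanish to conclude — is not supplied by the cited lemma. (For the cokernel piece you correctly used $\ell=i-m$ in part (1).)

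The fix, and what the paper actually does, is to avoid $K'$ entirely and work directly with $K=\mathrm{cofib}(\tau^{\le i}L\to C)$. Because $H^{i+1}(\tau^{\le i}L)=0$, the degree-$i$ cohomology of $K$ is $H^i(K)=\mathrm{coker}(\iota_*\colon H^i(L)\to H^i(C))$ with \emph{no} kernel contribution from degree $i+1$; injectivity of $f$ on this is exactly part (1) with $\ell=0$, $j=i$. For $m<i$, $H^m(K)$ is the same extension of $\ker(\iota_*^{(m+1)})$ by $\mathrm{coker}(\iota_*^{(m)})$ as before, and parts (1) (with $\ell=i-m>0$, $j=m$) and (3) (with $\ell=i-m-1\ge0$, $j=m+1$) give an automorphism on both ends, hence on $H^m(K)$. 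Every index $\ell$ that occurs is then $\ge 0$, and the diagram chase closes.
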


\begin{proof}
It is enough (we leave it to the reader to draw the necessary nine-term diagram of fibre sequences) to show that $1-\xi^i\varphi^{-1}$ acts automorphically on the kernel and cokernel of $\iota:H^j(L\eta_\mu C)\to H^j(C)$ for all $j<i$, automorphically on the kernel when $j=i$, and injectively on the cokernel when $j=i$.

For $j=0$, we have $H^0(L\eta_\mu C)=H^0(C)$ as $H^0(C)$ is $\mu$-torsion-free, so there is nothing to prove. Assume now that $j>0$. Recalling the isomorphisms $\mu^j:H^j(C)/H^j(C)[\mu]\cong H^j(L\eta_\mu C)$ \cite[Lemma 6.4]{BMS}, we see that for each $0<j\le i$ the canonical map $\iota$ fits into a natural exact sequence \[0\to H^j(C)[\mu]\to H^j(C)[\mu^j]\to H^j(L\eta_\mu C)\xrightarrow{\iota}H^j(C)\to H^j(C)/\mu^j H^j(C)\to 0,\] which is compatible with the operators $1-\xi^{i-j}\varphi^{-1}$, $1-\xi^{i-j}\varphi^{-1}$, $1-\xi^i\varphi^{-1}$, $1-\xi^i\varphi^{-1}$, $1-\xi^i\varphi^{-1}$ respectively. Then all desired properties of $1-\xi^i\varphi^{-1}$ on the kernel and cokernel of $\iota$ follow immediately from the previous lemma.
\end{proof}

Finally, we can finish the proof of Theorem~\ref{thm:nearbycycles}. By Proposition~\ref{prop:largedegrees},
\[
\mathrm{hofib}(\mathcal N^{\geq i}A\Omega\{i\}/p^n\xrightarrow{\varphi_i-1} A\Omega\{i\}/p^n) = \tau^{\leq i} \mathrm{hofib}(\tau^{\leq i}\mathcal N^{\geq i}A\Omega\{i\}/p^n\xrightarrow{\varphi_i-1} \tau^{\leq i} A\Omega\{i\}/p^n)\ .
\]
Now Lemma~\ref{lemma2} says that under the identification
\[
\tau^{\leq i} \mathrm{hofib}(\tau^{\leq i}\mathcal N^{\geq i}A\Omega\{i\}/p^n\xrightarrow{\varphi_i-1} \tau^{\leq i} A\Omega\{i\}/p^n) = \tau^{\leq i} \mathrm{hofib}(\tau^{\leq i}A\Omega/p^n\xrightarrow{1-\xi^i\varphi^{-1}} \tau^{\leq i} A\Omega/p^n)\ ,
\]
one has
\[
\tau^{\leq i} \mathrm{hofib}(\tau^{\leq i}A\Omega/p^n\xrightarrow{1-\xi^i\varphi^{-1}} \tau^{\leq i} A\Omega/p^n) = \tau^{\leq i} \mathrm{hofib}(R\nu_*\mathbb A_{\sub{inf},X}/p^n\xrightarrow{1-\xi^i\varphi^{-1}} R\nu_*\mathbb A_{\sub{inf},X}/p^n)\ .
\]
On the other hand, on the pro-\'etale site of the generic fibre $X$, the map
\[
\mathbb A_{\sub{inf},X}/p^n\xrightarrow{1-\xi^i\varphi^{-1}} \mathbb A_{\sub{inf},X}/p^n
\]
is surjective (by the argument of Remark~\ref{rem:as}), and the kernel is given by $\mathbb Z/p^n\mathbb Z(i)\simeq \mathbb Z/p^n\mathbb Z\cdot \mu^i$. In summary,
\[
\mathrm{hofib}(\mathcal N^{\geq i}A\Omega\{i\}/p^n\xrightarrow{\varphi_i-1} A\Omega\{i\}/p^n)  = \tau^{\leq i} R\nu_\ast \mathbb Z/p^n\mathbb Z(i) = \tau^{\leq i} R\psi_\ast \mathbb Z/p^n\mathbb Z(i)\ ,
\]
as desired.

\newpage
\section{Breuil-Kisin modules}
\label{sec:breuilkisin}

In this section, we use relative $\THH$ to prove Theorem~\ref{thm:main1}. Before explaining what we do, let us gather the relevant notation.

\begin{notation}
Let $K$ be a discretely valued extension of $\mathbb Q_p$ with perfect residue field $k$, ring of integers $\calO_K$ and fix a uniformizer $\varpi\in \calO_K$. Let $K_\infty$ be the $p$-adic completion of $K(\varpi^{1/p^\infty})$ and let $C$ be the completion of an algebraic closure of $K_\infty$ and $A_{\inf} = A_{\inf}(\calO_C)$. Let $\mathfrak{S} = W(k) \llbracket z \rrbracket$; there is a surjective map $\tilde{\theta}:\mathfrak{S} \to \mathcal{O}_K$ determined via the standard map on $W(k)$ and $z \mapsto \varpi$. The kernel of this map is generated by an Eisenstein polynomial $E(z) \in \mathfrak{S}$ for $\varpi$. Let $\varphi$ be the endomorphism of $\mathfrak{S}$ determined by the Frobenius on $W(k)$ and $z \mapsto z^p$. We regard $\mathfrak{S}$ as a $\varphi$-stable subring of $A_{\inf}(\mathcal{O}_{K_\infty})$ or $A_{\inf}$ by the Frobenius on $W(k)$ and sending $z$ to $[\varpi^\flat]^p$ where $\varpi^\flat = (\varpi, \varpi^{\frac{1}{p}}, \varpi^{\frac{1}{p^2}}, ....) \in \mathcal{O}_{K_\infty}^\flat$ is our chosen compatible system of $p$-power roots of $\varpi$; the resulting map $\mathfrak{S} \to A_{\inf}$ is faithfully flat and even topologically free (see \cite[Lemma 4.30 and its proof]{BMS}). Write $\theta = \tilde{\theta} \circ \varphi:\mathfrak{S} \to \mathcal{O}_K$. The embedding $\mathfrak{S} \subset A_{\inf}$ is compatible with the $\theta$ and $\tilde{\theta}$ maps.
 \end{notation}
 
Our goal in this section is to prove the following more precise local assertion that implies Theorem~\ref{thm:main1}.
 
 \begin{theorem}
 \label{thm:BKlocal}
To any smooth affine formal scheme $\mathrm{Spf}(A)/\mathcal{O}_K$, one can functorially attach a $(p,z)$-complete $E_\infty$-algebra $\widehat{\Prism}_{A/\mathfrak{S}} \in D(\mathfrak{S})$ together with a $\varphi$-linear Frobenius endomorphism $\varphi:\widehat{\Prism}_{A/\mathfrak{S}} \to \widehat{\Prism}_{A/\mathfrak{S}}$ inducing an isomorphism $\widehat{\Prism}_{A/\mathfrak{S}} \otimes_{\mathfrak{S},\varphi} \mathfrak{S}[\frac{1}{E}] \simeq \widehat{\Prism}_{A/\mathfrak{S}}[\frac{1}{E}]$, and having the following features:
\begin{enumerate}

\item ($A\Omega$ comparison) After base extension to $A_{\inf}$, there is a functorial Frobenius equivariant isomorphism
\[ \widehat{\Prism}_{A/\mathfrak{S}} \widehat{\otimes_{\mathfrak{S}}^{\mathbb L}} A_{\inf} \simeq A\Omega_{A_{\mathcal{O}_C}}\]
of $E_\infty$-$A_{\inf}$-algebras.

\item (de Rham comparison) After scalar extension along $\theta:=\tilde\theta\circ \varphi: \mathfrak S\to \calO_K$, there is a functorial isomorphism
\[
\widehat{\Prism}_{A/\mathfrak{S}} \dotimes_{\mathfrak{S},\theta} \mathcal{O}_K \simeq \big(\Omega_{A/\mathcal{O}_K}\big)^\wedge
\]
of $E_\infty$-$\mathcal{O}_K$-algebras.

\item (Crystalline comparison) After scalar extension along the map $\mathfrak S\to W(k)$ which is the Frobenius on $W(k)$ and sends $z$ to $0$, there is a functorial Frobenius equivariant isomorphism
\[ \widehat{\Prism}_{A/\mathfrak{S}} \dotimes_{\mathfrak{S}} W(k) \simeq R\Gamma_{\mathrm{crys}}(A_k/W(k)).\]
\end{enumerate}
In particular, for a proper smooth formal scheme $\mathfrak{X}/\calO_K$, setting $R\Gamma_{\mathfrak{S}}(\mathfrak{X}) := R\Gamma(\mathfrak{X}, \widehat{\Prism}_{-/\mathfrak{S}})$ (see Remark~\ref{ExtendFormal}) gives the cohomology theory wanted in Theorem~\ref{thm:main1}.\footnote{To see that it is a perfect complex of $\mathfrak S$-modules, reduce modulo $p$ and $z$ and use the crystalline comparison.}
\end{theorem}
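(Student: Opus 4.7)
The plan is to mimic the construction of $\widehat{\Prism}_A$ from Theorem~\ref{main_theorem}, but using the relative topological Hochschild homology $\THH(-/\mathbb{S}[z])$, where $\mathbb{S}[z] \to \mathcal{O}_K$ carries $z$ to the uniformizer $\varpi$. First I would establish relative analogs of Theorem~\ref{THHqrs} and Theorem~\ref{TCqrsp}: for a quasiregular semiperfectoid $\mathcal{O}_K$-algebra $S$ (which, after a quasisyntomic cover, is an $\mathcal{O}_C$-algebra), one shows that $\pi_* \THH(S/\mathbb{S}[z]; \mathbb{Z}_p)$ is concentrated in even degrees and that $\pi_0 \TC^-(S/\mathbb{S}[z]; \mathbb{Z}_p)$ is discrete with the double-speed Postnikov filtration inducing a Nygaard filtration. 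This rests on a relative variant of the deformation sequence in Theorem~\ref{TPHPDeformation}, combined with the identification $\pi_0 \TC^-(\mathcal{O}_K/\mathbb{S}[z]; \mathbb{Z}_p) \cong \mathfrak{S}$, where the polynomial generator $z$ is identified with $[\varpi^\flat]^p \in A_{\inf}$ via the chosen embedding $\mathfrak{S} \hookrightarrow A_{\inf}$. Unfolding this sheaf via Proposition~\ref{qsqspextend} produces a functorial $E_\infty$-algebra over $\mathfrak{S}$; however, the natural structure coming out of the cyclotomic Frobenius identifies it as an object over $\varphi_* \mathfrak{S}$, so what is constructed directly is $\varphi^* \widehat{\Prism}_{A/\mathfrak{S}}$.

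Next I would establish the three comparisons. For the $A\Omega$ comparison (1), the map $\mathfrak{S} \to A_{\inf}$ is faithfully flat, so it suffices to produce a natural equivalence after base change to $A_{\inf}$; this would follow by verifying that $\THH(B/\mathbb{S}[z]) \otimes_{\THH(\mathcal{O}_K/\mathbb{S}[z])} \THH(\mathcal{O}_C)$ is naturally equivalent to $\THH(B_{\mathcal{O}_C})$ for $B \in \qs_{\mathcal{O}_K}^{\sub{proj}}$, reducing the claim to Theorem~\ref{main_theorem}. The de Rham comparison (2) follows because after base change along $\theta:\mathfrak{S} \to \mathcal{O}_K$, the functor $\THH(-/\mathbb{S}[z]; \mathbb{Z}_p)$ becomes $\HH(-/\mathcal{O}_K; \mathbb{Z}_p)$, so Proposition~\ref{prop:hcddr} applies. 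The crystalline comparison (3) follows because the composite $\mathfrak{S} \to W(k)$ quotients out $z$, and the relative theory degenerates to the absolute characteristic-$p$ theory of Theorem~\ref{thm:main3} after passing to the special fiber.

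The main obstacle will be the Frobenius descent: the unfolding directly produces $\varphi^* \widehat{\Prism}_{A/\mathfrak{S}}$, and recovering the untwisted object requires a canonical $\varphi$-linear trivialization after inverting $E$. To carry this out I would prove a relative analog of the Segal conjecture in the style of Corollary~\ref{cor:segalcharp}: namely, for smooth $A/\mathcal{O}_K$, the cyclotomic Frobenius $\varphi: \THH(A/\mathbb{S}[z]; \mathbb{Z}_p) \to \THH(A/\mathbb{S}[z]; \mathbb{Z}_p)^{tC_p}$ is an equivalence in sufficiently high degrees. Combined with the $A_{\inf}$ base change, which provides a canonical Frobenius-equivariant identification of Nygaard filtrations with the $L\eta_E$-filtration of Proposition~\ref{LetaDF}, this produces the Frobenius descent data, yielding a complex $\widehat{\Prism}_{A/\mathfrak{S}}$ together with the $\varphi$-linear endomorphism inducing an isomorphism after inverting $E$. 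Finally, the passage from the local affine statement to the global $R\Gamma_{\mathfrak{S}}(\mathfrak{X})$ is by sheafification in the \'etale topology via Remark~\ref{ExtendFormal}, with perfectness over $\mathfrak{S}$ following from the crystalline comparison applied after reduction modulo $(p,z)$.
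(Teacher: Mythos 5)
Your proposal correctly identifies all the major structural features of the paper's argument: working with relative $\THH(-/\mathbb{S}[z])$ to produce a theory over $\mathfrak{S}$ rather than $A_{\inf}$, observing that unfolding $\pi_0 \TC^-(-/\mathbb{S}[z];\mathbb{Z}_p)$ over $\Qsp_{\calO_K}$ yields only the Frobenius twist $\varphi^* \widehat{\Prism}_{A/\mathfrak{S}}$ (which the paper calls $\widehat{\Prism}_{A/\mathfrak{S}^{(-1)}}$), establishing the three comparisons by reducing via base change to Theorems~\ref{main_theorem} and \ref{thm:main3}, and invoking a Segal-conjecture input to descend along $\varphi: \mathfrak{S} \to \mathfrak{S}$. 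The mechanisms you name for the comparisons — $\THH(\mathbb{S}[z^{1/p^\infty}];\mathbb{Z}_p)\simeq \mathbb{S}[z^{1/p^\infty}]^\wedge_p$ for the $A\Omega$ comparison and $\mathbb{S}[z]\xrightarrow{z\mapsto 0}\mathbb{S}$ for the crystalline comparison — are exactly what the paper uses.

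The place where the proposal is genuinely underspecified is the Frobenius descent. You write that the Segal-type equivalence ``combined with the $A_{\inf}$ base change, which provides a canonical Frobenius-equivariant identification of Nygaard filtrations with the $L\eta_E$-filtration \ldots produces the Frobenius descent data.'' But there is no general formalism that takes a complex over $\mathfrak{S}^{(-1)}$ together with a Frobenius that is an isomorphism after inverting $E$ and automatically produces a descent along $\varphi: \mathfrak{S} \to \mathfrak{S}$; one must explicitly exhibit the untwisted object. The paper's concrete mechanism is to set
\[
\widehat{\Prism}_{A/\mathfrak{S}} := \gr^0\bigl(\TC^-(A/\mathbb{S}[z];\mathbb{Z}_p)[\tfrac{1}{u}]\bigr),
\]
where $u\in\pi_2\TC^-(\calO_K/\mathbb{S}[z];\mathbb{Z}_p)$. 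The point is that inverting $u$ changes the base ring: $\pi_0\TC^-(\calO_K/\mathbb{S}[z];\mathbb{Z}_p)\cong\mathfrak{S}^{(-1)}$, but once $u$ (whose image under $\varphi$ is a unit in $\TP$) is inverted, the $\pi_0$-level sheaf becomes an $\mathfrak{S}$-algebra, since $\TC^-$ retains both $u$ and $v$ whereas $\TP$ sees only $\sigma=\sigma^{\pm 1}$. The identification of this object's base change along $\varphi$ with $\widehat{\Prism}_{A/\mathfrak{S}^{(-1)}}$ is where the Segal conjecture enters — and note it is the absolute characteristic-$p$ statement (Corollary~\ref{cor:segalcharp}), applied after reducing modulo $z^{1/p}$ (Proposition~\ref{prop:BKnc}), rather than a separately-proven relative analogue. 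You have the right ingredient in hand, but your outline does not actually specify a candidate $\widehat{\Prism}_{A/\mathfrak{S}}$; without the $\TC^-[\frac{1}{u}]$ device, the plan as stated does not give a construction.
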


As explained already in \S \ref{ss:THHtoBK}, the construction of $\widehat{\Prism}_{A/\mathfrak{S}}$ uses relative $\THH$. Thus, let $\mathbb S[z] :=\mathbb S[\mathbb N]$ be the free $E_\infty$-ring spectrum generated by the commutative monoid $\mathbb N$, so $\pi_* (\mathbb{S}[z]) = (\pi_* \mathbb{S})[z]$; we regard $\mathfrak{S}$ as an $\mathbb{S}[z]$-algebra via $z \mapsto z$, and thus $\calO_K$ is an $\mathbb S[z]$-algebra via $z\mapsto \varpi$. Roughly, we construct $\widehat{\Prism}_{A/\mathfrak{S}}$ by repeating the construction of $\widehat{\Prism}_A$ using  $\THH(-/\mathbb{S}[z])$ instead of $\THH(-)$. More precisely, we use this idea in \S \ref{ss:BK} to construct the Frobenius pullback $\varphi^* \widehat{\Prism}_{A/\mathfrak{S}}$ (Corollary~\ref{cor:BKtwisted}). In \S \ref{ss:BKdescent}, we then descend this construction along the Frobenius on $\mathfrak{S}$ to construct $\widehat{\Prism}_{A/\mathfrak{S}}$; this additional descent uses the structure of $\THH(\calO_K/\mathbb{S}[z])$ (and variants) as well as the analog of the Segal conjecture proven in Corollary~\ref{cor:segalcharp}. To carry out this outline, we need a good handle on $\THH(-/\mathbb{S}[z])$, and we record the relevant features in \S \ref{ss:RelTHH}.

\subsection{Relative $\THH$}
\label{ss:RelTHH}
We recall the structure of $\THH(\mathbb{S}[z])$ and use that to endow $\THH(-/\mathbb{S}[z])$ with a cyclotomic structure. Recall the definition: for any $E_\infty$-$\mathbb{S}[z]$-algebra $A$, the spectrum
\[
\THH(A/\mathbb S[z]) = \THH(A)\otimes_{\THH(\mathbb S[z])} \mathbb S[z] \cong A \otimes_{E_\infty-\mathbb{S}[z]} \T
\]
is the universal $\T$-equivariant $E_\infty$-$\mathbb S[z]$-algebra with a non-equivariant map from $A$. To endow $\THH(A/\mathbb S[z])$ with a Frobenius map
\[
\varphi_p: \THH(A/\mathbb S[z])\to \THH(A/\mathbb S[z])^{tC_p}\ .
\]
we recall a few results about $\THH(\mathbb S[z])$.

\begin{proposition}
\label{prop:THHN}
The $\T$-equivariant $E_\infty$-ring spectrum $\THH(\mathbb S[z])$ is given by $\mathbb S[B^\sub{cy} \mathbb N]$, where $B^\sub{cy}$ denotes the cyclic bar construction, with its natural $\T$-action. Concretely, $B^\sub{cy}\mathbb N$ is equivalent to the topological abelian monoid which is the closed submonoid of $S^1\times\mathbb Z$ given by the union of $S^1\times \mathbb N_{>0}$ and the zero element $0=(1,0)\in S^1\times \mathbb Z$; the $\T$-action is given via letting $t\in \T$ act on $(s,n)\in S^1\times \mathbb Z$ via $(s,n)\mapsto (t^n s,n)$, where we write the group structure on $S^1$ multiplicatively.

The map $\THH(\mathbb S[z])=\mathbb S[B^\sub{cy} \mathbb N]\to \mathbb S[z]=\mathbb S[\mathbb N]$ is induced by the map $B^\sub{cy} \mathbb N\to \mathbb N: (s,n)\mapsto n$. The map $\varphi_p: \THH(\mathbb S[z])\to \THH(\mathbb S[z])^{tC_p}$ factors naturally over $\THH(\mathbb S[z])^{hC_p}=\mathbb S[B^\sub{cy} \mathbb N]^{hC_p}$, and in fact over $\mathbb S[(B^\sub{cy} \mathbb N)^{hC_p}]$, and is induced by the $\T\cong \T/C_p$-equivariant map $B^\sub{cy} \mathbb N\to (B^\sub{cy} \mathbb N)^{hC_p}$ given by $(s,n)\mapsto (s^p,pn)$. In particular, the diagram
\[\xymatrix{
\THH(\mathbb S[z])\ar[r]^{\varphi_p}\ar[d] & \THH(\mathbb S[z])^{tC_p}\ar[d]\\
\mathbb S[z]\ar[r]^{z\mapsto z^p} & \mathbb S[z]^{tC_p}\ .
}\]
is a $\T$-equivariant commutative diagram of $E_\infty$-ring spectra.
\end{proposition}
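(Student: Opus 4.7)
The plan is to deduce everything from the general identification of $\THH$ of spherical commutative monoid algebras, largely following Nikolaus-Scholze [NS, \S IV.2--IV.3]. The key observation is that the functor $\mathbb{S}[-]: \mathrm{CMon}(\mathrm{An}) \to \mathrm{CAlg}(\mathrm{Sp})$ is symmetric monoidal and a left adjoint, hence commutes with the $\T$-tensor defining both $\THH$ (on $E_\infty$-ring spectra) and $B^\sub{cy}$ (on commutative monoids in anima). Specializing to $M = \mathbb{N}$, this gives the $\T$-equivariant equivalence $\THH(\mathbb{S}[z]) \simeq \mathbb{S}[B^\sub{cy}\mathbb{N}]$ of $E_\infty$-ring spectra, and identifies the map $\THH(\mathbb{S}[z]) \to \mathbb{S}[z]$ with $\mathbb{S}[-]$ applied to the commutative monoid map $B^\sub{cy}\mathbb{N} \to \mathbb{N}$ given by summation $(m_0, \ldots, m_n) \mapsto \sum_i m_i$.

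To describe $B^\sub{cy}\mathbb{N}$ explicitly, I would decompose by the value of the summation: $B^\sub{cy}\mathbb{N} = \coprod_{d \geq 0} Y_d$, each $Y_d$ a $\T$-space. The degree-zero piece $Y_0$ is the realization of the constant simplicial set on the zero tuple, hence a single point. For $d \geq 1$, I would compute $Y_d$ by embedding $\mathbb{N} \hookrightarrow \mathbb{Z}$ and using the group-case identification $B^\sub{cy}\mathbb{Z} \simeq L B\mathbb{Z} = L S^1 \simeq \coprod_{d \in \mathbb{Z}} S^1$, whose degree-$d$ component carries $\T$-rotation of weight $d$. The monoid structure on $B^\sub{cy}\mathbb{N}$ (inherited from that of $\mathbb{N}$ via the symmetric monoidality of $B^\sub{cy}$) then assembles these components into the submonoid of $S^1 \times \mathbb{Z}$ described in the statement.

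The main obstacle will be identifying the cyclotomic Frobenius $\varphi_p$. My plan is to invoke the construction of cyclotomic structures on $\THH$ of spherical monoid algebras from [NS, \S IV.2--IV.3]: for any commutative monoid $M$ in anima, the Frobenius factors naturally as
\[ \mathbb{S}[B^\sub{cy} M] \xrightarrow{\alpha_M} \mathbb{S}[(B^\sub{cy} M)^{hC_p}] \xrightarrow{\beta_M} \mathbb{S}[B^\sub{cy} M]^{hC_p} \to \mathbb{S}[B^\sub{cy} M]^{tC_p}, \]
where $\beta_M$ is the canonical comparison map $\mathbb{S}[X^{hC_p}] \to \mathbb{S}[X]^{hC_p}$, and $\alpha_M$ is $\mathbb{S}[-]$ applied to a natural $\T$-equivariant space-level map $\pi_{p,M}: B^\sub{cy} M \to (B^\sub{cy} M)^{hC_p}$ built from the Tate diagonal and the commutative monoid structure; concretely, $\pi_{p,M}$ is the composition of the $p$-th power endomorphism of $B^\sub{cy} M$ (induced by the $p$-th power on $M$) with the canonical inclusion of $C_p$-fixed points into homotopy fixed points. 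For $M = \mathbb{N}$, the $p$-th power is multiplication by $p$ on indices, and on each $S^1$-component it is the degree-$p$ covering $s \mapsto s^p$; this yields the formula $(s,n) \mapsto (s^p, pn)$. The commutativity of the final diagram is then tautological on the monoid level, since both composites $B^\sub{cy}\mathbb{N} \to \mathbb{N}$ send $(s,n) \mapsto pn$.
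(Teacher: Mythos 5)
Your proposal is correct and takes essentially the same route as the paper, which simply cites [NS, Lemma IV.3.1] for the identification $\THH(\mathbb S[z]) \simeq \mathbb S[B^\sub{cy}\mathbb N]$ and the factoring of $\varphi_p$ through $\mathbb S[(B^\sub{cy}\mathbb N)^{hC_p}]$, cites [NS, Proposition IV.3.2] for the free loop space description of $B^\sub{cy}\mathbb Z$, and says the final square ``follows formally.'' The one place where you assert more than you justify is the claim that $\pi_{p,\mathbb N}$ is ``concretely'' the $p$-th power endomorphism $B^\sub{cy}(\cdot p)$ followed by the inclusion of $C_p$-fixed points: the definition in [NS, \S IV.2] proceeds instead via the Tate diagonal $M \to (M^{\times p})^{hC_p}$ and the universal property of $\THH$, and the identification of that map with the strict $p$-th power $(s,n)\mapsto(s^p,pn)$ is a (true, but not tautological) statement whose verification is essentially the content of [NS, Proposition IV.3.2]; as it stands you assert it as a known description rather than deducing it. In practice this is no worse than the paper's own level of detail, and the rest — the $\T$-tensor argument for the first identification, the component-by-component computation of $B^\sub{cy}\mathbb N$ via the inclusion into $B^\sub{cy}\mathbb Z \simeq LS^1$, and the observation that commutativity of the final square reduces to a $\pi_0$-statement — matches the intended argument.
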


\begin{remark} 
By the Segal conjecture $\mathbb S^{tC_p}\simeq \mathbb S^\wedge_p$, so one can compute that $\mathbb S[z]^{tC_p}\simeq (\mathbb S[z])^\wedge_p$ is the $p$-completion of $\mathbb S[z]$ as a spectrum.
\end{remark}

\begin{proof} See for example \cite[Lemma IV.3.1]{NikolausScholze}. The explicit description of $B^\sub{cy} \mathbb N$ can be deduced from the description of $B^\sub{cy} \mathbb Z$ as the free loop space of $S^1$ (which is equivalent to $S^1\times \mathbb Z$ with given $\mathbb T$-action), cf.~\cite[Proposition IV.3.2]{NikolausScholze}. The final commutative diagram follows formally.
\end{proof}

Next, let us explain why relative $\THH$ carries a cyclotomic structure.

\begin{construction}[Construction of the cyclotomic structure on {$\THH(-/\mathbb{S}[z])$}] 
 For any $E_\infty$-$\mathbb S[z]$-algebra $A$, there is a natural map
\[\begin{aligned}
\THH(A/\mathbb S[z]) &= \THH(A)\otimes_{\THH(\mathbb S[z])} \mathbb S[z]\xrightarrow{\varphi_p\otimes 1} \THH(A)^{tC_p}\otimes_{\THH(\mathbb S[z])} \mathbb S[z]\\
&\to \THH(A)^{tC_p}\otimes_{\THH(\mathbb S[z])^{tC_p}} \mathbb S[z]^{tC_p}\to \THH(A/\mathbb S[z])^{tC_p}\ ,
\end{aligned}\]
where the first map comes from the cyclotomic structure of $\THH(A)$, the second map exists thanks to the commutative square in Proposition~\ref{prop:THHN}, and the last map comes from the lax symmetric monoidal nature of $(-)^{tC_p}$. By construction, this map is $\T\cong\T/C_p$-equivariant and linear over $\mathbb S[z]\to \mathbb S[z]$ given by $z\mapsto z^p$ provided we regard the target as $\mathbb S[z]$-algebra via the natural map $\mathbb S[z]\to \mathbb S[z]^{tC_p}\to \THH(A/\mathbb S[z])^{tC_p}$.

In particular, $\THH(A/\mathbb S[z])$ is a cyclotomic spectrum in the sense of \cite{NikolausScholze}, and in fact a cyclotomic $E_\infty$-algebra over the cyclotomic $E_\infty$-ring spectrum $\mathbb S[z]$, where $\mathbb S[z]$ is equipped with the trivial $\T$-action and the $\T\cong \T/C_p$-equivariant map $\varphi_p: \mathbb S[z]\to \mathbb S[z]^{tC_p}$ sending $z$ to $z^p$.
\end{construction}

There are two simple comparisons between the relative theory and the absolute theory. The first describes the specialization $z \mapsto 0$ and is the main source of the crystalline comparison.

\begin{lemma} 
Assume that $A$ is an $\calO_K$-algebra, regarded as $\mathbb S[z]$-algebra via $z\mapsto \varpi$. Base extension along $\mathbb S[z]\to \mathbb S$ sending $z$ to $0$ gives
\[
\THH(A/\mathbb S[z])\otimes_{\mathbb S[z]} \mathbb S\simeq \THH(A\otimes_{\calO_K} k)\ ,
\]
compatibly with the $\T$-action and $\varphi_p$.
\end{lemma}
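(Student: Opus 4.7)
The plan is to obtain this as a naturality/base change statement for relative $\THH$ viewed as a cyclotomic $E_\infty$-algebra. Recall the universal property: for an $E_\infty$-ring $R$ and an $E_\infty$-$R$-algebra $B$, one has $\THH(B/R) = B \otimes_{E_\infty\text-R} \T$, i.e.\ the colimit of the constant $\T$-shaped diagram on $B$ in $E_\infty$-$R$-algebras.

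First I would observe that the forgetful-then-base-change functor $-\otimes_R R'$ from $E_\infty$-$R$-algebras to $E_\infty$-$R'$-algebras is a left adjoint, hence commutes with colimits. Applying this to the $\T$-shaped colimit defining $\THH(-/R)$ yields a natural $\T$-equivariant equivalence
\[
\THH(B/R) \otimes_R R' \simeq \THH((B \otimes_R R')/R')
\]
of $E_\infty$-$R'$-algebras, where $R'$ is given the trivial $\T$-action. Specializing to $R = \mathbb S[z]$, $R' = \mathbb S$ via $z \mapsto 0$, and to $B = A$, and noting that $A \otimes_{\mathbb S[z]} \mathbb S = A/\varpi = A \otimes_{\calO_K} k$, this gives the desired $\T$-equivariant equivalence
\[
\THH(A/\mathbb S[z]) \otimes_{\mathbb S[z]} \mathbb S \simeq \THH((A \otimes_{\calO_K} k)/\mathbb S) = \THH(A \otimes_{\calO_K} k).
\]

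Next I would promote this to an equivalence of cyclotomic spectra. Both $\mathbb S[z]$ and $\mathbb S$ carry natural cyclotomic $E_\infty$-ring structures (with trivial $\T$-action), the former by Proposition~\ref{prop:THHN}. The map $\mathbb S[z] \to \mathbb S$ sending $z$ to $0$ is a map of cyclotomic $E_\infty$-rings: the required compatibility reduces to commutativity of the square
\[
\xymatrix{
\mathbb S[z] \ar[r]^-{\varphi_p}\ar[d]_-{z \mapsto 0} & \mathbb S[z]^{tC_p} \ar[d] \\
\mathbb S \ar[r]^-{\varphi_p} & \mathbb S^{tC_p},
}
\]
which holds because along the upper-right composite $z \mapsto z^p \mapsto 0$, while along the lower-left composite $z \mapsto 0 \mapsto 0$. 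By the explicit construction of $\varphi_p$ on $\THH(A/\mathbb S[z])$ recalled just before the lemma, the cyclotomic structure on relative $\THH$ is natural in the pair $(R, B)$ with $R$ a cyclotomic $E_\infty$-ring and $B$ an $E_\infty$-$R$-algebra. Applied to the map of pairs $(\mathbb S[z], A) \to (\mathbb S, A \otimes_{\calO_K} k)$, this upgrades the equivalence of the previous paragraph to an equivalence compatible with $\varphi_p$.

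The only subtle point is checking that the $\varphi_p$ constructed on $\THH(A/\mathbb S[z])$ really is natural in the above sense; but unwinding the three-step construction of $\varphi_p$ given just before the lemma, this naturality is immediate from (i) the naturality of the absolute cyclotomic Frobenius on $\THH(A)$, (ii) the lax symmetric monoidal functoriality of $(-)^{tC_p}$, and (iii) the commutativity of the square displayed above.
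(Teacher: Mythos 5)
Your proof is correct and takes essentially the same approach as the paper: base change for relative $\THH$ along $\mathbb{S}[z]\to\mathbb{S}$, the identification of $A\otimes_{\mathbb{S}[z]}\mathbb{S}$ with $A\otimes_{\calO_K}k$, and the observation that $z\mapsto 0$ intertwines the Frobenius $z\mapsto z^p$ on $\mathbb{S}[z]$ with the identity on $\mathbb{S}$. The only cosmetic difference is that the paper organizes the middle identification via the two pushout squares $(\mathbb{S}[z]\to\calO_K\to A)$ over $(\mathbb{S}\to k\to A\otimes_{\calO_K}k)$, whereas you compute $A\otimes_{\mathbb{S}[z]}\mathbb{S}\simeq A/\varpi$ directly.
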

\begin{proof}
The base change property and its compatibility with the $\T$-actions follows from the base change property of $\THH$ together with the observation that both commutative squares in
\[ \xymatrix{ \mathbb{S}[z] \ar[r] \ar[d]^{z \mapsto 0} & \calO_K \ar[r] \ar[d] & A \ar[d] \\
		   \mathbb{S} \ar[r] & k \ar[r] & A \otimes_{\calO_K} k }\]
are Cartesian in $E_\infty$-rings. The compatibility with $\varphi_p$ follows by observing that the map $\mathbb{S}[z] \xrightarrow{z \mapsto 0} \mathbb{S}$ intertwines the Frobenius map on $\mathbb{S}[z]$ (determined by $z \mapsto z^p$ and used to define the cyclotomic structure) with the identity on $\mathbb{S}$.
\end{proof}

 For the second, we observe the following proposition.

\begin{proposition} After $p$-completion, the map
\[
\THH(\mathbb S[z^{1/p^\infty}])\to \mathbb S[z^{1/p^\infty}]
\]
is an equivalence.
\end{proposition}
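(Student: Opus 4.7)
The plan is to reduce this to an explicit computation using the description of $\THH(\mathbb{S}[z])$ from Proposition~\ref{prop:THHN}. First, I would write $\mathbb{S}[z^{1/p^\infty}] = \colim_n \mathbb{S}[z^{1/p^n}] \simeq \colim_n \mathbb{S}[z]$, where in the second identification one uses $z^{1/p^n}$ as the generator at level $n$, so that the transition map is the $E_\infty$-ring map $z \mapsto z^p$. Since $\THH$ of $E_\infty$-rings commutes with filtered colimits, this gives $\THH(\mathbb{S}[z^{1/p^\infty}]) \simeq \colim_n \THH(\mathbb{S}[z])$, compatibly with the augmentations to $\mathbb{S}[z^{1/p^\infty}]$.

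Next, I would compute the fiber of the augmentation levelwise. By Proposition~\ref{prop:THHN}, the augmentation $\THH(\mathbb{S}[z]) \to \mathbb{S}[z]$ comes from the map of spaces $B^{\mathrm{cy}}\mathbb{N} \to \mathbb{N}$ given by $(s,n) \mapsto n$. The fiber over $n\geq 1$ is a circle $S^1$ and the fiber over $0$ is a point, so after passing to suspension spectra and taking reduced parts, the fiber $F$ of the augmentation is identified with $\bigoplus_{n\geq 1} \Sigma \mathbb{S}$. It then suffices, by stability of fiber sequences under colimits, to show that the colimit of the $F$'s along the transition maps becomes trivial after $p$-completion.

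The key computation is to identify the transition map $F \to F$. By the functoriality of $B^{\mathrm{cy}}$ applied to multiplication by $p$ on $\mathbb{N}$ (most easily tracked via the natural inclusion into $B^{\mathrm{cy}}\mathbb{Z} \simeq LS^1$, where $L$ of the degree-$p$ self-map of $S^1$ sends $(s,n) \mapsto (s^p, pn)$), the map $B^{\mathrm{cy}}\mathbb{N} \to B^{\mathrm{cy}}\mathbb{N}$ is given by $(s,n)\mapsto (s^p, pn)$. Thus the transition map sends the summand $\Sigma\mathbb{S}$ indexed by $n$ to the summand indexed by $pn$ via the map induced by the degree-$p$ self-map of $S^1$, which on the reduced suspension spectrum $\Sigma \mathbb{S}$ is multiplication by $p$.

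To conclude, I would check that $F^\wedge_p \simeq 0$ by checking that $F \otimes \mathbb{Z}/p \simeq 0$; this is enough since $p$-completion of a spectrum vanishes iff its mod $p$ reduction does. Because $\otimes \mathbb{Z}/p$ commutes with colimits, we are computing the colimit of a system $\bigoplus_{n\geq 1} \Sigma \mathbb{Z}/p$ whose transition maps are all zero modulo $p$; such a colimit vanishes. The main thing to be careful about is the identification of the transition map as $(s,n)\mapsto(s^p,pn)$, as this is what ensures the factor of $p$ on each fiber; modulo this point the argument is entirely formal.
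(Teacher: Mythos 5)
Your proof is correct, but it takes a genuinely different route from the paper's. The paper argues in two lines: by Lemma~\ref{THHvsHH} (and the finiteness of $\pi_i\THH(\mathbb{Z})$ for $i>0$), detecting a $p$-adic equivalence of connective spectra can be done after base change $-\otimes_{\mathbb{S}}\mathbb{Z}$, which turns the map into $\HH(\mathbb{Z}[z^{1/p^\infty}]) \to \mathbb{Z}[z^{1/p^\infty}]$; by the HKR filtration this reduces to the vanishing of the $p$-completion of $L_{\mathbb{Z}[z^{1/p^\infty}]/\mathbb{Z}}$, which holds because the transition maps on $\Omega^1$ are divisible by $p$. You instead stay entirely over the sphere: presenting $\mathbb{S}[z^{1/p^\infty}]$ as $\colim(\mathbb{S}[z]\xrightarrow{z\mapsto z^p}\mathbb{S}[z]\to\cdots)$, using the decomposition $\THH(\mathbb{S}[z])\simeq \mathbb{S}[B^{\mathrm{cy}}\mathbb{N}]$ from the preceding proposition to identify the fiber of the augmentation at each stage with $\bigoplus_{n\geq 1}\Sigma\mathbb{S}$ (indexed by weight), and checking that the transition map sends the weight-$n$ summand to the weight-$pn$ summand via the degree-$p$ self-map of $S^1$, i.e.\ multiplication by $p$ on $\Sigma\mathbb{S}$. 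Hence the transition maps on the fiber are null mod $p$ and the colimit dies $p$-adically. Your approach buys a more elementary, self-contained argument that avoids the HKR theorem and the cotangent complex entirely, but it requires the one non-formal input you flag: the formula $(s,n)\mapsto(s^p,pn)$ for the effect of $z\mapsto z^p$ on $B^{\mathrm{cy}}\mathbb{N}$, which you justify correctly via functoriality and the free loop space model $B^{\mathrm{cy}}\mathbb{Z}\simeq LS^1$. (Note that this is functoriality of $B^{\mathrm{cy}}$ under the monoid endomorphism $\cdot p$ of $\mathbb{N}$ — a close cousin of, but not literally the same as, the cyclotomic Frobenius formula $B^{\mathrm{cy}}\mathbb{N}\to (B^{\mathrm{cy}}\mathbb{N})^{hC_p}$ stated in the preceding proposition.) The paper's reduction is shorter but leans on the machinery of \S2; both are fine.
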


\begin{proof} By tensoring with $\THH(\mathbb Z)$ over $\mathbb S$ and using Lemma~\ref{THHvsHH}, this reduces to the same question for $\HH(\mathbb Z[z^{1/p^\infty}])$, which in turn follows from the vanishing of the $p$-completion of $L_{\mathbb Z[z^{1/p^\infty}]/\mathbb Z}=0$ by the HKR filtration.
\end{proof}

From this, we learn what relative $\THH$ looks like after base change to $\mathbb{S}[z^{\frac{1}{p^\infty}}]$. 

\begin{corollary}
\label{cor:BKPerfBC}
For any $\calO_K$-algebra $A$, the $p$-completion of
\[
\THH(A/\mathbb S[z])\otimes_{\mathbb S[z]} \mathbb S[z^{1/p^\infty}]\simeq \THH(A[\varpi^{1/p^\infty}]/\mathbb S[z^{1/p^\infty}])
\]
agrees with
\[
\THH(A[\varpi^{1/p^\infty}];\mathbb Z_p)=\THH(A\otimes_{\calO_K} \calO_{K_\infty};\mathbb Z_p)\ ,
\]
compatibly with the $\T$-action and $\varphi_p$. 
\end{corollary}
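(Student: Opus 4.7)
The plan is to verify the two displayed equivalences separately, then to check compatibility with the $\T$-action and with $\varphi_p$.

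For the first equivalence, I will first identify the pushout $A \otimes_{\mathbb{S}[z]} \mathbb{S}[z^{1/p^\infty}]$ as a discrete ring. Each subalgebra $\mathbb{S}[z^{1/p^n}] \subset \mathbb{S}[z^{1/p^\infty}]$ is free of rank $p^n$ over $\mathbb{S}[z]$ as a module spectrum, on the monomial basis $\{z^{a/p^n} : 0 \leq a < p^n\}$. Passing to the colimit, $\mathbb{S}[z]\to \mathbb{S}[z^{1/p^\infty}]$ is flat on homotopy, so the pushout is discrete and coincides with the ordinary ring $A[\varpi^{1/p^\infty}] = A\otimes_{\calO_K}\calO_{K_\infty}$. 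The first displayed equivalence is then a formal instance of the $\T$-equivariant base-change property of relative $\THH$, which follows from the universal property of $\THH(-/R)$ recalled in \S\ref{subsection_intro_to_THH}: both sides are computed by the same simplicial object whose $n$-th term is $A^{\otimes (n+1)}$ (all tensors over $\mathbb{S}[z]$) tensored up along $\mathbb{S}[z]\to \mathbb{S}[z^{1/p^\infty}]$.

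For the second equivalence, I unwind the definition of relative $\THH$ as
\[ \THH(A[\varpi^{1/p^\infty}]/\mathbb{S}[z^{1/p^\infty}]) \simeq \THH(A[\varpi^{1/p^\infty}]) \otimes_{\THH(\mathbb{S}[z^{1/p^\infty}])} \mathbb{S}[z^{1/p^\infty}]. \]
By the preceding proposition, the unit map $\THH(\mathbb{S}[z^{1/p^\infty}])\to \mathbb{S}[z^{1/p^\infty}]$ is an equivalence after $p$-completion. Consequently the above relative tensor product collapses after $p$-completion to $\THH(A[\varpi^{1/p^\infty}];\mathbb Z_p)$, yielding the second equivalence.

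The $\T$-equivariance of both equivalences is automatic from the constructions. The main obstacle, and the part requiring some care, is the $\varphi_p$-compatibility. I verify it by noting that the cyclotomic-structure construction of \S\ref{ss:RelTHH} is itself natural in the base ring: the analog of the commutative square in Proposition~\ref{prop:THHN}, with the monoid $\mathbb{N}$ replaced by $\mathbb{N}[1/p]$ and the $p$-th-power endomorphism extended accordingly, defines a cyclotomic structure on $\THH(-/\mathbb{S}[z^{1/p^\infty}])$ compatible with that on $\THH(-/\mathbb{S}[z])$ under base change. From this it is then formal that the quotient map $\THH(B)\to \THH(B/\mathbb{S}[z^{1/p^\infty}])$ is cyclotomic for any $E_\infty$-$\mathbb{S}[z^{1/p^\infty}]$-algebra $B$; the argument of the previous paragraph upgrades this cyclotomic map to a cyclotomic equivalence after $p$-completion when $B=A[\varpi^{1/p^\infty}]$, completing the proof.
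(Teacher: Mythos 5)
Your proof is correct and fills in the details that the paper leaves implicit: this corollary has no written proof in the paper, being a direct consequence of the preceding proposition together with the base-change property of relative $\THH$. Your argument follows exactly this route—flat base change for the first equivalence, the vanishing of $\THH(\mathbb{S}[z^{1/p^\infty}];\mathbb{Z}_p)$ relative to $\mathbb{S}[z^{1/p^\infty}]$ for the second, and naturality of the cyclotomic structure construction for $\varphi_p$-compatibility.

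One minor imprecision worth flagging: you identify the discrete pushout as ``$A[\varpi^{1/p^\infty}] = A\otimes_{\calO_K}\calO_{K_\infty}$,'' but the pushout $A\otimes_{\mathbb{Z}[z]}\mathbb{Z}[z^{1/p^\infty}]$ is the \emph{uncompleted} ring $A\otimes_{\calO_K}\calO_K[\varpi^{1/p^\infty}]$, whereas $\calO_{K_\infty}$ is $p$-complete. These differ, and the paper's displayed equality $\THH(A[\varpi^{1/p^\infty}];\mathbb Z_p)=\THH(A\otimes_{\calO_K} \calO_{K_\infty};\mathbb Z_p)$ is itself an assertion (that $p$-completed $\THH$ is insensitive to $p$-completing the input), not a tautology. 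Since everything is $p$-completed throughout the corollary, this does not affect your proof, but the equals sign deserves a remark rather than being folded into an identification of discrete rings.
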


\begin{remark}
Philosophically, the equality (after $p$-completion) between relative $\THH$ over ``perfect'' base rings such as $\mathbb S[z^{1/p^\infty}]$ and absolute $\THH$ is the reason that one can define the $A\Omega$-theory in terms of absolute $\THH$ while one needs relative $\THH$ for the Breuil-Kisin descent.
\end{remark}

\subsection{Frobenius twisted Breuil-Kisin modules}
\label{ss:BK}

In this section, we construct complexes that will end up equalling $\varphi^* \widehat{\Prism}_{A/\mathfrak{S}}$ in the context of Theorem~\ref{thm:BKlocal}. As the latter complexes have not yet been defined, this notation does not yet make sense; instead, we rename the map $\varphi:\mathfrak{S} \to \mathfrak{S}$ as the map $\mathfrak{S} \to \mathfrak{S}^{(-1)}$, and construct complexes $\widehat{\Prism}_{A/\mathfrak{S}^{(-1)}}$ that will eventually descend to $\mathfrak{S}$.

Thus, let us write $\mathfrak S^{(-1)}$ for a copy of $\mathfrak S$, which we regard as $\mathfrak S$-algebra via $\varphi: \mathfrak S\to \mathfrak S=\mathfrak S^{(-1)}$. We write $\theta^{(-1)}: \mathfrak S^{(-1)}\to \calO_K$ for the usual map $\mathfrak S\to \calO_K$, $z\mapsto \varpi$, that was denoted $\tilde\theta: \mathfrak S\to \calO_K$ before. Then there is a natural inclusion $\mathfrak S^{(-1)}\hookrightarrow A_{\inf}(\calO_{K_\infty})$ which is $W(k)$-linear and sends $z$ to $[\varpi^\flat]$; on $\mathfrak S\subset \mathfrak S^{(-1)}$, this is the inclusion $\mathfrak S\hookrightarrow A_{\inf}(\calO_K)$ fixed earlier. The diagram
\[\xymatrix{
\mathfrak S^{(-1)}\ar[r]\ar[d]^{\theta^{(-1)}} & A_{\inf}(\calO_{K_\infty})\ar[d]^\theta\\
\calO_K\ar[r] & \calO_{K_\infty}
}\]
commutes, and there is a natural diagram
\[\xymatrix{
\mathfrak S^{(-1)}\ar[r]\ar[d]^{\tilde{\theta}^{(-1)}} & A_{\inf}(\calO_{K_\infty})\ar[d]^{\tilde{\theta}}\\
\calO_K[\varpi^{1/p}]\ar[r] & \calO_{K_\infty}\ ,
}\]
where $\tilde{\theta}^{(-1)}: \mathfrak S^{(-1)}\to \calO_K[\varpi^{1/p}]$ is defined to make the diagram commute; in particular, $z\mapsto \varpi^{1/p}$. Using the base change properties for relative $\THH$, it is easy to check the following by reduction to the perfectoid case:

\begin{proposition} On homotopy groups,
\[
\pi_\ast \THH(\calO_K/\mathbb S[z];\mathbb Z_p)\cong \calO_K[u]
\]
where $u$ is of degree $2$,
\[
\pi_\ast \TC^-(\calO_K/\mathbb S[z];\mathbb Z_p)\cong \mathfrak S^{(-1)}[u,v]/(uv-E)
\]
where $u$ is of degree $2$ and $v$ is of degree $-2$,
\[
\pi_\ast \TP(\calO_K/\mathbb S[z];\mathbb Z_p) = \mathfrak S^{(-1)}[\sigma^{\pm 1}]
\]
and
\[
\pi_\ast \THH(\calO_K/\mathbb S[z];\mathbb Z_p)^{tC_p} = \calO_K[\varpi^{1/p}][\sigma^{\pm 1}]\ ,
\]
where $\sigma$ has degree $2$. The canonical map
\[
\pi_\ast \TC^-(\calO_K/\mathbb S[z];\mathbb Z_p)\to \pi_\ast \TP(\calO_K/\mathbb S[z];\mathbb Z_p)
\]
sends $u$ to $E\sigma$ and $v$ to $\sigma^{-1}$. The diagram
\[\xymatrix{
\pi_\ast \TC^-(\calO_K/\mathbb S[z];\mathbb Z_p)\ar[d]\ar[r]^{\varphi_p^{h\T}} & \pi_\ast \TP(\calO_K/\mathbb S[z];\mathbb Z_p) \ar[d] \\
\pi_\ast \THH(\calO_K/\mathbb S[z];\mathbb Z_p)\ar[r]^{\varphi_p} & \pi_\ast \THH(\calO_K/\mathbb S[z];\mathbb Z_p)^{tC_p}
}\]
is given by
\[\xymatrix{
\mathfrak S^{(-1)}[u,v]/(uv-E)\ar[d]^{\theta^{(-1)},u\mapsto u, v\mapsto 0}\ar[rrrr]^{\varphi,u\mapsto \sigma,v\mapsto \varphi(E)\sigma^{-1}} &&&& \mathfrak S^{(-1)}[\sigma^{\pm 1}]\ar[d]^{\tilde\theta^{(-1)},\sigma\mapsto \sigma} \\
\calO_K[u]\ar[rrrr]^{u\mapsto \sigma} &&&& \calO_K[\varpi^{1/p}][\sigma^{\pm 1}]
}\]
\end{proposition}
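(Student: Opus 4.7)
The plan is to reduce to the perfectoid calculation of Proposition~\ref{TCTPTHHpi} via the base change along $\mathbb{S}[z]\to\mathbb{S}[z^{1/p^\infty}]$. Corollary~\ref{cor:BKPerfBC} supplies a $\T$-equivariant equivalence
\[
\THH(\calO_K/\mathbb{S}[z];\mathbb{Z}_p)\otimes_{\mathbb{S}[z]}\mathbb{S}[z^{1/p^\infty}] \simeq \THH(\calO_{K_\infty};\mathbb{Z}_p),
\]
and the inclusion $\mathfrak{S}^{(-1)}\hookrightarrow A_{\inf}(\calO_{K_\infty})$ (sending $z\mapsto[\varpi^\flat]$, so that $\theta^{(-1)}(z)=\varpi$ and $\tilde\theta^{(-1)}(z)=\varpi^{1/p}$) is topologically free, so we may detect everything inside the known perfectoid answer. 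Since $\pi_*\mathbb{S}[z]\to\pi_*\mathbb{S}[z^{1/p^\infty}]$ is flat in each degree, flat base change on homotopy applied to the equivalence above gives
\[
\pi_*\THH(\calO_K/\mathbb{S}[z];\mathbb{Z}_p)\otimes_{\pi_*\mathbb{S}[z]}\pi_*\mathbb{S}[z^{1/p^\infty}]\cong \calO_{K_\infty}[u];
\]
in particular $\pi_*\THH$ is concentrated in even degrees. The $\calO_K$-module structure on each $\pi_{2i}$, coming from the $E_\infty$-map $\calO_K\to\THH(\calO_K/\mathbb{S}[z])$, combined with the $p$-completely faithfully flat cover $\calO_K\to\calO_{K_\infty}$, then forces $\pi_*\THH(\calO_K/\mathbb{S}[z];\mathbb{Z}_p)=\calO_K[u]$.

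Even concentration makes the homotopy fixed point and Tate spectral sequences degenerate, endowing $\pi_0\TC^-$ and $\pi_0\TP$ with complete multiplicative filtrations whose graded pieces are $\calO_K\cdot u^i$. Mimicking the proof of Proposition~\ref{TCTPTHHpi}, the pro-infinitesimal $W(k)[z]$-algebra thickening $\pi_0\TP\twoheadrightarrow\calO_K$ --- with $W(k)$ arising by uniquely lifting $W(k)\to\calO_K$ through the pro-nilpotent thickening and $z$ coming from the base ring $\mathbb{S}[z]$ --- extends by $(p,z)$-adic completeness to a canonical map $\mathfrak{S}^{(-1)}=W(k)\llbracket z\rrbracket\to\pi_0\TP(\calO_K/\mathbb{S}[z];\mathbb{Z}_p)$, which is an isomorphism by inspection on graded pieces. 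Tensoring up along $\mathfrak{S}^{(-1)}\hookrightarrow A_{\inf}$ and matching with Proposition~\ref{TCTPTHHpi} pins down the remaining data: the canonical map sends $v\mapsto\sigma^{-1}$, $u\mapsto E\sigma$, while the Frobenius sends $u\mapsto\sigma$, $v\mapsto\varphi(E)\sigma^{-1}$. Finally, the description of $\pi_*\THH(\calO_K/\mathbb{S}[z];\mathbb{Z}_p)^{tC_p}$ follows from the relative analog of Proposition~\ref{prop:TCtoTHH}, whose proof carries over verbatim with $\tilde\xi$ replaced by $\varphi(E)$; this yields $\THH^{tC_p}\simeq\TP/\varphi(E)$, and the identification $\mathfrak{S}^{(-1)}/\varphi(E)\cong\calO_K[\varpi^{1/p}]$ via $\tilde\theta^{(-1)}$ completes the calculation.

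The main obstacle is the universal-property argument producing the map $\mathfrak{S}^{(-1)}\to\pi_0\TP$ and verifying that it matches the prescribed Frobenius-twisted inclusion into $A_{\inf}$. One must carefully track the provenance of each piece of algebraic structure --- the Witt vectors from the unique lift of $W(k)\to\calO_K$ through the pro-nilpotent thickening (as in the perfectoid case), the formal variable $z$ from the base ring $\mathbb{S}[z]$, and the Frobenius twist that becomes visible only after comparison with $A_{\inf}$ --- since it is precisely this bookkeeping that distinguishes $\mathfrak{S}^{(-1)}$ from $\mathfrak{S}$ itself and sets up the Frobenius descent carried out in the next subsection.
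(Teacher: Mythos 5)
Your proposal follows essentially the same route the paper intends with its one-sentence proof (``This follows from the results of \S\ref{sec:TCperfectoid} by base change''): compute $\pi_*\THH$ by flat descent along $\mathbb{S}[z]\to\mathbb{S}[z^{1/p^\infty}]$ using Corollary~\ref{cor:BKPerfBC}, observe that even concentration degenerates both spectral sequences, and then identify $\pi_0\TP$ with $\mathfrak{S}^{(-1)}$ via a universal-property argument paralleling Proposition~\ref{TCTPTHHpi}, matching the Frobenius and canonical maps against the perfectoid case by naturality of the embedding $\mathfrak{S}^{(-1)}\hookrightarrow A_{\inf}(\calO_{K_\infty})$. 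This is the right strategy.

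Two places where your writeup leans on assertions that deserve a sentence of justification. First, the construction of the class $u\in\pi_2\THH(\calO_K/\mathbb{S}[z];\mathbb{Z}_p)$ and the multiplication-by-$u$ fiber sequence $\THH(\calO_K/\mathbb{S}[z];\mathbb{Z}_p)[2]\xrightarrow{u}\THH(\calO_K/\mathbb{S}[z];\mathbb{Z}_p)\to\HH(\calO_K/\calO_K;\mathbb{Z}_p)=\calO_K$ are not produced by flat descent alone; you implicitly invoke the relative analogue of Theorem~\ref{TPHPDeformation} to obtain both the element and the cofiber description (the ring $\calO_K$ plays the role of the perfectoid base, and $u\in\pi_2\TC^-(\calO_K/\mathbb{S}[z];\mathbb{Z}_p)$ is first detected inside $\pi_2\TC^-(\calO_{K_\infty};\mathbb{Z}_p)$). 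Second, the claim that the map $W(k)[z]\to\pi_0\TP$ extends to $W(k)\llbracket z\rrbracket$ ``by $(p,z)$-adic completeness'' needs the observation that, modulo $p$, the image of $z^e$ lies in the first step of the (complete, multiplicative) abutment filtration, so that $z$-adic Cauchy sequences converge; completeness for the abutment filtration is what the spectral sequence gives you, and $(p,z)$-adic completeness follows but is not automatic. With those two points made explicit, the argument goes through cleanly, and the rest — faithful flatness of $\mathbb{Z}[z]\to\mathbb{Z}[z^{1/p^\infty}]$ to descend evenness, identifying associated gradeds, and transporting the Frobenius data via the topologically free inclusion $\mathfrak{S}^{(-1)}\hookrightarrow A_{\inf}$ — is correct.
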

\begin{proof}
This follows from the results of \S \ref{sec:TCperfectoid} by base change. 
\end{proof}

Moreover, if $S$ is quasiregular semiperfectoid, then also $S\hat{\otimes}_{\calO_K} \calO_{K_\infty}$ is quasiregular semiperfectoid. This implies the following proposition, using Theorem~\ref{TCqrsp}.

\begin{proposition} If $S\in \Qsp_{\calO_K}$ is quasiregular semiperfectoid, then $\THH(S/\mathbb S[z];\mathbb Z_p)$, $\TC^-(S/\mathbb S[z];\mathbb Z_p)$ and $\TP(S/\mathbb S[z];\mathbb Z_p)$ are concentrated in even degrees, and $S\mapsto \pi_i \THH(S/\mathbb S[z];\mathbb Z_p)$ respectively $S\mapsto \pi_i \TC^-(S/\mathbb S[z];\mathbb Z_p)$, $S\mapsto \pi_i \TP(S/\mathbb S[z];\mathbb Z_p)$ define sheaves on $\Qsp_{\calO_K}$ with vanishing cohomology on any $S\in \Qsp_{\calO_K}$.
\end{proposition}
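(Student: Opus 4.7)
My plan is to deduce the proposition from the corresponding absolute assertions (Theorem~\ref{THHqrs} and Theorem~\ref{TCqrsp}) for quasiregular semiperfectoid $\calO_{K_\infty}$-algebras via faithfully flat descent along the map $\mathbb S[z]\to\mathbb S[z^{1/p^\infty}]$. Three ingredients drive the argument. First, Corollary~\ref{cor:BKPerfBC} gives, for any $\calO_K$-algebra $S$, the base change identification
\[ \THH(S/\mathbb S[z];\mathbb Z_p)\otimes_{\mathbb S[z]}\mathbb S[z^{1/p^\infty}] \simeq \THH(S\,\hat\otimes_{\calO_K}\calO_{K_\infty};\mathbb Z_p) \]
after $p$-completion. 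Second, if $S\in \Qsp_{\calO_K}$ then $S':=S\,\hat\otimes_{\calO_K}\calO_{K_\infty}$ lies in $\Qsp_{\calO_{K_\infty}}$: a perfectoid $R\to S$ produces a perfectoid $R\,\hat\otimes_{\mathbb Z_p}\calO_{K_\infty}\to S'$, and $S'/p$ is a tensor of semiperfect rings, hence semiperfect. Third, the map $\pi_\ast\mathbb S[z]\to\pi_\ast\mathbb S[z^{1/p^\infty}]$ is faithfully flat, being the base change of the manifestly faithfully flat $\mathbb Z[z]\to\mathbb Z[z^{1/p^\infty}]$ along $\mathbb Z\to\pi_\ast\mathbb S$; consequently $\pi_\ast(M\otimes_{\mathbb S[z]}\mathbb S[z^{1/p^\infty}]) \cong \pi_\ast M\otimes_{\pi_\ast\mathbb S[z]}\pi_\ast\mathbb S[z^{1/p^\infty}]$ for any $\mathbb S[z]$-module spectrum $M$.

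Applying the last point to $M=\THH(S/\mathbb S[z];\mathbb Z_p)$ identifies its base-changed odd homotopy with $\pi_{\mathrm{odd}}\THH(S';\mathbb Z_p)$, which vanishes by Theorem~\ref{THHqrs}; faithful flatness then forces $\pi_{\mathrm{odd}}\THH(S/\mathbb S[z];\mathbb Z_p)=0$. For $\TC^-$ and $\TP$, evenness is then immediate from the degeneracy of the $\T$-homotopy fixed point and Tate spectral sequences: the $E_2$-terms $H^i(\T,\pi_{-j}\THH(S/\mathbb S[z];\mathbb Z_p))$ are supported in bidegrees with both $i$ and $j$ even (since $H^\ast(B\T;\mathbb Z)$ is concentrated in even degrees), so for parity reasons all higher differentials vanish and the abutments are concentrated in even degrees.

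For the sheaf and cohomology vanishing assertion, fix a quasisyntomic cover $S\to T$ in $\Qsp_{\calO_K}$ with \v{C}ech nerve $T^\bullet$. Base change to $\calO_{K_\infty}$ produces a quasisyntomic cover of $S'$ in $\Qsp_{\calO_{K_\infty}}$, on which Theorem~\ref{THHqrs} asserts that $\pi_i\THH(-;\mathbb Z_p)$ is a sheaf with vanishing higher \v{C}ech cohomology. The base change formula and flatness of $\pi_\ast\mathbb S[z]\to\pi_\ast\mathbb S[z^{1/p^\infty}]$ imply that the \v{C}ech complex for $\pi_i\THH(-/\mathbb S[z];\mathbb Z_p)$ along $T^\bullet$, after tensoring with $\pi_\ast\mathbb S[z^{1/p^\infty}]$, coincides with the analogous complex for the absolute theory on the base-changed cover, which is acyclic in positive degrees with $H^0$ equal to $\pi_i\THH(S';\mathbb Z_p)$; faithful flatness then descends this to $\pi_i\THH(-/\mathbb S[z];\mathbb Z_p)$. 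The analogous statements for $\pi_i\TC^-$ and $\pi_i\TP$ follow by propagating along the complete descending filtrations provided by the degenerate spectral sequences, whose graded pieces are shifts of $\pi_\ast\THH(-/\mathbb S[z];\mathbb Z_p)$ and which are finite in every bounded range of degrees. The main technical step is thus the base change formula together with the verification of faithful flatness; once these are in hand the argument is essentially a formal descent from the absolute case already treated, and I expect no serious obstacle.
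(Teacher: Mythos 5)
Your overall strategy --- base changing along $\mathbb S[z]\to\mathbb S[z^{1/p^\infty}]$ to reduce to the absolute statements of Theorem~\ref{THHqrs} and Theorem~\ref{TCqrsp} for the quasiregular semiperfectoid ring $S'=S\,\hat\otimes_{\calO_K}\calO_{K_\infty}$ --- is the same one the paper has in mind (it states the proof as the single observation that $S'\in\Qsp$ plus a reference to Theorem~\ref{TCqrsp}). However, there is a gap in the step where you go from the base change to the absolute theory: you identify $\pi_{\mathrm{odd}}\big(M\otimes_{\mathbb S[z]}\mathbb S[z^{1/p^\infty}]\big)$ with $\pi_{\mathrm{odd}}\THH(S';\mathbb Z_p)$ and then invoke faithful flatness of $\pi_\ast\mathbb S[z]\to\pi_\ast\mathbb S[z^{1/p^\infty}]$. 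This is not correct as stated: Corollary~\ref{cor:BKPerfBC} says $\THH(S';\mathbb Z_p)$ is the \emph{$p$-completion} of $M\otimes_{\mathbb S[z]}\mathbb S[z^{1/p^\infty}]$, and that $p$-completion does change homotopy groups (the tensor product is a filtered colimit, hence generally not $p$-complete even though $M$ is). So the vanishing of $\pi_{\mathrm{odd}}\THH(S';\mathbb Z_p)$ does not a priori give vanishing of the homotopy of the uncompleted base change, which is what the flat base change formula on $\pi_\ast$ computes. The same confusion propagates into your \v{C}ech-complex argument for the cohomology vanishing.

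The fix is close at hand and actually gives you something stronger than faithful flatness. The monoid inclusion $\mathbb N\hookrightarrow\mathbb N[1/p]$ exhibits $\mathbb S[z^{1/p^\infty}]$ as a \emph{free} $\mathbb S[z]$-module with basis $\{z^\alpha:\alpha\in\mathbb N[1/p]\cap[0,1)\}$, and in particular $\mathbb S[z]$ is a direct summand of $\mathbb S[z^{1/p^\infty}]$ as an $\mathbb S[z]$-module. Since $M=\THH(S/\mathbb S[z];\mathbb Z_p)$ is already $p$-complete and $p$-completion preserves direct sum decompositions, it follows that $M$ is a direct summand of $\THH(S';\mathbb Z_p)$ as an $\mathbb S[z]$-module spectrum, and this splitting is natural in $S$. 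Now evenness and \v{C}ech acyclicity for $M$ follow immediately: $\pi_\ast M$ is a direct summand of $\pi_\ast\THH(S';\mathbb Z_p)$, and the \v{C}ech complex of $\pi_i\THH(-/\mathbb S[z];\mathbb Z_p)$ along your cover $T^\bullet$ is a natural direct summand of the \v{C}ech complex of $\pi_i\THH(-;\mathbb Z_p)$ along the base-changed cover $(T^\bullet)'$, whose exactness in positive degrees (with $H^0=\pi_i\THH(S';\mathbb Z_p)$) is provided by Construction~\ref{GradedTHH}. The remainder of your argument (degeneration of the homotopy fixed point and Tate spectral sequences for parity reasons, and propagation of the sheaf property through the induced complete filtrations) is correct. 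One small cosmetic point: to see $S'\in\Qsp_{\calO_{K_\infty}}$ it is simplest to observe that $\calO_{K_\infty}$ itself is a perfectoid ring mapping to $S'$; you do not need the $p$-completed tensor $R\,\hat\otimes_{\mathbb Z_p}\calO_{K_\infty}$, whose perfectoidness would require a separate justification.
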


In particular, we can define a sheaf
\[ \gr^0 \TC^-(-/\mathbb S[z];\mathbb Z_p) \stackrel{\mathrm{can}}{\simeq} \gr^0 \TP(-/\mathbb S[z];\mathbb Z_p)\]
 of $E_\infty$-$\mathfrak S^{(-1)}$-algebras on $\Qs_{\calO_K}$ by unfolding $\pi_0 \TC^{-}(-/\mathbb{S}[z];\mathbb{Z}_p)$; it is equipped with a natural Frobenius endomorphism compatible with the one on $\mathfrak{S}^{(-1)}$. This construction proves Theorem~\ref{thm:BKlocal} up to a missing Frobenius pullback:

\begin{corollary} 
\label{cor:BKtwisted}
Let $\mathfrak X = \mathrm{Spf}(A)$ be an affine smooth formal scheme over $\calO_K$. The complex $\widehat{\Prism}_{A/\mathfrak{S}^{(-1)}} := \gr^0 \TC^-(A/\mathbb S[z];\mathbb Z_p)$ is a $(p,z)$-complete object of $D(\mathfrak S^{(-1)})$ that admits a natural Frobenius endomorphism $\varphi$. This construction has the following properties:
\begin{enumerate}
\item There is a natural $\varphi$-equivariant isomorphism
\[
\widehat{\Prism}_{A/\mathfrak{S}^{(-1)}} \widehat{\otimes^{\mathbb L}_{\mathfrak{S}^{(-1)}}} A_{\inf}(\calO_{K_\infty}) \simeq \widehat{\Prism}_{A_{\calO_{K_\infty}}}
\]
of $E_\infty$-$A_{\inf}(\calO_{K_\infty})$-algebras, and thus a $\varphi$-equivariant isomorphism
\[
\widehat{\Prism}_{A/\mathfrak{S}^{(-1)}} \widehat{\otimes^{\mathbb L}_{\mathfrak{S}^{(-1)}}} A_{\inf}(\calO_C) \simeq A\Omega_{A_{\calO_C}}
\]
of $E_\infty$-$A_{\inf}(\calO_C)$-algebras.
\item There is a natural isomorphism
\[
\widehat{\Prism}_{A/\mathfrak{S}^{(-1)}} \dotimes_{\mathfrak S^{(-1)},\theta^{(-1)}} \calO_K\simeq \big(\Omega_{A/\calO_K}\big)^\wedge
\]
of $E_\infty$-$\calO_K$-algebras. 
\item After scalar extension along the map $\mathfrak S^{(-1)} \to W(k)$ which is the identity on $W(k)$ and sends $z$ to $0$, there is a functorial Frobenius equivariant isomorphism
\[
\widehat{\Prism}_{A/\mathfrak{S}^{(-1)}} \dotimes_{\mathfrak S^{(-1)}} W(k)\simeq R\Gamma_{\mathrm{crys}}(A_k/W(k))
\]
of $E_\infty$-$W(k)$-algebras.
\item The Frobenius $\varphi$ induces an isomorphism
\[ \widehat{\Prism}_{A/\mathfrak{S}^{(-1)}} \otimes_{\mathfrak{S}^{(-1)},\varphi} \mathfrak{S}^{(-1)} [\tfrac{1}{\varphi(E)}] \simeq \widehat{\Prism}_{A/\mathfrak{S}^{(-1)}}[\tfrac{1}{\varphi(E)}] \]
\end{enumerate}
All completions are above are with respect to $(p,z)$. 
\end{corollary}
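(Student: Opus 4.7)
The plan is to construct $\widehat{\Prism}_{A/\mathfrak{S}^{(-1)}} := \gr^0 \TC^-(A/\mathbb S[z];\mathbb Z_p)$ by unfolding the $D(\mathfrak S^{(-1)})$-valued sheaf $S \mapsto \pi_0 \TC^-(S/\mathbb S[z];\mathbb Z_p)$ on $\Qsp_{\calO_K}$ (to which the canonical map $\TC^- \to \TP$ specializes to an isomorphism by the preceding proposition) along the equivalence of Proposition \ref{qsqspextend}. That this is a sheaf, that it yields a $(p,z)$-complete $E_\infty$-algebra over $\mathfrak S^{(-1)}$, and that the cyclotomic Frobenius descends to a $\varphi$-semilinear endomorphism $\varphi$ of $\widehat{\Prism}_{A/\mathfrak S^{(-1)}}$ all follow by repeating the arguments of \S\S \ref{subsec:TCqrsp}--\ref{subsec:PrismDef} with Theorem \ref{TCqrsp} replaced by its relative counterpart supplied by the first proposition of this section. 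Each of (1)--(4) is then reduced via quasisyntomic descent to a statement at the level of $S \in \Qsp_{\calO_K}$.

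For (1), the key inputs are the $(p,z)$-completely faithful flatness of the topologically free embedding $\mathfrak S^{(-1)} \hookrightarrow A_{\inf}(\calO_{K_\infty})$ and Corollary \ref{cor:BKPerfBC}: applied pointwise to $S_\infty := S\widehat{\otimes}_{\calO_K}\calO_{K_\infty}$ for $S \in \Qsp_{\calO_K}$, the latter gives, after $p$-completion, $\TC^-(S_\infty/\mathbb S[z^{1/p^\infty}];\mathbb Z_p) \simeq \TC^-(S_\infty;\mathbb Z_p)$, hence $\pi_0\TC^-(S/\mathbb S[z];\mathbb Z_p)\widehat{\otimes}_{\mathfrak S^{(-1)}}A_{\inf}(\calO_{K_\infty}) \simeq \widehat{\Prism}_{S_\infty}$ $\varphi$-equivariantly. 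Quasisyntomic descent promotes this to smooth $A$, and further base change along $A_{\inf}(\calO_{K_\infty}) \to A_{\inf}(\calO_C)$ combined with Theorem \ref{main_theorem} yields the $A\Omega$-comparison. Part (4) then reduces via (1) to the statement that $\varphi$ on $\widehat{\Prism}_{A_{\calO_{K_\infty}}} \simeq A\Omega_{A_{\calO_{K_\infty}}}$ becomes an isomorphism after inverting $\tilde\xi$, which is Corollary \ref{NygaardSmooth}(3): $\varphi(E)$ maps to a unit multiple of $\tilde\xi$ in $A_{\inf}(\calO_{K_\infty})$, and localization commutes with faithfully flat base change.

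For (2), I would establish the relative analog of Theorem \ref{TPHPDeformation}: the element $u \in \pi_2 \TC^-(\calO_K/\mathbb S[z];\mathbb Z_p)$ (generating $\pi_2\THH(\calO_K/\mathbb S[z];\mathbb Z_p) = \calO_K\cdot u$) gives a $\T$-equivariant cofiber sequence
\[ \TC^-(A/\mathbb S[z];\mathbb Z_p)[2] \xrightarrow{u} \TC^-(A/\mathbb S[z];\mathbb Z_p) \to \HC^-(A/\calO_K;\mathbb Z_p)\]
for any $\calO_K$-algebra $A$. Passing to $\pi_0$, the image of $u$-multiplication on $\pi_{-2}\TC^- \simeq \widehat{\Prism}_{A/\mathfrak S^{(-1)}}\cdot v$ sends $v \mapsto uv = E$, identifying $\widehat{\Prism}_{A/\mathfrak S^{(-1)}}/E$ with $\pi_0\HC^-(A/\calO_K;\mathbb Z_p)$, which by Proposition \ref{prop:hcddr} is the Hodge-completed derived de Rham complex --- and simply $\widehat{\Omega}_{A/\calO_K}$ for smooth $A$. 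For (3), I would combine (1) with the crystalline comparison for $A\Omega$ (property (5) of \S \ref{ss:THHtoBK}). The composition $\mathfrak S^{(-1)} \to A_{\inf}(\calO_C) \to W(\overline k)$ sends $z\mapsto 0$ and factors as $\mathfrak S^{(-1)} \to W(k) \to W(\overline k)$ with the second map faithfully flat; base-changing (1) yields $\widehat{\Prism}_{A/\mathfrak S^{(-1)}}\otimes_{\mathfrak S^{(-1)}}W(\overline k) \simeq R\Gamma_{\mathrm{crys}}(A_{\overline k}/W(\overline k)) \simeq R\Gamma_{\mathrm{crys}}(A_k/W(k))\otimes^{L}_{W(k)}W(\overline k)$, and faithfully flat descent along $W(k) \to W(\overline k)$ produces (3) over $W(k)$ with Frobenius equivariance preserved throughout.

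The main obstacle will be part (1), and in particular the coherent bookkeeping of the Frobenius structure and base changes through the quasisyntomic descent: while the pointwise identification on $\Qsp_{\calO_K}$ is cleanly supplied by Corollary \ref{cor:BKPerfBC}, transporting this identification (together with its compatibility data) to smooth $A$ requires some care. Once (1) is in hand, parts (3) and (4) follow essentially formally, while (2) is a direct relative analog of Proposition \ref{NygaarddR}.
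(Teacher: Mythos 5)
Your proposal matches the paper's proof closely for parts (1), (2), and (4). The key inputs are the same: Corollary~\ref{cor:BKPerfBC} and quasisyntomic unfolding for (1), the relative analogue of the fiber sequence $\TC^{-}[2]\xrightarrow{u}\TC^{-}\to\HC^{-}$ together with Proposition~\ref{prop:hcddr} for (2) (the paper phrases this via $\gr^0\TP/E\simeq\gr^0\HP$ and Theorem~\ref{thm:main7}, which is the same content), and the direct-summand property of $\mathfrak S^{(-1)}\hookrightarrow A_{\inf}$ for (4). Two small citation slips: in (4) the input is really the definition $A\Omega = L\eta_\mu R\nu_*\mathbb A_{\inf}$ (or Proposition~\ref{prop:identnygaard}) rather than Corollary~\ref{NygaardSmooth}(3), which only gives the factorization of $\varphi$ through $L\eta_\xi$, not that it becomes an isomorphism after inverting $\tilde\xi$; and since $K_\infty$ is not algebraically closed you should write $\widehat\Prism_{A_{\calO_{K_\infty}}}$ rather than $A\Omega_{A_{\calO_{K_\infty}}}$ until after base change to $\calO_C$.

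Part (3) takes a genuinely different route from the paper and, as written, has a gap. You base change (1) along $A_{\inf}(\calO_C)\to W(\overline k)$, invoke the crystalline comparison for $A\Omega$ to get an isomorphism over $W(\overline k)$, and then say faithfully flat descent along $W(k)\to W(\overline k)$ produces (3) over $W(k)$. But both $\widehat\Prism_{A/\mathfrak S^{(-1)}}\dotimes_{\mathfrak S^{(-1)}}W(k)$ and $R\Gamma_{\mathrm{crys}}(A_k/W(k))$ are already defined over $W(k)$, and producing a natural isomorphism between them from one over $W(\overline k)$ is not a consequence of faithfully flat descent alone: you need either a compatible descent datum on that isomorphism (say, continuous $\mathrm{Gal}(\overline k/k)$-equivariance, which you do not establish and which is delicate since $\overline k/k$ is an infinite extension) or an a priori comparison map over $W(k)$ whose base change is checked to be an isomorphism. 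The paper constructs the map directly: the lemma preceding Corollary~\ref{cor:BKPerfBC} gives $\THH(A/\mathbb S[z])\otimes_{\mathbb S[z]}\mathbb S\simeq\THH(A_k)$ compatibly with the cyclotomic structure, hence a natural identification $\gr^0\TP(A/\mathbb S[z];\mathbb Z_p)\otimes_{\mathbb S[z]}\mathbb S\simeq\gr^0\TP(A_k;\mathbb Z_p)$, and Theorem~\ref{thm:main3} then identifies the right side with $R\Gamma_{\mathrm{crys}}(A_k/W(k))$. This produces the comparison and its $\varphi$-equivariance over $W(k)$ directly, bypassing the descent issue. To salvage your approach, you would at a minimum need to supply the a priori map over $W(k)$ --- at which point you have essentially reconstructed the paper's argument.
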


\begin{proof} Part (2) comes from the natural equivalence $\gr^0 \TP(A/\mathbb S[z];\mathbb Z_p)/E\simeq \gr^0 \HP(A/\calO_K;\mathbb Z_p)$ and Theorem~\ref{thm:main7}. The first equality of part (1) now comes from the identification
\[
\gr^0 \TP(A/\mathbb S[z];\mathbb Z_p)\widehat{\otimes}_{\mathbb S[z]} \mathbb S[z^{1/p^\infty}]\simeq \gr^0 \TP(A\hat{\otimes}_{\calO_K} \calO_{K_\infty};\mathbb Z_p)\ .
\]
obtained by passing to graded pieces in the similar statement for $\TP$ itself (which follows from the same statement for $\THH$). The second equality of part (1) follows from Theorem~\ref{thm:main2} and the identification
\[
\widehat{\Prism}_B\widehat{\otimes}_{A_{\inf}(\calO_{K_\infty})} A_{\inf}(\calO_C)\simeq \widehat{\Prism}_{B\hat{\otimes}_{\calO_{K_\infty}} \calO_C}
\]
for any $B\in \Qs_{\calO_{K_\infty}}$, for which it suffices to observe that modulo $\ker\theta$, both sides compute Hodge-completed derived de~Rham cohomology, which satisfies the required base change. Part (3) now comes from Theorem~\ref{thm:main3} and the identification
\[
\gr^0 \TP(A/\mathbb S[z];\mathbb Z_p)\otimes_{\mathbb S[z]} \mathbb S\simeq \gr^0 \TP(A\otimes_{\calO_K} k)
\]
since $A$ is flat over $\calO_K$. For part (4), we shall check that the cofiber of $\varphi^* \widehat{\Prism}_{A/\mathfrak{S}^{(-1)}} \to \widehat{\Prism}_{A/\mathfrak{S}^{(-1)}}$ becomes acyclic on inverting $\varphi(E)$. The map $\mathfrak{S}^{(-1)} \to A_{\inf}(\calO_C)$ is a topological direct summand, so by part (1), it suffices to show that the cofiber of $\varphi^* A\Omega_{A_{\calO_C}} \to A\Omega_{A_{\calO_C}}$ is acyclic on inverting $\varphi(\xi) = \tilde{\xi}$; this follows immediately from the definition of $A\Omega$.
\end{proof}

\begin{remark}
The complex $\widehat{\Prism}_{A/\mathfrak{S}^{(-1)}}$ from Corollary~\ref{cor:BKtwisted} does not give the complex $\widehat{\Prism}_{A/\mathfrak{S}}$ desired in Theorem~\ref{thm:BKlocal}.  Concretely, one cannot recover the consequence discussed in Remark~\ref{rmk:TorsiondR} from Corollary~\ref{cor:BKtwisted}.
\end{remark}

 \begin{remark}
 \label{rmk:NygaardBK}
Our construction naturally equips $\widehat{\Prism}_{A/\mathfrak{S}^{(-1)}}$ with a complete descending multiplicative $\mathbf{N}$-indexed filtration $\calN^{\geq \f} \widehat{\Prism}_{A/\mathfrak{S}^{(-1)}}$ coming from the homotopy fixed point spectral sequence.  This filtration is compatible with the Nygaard filtration via the identifications in Corollary~\ref{cor:BKtwisted} (1) and (3), and with the Hodge filtration via the identification in Theorem~\ref{thm:BKlocal} (2).  It is thus reasonable to refer to this as the Nygaard filtration on $\widehat{\Prism}_{A/\mathfrak{S}^{(-1)}}$. 
\end{remark}

\subsection{Frobenius descent}
\label{ss:BKdescent}

We now explain how to descend the complex $\widehat{\Prism}_{A/\mathfrak{S}^{(-1)}}$ from Corollary~\ref{cor:BKtwisted} along $\mathfrak S\to \mathfrak S^{(-1)}$. This relies on the following observation.

\begin{proposition} 
\label{prop:BKnc}

For any $\calO_K$-algebra $A$, the map $\varphi: \TC^-(A/\mathbb S[z];\mathbb Z_p)\to \TP(A/\mathbb S[z];\mathbb Z_p)$ extends naturally to a map
\[
\TC^-(A/\mathbb S[z];\mathbb Z_p)[\tfrac 1u]\otimes_{\mathbb S[z]} \mathbb S[z^{1/p}]\to \TP(A/\mathbb S[z];\mathbb Z_p)\ .
\]
If $A$ is quasiregular semiperfectoid, then the source is concentrated in even degrees, and $A\mapsto \pi_0 (\TC^-(A/\mathbb S[z];\mathbb Z_p)[\tfrac 1u])$ defines a sheaf with vanishing cohomology on $\Qsp_{\calO_K}$. Denote its unfolding to $\Qs_{\calO_K}$ by $\gr^0(\TC^{-}(-/\mathbb{S}[z];\mathbb{Z}_p)[\frac{1}{u}])$. By functoriality of unfolding, we have a  natural map
\[
\gr^0(\TC^-(A/\mathbb S[z];\mathbb Z_p)[\tfrac 1u])\otimes_{\mathbb Z[z]} \mathbb Z[z^{1/p}]\to \gr^0 \TP(A/\mathbb S[z];\mathbb Z_p)
\]
for $A\in \Qs_{\calO_K}$. If $A$ is the $p$-adic completion of a smooth $\calO_K$-algebra, this map is an equivalence. 
\end{proposition}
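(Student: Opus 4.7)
I would prove the three assertions in order, with the last being the hard step.

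\textbf{(a) Existence of the extended map.} By Proposition~\ref{prop:THHN}, the cyclotomic Frobenius on $\THH(\mathbb S[z])$ sends $z$ to $z^p$, so for any $\mathbb S[z]$-algebra $A$ the induced Frobenius
\[ \varphi : \TC^-(A/\mathbb S[z];\mathbb Z_p)\to \TP(A/\mathbb S[z];\mathbb Z_p) \]
is $\mathbb S[z]$-linear when the target is given its Frobenius-twisted $\mathbb S[z]$-structure. Under that twisted structure the image of $z$ has a preferred $p$-th root, namely the image of $z$ under the \emph{untwisted} structure; this promotes the target canonically to an $\mathbb S[z^{1/p}]$-algebra. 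Since $\varphi(u)=\sigma$ is a unit in $\TP$, the map $\varphi$ extends to $\TC^-(A/\mathbb S[z];\mathbb Z_p)[\tfrac1u]$, and combining with the $\mathbb S[z^{1/p}]$-structure on the target produces the desired extended map.

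\textbf{(b) Even concentration and sheafiness over $\Qsp_{\calO_K}$.} For $S\in\Qsp_{\calO_K}$, the relative spectra $\THH,\TC^-,\TP$ of $S/\mathbb S[z]$ are concentrated in even degrees and define sheaves with vanishing higher cohomology on $\Qsp_{\calO_K}$, by the same arguments that yielded Theorems~\ref{THHqrs} and \ref{TCqrsp} in the absolute setting (the Frobenius-twisted $\mathbb S[z]$-algebra $\TC^-(\calO_K/\mathbb S[z];\mathbb Z_p)$ plays the role of $A_{\inf}$, and all arguments go through verbatim). Inverting the degree-$2$ element $u$ preserves even concentration, and tensoring with the finite-free $\mathbb S[z]$-module $\mathbb S[z^{1/p}]$ of rank $p$ preserves homotopy groups term by term; both operations preserve sheafiness with vanishing higher cohomology since the needed statement passes to filtered colimits.

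\textbf{(c) The equivalence for smooth $A$.} By sheafiness both sides satisfy quasisyntomic descent, so it suffices to verify the equivalence after base change along any quasisyntomic cover. I would pass to $A_\infty := \widehat{A\otimes_{\calO_K}\calO_{K_\infty}}$, which is the $p$-adic completion of a smooth $\calO_{K_\infty}$-algebra and a quasisyntomic cover of $A$. Corollary~\ref{cor:BKPerfBC} identifies, after further base change along $\mathbb S[z]\to\mathbb S[z^{1/p^\infty}]$, the relative theory with the absolute theory $\THH(A_\infty;\mathbb Z_p)$. Under the identification $\widehat\Prism_{A_\infty/\mathfrak S^{(-1)}}\widehat\otimes_{\mathfrak S^{(-1)}}A_{\inf}(\calO_{K_\infty})\simeq A\Omega_{A_\infty}$ of Corollary~\ref{cor:BKtwisted}(1) and the Frobenius/Nygaard identification of Proposition~\ref{prop:identnygaard}, the displayed map becomes the linearized Frobenius map on $A\Omega_{A_\infty}$ restricted to an appropriate localization; here the tensor with $\mathbb Z[z^{1/p}]$ has the effect of adjoining the square root $[\varpi^\flat]$ that matches the element $\varphi(z)$, and the inversion of $u$ kills the remaining obstruction. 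The resulting statement is precisely the smooth-algebra case of the Segal-conjecture-type result packaged in the corollary after Proposition~\ref{prop:identnygaard}, namely that $\varphi$ is an equivalence on $A\Omega_{A_\infty}$ in the relevant range. Faithful flatness (after $p$-completion) of the base change $\mathfrak S^{(-1)}\to A_{\inf}(\calO_{K_\infty})$ then descends the equivalence back to $\mathfrak S^{(-1)}[z^{1/p}]$.

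\textbf{Main obstacle.} The delicate point lies in step (c): one must carefully track the identifications intertwining the Bott-like element $u$, the periodicity generator $\sigma$, the Breuil--Kisin element $E$ and its Frobenius twist $\varphi(E)=\tilde\xi$, and the $p$-th root $z^{1/p}\leftrightarrow[\varpi^\flat]$, so that the Frobenius appearing in the relative theory lines up, after the $\mathbb Z[z^{1/p}]$-tensor, with the Frobenius on $A\Omega_{A_\infty}$ rather than with a twist of it. Once this bookkeeping is performed, the result is an immediate consequence of the absolute Frobenius/Nygaard equivalence for smooth formal schemes over $\calO_{K_\infty}$.
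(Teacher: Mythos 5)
Your steps (a) and (b) agree with the paper. Step (c), however, takes a genuinely different route, and it's worth comparing. The paper reduces to the individual graded pieces $\gr^i$ for $i \geq \dim A$ (a colimit argument, since inverting $u$ is a filtered colimit), then observes that both sides are $(p,z^{1/p})$-complete so it suffices to check modulo $z^{1/p}$; this specializes via the $\mathbb S[z]\to\mathbb S$ base change to the characteristic-$p$ special fiber $\overline A = A_k$, where the equivalence is exactly the characteristic-$p$ Segal conjecture of Corollary~\ref{cor:segalcharp}. You instead base change up the tower $\calO_K \to \calO_{K_\infty} \to \calO_C$, using Corollary~\ref{cor:BKPerfBC} to pass from relative to absolute $\THH$ and the mixed-characteristic Segal conjecture packaged after Proposition~\ref{prop:identnygaard}, and then descend back by flatness of $\mathfrak S^{(-1)}\to A_{\inf}$. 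Both routes terminate in a Segal-conjecture input, but the paper's version lands in characteristic $p$ (which is simpler and was established earlier), while yours routes through the $A\Omega$-comparison and requires more bookkeeping of the various Frobenius and Breuil--Kisin twists, as you yourself flag. Your route buys nothing extra here, but it is not wrong in spirit.

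Two points where your write-up is imprecise enough to need fixing. First, the phrase ``by sheafiness both sides satisfy quasisyntomic descent, so it suffices to verify the equivalence after base change along any quasisyntomic cover'' is not a correct deduction: quasisyntomic descent expresses the value at $A$ as a totalization over the whole Cech nerve of $A\to A_\infty$, and the higher Cech terms are neither smooth over $\calO_{K_\infty}$ nor quasiregular semiperfectoid, so you cannot simply check the equivalence at $A_\infty$ alone by sheaf-theoretic descent. What actually makes the reduction work is $(p,z)$-completely faithfully flat descent in $D(\mathfrak S^{(-1)})$ along $\mathfrak S^{(-1)}\to A_{\inf}(\calO_{K_\infty})$ (implemented on the $\THH$-side by $\mathbb S[z]\to\mathbb S[z^{1/p^\infty}]$), together with a base change formula for both $\gr^0\TP(-/\mathbb S[z];\mathbb Z_p)$ (Corollary~\ref{cor:BKtwisted}(1)) and $\gr^0(\TC^-(-/\mathbb S[z];\mathbb Z_p)[\tfrac1u])$ under that map; you invoke the flatness only at the very end as an afterthought, but it is the actual engine of the reduction and the base-change compatibility for $\gr^0(\TC^-[\tfrac1u])$ needs to be stated and checked, not assumed. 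Second, ``the square root $[\varpi^\flat]$'' should read ``the $p$-th root'': the tensor with $\mathbb Z[z^{1/p}]$ adjoins a $p$-th root of $z$, not a square root.
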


\begin{proof} The extension of the map follows from the observation that for $A=\calO_K$, the element $u$ maps to $\sigma$ under $\varphi$, and thus becomes invertible; and that the map is linear over $\mathbb S[z]\to \mathbb S[z]$, $z\mapsto z^p$.

As for $A$ quasiregular semiperfetoid, each $\pi_i \TC^-(A/\mathbb S[z];\mathbb Z_p)$ is a sheaf with vanishing higher cohomology, it follows by passage to filtered colimits that the same is true for $\pi_i \TC^-(A/\mathbb S[z];\mathbb Z_p)[\tfrac 1u]$. To check that
\[
\gr^0(\TC^-(A/\mathbb S[z];\mathbb Z_p)[\tfrac 1u])\otimes_{\mathbb Z[z]} \mathbb Z[z^{1/p}]\to \gr^0 \TP(A/\mathbb S[z];\mathbb Z_p)
\]
is an equivalence for the $p$-adic completion $A$ of a smooth $\calO_K$-algebra, it suffices to see that for $i$ at least the dimension of $A$, the map
\[
\gr^i(\TC^-(A/\mathbb S[z];\mathbb Z_p))\otimes_{\mathbb Z[z]} \mathbb Z[z^{1/p}]\to \gr^i \TP(A/\mathbb S[z];\mathbb Z_p)
\]
is an equivalence. For this, we can reduce modulo $z^{1/p}$; then it suffices to see that for a smooth $k$-algebra $\overline{A}$, the Frobenius map
\[
\gr^i(\TC^-(\overline{A}))\to \gr^i \TP(\overline{A})
\]
is an equivalence for $i$ at least the dimension of $A$. But this follows from the version of the Segal conjecture, Corollary~\ref{cor:segalcharp}, by passing to homotopy-$\T$-fixed points.
\end{proof}

\begin{proof}[Proof of Theorem~\ref{thm:BKlocal}]
For the $p$-adic completion $A$ of a smooth $\calO_K$-algebra, define
\[ \widehat{\Prism}_{A/\mathfrak{S}} := \gr^0(\TC^{-}(A;\mathbb{Z}_p)[\tfrac{1}{u}]) \]
as in Proposition~\ref{prop:BKnc}. It follows from this proposition that we have a natural identification
\begin{equation}
\label{eq:BKdescent}
\widehat{\Prism}_{A/\mathfrak{S}} \otimes_{\mathfrak{S},\mathrm{can}} \mathfrak{S}^{(-1)}  \simeq \widehat{\Prism}_{A/\mathfrak{S}^{(-1)}}.
\end{equation}
We can also define the (linearized) Frobenius 
\[ \widehat{\Prism}_{A/\mathfrak{S}} \otimes_{\mathfrak{S},\varphi} \mathfrak{S} \simeq \widehat{\Prism}_{A/\mathfrak{S}} \otimes_{\mathfrak{S},\mathrm{can}} \mathfrak{S}^{(-1)} \to \widehat{\Prism}_{A/\mathfrak{S}} \]
as the following composition
\begin{align*}
 \widehat{\Prism}_{A/\mathfrak{S}} \otimes_{\mathfrak{S},\mathrm{can}} \mathfrak{S}^{(-1)}  & \simeq \widehat{\Prism}_{A/\mathfrak{S}^{(-1)}} \\
&:= \gr^0(\TP(A/\mathbb S[z];\mathbb Z_p)) \stackrel{\mathrm{can}}{\simeq} \gr^0(\TC^-(A/\mathbb S[z];\mathbb Z_p))\\
&\xrightarrow{invert \ u} \gr^0(\TC^-(A/\mathbb S[z];\mathbb Z_p)[\tfrac 1u])\\
&=: \widehat{\Prism}_{A/\mathfrak{S}}
\end{align*}
One verifies that this base changes to the Frobenius endomorphism of $\widehat{\Prism}_{A/\mathfrak{S}^{(-1)}}$ under \eqref{eq:BKdescent}, thus descending the pair $(\widehat{\Prism}_{A/\mathfrak{S}^{(-1)}},\varphi)$ along $\mathfrak{S} \to \mathfrak{S}^{(-1)}$. All assertions of Theorem~\ref{thm:BKlocal} now follow from Corollary~\ref{cor:BKtwisted}.
\end{proof}

We end with two remarks on the Nygaard filtration. First, we explain why the Nygaard filtration does not descend along $\varphi$.

\begin{remark}
The Nygaard filtration on $\varphi^* \widehat{\Prism}_{A/\mathfrak{S}} \simeq \widehat{\Prism}_{A/\mathfrak{S}^{(-1)}}$ from Remark~\ref{rmk:NygaardBK} does not obviously descend along $\varphi$ to a filtration on $\widehat{\Prism}_{A/\mathfrak{S}}$. In fact, there cannot be a functorial descent. For instance, if the projection $\widehat{\Prism}_{A/\mathfrak{S}^{(-1)}} \to \gr^0 \widehat{\Prism}_{A/\mathfrak{S}^{(-1)}} \simeq A$ descended functorially along $\varphi:\mathfrak{S} \to \mathfrak{S}^{(-1)}$, then one could globalize to conclude that each smooth formal scheme $\mathfrak{X}/\calO_K$ descends canonically to the subring $W(k)[\varpi^p] \subset \mathcal{O}_K$ (which is the image of $\mathfrak{S} \xrightarrow{\theta} \calO_K$), which is clearly nonsensical: any elliptic curve with good reduction whose $j$ invariant lies in $\calO_K - W(k)[\varpi^p]$ gives a counterexample.
\end{remark}

Secondly, we prove that $\varphi^* \widehat{\Prism}_{A/\mathfrak{S}}$ identifies with $L\eta_E \widehat{\Prism}_{A/\mathfrak{S}}$ via the Frobenius, in analogy with $A\Omega$.

\begin{remark}
In the situation of Theorem~\ref{thm:BKlocal}, consider the Frobenius map 
\[ \varphi_A: \varphi^* \widehat{\Prism}_{A/\mathfrak{S}} \simeq \widehat{\Prism}_{A/\mathfrak{S}^{(-1)}} \to \widehat{\Prism}_{A/\mathfrak{S}}\]
of $E_\infty$-algebras in $D(\mathfrak{S})$. The source of this map comes equipped with the Nygaard filtration $\calN^{\geq \f} \widehat{\Prism}_{A/\mathfrak{S}^{(-1)}}$ from Remark~\ref{rmk:NygaardBK}. The target of the map comes equipped with the $E$-adic filtration $(E)^\f \otimes \widehat{\Prism}_{A/\mathfrak{S}}$. We claim that the map $\varphi_A$ above lifts to a map of $E_\infty$-algebras in $DF(\mathfrak{S})$ of the form
\[  \calN^{\geq \f} \widehat{\Prism}_{A/\mathfrak{S}^{(-1)}} \xrightarrow{\tilde{\varphi}_A} (E)^\f \otimes \widehat{\Prism}_{A/\mathfrak{S}},\]
and that this map is a connective cover for the Beilinson $t$-structure. In particular, by Proposition~\ref{LetaDF}, this implies that $\varphi_A$ factors as 
\[ \varphi^* \widehat{\Prism}_{A/\mathfrak{S}} \simeq L\eta_E \widehat{\Prism}_{A/\mathfrak{S}} \xrightarrow{\mathrm{can}} \widehat{\Prism}_{A/\mathfrak{S}}.\]
To prove the above assertion, one checks that $\calN^{\geq \f} \widehat{\Prism}_{A/\mathfrak{S}^{(-1)}} \in DF^{\leq 0}(\mathfrak{S})$ just as in Corollary~\ref{NygaardSmooth}. Once we have shown that $\varphi_A$ lifts to a filtered map $\tilde{\varphi}_A$ as above, the rest will follow by base change to $\calO_C$ (i.e., via Theorem~\ref{thm:BKlocal} (1)) as $\mathfrak{S} \to A_{\inf}$ is topologically free. Thus, we are reduced to checking that the restriction of $\varphi_A$ to $\calN^{\geq i} \widehat{\Prism}_{A/\mathfrak{S}^{(-1)}}$ is functorially divisible by $E^i$ compatibly in $i$. Unwinding definitions and unfolding, this reduces to checking that for $S\in \Qsp_{\calO_K}$, the composite 
\[ \pi_{2i} \TC^{-}(S/\mathbb{S}[z]; \mathbb{Z}_p) \xrightarrow{v^i} \pi_0 \TC^{-}(S/\mathbb{S}[z];\mathbb{Z}_p) \xrightarrow{\mathrm{can}} \pi_0 \big(\TC^{-}(S/\mathbb{S}[z];\mathbb{Z}_p)[\tfrac{1}{u}]\big) \]
is functorially divisible by $E^i$ compatibly in $i$. The divisibility follows as
\[
v^i = \tfrac{E^i}{u^i} \in \pi_* \TC^{-}(\calO_K/\mathbb{S}[z]; \mathbb{Z}_p)[\tfrac{1}{u}]\ .
\]
To get functoriality as well as compatibility in $i$, it is enough to check that $E$ is a nonzerodivisor in $\pi_* \big(\TC^{-}(S/\mathbb{S}[z];\mathbb{Z}_p)[\frac{1}{u}]\big) \simeq \pi_* \big(\TC^{-}(S/\mathbb{S}[z];\mathbb{Z}_p)\big)[\frac{1}{u}]$. As inverting $u$ is flat, we are reduced to showing that $E$ is a nonzerodivisor in the graded ring $\pi_* \big(\TC^{-}(S/\mathbb{S}[z];\mathbb{Z}_p)\big) \subset \pi_* \big(\TP(S/\mathbb{S}[z];\mathbb{Z}_p)\big)$. As the larger graded ring is $2$-periodic and concentrated in even degrees, it is enough to check that the cone of multiplication by $E$ on $\pi_0 \TP(S/\mathbb{S}[z]; \mathbb{Z}_p)$ is discrete. But this cone identifies with $\widehat{L\Omega}_{S/\calO_K}$ (see proof of Corollary~\ref{cor:BKtwisted} (2)), which is discrete as $S \in \Qsp_{\calO_K}$. 
\end{remark}

\newpage
\bibliographystyle{amsalpha}
\bibliography{bms2}
\end{document}